\newcounter{dummy}
\newcommand{\customitem}[1][]{\item[#1]\refstepcounter{dummy}\def\@currentlabel{#1}}
\newtheorem*{theorem}{Theorem}
\newtheorem*{corollary}{Corollary}
\newtheorem*{lemma}{Lemma}
\newtheorem*{proposition}{Proposition}
\theoremstyle{definition}
\newtheorem*{definition}{Definition}
\newtheorem*{remark}{Remark}
\newtheorem*{example}{Example}
\newtheorem*{assumption}{Assumption}
\renewcommand{\bar}[1]{\overline{#1}}
\newcommand{\cat}[1]{\mathscr{#1}}
\renewcommand{\hat}[1]{\widehat{#1}}
\newcommand{\labeleq}[1]{\stackrel{\mathclap{\normalfont\mbox{#1}}}{=}}
\renewcommand{\tilde}[1]{\widetilde{#1}}
\renewcommand{\vec}[1]{\bm{{#1}}}
\newcommand{\bA}{\mathbb{A}}
\newcommand{\bC}{\mathbb{C}}
\newcommand{\bE}{\mathbb{E}}
\newcommand{\bF}{\mathbb{F}}
\newcommand{\bh}{\mathbbm{h}}
\newcommand{\bk}{\mathbbm{k}}
\newcommand{\bK}{\mathbb{K}}
\newcommand{\bfK}{\mathbf{K}}
\newcommand{\bL}{\mathbb{L}}
\newcommand{\bM}{\mathbb{M}}
\newcommand{\bN}{\mathbb{N}}
\newcommand{\bP}{\mathbb{P}}
\newcommand{\bQ}{\mathbb{Q}}
\newcommand{\bR}{\mathbb{R}}
\newcommand{\bT}{\mathbb{T}}
\newcommand{\bZ}{\mathbb{Z}}
\newcommand{\cA}{\mathcal{A}}
\newcommand{\cB}{\mathcal{B}}
\newcommand{\cE}{\mathcal{E}}
\newcommand{\cF}{\mathcal{F}}
\newcommand{\cG}{\mathcal{G}}
\newcommand{\cH}{\mathcal{H}}
\newcommand{\PcH}{\,{}^{\cat{P}}\!\mathcal{H}}
\newcommand{\cI}{\mathcal{I}}
\newcommand{\cK}{\mathcal{K}}
\newcommand{\cL}{\mathcal{L}}
\newcommand{\cM}{\mathcal{M}}
\newcommand{\cN}{\mathcal{N}}
\newcommand{\cO}{\mathcal{O}}
\newcommand{\cOb}{\mathcal{O}b}
\newcommand{\cQ}{\mathcal{Q}}
\newcommand{\cT}{\mathcal{T}}
\newcommand{\cV}{\mathcal{V}}
\newcommand{\fB}{\mathfrak{B}}
\newcommand{\fc}{\mathfrak{c}}
\newcommand{\fC}{\mathfrak{C}}
\newcommand{\ff}{\mathfrak{f}}
\newcommand{\fF}{\mathfrak{F}}
\newcommand{\fK}{\mathfrak{K}}
\newcommand{\fM}{\mathfrak{M}}
\newcommand{\fMum}{\mathfrak{Mum}}
\newcommand{\fn}{\mathfrak{n}}
\newcommand{\fN}{\mathfrak{N}}
\newcommand{\fQ}{\mathfrak{Q}}
\newcommand{\fS}{\mathfrak{S}}
\newcommand{\fW}{\mathfrak{W}}
\newcommand{\fX}{\mathfrak{X}}
\newcommand{\fY}{\mathfrak{Y}}
\newcommand{\fZ}{\mathfrak{Z}}
\newcommand{\scE}{\mathscr{E}}
\newcommand{\scF}{\mathscr{F}}
\newcommand{\scI}{\mathscr{I}}
\newcommand{\sA}{\mathsf{A}}
\newcommand{\sB}{\mathsf{B}}
\newcommand{\se}{\mathsf{e}}
\newcommand{\spt}{\mathsf{p}}
\newcommand{\sG}{\mathsf{G}}
\newcommand{\sP}{\mathsf{P}}
\newcommand{\sS}{\mathsf{S}}
\newcommand{\sT}{\mathsf{T}}
\newcommand{\sU}{\mathsf{U}}
\newcommand{\sV}{\mathsf{V}}
\newcommand{\sVW}{\mathsf{VW}}
\newcommand{\svw}{\mathsf{vw}}
\newcommand{\sY}{\mathsf{Y}}
\newcommand{\sZ}{\mathsf{Z}}
\newcommand{\sz}{\mathsf{z}}
\newcommand{\BS}{\mathrm{BS}}
\newcommand{\DT}{\mathrm{DT}}
\newcommand{\PT}{\mathrm{PT}}
\newcommand{\vw}{\mathrm{vw}}
\newcommand{\inert}{\mathrm{inert}}
\newcommand{\loc}{\mathrm{loc}}
\newcommand{\pe}{\mathrm{pe}}
\newcommand{\pl}{\mathrm{pl}}
\newcommand{\pt}{\mathrm{pt}}
\newcommand{\red}{\mathrm{red}}
\newcommand{\sst}{\mathrm{sst}}
\newcommand{\st}{\mathrm{st}}
\newcommand{\vac}{\mathbf{1}}
\newcommand{\vir}{\mathrm{vir}}
\newcommand{\Pn}{\,{}^{\cat{P}}\!n}
\newcommand\blackbullet{\raisebox{-.1ex}{\scalebox{1.5}{$\bullet$}}}
\DeclareMathOperator{\ad}{ad}
\DeclareMathOperator{\Aut}{Aut}
\DeclareMathOperator{\End}{End}
\DeclareMathOperator{\Ext}{Ext}
\DeclareMathOperator{\ch}{ch}
\DeclareMathOperator{\CH}{CH}
\DeclareMathOperator{\Char}{char}
\DeclareMathOperator{\cocone}{cocone}
\DeclareMathOperator{\Pcoker}{{}^{\cat{P}}\!coker}
\DeclareMathOperator{\coker}{coker}
\DeclareMathOperator{\cone}{cone}
\DeclareMathOperator{\ev}{ev}
\DeclareMathOperator{\fix}{\mathsf{fix}}
\DeclareMathOperator{\Fr}{Fr}
\DeclareMathOperator{\fr}{fr}
\DeclareMathOperator{\cFr}{\mathcal{F}{\it r}}
\DeclareMathOperator{\Frac}{Frac}
\DeclareMathOperator{\Frs}{Frs}
\DeclareMathOperator{\GL}{GL}
\DeclareMathOperator{\Hilb}{Hilb}
\DeclareMathOperator{\fHilb}{\mathfrak{Hilb}}
\DeclareMathOperator{\Hom}{Hom}
\DeclareMathOperator{\cHom}{\mathcal{H}{\it om}}
\DeclareMathOperator{\id}{id}
\DeclareMathOperator{\im}{im}
\DeclareMathOperator{\inj}{inj}
\DeclareMathOperator{\NS}{NS}
\DeclareMathOperator{\Per}{\cat{Per}}
\DeclareMathOperator{\PGL}{PGL}
\DeclareMathOperator{\Pic}{Pic}
\DeclareMathOperator{\fPic}{\mathfrak{Pic}}
\DeclareMathOperator{\pr}{pr}
\DeclareMathOperator{\rank}{rank}
\DeclareMathOperator{\Res}{Res}
\DeclareMathOperator{\SO}{SO}
\DeclareMathOperator{\Spec}{Spec}
\DeclareMathOperator{\SU}{SU}
\DeclareMathOperator{\supp}{supp}
\DeclareMathOperator{\Sym}{Sym}
\DeclareMathOperator{\td}{td}
\DeclareMathOperator{\tot}{tot}
\DeclareMathOperator{\tr}{tr}
\DeclareMathOperator{\vdim}{vdim}
\DeclarePairedDelimiter{\inner}{\langle}{\rangle}
\newcommand{\abs}[1]{\left| #1 \right|}
\DeclarePairedDelimiterX{\lseries}[1]{(}{)}{\mkern-2mu\delimsize(#1\delimsize)\mkern-2mu}
\DeclarePairedDelimiterX{\pseries}[1]{[}{]}{\mkern-2mu\delimsize[#1\delimsize]\mkern-2mu}
\newcommand\bigbullet{\raisebox{-.1ex}{\scalebox{1.5}{$\bullet$}}}
\tikzset{%
  vertex/.style={shape=circle,fill=black,minimum size=6pt,inner sep=0},
  framing/.style={shape=rectangle,fill=black,minimum size=6pt,inner sep=0},
  baseline={([yshift=-0.8ex]current bounding box.center)}
}
\title{Wall-crossing for invariants of equivariant 3CY categories}
\author{Nikolas Kuhn, Henry Liu, Felix Thimm}
\date{\today}
\begin{document}

\maketitle

\begin{abstract}
   We provide a wall-crossing framework for operational enumerative
   invariants of equivariant 3-Calabi--Yau categories arising from
   virtual cycles. The strategy follows ideas of Joyce's ``universal''
   wall-crossing framework \cite{Joyce2021}, using the authors'
   symmetrized pullback technique to preserve the symmetry of the
   (almost-perfect) obstruction theories throughout. As an
   application, we define and study wall-crossings of simple type
   between operational equivariant Donaldson--Thomas (DT),
   Pandharipande--Thomas (PT), and Bryan--Steinberg (BS) vertices. In
   particular, we give an explicit DT/PT descendent vertex
   correspondence in the Calabi--Yau limit. As another application, we
   construct and prove wall-crossing formulas for operational refined
   semistable Vafa--Witten invariants.
\end{abstract}

\tableofcontents

\section{Introduction}

\subsection{General motivation}

\subsubsection{} 

Enumerative geometry studies (virtual) invariants of moduli spaces of
objects of interest: curves, quiver representations, and coherent
sheaves, among others. For these invariants to be finite and
well-defined, we restrict to stable objects of a given topological
type $\alpha$, e.g. Gieseker-stable sheaves whose Chern character is
$\alpha$. Since stability conditions often come in families, a natural
question is: how do the invariants change as the stability condition
varies?

The goal of this paper is to provide a formula describing exactly this
change. Such formulas are called {\it wall-crossing formulas} due to
the prototypical setting where there is a geometric space of stability
conditions with a wall-and-chamber decomposition such that enumerative
invariants are locally constant within each chamber. Walls typically
consist of those stability conditions for which {\it strictly
  semistable} objects, i.e. semistable but not stable objects, exist.

\subsubsection{}

Historically, the study of {\it variation of GIT} \cite{Dolgachev1998,
  Thaddeus1996} --- the dependence of GIT quotients $X \sslash_\cL G$
on the choice of polarization $\cL$ --- formed one source of interest
in wall-crossing phenomena. The basic ``master space'' or ``symplectic
cut'' technique \cite{Kalkman1995, Lerman1995} first used to obtain
wall-crossing formulas in this setting\footnote{Independently of
\cite{Kalkman1995, Lerman1995}, Jeffrey and Kirwan \cite{Jeffrey1995}
established a different technique which produces the same
wall-crossing formula. This technique is now known as {\it
  Jeffrey--Kirwan integration} and consequently may be viewed as a
special form of wall-crossing.} forms the crucial foundation for the
more sophisticated machinery in this paper.

Interest in {\it BPS state counting} from physics, particularly in
those 4d supersymmetric gauge theories arising from compactifying
string theory on a Calabi--Yau $3$-fold --- see \cite{Denef2000}, for
example --- led to Joyce--Song and Kontsevich--Soibelman's results
\cite{Joyce2012, Kontsevich2010} on {\it generalized Donaldson--Thomas
  (DT) invariants} and their wall-crossing formulas. In contrast to
integrals over GIT quotients, these invariants are ``motivic'' and
count objects in categories which are suitably ``$3$-Calabi--Yau
(3CY)''. Their wall-crossing formula is written in terms of the
product in a motivic {\it Hall algebra}, and is the precursor to the
wall-crossing formula in this paper.

\subsubsection{}

In this paper, we study enumerative invariants of {\it equivariant 3CY
  categories} which arise from {\it virtual cycles}, instead of
motivic invariants, and provide a geometric framework which works
equally well in equivariant cohomology and equivariant K-theory.
Applied to the category of coherent sheaves on a smooth projective
Calabi--Yau $3$-fold, in non-equivariant cohomology, we immediately
recover the Joyce--Song wall-crossing formula
\cite{Joyce2012}.\footnote{When the stable(=semistable) locus is a
proper scheme with a symmetric perfect obstruction theory, its motivic
and virtual cycle invariants coincide \cite[Theorem
  4.18]{Behrend2009}.}

As the (equivariant) K-theoretic setting requires more technical care,
we focus on it for the majority of this paper. Recent work of Joyce
\cite{Joyce2021} provides the desired geometric framework in ordinary
cohomology and for categories which are ``$\le 2$-dimensional''. Tools
for its extension to equivariant K-theory, in particular a definition
of an appropriate {\it equivariant K-homology} functor, were developed
by the second author in \cite{liu_eq_k_theoretic_va_and_wc}. Tools for
its extension to equivariant 3CY categories, in particular the {\it
  symmetrized pullback} operation on symmetric obstruction theories,
were developed by the authors in \cite{klt_dtpt}. Thus, most of the
new technical content in this paper lies in the synthesis of these
tools with Joyce's machinery in full generality.

\subsubsection{}

A hallmark of DT-type theories is that their moduli stacks are
naturally equipped with {\it symmetric} or {\it self-dual} obstruction
theories \cite{Behrend2008}. (More strongly, perhaps the moduli stacks
are even $(-1)$-shifted symplectic \cite{Pantev2013}, though we will
not use this formalism.)

Although the setup in this paper encapsulates much more than the world
of DT-type theories, it remains crucial throughout to work with
obstruction theories which are symmetric up to an overall equivariant
weight $\kappa \neq 1$, for the simple reason that they are
automatically {\it perfect} on stable loci (see
Lemma~\ref{lem:perfect-symmetric-obstruction-theory}) and therefore
may be used to construct virtual cycles; otherwise, there is no
mechanism in general to rule out the presence of higher obstructions.
Thus, it is important to consistently symmetrize or preserve the
symmetry of objects related to the obstruction theory and virtual
cycle.

In particular, one such example of symmetrization is the
aforementioned symmetrized pullback operation on symmetric obstruction
theories. It forms the technical backbone of this paper and requires
us to use the more general notion of {\it almost-perfect obstruction
  theory (APOT)} of Kiem and Savvas \cite{kiem_savvas_apot}. Master
space wall-crossing in the 3CY setting therefore constitutes a highly
non-trivial and interesting application of their techniques.

\subsubsection{}
\label{sec:intro-lie-bracket}

A surprising observation by Joyce in \cite{Joyce2021} is that, for
moduli stacks of {\it linear} objects with obstruction theories that
are bilinear functions of the universal family, the complicated term
in wall-crossing formulas obtained by master space techniques may be
naturally interpreted as a {\it vertex algebra} structure on the
(equivariant) (K-)homology of the moduli stack, and the wall-crossing
formulas are naturally written in terms of the induced Lie bracket
$[-, -]$ \cite[\S 4]{Borcherds1986}. This geometrically-constructed
Lie bracket $[-, -]$ is, morally, the replacement for the motivic Hall
algebra in the earlier Joyce--Song machinery, and it is the key player
in the wall-crossing formulas in both \cite{Joyce2021} and this paper.

There exists a significant amount of work attempting to control
enumerative quantities via certain actions of quantum groups on the
cohomology or K-theory of moduli spaces; see \cite{Okounkov2018} for
an introduction. We hope that the vertex algebras in this paper may be
used similarly, to obtain constraints of a different flavor than what
is offered by quantum groups. We expect the two structures to be
compatible whenever they are comparable \cite{Liu2025}. Equivariant
and multiplicative refinements of vertex algebras, inspired by the
natural extension of \cite{Joyce2021} to equivariant K-theory, were
defined and explored in \cite{liu_eq_k_theoretic_va_and_wc}.

\subsubsection{}

Finally, we emphasize that the wall-crossing formula proved in this
paper is an equality of {\it operational K-homology classes}, not just
of numerical quantities. To be precise, typical K-theoretic
wall-crossing formulas arising from master space techniques have the
form
\begin{equation} \label{eq:master-space-numerical-wcf}
  \chi(M_+, i_+^*\cE) - \chi(M_-, i_-^*\cE) = \chi(\bM_0, \cE')
\end{equation}
where $i_\pm\colon M_\pm \subset \fM$ are two different
semistable=stable loci, $\cE$ is a perfect complex on $\fM$, and
$f\colon \bM_0 \to \fM$ is some auxiliary space constructed from $\fM$
with $\cE'$ constructed explicitly from $f^*\cE$. We view this instead
as an equality of {\it operators} such as $\chi(M_\pm, i_\pm^*(-))$.
Such operators have the opposite functorial properties as perfect
complexes, e.g. pullbacks of perfect complexes become pushforwards of
these operators. Our wall-crossing formula takes place in an {\it
  (torus-equivariant) operational K-homology theory}
$K_\circ^{\sT}(-)$ consisting of such operators. The operational
analogue of \eqref{eq:master-space-numerical-wcf}, in our setting, is
\[ (i_+)_* \chi(M_+, -) - (i_-)_* \chi(M_-, -) = [\phi, \psi], \]
where $[-, -]$ is the Lie bracket mentioned in
\S\ref{sec:intro-lie-bracket}, and $\phi$ and $\psi$ are some
operators such that $[\phi, \psi](\cE)$ is exactly the right hand side
of \eqref{eq:master-space-numerical-wcf}.

Operational wall-crossing formulas are stronger than numerical ones
like \eqref{eq:master-space-numerical-wcf}, but weaker than possible
wall-crossing formulas for classes in a ``genuine'' K-homology theory
which may contain classes like $\cO_{M_\pm}$ or virtual structure
sheaves $\cO_{M_\pm}^{\vir}$. For standard applications to enumerative
geometry, we believe operational wall-crossing formulas suffice.

\subsection{The geometric setting}

\subsubsection{}
\label{sec:abelian-category-moduli-stack}

Let $\cat{A}$ be a Noetherian abelian category which is $\bC$-linear,
meaning that its morphism sets are $\bC$-vector spaces. Assume that
$\cat{A}$ admits a moduli stack
\[ \fM = \bigsqcup_{\alpha \in K(\cat{A})} \fM_\alpha \]
which is Artin and locally of finite type. Here $K(\cat{A})$ is an
appropriate quotient of the Grothendieck group $K_0(\cat{A})$, and
$\fM_\alpha$ is the component parameterizing objects of class
$\alpha$. Direct sum and composition with scaling automorphisms induce
maps
\begin{align}
  \Phi_{\alpha,\beta} &\colon \fM_\alpha \times \fM_\beta \to \fM_{\alpha+\beta} \nonumber \\
  \Psi_\alpha &\colon [\pt/\bC^\times] \times \fM_\alpha \to \fM_\alpha \label{eq:psi}
\end{align}
respectively, which make $\fM$ into a graded monoidal stack
(Definition~\ref{def:graded-monoidal-stack}) --- roughly, a monoid
object in the category of Artin stacks, with
$[\pt/\bC^\times]$-action.

\subsubsection{}
\label{sec:equivariant-3CY-category}

Let $\sT = (\bC^\times)^r$ for $r > 0$. Throughout this paper, there
will be a distinguished $\sT$-weight
\[ 1 \neq \kappa \in \Hom(\sT, \bC^\times) \]
such that $\kappa^{1/2}$ does not exist, and we let $\tilde\sT \to
\sT$ be a double cover where $\kappa^{1/2}$ does exist. We say that
$\cat{A}$ is a {\it $\sT$-equivariant 3CY category} if $\fM$ admits a
$\sT$-action \cite{Romagny2005}, commuting with $\Psi$ (so that the
$\sT$-action descends to $\fM_\alpha^\pl$ and makes $\Pi^\pl_\alpha$
equivariant), and there are $\sT$-equivariant perfect complexes
\[ \scE_{\alpha,\beta} \text{ on } \fM_\alpha \times \fM_\beta, \]
for every $\alpha$ and $\beta$, satisfying the following properties.
\begin{itemize}
\item (Bilinear) Let $\cL$ be the weight-$1$ line bundle on
  $[\pt/\bC^\times]$. Then there are isomorphisms
  \begin{gather}\label{intro:eq:bilinear-element}
  \begin{aligned}
    (\Phi_{\alpha,\beta} \times \id)^*(\scE_{\alpha+\beta,\gamma}) &\cong \scE_{\alpha,\gamma} \oplus \scE_{\beta,\gamma}, \qquad &(\Psi_\alpha \times \id)^*(\scE_{\alpha,\beta}) &\cong \cL^\vee \boxtimes \scE_{\alpha,\beta}, \\
    (\id \times \Phi_{\beta,\gamma})^*(\scE_{\alpha,\beta+\gamma}) &\cong \scE_{\alpha,\beta} \oplus \scE_{\alpha,\gamma}, \qquad &(\id \times \Psi_\beta)^*(\scE_{\alpha,\beta}) &\cong \cL \boxtimes \scE_{\alpha,\beta}.
  \end{aligned}
\end{gather}

\item ($\kappa$-symmetric) Let $(12)\colon \fM_\beta \times \fM_\alpha
  \to \fM_\alpha \times \fM_\beta$ swap the two factors. Then there
  are isomorphisms
  \[ \Xi_{\alpha,\beta}\colon \scE_{\beta,\alpha} \xrightarrow{\sim} \kappa^{-1} \otimes (12)^*\scE_{\alpha,\beta}^\vee[-3] \]
  compatible with $\Phi$ and $\Psi$, e.g. there are commutative diagrams
  \[ \begin{tikzcd}[column sep=huge]
      (\Phi_{\alpha,\beta} \times \id)^*(\scE_{\alpha+\beta,\gamma}) \ar{r}{(\Phi_{\alpha,\beta} \times \id)^*\Xi_{\gamma,\alpha+\beta}} \ar{d}{\cong} & (\Phi_{\alpha,\beta} \times \id)^*(\kappa^{-1} \otimes (12)^*\scE_{\gamma,\alpha+\beta}^\vee[-3]) \ar{d}{\cong}\\
      \scE_{\alpha,\gamma}\boxplus \scE_{\beta,\gamma} \ar{r}{\Xi_{\gamma,\alpha}\boxplus\Xi_{\gamma,\beta}} & (\kappa^{-1} \otimes (12)^*\scE_{\gamma,\alpha}^\vee[-3])\boxplus (\kappa^{-1} \otimes (12)^*\scE_{\gamma,\beta}^\vee[-3])
  \end{tikzcd} \]
  for every $\alpha, \beta$, where the vertical arrows are given by
  the first isomorphism in \eqref{intro:eq:bilinear-element}. The
  analogous diagrams corresponding to the other isomorphisms in
  \eqref{intro:eq:bilinear-element} must also commute.

\item (Obstruction theory) Let $\Delta\colon \fM \hookrightarrow \fM
  \times \fM$ be inclusion of the diagonal. Then there are
  $\sT$-equivariant obstruction theories
  (Definition~\ref{def:obstruction-theory})
  \begin{equation} \label{eq:obstruction-theory}
    \varphi_\alpha\colon \Delta^*\scE_{\alpha,\alpha}^\vee[-1] \to \bL_{\fM_\alpha}
  \end{equation}
  compatible with $\Phi$, i.e. there are commutative diagrams
  \begin{equation*}
    \begin{tikzcd}[column sep=huge]
      \Phi_{\alpha,\beta}^*\Delta^*\scE_{\alpha+\beta,\alpha+\beta}^\vee[-1] \ar{r}{\Phi_{\alpha,\beta}^*\varphi_{\alpha+\beta}} \ar{d} & \Phi_{\alpha,\beta}^*\bL_{\fM_{\alpha+\beta}} \ar{d}\\
      \Delta^*\scE_{\alpha,\alpha}^\vee[-1]\boxplus \Delta^*\scE_{\beta,\beta}^\vee[-1] \ar{r}{\varphi_\alpha\boxplus\varphi_\gamma} & \bL_{\fM_\alpha}\boxplus\bL_{\fM_\beta}
    \end{tikzcd}
  \end{equation*}
  for every $\alpha, \beta$. Here, the upper-left corner of the
  diagram is identified with
  $(\Delta^*\scE_{\alpha,\alpha}^\vee[-1]\boxplus
  \Delta^*\scE_{\beta,\beta}^\vee[-1])\oplus
  \scE_{\alpha,\beta}^\vee[-1] \oplus
  (12)^*\scE_{\beta,\alpha}^\vee[-1]$ using
  \eqref{intro:eq:bilinear-element}, so that the left vertical arrow
  is the projection, and the right vertical arrow is the natural
  morphism of cotangent complexes.
\end{itemize}
In particular, $\fM$ becomes a graded monoidal $\sT$-stack with
symmetric bilinear element
(Definition~\ref{def:graded-monoidal-stack}). We let
\[ \chi(\alpha, \beta) \coloneqq \rank \scE_{\alpha,\beta}. \]

Note that this is completely different from the usual notion of
``$d$-Calabi--Yau category'' in the literature.

\subsubsection{}

\begin{example}
  Here is the prototypical example of this setup. Let $X$ be a smooth
  quasi-projective $3$-fold with $\sT$-action, which is {\it
    equivariantly Calabi--Yau} in the sense that its canonical bundle
  is
  \[ \cK_X \cong \kappa \otimes \cO_X, \]
  as $\sT$-equivariant sheaves, for some $\sT$-weight $\kappa \neq 1$.
  Let $\cat{A} = \cat{Coh}(X)$ be the category of compactly-supported
  coherent sheaves on $X$, and define $K(\cat{A}) \subset H^*(X; \bQ)$
  to be the image of the Chern character. The moduli stack $\fM$ of
  $\cat{A}$ is an Artin stack locally of finite type \cite[Theorem
    2.1.1]{Lieblich2006a}, and its monoidal structures
  $\Phi_{\alpha,\beta}$ and $\Psi_\alpha$ are given by direct sum and
  (composition with) scaling automorphisms of sheaves. Let
  $\scE_{\alpha,\beta} \coloneqq R\pi_*R\cHom(\scF_\alpha,
  \scF_\beta)$ where $\scF_\alpha, \scF_\beta$ are the (pullbacks of
  the) universal families on $\pi\colon \fM_\alpha \times \fM_\beta
  \times X$ for $\fM_\alpha$ and $\fM_\beta$ respectively. This is
  obviously bilinear, Serre duality makes it $\kappa$-symmetric, and
  it forms a natural $\sT$-equivariant obstruction theory on
  $\fM_\alpha$ via the Atiyah class \cite{Ricolfi2021,Kuhn2024}.
\end{example}

\subsubsection{}

For $\alpha \neq 0$, let $\fM_\alpha^\pl$ denote the {\it
  $\bC^\times$-rigidification} \cite{Abramovich2008} of $\fM_\alpha$
with respect to the group $\bC^\times$ of scaling automorphisms.
Roughly, this means to quotient by the $[\pt/\bC^\times]$-action given
by $\Psi$, so that there is a natural $\bC^\times$-gerbe
\[ \Pi^\pl_\alpha\colon \fM_\alpha \to \fM_\alpha^\pl. \]
We make the assumption that
\begin{itemize}
\item (rigidified obstruction theory) $\fM_\alpha^\pl$ has a
  $\sT$-equivariant $\kappa$-symmetric obstruction theory
  \begin{equation} \label{eq:obstruction-theory-pl}
    \varphi_\alpha^\pl\colon \bE_\alpha^\pl \to \bL_{\fM_\alpha^\pl}
  \end{equation}
  which is $\kappa$-symmetrically compatible
  (Definition~\ref{def:obstruction-theories-compatibility}) under
  $\Pi^\pl_\alpha$ with \eqref{eq:obstruction-theory}.
\end{itemize}
In practice, this can often be obtained from
\eqref{eq:obstruction-theory} using
Lemma~\ref{lem:obstruction-theory-pl}. \footnote{Conversely, if a
symmetric obstruction theory \eqref{eq:obstruction-theory-pl} is given
such that $\bE_\alpha^\pl = -\scE_{\alpha,\alpha}^\vee + \cO - \kappa
\cO^\vee$ in K-theory, then it is in fact unnecessary to check that
\eqref{eq:obstruction-theory} is an obstruction theory. This is
because \eqref{eq:obstruction-theory} is only ever used to construct
symmetric obstruction theories on auxiliary stacks
(Definition~\ref{def:auxiliary-stack}) via symmetrized pullback
(Theorem~\ref{thm:APOTs}\ref{it:APOT-sym-pullback}). So, whenever a
symmetrized pullback along a smooth morphism $f$ of $\varphi_\alpha$
is desired, one can simply take the symmetrized pullback along the
smooth morphism $\Pi^\pl \circ f$ of $\varphi_\alpha^\pl$.}

\subsubsection{}

A {\it weak stability condition} is a function $\tau$ from effective
classes $\alpha \neq 0$ into some totally-ordered set, such that if
$\alpha = \beta + \gamma$ is a decomposition into effective classes
$\beta,\gamma \neq 0$ then either
\begin{equation} \label{eq:stability-condition}
  \tau(\beta) \ge \tau(\alpha) \ge \tau(\gamma) \;\text{ or }\; \tau(\beta) \le \tau(\alpha) \le \tau(\gamma),
\end{equation}
which we refer to as the {\it weak see-saw property}. This is weaker
than the traditional notion of {\it stability condition}, which
satisfies the {\it see-saw property}: either $\tau(\beta) >
\tau(\alpha) > \tau(\gamma)$ or $\tau(\beta) = \tau(\alpha) =
\tau(\gamma)$ or $\tau(\beta) < \tau(\alpha) < \tau(\gamma)$.

Given $E \in \cat{A}$ of class $\alpha$, the notation $\tau(E)$ means
$\tau(\alpha)$, and we refer to it as the {\it ($\tau$-)slope} of $E$.
Let $C(\cat A) \subset K(\cat A)$ be the cone of non-zero effective
classes. It is convenient to write $\alpha > \beta$ if $\alpha - \beta
\in C(\cat A)$.

An object $E \in \cat{A}$ is {\it $\tau$-stable} (resp. {\it
  $\tau$-semistable}) if $\tau(E') < \tau(E/E')$ (resp. $\tau(E') \le
\tau(E/E')$) for all sub-objects $0 \neq E' \subsetneq E$ and is {\it
  strictly $\tau$-semistable} if it is $\tau$-semistable but not
$\tau$-stable. Let
\[ \fM_\alpha^\st(\tau) \subset \fM_\alpha^{\sst}(\tau) \subset \fM_\alpha^\pl \]
be the moduli substacks parameterizing $\tau$-stable and
$\tau$-semistable objects, respectively. Since stable objects in
$\cat{A}$ only have scaling automorphisms, and those are removed by
rigidification, $\fM_\alpha^\st(\tau)$ is an algebraic space.

\subsection{Summary of main results}

\subsubsection{}

The overall strategy can be summarized as follows. It is difficult to
directly define enumerative invariants of $\fM^{\sst}_\alpha(\tau)$
since strictly $\tau$-semistable objects of class $\alpha$ may exist,
and, furthermore, master space techniques for wall-crossing work best
only at {\it simple} walls, i.e. semistable objects decompose into at
most two pieces at the wall, and typically walls will not be simple.
Instead, we assume that appropriate {\it framing functors} $\Fr$ exist
--- roughly, exact functors from an open subcategory of $\cat{A}$ to
vector spaces --- using which we construct auxiliary categories
$\tilde{\cat{A}}^{Q(\Fr)}$ of ``objects framed by a quiver $Q$'',
whose moduli stack of objects of class $\alpha$ with framing dimension
vector $\vec d$ is denoted $\tilde\fM^{Q(\Fr)}_{\alpha,\vec d}$ (see
\S\ref{sec:auxiliary-stacks}). For appropriate $Q$:
\begin{itemize}
\item there is a weak stability condition $\tau^Q$ on
  $\tilde{\cat{A}}^{Q(\Fr)}$, compatible with $\tau$, with no strictly
  $\tau^Q$-semistable objects;
\item the forgetful map $\pi\colon \tilde\fM^{Q(\Fr)}_{\alpha,\vec d}
  \to \fM_\alpha$ is smooth, and the symmetric obstruction theory on
  $\fM$ admits a symmetrized pullback to symmetric APOTs on all
  semistable=stable loci on $\tilde\fM^{Q(\Fr)}_{\alpha,\vec d}$,
  yielding (symmetrized) virtual cycles;
\item a wall-crossing problem between $\tau$ and $\mathring \tau$ on
  $\cat{A}$ lifts to a wall-crossing problem between $\tau^Q$ and
  $\mathring \tau^Q$, which may (and will, for us) have more walls,
  but each wall will be simple.
\end{itemize}
Thus we may define enumerative invariants of
$\tilde\fM^{Q(\Fr),\sst}_{\alpha,\vec d}(\tau^Q)$ and study their
wall-crossing via master space techniques. Such an approach was
pioneered by Mochizuki \cite{Mochizuki2009} in his study of Donaldson
invariants, and later generalized in \cite{Joyce2021}. Finally,
enumerative invariants of the original semistable loci
$\fM^{\sst}_\alpha(\tau)$ may be defined formally, as a ``logarithm''
of the enumerative invariants on $\tilde\fM^{Q(\Fr),\sst}_{\alpha,\vec
  d}(\tau^Q)$. That these {\it semistable invariants} are well-defined
and independent of choices is the first main theorem of this paper
(Theorem~\ref{thm:sst-invariants}). Ultimately, the wall-crossing
formula (Theorem~\ref{thm:wcf}) is written in terms of the semistable
invariants.

\subsubsection{}

\begin{assumption} \label{assump:semistable-invariants}
  Let $\tau$ be a weak stability condition on $\cat{A}$. We make the
  following technical assumptions, in order to construct semistable
  invariants. 
  \begin{enumerate}[label = (\alph*)]
  \item \label{assump:it:tau-artinian} $\cat{A}$ admits {\it
    $\tau$-Harder--Narasimhan (HN) filtrations}: a non-zero object $A
    \in \cat{A}$ admits a finite chain of sub-objects $0 = A_0
    \subsetneq A_1 \subsetneq \cdots \subsetneq A_n = A$ whose factors
    $B_j \coloneqq A_j/A_{j-1}$ are $\tau$-semistable and $\tau(B_1) >
    \tau(B_2) > \cdots > \tau(B_n)$.\footnote{In \cite[Assumption
        5.2(a)]{Joyce2021}, Joyce obtains $\tau$-HN filtrations by
      assuming $\cat{A}$ is {\it $\tau$-Artinian}, i.e. that there is
      no infinitely strictly descending chain of objects $\cdots
      \subsetneq A_3 \subsetneq A_2 \subsetneq A_1 = A$ in $\cat{A}$
      with $\tau(A_{n+1}) \ge \tau(A_n/A_{n+1})$ for all $n \ge 1$.
      This is a strictly stronger assumption; the existence of
      $\tau$-HN filtrations is all that is genuinely used in
      wall-crossing.}

  \item \label{assump:it:semistable-loci} $\tau$-(semi)stability is
    {\it open}: $\fM^{\st}_\alpha(\tau) \subset
    \fM^{\sst}_\alpha(\tau) \subset \fM_\alpha^\pl$ are open substacks
    of finite type for all $\alpha \in C(\cat{A})$.

  \item \label{assump:it:framing-functor} There exists a set $\Frs$ of
    framing functors (Definition~\ref{bg:def:framing-functor}) such
    that for any finite collection of classes $\{\alpha_i\}_{i \in I}
    \subset C(\cat{A})$, there exists $\Fr \in \Frs$ such that
    $\fM_{\alpha_i}^{\sst}(\tau) \subset \fM_{\alpha_i}^{\Fr,\pl}$ for
    all $i \in I$.
  \end{enumerate}
  For our weak stability conditions on auxiliary stacks (see
  Definition~\ref{def:flag-invariant}), we furthermore assume the
  following.
  \begin{enumerate}[resume, label = (\alph*)]
  \item \label{assump:it:rank-function} There exists a ``rank
    function'' $r\colon C(\cat{A}) \to \bZ$ such that
    \begin{itemize}
    \item if $A \in \cat{A}$ is $\tau$-semistable then $r(A) > 0$,
      and moreover
    \item if $A' \subsetneq A$ has $\tau(A') = \tau(A/A')$ then $r(A)
      = r(A') + r(A/A')$ and $r(A'), r(A/A') > 0$.
    \end{itemize}

  \item \label{assump:it:semi-weak-stability} Let $0 < \beta < \alpha$
    be the classes of $\tau$-semistable objects $0 \neq B \subsetneq
    A$ in $\cat{A}$ with $\tau(\beta) = \tau(\alpha-\beta)$. Then for
    the class $\beta'$ of any sub-object $0 \neq B' \subsetneq
    B$: \footnote{It is straightforward to prove that this condition
    is equivalent to: $\tau(\beta') = \tau(\alpha-\beta')$ if and only
    if $\tau(\beta') = \tau(\beta-\beta')$; see
    Lemma~\ref{wc:lem:R-sets}\ref{it:R-sets-ii}. We present them in
    their current form in order to match better with the more general
    condition in Assumption~\ref{es:assump:semistable-invariants}.}
    \begin{itemize}
    \item $\tau(\beta') = \tau(\alpha-\beta') < \tau(\beta-\beta')$
      does not occur;
    \item $\tau(\beta') < \tau(\alpha-(\beta-\beta')) =
      \tau(\beta-\beta')$ does not occur.
    \end{itemize}
  \end{enumerate}
  Finally, we need to be able to construct virtual cycles and their
  enumerative invariants.
  \begin{enumerate}[resume, label = (\alph*)]
  \item \label{assump:it:properness} The following algebraic spaces
    are proper and have the resolution property (see
    Remark~\ref{rem:localization-resolution-property}):
    \begin{itemize}
    \item $\fM_\alpha^{\sst}(\tau)^\sT$ for all $\alpha \in
      C(\cat{A})$ with no strictly $\tau$-semistable
      objects; \footnote{This assumption is only used to obtain
        Theorem~\ref{thm:sst-invariants}\ref{item:vss-no-strictly-semistables}
        and can be omitted otherwise.}
    \item $\tilde\fM_{\alpha,1}^{Q(\Fr),\sst}(\tau^Q)^\sT$ (the
      auxiliary stack in Definition~\ref{def:pair-invariant}) for all
      $\alpha \in C(\cat{A})$ and $\Fr \in \Frs$;
    \item $\bM_\alpha^{\sT_w}$ (the master space in
      Proposition~\ref{prop:pairs-master-space-fixed-loci}) for all
      $w$ in Lemma~\ref{lem:master-space-no-poles}, and all $\alpha
      \in C(\cat{A})$ and $\Fr_1, \Fr_2 \in \Frs$.
    \end{itemize}
  \end{enumerate}
\end{assumption}

\subsubsection{}

\begin{theorem}[Operational semistable invariants] \label{thm:sst-invariants}
  Suppose $\tau$ is a weak stability condition on $\cat{A}$ for which
  Assumption~\ref{assump:semistable-invariants} holds. Then there
  exists a unique collection
  \begin{equation}
    \left(\sz_{\alpha}(\tau)\in K_{\circ}^{\tilde{\sT}}(\fM_{\alpha})^\pl_{\loc,\bQ}\right)_{\alpha\in C(\cat{A})}
  \end{equation}
  of operational K-homology classes satisfying the following properties:
  \begin{enumerate}[label = (\roman*)]
  \item \label{item:vss-support} $\sz_\alpha(\tau)$ is supported on
    $(\Pi^\pl_\alpha)^{-1}(\fM_\alpha^{\sst}(\tau))$;
  \item \label{item:vss-no-strictly-semistables} for any $\alpha$ for
    which $\fM_\alpha^{\st}(\tau) = \fM_{\alpha}^{\sst}(\tau)$, 
    \[ (\Pi^\pl_\alpha)_*\sz_{\alpha}(\tau) = \chi\left(\fM^{\sst}_{\alpha}(\tau), \hat\cO^{\vir}_{\fM_{\alpha}^{\sst}(\tau)}\otimes -\right); \]
  \item \label{item:vss-isomorphic-moduli} if $\tau'$ is another
    weak stability condition on $\cat{A}$ for which
    Assumption~\ref{assump:semistable-invariants} holds, and
    $\fM_\alpha^{\sst}(\tau) = \fM_\alpha^{\sst}(\tau')$ for all
    $\alpha$, then $\sz_\alpha(\tau) = \sz_\alpha(\tau')$ for all
    $\alpha$;
  \item \label{item:vss-pairs-relation} for any framing functor $\Fr
    \in \Frs$ such that $\fM_\alpha^{\sst}(\tau) \subset
    \fM_\alpha^{\Fr,\pl}$, in the notation of
    Definition~\ref{def:pair-invariant} and \S\ref{sec:pairs-maps},
    \begin{equation} \label{eq:sstable-def-intro}
      I_*\tilde\sZ_{\alpha,1}^{\Fr}(\tau^Q) = \sum_{\substack{n>0 \\ \alpha = \alpha_1+\cdots+\alpha_n\\ \forall i: \,\tau(\alpha_i) = \tau(\alpha)\\ \;\;\fM_{\alpha_i}^{\sst}(\tau) \neq \emptyset}} \frac{1}{n!} \left[\iota^Q_*\sz_{\alpha_n}(\tau), \left[\cdots,\left[\iota^Q_*\sz_{\alpha_2}(\tau), \left[\iota^Q_*\sz_{\alpha_1}(\tau), \partial\right]\right]\cdots\right]\right]
    \end{equation}
    in
    $K_\circ^{\tilde\sT}(\tilde\fM^{Q(\Fr)}_{\alpha,1})^\pl_{\loc,\bQ}$,
    with Lie bracket $[-, -]$ defined by
    Theorem~\ref{thm:auxiliary-stack-vertex-algebra}.
  \end{enumerate}
\end{theorem}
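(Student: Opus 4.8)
The plan is to treat the pair relation \eqref{eq:sstable-def-intro} as a \emph{recursive characterization} of the classes $\sz_\alpha(\tau)$ and to run it both backwards, for uniqueness, and forwards, for existence, verifying the remaining properties along the way. The induction is organized by the value $r(\alpha)\in\bZ_{\ge 1}$ of the rank function of Assumption~\ref{assump:semistable-invariants}\ref{assump:it:rank-function}: when $\fM_\alpha^{\sst}(\tau)=\emptyset$ the support property~\ref{item:vss-support} forces $\sz_\alpha(\tau)=0$, which is the base case; and in a same-slope decomposition $\alpha=\alpha_1+\cdots+\alpha_n$ with $n\ge 2$ and all $\fM_{\alpha_i}^{\sst}(\tau)\neq\emptyset$, forming $A_1\oplus\cdots\oplus A_n$ from semistable representatives and iterating the additivity clause of~\ref{assump:it:rank-function} gives $r(\alpha)=\sum_i r(\alpha_i)$ with each $r(\alpha_i)\ge 1$, hence $r(\alpha_i)<r(\alpha)$; finiteness of the set of such decompositions follows from the finite-type hypotheses together with the existence of $\tau$-HN filtrations~\ref{assump:it:tau-artinian}. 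Fixing, via~\ref{assump:it:framing-functor}, a framing functor $\Fr\in\Frs$ adapted to all classes $\le\alpha$, the $n\ge 2$ terms of \eqref{eq:sstable-def-intro} are thus expressed through lower data, so the entire content of the recursion at $\alpha$ is the linear term $[\iota^Q_*\sz_\alpha(\tau),\partial]$.

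The key lemma I would isolate is that the operator $x\mapsto[\iota^Q_*x,\partial]$ on the localized operational $K$-homology is injective and sends classes supported on $(\Pi^\pl_\alpha)^{-1}(\fM_\alpha^{\sst}(\tau))$ to classes supported on the preimage of the $\tau^Q$-semistable locus in $\tilde\fM^{Q(\Fr)}_{\alpha,1}$. This is a computation internal to the vertex and Lie algebra of the auxiliary stack (Theorem~\ref{thm:auxiliary-stack-vertex-algebra}): using smoothness of the forgetful map $\pi\colon\tilde\fM^{Q(\Fr)}_{\alpha,1}\to\fM_\alpha$ and the explicit shape of the quiver framing, bracketing with the generator $\partial$ attached to the framing vertex becomes, after localization, an injective tautological pushforward of projective-bundle type, and the support statement is read off from the formula for the support of a Lie bracket. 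Granting this, uniqueness is immediate: the difference of two candidate collections vanishes below $\alpha$ by induction, so subtracting the two instances of \eqref{eq:sstable-def-intro} kills the $n\ge 2$ terms and leaves $[\iota^Q_*(\sz_\alpha-\sz'_\alpha),\partial]=0$, whence $\sz_\alpha=\sz'_\alpha$. For existence I would run the recursion forwards: the pair invariant $\tilde\sZ^{\Fr}_{\alpha,1}(\tau^Q)$ of Definition~\ref{def:pair-invariant} is well-defined since $\tau^Q$ has no strictly semistable objects, the fixed locus $\tilde\fM^{Q(\Fr),\sst}_{\alpha,1}(\tau^Q)^\sT$ is proper with the resolution property by~\ref{assump:it:properness}, and it carries a symmetrized virtual structure sheaf obtained from the symmetrized pullback of Theorem~\ref{thm:APOTs}\ref{it:APOT-sym-pullback} along $\pi$; subtracting the already-constructed $n\ge 2$ terms from $I_*\tilde\sZ^{\Fr}_{\alpha,1}(\tau^Q)$ and inverting the operator above defines $\sz_\alpha(\tau)$, the verification that the relevant class lies in the image being supplied by the tautological projective-bundle structure of the auxiliary moduli. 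Properties~\ref{item:vss-support} and~\ref{item:vss-pairs-relation} then hold by construction.

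It remains to check~\ref{item:vss-no-strictly-semistables} and~\ref{item:vss-isomorphic-moduli} and, crucially, independence of the auxiliary choice $\Fr$. For~\ref{item:vss-no-strictly-semistables}: when $\fM_\alpha^{\st}(\tau)=\fM_\alpha^{\sst}(\tau)$ no same-slope decomposition of $\alpha$ with $n\ge 2$ and all loci nonempty can exist (it would produce a strictly semistable object of class $\alpha$), so \eqref{eq:sstable-def-intro} collapses to $I_*\tilde\sZ^{\Fr}_{\alpha,1}(\tau^Q)=[\iota^Q_*\sz_\alpha(\tau),\partial]$; one then matches the two sides on the (twisted) projective bundle $\tilde\fM^{Q(\Fr),\sst}_{\alpha,1}(\tau^Q)\to\fM_\alpha^{\sst}(\tau)$, on which the symmetrized pullback virtual sheaf restricts to $\hat\cO^{\vir}$ twisted by the tautological bundle and the $\partial$-bracket computes precisely the projective-bundle pushforward, identifying $(\Pi^\pl_\alpha)_*\sz_\alpha(\tau)$ with $\chi(\fM_\alpha^{\sst}(\tau),\hat\cO^{\vir}_{\fM_\alpha^{\sst}(\tau)}\otimes-)$. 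Property~\ref{item:vss-isomorphic-moduli} is soft: if $\tau$ and $\tau'$ share all semistable loci then $\tau^Q$ and $(\tau')^Q$ share all semistable loci on every auxiliary stack, so the pair invariants coincide and the recursion returns the same classes. Finally, given $\Fr_1,\Fr_2\in\Frs$ both adapted to $\alpha$, the two resulting candidates are compared through the master spaces $\bM_\alpha^{\sT_w}$ of Proposition~\ref{prop:pairs-master-space-fixed-loci} for the weights $w$ of Lemma~\ref{lem:master-space-no-poles}, which are proper with the resolution property by~\ref{assump:it:properness}; virtual localization on these master spaces, together with the symmetrized-pullback APOTs, produces a wall-crossing identity between the two pair invariants which, combined with the injectivity lemma and the inductive hypothesis, forces the two candidates to agree.

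I expect the main obstacle to be exactly this last step. It is here that the full master-space apparatus is invoked --- virtual localization on $\bM_\alpha^{\sT_w}$, the symmetrized pullback of Theorem~\ref{thm:APOTs}, and the absence of poles from Lemma~\ref{lem:master-space-no-poles} --- and where one must match the master-space fixed-point contributions with the iterated-bracket shape of \eqref{eq:sstable-def-intro} through a delicate combinatorial accounting of Harder--Narasimhan strata. By contrast, the injectivity lemma for $[\iota^Q_*(-),\partial]$ is a self-contained computation in the auxiliary vertex algebra, and the rank-function induction makes the remaining steps essentially bookkeeping.
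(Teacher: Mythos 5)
Your overall skeleton (induction on $r(\alpha)$, the left inverse of $x\mapsto[\iota^Q_*x,\partial]$ furnished by pushforward along the forgetful map, the explicit construction by subtracting the $n\ge 2$ terms, the projective-bundle argument for~\ref{item:vss-no-strictly-semistables}, the soft argument for~\ref{item:vss-isomorphic-moduli}, and a master-space comparison for framing independence) is the paper's architecture. But there is a genuine gap at the step you dismiss as automatic: the claim that property~\ref{item:vss-pairs-relation} ``holds by construction.'' Defining $\sz_\alpha(\tau)$ by applying the left inverse — i.e.\ by pushing \eqref{eq:sstable-def-intro} forward along $\pi_{\fM_\alpha^{\Fr}}$ and dividing by $[\fr(\alpha)]_\kappa$ — only guarantees the \emph{pushed-forward} identity in $K_\circ^{\tilde\sT}(\fM_\alpha)^\pl_{\loc,\bQ}$. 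Since $(\pi_{\fM_\alpha^{\Fr}})_*$ is far from injective, nothing in the construction forces the identity upstairs in $K_\circ^{\tilde\sT}(\tilde\fM^{Q(\Fr)}_{\alpha,1})^\pl_{\loc,\bQ}$, i.e.\ that $I_*\tilde\sZ^{\Fr}_{\alpha,1}(\tau^Q)-\sum_{n\ge 2}(\cdots)$ actually lies in the image of $[\iota^Q_*(-),I_*\partial]$ and equals $[\iota^Q_*\sz_\alpha(\tau),I_*\partial]$. Your proposed justification — reading this off from supports and the ``tautological projective-bundle structure'' — fails: the auxiliary stack is a projective bundle over the stable locus only when there are no strictly $\tau$-semistable objects, which is exactly the case where the $n\ge 2$ terms are absent; and the support statement in your key lemma is false as stated, since $[\iota^Q_*x,\partial]$ is a pushforward along the direct-sum map and is therefore supported on pairs $(E,V,\rho)$ with $\rho=0$, which are $\tau^Q$-\emph{unstable}, not on the $\tau^Q$-semistable locus where $I_*\tilde\sZ^{\Fr}_{\alpha,1}(\tau^Q)$ lives.

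The missing input is that the master-space relation on the doubled quiver $Q\wedge Q$ (Theorem~\ref{thm:pairs-master-relation}) is needed not only to compare $\Fr_1$ and $\Fr_2$, but already with $\Fr_1=\Fr_2=\Fr$ to prove~\ref{item:vss-pairs-relation} itself: one feeds the inductive hypothesis for the lower classes into \eqref{eq:pairs-master-relation}, kills the quadratic sum by the formal Lie-algebra identity $[e^{\ad_z}\delta_1,e^{\ad_z}\delta_2]=0$ (Lemma~\ref{lem:wall-crossing-total-term}), and then pushes forward along only \emph{one} of the two framing directions (Lemmas~\ref{lem:pushforward-of-bracket-partial} and~\ref{lem:pushforward-of-partial-bracket}) so as to recover the un-pushed relation in degree $(\alpha,1)$. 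Without this (or some substitute argument producing the identity before pushforward), your existence proof delivers a collection satisfying only the $\pi_*$-image of \eqref{eq:sstable-def-intro}, and correspondingly your uniqueness argument proves uniqueness of a collection satisfying a weaker property than~\ref{item:vss-pairs-relation}. The rest of your outline — properties~\ref{item:vss-support}--\ref{item:vss-isomorphic-moduli} and the use of the master space for framing independence — is consistent with the paper's proof.
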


To be clear, the Lie brackets on the right hand side of
\eqref{eq:sstable-def-intro} depend non-trivially on $\Fr$ via the
dimension function $\fr$.

The existence of the rank function
(Assumption~\ref{assump:semistable-invariants}\ref{assump:it:rank-function}),
along with \ref{item:vss-support}, ensures that the sum in
\eqref{eq:sstable-def-intro} is a finite sum.
  
\subsubsection{}

\begin{remark}
  Theorem~\ref{thm:sst-invariants} is analogous to \cite[Theorem
    5.7]{Joyce2021}, but property~\ref{item:vss-pairs-relation} is
  slightly different. Namely, for us, the formula
  \eqref{eq:sstable-def-intro} which implicitly defines the semistable
  invariants $\sz_\alpha(\tau)$ takes place on the auxiliary stack
  $\tilde\fM_{\alpha,1}^{Q(\Fr),\pl}$, not the original stack
  $\fM_\alpha^{\pl}$. This is not merely an aesthetic choice. Although
  there is a natural forgetful map between these stacks, pushforward
  along this map is not a Lie algebra homomorphism and therefore does
  {\it not} recover a Joyce-style formula from ours. The 3CY setting,
  working with symmetric obstruction theories, behaves fundamentally
  differently from Joyce's setting and genuinely requires our version
  of the formula; see Remarks~\ref{rem:joyce-lie-bracket-vs-ours-1}
  and \ref{rem:joyce-lie-bracket-vs-ours-2}.
\end{remark}

\subsubsection{}
\label{sec:universal-coefficients}

\begin{definition}[Universal coefficients] \label{def:universal-coefficients}
  Let $n \ge 1$ and $\alpha_1, \ldots, \alpha_n$ be effective classes.
  If for all $i = 1, \ldots, n-1$, either
  \begin{enumerate}[label=(\alph*)]
  \item \label{it:S-case-1} $\tau(\alpha_i) \le \tau(\alpha_{i+1})$
    and $\tau'(\alpha_1 + \cdots + \alpha_i) >
    \tau'(\alpha_{i+1} + \cdots + \alpha_n)$, or
  \item \label{it:S-case-2} $\tau(\alpha_i) > \tau(\alpha_{i+1})$
    and $\tau'(\alpha_1 + \cdots + \alpha_i) \le
    \tau'(\alpha_{i+1} + \cdots + \alpha_n)$,
  \end{enumerate}
  then define $S(\alpha_1, \ldots, \alpha_n; \tau, \tau')
  \coloneqq (-1)^r$ where $r$ is the number of $i = 1, \ldots, n-1$
  satisfying \ref{it:S-case-1}. Otherwise define $S(\alpha_1, \ldots,
  \alpha_n; \tau, \tau') \coloneqq 0$.

  We use $S$ to define universal coefficients $U$. A {\it double
    grouping} is a choice of integers
  \begin{align*}
    0 &= a_0 < a_1 < \cdots < a_m = n, \qquad \text{ for some } 1 \le m \le n, \\
    0 &= b_0 < b_1 < \cdots < b_l = m, \qquad \text{ for some } 1 \le l \le m,
  \end{align*}
  which defines classes $\beta_i \coloneqq \alpha_{a_{i-1}+1} + \cdots
  + \alpha_{a_i}$ and $\gamma_j \coloneqq \beta_{b_{j-1}+1} + \cdots +
  \beta_{b_j}$. A double grouping is {\it $(\tau,
    \tau')$-permissible} if:
  \begin{itemize}
  \item ($\tau$-permissible) $\tau(\beta_i) = \tau(\alpha_j)$ for
    every $1 \le i \le m$ and $a_{i-1} < j \le a_i$;
  \item ($\tau'$-permissible) $\tau'(\gamma_j) =
    \tau'(\alpha_1 + \cdots + \alpha_n)$ for every $1 \le j \le
    l$.
  \end{itemize}
  Then, summing over $(\tau,\tau')$-permissible double groupings,
  define
  \[ U(\alpha_1, \ldots, \alpha_n; \tau, \tau') \coloneqq \sum_{\text{groupings}} \frac{(-1)^{l-1}}{l} \cdot \prod_{j=1}^l S(\beta_{b_{j-1}+1}, \ldots, \beta_{b_j}; \tau,\tau') \cdot \prod_{i=1}^m \frac{1}{(a_i - a_{i-1})!}. \]
  Finally, we use $U$ to implicitly define universal coefficients
  $\tilde U$, using the following Lemma~\ref{lem:Utilde-definition}.
\end{definition}

\subsubsection{}

\begin{lemma}[{\cite[Theorem 5.4]{joyce-config-iv}}] \label{lem:Utilde-definition}
  Let $L$ be a free graded Lie algebra over $\bQ$ generated by symbols
  $\epsilon(\beta)$ of degree $\beta$, for all effective classes
  $\beta$. Then, assuming that the right hand sum below has finitely
  many non-zero terms, there exist $\tilde U(\alpha_1, \ldots,
  \alpha_n; \tau, \tau') \in \bQ$ such that
  \begin{equation} \label{eq:U-vs-Utilde}
    \begin{aligned}
      &\sum_{\substack{n \ge 1\\\alpha = \alpha_1+\cdots+\alpha_n}} \tilde U(\alpha_1,\ldots,\alpha_n; \tau, \tau') \left[\left[ \cdots \left[[\epsilon(\alpha_1), \epsilon(\alpha_2)], \epsilon(\alpha_3)\right], \ldots\right], \epsilon(\alpha_n)\right] \\
      &= \sum_{\substack{n \ge 1\\\alpha = \alpha_1+\cdots+\alpha_n}} U(\alpha_1,\ldots,\alpha_n; \tau, \tau') \epsilon(\alpha_1) \epsilon(\alpha_2) \cdots \epsilon(\alpha_n)
    \end{aligned}
  \end{equation}
  in the universal enveloping algebra of $L$, i.e. after expanding
  $[f, g] = fg - gf$.
\end{lemma}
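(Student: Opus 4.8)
The plan is to realize the right-hand side of \eqref{eq:U-vs-Utilde} --- write it $\Theta := \sum_{n\ge1}\sum_{\alpha=\alpha_1+\cdots+\alpha_n} U(\alpha_1,\ldots,\alpha_n;\tau,\tau')\,\epsilon(\alpha_1)\cdots\epsilon(\alpha_n)$ --- as an element of the Lie subalgebra $L\subset U(L)$, and then to expand that element in left-normed brackets to read off the $\tilde U$. By hypothesis $\Theta$ is a finite sum, hence a genuine degree-$\alpha$ element of $U(L)=T(V)$ with $V=\bigoplus_\beta\bQ\,\epsilon(\beta)$; I equip $T(V)$ with the unique coproduct $\Delta$ making it a bialgebra in which every $\epsilon(\beta)$ is primitive (the unshuffle coproduct). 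The argument then splits into: (i) show $\Theta$ is primitive, hence $\Theta\in L$; and (ii) granting (i), produce the $\tilde U$.

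For (ii), I would use the classical fact (Friedrichs' criterion) that the primitives of $(T(V),\Delta)$ are exactly the free Lie algebra $L=L(V)$, together with the bracket-length grading $L=\bigoplus_n L_n$ with each $L_n\subseteq T^n(V)$ a direct summand. Once $\Theta\in L$, its length-$n$ homogeneous component $\Theta_n:=\sum_{\alpha=\alpha_1+\cdots+\alpha_n}U(\alpha_1,\ldots,\alpha_n;\tau,\tau')\,\epsilon(\alpha_1)\cdots\epsilon(\alpha_n)$ already lies in $L_n$. Applying the left-normed Dynkin operator $D_n\colon T^n(V)\to L_n$, $\epsilon(\alpha_1)\cdots\epsilon(\alpha_n)\mapsto[[\cdots[[\epsilon(\alpha_1),\epsilon(\alpha_2)],\epsilon(\alpha_3)],\ldots],\epsilon(\alpha_n)]$, and invoking Dynkin--Specht--Wever ($D_n$ acts as multiplication by $n$ on $L_n$), one gets $\Theta_n=\tfrac1n D_n(\Theta_n)$, i.e.\ \eqref{eq:U-vs-Utilde} holds with $\tilde U(\alpha_1,\ldots,\alpha_n;\tau,\tau'):=\tfrac1n U(\alpha_1,\ldots,\alpha_n;\tau,\tau')$. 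Since the left-normed brackets are not linearly independent as $(\alpha_1,\ldots,\alpha_n)$ ranges over ordered decompositions of $\alpha$, this is only one admissible choice --- any solution of the resulting finite linear system works --- and we fix this one. Summing over the finitely many $n$ with $\Theta_n\ne0$ completes (ii).

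The crux is (i). By Friedrichs again it suffices to prove $\Delta\Theta=\Theta\otimes1+1\otimes\Theta$, i.e.\ that in $\Delta\Theta$ every ``mixed'' component $\epsilon(\delta_1)\cdots\epsilon(\delta_p)\otimes\epsilon(\delta_{p+1})\cdots\epsilon(\delta_n)$ with $0<p<n$ vanishes. Expanding $\Delta(\epsilon(\alpha_1)\cdots\epsilon(\alpha_n))$ as a sum over $(p,n-p)$-shuffles of the factors and collecting, the coefficient of each mixed monomial becomes a signed sum of products of the transformation coefficients $S(-;\tau,\tau')$ over precisely the double-grouping data appearing in Definition~\ref{def:universal-coefficients}. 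Its vanishing is exactly the assertion that the generating series $\bar S:=\sum S(\beta_1,\ldots,\beta_m;\tau,\tau')\,\epsilon(\beta_1)\cdots\epsilon(\beta_m)$ assembled from the $S$-coefficients is group-like for $\Delta$, equivalently that $\Theta=\log\bar S$ is primitive; and group-likeness of $\bar S$ reduces to the ``Hall-algebra-type'' multiplicativity of the $S(-;\tau,\tau')$ under concatenation and refinement of decompositions compatible with the weak see-saw property. This is the content of Joyce's configurations series \cite{joyce-config-iv}; I would either quote it or re-derive the needed identity by induction on $n$, splitting off the leading block and using the recursion for $S$ built into its defining cases \ref{it:S-case-1}--\ref{it:S-case-2}.

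The main obstacle is this last combinatorial identity --- group-likeness of $\bar S$, equivalently primitivity of $\log\bar S$ --- since the surrounding steps (primitive $\Leftrightarrow$ Lie via Friedrichs, the Dynkin--Specht--Wever extraction of $\tilde U$, reduction to a finite linear system) are formal. A secondary point to handle carefully is the bookkeeping of gradings: one must check that the standing finiteness hypotheses (which make the right-hand sum of \eqref{eq:U-vs-Utilde} finite) likewise make $\bar S$ a finite sum and place all these series in a suitable degreewise-finite completion of $T(V)$, so that ``$\log$'' and ``group-like'' are literally meaningful and the Hopf-algebra manipulations are legitimate.
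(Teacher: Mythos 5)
The paper gives no proof of this lemma at all --- it is quoted from Joyce [joyce-config-iv, Theorem 5.4] --- so the only question is whether your argument is self-contained. Your step (ii) is correct and is a clean observation: once one knows that $\Theta=\sum U(\alpha_1,\ldots,\alpha_n;\tau,\tau')\,\epsilon(\alpha_1)\cdots\epsilon(\alpha_n)$ lies in the free Lie algebra $L\subset T(V)$, each word-length component $\Theta_n$ lies in $L_n$ (since $L$ is graded by word length), and Dynkin--Specht--Wever gives the admissible explicit choice $\tilde U=U/n$; the finiteness hypothesis makes $\Theta$ an honest element of $U(L)$ so no completion is needed there.

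The genuine gap is step (i): the primitivity of $\Theta$ \emph{is} the cited theorem, and your route to it does not work as written. You set $\bar S=\sum S(\beta_1,\ldots,\beta_m;\tau,\tau')\,\epsilon(\beta_1)\cdots\epsilon(\beta_m)$ and claim $\Theta=\log\bar S$. Comparing with Definition~\ref{def:universal-coefficients}, this drops two essential features of the $U$-coefficients: the inner grouping of the letters into $\tau$-constant blocks with the symmetrizing factors $1/(a_i-a_{i-1})!$ --- i.e.\ the letters fed into the $S$-blocks must be the degree-$\beta$ components $d_\beta$ of $\exp\big(\sum_{\tau(\gamma)=\tau(\beta)}\epsilon(\gamma)\big)$, not the primitives $\epsilon(\beta)$ themselves --- and the $\tau'$-permissibility restriction $\tau'(\gamma_j)=\tau'(\alpha)$ on the outer blocks. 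The correct statement is that $\Theta$ is the degree-$\alpha$ part of $\log\big(1+\sum_{\tau'(\gamma)=\tau'(\alpha)}T_\gamma\big)$ with $T_\gamma=\sum S(\beta_1,\ldots,\beta_m;\tau,\tau')\,d_{\beta_1}\cdots d_{\beta_m}$, and what you must prove is that $\Delta T_\gamma=\sum T_{\gamma_1}\otimes T_{\gamma_2}$ summed over splittings with $\gamma_i=0$ or $\tau'(\gamma_i)=\tau'(\gamma)$. That coproduct identity is not a formal Hopf-algebra fact and does not follow from an evident recursion in the two defining cases of $S$; it is exactly the nontrivial combinatorial content of Joyce's theorem, established in his configurations papers via the Hall-algebra/Harder--Narasimhan identities that underlie the definition of $S$. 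So either you quote [joyce-config-iv] for it --- in which case you have added nothing beyond the (useful) normalization $\tilde U=U/n$, and the appeal is essentially to the result being proved --- or you owe a real proof of this identity, which your sketch does not supply. A secondary, fixable point: the finiteness hypothesis concerns only the $U$-sum, so your auxiliary series $\bar S$, $T_\gamma$ need not be finite and must be handled degreewise in a completed bialgebra for ``$\log$'' and ``group-like'' to make sense.
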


\subsubsection{}

\begin{definition}[Dominance conditions] \label{def:dominates-at}
  Let $\tau$ and $\mathring\tau$ be two weak stability conditions. We
  say {\it $\tau$ dominates $\mathring\tau$} if, for any $\alpha_1,
  \alpha_2 \in C(\cat{A})$,
  \[ \mathring\tau(\alpha_1) \le \mathring\tau(\alpha_2) \implies \tau(\alpha_1) \le \tau(\alpha_2). \]
  
  Given a class $\alpha \in C(\cat{A})$, we say {\it $\tau$ dominates
    $\mathring{\tau}$ at $\alpha$}, if for any decomposition $\alpha =
  \alpha_1 + \cdots + \alpha_n$, $\alpha_1, \ldots, \alpha_n \in
  C(\cat{A})$, with either
  \begin{enumerate}[label=(\alph*)]
  \item \label{it:wcf-U-case-i} $U(\alpha_1, \ldots, \alpha_n; \tau,
    \mathring\tau) \neq 0$ and $\fM_{\alpha_i}^{\sst}(\tau)\neq
    \emptyset$ for $i = 1, \ldots, n$, or
  \item \label{it:wcf-U-case-ii} $U(\alpha_1, \ldots, \alpha_n;
    \mathring\tau, \tau) \neq 0$ and
    $\fM_{\alpha_i}^{\sst}(\mathring\tau) \neq \emptyset$ for $i =
    1, \ldots, n$,
  \end{enumerate}
  then $\tau(\alpha_i) = \tau(\alpha)$ for all $i = 1, \ldots, n$,
  and in case \ref{it:wcf-U-case-ii} we also have
  $\fM_{\alpha_i}^{\sst}(\mathring\tau) \subset
  \fM_{\alpha_i}^{\sst}(\tau)$ for $i = 1, \ldots, n$.

  Finally, given a class $\alpha \in C(\cat{A})$, define the sets
  \begin{align*}
    R_{\alpha} &= \{\alpha\} \cup \{\beta \in C(\cat{A}) : \alpha-\beta\in C(\cat{A}), \, \tau(\beta) = \tau(\alpha-\beta), \, \fM_{\beta}^{\sst}(\tau), \fM_{\alpha-\beta}^{\sst}(\tau)\neq \emptyset\}, \\
    \mathring R_{\alpha} &= \{\alpha\} \cup \{\beta \in C(\cat{A}) : \alpha-\beta\in C(\cat{A}), \, \mathring\tau(\beta) = \mathring\tau(\alpha-\beta), \, \fM_{\beta}^{\sst}(\mathring\tau), \fM_{\alpha-\beta}^{\sst}(\mathring\tau)\neq \emptyset\}.
  \end{align*}
  We say {\it $\tau$ weakly dominates $\mathring{\tau}$ at $\alpha$}
  if $\mathring R_\alpha \subseteq R_\alpha$ and, for any $\beta\in
  \mathring{R}_\alpha$, we have
  $\fM_\beta^\sst(\mathring{\tau})\subseteq \fM_\beta^\sst(\tau)$.
\end{definition}

It is easy to see that if $\tau$ dominates $\mathring{\tau}$, then
$\tau$-stable implies $\mathring\tau$-stable implies
$\mathring\tau$-semistable implies $\tau$-semistable. Less obviously,
if $\tau$ dominates $\mathring{\tau}$, then it also dominates
$\mathring{\tau}$ at any $\alpha \in C(\cat{A})$ \cite[Thm.
  3.11]{Joyce2021}. Finally, if $\tau$ dominates $\mathring\tau$ at
$\alpha$, then it also weakly dominates $\mathring\tau$ at $\alpha$
\cite[Prop. 10.2]{Joyce2021} (see also
Lemma~\ref{wc:lem:R-sets}\ref{it:R-sets-i}).

\subsubsection{}

\begin{assumption} \label{assump:wall-crossing}
  Let $\tau$ and $\mathring\tau$ be weak stability conditions on
  $\cat{A}$. We make the following technical assumptions, for all
  $\alpha \in C(\cat{A})$, in order to obtain a wall-crossing formula.
  \begin{enumerate}[label = (\alph*)]
  \item \label{assump:it:tau-circ} The weak stability condition $\tau$
    satisfies Assumption~\ref{assump:semistable-invariants}. In
    addition, $\cat{A}$ admits $\mathring\tau$-HN filtrations
    (Assumption~\ref{assump:semistable-invariants}\ref{assump:it:tau-artinian})
    and $\mathring\tau$-(semi)stability is open
    (Assumption~\ref{assump:semistable-invariants}\ref{assump:it:semistable-loci}).

  \item \label{assump:it:lambda} For any $\alpha$, there exists a
    group homomorphism
    \[ \lambda\colon K(\cat{A})\to \bR \]
    such that for any class $\beta \in R_\alpha$, we have
    $\lambda(\beta) > 0$ (resp. $\lambda(\beta) < 0$) if and only if
    $\mathring\tau(\beta) > \mathring\tau(\alpha-\beta)$ (resp.
    $\mathring\tau(\beta) <
    \mathring\tau(\alpha-\beta)$). \footnote{This is a slightly weaker
      condition on $\lambda$ than in \cite[Assumption
        5.2(d)]{Joyce2021} where $\mathring\tau(\beta)$ is compared
      with $\mathring\tau(\alpha)$ instead of
      $\mathring\tau(\alpha-\beta)$. This weaker condition is needed
      to ensure the auxiliary weak stability conditions in
      \eqref{wc:eq:joyce-framed-stack-stability} are genuinely weak
      stability conditions. Where necessary, we explicitly indicate
      how this affects our proof of the dominant wall-crossing formula
      in \S\ref{sec:wall-crossing}, in comparison to Joyce's
      proof.\label{footnote:lambda}}

  \item \label{assump:it:properness-wcf} The following algebraic
    spaces are proper and have the resolution property (see
    Remark~\ref{rem:localization-resolution-property}):
    \begin{itemize}
    \item $\tilde\fM_{\alpha,\vec d}^{\vec Q(\Fr),\sst}(\tau_x^s)^\sT$
      (the auxiliary stack in Definition~\ref{def:flag-invariant}) for
      all $\Fr \in \Frs$ and $(\alpha, \vec d)$ with no strictly
      $\tau^s_x$-semistable objects; \footnote{It is enough to satisfy
      this assumption for $(s, x) \in [0,1] \times \{0\}$ and $(s,x)
      \in \{0,1\} \times [-1,0]$. By
      Lemma~\ref{lem:sst-pair}\ref{it:sst-pair-a}, this subsumes the
      properness assumption on
      $\tilde\fM_{\alpha,1}^{Q(\Fr),\sst}(\tau^Q)^\sT$ in
      Assumption~\ref{assump:semistable-invariants}\ref{assump:it:properness}.}
    \item $\bM_{\alpha,\vec d}^{\sT_w}$ for all $w$ in
      Lemma~\ref{lem:master-space-no-poles}, where $\bM_{\alpha,\vec
        d}$ is the master space in the proof of
      Proposition~\ref{wc:prop:horizontal-wc}, for all classes
      $(\alpha,\vec d)$ and $\Fr \in \Frs$.
    \end{itemize}
  \end{enumerate}
\end{assumption}

\subsubsection{}

\begin{theorem}[Dominant wall-crossing formula] \label{thm:wcf}
  Let $\tau$ and $\mathring\tau$ be weak stability conditions on
  $\cat{A}$ for which Assumption~\ref{assump:wall-crossing} holds.
  Suppose $\tau$ weakly dominates $\mathring{\tau}$ at $\alpha \in
  C(\cat{A})$. Then the operational semistable invariants of
  Theorem~\ref{thm:sst-invariants} satisfy
  \begin{equation} \label{eq:wcf}
    \sz_\alpha(\mathring\tau) = \sum_{\substack{n>0\\\alpha = \alpha_1 + \cdots + \alpha_n\\\forall i:\, \tau(\alpha_i) = \tau(\alpha)\\\fM^{\sst}_{\alpha_i}(\tau)\neq \emptyset}}\tilde U\left(\alpha_1,\dots,\alpha_n;\tau,\mathring\tau\right)\left[\left[\cdots\left[\sz_{\alpha_1}(\tau),\sz_{\alpha_2}(\tau)\right],\cdots\right],\sz_{\alpha_n}(\tau)\right]
  \end{equation}
  in $K_\circ^{\tilde\sT}(\fM_\alpha)^\pl_{\loc,\bQ}$, with Lie
  bracket $[-, -]$ defined by
  Theorem~\ref{thm:mVOA-monoidal-stack-lie-algebra} (or
  Theorem~\ref{thm:auxiliary-stack-vertex-algebra}).
\end{theorem}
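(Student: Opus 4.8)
The plan is to deduce the dominant wall-crossing formula \eqref{eq:wcf} from the defining relation \eqref{eq:sstable-def-intro} for the semistable invariants, applied once for $\tau$ and once for $\mathring\tau$, combined with the combinatorial identity of Lemma~\ref{lem:Utilde-definition} that relates $U$ to $\tilde U$. The key observation is that \eqref{eq:sstable-def-intro} expresses $I_*\tilde\sZ^{\Fr}_{\alpha,1}(\tau^Q)$ (and similarly $I_*\tilde\sZ^{\Fr}_{\alpha,1}(\mathring\tau^Q)$, choosing a single $\Fr \in \Frs$ large enough to contain both the $\tau$- and $\mathring\tau$-semistable loci of all relevant classes, using Assumption~\ref{assump:semistable-invariants}\ref{assump:it:framing-functor}) as an iterated-Lie-bracket polynomial in the $\iota^Q_*\sz_{\alpha_i}(\tau)$, respectively $\iota^Q_*\sz_{\alpha_i}(\mathring\tau)$, acting on $\partial$. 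So the entire content is: (1) produce a wall-crossing formula relating the pair invariants $\tilde\sZ^{\Fr}_{\alpha,1}(\tau^Q)$ and $\tilde\sZ^{\Fr}_{\alpha,1}(\mathring\tau^Q)$ on the auxiliary stack, and (2) invert \eqref{eq:sstable-def-intro} to extract the relation among the $\sz_\alpha$ themselves.

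First I would establish the wall-crossing formula for the \emph{pair invariants} on the auxiliary stacks. Since $\tau$ weakly dominates $\mathring\tau$ at $\alpha$, the induced weak stability conditions $\tau^Q$ and $\mathring\tau^Q$ on $\tilde{\cat{A}}^{Q(\Fr)}$ have no strictly semistable objects, and one interpolates between them through a finite sequence of simple walls. This is exactly where the master space machinery of Assumption~\ref{assump:wall-crossing}\ref{assump:it:properness-wcf} enters: for each wall one builds the master space $\bM_{\alpha,\vec d}$ (Proposition~\ref{wc:prop:horizontal-wc}), equips it with a symmetrized-pullback APOT via Theorem~\ref{thm:APOTs}, and applies torus localization on its $\sT_w$-fixed loci (using properness and the resolution property, and Lemma~\ref{lem:master-space-no-poles} to kill poles) to obtain a contribution written via the Lie bracket of Theorem~\ref{thm:auxiliary-stack-vertex-algebra}. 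Summing over the chain of walls and bookkeeping the combinatorics gives a formula of the shape $I_*\tilde\sZ^{\Fr}_{\alpha,1}(\mathring\tau^Q) = \sum U(\alpha_1,\dots,\alpha_n;\tau,\mathring\tau)\,\iota^Q_*\sz_{\alpha_n}(\tau)\cdots\iota^Q_*\sz_{\alpha_1}(\tau)\cdot\partial$ up to the usual reordering — i.e., the associative-algebra side of \eqref{eq:U-vs-Utilde}. The combinatorial coefficients $U$ arise precisely as in Joyce's configuration-counting arguments \cite{joyce-config-iv}; one must check the bookkeeping survives the footnote~\ref{footnote:lambda} weakening of the $\lambda$-condition, which affects only whether the auxiliary $\tau^s_x$ are genuinely weak stability conditions.

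Next I would substitute \eqref{eq:sstable-def-intro} for $\tau$ into this pair-invariant wall-crossing formula. On the left we have $I_*\tilde\sZ^{\Fr}_{\alpha,1}(\mathring\tau^Q)$, which by \eqref{eq:sstable-def-intro} applied to $\mathring\tau$ equals the iterated Lie-bracket expression in the $\iota^Q_*\sz_{\alpha_i}(\mathring\tau)$. On the right we have the associative-monomial expression in the $\iota^Q_*\sz_{\alpha_i}(\tau)$. Now Lemma~\ref{lem:Utilde-definition} — the identity \eqref{eq:U-vs-Utilde} in the universal enveloping algebra of the free graded Lie algebra $L$ on generators $\epsilon(\beta)$ — lets us rewrite the associative side as an iterated-Lie-bracket side with coefficients $\tilde U$, under the substitution $\epsilon(\beta) \mapsto \iota^Q_*\sz_\beta(\tau)$ (valid since the $K_\circ^{\tilde\sT}$ Lie bracket is a genuine graded Lie bracket by Theorem~\ref{thm:auxiliary-stack-vertex-algebra}, so there is an algebra map from $L$). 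This turns the right side into $\sum \tilde U(\alpha_1,\dots,\alpha_n;\tau,\mathring\tau)[[\cdots[\iota^Q_*\sz_{\alpha_1}(\tau),\iota^Q_*\sz_{\alpha_2}(\tau)],\cdots],\iota^Q_*\sz_{\alpha_n}(\tau)]$. Comparing with the $\iota^Q_*\sz_{\alpha_i}(\mathring\tau)$-expression on the left and using the injectivity/faithfulness of $\iota^Q_*$ on the relevant degree-$\alpha$ piece (this is the role of property~\ref{item:vss-support}, which pins down the support and makes the sums finite, together with the fact that $\iota^Q$ is a closed immersion so $\iota^Q_*$ is injective on operational classes supported appropriately), one extracts \eqref{eq:wcf} as an equality in $K_\circ^{\tilde\sT}(\fM_\alpha)^\pl_{\loc,\bQ}$. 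Finally, independence of the result from the choice of $\Fr$ follows from Theorem~\ref{thm:sst-invariants}\ref{item:vss-pairs-relation}, which holds for \emph{every} admissible $\Fr$.

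The main obstacle I expect is step~(1): constructing the pair-invariant wall-crossing formula through the chain of simple walls, in particular verifying that the symmetrized-pullback APOT on each master space $\bM_{\alpha,\vec d}$ behaves well under the torus-localization residue computation — that the $\sT_w$-fixed loci decompose as expected into products of smaller auxiliary stacks, that the virtual normal bundle contributions assemble into exactly the Lie-bracket term of Theorem~\ref{thm:auxiliary-stack-vertex-algebra} (with the correct $\kappa^{1/2}$ twist, hence the passage to $\tilde\sT$), and that the pole-cancellation of Lemma~\ref{lem:master-space-no-poles} genuinely applies for the relevant weights $w$. The combinatorial extraction in step~(2) is, by contrast, essentially Joyce's argument \cite[\S 5--10]{Joyce2021} transported verbatim to our setting, modulo the already-flagged footnote~\ref{footnote:lambda} subtlety; the only genuinely new point there is that the formula must be stated and proved on the auxiliary stack $\tilde\fM^{Q(\Fr)}_{\alpha,1}$ rather than on $\fM_\alpha^\pl$, since pushforward along the forgetful map is not a Lie algebra homomorphism — but because \eqref{eq:wcf} is an identity purely among the $\sz_\alpha(\tau)$ and $\sz_\alpha(\mathring\tau)$ with coefficients $\tilde U$ that do not involve $\fr$, once extracted on the auxiliary stack it descends, and one checks the descent is consistent.
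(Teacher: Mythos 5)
There is a genuine gap, and it sits at the heart of your step (1). You assert that the wall-crossing between $\tau^Q$ and $\mathring\tau^Q$ on the pairs stack $\tilde\fM^{Q(\Fr)}_{\alpha,1}$ (one framing vertex, framing dimension $1$) ``interpolates through a finite sequence of simple walls'' to which master-space techniques apply. This is unjustified and in general false: at the wall (the stability condition lifted from $\tau$ itself), a framed object of class $(\alpha,1)$ can be S-equivalent to one framed piece plus \emph{arbitrarily many} unframed $\tau$-semistable pieces of equal slope, so the wall on the pairs stack is exactly as non-simple as the original wall on $\cat{A}$; a single framing of dimension $1$ does nothing to separate the destabilizing classes. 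Master spaces only handle two-piece decompositions at the wall. This is precisely why the paper's proof in \S\ref{sec:wall-crossing} does \emph{not} wall-cross the pair invariants directly, but instead introduces the flag quiver $\vec Q$ with $N=\fr(\alpha)$ framing vertices, generic weights $\vec\mu$, and the two-parameter family $\tau^s_x$ (Definition~\ref{def:flag-invariant}): only after forcing the framing to be a full flag does Lemma~\ref{wc:lem:wall-crossing-horizontal-setup} guarantee that every wall in the ``horizontal'' direction is simple (exactly two pieces, no strictly semistables), so that Proposition~\ref{wc:prop:horizontal-wc} applies. The entire bookkeeping through iterated walls — the auxiliary invariants $\tilde\sz^{s,x}$, the artificial invariants $\hat\sz^{s,x}$ built from the $\tilde U$-coefficients, and the vanishing Lemma~\ref{wc:lem:vanishing-basic} — exists to convert that chain of simple wall-crossings into the single formula \eqref{eq:wcf}; your proposal assumes this hard part away and, correspondingly, gives no mechanism by which the coefficients $U(\alpha_1,\ldots,\alpha_n;\tau,\mathring\tau)$ would actually emerge from the geometry.

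Two secondary problems. First, your intermediate formula with $U$-coefficients multiplying ``associative monomials'' $\iota^Q_*\sz_{\alpha_n}(\tau)\cdots\iota^Q_*\sz_{\alpha_1}(\tau)\cdot\partial$ is not well-defined: Theorem~\ref{thm:auxiliary-stack-vertex-algebra} equips $K_\circ^{\tilde\sT}(\tilde\fM^{Q(\Fr)})^\pl$ only with a Lie bracket, not an associative product, and Lemma~\ref{lem:Utilde-definition} is a formal identity in a free Lie algebra used to \emph{define} $\tilde U$, not a rewriting rule one can apply to geometric classes; both the paper's geometric inputs (Theorem~\ref{thm:pairs-master-relation}, Proposition~\ref{wc:prop:horizontal-wc}) and the final extraction are phrased entirely in terms of iterated brackets, and descending from the auxiliary stack to $\fM_\alpha$ uses the left inverse of $[\iota^Q_*(-),I_*\partial]$ (Lemma~\ref{lem:pushforward-of-bracket-partial}), not injectivity of $\iota^Q_*$. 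Second, you invoke \eqref{eq:sstable-def-intro} for $\mathring\tau$ as if it were immediate, but Assumption~\ref{assump:wall-crossing} grants $\mathring\tau$ only HN filtrations and openness, not all of Assumption~\ref{assump:semistable-invariants}; the paper obtains the $\mathring\tau$-pair relation through the $s=1$ slice of the flag-stack family (Lemma~\ref{lem:sst-pair}\ref{it:sst-pair-b}), using only weak dominance, and the relevant properness is arranged via the footnote to Assumption~\ref{assump:wall-crossing}\ref{assump:it:properness-wcf}. So while your step (2) captures the correct combinatorial flavor, the proof as proposed does not go through without rebuilding essentially all of \S\ref{sec:wall-crossing}.
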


As in Theorem~\ref{thm:sst-invariants}, the existence of the rank
function ensures that the sum is finite.

\subsubsection{}

\begin{remark} \label{intro:remark:dominant-wcf-conditions}
  Theorem~\ref{thm:wcf} is analogous to \cite[Theorem 5.8]{Joyce2021},
  but the dominance condition for $\tau$ and $\mathring\tau$ is
  weaker; Joyce assumes that $\tau$ dominates $\mathring{\tau}$ at
  $\alpha$. Joyce's assumption in fact unnecessary, and even Joyce's
  proof of his wall-crossing formula uses only the weak dominance
  condition except in the proof of \cite[Prop. 10.13]{Joyce2021}. The
  analogous step in our proof is
  Lemma~\ref{lem:sst-pair}\ref{it:sst-pair-b}, which we prove with
  only the weak dominance condition.
\end{remark}

\subsubsection{}
\label{sec:general-wall-crossing}

Morally, one should imagine that $\tau$ and $\mathring\tau$ originate
from a family of weak stability conditions with a wall-and-chamber
decomposition, and the dominance condition expresses that $\tau$ is a
weak stability condition on some wall and $\mathring\tau$ is a weak
stability condition in an adjacent chamber. Clearly, a
``chamber-to-chamber'' wall-crossing problem may be factorized into
two ``chamber-to-wall'' wall-crossing problems. Using the transitivity
property (Lemma~\ref{wc:lem:U-properties}\ref{it:U-composition}) of
the universal coefficients $\tilde U$, the dominant wall-crossing
formula immediately implies the {\it general wall-crossing formula}
\[ \sz_\alpha(\tau') = \sum_{\substack{n>0\\\alpha = \alpha_1 + \cdots + \alpha_n\\\forall i:\, \fM^{\sst}_{\alpha_i}(\tau)\neq \emptyset}}\tilde U\left(\alpha_1,\dots,\alpha_n;\tau,\tau'\right)\left[\left[\cdots\left[\sz_{\alpha_1}(\tau),\sz_{\alpha_2}(\tau)\right],\cdots\right],\sz_{\alpha_n}(\tau)\right] \]
between any two weak stability conditions $\tau$ and $\tau'$ connected
by a path of dominant wall-crossings; see \cite[Assumption 5.3, \S
  11]{Joyce2021} for details.

\subsubsection{}

\begin{remark}
  We believe that
  Assumptions~\ref{assump:semistable-invariants}\ref{assump:it:tau-artinian},
  \ref{assump:it:semistable-loci}, and
  \ref{assump:wall-crossing}\ref{assump:it:tau-circ} are genuinely
  important for studying wall-crossing, while
  Assumptions~\ref{assump:semistable-invariants}\ref{assump:it:framing-functor},
  \ref{assump:it:rank-function}, \ref{assump:it:semi-weak-stability},
  and \ref{assump:wall-crossing}\ref{assump:it:lambda} are only
  artifacts of our (and Joyce's \cite{Joyce2021}) geometric setup for
  proving the wall-crossing formula. Namely, the latter assumptions
  are only really used to construct auxiliary moduli stacks and weak
  stability conditions on them (Definition~\ref{def:flag-invariant};
  see also Remark~\ref{rem:joyce-framed-stack-stability}), and future
  work may render them unnecessary.

  For readers familiar with \cite[Assumption 5.2]{Joyce2021}, note
  that
  Assumption~\ref{assump:semistable-invariants}\ref{assump:it:rank-function}
  is weaker and
  Assumption~\ref{assump:semistable-invariants}\ref{assump:it:semi-weak-stability}
  is new (see footnote~\ref{footnote:joyce-framed-stack-stability}).
  Note that
  Assumption~\ref{assump:semistable-invariants}\ref{assump:it:semi-weak-stability}
  is automatically satisfied if $\tau$ is a stability condition.

  Both
  Assumptions~\ref{assump:semistable-invariants}\ref{assump:it:rank-function}
  and \ref{assump:it:semi-weak-stability} only need to be checked on
  the classes of sub- and quotient objects of $\tau$-semistable
  objects; both $r$ and $\lambda$ only need to be defined on such
  classes. Often, such sub- and quotient objects remain
  $\tau$-semistable, so that it suffices to define $r$ and $\lambda$
  only on $\tau$-semistable objects. In fact, if $r$ only needs to be
  defined for finitely many classes $\beta$, then we can take it to be
  the dimension function $\fr$ of a framing functor $\Fr$ such that
  $\fM_{\beta}^{\sst}(\tau) \subset \fM_{\beta}^{\Fr,\pl}$ for all
  such $\beta$ (which exists by
  Assumption~\ref{assump:semistable-invariants}\ref{assump:it:framing-functor});
  e.g. the injectivity condition $\End(E)
  \hookrightarrow \End(\Fr(E))$ on $\Fr$ yields the desired positivity
  of $r$.
\end{remark}

\subsubsection{}
\label{sec:comparison-with-joyce-song}

This wall-crossing framework can be viewed as a (K-)homological
generalization, as well as a $\kappa$-refinement, of the {\it motivic}
wall-crossing framework by Joyce--Song \cite{Joyce2012} and
Kontsevich--Soibelman \cite{Kontsevich2010}. Namely, our wall-crossing
formula \eqref{eq:wcf} has the exact same shape as \cite[Theorem
  3.14]{Joyce2012}, and evaluation of our operational K-homology
classes on $\cO$ is the analogue of the Joyce--Song \cite[Theorem
  5.14]{Joyce2012} and the Kontsevich--Soibelman \cite[Theorem
  2]{Kontsevich2010} integration maps. In particular, the formula (see
Proposition~\ref{prop:rigidity})
\[ [\phi_\alpha, \psi_\beta](\cO) = [\chi(\alpha,\beta)]_\kappa \cdot \phi_\alpha(\cO) \psi_\beta(\cO) \]
is (the $\kappa$-refinement of) the Lie bracket for the quantum torus
underlying both Joyce--Song \cite[Def. 5.13]{Joyce2012} and
Kontsevich--Soibelman's \cite[\S 4.2]{Kontsevich2010} frameworks. Here
$[n]_\kappa$ is the {\it (symmetric) quantum integer}
\begin{equation} \label{eq:quantum-integer}
  [n]_\kappa \coloneqq (-1)^{n-1} \frac{\kappa^{\frac{n}{2}} - \kappa^{-\frac{n}{2}}}{\kappa^{\frac{1}{2}} - \kappa^{-\frac{1}{2}}} \in \bZ[\kappa^{\pm\frac{1}{2}}],
\end{equation}
which satisfies $\lim_{\kappa \to 1} [n]_\kappa = (-1)^{n-1} n$. The
slightly unconventional sign $(-1)^{n-1}$ is a stylistic choice to
save on signs elsewhere.

Recall that motivic enumerative invariants are defined (for
semistable=stable loci) using Behrend functions, in contrast to our
enumerative invariants defined using virtual cycles. The two
approaches agree when the underlying stable locus is proper
\cite{Behrend2009}, explaining why our wall-crossing formula matches
those of Joyce--Song and Kontsevich--Soibelman. However, they disagree
in general, and there is some indication that virtual cycles may be
the correct approach to consider in physics, see e.g. \cite[\S
  1.10]{Tanaka2020}.

\subsubsection{}
\label{sec:generalizations}

In \S\ref{sec:additional-features}, we provide the following
generalizations of Theorems~\ref{thm:sst-invariants} and
\ref{thm:wcf}.
\begin{itemize}
\item (\S\ref{sec:cohomological-version}) Cohomological versions
  exist, of exactly the same shape. Invariants live in
  $A^\sT_*(\fM)^\pl_{\loc,\bQ}$ where $A^\sT_*(-)$ is dual to {\it
    equivariant operational Chow cohomology} in the same way
  $K_\circ^\sT(-)$ is dual to $K^\circ_\sT(-)$. There is a notion of
  additive equivariant vertex algebra, and we explain how to replace
  all K-theoretic ingredients with cohomological analogues. More
  general cohomology theories may also be used; see
  Remark~\ref{rem:cohomology-theory}.

\item (\S\ref{sec:restrictions}) The abelian category $\cat{A}$ may be
  replaced by an exact subcategory $\cat{B} \subset \cat{A}$ closed
  under isomorphisms and direct summands. Furthermore, the moduli
  stack $\fM$ of $\cat{B}$ may be replaced by certain locally closed
  moduli substacks $\fN \subset \fM$ closed under direct sums and
  summands. For example, $\fM$ may be some moduli of sheaves on a
  space $X$ and $\fN$ the moduli substack of those sheaves with
  prescribed restriction to a divisor $D \subset X$ (see e.g.
  \S\ref{sec:intro:equivariant-vertices}). Finally, with some care, we
  need only require that all assumptions hold for a subset
  $C(\cat{B})_{\pe} \subset C(\cat{B})$ of ``permissible'' classes,
  not necessarily closed under direct sum.

\item (\S\ref{sec:reduced-obstruction-theories}) We may also consider
  wall-crossing for {\it (Kiem--Li) reduced virtual cycles} and their
  enumerative invariants, in the setting where the obstruction theory
  on $\fM_\alpha$ comes with $o_\alpha\in \bZ_{\ge 0}$ cosections. The
  wall-crossing formula for such reduced invariants is actually
  simpler because its sum contains an additional constraint $o_\alpha
  = o_{\alpha_1} + \cdots + o_{\alpha_n}$. In English, the two sides
  of the wall-crossing formula are reduced by some number of
  cosections, and, if these numbers don't match, the resulting term
  vanishes. Our setup for reduced virtual cycles follows
  \cite{Kiem2013}, generalizing the setup in \cite{Joyce2021}.
\end{itemize}

\subsubsection{}
\label{sec:joyce-comparison-and-differences}

Our 3CY wall-crossing framework and proof strategy is based on and
closely follows Joyce's ``universal wall-crossing'' framework
\cite{Joyce2021}. In a nutshell, Joyce's moduli stacks all carry {\it
  perfect} obstruction theories, while we ensure throughout that ours
carry {\it symmetric} obstruction theories. We highlight three main
technical differences.

First, the auxiliary stacks and master spaces are equipped with
symmetric APOTs, compatible with the one on $\fM$, via a {\it
  symmetrized pullback} operation
(Theorem~\ref{thm:APOTs}\ref{it:APOT-sym-pullback}) on obstruction
theories, constructed in the authors' earlier work \cite{klt_dtpt}.
This is non-trivial even without symmetrization. Joyce circumvents
this by assuming the existence of derived versions of all moduli
stacks and morphisms between them.

Consequently, the key geometric wall-crossing steps
(Theorems~\ref{thm:pairs-master-relation} and
\ref{wc:prop:horizontal-wc}) must be written in terms of the Lie
bracket on the (operational K-homology of) auxiliary stacks. In
Joyce's setting, they may be written using only the Lie bracket on the
(homology of the) original stack $\fM$. As explained in
Remark~\ref{rem:joyce-lie-bracket-vs-ours-1}, this is an inevitable
symptom of symmetrization.

Consequently, in the proof of the wall-crossing formula
(Theorem~\ref{thm:wcf}), many results may be proved directly in the
operational K-homology of the auxiliary stacks instead of in an
artificial Lie algebra. This makes some of the vanishing results more
complicated, notably Lemma~\ref{wc:lem:vanishing-basic}, but leads to
a more conceptual proof.

\subsection{Summary of applications}

\subsubsection{}
\label{sec:wcf-pure-type-setting}

In \S\ref{sec:wall-crossings-simple-type}, we study the simplest
non-trivial case of our wall-crossing formula (Theorem~\ref{thm:wcf}):
a continuous family of weak stability conditions $\{\tau_t\}_{t \in
  [-1,1]}$ which is ``of simple type''
(Definition~\ref{def:wall-crossing-simple-type}). One may imagine that
$\{\tau_t\}_t$ arises from a family of weak Bridgeland stability
conditions with central charges as depicted in
Figure~\ref{fig:wc-simple-type} involving effective classes of type
$A$ and of type $B$. Roughly, there is only one wall, at $t=0$, and at
this wall objects of type $A$ may split off strictly
$\tau_0$-semistable pieces of type $B$.

\begin{figure}[!ht]
  \centering
  \captionsetup{width=.8\linewidth}
  \includegraphics[scale=1.1]{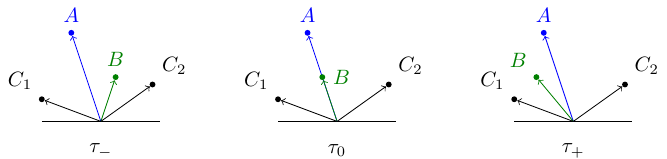}
  \caption{A one-parameter family of weak stability conditions
    defining a wall-crossing problem of simple type. Central charges
    of all classes not of type $A$ and $B$ stay constant and far away
    from central charges of classes of type $A$ and $B$.}
  \label{fig:wc-simple-type}
\end{figure}

\subsubsection{}

\begin{proposition} \label{prop:wcf-simple-type}
  Let $\{\tau_t\}_{t \in [-1,1]}$ define a wall-crossing problem of
  simple type. For any class $\alpha \in A$,
  \begin{equation} \label{eq:wcf-simple-type}
    \sz_\alpha(\tau_1) = \sum_{\substack{n \ge 0\\\alpha = \alpha' + \beta_1 + \cdots + \beta_n\\\alpha' \in A, \; \forall k: \, \beta_k \in B}} \frac{1}{n!} \left[\left[\cdots[\sz_{\alpha'}(\tau_{-1}), \sz_{\beta_1}(\tau_{-1})], \ldots\right], \sz_{\beta_n}(\tau_{-1})\right].
  \end{equation}
\end{proposition}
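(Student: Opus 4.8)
The plan is to derive Proposition~\ref{prop:wcf-simple-type} as a special case of the general wall-crossing formula (Theorem~\ref{thm:wcf}), by computing the universal coefficients $\tilde U(\alpha_1, \ldots, \alpha_n; \tau_{-1}, \tau_1)$ explicitly for a wall-crossing problem of simple type. First I would verify that the hypotheses of Theorem~\ref{thm:wcf} are met: namely, that $\tau_{-1}$ and $\tau_1$ satisfy Assumption~\ref{assump:wall-crossing} (this should be part of, or follow easily from, Definition~\ref{def:wall-crossing-simple-type}), and that $\tau_{-1}$ weakly dominates $\tau_1$ at $\alpha$ — or, if the domination runs the other way, apply the formula in the reverse direction. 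Here the key structural input is that in a simple-type family, the only wall is at $t=0$, that $\tau_t$ is constant in slope-ordering away from the interaction between type-$A$ and type-$B$ classes, and that at the wall a type-$A$ object can only shed strictly semistable type-$B$ pieces (so $\tau_0(\alpha') = \tau_0(\beta_k)$ for the relevant classes, and type-$B$ classes among themselves never cross a wall in this family).

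Next I would identify which decompositions $\alpha = \alpha_1 + \cdots + \alpha_n$ contribute to the sum in \eqref{eq:wcf}. The constraint $\tau_1(\alpha_i) = \tau_1(\alpha)$ together with $\fM_{\alpha_i}^{\sst}(\tau_1) \neq \emptyset$ should force each $\alpha_i$ to be either the single leftover type-$A$ class $\alpha'$ or a type-$B$ class, and moreover should force $\alpha'$ to be unique (at most one type-$A$ summand, since two type-$A$ classes of the same total would already be semistable and not interact). The main computation is then to evaluate $\tilde U(\alpha_1, \ldots, \alpha_n; \tau_{-1}, \tau_1)$ on such a decomposition. Unwinding Definition~\ref{def:universal-coefficients}: the coefficients $S(\cdots; \tau_{-1}, \tau_1)$ are determined by comparing the $\tau_{-1}$-ordering (where type-$B$ classes are, say, below type-$A$) with the $\tau_1$-ordering (where they have swapped), so only very specific orderings of the $\alpha_i$ survive; then $U$ is assembled from permissible double groupings; and finally $\tilde U$ is extracted via Lemma~\ref{lem:Utilde-definition}, i.e. by matching the free-Lie-algebra expansion against the enveloping-algebra expansion. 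I expect the upshot to be that the surviving Lie-bracket term is exactly the left-nested bracket $[[\cdots[\sz_{\alpha'}(\tau_{-1}), \sz_{\beta_1}(\tau_{-1})], \ldots], \sz_{\beta_n}(\tau_{-1})]$ with $\alpha'$ in the innermost slot, and that all the combinatorial factors collapse to $1/n!$ after summing over the $n!$ orderings of the $\beta_k$'s (using that the bracket is already antisymmetrized, or appealing to a symmetry argument like the one behind \eqref{eq:wcf-simple-type}'s symmetric form). This kind of computation of $\tilde U$ in a one-wall situation is essentially the content of \cite[\S 5.4, \S 13]{Joyce2021}, so I would lean on that, adapting signs to our convention.

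The main obstacle, I expect, is the bookkeeping in the $\tilde U$ computation: going from $U$ to $\tilde U$ via Lemma~\ref{lem:Utilde-definition} is genuinely delicate because it requires inverting the PBW-type relation between iterated brackets and products, and one must be careful that the sum over double groupings and orderings reorganizes correctly into a single $1/n!$. A secondary subtlety is making sure the type-$A$ summand really is forced to be unique and to sit at the innermost position of the bracket — this uses both the see-saw-type constraints from simple type and the fact that $\sz_{\alpha''}(\tau_{-1}) = \sz_{\alpha''}(\tau_1)$ for $\tau_{-1}$-semistable type-$A$ classes $\alpha''$ that don't reach the wall (by Theorem~\ref{thm:sst-invariants}\ref{item:vss-isomorphic-moduli}), so that only the one class $\alpha$ genuinely jumps. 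One clean way to sidestep some of this is to prove \eqref{eq:wcf-simple-type} by downward induction on the "type-$B$ content" of $\alpha$: peel off one wall-crossing at a time, or better, recognize the right-hand side as the unique solution to the recursion that the general wall-crossing formula imposes, using the transitivity property (Lemma~\ref{wc:lem:U-properties}\ref{it:U-composition}) of $\tilde U$. I would present whichever of these two routes — direct $\tilde U$ evaluation versus inductive/recursive characterization — turns out to require less sign-chasing, but I anticipate the direct route, modeled on Joyce, is the cleanest to state.
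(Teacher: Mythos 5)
Your proposal follows essentially the same route as the paper: \S\ref{sec:wall-crossings-simple-type} proves the proposition precisely by computing $S$, then $U$, then $\tilde U(\alpha_1,\ldots,\alpha_n;\tau_{-1},\tau_1)$ from Definition~\ref{def:universal-coefficients} and Lemma~\ref{lem:Utilde-definition} (the answer being $1/(n-1)!$ exactly when $\alpha_1\in A$ and $\alpha_2,\ldots,\alpha_n\in B$, and zero otherwise, obtained via the identity $e^{-\ad_X}Y=e^{-X}Ye^{X}$), and then feeding this into the wall-crossing formula. One minor correction to your first paragraph: neither $\tau_{-1}$ nor $\tau_{1}$ dominates the other; the dominance is by $\tau_0$ at the wall, and the chamber-to-chamber statement is obtained by composing the two dominant wall-crossings via the transitivity of $\tilde U$ (the general wall-crossing formula of \S\ref{sec:general-wall-crossing}), which is the route you correctly identify at the end.
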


This is essentially a calculation of the universal coefficients
$\tilde U$, and is proved from first principles in
\S\ref{sec:wall-crossings-simple-type}.

By introducing a grading or otherwise so that all the following
expressions are well-defined, we may define
\[ \sz^A(\tau_t) \coloneqq \sum_{\alpha \in A} \sz_\alpha(\tau_t), \quad \sz^B(\tau_t) \coloneqq \sum_{\beta \in B} \sz_\beta(\tau_t), \]
and use the operator notation $\ad(x)(-) \coloneqq [x, -]$ to write
\eqref{eq:wcf-simple-type} more cleanly as
\[ \sz^A(\tau_1) = \exp\left(-\ad(\sz^B(\tau_{-1}))\right) \sz^A(\tau_{-1}). \]

\subsubsection{}
\label{sec:DT-theories}

In \S\ref{sec:DT-PT} and \S\ref{sec:PT-BS}, we apply
Proposition~\ref{prop:wcf-simple-type} to Donaldson--Thomas-type
theories counting curves in smooth quasi-projective 3-folds $X$:
\begin{itemize}
\item the {\it Donaldson--Thomas} ({\it DT}) theory \cite{Maulik2006}
  of ideal sheaves $\cI_C$ of $1$-dimensional subschemes $C
  \subset X$;
\item the {\it Pandharipande--Thomas} ({\it PT}) theory
  \cite{Pandharipande2009} of {\it stable pairs} $\cO_X
  \xrightarrow{s} \cF$ on $X$, meaning that $\coker(s)$ is
  $0$-dimensional and $\cF$ is pure $1$-dimensional;
\item if $\pi\colon X \to X_0$ is a resolution of singularities of
  relative dimension $\le 1$, the {\it Bryan--Steinberg} ({\it
    $\BS(\pi)$}) theory of {\it $\pi$-stable pairs} $\cO_X
  \xrightarrow{s} \cF$ on $X$, meaning that $R\pi_*\coker(s)$ is a
  $0$-dimensional sheaf and $\cF$ is $1$-dimensional and admits only
  the zero map from sheaves $\cT$ such that $R\pi_*\cT$ is a
  $0$-dimensional sheaf \cite{Bryan2016}.
\end{itemize}
Viewing an ideal sheaf $\cI_C$ also as a pair $\cO_X
\twoheadrightarrow \cO_C$, these are three (weak) stability chambers
for pairs $\cO_X \to \cF$ on $X$ with proper support and $\dim\supp\cF
\le 1$. The DT/PT chambers and PT/BS chambers are related by
wall-crossings of simple type, well-studied due to their relevance to
e.g. rationality problems \cite{Bridgeland2011,Pandharipande2013} and
the DT crepant resolution conjecture \cite{Bryan2012,Beentjes2022}.

When $X$ is toric, the bulk of the complexity in these wall-crossings
may be captured by certain quasi-projective ``model'' geometries.
Generating series of their associated universal enumerative
invariants, called {\it vertices}, have a long history in mathematical
physics \cite{Aganagic2005, Iqbal2008, Li2009,Iqbal2009}, with rich
connections to Gromov--Witten theory \cite{Maulik2011}, geometric
representation theory \cite{Okounkov2017}, and statistical mechanics
\cite{Okounkov2006,Jenne2021}, for a non-representative sample. We
summarize their construction in \S\ref{sec:intro:equivariant-vertices}
below; see \S\ref{sec:DT-PT} and \S\ref{sec:PT-BS} for details.

\subsubsection{}
\label{sec:intro:equivariant-vertices}

For DT/PT, consider $\bC^3 \subset X \coloneqq (\bP^1)^3$ with the
scaling action of $\sT \coloneqq (\bC^\times)^3$ and with
$\sT$-invariant boundary divisors at infinity denoted $\iota_i\colon
D_i \to (\bP^1)^3$ for $i=1,2,3$. For $M \in \{\DT, \PT\}$ and integer
partitions $\lambda,\mu,\nu$ specifying $\sT$-fixed points in
$\Hilb(D_i)$ for $i = 1,2,3$, let
\[ M_{(\lambda,\mu,\nu),n}^{\sst} = \left\{ M\text{-stable } [\cO_X \xrightarrow{s} \cF] : \begin{array}{c}\ch(\cF) = (0,0,\beta,n), \\ L^1\iota_i^*\cF = 0 \text{ for } i=1,2,3, \\ ({[\cO_{D_i} \xrightarrow{s} \iota_i^*\cF]})_{i=1}^3 = (\lambda,\mu,\nu)\end{array}\right\} \]
where $\beta \coloneqq (|\lambda|,|\mu|,|\nu|) \in H_2((\bP^1)^3;
\bZ)$. These moduli spaces carry symmetric perfect obstruction
theories, using which we define the (``3-legged'') {\it DT} and {\it
  PT vertices}
\begin{equation} \label{eq:DT-PT-vertices}
  \sV_{\lambda,\mu,\nu}^M \coloneqq \sum_{n \in \bZ} Q^n \chi\left(M_{(\lambda,\mu,\nu),n}^{\sst}, \hat\cO^\vir \otimes -\right).
\end{equation}

For PT/BS, consider $\cA_m \times \bC \subset X \coloneqq \cA_m \times
\bP^1$, where $\pi\colon \cA_m \times \bP^1 \to \bC^2/\bZ_{m+1} \times
\bP^1$ is the minimal smooth crepant resolution, with the action of
$\sT \coloneqq (\bC^\times)^3$ induced from its natural action on
$\bC^2 \times \bP^1$, and $\sT$-invariant boundary divisor at infinity
denoted $\iota\colon D \to \cA_m \times \bP^1$. For $M \in \{\PT,
\BS\}$ and integer partitions $\lambda_1,\ldots,\lambda_{m+1}$
specifying a $\sT$-fixed point $\vec\lambda =
(\lambda_1,\ldots,\lambda_{m+1})$ in $\Hilb(\cA_m)$, let
\[ M_{\vec\lambda,\beta_{\cA},n}^{\sst} = \left\{ M\text{-stable } [\cO_X \to \cF] : \begin{array}{c}\ch(\cF) = (0,0,(\beta_{\bP}, \beta_{\cA}),n), \\ L^1\iota^*\cF = 0 \\ {[\cO_D \xrightarrow{s} \iota^*\cF]} = \vec\lambda \end{array}\right\} \]
where $\beta_{\bP} \coloneqq \textstyle\sum_{i=1}^m |\lambda_i| \in
H_2(\bP^1; \bZ)$. These again carry symmetric perfect obstruction
theories, using which we define the (``1-legged'') {\it PT$(\pi)$} and
{\it BS$(\pi)$ vertices} \footnote{While $\bC^3$ is the only basic
building block for DT-type theories of smooth toric $3$-folds, the
basic building blocks of DT-type theories of toric crepant resolutions
are crepant resolutions of $[\bC^3/G]$ where $G \subset \SO(3)$ or $G
\subset \SU(2)$ is a finite subgroup \cite[Lemma 24]{Bryan2009}, and
there should be a BS vertex for each such basic crepant resolution.
Thus, $\pi$ must be recorded in the notation.}
\begin{equation} \label{eq:PT-BS-vertices}
  \sV_{\vec\lambda}^{M(\pi)} \coloneqq \sum_{\substack{\beta_{\cA} \in H_2(\cA_m; \bZ)\\n \in \bZ}} A^{\beta_{\cA}} Q^n \chi\left(M_{\vec\lambda,\beta_{\cA},n}^{\sst}, \hat\cO^\vir \otimes -\right).
\end{equation}

These DT/PT and $\PT(\pi)$/$\BS(\pi)$ vertices agree with those of
\cite[\S 4]{Maulik2006} (DT), \cite[\S 4]{Pandharipande2009a} (PT),
and \cite[\S 2.2]{Liu2021} ($\BS(\pi)$).

\subsubsection{}

\begin{theorem}[Operational equivariant vertex correspondences] \label{thm:vertex-correspondences} 
  \leavevmode
  \begin{enumerate}[label=(\roman*)]
  \item \label{it:dt-pt-vertex} (DT/PT) For all $\lambda, \mu, \nu$,
    \begin{equation} \label{eq:DT-PT}
      I_*\sV_{\lambda,\mu,\nu}^{\DT} = \exp\left(\ad(\sz)\right) I_*\sV_{\lambda,\mu,\nu}^{\PT}
    \end{equation}
    where $\sz \coloneqq \sum_{m \in \bZ} \sz_m Q^m$ is a series of
    K-homology classes independent of $\lambda, \mu, \nu$.
  \item \label{it:pt-bs-vertex} (PT/BS) For all $\vec\lambda$,
    \begin{equation} \label{eq:PT-BS}
      I_*\sV_{\vec\lambda}^{\PT(\pi)} = \bigg(\prod^{\rightarrow}_{s \in \bQ_{>0}} \exp\left(\ad(\sz^{\pi,s})\right)\bigg) I_*\sV_{\vec\lambda}^{\BS(\pi)},
    \end{equation}
    where $\sz^{\pi,s} \coloneqq \sum_{\mu(\beta_{\cA},n)=s}
    \sz_{\beta_{\cA},n}^{\pi,s} A^{\beta_{\cA}} Q^n$ (see
    \S\ref{sec:PT-BS-WCF-proof} and Remark~\ref{rem:PT-BS-wcf-shape})
    is a series of K-homology classes independent of $\vec\lambda$.
  \end{enumerate}
\end{theorem}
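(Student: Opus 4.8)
The strategy is to realize both the DT/PT and the PT/BS vertex correspondences as instances of the simple-type wall-crossing formula (Proposition~\ref{prop:wcf-simple-type}), carried out in \S\ref{sec:DT-PT} and \S\ref{sec:PT-BS}. In both cases one takes the $\sT$-equivariant 3CY category to be (a category of) pairs $[\cO_X \to \cF]$ on $X$ with proper support and $\dim\supp\cF \le 1$, where $X = (\bP^1)^3 \supset \bC^3$ for DT/PT and $X = \cA_m\times\bP^1 \supset \cA_m\times\bC$ for PT/BS; the equivariant Calabi--Yau structure $\cK_X \cong \kappa\otimes\cO_X$ on the open part supplies the symmetric obstruction theory (via the Atiyah class), which on the chamber loci is perfect and agrees with the standard one used to define \eqref{eq:DT-PT-vertices}--\eqref{eq:PT-BS-vertices}. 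Using the subcategory/substack formalism of \S\ref{sec:restrictions}, one restricts to the locally closed moduli substack cut out by the boundary conditions $L^1\iota_i^*\cF = 0$ and the prescribed boundary data $(\lambda,\mu,\nu)$ (resp. $\vec\lambda$); the rank function and the homomorphism $\lambda$ of Assumptions~\ref{assump:semistable-invariants}--\ref{assump:wall-crossing} are built from the holomorphic Euler characteristic $n$ and the curve class.

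First I would produce the relevant families of weak stability conditions on pairs, following the polynomial/Bridgeland-type constructions for DT/PT and the $\pi$-stability constructions for PT/BS. For DT/PT one obtains a one-parameter family $\{\tau_t\}_{t\in[-1,1]}$ of simple type (Definition~\ref{def:wall-crossing-simple-type}) with a single wall at $t = 0$: type $A$ is the set of classes of genuine pairs of fixed curve class, type $B$ the set of classes of $0$-dimensional sheaves supported in the interior, and $\fM^\sst(\tau_{1})$, $\fM^\sst(\tau_{-1})$ are the ambient substacks whose semistable loci give the DT and PT moduli spaces, with semistable $=$ stable in both chambers. For PT/BS one obtains instead a family with one wall for each slope $s\in\bQ_{>0}$ of an exceptional curve class; at the wall of slope $s$, type $B$ is the set of classes of sheaves $\cT$ with $R\pi_*\cT$ zero-dimensional of $\mu$-slope $s$, each such wall is simple, and the family interpolates between the $\PT(\pi)$ and $\BS(\pi)$ chambers.

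Applying Proposition~\ref{prop:wcf-simple-type} to a class $\alpha \in A$ with $\ch(\cF) = (0,0,\beta,n)$ expresses $\sz_\alpha(\tau_1)$ as a sum, with coefficient $1/k!$, of $k$-fold nested brackets of $\sz_{\alpha'}(\tau_{-1})$ ($\alpha'\in A$) against type-$B$ classes. Pushing forward to the ambient moduli stack and invoking Theorem~\ref{thm:sst-invariants}\ref{item:vss-no-strictly-semistables} (legitimate because semistable $=$ stable in the chambers), the pushforwards of $\sz_{\alpha}(\tau_{\pm1})$ become the operators $\chi(M^\sst, \hat\cO^\vir \otimes -)$ defining the vertices. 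Summing over the Euler characteristic $n$ with the variable $Q$ (and, for PT/BS, over exceptional curve classes with $A$), the factor $\sum_k \tfrac{1}{k!}\ad(-)^k$ becomes an exponential, giving $I_*\sV^{\DT} = \exp(\ad(\sz))\,I_*\sV^{\PT}$ with $\sz = \sum_m \sz_m Q^m$ the generating series of the (signed) semistable invariants of the $0$-dimensional classes; for PT/BS, composing the successive simple-type wall-crossings in order of slope --- either by iterating Proposition~\ref{prop:wcf-simple-type} or via the transitivity of \S\ref{sec:general-wall-crossing} --- yields the ordered product $\prod^{\rightarrow}_{s}\exp(\ad(\sz^{\pi,s}))$. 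Independence of $\sz$ from $\lambda,\mu,\nu$ (resp. $\sz^{\pi,s}$ from $\vec\lambda$) is then immediate, since the type-$B$ objects are sheaves supported in the interior, so their moduli stacks, symmetric obstruction theories, and the vertex-algebra operations among them do not involve the boundary data at all.

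The main obstacle is not the wall-crossing step itself but verifying that these concrete moduli problems genuinely satisfy Assumptions~\ref{assump:semistable-invariants} and \ref{assump:wall-crossing}, and that the abstract framework reproduces the classical vertices. The delicate points are: (i) properness of the $\sT$-fixed auxiliary stacks and master spaces --- this is where the boundary conditions $L^1\iota_i^*\cF = 0$ together with the fixed boundary data are essential, as they keep the fixed loci compact by preventing sheaves from degenerating towards the divisor at infinity; (ii) checking that the framework's symmetrized virtual structure sheaf coincides with the $\hat\cO^\vir$ used in \eqref{eq:DT-PT-vertices}--\eqref{eq:PT-BS-vertices}, i.e.\ that the symmetric obstruction theory restricts correctly on the chamber loci; and, for PT/BS, (iii) finiteness of the infinite product $\prod^{\rightarrow}_{s}$, which holds because the $A$-grading by exceptional curve classes ensures that only finitely many slopes $s$ contribute in each fixed total degree.
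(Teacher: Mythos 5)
Your proposal is correct and takes essentially the same route as the paper: both correspondences are deduced from the simple-type wall-crossing formula (Proposition~\ref{prop:wcf-simple-type}) applied to the Toda-style ambient categories of pairs with the restricted substacks of \S\ref{sec:restrictions} fixing the boundary data, with the DT/PT case a single simple wall and the PT/BS case factored into one simple wall per slope $s$ composed via the transitivity of \S\ref{sec:general-wall-crossing}, and with independence of $\sz$, $\sz^{\pi,s}$ from the boundary data because the type-$B$ moduli live in the interior. The one point you gloss over is that the natural PT/BS family \eqref{eq:PT-BS-stability-condition} has only a single wall, at which Assumptions~\ref{es:assump:semistable-invariants}\ref{es:assump:it:rank-function}--\ref{es:assump:it:semi-weak-stability} fail; the paper therefore replaces it by the modified families $\tau^{(s)}_\xi$, an ad hoc slope-by-slope splitting that your slope-indexed walls implicitly presuppose.
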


These are equalities of formal series in $Q$ (and $A$) valued in
$\tilde\sT$-equivariant K-homology. The discussion of
\S\ref{sec:cohomological-version} applies, yielding the same results
in homology as well.

The proof of \ref{it:dt-pt-vertex} in \S\ref{sec:DT-PT} is more or
less a review of the content of \cite[\S 6.1]{klt_dtpt}, which builds
on the setup of \cite{toda_dtpt}. The analogous proof of
\ref{it:pt-bs-vertex} in \S\ref{sec:PT-BS} is new and is inspired by
the setup of \cite{Toda2013}.

\subsubsection{}
\label{sec:intro:primary-vertex-correspondences}

Evaluating \eqref{eq:DT-PT} and \eqref{eq:PT-BS} on the trivial
cohomology/K-theory classes $1$ or $\cO$ immediately produces the {\it
  equivariant primary vertex correspondences} (see
\S\ref{sec:DT-PT-consequences} and \S\ref{sec:PT-BS-consequences})
\begin{align}
  \sV_{\lambda,\mu,\nu}^{\DT}(\cO) &= \sV_{\emptyset,\emptyset,\emptyset}^{\DT}(\cO) \cdot \sV_{\lambda,\mu,\nu}^{\PT}(\cO), \label{eq:DT-PT-primary} \\
  \sV_{\vec\lambda}^{\PT(\pi)}(\cO) &= \sV_{\vec\emptyset}^{\PT(\pi)}(\cO) \cdot \sV_{\vec\lambda}^{\BS(\pi)}(\cO), \label{eq:PT-BS-primary}
\end{align}
where $\emptyset$ is the trivial integer partition and $\vec\emptyset
\coloneqq (\emptyset,\ldots,\emptyset)$. Note here, that $\sV_{\vec\emptyset}^{\PT(\pi)}$ only fixes an empty curve class in the $\bP^1$-direction and hence corresponds to the PT invariants counting exceptional curves. While the (K-theoretic) DT/PT
primary vertex correspondence \eqref{eq:DT-PT-primary} had previously
existed in the literature as a conjecture \cite[Conjecture
  4]{Pandharipande2009a} \cite[Equation (16)]{Nekrasov2016} --- and
was proved by the authors ``by hand'' in \cite{klt_dtpt} --- the
(K-theoretic) PT/BS primary vertex correspondence
\eqref{eq:PT-BS-primary} is genuinely new and is a significant
refinement of previous non-equivariant cohomological results for
compact CY 3-folds \cite{Toda2013} \cite[Theorem 6]{Bryan2016}
\cite[\S 1.1]{Beentjes2022}. Combined with the BS/quasimaps primary
vertex correspondence \cite{Liu2021}, it completes the K-theoretic
refinement of a connection between the quantum cohomology of
$\Hilb(\cA_m)$ and sheaf counting on $\cA_m \times \bP^1$
\cite{Maulik2009, Maulik2011}.

\subsubsection{}

In \S\ref{sec:DT-descendent-transformations}, we go one step beyond
the simplest case of \S\ref{sec:intro:primary-vertex-correspondences}
and evaluate \eqref{eq:DT-PT} and \eqref{eq:PT-BS} on products of
cohomological {\it descendent classes}
\begin{equation} \label{eq:DT-descendents}
  \tau_n(\xi) \coloneqq \pi_{\fN *}((\ch_n(\scF^\pl) \cup \pi_X^*(\xi)) \cap \pi_{\fN}^*(-)) \in A_{\sT}^{n-3+\deg \xi}(\fN^\pl; \bQ),
\end{equation}
for $n \ge 0$ and homogeneous $\xi \in \CH^{\sT}_*(X)$. Here
$\scF^\pl$ is the universal sheaf of $\cF$ on $\fN^\pl \times X$ (see
Definition~\ref{def:DT-rigidified-descendents}), and $\pi_{\fN}$ and
$\pi_X$ are projections to the two factors. The task of understanding
integrals of polynomials in descendent classes, as well as structures
in the algebra of descendent classes themselves, has a long history
\cite{Pandharipande2018} motivated by the analogous (apparently
harder) task for {\it Gromov--Witten (GW) theory}, see e.g.
\cite{Maulik2006a,Pandharipande2018a}, along with the celebrated GW/DT
correspondence \cite{Maulik2011}.

Certainly, one may also consider K-theoretic descendents and analogues
of the following Corollary~\ref{cor:descendent-correspondences} and
Theorem~\ref{thm:DT-PT-descendents}. But because K-theoretic
descendents are rather obscure in the current literature, this will be
addressed more comprehensively in future work instead.

\subsubsection{}

\begin{corollary}[Equivariant cohomological descendent vertex correspondences] \label{cor:descendent-correspondences}
  Let
  \[ (\sV, \sV_0, \sV') = (\sV_{\lambda,\mu,\nu}^{\DT}, \sV_{\emptyset,\emptyset,\emptyset}^{\DT}, \sV_{\lambda,\mu,\nu}^{\PT}) \text{ or } (\sV_{\vec\lambda}^{\PT(\pi)}, \sV_{\vec\emptyset}^{\PT(\pi)}, \sV_{\vec\lambda}^{\BS(\pi)}). \]
  For a monomial $\ff$ in the descendents $\{\tau_n(\xi) : n > 0, \,
  \xi \in \CH^{\sT}_*(X)\}$,
  \[ \sV(\ff) = \sum_{\ff'} c_{\ff}^{\ff'}(\sV_0) \cdot \sV'(\ff') \]
  for monomials $\ff'$ and coefficients $c_{\ff}^{\ff'}(\sV_0)$ whose
  $Q^n$ coefficient is a polynomial, over $\bQ$, in
  \[ (\sV_0)_m(\ff'') \text{ for } m \le n \text{ and monomials } \ff'' \]
  where $(\sV_0)_m$ denotes the coefficient of $Q^m$ in $\sV_0$.
\end{corollary}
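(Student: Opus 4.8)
The plan is to deduce the corollary from Theorem~\ref{thm:vertex-correspondences} by the standard device of pulling the ``adjoint exponential'' through the descendent insertions. Recall that both correspondences have the form $I_*\sV = \mathcal{A}\, I_*\sV'$, where $\mathcal{A} = \exp(\ad(\sz))$ in the DT/PT case and $\mathcal{A} = \prod^{\rightarrow}_{s}\exp(\ad(\sz^{\pi,s}))$ in the PT/BS case, with $\sz$ (resp. the $\sz^{\pi,s}$) a series, in $Q$ (and $A$), of fixed operational K-homology — or, via \S\ref{sec:cohomological-version}, Chow-homology — classes on the ambient stack $\fN^\pl$, independent of the boundary data $\lambda,\mu,\nu$ (resp. $\vec\lambda$). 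Evaluating the left side on a descendent monomial $\ff$ means feeding the cohomology class $\ff(1) \in A^*_\sT(\fN^\pl)$ — i.e.\ the monomial in the operators $\tau_n(\xi)$ applied to the unit — into the operational class $I_*\sV$. The key point is therefore to understand how the operators $\ad(\sz)$ and the descendent operators $\tau_n(\xi)$ interact.

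First I would record the basic commutation relation. The descendents $\tau_n(\xi)$ are defined in \eqref{eq:DT-descendents} by pushing, pulling, and cap/cup with Chern classes of the universal sheaf, so they are cohomology \emph{operators} on $A^\sT_*(\fN^\pl)$, compatible with the graded monoidal structure that defines the Lie bracket $[-,-]$ of Theorem~\ref{thm:mVOA-monoidal-stack-lie-algebra}. Consequently, for a homology class $z$ the operator $\ad(z) = [z,-]$ and the descendent operator $\tau_n(\xi)$ do not commute, but their commutator $[\ad(z), \tau_n(\xi)]$ is again an operator that is a \emph{linear combination of descendent operators with coefficients built from $z$ and $Q$} — this is exactly the kind of ``vertex algebra Ward identity'' one expects, since the bilinear element $\scE$ enters both the Lie bracket and (through the Atiyah/universal-sheaf construction) the descendents. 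I would either cite or prove a lemma of the form
\[
  \ad(\sz)\circ \tau_n(\xi) = \sum_{m,\,\xi'} \tau_m(\xi')\circ D_{n,m}^{\xi,\xi'}(\sz),
\]
where each $D_{n,m}^{\xi,\xi'}(\sz)$ is itself an operator whose $Q$-expansion has coefficients that are polynomial in $\sz_k$ for $k$ bounded by the ambient $Q$-degree. Iterating, $\mathcal{A}$ conjugates any descendent monomial into a ($Q$-locally finite) sum of descendent monomials with coefficients that are polynomials in the components of $\sz$ — and, crucially, $\sz(\cO)$ or $\sz(1)$ repackages exactly as $\sV_0$ (resp.\ the $\sV_0$ data) by the very definition of these classes via the pair/framed invariants. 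So conjugating $\mathcal{A}$ past $\ff$ and then evaluating on the unit expresses $\sV(\ff)$ as $\sum_{\ff'} c_\ff^{\ff'}\,\sV'(\ff')$ with $c_\ff^{\ff'}$ of the asserted shape.

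The steps, in order, are: (1) reinterpret $\sV(\ff)$ as $I_*\sV$ evaluated on $\ff(1)$, and similarly for $\sV'$; (2) establish the commutation lemma for $\ad(\sz)$ against a single $\tau_n(\xi)$, tracking that the ``correction'' operators lower (or preserve) the relevant degree so that the $Q$-expansion stays locally finite — this is where the bilinearity and $\kappa$-symmetry of $\scE$, hence the explicit form of the Lie bracket from Proposition~\ref{prop:rigidity}, get used; (3) iterate to conjugate $\mathcal{A}$ (a possibly infinite ordered product of exponentials, in the PT/BS case) through the monomial $\ff$, noting that at each fixed total $Q$-degree only finitely many terms contribute; (4) evaluate the resulting identity $I_*\sV\,(\ff(1)) = \sum_{\ff'} \big(\text{coeff}\big)\cdot I_*\sV'\,(\ff'(1))$ on $1$ and identify the coefficients' dependence on $\sz$ with the asserted polynomial dependence on the $(\sV_0)_m(\ff'')$, using that $(\sV_0)_m$ is (up to the standard logarithm/exponential bookkeeping of Theorem~\ref{thm:sst-invariants}) built from the same $Q^m$-components of $\sz$. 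The main obstacle is step (2)–(3): one must verify that the commutator of $\ad(\sz)$ with a descendent insertion genuinely produces only descendent operators (not some larger class of operations) and that the bookkeeping of $Q$- (and $A$-) degrees really does confine the coefficient of $Q^n$ to a \emph{polynomial} expression in finitely many $(\sV_0)_m(\ff'')$ with $m\le n$ — in the PT/BS case one additionally has to control the infinite ordered product $\prod^{\rightarrow}_{s\in\bQ_{>0}}$, which requires the curve-class grading to make the product $Q$-locally finite, exactly as in the proof of Theorem~\ref{thm:vertex-correspondences}\ref{it:pt-bs-vertex}.
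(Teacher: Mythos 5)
Your overall route is the same as the paper's, just phrased in adjoint form: commuting $\ad(\sz)$ past the descendent operators is dual to the computation of the coproduct $\Delta$ on operational Chow cohomology, $((\Pi^\pl)_*[I_*\phi,\psi])(\tau)=(\phi\boxtimes\psi)(\Delta(\tau))$, followed by $\sV(\ff)=(\sV'\boxtimes\sum_{n\ge 0}\sz^{\boxtimes n})(e^{-\Delta}\ff)$. But the proposal leaves unproved exactly the point that carries the content. Your ``commutation lemma'' is equivalent to the assertion that $\Delta$ preserves the tautological (descendent) subalgebras, and this is not a formal consequence of bilinearity or of rigidity (Proposition~\ref{prop:rigidity} only controls the trivial insertion $\cO$, resp.\ $1$); it requires showing that the $\Theta$-class, i.e.\ $\ch(\scE)$ restricted to $\fN\times\fQ$, is itself a descendent expression. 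In the paper this is Proposition~\ref{prop:DT-coproduct} combined with Lemmas~\ref{lem:DT-Theta} and \ref{lem:DT-bilinear-element-chern-character}: one writes $\Theta(u)$ plethystically in the $\ch_k(\scE)$ and then computes $\ch(\scE)=-\td(\bC^3)\,(e^{-v}-\tilde\tau^{(-1)}(\spt))\boxtimes\tilde\tau(1)$ via GRR, the K\"unneth decomposition of the diagonal, and the fact that the sheaves parameterized by $\fQ$ are punctual and supported in $\bC^3$. Since you explicitly defer this to a lemma you would ``cite or prove'', the plan is reasonable but the substantive verification is missing.

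The second gap is in your final step. It is not true ``by the very definition'' that $\sz(1)$ repackages as $\sV_0$: the relation is $I_*\sV_0=\exp(\ad(\sz))\,I_*\partial$, obtained from the correspondence at empty boundary data using $\sV^{\PT}_{\emptyset,\emptyset,\emptyset}=\partial$, and the corollary asks for coefficients polynomial in $(\sV_0)_m(\ff'')$, not in $\sz_m(\ff'')$. One must therefore invert this relation order by order in $Q$: its $Q^n$ coefficient reads $I_*(\sV_0)_n=[\sz_n,\partial]+(\text{terms involving only }\sz_m,\ m<n)$, and recovering $\sz_n(\ff)$ from it requires both the invertibility of $[-,\partial]$ and, again, the fact that the coproduct preserves descendent evaluations, so that the induction closes inside the tautological subalgebra. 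Without this inductive inversion you only obtain coefficients polynomial in the components of $\sz$, which is strictly weaker than the statement. Your concern about the ordered product over $s\in\bQ_{>0}$ in the PT/BS case is handled, as you suggest, by the $(A,Q)$-grading, so that part is fine.
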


The coefficients $c_{\ff}^{\ff'}(\sV_0)$ are inexplicit but are
effectively computable. The existence of {\it some} expressions
$c_{\ff}^{\ff'}(\sV_0)$, not necessarily a polynomial in the $\DT_0$
or $\PT_0(\pi)$ descendent vertices, was a folklore conjecture
probably already anticipated by \cite{Pandharipande2009}.

\subsubsection{}

\begin{theorem}[Explicit equivariant cohomological DT/PT descendent vertex correspondence] \label{thm:DT-PT-descendents}
  Let
  \[ \tau^{(k)}(\xi) \coloneqq \sum_{n \ge 0} k^{n-3+\deg \xi} \tau_n(\xi). \]
  Let $\spt \in \CH_0^\sT(X)$ be the class of $0 \in \bC^3 \subset X$
  and
  \[ \sigma\{k_1, \ldots, k_N\} \coloneqq \prod_{i=1}^N (1 - \tau^{(k_i)}(\spt)). \]
  Then, for integers $N \ge 0$ and $k_1, \ldots, k_N \in \bZ$,
  \begin{equation} \label{eq:DT-PT-descendent-transformation}
    \begin{aligned}
      \sV_{\lambda,\mu,\nu}^{\DT}\left(\sigma\{k_i\}_{i \in \underline{N}}\right) \equiv \sum_{\substack{n>0\\m_1,\ldots,m_n>0\\ S_1 \sqcup \cdots \sqcup S_n = \underline{N}\\ \forall i: \, S_i^1 \sqcup \cdots \sqcup S_i^{m_i} = S_i}}
      &(-1)^n \cdot \sV_{\lambda,\mu,\nu}^{\PT}\Big(\sigma\big\{\sum_{j \in S_i} k_j\big\}_{i=1}^n\Big) \cdot \sV_{\emptyset,\emptyset,\emptyset}^{\DT}(1) \\[-2.5em]
      &\quad \cdot \prod_{i=1}^n (m_i-1)! \prod_{j=1}^{m_i} \frac{\sV_{\emptyset,\emptyset,\emptyset}^{\DT}\left(\sigma\{k_\ell\}_{\ell \in S_i^j}\right)}{-\sV_{\emptyset,\emptyset,\emptyset}^{\DT}(1)} \bmod{\hbar}
    \end{aligned}
  \end{equation}
  where $\underline{N} \coloneqq \{1, \ldots, N\}$, all set partitions
  are into non-empty subsets (i.e. all $S_i^j \neq \emptyset$), and
  $\hbar \in \bh^2_\sT$ is the Calabi--Yau weight.
\end{theorem}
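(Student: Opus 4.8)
The plan is to deduce \eqref{eq:DT-PT-descendent-transformation} from the operational DT/PT vertex correspondence $I_*\sV_{\lambda,\mu,\nu}^{\DT} = \exp(\ad(\sz))\,I_*\sV_{\lambda,\mu,\nu}^{\PT}$ of Theorem~\ref{thm:vertex-correspondences}\ref{it:dt-pt-vertex}, by evaluating both sides of this identity of operational classes on $\sigma\{k_i\}_{i\in\underline{N}} = \prod_{i=1}^N(1-\tau^{(k_i)}(\spt))$ and expanding $\exp(\ad(\sz)) = \sum_{r\ge 0}\tfrac1{r!}(\ad(\sz))^r$ modulo $\hbar$. Here $\sz = \sum_m \sz_m Q^m$ is the series of operational semistable invariants of pure $0$-dimensional sheaves of length $m$, i.e.\ the ``type $B$'' classes in the DT/PT wall-crossing of simple type underlying Proposition~\ref{prop:wcf-simple-type}. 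Running the \emph{same} correspondence with empty leg data, where the PT side degenerates to the unit $\mathbf{1}$ because there is no stable pair $[\cO_X\to\cF]$ with $\ch(\cF) = (0,0,0,n)$ and $n\ne 0$, yields $I_*\sV_{\emptyset,\emptyset,\emptyset}^{\DT} = \exp(\ad(\sz))(\mathbf{1})$; since this map is unipotent in $Q$, it can be inverted to express the descendent values of $\sz$ in terms of those of $\sV_{\emptyset,\emptyset,\emptyset}^{\DT}$, and the M\"obius combinatorics of that inversion is exactly what produces the factors $(m_i-1)!$ and $1/(-\sV_{\emptyset,\emptyset,\emptyset}^{\DT}(1))$, together with the sign, in the final formula.

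The technical heart — and the step I expect to be the main obstacle — is a descendent-refined rigidity lemma generalizing Proposition~\ref{prop:rigidity}: a closed formula, valid modulo $\hbar$, for $([\phi_\alpha,\psi_\beta])\bigl(\prod_{i}(1-\tau^{(k_i)}(\spt))\bigr)$ in terms of $\phi_\alpha$ and $\psi_\beta$ applied to sub-products of the $1-\tau^{(k_i)}(\spt)$. Modulo $\hbar$ the torus restricts to the Calabi--Yau subtorus, $\kappa$ becomes cohomologically trivial so that the quantum integers $[n]_\kappa$ degenerate to $(-1)^{n-1}n$, and the mVOA Lie bracket of Theorem~\ref{thm:mVOA-monoidal-stack-lie-algebra} collapses to its ``classical'' part; one must then verify that all remaining correction terms vanish in this limit, and that $\sT$-localization concentrates the point descendent $\tau_n(\spt)$, with $\spt = [0\in\bC^3]$, at the fixed point. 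The role of the rescaled generating functions $\tau^{(k)}(\spt) = \sum_n k^{n-3}\tau_n(\spt)$ is that the power $k^{n-3}$ is engineered to absorb, simultaneously, the $Q$-shift in the scaling morphism $\Psi$ of \eqref{eq:psi} and the degree of $\tau_n(\spt)$, so that each $\tau^{(k)}(\spt)$ behaves as an eigen-object; combined with the additivity of $\ch(\scF)$ under $\Phi$, this makes the splitting combinatorics of $\prod_i(1-\tau^{(k_i)}(\spt))$ under the bracket collapse to disjoint decompositions of the index set $\underline{N}$ with the weights summed within each block — the origin of the argument $\sigma\{\sum_{j\in S_i}k_j\}_{i=1}^n$ on the PT side.

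Granting the lemma, the remaining steps are bookkeeping. First I would apply $(\ad(\sz))^r$ to $\sV_{\lambda,\mu,\nu}^{\PT}(\sigma\{k_i\})$ using the lemma $r$ times, so that each successive bracket either merges a block of the remaining point descendents onto a factor $\sz_m$ or leaves them on the PT vertex, producing a sum over ordered sequences of such choices. Next I would reorganize this sum via the exponential formula, trading the ordered sequences and the $1/r!$ factors for an unordered double partition $S_1\sqcup\cdots\sqcup S_n = \underline{N}$ with each $S_i = S_i^1\sqcup\cdots\sqcup S_i^{m_i}$ — the inner partition recording how descendents were merged onto a single $\sz_m$, the outer one recording the separate bracket applications — which also generates the overall sign $(-1)^n$. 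Finally I would apply the lemma to the empty-leg correspondence $\sV_{\emptyset,\emptyset,\emptyset}^{\DT} = \exp(\ad(\sz))(\mathbf{1})$, invert the resulting formal $Q$-series identity to solve for $\sz_m(\sigma\{k_\ell\}_{\ell\in S})$ in terms of the values $\sV_{\emptyset,\emptyset,\emptyset}^{\DT}(\sigma\{k_\ell\}_{\ell\in S_i^j})$ (the M\"obius/logarithm step, yielding $(m_i-1)!$ and $\prod_j 1/(-\sV_{\emptyset,\emptyset,\emptyset}^{\DT}(1))$), substitute, and collect the remaining $\sV_{\emptyset,\emptyset,\emptyset}^{\DT}(1)$; matching indices against \eqref{eq:DT-PT-descendent-transformation} finishes the proof. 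Because the identity is asserted only modulo $\hbar$, there is no need to worry that $\sT$-fixed loci fail to be proper on the Calabi--Yau subtorus: all invariants are constructed equivariantly over the full torus $\sT$, where Assumption~\ref{assump:wall-crossing} holds, and the reduction modulo $\hbar$ is performed only at the very end.
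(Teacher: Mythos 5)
Your overall route is the one the paper takes: evaluate the operational correspondence $I_*\sV^{\DT}_{\lambda,\mu,\nu}=\exp(\ad(\sz))\,I_*\sV^{\PT}_{\lambda,\mu,\nu}$ on the basis $\sigma\{k_1,\ldots,k_N\}$, compute how the Lie bracket acts on point descendents in the limit $\hbar\to 0$, and eliminate the semistable invariants $\sz$ by means of the empty-leg identity $I_*\sV^{\DT}_{\emptyset,\emptyset,\emptyset}=\exp(\ad(\sz))\,I_*\partial$, with the set-partition/exp--log inversion producing the signs and the $(m_i-1)!$. So the skeleton is right, and your identification of where the combinatorial factors come from matches the paper's recursion for the quantities $\sY$ and its explicit solution with coefficients $(-1)^{n-1}(n-1)!$.

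The gap is that the ``descendent-refined rigidity lemma'' you defer is the actual content of the proof, and as you state it it is not quite what is true. In the paper this step is the explicit coproduct formula (Proposition~\ref{prop:DT-coproduct} together with Lemmas~\ref{lem:DT-Theta} and \ref{lem:DT-bilinear-element-chern-character} and the specialization \eqref{eq:DT-coproduct-CY}): applying the bracket to $\prod_i(1-\tau^{(k_i)}(\spt))$ puts on the pairs side the class $\sigma\{K_J,\{k_i\}_{i\in I}\}$ with the \emph{merged, Adams-shifted} descendent $\tau^{(K_J)}(\spt)$, but on the point-sheaf side it does \emph{not} produce a sub-product of $(1-\tau^{(k_j)}(\spt))$'s; it produces the specific tautological classes $a\{k_j\}_{j\in J}=(-1)^{\rank\scE}\,\td^{(K_J)}(\bC^3)\,\tilde\tau^{(-K_J)}(1)\prod_{j\in J}\tilde\tau^{(k_j)}(\spt)$, involving an inverse-Adams descendent of $1$ and a Todd twist coming from $\ch(\scE)=-\td(\bC^3)(e^{-v}-\tilde\tau^{(-1)}(\spt))\boxtimes\tilde\tau(1)$. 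Your plan only closes up because these unknown evaluations $\sz^{\boxtimes}(a\{\cdots\})$ can be eliminated against the same combinations occurring in the empty-leg identity, and making that elimination legitimate requires the bookkeeping the paper supplies: the upper-triangularity of $\Delta$ with respect to refinement of partitions, the factorization of general matrix elements of $e^{-\Delta}$ into products of the extremal ones $\sY$, and the solvable recursion for $\sY$. Also, the mechanism behind the mod-$\hbar$ simplification is not that ``the bracket collapses to its classical part'': the cohomological bracket is defined with a $1/\hbar$ (cf.\ \eqref{eq:monoidal-stack-cohomological-lie-bracket}), and each coefficient $\theta_n$ of $\Theta(u)$ is divisible by $\hbar$, so what survives modulo $\hbar$ is precisely the $\hbar$-linear term $\theta_{n+1}/\hbar\equiv -(-1)^{\rank\scE}n!\,\ch_n(\scE)$; likewise the role of the exponent $k^{n-3+\deg\xi}$ is that $\deg_u$ acts on $\ch^{(k)}(\scF)$ and on $e^{-kv}$ by $e^{ku}$ and $e^{-ku}$, not any absorption of a $Q$-shift. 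Without proving this computation (and the ensuing triangular inversion), the claimed formula \eqref{eq:DT-PT-descendent-transformation}, in particular the exact arguments $\sigma\{\sum_{j\in S_i}k_j\}$ on the PT side and the normalization by $-\sV^{\DT}_{\emptyset,\emptyset,\emptyset}(1)$, is not established.
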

  
The notation $\{-\}$ reminds us that $\sigma$ depends only on the set
$\{k_1, \ldots, k_N\}$ and not the ordering of its elements.

\subsubsection{}

\begin{remark}
  We comment on various aspects of
  \eqref{eq:DT-PT-descendent-transformation}.

  First, one can use \eqref{eq:DT-PT-descendent-transformation} to
  effectively obtain a descendent vertex correspondence (modulo
  $\hbar$) for arbitrary descendents; see
  Remark~\ref{rem:DT-PT-descendents-basis}.

  Second, vertices may be glued together to form partition functions
  for toric geometries $X$. This gluing is compatible with
  \eqref{eq:DT-PT-descendent-transformation} and produces a descendent
  correspondence at the level of partition functions; see
  Remark~\ref{rem:DT-PT-partition-functions}.

  Third, the formula \eqref{eq:DT-PT-descendent-transformation} takes
  place in the Calabi--Yau limit $\hbar \to 0$, which is well-defined
  \cite[\S 4.10]{Maulik2006} and greatly simplifies (descendent)
  vertices, but even in this simplified setting we are unaware of any
  previously-known general formulas or conjectures. The previous
  state-of-the-art appears to be \cite[Conjecture
    5.3.1]{Oblomkov2019}, which is the $N=1$ case (i.e. one descendent
  insertion) of Theorem~\ref{thm:DT-PT-descendents}. \footnote{To be
  precise, Hagborg--Oblomkov's conjecture is a transformation formula
  for {\it $\sT$-equivariant} and {\it capped} vertices with one
  descendent insertion. Capped vertices are related to our vertices by
  a change of basis known as the {\it capping operator} \cite[\S
    2.3]{Maulik2011}, which becomes the identity operator upon taking
  $\hbar \to 0$.}

  Finally, we produce formulas like
  \eqref{eq:DT-PT-descendent-transformation} by explicitly computing
  the Lie bracket evaluated on descendents. Such explicit computations
  can be done equally well without taking the Calabi--Yau limit, and
  also for PT/BS descendent vertices. As the resulting formulas will
  be much more complicated, we computed only the simplest case,
  i.e. DT/PT in the Calabi--Yau limit.
\end{remark}

\subsubsection{}

In \S\ref{sec:VW}, we apply our main results to {\it Vafa--Witten (VW)
  theory}, which we view as a flavor of DT theory with reduced virtual
cycles. As constructed mathematically by Tanaka and Thomas
\cite{Tanaka2017,Tanaka2020}, VW invariants count Gieseker-stable
compactly-supported sheaves on a local surface, with an appropriately
symmetrically-reduced symmetric obstruction theory. Moreover, in
\cite{Thomas2020}, Thomas provides a generalization to K-theory which
{\it refines} these VW invariants. This is a natural setting where our
K-theoretic wall-crossing framework is applicable, and indeed we
apply it to define and study {\it operational refined semistable VW
  invariants} $\svw_\alpha(H)$ in equivariant K-homology. Their
evaluations $\svw_\alpha(H)(\cO)$ recover the numerical refined
semistable VW invariants of \cite{Thomas2020, liu_ss_vw}.\footnote{To
  avoid confusion, note that our $\svw_\alpha(H)(\cO)$ is called
  $\sVW_\alpha(t)$ in these references, and their notation
  $\svw_\alpha(t)$ denotes the {\it Behrend-weighted Euler
    characteristic} version of VW invariants, which are ``incorrect''
  and do not appear in this paper.}

In physics, Vafa and Witten \cite{Vafa1994} originally introduced and
studied these numerical (unrefined) invariants in order to test the
prediction from S-duality that appropriate generating series of VW
invariants are modular forms. In recent years, this prediction has
attracted growing mathematical interest, see e.g. \cite{Goettsche2022,
  Jiang2022, Goettsche2025}, and we expect our results to enable the
study of modularity phenomena at the level of operational K-homology
classes.

\subsubsection{}
\label{sec:intro-VW-setup}

We briefly introduce the notation necessary to state the main
Theorems~\ref{thm:sst-invariants} and \ref{thm:wcf} in the setting of
VW theory. See \S\ref{sec:VW-setup} for details.

Let $S$ be a smooth projective surface and let $C_+(S) \subset
H^{\text{even}}(S; \bQ)$ be the sub-monoid of Chern characters $(r,
c_1, \ch_2)$ of coherent sheaves on $S$ with rank $r > 0$. For each
$\alpha = (r, c_1, \ch_2) \in C_+(S)$, choose a line bundle
$\cL(\alpha) \in \Pic(S)$ such that $c_1(\cL(\alpha)) = c_1$, such
that $\cL(\alpha+\beta) = \cL(\alpha) \otimes \cL(\beta)$ for all
$\alpha, \beta \in C_+(S)$. Define the moduli stacks
\[ \fN_{\alpha,\cL(\alpha)} \coloneqq \left\{\text{Higgs pairs } (\bar\cE, \phi) : \begin{array}{c} \alpha = (\rank \bar\cE, c_1(\bar\cE), \ch_2(\bar\cE)) \\ \det \bar\cE = \cL(\alpha), \, \tr \phi = 0 \end{array}\right\} \]
parameterizing {\it fixed determinant} and {\it trace-free} Higgs
sheaves. These moduli stacks carry the natural action of $\sT
\coloneqq \bC^\times$ scaling the Higgs field $\phi$ with $\sT$-weight
denoted $\kappa$, and $\sT$-equivariant symmetric obstruction theories
which may be used to construct virtual cycles on stable loci. For
brevity, we write $\fN_{\alpha,\cL}$ instead of
$\fN_{\alpha,\cL(\alpha)}$.

Let $H$ be a polarization of $S$, and let $\tau^H$ denote $H$-Gieseker
stability for Higgs pairs (Definition~\ref{def:VW-stability}).
Conventional VW theory uses Gieseker stability, but one may consider
other variations such as slope stability, Uhlenbeck stability
\cite{Tajakka2023}, and more exotic stability conditions. The
following results are stated for $\tau^H$ but hold equally well for
slope stability (which is used in their proofs).

\subsubsection{}

\begin{theorem}[Operational refined semistable VW invariants] \label{thm:VW-sst-invariants}
  There exists a unique collection
  \[ \left(\svw_\alpha(H) \in K_\circ^{\tilde\sT}(\fN_{\alpha,\cL})^\pl_{\loc,\bQ}\right)_{\alpha \in C_+(S)} \]
  of operational K-homology classes satisfying the following
  properties:
  \begin{enumerate}[label = (\roman*)]
  \item $\svw_\alpha(H)$ is supported on
    $(\Pi_\alpha^\pl)^{-1}(\fN_{\alpha,\cL}^{\sst}(\tau^H))$;
  \item for any $\alpha$ where $\fN_{\alpha,\cL}^{\sst}(\tau^H) =
    \fN_{\alpha,\cL}^{\st}(\tau^H)$,
    \[ (\Pi_\alpha^\pl)_*\svw_\alpha(H) = \chi\left(\fN^{\sst}_{\alpha,\cL}(\tau^H), \hat\cO^{\vir}_{\fN_{\alpha,\cL}^{\sst}(\tau^H)} \otimes -\right); \]
  \item if $H'$ is another polarization, and
    $\fN_{\alpha,\cL}^{\sst}(\tau^H) =
    \fN_{\alpha,\cL}^{\sst}(\tau^{H'})$ for all $\alpha$, then
    $\svw_\alpha(H) = \svw_\alpha(H')$ for all $\alpha$;
  \item for integers $k \gg 0$ \footnote{It suffices to take $k$ such
  that $\cE(k)$ is globally generated for any $H$-Gieseker semistable
  sheaf $\cE$ of class $\alpha$} and the \emph{refined pairs
  invariant} $\tilde\sVW_\alpha(H; k)$ associated to the refined pairs
    stack $\pi\colon \tilde\fN_{(\alpha,\cL),1}^{Q(k)} \to
    \fN_{\alpha,\cL}$ (\S\ref{sec:VW-auxiliary-stack}),
    \begin{enumerate}[label = (\arabic*)]
    \item if $h^1(\cO_S) = h^2(\cO_S) = 0$, then
      \begin{equation} \label{eq:VW-sst-invariants-pg-zero}
        I_*\tilde\sVW_\alpha(H; k) = \sum_{\substack{n>0 \\ \alpha = \alpha_1+\cdots+\alpha_n\\ \forall i: \,\tau^H(\alpha_i) = \tau^H(\alpha)}} \frac{1}{n!} \left[\iota^Q_*\svw_{\alpha_n}(H), \left[\cdots,\left[\iota^Q_*\svw_{\alpha_2}(H), \left[\iota^Q_*\svw_{\alpha_1}(H), \partial\right]\right]\cdots\right]\right],
      \end{equation}
    \item otherwise if $h^2(\cO_S) \neq 0$ then
      \begin{equation} \label{eq:VW-sst-invariants-pg-positive}
        \pi_*I_*\tilde\sVW_\alpha(H; k) = [\chi(\alpha(k))]_\kappa \cdot \svw_\alpha(H),
      \end{equation}
      and if $h^1(\cO_S) \neq 0$ then the same holds \emph{modulo $(1
        - \kappa)^{h^1(\cO_S)}$} (see \S\ref{sec:VW-proof-end}).
    \end{enumerate}
  \end{enumerate}
\end{theorem}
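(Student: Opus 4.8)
The plan is to exhibit Vafa--Witten theory as an instance of the framework of \S\ref{sec:restrictions} and \S\ref{sec:reduced-obstruction-theories} and then read off Theorem~\ref{thm:VW-sst-invariants} from (the reduced version of) Theorem~\ref{thm:sst-invariants}. A trace-free Higgs pair $(\bar\cE,\phi)$ on $S$ is the same datum as a compactly-supported coherent sheaf on the local surface $X \coloneqq \mathrm{Tot}(K_S)$, which is equivariantly Calabi--Yau with $\cK_X \cong \kappa\otimes\cO_X$ for the weight $\kappa$ of the $\sT = \bC^\times$-action on the fibres. Thus $\cat{A} \coloneqq \cat{Coh}(X)$ with its moduli stack $\fM$ falls under the prototypical example, and $\fN \coloneqq \bigsqcup_\alpha \fN_{\alpha,\cL}$ is the closed substack cut out by $\tr\phi = 0$ and $\det\bar\cE = \cL(\alpha)$. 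The compatibility $\cL(\alpha+\beta) = \cL(\alpha)\otimes\cL(\beta)$ renders $\fN$ compatible with the direct-sum and summand structure in the sense of \S\ref{sec:restrictions}, and it is manifestly $\sT$-stable; so the ``restrictions'' setup applies, with permissible classes $C_+(S)$ (those of positive sheaf rank) and the $\kappa$-symmetric (reduced) obstruction theory of \cite{Tanaka2017, Tanaka2020, Thomas2020}, which is compatible with $\Phi$.

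Next I would verify Assumption~\ref{assump:semistable-invariants} for $\tau^H$ on $C_+(S)$. Harder--Narasimhan filtrations and openness/finite-type of the semistable loci are standard for $H$-Gieseker stability of Higgs sheaves of fixed topological type, using boundedness. For the framing functors one takes $\Fr_k\colon\cE\mapsto H^0(\cE(k))$ for $k\gg 0$, which is a framing functor in the sense of Definition~\ref{bg:def:framing-functor} and satisfies the covering condition because any finite family of Gieseker-semistable classes becomes globally generated with vanishing higher cohomology after a uniform twist; the rank function is the sheaf rank $r(\alpha)=\rk\bar\cE$, positive on semistable Higgs sheaves of positive rank and additive on same-slope subquotients (which remain of positive rank), so \ref{assump:it:rank-function} holds. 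Assumption~\ref{assump:semistable-invariants}\ref{assump:it:semi-weak-stability} is automatic since $\tau^H$ is a genuine stability condition (and is checked directly for the auxiliary slope stability used in the proofs). Finally, the $\sT$-fixed loci in \ref{assump:it:properness} decompose, according to whether $\phi$ vanishes or is graded by the $\sT$-weight, into moduli of $H$-Gieseker semistable sheaves and $\sT$-graded Higgs sheaves on the projective surface $S$; these are projective by \cite{Tanaka2017}, hence proper, and are global quotient stacks, hence have the resolution property, and the analogous assertions for the refined pairs stacks and master spaces follow from their GIT constructions. Invoking the reduced form of Theorem~\ref{thm:sst-invariants} (\S\ref{sec:reduced-obstruction-theories}) then produces $\svw_\alpha(H)$ together with properties (i), (ii), (iii); when $p_g = h^2(\cO_S) = 0$ and $h^1(\cO_S) = 0$ the obstruction theory has no cosections and the reduced pairs relation \eqref{eq:sstable-def-intro} is exactly \eqref{eq:VW-sst-invariants-pg-zero}, giving (iv)(1).

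For (iv)(2) I would use that the reduction is by $o_\alpha = h^2(\cO_S) = p_g$ cosections, independent of $\alpha$. When $p_g > 0$, the reduced pairs relation carries the additional constraint $o_\alpha = o_{\alpha_1}+\cdots+o_{\alpha_n}$ (see \S\ref{sec:reduced-obstruction-theories}), i.e.\ $p_g = n\,p_g$, which forces $n=1$; hence it collapses to the single term $I_*\tilde\sVW_\alpha(H;k) = [\iota^Q_*\svw_\alpha(H),\partial]$. Pushing forward along the smooth forgetful map $\pi$ and applying the rigidity of Proposition~\ref{prop:rigidity}, the bracket $\pi_*[\iota^Q_*\svw_\alpha(H),\partial]$ equals $[\chi(\alpha(k))]_\kappa\cdot\svw_\alpha(H)$, where $\chi(\alpha(k)) = \chi(\cE(k)) = \fr_k(\alpha)$ for $k\gg 0$; since $[m]_\kappa$ is invertible in the localized coefficient ring for $m\neq 0$ and $\fr_k(\alpha) > 0$, this both yields \eqref{eq:VW-sst-invariants-pg-positive} and shows that $\svw_\alpha(H) = [\chi(\alpha(k))]_\kappa^{-1}\pi_*I_*\tilde\sVW_\alpha(H;k)$ is independent of $k$ and satisfies (i)--(iii).

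The main technical novelty, and the step I expect to be the real obstacle, is the refinement when $q = h^1(\cO_S) \neq 0$. In that case the fixed-determinant, trace-free constraint fails to fully cancel the contribution of $H^1(\cO_S)$ to $R\pi_*R\cHom$ of the universal sheaf, so the rigidity computation of Proposition~\ref{prop:rigidity} produces, instead of an exact quantum-integer multiple, that multiple times a correction factor coming from the $q$ ``Jacobian'' directions in the deformation complex. One checks that this factor is congruent to $1$ modulo $(1-\kappa)^{q}$ --- essentially because each such direction contributes a factor $1-\kappa$ to the relevant virtual normal class in the localization --- so that \eqref{eq:VW-sst-invariants-pg-positive} holds modulo $(1-\kappa)^{h^1(\cO_S)}$; this bookkeeping is carried out in \S\ref{sec:VW-proof-end}. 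Everything else is a direct application of the general machinery and of \cite{Tanaka2017,Tanaka2020,Thomas2020}.
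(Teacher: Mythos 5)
Your reduction of the case $h^1(\cO_S)=h^2(\cO_S)=0$ to Theorem~\ref{thm:sst-invariants} (where $\fN_{\alpha,\cL}=\fM_\alpha$, with $\Fr_k$, the sheaf rank as rank function, and slope/Gieseker stability) is essentially the paper's argument and is fine. The genuine gap is in the cases $h^1(\cO_S)\neq 0$ or $h^2(\cO_S)\neq 0$, where your two key claims fail. First, you assert that $\cL(\alpha+\beta)=\cL(\alpha)\otimes\cL(\beta)$ makes $\fN=\bigsqcup_\alpha\fN_{\alpha,\cL}$ compatible with the direct-sum structure in the sense of \S\ref{sec:restrictions}; but Assumption~\ref{es:assump:semistable-invariants}\ref{es:assump:it:Bpe-sst-summands} requires the square comparing $\Phi$ on $\fN$ and on $\fM$ to be \emph{Cartesian}, and the actual fiber product is the locus $\fZ_{\beta_1,\beta_2}$ of pairs with $\det(\bar\cE_1)\det(\bar\cE_2)=\cL(\beta)$ and $\tr\phi_1+\tr\phi_2=0$, which differs from $\fN_{\beta_1,\cL}\times\fN_{\beta_2,\cL}$ by $\Pic_0(S)\times H^0(\cK_S)$ worth of directions as soon as $h^1(\cO_S)$ or $h^2(\cO_S)$ is nonzero. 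So the restricted framework does not apply off the shelf, and the interaction loci in the master spaces (which produce the $n\ge 2$ Lie-bracket terms) are not products of the two auxiliary stacks; one must re-run the master-space steps of \S\ref{sec:pairs-master-space-localization} and \S\ref{sec:horizontal-wc} over $\fN$ and show these contributions vanish. The paper does this by observing that the obstruction theory on $\fZ_{\beta_1,\beta_2}$ retains an extra summand $R\Gamma(\cO_S)^\vee[-1]\oplus\kappa\otimes R\Gamma(\cK_S)^\vee$, giving $h^2(\cO_S)$ genuine cosections (whence vanishing by cosection localization) and $h^1(\cO_S)$ $\kappa$-twisted cosections (whence vanishing only modulo $(1-\kappa)^{h^1(\cO_S)}$).

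Second, your route via \S\ref{sec:reduced-obstruction-theories} with ``$o_\alpha=p_g$ cosections for every $\alpha$'' does not prove the stated theorem: the cosections from $H^2(\cO_S)$ live on the obstruction theory of $\fM_\alpha$, not on the Tanaka--Thomas $SU(r)$ obstruction theory of $\fN_{\alpha,\cL}$ (where they have already been removed), and Kiem--Li reduction on $\fM$ would yield invariants of the $U(r)$ moduli rather than the classes in $K_\circ^{\tilde\sT}(\fN_{\alpha,\cL})^\pl$ with property (ii) matching $\hat\cO^{\vir}_{\fN^{\sst}_{\alpha,\cL}}$; the constraint $o_\alpha=o_{\alpha_1}+\cdots+o_{\alpha_n}$ forcing $n=1$ is only the heuristic behind the actual argument. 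Moreover the cosection-reduction framework handles untwisted cosections $\bE^\vee[1]\to\sU\otimes\cO$, so it cannot absorb the $H^1(\cO_S)$ directions, which are $\kappa$-twisted. Relatedly, your treatment of $h^1(\cO_S)\neq 0$ misplaces the source of the congruence: the rigidity computation for the $n=1$ term is exact (it gives precisely $[\chi(\alpha(k))]_\kappa$ via Lemma~\ref{lem:pushforward-of-bracket-partial}), and the ``modulo $(1-\kappa)^{h^1(\cO_S)}$'' arises because the $n\ge 2$ interaction terms only vanish up to a factor $(1-\kappa)^{h^1(\cO_S)}$ in their virtual cycles; your claim that a correction factor in the $n=1$ rigidity term is congruent to $1$ is asserted, not proved, and defers to the very argument you are meant to supply.
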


Note that if $\tau^H(\alpha_i) = \tau^H(\alpha)$ and $\rank \alpha >
0$ then $\rank \alpha_i > 0$ as well, so indeed the right hand side of
\eqref{eq:VW-sst-invariants-pg-zero} is well-defined. (Rank zero
objects have strictly larger $\tau^H$; see
Definition~\ref{def:VW-stability}.)

\subsubsection{}

\begin{theorem}[Wall-crossing formula for operational refined semistable VW invariants] \label{thm:VW-WCF}
  Let $H_1$ and $H_2$ be two polarizations on $S$.
  \begin{enumerate}[label = (\arabic*)]
  \item If $h^1(\cO_S) = h^2(\cO_S) = 0$, then
    \begin{equation} \label{eq:VW-WCF-pg-zero}
      \svw_\alpha(H_2) = \sum_{\substack{n > 0 \\ \alpha = \alpha_1 + \cdots + \alpha_n\\\forall i: \rank(\alpha_i) > 0}} \tilde U(\alpha_1,\ldots,\alpha_n; \tau^{H_1}, \tau^{H_2}) \left[\left[\cdots\left[\svw_{\alpha_1}(H_1),\svw_{\alpha_2}(H_1)\right],\cdots\right],\svw_{\alpha_n}(H_1)\right].
    \end{equation}

  \item Otherwise if $h^2(\cO_S) \neq 0$, then
    \[ \svw_\alpha(H_2) = \svw_\alpha(H_1) \]
    and if $h^1(\cO_S) \neq 0$ then the same holds \emph{modulo $(1 -
      \kappa)^{h^1(\cO_S)}$} (see \S\ref{sec:VW-proof-end}).
  \end{enumerate}
\end{theorem}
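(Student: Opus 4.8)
The plan is to realize VW theory as a special case of the framework of \S\ref{sec:equivariant-3CY-category} and then specialize Theorem~\ref{thm:wcf}, together with its reduced refinement from \S\ref{sec:reduced-obstruction-theories}. The three regimes in the statement correspond to whether the (reduced) wall-crossing formula is the full one, collapses to a single $n=1$ term, or holds only modulo a power of $1-\kappa$.

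\textbf{Setup.} Following \S\ref{sec:VW-setup}, trace-free fixed-determinant Higgs pairs on $S$ form a $\sT$-equivariant $3$CY category: a Higgs pair $(\bar\cE,\phi)$ is a compactly-supported coherent sheaf on the local threefold $\tot(K_S)$, which is equivariantly Calabi--Yau with $\kappa$ the weight of the fibre-scaling $\sT=\bC^\times$-action, and the conditions $\det\bar\cE=\cL(\alpha)$, $\tr\phi=0$ cut out a locally closed substack $\fN\subset\fM$ closed under direct sums and summands, to which the restriction mechanism of \S\ref{sec:restrictions} applies. Gieseker stability $\tau^H$ of Higgs pairs is a weak stability condition, and I would verify Assumptions~\ref{assump:semistable-invariants} and \ref{assump:wall-crossing} for it: $\tau^H$-HN filtrations and openness/finite-typeness of (semi)stable loci are classical; a family of framing functors is $\bar\cE\mapsto H^0(\bar\cE(k))$ for $k\gg 0$; the rank function is the surface rank $\bar\cE\mapsto\rank\bar\cE$; Assumption~\ref{assump:semistable-invariants}\ref{assump:it:semi-weak-stability} holds automatically because $\tau^H$ is a genuine stability condition; and the properness/resolution requirements follow, as in \cite{Tanaka2017,Tanaka2020}, from projectivity of the relevant Gieseker moduli of Higgs pairs and properness of their $\sT$-fixed loci, auxiliary stacks, and master spaces. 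This produces $\svw_\alpha(H)\coloneqq\sz_\alpha(\tau^H)$ and Theorem~\ref{thm:VW-sst-invariants}.

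\textbf{Case $h^1(\cO_S)=h^2(\cO_S)=0$.} Here the VW obstruction theory is $\kappa$-symmetric with no cosections ($o_\alpha=0$), so Theorem~\ref{thm:wcf} applies directly. By Assumption~\ref{assump:semistable-invariants}\ref{assump:it:rank-function} and boundedness of Gieseker-semistables, for each fixed $\alpha$ only finitely many decompositions $\alpha=\alpha_1+\cdots+\alpha_n$ with all $\fN^{\sst}_{\alpha_i}(\tau^H)\neq\emptyset$ contribute, so the Gieseker walls relevant to $\svw_\alpha$ are locally finite in the ample cone and $\tau^{H_1}$, $\tau^{H_2}$ are joined by a path of dominant wall-crossings; the general wall-crossing formula of \S\ref{sec:general-wall-crossing} (transitive composition of the dominant formula via Lemma~\ref{wc:lem:U-properties}\ref{it:U-composition}) then yields \eqref{eq:VW-WCF-pg-zero}. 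Its index set agrees with the one supplied by \S\ref{sec:general-wall-crossing} because nonempty VW semistable loci force $\rank\alpha_i>0$, and the missing terms vanish by the support property Theorem~\ref{thm:sst-invariants}\ref{item:vss-support}.

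\textbf{Case $h^2(\cO_S)\neq 0$, and the residual $h^1$.} Now the VW obstruction theory carries $o_\alpha=h^0(S,K_S)=h^2(\cO_S)$ cosections \cite{Tanaka2020,Thomas2020}, and crucially $o_\alpha$ is independent of $\alpha$ and positive; I would invoke the reduced wall-crossing formula of \S\ref{sec:reduced-obstruction-theories}, whose sum carries the extra constraint $o_\alpha=o_{\alpha_1}+\cdots+o_{\alpha_n}$, i.e.\ $h^2(\cO_S)=n\,h^2(\cO_S)$, forcing $n=1$; the lone term has $\tilde U(\alpha;\tau^{H_1},\tau^{H_2})=1$ (which equals $1$ directly from Definition~\ref{def:universal-coefficients} and \eqref{eq:U-vs-Utilde} in degree $\alpha$) and empty iterated bracket, so $\svw_\alpha(H_2)=\svw_\alpha(H_1)$ exactly when moreover $h^1(\cO_S)=0$. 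In the remaining regime $h^2(\cO_S)=0$, $h^1(\cO_S)\neq 0$ there are no cosections and the sum of the previous case is genuinely nontrivial, but the fixed-determinant/trace-free conditions do not entirely split off the $H^1(\cO_S)$-contribution of the obstruction complex; the plan is to show, exactly as in the proof of the $h^2(\cO_S)\neq 0$ part of Theorem~\ref{thm:VW-sst-invariants}, that what survives is a rank-$h^1(\cO_S)$ summand of $\sT$-weight $\kappa$, so that its localized equivariant Euler class on the master spaces --- hence every new ($n\geq 2$) wall-crossing term, i.e.\ the discrepancy $\svw_\alpha(H_2)-\svw_\alpha(H_1)$ --- is divisible by $(1-\kappa)^{h^1(\cO_S)}$. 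Isolating this surviving summand and computing its $(1-\kappa)$-valuation through the symmetrized pullbacks and the localization, together with the properness/resolution checks of the Setup, is where essentially all the work lies; it is carried out in \S\ref{sec:VW-proof-end}, and the rest is a mechanical specialization of Theorem~\ref{thm:wcf} and its reduced version.
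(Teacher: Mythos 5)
There is a genuine gap, and it sits exactly where the paper has to work hardest: case (2). You propose to get $\svw_\alpha(H_2)=\svw_\alpha(H_1)$ for $h^2(\cO_S)\neq 0$ by invoking the reduced wall-crossing formula of \S\ref{sec:reduced-obstruction-theories} with $o_\alpha=h^2(\cO_S)$ cosections, so that the constraint $o_\alpha=o_{\alpha_1}+\cdots+o_{\alpha_n}$ forces $n=1$. But the $\SU(r)$ obstruction theory on $\fN_{\alpha,\cL}$ used to define $\svw_\alpha(H)$ is $R\Hom_X(\cE,\cE)_\perp$, from which the $R\Gamma(\cO_S)$ and $R\Gamma(\cK_S)$ pieces have already been removed: it carries \emph{no} surjective cosections (if it did, cosection localization would kill the invariants $\sVW_\alpha(H)$ themselves, which is precisely what the trace-free/fixed-determinant reduction is designed to avoid). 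The cosections live on $\fM$, not on $\fN$, so Theorem~\ref{thm:red-wcf} is not applicable as stated. Worse, your Setup asserts that $\fN\subset\fM$ is ``closed under direct sums and summands, to which the restriction mechanism of \S\ref{sec:restrictions} applies''; this fails whenever $h^1(\cO_S)\neq 0$ or $h^2(\cO_S)\neq 0$, because the fibre product $\fZ_{\beta_1,\beta_2}$ of $\Phi_{\beta_1,\beta_2}$ with $\fN_\beta\hookrightarrow\fM_\beta$ is not $\fN_{\beta_1,\cL}\times\fN_{\beta_2,\cL}$ (Assumption~\ref{es:assump:semistable-invariants}\ref{es:assump:it:Bpe-sst-summands} fails), so neither the plain nor the reduced framework can be ``mechanically specialized''. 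The actual argument (\S\ref{sec:VW-proof-H1-H2-nonzero}--\S\ref{sec:VW-proof-end}) reopens the master-space steps, identifies on the interaction loci $Z^{\fN}_{\beta_1,\beta_2}$ over $\fZ_{\beta_1,\beta_2}$ the extra summand $R\Gamma(\cO_S)^\vee[-1]\oplus\kappa\otimes R\Gamma(\cK_S)^\vee$ (only one diagonal copy is removed by the symmetric reduction), lifts these $h^2(\cO_S)$ genuine and $h^1(\cO_S)$ $\kappa$-twisted cosections via Lemma~\ref{lem:lifting-Obs-cosections}, and kills the $n\ge 2$ terms by cosection localization, respectively modulo $(1-\kappa)^{h^1(\cO_S)}$. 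Your sketch for the residual $h^1$ regime gestures at the right divisibility but defers it entirely, and bases it on a case of Theorem~\ref{thm:VW-sst-invariants} that in your scheme was proved by the (inapplicable) reduced framework.

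In case (1) your approach is essentially the paper's, but the step ``$\tau^{H_1}$ and $\tau^{H_2}$ are joined by a path of dominant wall-crossings'' is asserted rather than proved, and it is not a consequence of the finiteness of contributing decompositions for fixed $\alpha$. Gieseker stability $\tau^{H}$ does not vary continuously in $H$, so one cannot directly produce ``wall'' stability conditions dominating the chamber ones along a segment in the ample cone. The paper's route is to run the wall-crossing for slope stability $\mu^H$ (which is continuous in $H$), prove local finiteness of Yoshioka walls and the dominance statements of Lemma~\ref{vw:lem:finite-dominant-wcs-at-alpha} using the Bogomolov--Gieseker inequality for Higgs sheaves, and then transfer to Gieseker stability via the dominance $\mu^H\succ\tau^H$ (Lemma~\ref{lem:VW-WCF-dominant-to-general}); since $\mu^H$ is only a \emph{weak} stability condition, this also forces the inert-class device in Assumption~\ref{es:assump:semistable-invariants}\ref{es:assump:it:inert-classes}, so your remark that Assumption~\ref{assump:semistable-invariants}\ref{assump:it:semi-weak-stability} is ``automatic'' for Gieseker stability does not dispose of the issue. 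Supplying this slope-stability detour (or an alternative argument producing the dominance path directly for Gieseker stability) is needed to make part (1) complete.
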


The proofs of Theorems~\ref{thm:VW-sst-invariants} and
\ref{thm:VW-WCF} are given in \S\ref{sec:VW-WCF}. The independence on
$H$ when $h^2(\cO_S) > 0$ agrees with a similar result in Donaldson
theory \cite[\S 2.5]{Witten1994} and with expectations from physics
(in unrefined Vafa--Witten theory) \cite[\S 5]{Vafa1994}.

\subsubsection{}

\begin{corollary}[Numerical refined semistable VW invariants and their wall-crossing formulas] \label{cor:numerical-VW}
  \leavevmode
  \begin{enumerate}[label = (\roman*)]
  \item \label{it:numerical-VW-sst-invariants} There exist
    $\vw_\alpha(H) \in \bQ(\kappa^{\frac{1}{2}})$, namely
    $\vw_\alpha(H) \coloneqq \svw_\alpha(H)(\cO)$, such that:
    \begin{enumerate}[label = (\arabic*)]
    \item if $h^1(\cO_S) = h^2(\cO_S) = 0$, then
      \begin{equation} \label{eq:numerical-VW-sst-invariants-pg-zero}
        \tilde\sVW_\alpha(H; k)(\cO) = \sum_{\substack{n>0 \\ \alpha = \alpha_1+\cdots+\alpha_n\\ \forall i: \,\tau^H(\alpha_i) = \tau^H(\alpha)}} \frac{1}{n!} \prod_{i=1}^n \Big[\chi(\alpha_i(k)) + \chi\Big(\alpha_i, \sum_{j=1}^{i-1} \alpha_j\Big)\Big]_\kappa \vw_{\alpha_i}(H);
      \end{equation}
    \item otherwise if $h^2(\cO_S) \neq 0$ then
      \[ \tilde\sVW_\alpha(H; k)(\cO) = [\chi(\alpha(k))]_\kappa \cdot \vw_\alpha(H), \]
      and if $h^1(\cO_S) \neq 0$ then the same holds \emph{modulo $(1 -
      \kappa)^{h^1(\cO_S)}$} (see \S\ref{sec:VW-proof-end}).
    \end{enumerate}

  \item \label{it:numerical-VW-WCF} Given two polarizations $H_1$ and
    $H_2$ on $S$:
    \begin{enumerate}[label = (\arabic*)]
    \item if $h^1(\cO_S) = h^2(\cO_S) = 0$, then
      \begin{equation} \label{eq:numerical-VW-WCF-pg-zero}
        \vw_\alpha(H_2) = \sum_{\substack{n > 0 \\ \alpha = \alpha_1 + \cdots + \alpha_n\\\forall i: \rank(\alpha_i) > 0}} \tilde U(\alpha_1,\ldots,\alpha_n; \tau^{H_1}, \tau^{H_2}) \prod_{i=1}^n \Big[\chi\Big(\alpha_i, \sum_{j=1}^{i-1} \alpha_j\Big)\Big]_\kappa \vw_{\alpha_i}(H);
      \end{equation}
    \item otherwise if $h^2(\cO_S) \neq 0$ then $\vw_\alpha(H_2) =
      \vw_\alpha(H_1)$, and if $h^1(\cO_S) \neq 0$ then the same holds
      \emph{modulo $(1 - \kappa)^{h^1(\cO_S)}$} (see
      \S\ref{sec:VW-proof-end}).
    \end{enumerate}
  \end{enumerate}
\end{corollary}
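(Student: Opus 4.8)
The plan is to deduce the corollary from Theorems~\ref{thm:VW-sst-invariants} and \ref{thm:VW-WCF} by applying the ``evaluation at $\cO$'' map, which sends an operational K-homology class to the ``number'' obtained by feeding it the structure sheaf $\cO$. By definition $\vw_\alpha(H) \coloneqq \svw_\alpha(H)(\cO)$; since $\svw_\alpha(H)$ is, by the construction in Theorem~\ref{thm:VW-sst-invariants}, a $\bQ$-linear combination of iterated Lie brackets of pushforwards of virtual-cycle invariants of \emph{proper} auxiliary stacks, Proposition~\ref{prop:rigidity} exhibits $\vw_\alpha(H)$ as a polynomial in localized equivariant Euler characteristics of proper fixed loci, hence an element of $K_{\tilde\sT}(\pt)_{\loc,\bQ} \subset \bQ(\kappa^{\frac{1}{2}})$ as claimed. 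Parts~\ref{it:numerical-VW-sst-invariants}(2) and \ref{it:numerical-VW-WCF}(2) are then immediate: evaluation at $\cO$ commutes with the pushforwards $\pi_*$ and $I_*$ (as $\pi^*\cO = \cO$ and $I^*\cO = \cO$), and $[\chi(\alpha(k))]_\kappa$ is a scalar, so \eqref{eq:VW-sst-invariants-pg-positive} becomes $\tilde\sVW_\alpha(H;k)(\cO) = [\chi(\alpha(k))]_\kappa\,\vw_\alpha(H)$ and Theorem~\ref{thm:VW-WCF}(2) becomes $\vw_\alpha(H_2) = \vw_\alpha(H_1)$; the $h^1(\cO_S)\neq 0$ refinements survive because evaluation at $\cO$ is $R(\tilde\sT)_{\loc,\bQ}$-linear and $(1-\kappa)^{h^1(\cO_S)}$ is a coefficient.

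The real content is parts~\ref{it:numerical-VW-sst-invariants}(1) and \ref{it:numerical-VW-WCF}(1): one must turn the iterated Lie brackets of \eqref{eq:VW-sst-invariants-pg-zero} and \eqref{eq:VW-WCF-pg-zero} into the explicit products of \eqref{eq:numerical-VW-sst-invariants-pg-zero} and \eqref{eq:numerical-VW-WCF-pg-zero}. The key input is the rigidity identity $[\phi_\alpha,\psi_\beta](\cO) = [\chi(\alpha,\beta)]_\kappa\,\phi_\alpha(\cO)\,\psi_\beta(\cO)$ of Proposition~\ref{prop:rigidity}, together with bilinearity of the Euler pairing $\chi=\rank\scE$ over direct sums (a consequence of \eqref{intro:eq:bilinear-element}). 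I would first prove, by induction on the number of brackets, a closed form for a nested bracket evaluated at $\cO$: evaluating from the innermost bracket outward, the step introducing $\alpha_i$ produces, via Proposition~\ref{prop:rigidity} and bilinearity of $\chi$, a factor $[\chi(\alpha_i,\delta_i)]_\kappa$, where $\delta_i$ is the total class accumulated from the inner brackets, so that
\[ \big[[\cdots[\svw_{\alpha_1}(H),\svw_{\alpha_2}(H)],\cdots],\svw_{\alpha_n}(H)\big](\cO) \;=\; \pm\prod_{i=2}^n [\chi(\alpha_i,\delta_i)]_\kappa \cdot \prod_{i=1}^n \vw_{\alpha_i}(H),\qquad \delta_i \coloneqq \alpha_1 + \cdots + \alpha_{i-1}, \]
with an entirely analogous formula when the centre of the nested bracket is the framing operator $\partial$. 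Substituting into Theorem~\ref{thm:VW-WCF}(1) yields \eqref{eq:numerical-VW-WCF-pg-zero}; substituting into Theorem~\ref{thm:VW-sst-invariants}(iv)(1), where $\partial$ represents the $1$-dimensional degree-$k$ framing of $\tilde\fN^{Q(k)}_{(\alpha,\cL),1}$ and $\partial(\cO)=1$ (\S\ref{sec:VW-auxiliary-stack}, \S\ref{sec:pairs-maps}), the framing contributes an extra $\chi(\alpha_i(k))$ inside each quantum integer, giving \eqref{eq:numerical-VW-sst-invariants-pg-zero}; the coefficients $\tilde U$ and $\tfrac1{n!}$ are carried along verbatim, and finiteness of the sums is inherited from Theorems~\ref{thm:VW-sst-invariants} and \ref{thm:VW-WCF}.

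The main obstacle is precisely the sign-and-convention bookkeeping in the inductive step: matching the overall sign and the order of the arguments of $\chi(-,-)$ with the form written in \eqref{eq:numerical-VW-WCF-pg-zero} and \eqref{eq:numerical-VW-sst-invariants-pg-zero} --- here one uses the antisymmetry $\chi(\beta,\alpha)=-\chi(\alpha,\beta)$ forced by $\kappa$-symmetry and the identity $[-m]_\kappa = -[m]_\kappa$ from \eqref{eq:quantum-integer}, and notes that the two nested brackets in \eqref{eq:VW-sst-invariants-pg-zero} and \eqref{eq:VW-WCF-pg-zero} are nested in opposite directions --- handling the degenerate $i=1$ slot, and checking that $\partial$ contributes exactly $\chi(\alpha_i(k))$ and not a shifted variant. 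Everything else is formal substitution.
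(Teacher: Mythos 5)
Your proposal is correct and is essentially the derivation the paper intends: the corollary is treated as an immediate consequence of Theorems~\ref{thm:VW-sst-invariants} and \ref{thm:VW-WCF} obtained by evaluating on $\cO$ and iterating the rigidity formula (Proposition~\ref{prop:rigidity}, or the last formula of Theorem~\ref{thm:auxiliary-stack-vertex-algebra} on the auxiliary stack), with $\partial(\cO)=1$, $I^*\cO=\pi^*\cO=\cO$, and $\fr(\alpha_i)=\chi(\alpha_i(k))$ supplying the framing term, exactly as you describe. The bookkeeping you defer is routine given the identities you list: in \eqref{eq:VW-sst-invariants-pg-zero} the new class $\alpha_i$ sits in the \emph{first} slot of each bracket, so rigidity produces precisely $\bigl[\chi(\alpha_i,\textstyle\sum_{j<i}\alpha_j)+\chi(\alpha_i(k))\bigr]_\kappa$ with no sign and a harmless $i=1$ slot, whereas in \eqref{eq:VW-WCF-pg-zero} it sits in the \emph{second} slot, yielding $\bigl[\chi(\sum_{j<i}\alpha_j,\alpha_i)\bigr]_\kappa$ for $i\ge 2$, so that matching the product as printed in \eqref{eq:numerical-VW-WCF-pg-zero} is exactly the antisymmetry/parity and empty-$i=1$-pairing check you flag.
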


Part~\ref{it:numerical-VW-sst-invariants} is the main theorem of
\cite{liu_ss_vw}, originally a conjecture of
\cite{Thomas2020}. \footnote{In the $h^1(\cO_S)$ case, Thomas
  conjectures that the caveat ``modulo $(1 - \kappa)^{h^1(\cO_S)}$''
  is unnecessary. We are unsure whether this is true.} Thus
Theorem~\ref{thm:VW-sst-invariants} may be viewed as the ``true''
source of such numerical VW invariants.

Part~\ref{it:numerical-VW-WCF} is new, although when $h^2(\cO_S) > 0$
it was known in special cases as a consequence of a universality
theorem for semistable=stable numerical VW invariants \cite[Cor.
  1.12]{Laarakker2020}.

\subsection{Notation, conventions, acknowledgements}

\subsubsection{}

We work over $\bC$. All Artin stacks, i.e. algebraic stacks, are
assumed to be locally of finite type. Given an algebraic group $\sG$
(i.e. group scheme of finite type), let $\cat{Art}_\sG$ be the strict
$2$-category of Artin stacks with $\sG$-action \cite{Romagny2005}. We
will rarely use the $2$-categorical structure. Equalities of morphisms
will take place in the homotopy category of $\cat{Art}_\sG$. When
$\sG$ is trivial, we omit the subscript $\sG$ from all relevant
objects.

\subsubsection{}

Where reasonable, we align our notation with Joyce's notation
\cite{Joyce2021}. Furthermore, we use the following notational
conventions.
\begin{itemize}
\item If $\fX = \bigsqcup_\alpha \fX_\alpha$ is a stack and $\cF$ is
  an object on $\fX$, then $\cF_\alpha$ is the restriction of $\cF$ to
  the component $\fX_\alpha$, and similarly for morphisms from $\fX$.

\item If $f$ is a function of classes $\alpha$, and $E \in \cat{A}$ is
  an object, it is often convenient to write $f(E)$ to mean $f$
  applied to the class of $E$.

\item In tuples, e.g. of real numbers, an entry $*$ denotes an
  arbitrary valid value. For example, the condition $\ch(I) = (1, 0,
  *, *)$ denotes the conditions $\ch_0(I) = 1$ and $\ch_1(I) = 0$.

\item The $i$-th cohomology sheaf of $E \in D\cat{Coh}(X)$ is denoted
  $\cH^i(E) \in \cat{Coh}(X)$, while its $i$-th hypercohomology is
  denoted $H^i(X, E) \in \cat{Vect}_{\bC}$ or $H^i(E)$ for short. In
  particular, if $\cE \in \cat{Coh}(X)$, its sheaf cohomology is
  denoted $H^i(\cE)$. Let $h^i(\cE) \coloneqq \dim H^i(\cE)$.
\end{itemize}

\subsubsection{}

This project is very much inspired by the monumental work
\cite{Joyce2021} of Joyce. A significant portion of the content in
\S\ref{sec:semistable-invariants} and \S\ref{sec:wall-crossing} is
based on the framework presented there.

During the arduous two-year-long course of this project, we benefited
from fruitful discussions with Dominic Joyce, Ivan Karpov, Miguel
Moreira, Georg Oberdieck, and Richard Thomas. H.L. would also like to
thank Yukinobu Toda for drawing attention to \cite{Toda2013}. The name
``inert''
(Assumption~\ref{es:assump:semistable-invariants}\ref{es:assump:it:inert-classes})
was suggested by Gemini 3.

N.K. was supported by Research Council of Norway grant number 302277 -
”Orthogonal gauge duality and non-commutative
geometry” and by EPSRC grant number EP/X040674/1. H.L. was
supported by World Premier International Research Center Initiative
(WPI), MEXT, Japan.

\section{Background and setup}
\label{sec:background}

\subsection{Equivariant K-theory}
\label{sec:k-theory}

\subsubsection{}

\begin{definition} \label{def:equivariant-k-theory}
  Given an Artin stack $\fX \in \cat{Art}_\sT$, let
  \begin{align*}
    K_\sT(\fX) &\coloneqq K_0(D_{\cat{Coh},\sT}(\fX)), \\
    K_\sT^\circ(\fX) &\coloneqq K_0(D_{\cat{Perf},\sT}(\fX)),
  \end{align*}
  be the Grothendieck groups of $\sT$-equivariant quasi-coherent
  complexes on $\fX$ which are coherent and perfect, respectively.
  Both are modules for the representation ring
  \[ \bk_\sT \coloneqq K_\sT(\pt) \cong \bZ[t^\mu : \mu \in \Char\sT]. \]
  Let $\bk_{\sT,\loc} \coloneqq \Frac \bk_\sT$. Given a
  $\bk_\sT$-module $M$, let $M_\loc \coloneqq M \otimes_{\bk_\sT}
  \bk_{\sT,\loc}$. We refer to $K_\sT(\fX)_{\loc}$ and
  $K^\circ_\sT(\fX)_{\loc}$ as {\it localized} K-groups.
\end{definition}

\subsubsection{}
\label{sec:thomason-trobaugh-vs-quillen}

In the literature, our $K(-)$ is often called {\it G-theory} and our
$K^\circ(-)$ is often called {\it Thomason--Trobaugh K-theory}. We
will not use this terminology past this subsection. One may also
consider the {\it Quillen K-theory}
\[ K_0(\cat{Vect}_\sT(\fX)) \]
where $\cat{Vect}_\sT(\fX)$ is the exact category of $\sT$-equivariant
locally free $\cO_{\fX}$-modules. A locally free $\cO_{\fX}$-module is
in particular a perfect complex, so there is a natural map
\[ K_0(\cat{Vect}_\sT(\fX)) \to K_\sT^\circ(\fX). \]
If $\fX$ has the {\it $\sT$-equivariant resolution property}, i.e.
every $\sT$-equivariant coherent $\cO_{\fX}$-module is a quotient of a
$\sT$-equivariant locally free $\cO_{\fX}$-module, then this map is an
isomorphism \cite{Totaro2004}.

While Quillen K-theory has the advantage that it naturally admits
characteristic classes (e.g. see \S\ref{def:k-theoretic-wedge}),
Thomason--Trobaugh K-theory is better-behaved for stacks.

\subsubsection{}
\label{sec:k-theory-basic-properties}

We review some basic properties of $K_\sT(-)$ and $K_\sT^\circ(-)$.
Unless indicated otherwise, we will only consider $K_\sT(-)$ only for
finite-type algebraic spaces.
\begin{itemize}
\item The (derived) tensor product $\cE \otimes -$, for $\cE \in
  K_\sT^\circ(\fX)$, is a well-defined $\bk_\sT$-linear operator on
  both $K^\circ_\sT(\fX)$ and $K_\sT(\fX)$. In particular,
  $K^\circ_\sT(\fX)$ is a $\bk_\sT$-algebra and $K_\sT(\fX)$ is a
  $K^\circ_\sT(\fX)$-module.

\item The canonical inclusion $D_{\cat{Perf},\sT}(\fX) \subset
  D_{\cat{Coh},\sT}(\fX)$ induces a $\bk_\sT$-linear map
  $K_\sT^\circ(\fX) \to K_\sT(\fX)$ which is neither injective nor
  surjective in general.

\item A $\sT$-equivariant morphism $f\colon \fX \to \fY$ induces a
  functorial pullback $f^*\colon K^\circ_\sT(\fY) \to
  K^\circ_\sT(\fX)$. If $f$ is proper and representable, there is a
  functorial pushforward $f_*\colon K_\sT(\fX) \to K_\sT(\fY)$
  satisfying the {\it projection formula}
  \[ f_*(\cF) \otimes \cE = f_*(\cF \otimes f^*\cE), \qquad \cF \in K_\sT(\fX) \text{ or } K_\sT^\circ(\fX), \; \cE \in K_\sT^\circ(\fY). \]
  If in addition $f$ is of finite Tor-amplitude, there is a functorial
  pushforward $f_*\colon K^\circ_\sT(\fX) \to K^\circ_\sT(\fY)$
  satisfying the {\it projection formula}
  \[ \cF \otimes f_*\cE = f_*(f^*\cF \otimes \cE), \qquad \cF \in K_\sT(\fY) \text{ or } K_\sT^\circ(\fY), \; \cE \in K_\sT^\circ(\fX). \]
  In particular, both $f^*$ and $f_*$ are $\bk_\sT$-linear.
\end{itemize}

\subsubsection{}

\begin{definition} \label{def:k-theoretic-wedge}
  Given $\cE \in \cat{Vect}_\sT(\fX)$ and a formal variable $z$, let
  \[ \wedge_{-z}^\bullet(\cE) \coloneqq \sum_i (-z)^i \wedge^i(\cE) = \prod_\cL (1 - z\cL) \in K_0(\cat{Vect}_\sT(\fX))[z] \]
  be its exterior algebra in K-theory; the product ranges over
  (K-theoretic) Chern roots $\cL$ of $\cE$, by the splitting
  principle. Whenever a square root of $\det(\cE)$ is given, define
  also the {\it symmetrized} version
  \begin{align}
    \hat\wedge_{-z}^\bullet(\cE)
    &\coloneqq z^{-\frac{1}{2} \rank \cE} \det(\cE)^{-\frac{1}{2}} \wedge_{-z}^\bullet(\cE) \label{eq:k-theoretic-wedge-symmetrized} \\
    &= \prod_\cL (z^{-1/2}\cL^{-1/2} - z^{1/2}\cL^{1/2}) \in K_0(\cat{Vect}_\sT(\fX))[z^{\pm \frac{1}{2}}], \nonumber
  \end{align}
  Extend both $\wedge_{-z}^\bullet(-)$ and
  $\hat\wedge_{-z}^\bullet(-)$ to $K_0(\cat{Vect}_\sT(\fX))$ by
  defining
  \begin{equation} \label{eq:k-theoretic-wedge}
    \wedge_{-z}^\bullet(\cE_1 - \cE_2) \coloneqq \frac{\wedge_{-z}^\bullet(\cE_1)}{\wedge_{-z}^\bullet(\cE_2)} \in K_0(\cat{Vect}_\sT(\fX))[z]\pseries*{(1-z)^{-1}},
  \end{equation}
  using the splitting principle and the formula
  \begin{equation} \label{eq:k-theoretic-inverse-chern-root}
    \frac{1}{\wedge_{-z}^\bullet(\cL)} \coloneqq \frac{1}{\cL(1 - z) - (\cL - 1)} \coloneqq \cL^\vee \sum_{k \ge 0} \frac{(1 - \cL^\vee)^k}{(1 - z)^{k+1}},
  \end{equation}
  for Chern roots $\cL$ of $\cE_2$, to define the inverse of
  $\wedge_{-z}^\bullet(\cE_2)$.
\end{definition}

For $\cE \in \cat{Vect}_\sT(\fX)$, note the crucial symmetry
properties
\begin{equation} \label{eq:wedge-symmetry}
  \begin{aligned}
    \wedge_{-z}^\bullet(\cE) &= (-z)^{\rank \cE} \det(\cE) \wedge_{-z^{-1}}^\bullet(\cE^\vee) \\
    \hat\wedge_{-z}^\bullet(\cE) &= (-1)^{\rank \cE} \hat\wedge_{-z^{-1}}^\bullet(\cE^\vee).
  \end{aligned}
\end{equation}

\subsubsection{}

\begin{definition} \label{def:k-theoretic-euler-class}
  Suppose $\sT$ acts trivially on $\fX$. Then every $\sT$-equivariant
  locally free $\cO_{\fX}$-module decomposes as
  \[ \cE \eqqcolon \sum_\mu t^\mu \cE_\mu \]
  for a finite set of $\sT$-weights $\mu$. Its {\it K-theoretic Euler
    class} is
  \[ \se_z(\cE) \coloneqq \bigotimes_\mu \wedge_{-z^{-1} t^{-\mu}}^\bullet(\cE_\mu^\vee) \in K_0(\cat{Vect}_\sT(\fX))[z^{-1}] \]
  Extend this multiplicatively, using \eqref{eq:k-theoretic-wedge}, to
  $K_0(\cat{Vect}_\sT(\fX))$:
  \begin{equation} \label{eq:k-theoretic-euler-class}
    \se_z(\cE) \in K_0(\cat{Vect}_\sT(\fX))[z^{-1}]\pseries*{(1 - z^{-1} t^\mu)^{-1} : \mu \in \Char(\sT)}.
  \end{equation}
  For a given $\cE$, only the $\sT$-weights $t^\mu$ occuring in $\cE$
  will appear in $\se_z(\cE)$, so in particular only finitely many $(1
  - z^{-1} t^\mu)^{-1}$ need to be adjoined.

  Define $\hat\se(\cE)$ in the same way, using $\hat\wedge$ instead of
  $\wedge$.

  When $\cE$ has no piece of trivial $\sT$-weight, $\se(\cE) \coloneqq
  \se_1(\cE)$ and $\hat\se(\cE) \coloneqq \hat\se_1(\cE)$ are
  well-defined. Note that this is very different from
  $\wedge_{-z^{-1}}^\bullet(-)^\vee$ and
  $\hat\wedge_{-z^{-1}}^\bullet(-)^\vee$, which may not be
  well-defined at $z=1$.

  Note that $\se_z(-\cE)$ is well-defined and therefore an inverse for
  $\se_z(\cE)$ since $\se_z(-)$ is multiplicative by definition. By
  convention, we write this inverse as $\frac{1}{\se_z(\cE)}$.
\end{definition}

\subsubsection{}

\begin{lemma} \label{lem:k-theory-finiteness}
  Let $X$ be a finite-type algebraic space with trivial $\sT$-action,
  and
  \[ I^\circ(X) \subset K_0(\cat{Vect}(X)) \]
  denote the {\it augmentation ideal} of rank-$0$ elements. Then
  \begin{equation} \label{eq:k-theory-augmentation-nilpotence}
    I^\circ(X)^{\otimes N} K(X) = 0, \qquad \forall N \gg 0.
  \end{equation}
  Consequently, \eqref{eq:k-theoretic-euler-class} defines an operator
  \[ \se_z(\cE) \otimes \in \End(K_\sT(X))[z^{-1}]\left[(1 - z^{-1} t^\mu)^{-1} : \mu \in \Char(\sT)\right] \]
  which satisfies the symmetry property \eqref{eq:wedge-symmetry} as
  rational functions of $z$. Its expansion as a formal power series
  around $z^{-1} = 0$ is
  \begin{equation} \label{eq:k-theoretic-euler-class-expansion}
    \se_z(\cE_1 - \cE_2) \otimes \leadsto \sum_{i,j \ge 0} z^{-i-j} (-1)^i \wedge^i(\cE_1^\vee) \otimes \Sym^j(\cE_2^\vee) \otimes \in \End(K_\sT(X))\pseries*{z^{-1}},
  \end{equation}
  for $\cE_1, \cE_2 \in \cat{Vect}_\sT(X)$.
\end{lemma}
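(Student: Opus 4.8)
The plan is to derive all three assertions from the nilpotence statement \eqref{eq:k-theory-augmentation-nilpotence}, which is the only non-formal ingredient. Since $\sT$ acts trivially on $X$, it suffices to prove \eqref{eq:k-theory-augmentation-nilpotence} for the non-equivariant $K(X)$ and then tensor up along $\bZ\to\bk_\sT$ (equivalently, run the same argument verbatim equivariantly).

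\textbf{Nilpotence.} I would introduce the dimension (coniveau) filtration: let $F_p K(X)\subseteq K(X)$ be the subgroup generated by classes $[\cF]$ of coherent sheaves with $\dim\supp\cF\le p$. As $X$ is of finite type, $F_{-1}K(X)=0$ and $F_{\dim X}K(X)=K(X)$. The heart of the matter is that the augmentation ideal drops this filtration by one: $I^\circ(X)\cdot F_p K(X)\subseteq F_{p-1}K(X)$. By Noetherian dévissage it is enough to check this on products $([\cE]-\rank\cE)\cdot[\cF]$ with $\cE\in\cat{Vect}(X)$ and $\cF$ a coherent sheaf on an integral closed subspace $Z\subseteq X$ of dimension $p$; there $\cE\otimes\cF$ and $\cF^{\oplus\rank\cE}$ restrict isomorphically over the generic point $\eta$ of $Z$, so $([\cE]-\rank\cE)\cdot[\cF]$ lies in the kernel of the generic-rank map $K(Z)\to K(\kappa(\eta))=\bZ$, which by the localization sequence $\varinjlim_{U\ni\eta}K(Z\setminus U)\to K(Z)\to\bZ\to 0$ is generated by classes of support dimension $<p$. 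For $X$ an algebraic space rather than a scheme this goes through unchanged, or one may reduce to the scheme case by stratifying $X$ into schemes. Iterating, $I^\circ(X)^{\otimes N}K(X)\subseteq F_{\dim X-N}K(X)=0$ for $N>\dim X$.

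\textbf{Well-definedness and rationality of the operator.} Next I would unwind the definitions. Each equivariant Chern root of $\cE_\mu^\vee$ has the form $\ell=t^{-\mu}(1+\nu)$ with $\nu\coloneqq\cL^\vee-1\in I^\circ(X)$, so by the nilpotence just proved
\[
\frac{1}{1-z^{-1}\ell}=\frac{1}{1-z^{-1}t^{-\mu}}\sum_{k\ge 0}\left(\frac{z^{-1}t^{-\mu}}{1-z^{-1}t^{-\mu}}\right)^{\!k}\nu^k
\]
acts on $K_\sT(X)$ as a \emph{finite} sum of operators, with coefficients in $\bZ[z^{-1},t^{\pm\mu}][(1-z^{-1}t^\mu)^{-1}]$. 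Multiplying over the finitely many Chern roots and weights appearing in $\cE_1,\cE_2$, the formula \eqref{eq:k-theoretic-euler-class}, namely $\se_z(\cE_1-\cE_2)\otimes=\se_z(\cE_1)\otimes\,/\,\se_z(\cE_2)\otimes$, defines an operator in $\End(K_\sT(X))[z^{-1}]\left[(1-z^{-1}t^\mu)^{-1}:\mu\in\Char(\sT)\right]$, a genuine rational function of $z$ (and likewise for $\hat\se_z$, up to the twist by $z^{1/2}\det^{1/2}$). The symmetry \eqref{eq:wedge-symmetry} is an identity of each factor $1-z^{-1}\ell$ against $(-z^{-1}\ell)(1-z\ell^{-1})$ at the level of equivariant Chern roots, hence holds as a rational function of $z$ and descends to the operator.

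\textbf{The power-series expansion.} Finally, to obtain \eqref{eq:k-theoretic-euler-class-expansion}, expand around $z^{-1}=0$: combining weight pieces gives $\se_z(\cE_1)=\sum_i(-z^{-1})^i\wedge^i(\cE_1^\vee)$, and the geometric series $\frac{1}{1-z^{-1}\ell}=\sum_j z^{-j}\ell^j$ in each Chern root of $\cE_2^\vee$ yields $1/\se_z(\cE_2)=\sum_j z^{-j}\Sym^j(\cE_2^\vee)$; multiplying,
\[
\se_z(\cE_1-\cE_2)\otimes\leadsto\sum_{i,j\ge 0}z^{-i-j}(-1)^i\wedge^i(\cE_1^\vee)\otimes\Sym^j(\cE_2^\vee)\otimes.
\]
For each fixed power $z^{-m}$ only the finitely many $(i,j)$ with $i+j=m$ contribute, so the right-hand side is a well-defined element of $\End(K_\sT(X))\pseries*{z^{-1}}$, and it agrees under expansion with the rational function of the previous step. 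The only real content is the nilpotence \eqref{eq:k-theory-augmentation-nilpotence} — in particular, making the dévissage for the dimension filtration and the localization sequence fully rigorous for an arbitrary finite-type algebraic space rather than a scheme — after which all three assertions of the lemma are purely formal bookkeeping.
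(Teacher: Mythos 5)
Your proposal is correct and follows essentially the same route as the paper: nilpotence of the augmentation ideal forces the Euler-class operator to be a finite sum (the paper tracks the $(1-z)^{-N}$ coefficients landing in increasing powers of $I^\circ(X)$, you expand each factor $1-z^{-1}t^{-\mu}\cL$ with nilpotent correction $\cL-1$ — the same idea), after which the symmetry and the $z^{-1}$-expansion are formal. The only real difference is that the paper simply cites \cite[Lemma 2.4]{Edidin2000} for \eqref{eq:k-theory-augmentation-nilpotence}, whereas you reprove it via the coniveau filtration, generic triviality of vector bundles, and the localization sequence, which is the standard argument and is fine.
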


\begin{proof}
  The nilpotence \eqref{eq:k-theory-augmentation-nilpotence} is
  \cite[Lemma 2.4]{Edidin2000}. If $\cE$ is a vector bundle, then by
  the splitting principle
  \begin{align*}
    \wedge_{-z}^\bullet(\cE)
    &= \prod_\cL ((1 - \cL) + \cL(1 - z)) \\
    &\in \det(\cE) (1 - z)^{\rank \cE} \cdot (1 + I^\circ(X)[(1-z)^{-1}])
  \end{align*}
  since the coefficient of $(1 - z)^{-k}$ is a degree-$k$ symmetric
  polynomial in the variables $1 - \cL$. Hence both it and its inverse
  $\wedge_{-z}^\bullet(-\cE)$ have the property that the $(1 -
  z)^{-N}$ coefficient lies in $I^\circ(X)^{\otimes M(N)}$ for some
  function $M(N)$ such that $\lim_{N \to \infty} M(N) = \infty$. By
  \eqref{eq:k-theory-augmentation-nilpotence}, multiplication by the
  Euler class therefore becomes a finite sum. Finiteness of the sum
  means we may treat the Chern roots $\cL$ as generic non-zero complex
  numbers without worrying about convergence issues. The symmetry
  \eqref{eq:wedge-symmetry} and expansion
  \eqref{eq:k-theoretic-euler-class-expansion} follow immediately.
\end{proof}

\subsubsection{}

\begin{definition}[{\cite[Definition 3.1.6]{liu_eq_k_theoretic_va_and_wc}}] \label{def:k-theoretic-residue-map}
  Given a rational function $f \in \bk_{\sT \times \bC^\times,\loc}$
  with the coordinate on $\bC^\times$ denoted by $z$, let $f_+ \in
  \bk_{\sT,\loc}\lseries{z}$ and $f_- \in
  \bk_{\sT,\loc}\lseries{z^{-1}}$ be its formal series expansion
  around $z=0$ and $z=\infty$ respectively. The {\it
    ($\sT$-equivariant) K-theoretic residue map} is the
  $\bk_{\sT,\loc}$-module homomorphism
  \begin{align*}
    \rho_K\colon \bk_{\sT \times \bC^\times,\loc} &\to \bk_{\sT,\loc} \\
    f &\mapsto z^0 \text{ term in } (f_- - f_+).
  \end{align*}
  Treating $\sT$-weights as generic non-zero complex numbers, this is
  equivalent to
  \begin{align}
    \rho_K f
    &= -\left(\Res_{z=0} + \Res_{z=\infty}\right)\left(f \, \frac{dz}{z}\right) \label{eq:k-theoretic-residue-map-0-infty} \\
    &= \sum_{p \in \bC^\times} \Res_{z=p} \left(f \, \frac{dz}{z}\right) \label{eq:k-theoretic-residue-map-finite-poles}
  \end{align}
  where the second equality is the residue theorem. When there is
  ambiguity, we write $\rho_{K,z}$ to emphasize that the residue is
  being taken in the variable $z$.
\end{definition}

\subsubsection{}

\begin{lemma}[Projective bundle formula] \label{lem:projective-bundle-formula}
  Let $X$ be a finite-type algebraic space acted on by $\sT$, and $\cV
  \in \cat{Vect}_\sT(X)$. Let
  \[ \pi\colon \bP_X(\cV) \to X. \]
  be the natural projection, and $s = \cO_{\bP_X(\cV)}(1)$ be the dual of
  the universal line bundle on $\bP_X(\cV)$.
  \begin{enumerate}[label = (\roman*)]
  \item There is an isomorphism of $K_\sT^\circ(X)$-modules
    \[ K_\sT(\bP_X(\cV)) \cong \bigoplus_{k=0}^{\rank \cV-1} (s^k \otimes \pi^* K_\sT(X)). \]
  \item If $\sT$ acts trivially on $X$, then for any $f(s) \in
    K_\sT(X)[s^\pm]$ or $K_\sT^\circ(X)[s^\pm]$,
    \begin{equation} \label{eq:projective-bundle-formula}
      \pi_* f(s) = \rho_{K,z} \frac{f(z)}{\se_z(\cV)}.
    \end{equation}
  \end{enumerate}
\end{lemma}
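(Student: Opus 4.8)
The plan is to prove (i) first, then derive (ii) by a localization-type computation combined with the residue formula \eqref{eq:projective-bundle-formula} already encoded in the definition of $\se_z$.

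For (i), I would argue exactly as in the classical projective bundle formula, but in $G$-theory. Recall $\bP_X(\cV) = \operatorname{Proj}\bigoplus_k \Sym^k(\cV^\vee)$ (or $\Sym^k \cV$, depending on convention), so there is a tautological exact sequence relating $s = \cO(1)$ and $\pi^*\cV$, giving the Koszul-type relation $\wedge^\bullet_{-s^{-1}}(\pi^*\cV)=0$ in $K^\circ_\sT(\bP_X(\cV))$, i.e. $s$ satisfies a monic degree-$(\rank\cV)$ polynomial with coefficients pulled back from $X$. The key input is the semiorthogonal decomposition of $D_{\cat{Coh},\sT}(\bP_X(\cV))$ into $\rank\cV$ copies of $D_{\cat{Coh},\sT}(X)$, embedded via $\cF \mapsto s^k\otimes\pi^*\cF$ for $k = 0,\dots,\rank\cV-1$; this is Beilinson/Orlov's theorem, valid relatively over an arbitrary (Noetherian, finite-type) base algebraic space with $\sT$-action. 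Passing to $K_0$ of these triangulated categories gives the claimed direct sum decomposition of $K_\sT(\bP_X(\cV))$ as a $K^\circ_\sT(X)$-module, with the projection formula $\pi_*(s^k\otimes\pi^*\cF)$ handled by the standard computation $R\pi_*\cO(k)=\Sym^k(\cV^\vee)$ for $0\le k$, extended to negative $k$ by the relation above. I expect this step to be routine modulo citing the relative Beilinson resolution.

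For (ii), assume $\sT$ acts trivially on $X$. By (i) it suffices to check \eqref{eq:projective-bundle-formula} on the module generators $f(s) = s^k\otimes\pi^*\cF$ for $\cF\in K_\sT(X)$ and, by $\bk_\sT$-linearity, on the $\bk_\sT$-algebra generators $f(s)=s^k$ for $k\in\bZ$; since both sides are $K^\circ_\sT(X)$-linear in $\cF$ we reduce to $\cF=\cO_X$. For $0\le k\le\rank\cV-1$ the left side is $\pi_*s^k$, which by Serre vanishing on fibers equals $\Sym^k(\cV^\vee)$ (there is no $\sT$-weight subtlety because $\sT$ acts trivially on $X$, though $\cV$ itself may be $\sT$-nontrivial). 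On the right side, writing $\cV=\sum_\mu t^\mu\cV_\mu$ and using the definition $\se_z(\cV)=\bigotimes_\mu\wedge^\bullet_{-z^{-1}t^{-\mu}}(\cV_\mu^\vee)$, one expands $z^k/\se_z(\cV)$ around $z=\infty$; the generating-function identity $\sum_{j\ge0}z^{-j}\Sym^j(\cV^\vee)=1/\se_z(\cV)$ (formally, $1/\prod_\cL(1-z^{-1}\cL^{-1})$, which is exactly \eqref{eq:k-theoretic-euler-class-expansion} with $\cE_1=0$) shows the $z^0$-coefficient of $z^k\cdot\sum_j z^{-j}\Sym^j(\cV^\vee)$ is $\Sym^k(\cV^\vee)$. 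The $z=0$ expansion $f_+$ contributes nothing to $\rho_K$ here since $z^k/\se_z(\cV)$, expanded at $z=0$, has only nonnegative powers of $z$ for $k\ge0$ (the Euler class $\se_z(\cV)=\prod_\cL(1-z^{-1}\cL^{-1})$ is a unit times $z^{-\rank\cV}$ near $z=0$, so $z^k/\se_z(\cV)\sim z^{k+\rank\cV}\cdot(\text{unit})$), so $f_+$ has no $z^0$ term. Matching the two sides gives the formula for $0\le k\le\rank\cV-1$.

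It remains to extend to all $k\in\bZ$, which is where the relation in (i) does the work and which I expect to be the only genuinely delicate point. Both sides of \eqref{eq:projective-bundle-formula} are $\bk_\sT$-linear in the formal symbol, so it suffices to note that $s$ satisfies $\se_s(\cV)=0$, i.e. $\prod_\mu\wedge^\bullet_{-s^{-1}t^{-\mu}}(\cV_\mu^\vee)=0$, in $K_\sT(\bP_X(\cV))$: multiplying this relation by $s^j\otimes\pi^*(-)$ and pushing forward expresses $\pi_*(s^k)$ for $k\ge\rank\cV$ and for $k<0$ as $K^\circ_\sT(X)$-combinations of $\pi_*(s^{k'})$ with $0\le k'\le\rank\cV-1$. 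On the right side, the \emph{same} polynomial relation holds identically: $\se_z(\cV)\cdot\tfrac{1}{\se_z(\cV)}=1$ means $z^k/\se_z(\cV)$ and its reductions via the characteristic polynomial of $z$ produce matching residues, because $\rho_K$ is $\bk_{\sT,\loc}$-linear and kills $\bk_{\sT,\loc}[z]$ wait — more carefully, $\rho_K$ annihilates no polynomials in general, so instead I argue: the right-hand side $\rho_{K,z}(z^k/\se_z(\cV))$ is, by the residue theorem \eqref{eq:k-theoretic-residue-map-finite-poles}, a sum of residues at the (inverse) Chern roots; treating Chern roots as generic nonzero numbers (justified by Lemma~\ref{lem:k-theory-finiteness}, which guarantees all sums are finite), this is a $\bk_\sT$-linear functional in $z^k$ that is readily checked to satisfy the same recursion in $k$ as $\pi_*(s^k)$ via the partial-fractions identity. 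Since the two $\bk_\sT$-linear functionals of $k$ agree for $0\le k\le\rank\cV-1$ and satisfy the same linear recursion, they agree for all $k\in\bZ$, completing the proof. The main obstacle, then, is the careful bookkeeping of expansions at $z=0$ versus $z=\infty$ in the residue map for negative $k$ — but Lemma~\ref{lem:k-theory-finiteness}'s finiteness statement and the explicit inverse formula \eqref{eq:k-theoretic-inverse-chern-root} make this manageable.
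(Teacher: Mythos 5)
Your proposal is correct in substance, but for part (ii) it takes a genuinely different route from the paper. The paper simply computes $\pi_*s^k$ in closed form for \emph{every} $k\in\bZ$ (namely $\Sym^k\cV^\vee$ for $k\ge 0$, zero for $-r<k<0$, and $(-1)^{r+1}\det\cV\otimes\Sym^{-k-r}\cV$ for $k\le -r$, with $r=\rank\cV$) and matches this against the $z^0$-coefficients of \emph{both} expansions of $z^k/\se_z(\cV)$, using the symmetry \eqref{eq:wedge-symmetry} to produce the $z=0$ expansion; no induction or Koszul relation is needed. You instead verify only the range $0\le k\le r-1$ by the $z=\infty$ expansion and then propagate to all $k$ via the relation $\se_s(\pi^*\cV)=0$ in $K^\circ_\sT(\bP_X(\cV))$, using that both $\pi_*(s^k)$ and $R(k)\coloneqq\rho_{K,z}(z^k/\se_z(\cV))$ satisfy the same $r$-term linear recursion with coefficients $(-1)^i\wedge^i(\cV^\vee)$. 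This works, and it buys you a proof that never has to identify $\pi_*s^k$ for negative $k$; the paper's direct computation buys the explicit two-sided formula \eqref{eq:projective-bundle-tautological-pushforward}, whose built-in symmetry is what later rigidity arguments (e.g. Corollary~\ref{cor:symmetrized-projective-bundle-formula}) trade on.

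Two points in your write-up should be repaired, though neither is fatal. First, your mid-proof ``correction'' is backwards: $\rho_K$ \emph{does} annihilate every Laurent monomial $z^j$ with $z$-independent coefficients, since the expansions of $z^j$ at $z=0$ and $z=\infty$ coincide, so $f_--f_+=0$. That observation is exactly what makes your recursion for the right-hand side immediate: by linearity over $z$-independent coefficients, $\sum_{i=0}^r(-1)^i\wedge^i(\cV^\vee)R(k-i)=\rho_{K,z}\bigl(z^{k}\se_z(\cV)/\se_z(\cV)\bigr)=\rho_{K,z}(z^{k})=0$; the detour through residues at Chern roots and an unproved ``partial-fractions identity'' is unnecessary and, as written, not justified. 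Second, to conclude from agreement on $r$ consecutive values that the two sequences agree for all $k\in\bZ$, you need the recursion to run in both directions; this holds because the extreme coefficients $\wedge^0(\cV^\vee)=1$ and $(-1)^r\det(\cV)^{-1}$ are units in $K_\sT^\circ(X)$, and you should say so.
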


When $\sT$ acts trivially, $K_\sT(X) = K(X) \otimes_{\bZ} \bk_\sT$ and
$K_\sT^\circ(X) = K^\circ(X) \otimes_{\bZ} \bk_\sT$; this is required
to define $1/\se_z(\cV)$ and to make sense of the operation $\rho_K$.
When the base $X$ is unambiguous, we omit the subscript on
$\bP_X(\cV)$ and write $\bP(\cV)$.

\begin{proof}
  The first claim follows from a standard argument using the Beilinson
  resolution of the diagonal, see e.g. \cite[\S 5.7]{Chriss1997}. The
  second claim is a special case of Jeffrey--Kirwan integration, see
  e.g. \cite[\S 2]{Szenes2004} or \cite[Appendix A]{Aganagic2018}, but
  we will verify it by direct calculation. By linearity, it is enough
  to take $f(s) = s^k$ for $k \in \bZ$. Let $r \coloneqq \rank \cV$.
  Then the left hand side is
  \begin{equation} \label{eq:projective-bundle-tautological-pushforward}
    \pi_* s^k = \begin{cases} \Sym^k \cV^\vee, & k \ge 0, \\ 0, & -r<k<0, \\ (-1)^{r+1}\det \cV \otimes \Sym^{-k-r} \cV, & k\le -r. \end{cases}
  \end{equation}
  To compute the right hand side, we use the expansion
  \eqref{eq:k-theoretic-euler-class-expansion} and the symmetry
  \eqref{eq:wedge-symmetry} to expand $1/\se_z(\cV)$ as a
  formal power series in $z^{-1}$ and $z$. By the definition of
  $\rho_K$, the right hand side is the difference of
  \begin{equation} \label{eq:projective-bundle-residue}
    \begin{aligned}
      z^0 \text{ term in } z^k \cdot (-z)^r \det(\cV) \otimes \sum_{i \ge 0} z^i \Sym^i(\cV) &= \begin{cases} (-1)^r \det(\cV) \otimes \Sym^{-k-r}(\cV), & k \le -r, \\ 0, & \text{else}, \end{cases} \\
      z^0 \text{ term in } z^k \cdot \sum_{i \ge 0} z^{-i} \Sym^i(\cV^\vee) &= \begin{cases} \Sym^k(\cV^\vee) & k \ge 0, \\ 0, & \text{else}. \end{cases}
    \end{aligned}
  \end{equation}
  We see that these match up exactly.
\end{proof}

\subsection{Equivariant operational K-homology}
\label{sec:equivariant-k-homology}

\subsubsection{}

\begin{definition}[{\cite[\S 2.2]{liu_eq_k_theoretic_va_and_wc}}] \label{def:operational-k-homology}
  Let $\bK^\sT_\circ(\fX)$ be the $\bk_{\sT}$-module which consists of
  all collections
  \[ \phi\coloneqq \{K^\circ_\sT(\fX \times S) \xrightarrow{\phi_S} K^\circ_\sT(S)\}_S \]
  of homomorphisms of $K^\circ_\sT(S)$-modules, for all $S \in
  \cat{Art}_\sT$ which we call the {\it base}, which obey the
  following axioms.
  \begin{enumerate}[label = (\alph*)]
  \item (Naturality) For any morphism $h\colon S \to S'$ in
    $\cat{Art}_\sT$, there is a commutative square
    \[ \begin{tikzcd}
        K^\circ_{\sT}(\fX \times S') \ar{r}{(\id \times h)^*} \ar{d}{\phi_{S'}} & K^\circ_{\sT}(\fX \times S) \ar{d}{\phi_S} \\
        K^\circ_{\sT}(S') \ar{r}{h^*} & K^\circ_{\sT}(S).
      \end{tikzcd} \]
  \item (Equivariant localization) There exists a proper algebraic
    space $\fF_\phi$ with the resolution property (see
    Remark~\ref{rem:localization-resolution-property}),
    equipped with the trivial $\sT$-action, and a map $\fix_\phi\colon
    \fF_\phi \to \fX$ in $\cat{Art}_\sT$, such that $\phi$ factors as
    \[ \begin{tikzcd}
        K_{\sT}^\circ(\fX \times S) \ar{rr}{\phi_S} \ar{dr}[swap]{(\fix_\phi \times \id)^*} && K_{\sT}^\circ(S) \\
        & K_{\sT}^\circ(\fF_\phi \times S) \ar{ur}[swap]{\phi_S^{\sT}}
      \end{tikzcd} \]
    for homomorphisms $\{\phi_S^{\sT}\}_S$ of
    $K_{\sT}^\circ(S)$-modules which themselves satisfy all other
    axioms, i.e. forming an element $\phi^\sT \in \bK_\circ^\sT(\fF)$.
  \item (Finiteness) for any proper algebraic space $S$ with the
    resolution property,
    \begin{equation} \label{eq:operational-k-homology-finiteness-condition}
      \phi_S^\sT\left(I^\circ(\fF \times S)^{\otimes N}\right) = 0, \qquad \forall N \gg 0.
    \end{equation}
  \end{enumerate}
  The sum $\phi + \psi$ of two elements $\phi, \psi \in
  \bK_\circ^\sT(\fX)$ still satisfies the equivariant localization and
  finiteness axioms by setting $\fF_{\phi + \psi} \coloneqq \fF_\phi
  \sqcup \fF_\psi$.
  
  Similarly, define the {\it localized} groups
  $\bK_\circ^\sT(\fX)_{\loc}$ by replacing all groups
  $K_{\sT}^\circ(-)$ with the localized groups
  $K_{\sT}^\circ(-)_{\loc}$.
\end{definition}

\subsubsection{}
\label{sec:k-homology-shorthand}

To prevent notational clutter, and for clarity, we generally write
formulas involving elements $\phi$ in a K-homology group
$\bK_\circ^\sT(\fX)$ in terms of just the functional $\phi_S$ for $S =
\pt$. It will always be clear how to extend the formula to $\phi_S$ for
general $S$. For instance, in \eqref{eq:k-homology-pushforward} below,
the definition $(f_*\phi)(\cE) \coloneqq \phi(f^*\cE)$ means that
\[ f_*\phi \coloneqq \{(f_*\phi)_S\colon K^\circ_\sT(\fY \times S) \to K^\circ_\sT(S)\} \in \bK^\sT_\circ(\fY) \]
is defined by $(f_*\phi)_S(\cE) \coloneqq \phi_S((f \times \id)^*\cE)$
for every base $S$.

\subsubsection{}
\label{sec:k-homology-properties}

We list some properties of $\bK_\circ^\sT(-)$, mostly inherited from
$K_\sT^\circ(-)$.
\begin{itemize}
\item Tensor product on $K^\circ_\sT(\fX)$ induces a cap product
  \begin{equation} \label{eq:k-homology-cap}
    \cap\colon \bK^\sT_\circ(\fX) \otimes K^\circ_\sT(\fX) \to \bK^\sT_\circ(\fX), \qquad (\phi \cap \cF)(\cE) \coloneqq \phi(\cE \otimes \cF).
  \end{equation}
  This is well-defined since $I^\circ(\fF_\phi)$ is an ideal.

\item A $\sT$-equivariant morphism $f\colon \fX \to \fY$ induces a
  functorial pushforward
  \begin{equation} \label{eq:k-homology-pushforward}
    f_*\colon \bK^\sT_\circ(\fX) \to \bK^\sT_\circ(\fY), \qquad (f_*\phi)(\cE) \coloneqq \phi(f^*\cE)
  \end{equation}
  which satisfies the {\it push-pull} formula
  \[ f_*(\phi) \cap E = f_*(\phi \cap f^*E), \qquad \phi \in \bK_\circ^\sT(\fX), \; E \in K^\circ_\sT(\fY). \]

\item A proper and representable $\sT$-equivariant morphism $f\colon
  \fX \to \fY$ of finite Tor-amplitude induces a functorial pullback
  \begin{equation} \label{eq:k-homology-pullback}
    f^*\colon \bK^\sT_\circ(\fY) \to \bK^\sT_\circ(\fX), \qquad (f^*\phi)(\cE) \coloneqq \phi(f_*\cE),
  \end{equation}
  which respects cap product, i.e.
  \[ f^*\phi \cap f^*E = f^*(\phi \cap E), \qquad \phi \in \bK_\circ^\sT(\fY), \; E \in K^\circ_\sT(\fY). \]

\item There is an external tensor product
  \begin{equation} \label{eq:k-homology-boxtimes}
    \boxtimes\colon \bK_\circ^\sT(\fX) \otimes \bK_\circ^\sT(\fY) \to \bK_\circ^\sT(\fX \times \fY), \qquad (\phi \boxtimes \psi)_S \coloneqq \psi_S \circ \phi_{\fY \times S}.
  \end{equation}
\end{itemize}
All these operations are homomorphisms of $\bk_\sT$-modules.

\subsubsection{}

\begin{definition} \label{def:k-homology-theory}
  View $\bK_\circ^\sT(-)$
  (Definition~\ref{def:operational-k-homology}) as a covariant functor
  from $\cat{Art}_\sT$ into $\bk_\sT$-modules via
  \eqref{eq:k-homology-pushforward}. A {\it $\sT$-equivariant
    operational K-homology theory} $\bfK_\circ^\sT(-) =
  \bfK_\circ^\sT(-)$ is a sub-functor
  \[ \bfK_\circ^\sT(-) \subset \bK_\circ^\sT(-) \]
  which is closed under the cap product \eqref{eq:k-homology-cap}, the
  pullback \eqref{eq:k-homology-pullback}, and the external tensor
  product \eqref{eq:k-homology-boxtimes}. Similarly, use
  $\bK_\circ^\sT(-)_{\loc}$ to define the notion of {\it localized}
  K-homology theory.

  We say $\bfK_\circ^\sT(-)$ is a {\it commutative} K-homology theory
  (see \cite[\S 2.2]{Fulton1981}) if for any $\fX, \fX' \in
  \cat{Art}_\sT$ and $\phi \in \bfK_\circ^\sT(\fX)$ and $\psi \in
  \bfK_\circ^\sT(\fX')$,
  \[ \phi \boxtimes \psi = \psi \boxtimes \phi. \]
\end{definition}

\subsubsection{}

\begin{definition} \label{def:supported-on}
  Let $\bfK_\circ^\sT(-)$ be a K-homology theory. An element $\phi \in
  \bfK_\circ^\sT(\fX)$ is {\it supported on} a $\sT$-invariant
  substack $\iota\colon \fX' \hookrightarrow \fX$ if there exists
  $\phi' \in \bfK_\circ^\sT(\fX')$ such that
  \[ \iota_* \phi' = \phi. \]
  As for usual homology class or for sheaves, we often omit $\iota_*$
  and simply write $\phi' = \phi$.
\end{definition}

\subsubsection{}

\begin{definition} \label{def:concrete-k-homology}
  Here is the ``concrete'' K-homology theory we will use throughout
  this paper. Define $K_\circ^\sT(\fX)_{\loc}$ to be the set of all
  triples $\phi = (Z_\phi, \fix_\phi, \cF_\phi)$ where:
  \begin{itemize}
  \item $Z_\phi$ is a proper algebraic space with the resolution
    property;
  \item $\fix_\phi\colon Z_\phi \to \fX$ is a $\sT$-equivariant
    morphism for the trivial $\sT$-action on $Z_\phi$;
  \item $\cF_\phi \in K_\sT(Z_\phi)_{\loc}$ is a K-theory element.
  \end{itemize}
  Equip it with the group operation where $\fix_{\phi+\psi}$ is the
  obvious map from $Z_{\phi+\psi} \coloneqq Z_\phi \sqcup Z_\psi$ and
  $\cF_{\phi+\psi} \coloneqq \cF_\phi + \cF_\psi$. For the remainder
  of this paper, $K_\circ^\sT(\fX)_{\loc}$ will denote its image
  inside $\bK_\circ^\sT(\fX)_{\loc}$, defined by the following
  Proposition~\ref{prop:actual-k-homology}. 
\end{definition}

\subsubsection{}

\begin{proposition}[{\cite[Propositions 2.2.10, 2.3.5]{liu_eq_k_theoretic_va_and_wc}}] \label{prop:actual-k-homology}
  $K_\circ^\sT(-)_{\loc}$ defines a localized commutative K-homology
  theory. Furthermore, for the trivial $\sT$-action,
  \begin{align}
    K_\circ^\sT(\pt)_{\loc} &= \bK_\circ^\sT(\pt)_{\loc} = \bk_{\sT,\loc}, \nonumber \\
    K_\circ^\sT([\pt/\bC^\times])_{\loc} &= \bK_\circ^\sT([\pt/\bC^\times])_{\loc} = \bk_{\sT,\loc}[\xi], \label{eq:k-homology-BGm}
  \end{align}
  where if $K([\pt/\bC^\times]) = \bZ[s^\pm]$ then $\xi^k(s^n) =
  (-1)^k \binom{n}{k}$.
\end{proposition}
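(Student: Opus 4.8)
The plan is to build the comparison map from the set of triples to $\bK_\circ^\sT(-)_\loc$, check the three axioms of Definition~\ref{def:operational-k-homology}, verify closure under the structural operations, and then compute the two examples by direct inspection. To a triple $\phi=(Z_\phi,\fix_\phi,\cF_\phi)$ I would assign $\phi_S(\cE)\coloneqq p_{S*}\big((\fix_\phi\times\id_S)^*\cE\otimes\pr_{Z_\phi}^*\cF_\phi\big)$, where $p_S\colon Z_\phi\times S\to S$; this is $K^\circ_\sT(S)_\loc$-linear by the projection formula, and it lands in $K^\circ_\sT(S)_\loc$ (not merely $K_\sT(S)_\loc$) because $Z_\phi$ is proper with the resolution property. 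Naturality is flat base change along $\id_{Z_\phi}\times h$ together with the projection formula; the equivariant-localization axiom holds tautologically, with $\fF_\phi\coloneqq Z_\phi$, $\fix_\phi$ itself, and $\phi^\sT$ the class ``cap with $\cF_\phi$, then push to $S$''; and the finiteness axiom reduces to the nilpotence of the augmentation ideal in Lemma~\ref{lem:k-theory-finiteness}, applied to the proper algebraic space $Z_\phi\times S$. Closure under the structural operations is a matter of exhibiting each one on triples: $f_*(Z_\phi,\fix_\phi,\cF_\phi)=(Z_\phi,f\circ\fix_\phi,\cF_\phi)$; $(Z_\phi,\fix_\phi,\cF_\phi)\cap E=(Z_\phi,\fix_\phi,\cF_\phi\otimes\fix_\phi^*E)$; for $f\colon\fX\to\fY$ proper representable of finite Tor-amplitude, $f^*(Z_\phi,\fix_\phi,\cF_\phi)=(Z_\phi\times_\fY\fX,\pr,g^*\cF_\phi)$ with $g$ the base change of $f$ (the fibre product is again a proper algebraic space with the resolution property); and $(Z_\phi,\fix_\phi,\cF_\phi)\boxtimes(Z_\psi,\fix_\psi,\cF_\psi)=(Z_\phi\times Z_\psi,\fix_\phi\times\fix_\psi,\cF_\phi\boxtimes\cF_\psi)$. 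That each of these matches the corresponding abstract operation on functionals is base change plus the projection formula, and commutativity of $\boxtimes$ follows from the canonical isomorphism $Z_\phi\times Z_\psi\cong Z_\psi\times Z_\phi$ and Fubini for iterated proper pushforward.

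For $\fX=\pt$ the projection formula gives $\phi_S(\cE)=\chi(Z_\phi,\cF_\phi)\cdot\cE$, so every triple is multiplication by the scalar $\chi(Z_\phi,\cF_\phi)\in\bk_{\sT,\loc}$; conversely, any $\phi\in\bK_\circ^\sT(\pt)_\loc$ is $K^\circ_\sT(S)_\loc$-linear and compatible with pullback along $S\to\pt$, hence is also multiplication by $\phi_\pt(1)\in K^\circ_\sT(\pt)_\loc=\bk_{\sT,\loc}$. Taking $Z_\phi=\pt$ realizes every scalar, so both groups equal $\bk_{\sT,\loc}$.

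For $\fX=[\pt/\bC^\times]$, write $s$ for the weight-one line bundle; since the $\bC^\times$-action on $S$ is trivial, $K^\circ_\sT([\pt/\bC^\times]\times S)_\loc=K^\circ_\sT(S)_\loc[s^{\pm1}]$, so a class $\phi$ is determined by the sequence $\big(\phi_\pt(s^n)\big)_{n\in\bZ}$ in $\bk_{\sT,\loc}$. Given $\phi$, the equivariant-localization axiom writes it through a proper algebraic space $\fF_\phi$ with $L\coloneqq\fix_\phi^*s$ a line bundle on it; setting $u\coloneqq L-1\in I^\circ(\fF_\phi)$, the expansion $L^n=\sum_{k\ge0}\binom{n}{k}u^k$ (for $n<0$ read off from $L^{-1}=1-uL^{-1}$) holds modulo $u^N K^\circ_\sT(\fF_\phi\times S)$, and the finiteness axiom kills the tail for $N\gg0$; hence $n\mapsto\phi_\pt(s^n)$ is a numerical polynomial with coefficients in $\bk_{\sT,\loc}$. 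Conversely the triples $Z_\phi=\bP^j$, $\fix_\phi$ classifying $\cO_{\bP^j}(1)$, $\cF_\phi=\cO_{\bP^j}$ give $\phi_\pt(s^n)=\chi(\bP^j,\cO(n))=\binom{n+j}{j}$, and these span all numerical polynomials over $\bk_{\sT,\loc}$; rewriting this basis yields $\bk_{\sT,\loc}[\xi]$ with $\xi^k(s^n)=(-1)^k\binom{n}{k}$, proving equality of both groups. I expect the main obstacle to be exactly the two points at which the abstract axioms must be matched against the concrete construction: first, confirming that the pushforward functional is valued in $K^\circ_\sT(-)_\loc$, which genuinely uses properness together with the resolution property of $Z_\phi$; and second, turning the opaque finiteness axiom into the explicit finite expansion of $L^n$ modulo the kernel that forces polynomiality in the $[\pt/\bC^\times]$ computation.
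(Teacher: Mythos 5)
Your proposal is correct and follows essentially the same route as the paper's own proof: the same concrete functional attached to a triple, the axioms checked via base change and Lemma~\ref{lem:k-theory-finiteness}, closure exhibited by explicit operations on triples, and the two computations done by showing finiteness forces numerical-polynomial behavior in $n\mapsto\phi_\pt(s^n)$ and realizing a generating set by projective spaces mapping to $[\pt/\bC^\times]$. The only (cosmetic) differences are that the paper realizes $\xi^k$ directly via $(-1)^k(\bP^k,\cO_{\bP^k}(-k))$ rather than your $\binom{n+j}{j}$-basis from $(\bP^j,\cO_{\bP^j})$, and it justifies that the pushforward lands in $K^\circ_\sT(S)_{\loc}$ by $S$-flatness of the integrand rather than your appeal to the resolution property.
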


\begin{proof}[Proof sketch.]
  Let $\phi = (Z_\phi, \fix_\phi, \cF_\phi) \in
  K_\circ^\sT(\fX)_{\loc}$. Given a base $S \in \cat{Art}_\sT$, define
  \begin{align}
    \phi_S\colon K^\circ_\sT(\fX \times S)_{\loc} &\to K^\circ_\sT(S)_{\loc} \nonumber \\
    \cE &\mapsto (\pi_S)_*(\pi_Z^*\cF_\phi \otimes (\fix_\phi \times \id)^*(\cE)) \label{eq:actual-k-homology-elements}
  \end{align}
  where $\pi_Z$ and $\pi_S$ are the projections from $Z_\phi \times S$
  to the $Z_\phi$ and $S$ factors respectively. Since both
  $\pi_Z^*\cF_\phi$ and $(\fix_\phi \times \id)^*\cE$ are flat with
  respect to $\pi_S$, the output of $\phi_S$ indeed defines an element
  of $K^\circ_\sT(S)_{\loc}$. Some straightforward base change
  formulas show that $\{\phi_S\}_S$ defines an element of
  $\bK_\circ^\sT(\fX)_{\loc}$; in particular, the finiteness axiom
  follows from Lemma~\ref{lem:k-theory-finiteness}.

  The verification that $K_\circ^\sT(-)_{\loc}$ is closed under cap
  product, pushforward, pullback, and external tensor is
  straightforward. Note that the product of two algebraic spaces with
  the resolution property still has the resolution property
  \cite{Totaro2004, Gross2017}.

  To compute $\bK_\circ^\sT(\fX)$ for $\fX = \pt$ and
  $[\pt/\bC^\times]$, it suffices by the naturality axiom to compute
  the sub-module of $\Hom_{\bZ}(K^\circ(\fX), \bZ)$ consisting of
  elements satisfying the finiteness axiom. This is easy since
  $K^\circ([\pt/G]) = R(G)$ is the representation ring of $G$. To show
  that $K_\circ^\sT(\fX) = \bK_\circ^\sT(\fX)$, it then suffices to
  verify that all elements in $\bK_\circ^\sT(\fX)$ may be written in
  the form \eqref{eq:actual-k-homology-elements}. Explicitly,
  \[ (\xi^k_S)^\sT = (-1)^k (\pi_S)_*\left(\pi_Z^*\cO_{\bP^k}(-k) \otimes (\fix \times \id)^*(-)\right) \]
  where $Z \coloneqq \bP^k$ and $\fix\colon Z = (\bC^{k+1} \setminus
  \{0\})/\bC^\times \hookrightarrow [\bC^{k+1}/\bC^\times] \to
  [\pt/\bC^\times]$ is the natural open embedding followed by
  projection.
\end{proof}

Note that \eqref{eq:k-homology-BGm} agrees with the topological
K-homology of the topological realization $B\bC^\times =
\bC\bP^\infty$ \cite[\S II.4]{Adams1974}.

\subsubsection{}
\label{sec:universal-invariants-shorthand}

It is clear from the proof of Proposition~\ref{prop:actual-k-homology}
that we view an element $\phi = (Z_\phi, \fix_\phi, \cF_\phi) \in
K_\circ^\sT(\fX)_{\loc}$ as an operator
\[ \phi = \chi\left(Z_\phi, \cF_\phi \otimes \fix_\phi^*(-)\right)\colon K_\sT^\circ(\fX)_{\loc} \to \bk_{\sT,\loc} \]
which behaves well with respect to base change. Hence we adopt this
shorthand notation for elements in $K_\circ^\sT(-)$, cf.
\S\ref{sec:k-homology-shorthand}. For instance, given $\psi = (Z_\psi,
\fix_\psi, \cF_\psi) \in K_\circ^\sT(\fX)_{\loc}$,
\begin{equation} \label{eq:universal-invariants-tensor-product}
  \phi \boxtimes \psi = \chi\left(Z_\phi \times Z_\psi, \left(\cF_\phi \boxtimes \cF_\psi\right) \otimes \left(\fix_\phi \times \fix_\psi\right)^*(-)\right).
\end{equation}

\subsubsection{}

\begin{remark} \label{rem:cohomology-theory}
  More generally, our wall-crossing framework only requires a pair of
  functors $F_*^\sT(-)$ (homology) and $F^*_\sT(-)$ (cohomology), and
  a pairing $F_*^\sT(X) \otimes F^*_\sT(X) \to F^*_\sT(\pt)$ on
  algebraic spaces $X \in \cat{Art}_\sT$. The cohomology functor
  $F^*_\sT(-)$ must have analogues of all the properties of
  $K_\sT^\circ(-)$ given in \S\ref{sec:k-theory}. In particular, it
  must have Euler classes, a residue map, and a projective bundle
  formula. The homology functor $F_*^\sT(-)$ must be commutative
  (Definition~\ref{def:k-homology-theory}) and satisfy the analogue of
  \eqref{eq:k-homology-BGm} along with all the properties given in
  this subsection. A suitable $F_*^\sT(-)$ may always be constructed
  from $F^*_\sT(-)$ in an ``operational'' manner, as in
  Definition~\ref{def:concrete-k-homology} or
  Definition~\ref{def:homology-theory-operational} below.

  In \S\ref{sec:cohomological-version}, we explain how to take
  $F^*_\sT(-)$ to be {\it operational Chow cohomology}, and
  $F_*^\sT(-)$ to be its operational dual. See also
  Remark~\ref{rem:khan-homology}.
\end{remark}

\subsection{Monoidal stacks and vertex algebras}
\label{sec:vertex-algebra}

\subsubsection{}
\label{sec:series-rings}

In this subsection, we review a symmetrized version of the
constructions of \cite{liu_eq_k_theoretic_va_and_wc}. For formal
variables $z_1, \ldots, z_m$ and a $\bk_\sT$-module $M$, define
\begin{equation} \label{eq:equivariant-polynomial-ring}
  M\left[(1 - z_1 \cdots z_m)^{-1}\right]_\sT \coloneqq M\left[(1 - t^\mu z_1^{i_1} \cdots z_m^{i_m})^{-1} : \begin{array}{c} \mu \in \Char(\sT) \\ i_1,\ldots,i_m \in \bZ \setminus \{0\}\end{array}\right]
\end{equation}
Similarly define $M[(1-z_1\cdots z_m)^\pm]_\sT$. In the case of
multiple ``generators'', we use the usual convention for polynomial
rings that, for instance,
\[ M\left[(1 - z)^{-1}, (1 - w)^{-1}\right]_\sT \coloneqq M\left[(1 - z)^{-1}\right]_\sT\left[(1 - w)^{-1}\right]_\sT \]
for variables $z, w$. In the case of a single variable $z$, let
\[ M\lseries*{1 - z}_{\sT} \coloneqq M\pseries*{1-z}\left[(1 - z)^{-1}\right]_\sT. \]
Finally, define $M[(1 - z_1 \cdots z_m)^{-1}]_\sT^{(1)}$ by replacing
$\bZ \setminus \{0\}$ in \eqref{eq:equivariant-polynomial-ring} with
$\{1, -1\}$. This also defines $M[(1 - z_1 \cdots z_m)^\pm]_\sT^{(1)}$
and $M\lseries{1-z}_\sT^{(1)}$.

If $M$ is a ring, then all these modules are also rings under the
usual multiplication of polynomials and power series.

\subsubsection{}

\begin{definition} \label{def:expansion}
  For two formal variables $z$ and $w$, the {\it (multiplicative)
    expansion}
  \[ \iota_{z,w}\colon \bZ\left[(1 - zw)^\pm\right] \hookrightarrow \bZ\left[(1-w)^\pm\right]\pseries*{1 - z} \]
  is the injective ring homomorphism given by
  \begin{align*}
    \iota_{z,w}(1 - zw)^k
    &\coloneqq \left((1-z) + (1-w) - (1-z)(1-w)\right)^k \\
    &= (1-w)^k \left(1 - (1-z)(1 - (1-w)^{-1})\right)^k.
  \end{align*}
  Note that $1 - (1-z)(1 - (1-w)^{-1})$ is invertible, so this is
  well-defined for all $k \in \bZ$. The {\it (multiplicative)
    equivariant expansion}
  \[ \iota_{z,w}^\sT\colon \bk_\sT\left[(1 - zw)^\pm\right]_\sT \to \bk_\sT\left[(1-w)^\pm\right]_\sT\pseries*{1-z}. \]
  is the $\bk_\sT$-algebra homomorphism given by applying the
  expansion
  \[ \iota_{z^i,t^\mu w^j}\colon \bk_\sT\left[(1 - t^\mu z^i w^j)^\pm\right] \to \bk_\sT\left[(1 - t^\mu w^j)^\pm\right]\pseries*{1 - z^i} \]
  to the monomial $(1 - t^\mu z^i w^j)^k$, and using the isomorphism
  $\bZ\pseries{1-z^{-1}} \cong \bZ\pseries{1-z}$ as necessary.
\end{definition}

\subsubsection{}

\begin{definition} \label{def:multiplicative-vertex-algebra}
  A {\it $\sT$-equivariant multiplicative vertex algebra} is
  the data of:
  \begin{enumerate}[label = (\roman*)]
  \item a $\bk_\sT$-module $V$ of {\it states} with a distinguished
    vacuum vector $\vac \in V$;
  \item a multiplicative {\it translation operator} $D(z)
    \in \End(V)\pseries*{1 - z}$, i.e. $D(z)D(w) = D(zw)$;
  \item a {\it vertex product} $Y(-, z)\colon V \otimes V \to
    V\lseries{1-z}_\sT$.
  \end{enumerate}
  This data must satisfy the following axioms:
  \begin{enumerate}[label = (\alph*)]
  \item (vacuum) $Y(\vac, z) = \id$ and $Y(a, z)\vac \in V\pseries{1 -
    z}$ with $Y(a,1)\vac = a$;
  \item (skew symmetry) $Y(a, z)b = D(z) Y(b, z^{-1}) a$;
  \item (weak associativity) $Y(Y(a,z) b, w) \equiv Y(a, zw) Y(b, w)$,
    where $\equiv$ means that when applied to any $c \in V$, both
    sides are equivariant expansions of the same element in
    \begin{equation} \label{eq:mvoa-ope-ring}
      V\pseries*{1 - z, 1 - w}\left[(1 - z)^{-1}, (1 - w)^{-1}, (1 - zw)^{-1}\right]_{\sT}.
    \end{equation}
  \end{enumerate}
  A {\it morphism} $f\colon (V, \vac, D, Y) \to (V', \vac', D', Y')$
  of two such vertex algebras is a homomorphism $f\colon V \to V'$ of
  $\bk_\sT$-modules such that
  \[ f(\vac) = \vac', \qquad f \circ D = D' \circ f, \qquad f \circ Y = Y' \circ (f \otimes f). \]
\end{definition}

\subsubsection{}

\begin{definition} \label{def:graded-monoidal-stack}
  A {\it graded monoidal $\sT$-stack with symmetric bilinear element} is the
  data of:
  \begin{enumerate}[label = (\roman*)]
  \item an Artin stack $\fM = \bigsqcup_\alpha \fM_\alpha$, where
    $\alpha$ ranges over an additive monoid and the torus $\sT$ acts
    on each $\fM_\alpha$;
  \item for every $\alpha$ and $\beta$, $\sT$-equivariant morphisms
    \begin{align*}
      &\Phi_{\alpha,\beta}\colon \fM_\alpha \times \fM_\beta \to \fM_{\alpha+\beta}, \\
      &\Psi_\alpha \colon [\pt/\bC^\times] \times \fM_\alpha \to \fM_\alpha,
    \end{align*}
    and elements $\scE_{\alpha,\beta} \in K_\sT^\circ(\fM_\alpha \times
    \fM_\beta)$.
  \end{enumerate}
  This data must satisfy the following axioms, for all $\alpha$ and
  $\beta$.
  \begin{enumerate}[label = (\alph*)]
  \item (Identity) $\fM_0 = *$ consists of a single point.
  \item ($\Phi$ is monoidal) $\Phi_{0,\alpha} = \id = \Phi_{\alpha,0}$
    and $\Phi_{\alpha,\beta+\gamma} \circ (\id \times
    \Phi_{\beta,\gamma} \times \id) = \Phi_{\alpha+\beta,\gamma} \circ
    (\Phi_{\alpha,\beta} \times \id)$.
  \item ($\Psi$ is $[\pt/\bC^\times]$-action) Define $\Omega\colon
    [\pt/\bC^\times]^2 \to [\pt/\bC^\times]$ using multiplication on
    on $\bC^\times$. Then
    \begin{align*}
      \Psi_{\alpha+\beta} \circ (\id \times \Phi_{\alpha,\beta}) &= \Phi_{\alpha,\beta} \circ (\Psi_\alpha^{(12)}, \Psi_\alpha^{(13)}) \\
      \Psi_\alpha \circ (\id \times \Psi_\alpha) &= \Psi_\alpha \circ (\Omega \times \id)
    \end{align*}
    where superscripts $(ij)$ mean to act on the $i$-th and $j$-th
    factors.
  \item ($\scE_{\alpha,\beta}$ is bilinear) Let $\cL_{[\pt/\bC^\times]}
    \in K_\sT([\pt/\bC^\times])$ be the weight-$1$ representation.
    Then
    \begin{equation} \label{eq:mVOA-bilinear-complex-conditions}
      \begin{alignedat}{2}
        (\Phi_{\alpha,\beta} \times \id)^*(\scE_{\alpha+\beta,\gamma}) &= \scE_{\alpha,\gamma} \oplus \scE_{\beta,\gamma}, \qquad & (\Psi_\alpha \times \id)^*(\scE_{\alpha,\beta}) &= \cL^\vee \boxtimes \scE_{\alpha,\beta}, \\
        (\id \times \Phi_{\beta,\gamma})^*(\scE_{\alpha,\beta+\gamma}) &= \scE_{\alpha,\beta} \oplus \scE_{\alpha,\gamma}, \qquad & (\id \times \Psi_\beta)^*(\scE_{\alpha,\beta}) &= \cL \boxtimes \scE_{\alpha,\beta}.
      \end{alignedat}
    \end{equation}
    Some obvious pullbacks along projections to various factors have
    been omitted.
  \end{enumerate}
  We say that the bilinear element is {\it $\kappa$-symmetric} (cf.
  Definition~\ref{def:obstruction-theory}) if:
  \begin{enumerate}[resume, label = (\alph*)]
  \item ($\scE_{\alpha,\beta}$ is $\kappa$-symmetric) If $(12)\colon
    \fM_\beta \times \fM_\alpha \to \fM_\alpha \times \fM_\beta$ swaps
    the two factors, then
    \[ \scE_{\beta,\alpha} = -\kappa^{-1} \otimes (12)^*\scE_{\alpha,\beta}^\vee. \]
  \end{enumerate}
  A {\it morphism} $f\colon (\fM, \Phi, \Psi, \scE) \to (\fM', \Phi',
  \Psi', \scE')$ of two graded monoidal stacks with (symmetric)
  bilinear elements is a collection of morphisms $f_\alpha\colon
  \fM_\alpha \to \fM'_\alpha$ in $\cat{Art}_\sT$, for every $\alpha$,
  such that
  \[ \Phi'_{\alpha,\beta} \circ (f_\alpha \times f_\beta) = f_{\alpha+\beta} \Phi_{\alpha,\beta}, \qquad \Psi'_\alpha \circ f_\alpha = f_\alpha \circ \Psi_\alpha, \qquad (f_\alpha \times f_\beta)^*\scE_{\alpha,\beta} = \scE'_{\alpha,\beta}. \]
\end{definition}

\subsubsection{}

\begin{definition} \label{def:degree-map}
  Let the rigidification $\Pi\colon \fX \to \fX^\pl$ be a $\sT$-equivariant
  $\bC^\times$-gerbe. Then there is a natural $\sT$-equivariant map
  \[ \Psi\colon [\pt/\bC^\times] \times \fX \to \fX \]
  given by $(\pt, x) \mapsto x$ on $\bC$-points and $(\lambda, f)
  \mapsto (\lambda \cdot \id_x) \circ f$ on the associated stabilizer
  groups. It induces a {\it degree operator}
  \[ z^{\deg}\colon K_\sT^\circ(\fX) \xrightarrow{\Psi^*} K_\sT^\circ([\pt/\bC^\times] \times \fX) \cong K_\sT^\circ(\fX)[z^\pm] \]
  where we identify $K_\sT^\circ([\pt/\bC^\times]) =
  K_\sT([\pt/\bC^\times]) = \bk_\sT[z^\pm]$. If $\fX = \prod_i \fX_i$
  is a product of such $\bC^\times$-gerbes, we write $z^{\deg_i}$ for
  the operator defined by acting on the $i$-th factor by $\Psi^*$.
  Note that, by construction,
  \begin{equation} \label{eq:degree-map-pl-stack}
    z^{\deg} \Pi^*\cE = \Pi^*\cE, \qquad \cE \in K_\sT^\circ(\fX^\pl).
  \end{equation}
\end{definition}

\subsubsection{}

\begin{theorem}[{\cite[Theorem 3.3.5]{liu_eq_k_theoretic_va_and_wc}}] \label{thm:mVOA-monoidal-stack}
  Let $\fM = \bigsqcup_\alpha \fM_\alpha$ be a graded monoidal
  $\sT$-stack with $\kappa$-symmetric bilinear elements
  $\scE_{\alpha,\beta}$. Let $\bfK_\circ^{\tilde\sT}(-)$ be a
  commutative K-homology theory such that
  $\bfK^{\tilde\sT}_\circ([*/G]) = \bK^{\tilde\sT}_\circ([*/G])$ for
  $G = 1$ and $G = \bC^\times$. Then
  \[ \bfK^{\tilde\sT}_\circ(\fM) \coloneqq \bigoplus_\alpha \bfK^{\tilde\sT}_\circ(\fM_\alpha) \]
  has the structure of a $\tilde\sT$-equivariant multiplicative vertex
  algebra.
  \begin{itemize}
  \item The vacuum $\vac \in \bfK_\circ^{\tilde\sT}(\fM_0)$ is given by
    the identity map $\bk_{\tilde\sT} \to \bk_{\tilde\sT}$.

  \item The translation operator is
    \[ D(z)\phi \coloneqq \sum_{k \ge 0} (1-z)^k \Psi_*\left(\xi^k \boxtimes \phi\right) \in \bfK_\circ^{\tilde\sT}(\fM)\pseries{1-z}, \]
    where $\bfK_\circ^{\tilde\sT}([\pt/\bC^\times]) =
    \bk_{\tilde\sT}[\xi]$ as in Proposition~\ref{prop:actual-k-homology}.
    Explicitly, $(D(z)\phi)(\cE) = \phi(z^{\deg} \cE)$ where
    $z^{\deg}$ is the degree operator associated to $\Psi$
    (Definition~\ref{def:degree-map}).

  \item The vertex product is given on $\phi \in
    \bfK^{\tilde\sT}_\circ(\fM_\alpha)$ and $\psi \in
    \bfK^{\tilde\sT}_\circ(\fM_\beta)$ by
    \begin{equation} \label{eq:monoidal-stack-vertex-product}
      \begin{aligned}
        Y(\phi, z) \psi
        &\coloneqq (\Phi_{\alpha,\beta})_* (D(z) \times \id) \left((\phi \boxtimes \psi) \cap \hat\Theta_{\alpha,\beta}(z)\right) \\
        K^\circ_\sT(\fM_{\alpha+\beta}) \ni \cE
        &\mapsto (\phi \boxtimes \psi)\left(\hat\Theta_{\alpha,\beta}(z) \otimes z^{\deg_1} \Phi_{\alpha,\beta}^* \cE \right) \in \bk_{\tilde\sT}[(1-z)^\pm]_\sT^{(1)},
      \end{aligned}
    \end{equation}
    where (see \S\ref{sec:monoidal-stack-vertex-product-well-defined}
    for details)
    \begin{equation} \label{eq:mVOA-theta}
      \hat\Theta_{\alpha,\beta}(z) \coloneqq \hat\se_{z^{-1}}\left(\scE_{\alpha,\beta}\right) \otimes \hat\se_{z}\left((12)^*\scE_{\beta,\alpha}\right).
    \end{equation}
  \end{itemize}
  Furthermore, this construction is functorial: if $f\colon \fM \to
  \fM'$ is a morphism of graded monoidal $\sT$-stacks with symmetric
  bilinear elements, then
  \[ f_*\colon \bfK_\circ^{\tilde\sT}(\fM) \to \bfK_\circ^{\tilde\sT}(\fM') \]
  is a morphism of $\tilde\sT$-equivariant multiplicative vertex
  algebras.
\end{theorem}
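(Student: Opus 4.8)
The plan is to define the vacuum, translation operator and vertex product exactly as in the statement, and then verify in order that (i) each is well defined with the asserted target ring, (ii) the translation, vacuum, skew-symmetry and weak-associativity axioms hold, and (iii) the construction is functorial; throughout, I would follow the unsymmetrized construction of \cite[Theorem 3.3.5]{liu_eq_k_theoretic_va_and_wc} essentially verbatim, the only new ingredient being that every Euler class is now the hatted one $\hat\se$ of Definition~\ref{def:k-theoretic-euler-class} and that one must track the weight $\kappa$ and its square root. The delicate part of (i) is the class $\hat\Theta_{\alpha,\beta}(z)$ of \eqref{eq:mVOA-theta}: since $\hat\se$ involves a square root of a determinant, the single factor $\hat\se_{z^{-1}}(\scE_{\alpha,\beta})$ need not be defined over $\bk_{\tilde\sT}$, but in the product $\hat\se_{z^{-1}}(\scE_{\alpha,\beta})\otimes\hat\se_z((12)^*\scE_{\beta,\alpha})$ the $\kappa$-symmetry $\scE_{\beta,\alpha} = -\kappa^{-1}\otimes(12)^*\scE_{\alpha,\beta}^\vee$ forces the two determinant half-powers to combine into $\kappa^{-\chi(\alpha,\beta)/2}(\det\scE_{\alpha,\beta})^{-1}$, and the half-integer powers of $z$ to add to $z^{\chi(\alpha,\beta)}$; hence only $\kappa^{1/2}$ is ever needed, which is precisely why one passes to $\tilde\sT$. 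After splitting $\scE_{\alpha,\beta}$ and $(12)^*\scE_{\beta,\alpha}$ into $\sT$-isotypic pieces so that $\hat\se$ applies (with $z$ regularizing any trivial-weight summand), $\hat\Theta_{\alpha,\beta}(z)$ is a well-defined class with coefficients in $\bk_{\tilde\sT}[(1-z)^\pm]_\sT^{(1)}$, and the fact that $Y(\phi,z)\psi$ then lands in $\bfK_\circ^{\tilde\sT}(\fM_{\alpha+\beta})\lseries*{1-z}_\sT$ with no unexpected poles follows from the bilinearity of $\scE$ under $\Psi$ in \eqref{eq:mVOA-bilinear-complex-conditions}, which makes $\hat\Theta$ shift by a power of $z$ under $\Psi$ compatibly with the degree shift built into $D(z)$. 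These checks are the content of \S\ref{sec:monoidal-stack-vertex-product-well-defined} and are the unsymmetrized ones once the above $\kappa$-bookkeeping is done.

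Next I would dispatch the formal axioms. The translation axiom $D(z)D(w)=D(zw)$ is immediate from $(D(z)D(w)\phi)(\cE)=\phi(z^{\deg}w^{\deg}\cE)$ together with $z^{\deg}w^{\deg}=(zw)^{\deg}$, the latter holding because $z^{\deg}$ is pullback along $\Psi$ and $\Psi$ is a $[\pt/\bC^\times]$-action (the relevant axiom of Definition~\ref{def:graded-monoidal-stack}). For the vacuum axiom, $\fM_0=*$ and bilinearity forces $\scE_{0,\beta}=\scE_{\alpha,0}=0$, so $\hat\Theta_{0,\beta}=\hat\Theta_{\alpha,0}=1$ and $\Phi_{0,\beta}=\Phi_{\alpha,0}=\id$; hence $Y(\vac,z)=\id$, while $Y(\phi,z)\vac=D(z)\phi\in\bfK_\circ^{\tilde\sT}(\fM)\pseries*{1-z}$ has value $\phi$ at $z=1$ since $z^{\deg}|_{z=1}=\id$ (the composite $\pt\times\fM_\alpha\hookrightarrow[\pt/\bC^\times]\times\fM_\alpha\xrightarrow{\Psi}\fM_\alpha$ is the identity). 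Skew symmetry $Y(a,z)b=D(z)Y(b,z^{-1})a$ rests on the identity $\hat\Theta_{\alpha,\beta}(z)=(12)^*\hat\Theta_{\beta,\alpha}(z^{-1})$, which is immediate from \eqref{eq:mVOA-theta} because $\hat\Theta$ is built symmetrically from $\scE_{\alpha,\beta}$ and $(12)^*\scE_{\beta,\alpha}$, together with the compatibility of $\Psi$ with $\Phi$ (translating $\alpha+\beta$ equals simultaneously translating $\alpha$ and $\beta$): the latter turns the outer $D(z)$ on $\fM_{\alpha+\beta}$ into a simultaneous translation of both factors and, with $D(z)D(z^{-1})=\id$, the swap $\Phi_{\alpha,\beta}\simeq\Phi_{\beta,\alpha}\circ(12)$, and commutativity of $\bfK_\circ^{\tilde\sT}(-)$, matches the two sides after applying the $\hat\Theta$-identity.

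The substantive axiom is weak associativity $Y(Y(a,z)b,w)\equiv Y(a,zw)Y(b,w)$. Here I would expand both sides over the triple product $\fM_\alpha\times\fM_\beta\times\fM_\gamma$ using the associativity of $\Phi$; the bilinearity axiom \eqref{eq:mVOA-bilinear-complex-conditions} factors $\hat\Theta_{\alpha+\beta,\gamma}$ and $\hat\Theta_{\alpha,\beta+\gamma}$ into products of $\hat\Theta$'s over the three pairs of factors, cleanly since $\hat\se$ is multiplicative, while the $[\pt/\bC^\times]$-action axiom reconciles $D(z)$, $D(w)$ and $D(zw)$; the two resulting classes are then the two iterated equivariant expansions $\iota_{z,w}^{\tilde\sT}$ (Definition~\ref{def:expansion}) of one and the same rational class in the ring \eqref{eq:mvoa-ope-ring}. \emph{The bookkeeping of these expansions --- matching the domains of definition of the two sides and identifying the common rational representative --- is the step I expect to be the main obstacle}; it goes through exactly as in \cite[Theorem 3.3.5]{liu_eq_k_theoretic_va_and_wc}, the hatted Euler classes producing nothing beyond the extra $z$- and $\kappa$-powers already accounted for in step (i). Finally, functoriality is routine: if $f=(f_\alpha)$ is a morphism of graded monoidal $\sT$-stacks with symmetric bilinear elements, then $(f_\alpha\times f_\beta)^*\scE'_{\alpha,\beta}=\scE_{\alpha,\beta}$ gives $(f_\alpha\times f_\beta)^*\hat\Theta'_{\alpha,\beta}(z)=\hat\Theta_{\alpha,\beta}(z)$, and $f$ intertwines $(\Phi,\Psi)$ with $(\Phi',\Psi')$; combining this with the push--pull and projection formulas for $\bfK_\circ^{\tilde\sT}(-)$, one checks directly that $f_*$ sends $\vac$ to $\vac'$ and commutes with $D(z)$ and $Y(-,z)$, so it is a morphism of $\tilde\sT$-equivariant multiplicative vertex algebras.
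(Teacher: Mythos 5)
Your proposal is correct and follows essentially the same route as the paper: the paper's proof is itself a sketch that defers to \cite[Theorem 3.3.5]{liu_eq_k_theoretic_va_and_wc}, noting that replacing $\se$, $\sT$ by $\hat\se$, $\tilde\sT$ changes nothing in the verification, with the well-definedness handled exactly as in \S\ref{sec:monoidal-stack-vertex-product-well-defined} and all axioms reduced to the graded monoidal structure plus the degree-shift identity $(w^{\deg}\times\id)\hat\Theta(z)=\iota_{w,z}^{\tilde\sT}\hat\Theta(wz)$, which is precisely the bilinearity compatibility you invoke for the vacuum, skew-symmetry and weak-associativity checks. Your extra $\kappa$- and $z$-half-power bookkeeping is the same observation the paper makes (the ranks of $\scE_{\alpha,\beta}$ and $\scE_{\beta,\alpha}$ cancel, so only $\kappa^{1/2}$ is needed), so there is no substantive difference.
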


This theorem, as well as the contents of the remainder of this
subsection, continue to hold with localized K-homology theories. By
Proposition~\ref{prop:actual-k-homology}, the ``concrete'' K-homology
theory $K_\circ^{\tilde\sT}(-)_{\loc}$ satisfies the conditions of
this theorem.

\begin{proof}[Proof sketch.]
  All vertex algebra axioms, as well as functoriality, follow
  completely formally from this and the graded monoidal structure of
  $\fM$, and the easy algebraic exercise that
  \[ (w^{\deg} \times \id)\hat\Theta(z) = \iota_{w,z}^{\tilde\sT}\hat\Theta(wz). \]
  Note that \cite[Theorem 3.3.5]{liu_eq_k_theoretic_va_and_wc} uses
  $\se$ (and $\sT$) instead of $\hat\se$ (and $\tilde\sT$) in
  \eqref{eq:mVOA-theta}, but this makes no difference in the
  verification of the axioms.
\end{proof}

\subsubsection{}
\label{sec:monoidal-stack-vertex-product-well-defined}

The formula \eqref{eq:mVOA-theta} is not well-defined, as written.
Instead, in \eqref{eq:monoidal-stack-vertex-product}, we {\it define}
\[ (\phi \boxtimes \psi) \cap \hat\Theta_{\alpha,\beta}(z) \coloneqq \fix_*\left[(\phi^\sT \boxtimes \psi^\sT) \cap \fix^*\hat\Theta_{\alpha,\beta}(z)\right] \]
where $\fix$ denotes $\fix_{\phi \boxtimes \psi}\colon \fF \to
\fM_\alpha \times \fM_\beta$ from the equivariant localization axiom
for the K-homology element $\phi \boxtimes \psi$. On the right hand
side, since $\fF$ is an algebraic space with trivial
$\tilde\sT$-action and the resolution property, the K-theoretic Euler
classes in
\begin{equation} \label{eq:Theta-on-fixed-locus}
  \fix^*\hat\Theta_{\alpha,\beta}(z) \coloneqq \hat\se_{z^{-1}}\left(\fix^*\scE_{\alpha,\beta}\right) \otimes \hat\se_{z}\left(\fix^* (12)^*\scE_{\beta,\alpha}\right) \in K_0(\cat{Vect}_{\tilde\sT}(\fF))[z^\pm]\pseries*{(1 - z^\pm t^\mu)^{-1}}
\end{equation}
are well-defined. (No square root of $z$ is required in
\eqref{eq:Theta-on-fixed-locus} because $\rank \scE_{\alpha,\beta} =
-\rank \scE_{\beta,\alpha}$.) Finally, from
\eqref{eq:k-theoretic-inverse-chern-root}, the degree-$N$ terms in
this power series ring are elements in $K^\circ(\fF)^{\otimes (N-K)}$
for some constant $K$ independent of $N$. Since
\[ (\phi^\sT \boxtimes \psi^\sT)_S(I^\circ(\fF \times S)^{\otimes (N-K)}) = 0 \qquad \forall N \gg 0 \]
by the finiteness axiom for $\phi \boxtimes \psi$, it follows that
\[ (\phi^\sT \boxtimes \psi^\sT) \cap \fix^*\hat\Theta_{\alpha,\beta}(z) \in \bfK_\circ^{\tilde\sT}(\fF)[(1-z)^\pm]_\sT^{(1)}. \]

\subsubsection{}

\begin{definition} \label{def:pl-groups}
  Given a $\sT$-equivariant multiplicative vertex algebra $(V, \vac,
  D, Y)$, let $\im(1 - D(z)) \subset V$ denote the
  $\bk_\sT$-submodule generated by the coefficients of
  \[ (1 - D(z))a \in V\pseries*{1-z} \]
  for all $a \in V$, and define
  \[ V^\pl \coloneqq V / \im(1 - D(z)). \]
\end{definition}

\subsubsection{}

\begin{lemma} \label{lem:k-homology-pl}
  The morphism $\Pi_\alpha^\pl\colon \fM_\alpha \to \fM_\alpha^\pl$
  induces
  \[ (\Pi_\alpha^\pl)_*\colon \bfK_\circ^{\tilde\sT}(\fM_\alpha)^\pl \to \bfK_\circ^{\tilde\sT}(\fM_\alpha^\pl). \]
  If $\Pi_\alpha^\pl$ admits a section $I_\alpha$, then this is an
  isomorphism of $\bk_\sT$-modules with inverse $(I_\alpha)_*$.
\end{lemma}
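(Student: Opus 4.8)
The plan is to treat the two assertions in turn. Throughout, write $D(z)\phi = \sum_{k\ge0}(1-z)^k\Psi_{\alpha,*}(\xi^k\boxtimes\phi)$ as in Theorem~\ref{thm:mVOA-monoidal-stack}; then $\Psi_{\alpha,*}(\xi^0\boxtimes-) = \id$ (the $(1-z)^0$-coefficient of $D(z)$) and $\im(1-D(z))$ is the $\bk_{\tilde\sT}$-span of the elements $\Psi_{\alpha,*}(\xi^k\boxtimes\phi)$ with $k\ge1$. For the first assertion, I would show $(\Pi_\alpha^\pl)_*$ annihilates $\im(1-D(z))$: for $\phi\in\bfK_\circ^{\tilde\sT}(\fM_\alpha)$ and a perfect complex $\cE$ on $\fM_\alpha^\pl$ over any base, the description $(D(z)\phi)(\cE) = \phi(z^{\deg}\cE)$ of Definition~\ref{def:degree-map} combined with the invariance $z^{\deg}(\Pi_\alpha^\pl)^*\cE = (\Pi_\alpha^\pl)^*\cE$ from \eqref{eq:degree-map-pl-stack} gives $((\Pi_\alpha^\pl)_*D(z)\phi)(\cE) = \phi((\Pi_\alpha^\pl)^*\cE) = ((\Pi_\alpha^\pl)_*\phi)(\cE)$, so $(\Pi_\alpha^\pl)_*(1-D(z))\phi = 0$; hence $(\Pi_\alpha^\pl)_*$ kills each coefficient $\Psi_{\alpha,*}(\xi^k\boxtimes\phi)$, $k\ge1$, and so descends to $\bfK_\circ^{\tilde\sT}(\fM_\alpha)^\pl$.

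For the second assertion, note first that $\Pi_\alpha^\pl\circ I_\alpha = \id$ and functoriality of pushforward \eqref{eq:k-homology-pushforward} give $(\Pi_\alpha^\pl)_*\circ(I_\alpha)_* = \id$, so the descended map is surjective with right inverse $(I_\alpha)_*$; it remains to show $(I_\alpha)_*(\Pi_\alpha^\pl)_*\phi\equiv\phi\pmod{\im(1-D(z))}$ for all $\phi$. I would use that a section $\tilde\sT$-equivariantly trivialises the $\bC^\times$-gerbe $\Pi_\alpha^\pl$, identifying $\fM_\alpha\cong\fM_\alpha^\pl\times[\pt/\bC^\times]$ so that $\Pi_\alpha^\pl$ is the first projection, $\Psi_\alpha$ is multiplication on the $[\pt/\bC^\times]$-factor (hence $\Psi_\alpha^*s = s\boxtimes s$ for $s$ the pullback of the weight-$1$ line bundle), and $I_\alpha^*$ sends $s$ to the trivial bundle. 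Applying the equivariant localisation axiom, write $\phi = (\fix_\phi)_*\phi^{\sT}$ with $\fix_\phi\colon\fF_\phi\to\fM_\alpha$, set $g\coloneqq\Pi_\alpha^\pl\circ\fix_\phi$ and $L\coloneqq\fix_\phi^*s$ --- a line bundle of trivial $\tilde\sT$-weight, since $s$ comes from the $\tilde\sT$-fixed factor $[\pt/\bC^\times]$. Expanding $L^n = \sum_{i\ge0}\binom{n}{i}(L-1)^i$ and using $\xi^i(s^n) = (-1)^i\binom{n}{i}$ from Proposition~\ref{prop:actual-k-homology}, one computes, for any perfect $\cE = \sum_n\cE_n\otimes s^n$ on $\fM_\alpha$ (a finite sum),
\[ \phi(\cE) = \phi^{\sT}\Big(\sum_n g^*\cE_n\otimes L^n\Big) = \sum_{i\ge0}(-1)^i\,\psi_i\big(\xi^i_{\fM_\alpha^\pl}(\cE)\big), \qquad \psi_i\coloneqq g_*\big(\phi^{\sT}\cap(L-1)^i\big)\in\bfK_\circ^{\tilde\sT}(\fM_\alpha^\pl), \]
and a parallel expansion of $\Psi_{\alpha,*}(\xi^i\boxtimes(I_\alpha)_*\psi_i)$ identifies its value on $\cE$ with $\psi_i(\xi^i_{\fM_\alpha^\pl}(\cE))$. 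As $\psi_0 = g_*\phi^{\sT} = (\Pi_\alpha^\pl)_*\phi$, this rearranges to $\phi = (I_\alpha)_*(\Pi_\alpha^\pl)_*\phi + \sum_{i\ge1}(-1)^i\Psi_{\alpha,*}(\xi^i\boxtimes(I_\alpha)_*\psi_i)$, whose last sum lies in $\im(1-D(z))$ term by term; combined with the first assertion, $\overline{(\Pi_\alpha^\pl)_*}$ and $(I_\alpha)_*$ are mutually inverse.

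The main obstacle is to see that the sum over $i$ is finite, i.e.\ $\psi_i = 0$ for $i\gg0$, which is what legitimises the rearrangement. This holds because $L - 1$ lies in the augmentation ideal $I^\circ(\fF_\phi)$ --- here the $\tilde\sT$-triviality of $L$ is essential --- so $(L-1)^i\in I^\circ(\fF_\phi)^{\otimes i}$, and the finiteness axiom \eqref{eq:operational-k-homology-finiteness-condition} for $\phi^{\sT}$, itself resting on the nilpotence of Lemma~\ref{lem:k-theory-finiteness}, forces $\phi^{\sT}\cap(L-1)^i = 0$ for $i\gg0$; the same argument with an arbitrary base $S$ shows the displayed identities hold in $\bfK_\circ^{\tilde\sT}(\fM_\alpha)$, not merely after evaluation on a point. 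Everything else --- carrying out the trivialisation $\tilde\sT$-equivariantly and tracking the compatibilities among $\Psi_\alpha$, $I_\alpha$, $\xi^i$, and the degree operator --- is routine bookkeeping.
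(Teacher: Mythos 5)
Your proposal is correct, and for the first assertion it is exactly the paper's argument: $(\Pi_\alpha^\pl)_*$ kills $\im(1-D(z))$ because $z^{\deg}(\Pi_\alpha^\pl)^*\cE = (\Pi_\alpha^\pl)^*\cE$ by \eqref{eq:degree-map-pl-stack}. For the second assertion the paper gives no argument at all, deferring to \cite[Lemma 3.3.17]{liu_eq_k_theoretic_va_and_wc}; your computation (trivialize the gerbe via the section, decompose $\cE$ by $\bC^\times$-weight, expand $L^n=\sum_i\binom{n}{i}(L-1)^i$ against $\xi^i(s^n)=(-1)^i\binom{n}{i}$, and identify the $i\ge 1$ terms as elements of $\im(1-D(z))$, the $i=0$ term being $(I_\alpha)_*(\Pi_\alpha^\pl)_*\phi$) is the natural way to fill that in, and the identities you display check out. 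Two small caveats. First, the section $I_\alpha$ must be taken $\tilde\sT$-equivariant for $(I_\alpha)_*$ and the equivariant trivialization to make sense; this is implicit in the statement and holds in all the paper's applications (e.g.\ Definition~\ref{def:de-rigidification}), but is worth saying. Second, your finiteness step is airtight for the concrete theory $K_\circ^{\tilde\sT}(-)_{\loc}$ actually used in the paper: there $\phi^\sT$ is represented by $\cF_\phi\in K_{\tilde\sT}(Z_\phi)_{\loc}$ and Lemma~\ref{lem:k-theory-finiteness} applied on $Z_\phi$ gives $\cF_\phi\otimes(L-1)^i=0$ as a class for $i\gg 0$, uniformly in the base, so $\psi_i=0$ as elements and the rearrangement is a finite identity. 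In the purely axiomatic setting, however, the finiteness axiom \eqref{eq:operational-k-homology-finiteness-condition} is only stated for proper bases $S$ with the resolution property and with an a priori $S$-dependent bound, and moreover $\phi^\sT$ is only guaranteed to lie in the ambient $\bK_\circ^{\tilde\sT}(\fF_\phi)$ rather than in the sub-theory $\bfK$; so the phrase ``the same argument with an arbitrary base $S$'' glosses over exactly the point where one either imposes a uniform finiteness (as the concrete theory provides) or argues as in the cited reference. With that qualification your proof is complete for the setting in which the lemma is applied.
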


Note that $\Pi_\alpha^\pl$ admits a section if and only if it is a
trivial $[\pt/\bC^\times]$-bundle, i.e. $\fM_\alpha = \fM_\alpha^\pl
\times [\pt/\bC^\times]$ \cite[Lemma 3.21]{Laumon2000}.

This lemma will be how we relate abstract invariants in
$K_\circ^{\tilde\sT}(\fM_\alpha)^\pl_{\loc}$ with enumerative
invariants in $K_\circ^{\tilde\sT}(\fM_\alpha^\pl)_{\loc}$.

\begin{proof}
  It suffices to check that $(\Pi_\alpha^\pl)_* \im(1 - D(z)) = 0$.
  This follows from \eqref{eq:degree-map-pl-stack}. See \cite[Lemma
    3.3.17]{liu_eq_k_theoretic_va_and_wc} for details.
\end{proof}

\subsubsection{}

\begin{lemma} \label{lem:pl-group-functoriality}
  Let $f\colon \fM \to \fM'$ be a morphism of graded monoidal
  $\sT$-stacks.
  \begin{enumerate}[label = (\roman*)]
  \item The degree operator $z^{\deg}$ commutes with $f^*\colon
    K^\circ_\sT(\fM') \to K^\circ_\sT(\fM)$.
  \item The translation operator $D(z)$ commutes with $f_*\colon
    \bfK_\circ^\sT(\fM) \to \bfK_\circ^\sT(\fM')$.
  \end{enumerate}
  Consequently $f_*$ induces a map $f_*\colon \bfK_\circ^\sT(\fM)^\pl
  \to \bfK_\circ^\sT(\fM')^\pl$ of $\bk_\sT$-modules.
\end{lemma}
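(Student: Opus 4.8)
The plan is to prove (i) directly from the compatibility of $f$ with the $[\pt/\bC^\times]$-actions and then deduce (ii) and the final clause formally. Note first that neither the degree operator $z^{\deg}$ (Definition~\ref{def:degree-map}) nor the translation operator $D(z)$ (Theorem~\ref{thm:mVOA-monoidal-stack}) involves the bilinear elements $\scE_{\alpha,\beta}$: both depend only on the maps $\Psi$, $\Psi'$. For (i), recall that $z^{\deg}$ on $K_\sT^\circ(\fX)$ is $\Psi^*$ followed by the identification $\theta_\fX\colon K_\sT^\circ([\pt/\bC^\times]\times\fX)\xrightarrow{\sim}K_\sT^\circ(\fX)[z^\pm]$ induced by external product from $K_\sT^\circ([\pt/\bC^\times])=\bk_\sT[z^\pm]$. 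Since $\theta$ is assembled from pullbacks along projections, it is natural in $\fX$: for any $g\colon\fX\to\fX'$ the map $\theta_\fX\circ(\id\times g)^*$ agrees with $g^*$ applied coefficientwise to $\theta_{\fX'}$. Combining this (with $g=f_\alpha$) with the identity $\Psi_\alpha^*f_\alpha^*=(\id\times f_\alpha)^*(\Psi'_\alpha)^*$ — itself immediate from functoriality of pullback and the morphism axiom $\Psi'_\alpha\circ(\id\times f_\alpha)=f_\alpha\circ\Psi_\alpha$ of Definition~\ref{def:graded-monoidal-stack} — yields $z^{\deg}\circ f^*=f^*\circ z^{\deg}$ componentwise in $\alpha$; and the same computation goes through verbatim after taking products with an arbitrary base $S$.

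Part (ii) is then formal. Using the description $(D(z)\phi)(\cE)=\phi(z^{\deg}\cE)$ of the translation operator from Theorem~\ref{thm:mVOA-monoidal-stack} and the defining formula $(f_*\psi)(\cE)=\psi(f^*\cE)$ for K-homology pushforward, one computes for any base $S$ and $\cE\in K_\sT^\circ(\fM'\times S)$ that
\[ (D(z)f_*\phi)(\cE)=(f_*\phi)(z^{\deg}\cE)=\phi(f^*z^{\deg}\cE)\labeleq{(i)}\phi(z^{\deg}f^*\cE)=(D(z)\phi)(f^*\cE)=(f_*D(z)\phi)(\cE), \]
where the degree operator is that of $\fM'$ on the left and that of $\fM$ on the right, and part (i) over the base $S$ makes the two middle terms agree. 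Hence $D(z)\circ f_*=f_*\circ D(z)$ as maps $\bfK_\circ^\sT(\fM)\to\bfK_\circ^\sT(\fM')\pseries*{1-z}$. (Equivalently one may argue from $D(z)\phi=\sum_{k\ge0}(1-z)^k\Psi_*(\xi^k\boxtimes\phi)$ via $f_*\Psi_*=\Psi'_*(\id\times f)_*$ and naturality of $\boxtimes$ under pushforward, but the functional argument above is cleaner.)

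For the final clause, recall from Definition~\ref{def:pl-groups} that $\bfK_\circ^\sT(\fM)^\pl=\bfK_\circ^\sT(\fM)/\im(1-D(z))$, where $\im(1-D(z))$ is the submodule spanned by the coefficients of $(1-D(z))a$ over all $a\in\bfK_\circ^\sT(\fM)$. By (ii), $f_*((1-D(z))a)=(1-D(z))(f_*a)$, whose coefficients lie in $\im(1-D(z))\subset\bfK_\circ^\sT(\fM')$; therefore $f_*$ maps $\im(1-D(z))$ into $\im(1-D(z))$ and descends to a well-defined map $\bfK_\circ^\sT(\fM)^\pl\to\bfK_\circ^\sT(\fM')^\pl$, which is $\bk_\sT$-linear because $f_*$ is.

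The one step that genuinely needs care is the naturality of the identification $K_\sT^\circ([\pt/\bC^\times]\times\fX)\cong K_\sT^\circ(\fX)[z^\pm]$ underlying $z^{\deg}$; with that in hand the entire lemma is bookkeeping. A further routine point is that all of the asserted equalities hold not just on $(-)_{\pt}$ but on the defining families $\{(-)_S\}_S$ of K-homology elements — this is automatic, since $z^{\deg}$, $f^*$, $f_*$ and $\boxtimes$ are all defined base-by-base and the morphism axioms for $f$ persist after base change by $S$.
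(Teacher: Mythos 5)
Your proof is correct and follows the same route as the paper, whose entire proof is the observation that $z^{\deg}$ is defined via $\Psi^*$ and pullbacks are functorial, so that the morphism axiom $\Psi'\circ(\id\times f)=f\circ\Psi$ gives (i) and the rest is formal. Your write-up simply spells out the naturality of the identification $K_\sT^\circ([\pt/\bC^\times]\times\fX)\cong K_\sT^\circ(\fX)[z^\pm]$ and the descent to the $\pl$-quotients, which the paper leaves implicit.
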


\begin{proof}
  Straightforward, since $z^{\deg}$ is defined by a pullback and
  pullbacks are functorial.
\end{proof}

\subsubsection{}

\begin{theorem}[{\cite[Theorem 3.2.13]{liu_eq_k_theoretic_va_and_wc}}] \label{thm:mVOA-monoidal-stack-lie-algebra}
  The multiplicative vertex algebra structure on
  $\bfK_\circ^{\tilde\sT}(\fM)$ induces a Lie algebra structure on
  $\bfK_\circ^{\tilde\sT}(\fM)^\pl$, with Lie bracket given by
  \begin{align} 
    (\kappa^{-\frac{1}{2}} - \kappa^{\frac{1}{2}}) \cdot [\phi, \psi](\cE)
    &\coloneqq \rho_{K,z} \left(Y(\tilde\phi, z)\tilde \psi\right)(\cE) \nonumber \\
    &= \rho_{K,z} \left[ (\tilde\phi \boxtimes \tilde\psi)\left(\hat\Theta_{\alpha,\beta}(z) \otimes z^{\deg_1} \Phi_{\alpha,\beta}^*\cE\right)\right] \label{eq:monoidal-stack-lie-bracket}
  \end{align}
  for $\phi \in \bfK_\circ^{\tilde\sT}(\fM_\alpha)^\pl$ and $\psi \in
  \bfK_\circ^{\tilde\sT}(\fM_\beta)^\pl$, and $\tilde\phi \in
  \bfK_\circ^{\tilde\sT}(\fM_\alpha)$ and $\tilde\psi \in
  \bfK_\circ^{\tilde\sT}(\fM_\beta)$ are any lifts of $\phi$ and
  $\psi$ respectively.
\end{theorem}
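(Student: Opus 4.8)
The plan is to deduce the Lie algebra structure on $\bfK_\circ^{\tilde\sT}(\fM)^\pl$ purely formally from the multiplicative vertex algebra axioms of Definition~\ref{def:multiplicative-vertex-algebra}, following the standard Borcherds construction \cite{Borcherds1986} adapted to the multiplicative/equivariant setting. The key observation is that $\im(1-D(z))$ is precisely the submodule one must quotient by in order for the residue $\rho_{K,z}$ of the vertex product to be well-defined on equivalence classes and antisymmetric. First I would check that the right-hand side of \eqref{eq:monoidal-stack-lie-bracket} is independent of the choice of lifts $\tilde\phi,\tilde\psi$: if we replace $\tilde\phi$ by $\tilde\phi + (1-D(w))a$, the extra term contributes $\rho_{K,z}\,Y((1-D(w))a|_{\text{coeff}}, z)\tilde\psi$, and one shows this lies in $\im(1-D(z))$ after applying $\rho_{K,z}$, using weak associativity together with the fact that $D(z)$ acts on $Y(a,zw)Y(b,w)c$ in a controlled way; similarly for the second slot using skew-symmetry. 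This is the bookkeeping heart of the argument.

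Next I would establish antisymmetry and the Jacobi identity. Antisymmetry $[\phi,\psi] = -[\psi,\phi]$ follows from the skew-symmetry axiom $Y(a,z)b = D(z)Y(b,z^{-1})a$: applying $\rho_{K,z}$, the translation operator $D(z)$ becomes $1 + (1-D(z))\cdot(\ldots)$, the $(1-D(z))$-part dies in the quotient $V^\pl$, and the change of variable $z \mapsto z^{-1}$ in $\rho_{K,z}$ flips the sign (one must track that $\rho_{K,z}$ is odd under $z\mapsto z^{-1}$, which is immediate from the residue-theorem description \eqref{eq:k-theoretic-residue-map-finite-poles} and $dz/z \mapsto -dz/z$). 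Notice the normalization factor $(\kappa^{-1/2}-\kappa^{1/2})$ is antisymmetric under $\kappa^{1/2}\leftrightarrow\kappa^{-1/2}$, which is exactly what matches the $\hat\se$ vs $\se$ asymmetry and makes the bracket land in the unlocalized module; I would remark on this. For the Jacobi identity, I would take the standard route: weak associativity \eqref{eq:mvoa-ope-ring} expresses $Y(Y(a,z)b,w)c$ and $Y(a,zw)Y(b,w)c$ as expansions of a common element $F(z,w) \in V\pseries{1-z,1-w}[(1-z)^{-1},(1-w)^{-1},(1-zw)^{-1}]_\sT$; applying the iterated residue $\rho_{K,z}\rho_{K,w}$ and using the residue theorem / partial fractions in the two variables produces the three cyclic terms of the Jacobi identity, exactly as in the classical proof that a vertex algebra yields a Lie algebra on $V/DV$. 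Here the multiplicative expansion $\iota_{z,w}^\sT$ of Definition~\ref{def:expansion} plays the role of the usual binomial expansion, and one must be careful that $\rho_K$ interacts correctly with these expansions — this is where I expect the genuine subtlety to lie.

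The main obstacle, then, is not any of the Lie-algebra formalism per se but rather the interplay between the residue map $\rho_K$ and the equivariant rational-function rings: one needs that $\rho_{K,z}$ annihilates equivariant expansions of polynomials (so that, e.g., the skew-symmetry correction term really vanishes) and that the iterated residue $\rho_{K,z}\rho_{K,w}$ applied to an element of the ring \eqref{eq:mvoa-ope-ring} can be reorganized via the residue theorem to yield the cyclic sum. Once those lemmas about $\rho_K$ are in place — and they are essentially the multiplicative analogues of standard facts about formal residues, already implicit in \cite{liu_eq_k_theoretic_va_and_wc} — the rest is a transcription of Borcherds' argument. I would therefore organize the proof as: (1) a lemma on $\rho_K$ and expansions; (2) well-definedness on $V^\pl$; (3) antisymmetry; (4) Jacobi via weak associativity and the two-variable residue theorem; and cite \cite[Theorem 3.2.13]{liu_eq_k_theoretic_va_and_wc} for the verification details that are identical to the non-symmetrized case, noting as in the proof sketch of Theorem~\ref{thm:mVOA-monoidal-stack} that replacing $\se$ by $\hat\se$ changes nothing in the axiom checks.
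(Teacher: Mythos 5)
Your proposal follows essentially the same route as the paper's (very brief) proof sketch: apply $\rho_{K,z}$ to the vertex product (legitimate because $Y(\tilde\phi,z)\tilde\psi$ becomes a rational function of $z$ on any given K-theory class), check that the resulting bilinear operation descends to the quotient by $\im(1-D(z))$, and derive antisymmetry and Jacobi from skew symmetry and weak associativity, deferring the residue/expansion bookkeeping to \cite[Theorem 3.2.13]{liu_eq_k_theoretic_va_and_wc} exactly as the paper does. The only caveat is your side remark attributing significance to the antisymmetry of the prefactor $\kappa^{-1/2}-\kappa^{1/2}$, which is not load-bearing and is not how the normalization actually enters; otherwise the argument matches.
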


\begin{proof}[Proof sketch.]
  While $Y(\tilde\phi, z)\tilde\psi$ may involve power series in
  $1-z$, it becomes a rational function of $z$ upon application to any
  given K-theory class. Hence $\rho_{K,z}$ may be applied. Then
  \eqref{eq:monoidal-stack-lie-bracket} produces a bilinear operation
  \[ [-, -]\colon \bfK_\circ^{\tilde\sT}(\fM) \otimes \bfK_\circ^{\tilde\sT}(\fM) \to \bfK_\circ^{\tilde\sT}(\fM) \]
  which one can check preserves the submodule $\im(1 - D(z))$. Hence
  $[-, -]$ induces a bilinear operation on
  $\bfK_\circ^{\tilde\sT}(\fM)^\pl$. Finally, the anti-symmetry and
  Jacobi identities for $[-, -]$ follow from skew symmetry and weak
  associativity for the vertex algebra.
\end{proof}

\subsubsection{}

\begin{proposition}[Rigidity] \label{prop:rigidity}
  For any $\phi \in \bfK_\circ^{\tilde\sT}(\fM_\alpha)^\pl$ and $\psi
  \in \bfK_\circ^{\tilde\sT}(\fM_\beta)^\pl$
  \[ [\phi, \psi](\cO) = (\phi \boxtimes \psi)\left(\left[\rank \scE_{\alpha,\beta}\right]_\kappa \cdot (\cO \boxtimes \cO)\right) \]
  where $[n]_\kappa$ is the quantum integer from
  \eqref{eq:quantum-integer}.
\end{proposition}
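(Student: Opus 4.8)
The plan is to substitute $\cE = \cO$ into the Lie bracket formula \eqref{eq:monoidal-stack-lie-bracket} and to extract a cancellation of Chern roots. First I would observe that $\Phi_{\alpha,\beta}^*\cO = \cO$ and, since $\cO$ is pulled back along the rigidification $\Pi^\pl_\alpha$, that $z^{\deg_1}\Phi_{\alpha,\beta}^*\cO = \cO$ by \eqref{eq:degree-map-pl-stack}. Hence \eqref{eq:monoidal-stack-lie-bracket} collapses to
\[ (\kappa^{-\frac{1}{2}} - \kappa^{\frac{1}{2}})\,[\phi,\psi](\cO) = \rho_{K,z}\bigl[(\tilde\phi\boxtimes\tilde\psi)(\hat\Theta_{\alpha,\beta}(z))\bigr] \]
for any lifts $\tilde\phi,\tilde\psi$ of $\phi,\psi$; the well-definedness of the bracket guarantees the right-hand side is independent of these lifts. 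Following the prescription of \S\ref{sec:monoidal-stack-vertex-product-well-defined}, I would then pass to the fixed locus $\fix\colon \fF \to \fM_\alpha\times\fM_\beta$ of $\tilde\phi\boxtimes\tilde\psi$, on which $\tilde\sT$ acts trivially so that $E \coloneqq \fix^*\scE_{\alpha,\beta}$ splits into $\tilde\sT$-weight spaces. The $\kappa$-symmetry isomorphism (in K-theory, $\scE_{\beta,\alpha} = -\kappa^{-1}\otimes(12)^*\scE_{\alpha,\beta}^\vee$) gives $\fix^*(12)^*\scE_{\beta,\alpha} = -\kappa^{-1}E^\vee$, and therefore $\fix^*\hat\Theta_{\alpha,\beta}(z) = \hat\se_{z^{-1}}(E)\otimes\hat\se_z(-\kappa^{-1}E^\vee)$.

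Next I would apply the splitting principle on $\fF$, writing $E = \sum_j \epsilon_j w_j$ with $\epsilon_j \in \{\pm 1\}$ and $w_j$ equivariant Chern roots, so that $r \coloneqq \rank\scE_{\alpha,\beta} = \chi(\alpha,\beta) = \sum_j \epsilon_j$. A direct computation — clearing each factor by multiplying numerator and denominator by $z^{1/2}w_j^{1/2}$ — shows that the apparent square roots $w_j^{1/2}$ cancel between $\hat\se_{z^{-1}}(E)$ and $\hat\se_z(-\kappa^{-1}E^\vee)$, leaving
\[ \fix^*\hat\Theta_{\alpha,\beta}(z) = \prod_j \Bigl(-\kappa^{\frac{1}{2}}\,\frac{z - w_j}{z - \kappa w_j}\Bigr)^{\epsilon_j} \eqqcolon H(z), \]
a rational function of $z$ (over $K^\circ_{\tilde\sT}(\fF)_{\loc}$) all of whose poles lie at $z = w_j$ or $z = \kappa w_j$, hence away from $0$ and $\infty$. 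Consequently $\rho_{K,z}[H] = H(\infty) - H(0)$, and here is the one nontrivial point, the \emph{rigidity}: each factor $\frac{z - w_j}{z - \kappa w_j}$ equals $\kappa^{-1}$ at $z = 0$ and $1$ at $z = \infty$, so the Chern roots disappear entirely and $H(0) = (-\kappa^{-1/2})^r\cO_\fF$, $H(\infty) = (-\kappa^{1/2})^r\cO_\fF$. Thus $\rho_{K,z}[H] = \bigl((-\kappa^{1/2})^r - (-\kappa^{-1/2})^r\bigr)\cO_\fF = (-1)^r(\kappa^{r/2} - \kappa^{-r/2})\,\cO_\fF = [r]_\kappa\,(\kappa^{-\frac{1}{2}} - \kappa^{\frac{1}{2}})\,\cO_\fF$, using \eqref{eq:quantum-integer}.

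To finish, I would use that $\rho_{K,z}$, acting on the $\bC^\times$-factor, commutes with the $\bk_{\tilde\sT,\loc}$-linear fixed-locus functional $(\tilde\phi\boxtimes\tilde\psi)^{\tilde\sT}$ on $\fF$ supplied by the equivariant localization axiom, so that
\[ \rho_{K,z}\bigl[(\tilde\phi\boxtimes\tilde\psi)(\hat\Theta_{\alpha,\beta}(z))\bigr] = [r]_\kappa\,(\kappa^{-\frac{1}{2}} - \kappa^{\frac{1}{2}})\cdot(\tilde\phi\boxtimes\tilde\psi)(\cO). \]
Since $\cO = \cO_{\fM_\alpha}\boxtimes\cO_{\fM_\beta}$, the naturality axiom gives $(\tilde\phi\boxtimes\tilde\psi)(\cO) = \phi(\cO)\,\psi(\cO)$, where $\tilde\phi(\cO)$ and $\tilde\psi(\cO)$ are independent of the chosen lifts because $z^{\deg}\cO = \cO$ forces $\im(1 - D(z))$ to annihilate $\cO$. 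Cancelling the unit $\kappa^{-\frac{1}{2}} - \kappa^{\frac{1}{2}}$ then yields $[\phi,\psi](\cO) = [r]_\kappa\,\phi(\cO)\,\psi(\cO) = (\phi\boxtimes\psi)\bigl([r]_\kappa\cdot(\cO\boxtimes\cO)\bigr)$, which is the claim. The remaining routine checks are the well-definedness of $\hat\se_z(-\kappa^{-1}E^\vee)$ via \eqref{eq:k-theoretic-inverse-chern-root} and the fact — a consequence of Lemma~\ref{lem:k-theory-finiteness} — that its $(1-z)^{-1}$-expansion, which becomes a finite sum after applying the finiteness axiom, re-expands at $0$ and $\infty$ to the naive expansions of the rational function $H(z)$ used above.
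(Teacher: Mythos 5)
Your proposal is correct and follows essentially the same route as the paper: plug $\cE = \cO$ into \eqref{eq:monoidal-stack-lie-bracket}, use $z^{\deg}\cO = \cO$, pass to the fixed locus, and evaluate $\rho_{K,z}$ of $\fix^*\hat\Theta_{\alpha,\beta}(z)$ via its limits at $z=0$ and $z=\infty$ — which is exactly the rigidity computation the paper delegates to the proof of Corollary~\ref{cor:symmetrized-projective-bundle-formula}. Your explicit Chern-root cancellation, the sign bookkeeping for virtual summands $\epsilon_j$, and the lift-independence check via $z^{\deg}\cO=\cO$ are all consistent with the paper's argument.
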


In general, $\rank \scE_{\alpha,\beta}$ may not be a constant function
on $\fM_\alpha \times \fM_\beta$. In practice, it is, in which case we
denote it by $\chi(\alpha, \beta)$ and we may write
\[ (\phi \boxtimes \psi)\left(\left[\rank \scE_{\alpha,\beta}\right]_\kappa\right) = [\chi(\alpha,\beta)]_\kappa \phi(\cO) \psi(\cO). \]

\begin{proof}
  Plug $\cE = \cO$ into the formula
  \eqref{eq:monoidal-stack-lie-bracket} for the Lie bracket. Since
  $\Phi_{\alpha,\beta}^*\cO = \cO \boxtimes \cO$ and $z^{\deg}\cO =
  \cO$, it suffices to show that
  \[ \rho_K \fix^*\hat\Theta_{\alpha,\beta}(z) = (-1)^{-\rank \fix^*\scE_{\alpha,\beta}} \left(\kappa^{\frac{1}{2} \rank \fix^*\scE_{\alpha,\beta}} - \kappa^{-\frac{1}{2} \rank \fix^*\scE_{\alpha,\beta}}\right). \]
  This is the exact same computation as in the proof of
  Corollary~\ref{cor:symmetrized-projective-bundle-formula}.
\end{proof}

\subsection{Auxiliary framed stacks}
\label{sec:auxiliary-stacks}

\subsubsection{}

\begin{definition}\label{bg:def:framing-functor}
  A {\it framing functor} for the abelian category $\cat{A}$ is a
  $\bC$-linear exact functor
  \[ \Fr\colon \cat{A}^{\Fr} \to \cat{Vect} \]
  on a full exact $\sT$-invariant subcategory $\cat{A}^{\Fr} \subset
  \cat{A}$, closed under isomorphisms in $\cat{A}$ and direct summands
  in $\cat{A}$ (i.e. if $E, F \in \cat{A}$ with $E \oplus F \in
  \cat{A}^{\Fr}$, then $E, F \in \cat{A}^{\Fr}$), such that:
  \begin{enumerate}[label=(\alph*)]
  \item the moduli substack $\fM^{\Fr}_\alpha \subset \fM_\alpha$, of
    objects in $\cat{A}^{\Fr}$ of class $\alpha$, is open, and $\Fr$
    induces morphisms of moduli stacks
    \begin{align*}
      \fM^{\Fr}_\alpha &\to \bigsqcup_{d \ge 0} [\pt/\GL(d)] \\
      \fM^{\Fr,\pl}_\alpha &\to \bigsqcup_{d \ge 0} [\pt/\PGL(d)];
    \end{align*}
  \item $\Hom(E, E) \to \Hom(\Fr(E), \Fr(E))$ is injective for all $E
    \in \cat{A}^{\Fr}$, so $\Fr(E) \neq 0$ for $E \neq 0$;
  \item $\fr(E) \coloneqq \dim \Fr(E)$ depends only on the class
    $\alpha$ of $E \in \cat{A}^{\Fr}$.
  \end{enumerate}
  We usually write $\fr(\alpha)$ instead of $\fr(E)$.
\end{definition}

\subsubsection{}

\begin{example} \label{ex:framing-functor}
  Suppose $\cat{A} = \cat{Coh}(X)$ is the abelian category of
  compactly-supported coherent sheaves on a smooth quasi-projective
  Calabi--Yau $3$-fold $X$. Let $\cO_X(1)$ denote any very ample line
  bundle on $X$ and consider the $\bC$-linear functor
  \[ \Fr_k\colon \cF \mapsto H^0(\cF \otimes \cO_X(k)). \]
  Let $\cat{A}^{\Fr_k} \subset \cat{A}$ be the full subcategory
  consisting of objects $\cF$ which are {\it $k$-regular} \cite[\S
    1.7]{huybrechts_lehn_geom}, i.e. such that $H^i(\cF(k-i)) = 0$ for
  all $i > 0$. Here are some basic properties of $k$-regularity, for
  any $k \in \bZ$:
  \begin{itemize}
  \item $\cat{A}^{\Fr_k} \subset \cat{A}$ is an exact subcategory;
  \item the moduli substack $\fM^{\Fr_k} \subset \fM$ parameterizing
    objects in $\cat{A}^{\Fr_k} \subset \cat{A}$ is an open substack;
  \item if $\cF$ is $k$-regular then $\cF(k)$ is globally generated,
    so in particular $\Hom(\cF, \cF) \cong \Hom(\cF(k), \cF(k)) \to
    \Hom(H^0(\cF(k)), H^0(\cF(k)))$ is injective;
  \item if $\cF$ is $k$-regular then $H^i(\cF(k)) = 0$ for $i > 0$, so
    the dimension of $\Fr_k(\cF) = H^0(\cF(k)) = \chi(X, \cF(k))$
    depends only on the class of $\cF$;
  \item if $\fS \subset \fM$ is any finite type substack, then there
    exists $k \gg 0$ such that $\fS \subset \fM^{\Fr_k}$.
  \end{itemize}
  Hence $\Fr_k$ is a framing functor that additionally satisfies the
  condition in
  Assumption~\ref{assump:semistable-invariants}\ref{assump:it:framing-functor}.
  We will use (mild modifications of) $\Fr_k$ throughout
  \S\ref{sec:DT-PT-BS} and \S\ref{sec:VW}.
\end{example}

\subsubsection{}

\begin{definition} \label{def:auxiliary-stack}
  Let $Q$ be an acyclic quiver with edges $Q_1$ and vertices $Q_0 =
  Q_0^f \sqcup Q_0^o$ split into {\it ordinary} vertices
  $\blackbullet\in Q_0^o$ and {\it framing} vertices $\blacksquare \in
  Q_0^f$, such that ordinary vertices have no outgoing arrows. Given
  an abelian category $\cat{A}$ and a tuple
  \[ \vec\Fr \coloneqq (\Fr_v)_{v \in Q_0^o} \]
  of framing functors for $\cat{A}$, let
  $\tilde{\cat{A}}^{Q(\vec\Fr)}$ be the exact category of triples $(E,
  \vec V, \vec\rho)$ where:
  \begin{itemize}
  \item $E \in \cat{A}^{\vec\Fr} \coloneqq \bigcap_{v \in Q_0^o}
    \cat{A}^{\Fr_v}$;
  \item $\vec V = (V_v)_{v\in Q_0^f}$ are finite-dimensional vector
    spaces; set $V_v\coloneqq \Fr_v(E)$ for $v\in Q_0^o$;
  \item $\vec \rho = (\rho_e)_{e\in Q_1}$ are morphisms $\rho_e\colon
    V_{t(e)}\to V_{h(e)}$, where $t$ and $h$ denote tail and head.
  \end{itemize}
  A morphism $f\colon (E, \vec V, \vec\rho) \to (E', \vec V',
  \vec\rho')$ in $\cat{A}^{Q(\vec\Fr)}$ is given by a morphism $E \to
  E'$ in $\cat{A}$, which induces linear maps $f_v\colon \Fr_v(E) \to
  \Fr_v(E')$ for $v \in Q_0^o$, along with linear maps $f_v\colon V_v
  \to V'_v$ for $v \in Q_0^f$, such that $f_{t(e)} \circ \rho_e =
  \rho'_e \circ f_{h(e)}$ for all $e \in Q_1$. The group $\bC^\times$
  of scaling automorphisms acts diagonally on $(V_v)_{v \in Q_0}$.
\end{definition}

\subsubsection{}\label{bg:sec:aux-stacks}

A triple $(E, \vec V, \vec\rho)$ has class $(\alpha, \vec d)$ where
$\alpha$ is the class of $E$ and $\vec d \coloneqq (\dim V_v)_{v \in
  Q_0^f}$. Let
\[ \tilde\fM^{Q(\vec\Fr)} = \bigsqcup_{\alpha,\vec d} \tilde\fM^{Q(\vec\Fr)}_{\alpha,\vec d} \]
be the moduli stack of objects in $\cat{A}^{Q(\vec\Fr)}$. Clearly the
$\sT$-action lifts from $\fM$ to $\tilde\fM^{Q(\vec\Fr)}$. Let
$(\cV_v)_{v \in Q_0}$ be the universal bundles for $(V_v)_{v \in
  Q_0}$, and in particular let $(\cFr_v(\cE))_{v \in Q_0^o}$ denote
the universal bundles for $(\Fr_v(E))_{v \in Q_0^o}$. Let
\[ \pi_{\fM^{\vec\Fr}_\alpha}\colon \tilde\fM^{Q(\vec\Fr)}_{\alpha,\vec d} \to \fM^{\vec\Fr}_\alpha \]
be the forgetful map to the moduli stack $\fM^{\vec\Fr}_\alpha$ of
objects in $\cat{A}^{\vec\Fr}$ with class $\alpha$. Then
\[ \tilde\fM^{Q(\vec\Fr)}_{\alpha,\vec d} = \tot\bigg(\bigoplus_{e \in Q_1} \cV_{t(e)}^\vee \otimes \cV_{h(e)}\bigg) \to \fM^{\vec\Fr}_{\alpha} \times \prod_{v \in Q_0^f} [\pt/\GL(d_v)]. \]
So $\pi_{\fM^{\vec\Fr}_\alpha}$ is smooth. Define the bilinear element
\begin{equation} \label{eq:framed-stack-forgetful-map-cotangent}
  \bF^{Q(\vec\Fr)} \coloneqq \sum_{e \in Q_1} \cV^\vee_{t(e)} \boxtimes \cV_{h(e)} - \sum_{v \in Q_0^f} \cV^\vee_v \boxtimes \cV_v \in K_\sT^\circ\left(\tilde\fM^{Q(\vec\Fr)} \times \tilde\fM^{Q(\vec\Fr)}\right).
\end{equation}
The restriction of $\bF^{Q(\vec\Fr)}$ to the diagonal is the relative
tangent complex of $\pi_{\fM^{\vec\Fr}_\alpha}$.

\subsubsection{}

\begin{definition}[``De-rigidification''] \label{def:de-rigidification}
  Suppose $\vec d$ is a dimension vector with $d_i = 1$ for some $i$.
  Then the rigidification map
  \[ \Pi_{\alpha,\vec d}^\pl\colon \tilde\fM_{\alpha,\vec d}^{Q(\vec\Fr)} \to \tilde\fM_{\alpha,\vec d}^{Q(\vec\Fr),\pl} \]
  is a {\it trivial} $\bC^\times$-gerbe because the group of scaling
  automorphisms may be identified with $\Aut(V_i) = \bC^\times$. In
  other words, rigidification has a non-canonical description as the
  choice of an isomorphism $\bC \xrightarrow{\sim} V_i$, and it has a
  section
  \[ I_{\alpha,\vec d}\colon \tilde\fM^{Q(\vec\Fr),\pl}_{\alpha,\vec d} \to \tilde\fM^{Q(\vec\Fr)}_{\alpha,\vec d} \]
  given by forgetting this isomorphism. We often write omit the
  subscript on $I_{\alpha,\vec d}$ and just write $I$ when there is no
  ambiguity.
\end{definition}

\subsubsection{}

\begin{theorem} \label{thm:auxiliary-stack-vertex-algebra}
  Let $\fM = \bigsqcup_\alpha \fM_\alpha$ be a graded monoidal
  $\sT$-stack with $\kappa$-symmetric bilinear elements
  $\scE_{\alpha,\beta}$, and suppose the graded monoidal structure is
  given by direct sum and scaling automorphisms. Then
  $\tilde\fM^{Q(\vec\Fr)}$ is also a graded monoidal $\sT$-stack, with
  $\kappa$-symmetric bilinear elements
  \[ \tilde\scE^{Q(\vec\Fr)}_{(\alpha,\vec d),(\beta,\vec e)} \coloneqq \scE_{\alpha,\beta} - \bF^{Q(\vec\Fr)}_{(\alpha,\vec d),(\beta,\vec e)} + \kappa^{-1} \cdot (12)^*(\bF^{Q(\vec\Fr)}_{(\beta,\vec e),(\alpha,\vec d)})^\vee. \]
  Let $\bfK_\circ^{\tilde\sT}(-)$ be a commutative K-homology theory
  such that $\bfK^{\tilde\sT}_\circ([*/G]) =
  \bK^{\tilde\sT}_\circ([*/G])$ for $G = 1$ and $G = \bC^\times$. Then
  $\bfK_\circ^{\tilde\sT}(\tilde\fM^{Q(\vec\Fr)})$ becomes a
  $\tilde\sT$-equivariant multiplicative vertex algebra with vertex
  product
  \[ (Y(\tilde\phi, z)\tilde\psi)(\tilde\cE) \coloneqq (\tilde\phi \boxtimes \tilde\psi)\left(\hat\Theta^{Q(\vec\Fr)}_{(\alpha,\vec d),(\beta,\vec e)}(z) \otimes z^{\deg_1}\Phi_{(\alpha,\vec d),(\beta,\vec e)}^*\tilde\cE\right) \]
  for $\tilde\phi \in K_\circ^{\tilde\sT}(\tilde\fM_{\alpha,\vec
    d}^{Q(\vec\Fr)})$ and $\tilde\psi \in
  K_\circ^{\tilde\sT}(\tilde\fM_{\beta,\vec e}^{Q(\vec\Fr)})$, where
  \[ \hat\Theta^{Q(\vec\Fr)}_{(\alpha,\vec d),(\beta,\vec e)}(z) \coloneqq \hat\se_{z^{-1}}\left(\tilde\scE^{Q(\vec\Fr)}_{(\alpha,\vec d),(\beta,\vec e)}\right) \otimes \hat\se_{z}\left((12)^*(\tilde\scE^{Q(\vec\Fr)}_{(\beta,\vec e),(\alpha,\vec d)})\right) \]
  is defined as in
  \S\ref{sec:monoidal-stack-vertex-product-well-defined}. The induced
  Lie algebra structure on
  $\bfK_\circ^{\tilde\sT}(\tilde\fM^{Q(\vec\Fr)})^\pl$ has Lie bracket
  \[ (\kappa^{-\frac{1}{2}} - \kappa^{\frac{1}{2}}) \cdot [\phi, \psi](\cE) \coloneqq \rho_{K,z} \left(Y(\tilde\phi, z)\tilde\psi\right)(\cE) \]
  for $\phi \in \bfK_\circ^{\tilde\sT}(\tilde\fM_{\alpha,\vec
    d}^{Q(\vec\Fr)})^\pl$ and $\psi \in
  \bfK_\circ^{\tilde\sT}(\tilde\fM_{\beta,\vec e}^{Q(\vec\Fr)})^\pl$,
  where $\tilde\phi \in \bfK_\circ^{\tilde\sT}(\tilde\fM_{\alpha,\vec
    d}^{Q(\vec\Fr)})$ and $\tilde\psi \in
  \bfK_\circ^{\tilde\sT}(\tilde\fM_{\beta,\vec e}^{Q(\vec\Fr)})$ are
  arbitrary lifts of $\phi$ and $\psi$ respectively. It satisfies
  \[ [\phi, \psi](\cO) = (\phi \boxtimes \psi)\left(\left[\rank \tilde\scE^{Q(\vec \Fr)}_{(\alpha,\vec d),(\beta,\vec e)}\right]_\kappa \cdot (\cO \boxtimes \cO)\right). \]
\end{theorem}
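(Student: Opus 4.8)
The statement follows formally from the machinery already in place, once $\tilde\fM^{Q(\vec\Fr)}$, together with the class $\tilde\scE^{Q(\vec\Fr)}$, is shown to be a graded monoidal $\sT$-stack with $\kappa$-symmetric bilinear element in the sense of Definition~\ref{def:graded-monoidal-stack}. Granting this, Theorem~\ref{thm:mVOA-monoidal-stack} produces the $\tilde\sT$-equivariant multiplicative vertex algebra structure on $\bfK_\circ^{\tilde\sT}(\tilde\fM^{Q(\vec\Fr)})$, and the vertex product it produces is \eqref{eq:monoidal-stack-vertex-product} with $\scE_{\alpha,\beta}$ replaced throughout by $\tilde\scE^{Q(\vec\Fr)}_{(\alpha,\vec d),(\beta,\vec e)}$, which is verbatim the formula in the statement (with $\hat\Theta^{Q(\vec\Fr)}$ defined via \S\ref{sec:monoidal-stack-vertex-product-well-defined} as prescribed); Theorem~\ref{thm:mVOA-monoidal-stack-lie-algebra} produces the Lie algebra on the $\pl$-quotient with the stated bracket; and Proposition~\ref{prop:rigidity} gives $[\phi,\psi](\cO) = (\phi\boxtimes\psi)\bigl([\rank\tilde\scE^{Q(\vec\Fr)}]_\kappa\cdot(\cO\boxtimes\cO)\bigr)$. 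So the plan is entirely to verify the axioms of Definition~\ref{def:graded-monoidal-stack}.

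First I would build the graded monoidal structure on $\tilde\fM^{Q(\vec\Fr)}$. By hypothesis the monoidal structure on $\fM$ is direct sum and scaling automorphisms, and each $\Fr_v$ is an additive exact functor, so direct sum of framed quiver objects — add the underlying objects, the framing spaces $\vec V$, and the maps $\vec\rho$ componentwise — defines a $\sT$-equivariant morphism $\Phi_{(\alpha,\vec d),(\beta,\vec e)}\colon \tilde\fM_{\alpha,\vec d}^{Q(\vec\Fr)}\times\tilde\fM_{\beta,\vec e}^{Q(\vec\Fr)}\to\tilde\fM_{\alpha+\beta,\vec d+\vec e}^{Q(\vec\Fr)}$, while the diagonal scaling action on $(\cV_v)_{v\in Q_0}$ defines $\Psi_{(\alpha,\vec d)}$; both lift, along the forgetful map of \S\ref{bg:sec:aux-stacks}, the corresponding morphisms on $\fM$. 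The identity, monoidality, and $[\pt/\bC^\times]$-action axioms then reduce directly to the same axioms for $\fM$ together with the obvious compatibility of $\oplus$ and of scaling with the linear maps $\rho_e$.

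Next, bilinearity of $\tilde\scE^{Q(\vec\Fr)}$. Under $\Phi$ each universal framing bundle $\cV_v$ restricts to a direct sum $\cV_v^{(1)}\oplus\cV_v^{(2)}$, so every summand $\cV^\vee_{t(e)}\boxtimes\cV_{h(e)}$ and $\cV^\vee_v\boxtimes\cV_v$ of $\bF^{Q(\vec\Fr)}$ in \eqref{eq:framed-stack-forgetful-map-cotangent} obeys the additive relations of \eqref{eq:mVOA-bilinear-complex-conditions}; and under $\Psi$ the diagonal scaling twists each $\cV_v$ by $\cL$, hence each such summand by $\cL^\vee$ in the first slot and $\cL$ in the second. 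Thus $\bF^{Q(\vec\Fr)}$ is a bilinear element. Bilinearity is preserved by K-theoretic sums and differences, and although $(-)^\vee$ by itself flips the $\cL$/$\cL^\vee$ twists in the $\Psi$-relations, the swap $(12)^*$ flips them back, so $\kappa^{-1}(12)^*(\bF^{Q(\vec\Fr)})^\vee$ — and therefore $\tilde\scE^{Q(\vec\Fr)} = \scE_{\alpha,\beta} - \bF^{Q(\vec\Fr)}_{(\alpha,\vec d),(\beta,\vec e)} + \kappa^{-1}(12)^*(\bF^{Q(\vec\Fr)}_{(\beta,\vec e),(\alpha,\vec d)})^\vee$ — is bilinear.

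Finally, $\kappa$-symmetry. The element $-\bF^{Q(\vec\Fr)}+\kappa^{-1}(12)^*(\bF^{Q(\vec\Fr)})^\vee = -\bigl(\bF^{Q(\vec\Fr)}-\kappa^{-1}(12)^*(\bF^{Q(\vec\Fr)})^\vee\bigr)$ satisfies the $\kappa$-symmetry relation of Definition~\ref{def:graded-monoidal-stack} by a one-line K-theory computation (using $(\kappa^{-1})^\vee=\kappa$, $(12)^{*2}=\id$, $(-)^{\vee\vee}=\id$), since for any $\cG$ the class $\cG-\kappa^{-1}(12)^*\cG^\vee$ is $\kappa$-symmetric; as $\scE_{\alpha,\beta}$ is $\kappa$-symmetric by assumption, so is the sum $\tilde\scE^{Q(\vec\Fr)}$. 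With Definition~\ref{def:graded-monoidal-stack} verified, the three cited results give all the assertions. The main obstacle I anticipate is nothing deeper than disciplined bookkeeping: tracking the $(12)^*$ swaps alongside the $\bC^\times$-weights in the $\Psi$-relations — recall that the scaling automorphism acts on \emph{all} of $(\cV_v)_{v\in Q_0}$, including the framing vertices $Q_0^f$, not only those coming from $Q_0^o$ — and confirming that the direct sum of framed quiver objects genuinely is a morphism of stacks carrying $\cV_v$ to $\cV_v^{(1)}\oplus\cV_v^{(2)}$; none of this is conceptually hard, but the signs, shifts, and line-bundle twists must be kept consistent throughout.
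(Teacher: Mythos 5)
Your proposal is correct and follows essentially the same route as the paper: the paper's proof likewise observes that the graded monoidal structure on $\tilde\fM^{Q(\vec\Fr)}$ is again direct sum and scaling (well-defined because framing functors are compatible with both), that $\tilde\scE^{Q(\vec\Fr)}$ is bilinear of weight $\pm 1$ and $\kappa$-symmetric by inspection of \eqref{eq:framed-stack-forgetful-map-cotangent}, and then cites Theorem~\ref{thm:mVOA-monoidal-stack}, Theorem~\ref{thm:mVOA-monoidal-stack-lie-algebra}, and Proposition~\ref{prop:rigidity}. Your write-up simply spells out the bilinearity and $\kappa$-symmetry checks that the paper leaves implicit, and those checks are accurate.
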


Note that the natural isomorphisms $\iota^Q_\alpha\colon \fM_\alpha
\to \fM^{Q(\vec\Fr)}_{\alpha,\vec 0}$ given by $E \mapsto (E, \vec 0,
\vec 0)$ are morphisms of graded monoidal $\sT$-stacks, so this
theorem is compatible with and generalizes
Theorem~\ref{thm:mVOA-monoidal-stack}.

\begin{proof}
  The graded monoidal structure on $\tilde\fM^{Q(\vec\Fr)}$ is also
  given by direct sum and scaling automorphisms. This is well-defined
  because framing functors are compatible with both. Also, from
  \eqref{eq:framed-stack-forgetful-map-cotangent}, it is clear that
  $\tilde\scE_{\alpha,\beta}^{Q(\vec \Fr)}$ is still bilinear with
  weight $\pm 1$ in its two factors, and $\kappa$-symmetric. The
  remaining claims summarize Theorem~\ref{thm:mVOA-monoidal-stack},
  Theorem~\ref{thm:mVOA-monoidal-stack-lie-algebra}, and
  Proposition~\ref{prop:rigidity}, when applied to
  $\tilde\fM^{Q(\vec\Fr)}$.
\end{proof}

\subsection{Obstruction theories}

\subsubsection{}

Throughout this subsection, let $\fX$ be an Artin stack over a smooth
equidimensional base $\fB$, all in $\cat{Art}_\sT$. Let
$D^-_{\cat{QCoh},\sT}(\fX)$ be its derived category of bounded-above
$\sT$-equivariant complexes with quasi-coherent cohomology
\cite{Olsson2007}. Let $\bL_{\fX/\fB} \in D^-_{\cat{QCoh},\sT}(\fX)$
denote the cotangent complex \cite{Illusie1971}.

\subsubsection{}

\begin{definition} \label{def:obstruction-theory}
  A $\sT$-equivariant\footnote{Since all obstruction theories in this
    paper are $\sT$-equivariant, we will omit writing
    ``$\sT$-equivariant''.} {\it obstruction theory} on $\fX$ relative
  to $\fB$ is an object $\bE_{\fX/\fB}$ with a morphism
  \[ \varphi\colon \bE_{\fX/\fB} \to \bL_{\fX/\fB} \]
  in $D^-_{\cat{QCoh},\sT}(\fX)$, such that $\cH^i(\varphi)$ for $i
  \ge 0$ are isomorphisms and $\cH^{-1}(\varphi)$ is surjective. An
  obstruction theory is:
  \begin{itemize}
  \item {\it perfect} if $\bE$ is a perfect complex of amplitude
    contained in $[-1,1]$;
  \item {\it $\kappa$-symmetric}, for a $\sT$-weight $\kappa$, if
    $\bE$ is a perfect complex and there is an isomorphism
    \[ \Xi\colon \bE\xrightarrow{\sim} \kappa \otimes \bE^\vee[1] \]
    such that $\Xi^\vee[1]=\Xi$. (We often just write {\it symmetric}
    when the precise weight $\kappa$ is irrelevant.)
  \end{itemize}
  The notions of perfect obstruction theory (POT) and symmetric POT
  coincide with the ones in \cite{Behrend1997, Behrend2008} if $\fX$
  is a Deligne--Mumford stack, because of the requirement that
  $\cH^1(\varphi)$ is an isomorphism.

  The {\it obstruction sheaf} of an obstruction theory is
  $\cOb_\varphi \coloneqq \cH^1(\bE^\vee)$.
\end{definition}

\subsubsection{}

\begin{lemma} \label{lem:perfect-symmetric-obstruction-theory}
  Let $X \subset \fX$ be a $\sT$-invariant open locus which is an
  algebraic space. Then the restriction to $X$ of a symmetric
  obstruction theory on $\fX$ is automatically perfect.
\end{lemma}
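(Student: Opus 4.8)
The goal is to show that if $\varphi\colon \bE \to \bL_{\fX/\fB}$ is a symmetric obstruction theory on $\fX$, then its restriction $\varphi|_X\colon \bE|_X \to \bL_{X/\fB}$ to a $\sT$-invariant open algebraic-space locus $X$ is a \emph{perfect} obstruction theory, i.e. $\bE|_X$ has amplitude in $[-1,1]$ (as it is already perfect, and the obstruction-theory axioms on $\cH^i$ are preserved under restriction to an open substack). The plan is to show that because $X$ is an algebraic space — so has no stacky structure and $\bL_X$ sits in amplitude $[-\infty,0]$, in fact in $[-1,0]$ since $X$ is (locally of finite type) over a smooth base — the upper amplitude bound $\bE|_X \in D^{\le 1}$ is automatic from the obstruction theory axioms, and then the symmetry isomorphism $\Xi$ converts this into the lower bound $\bE|_X \in D^{\ge -1}$.

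\textbf{Key steps.} First I would recall that for an Artin stack $\fY$ over a smooth base $\fB$, the cotangent complex $\bL_{\fY/\fB}$ lives in amplitude $[-1, 1]$ in general, with the $+1$ part coming from stabilizer groups; when $\fY = X$ is an algebraic space, $\bL_{X/\fB}$ has amplitude in $[-1,0]$ (it is $[0,0]$, concentrated in $\cH^0 = \Omega_{X/\fB}$, when $X \to \fB$ is smooth, but in general only the upper bound $\cH^{>0} = 0$ is needed here). Next, the obstruction theory axioms say $\cH^i(\varphi)$ is an isomorphism for $i \ge 1$ and $\cH^0(\varphi)$ is an isomorphism, $\cH^{-1}(\varphi)$ surjective; restricting to the open $X$ preserves all of this. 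Since $\cH^i(\bL_{X/\fB}) = 0$ for $i \ge 1$, we get $\cH^i(\bE|_X) = 0$ for $i \ge 1$, so $\bE|_X$ is a perfect complex with $\cH^{\ge 1} = 0$, hence has Tor-amplitude (equivalently, as a perfect complex, amplitude) in $(-\infty, 0] \subset (-\infty, 1]$. Now apply the symmetry: $\Xi\colon \bE|_X \xrightarrow{\sim} \kappa \otimes (\bE|_X)^\vee[1]$. Dualizing a perfect complex with amplitude in $(-\infty, 0]$ gives amplitude in $[0, \infty)$, and shifting by $[1]$ gives amplitude in $[-1, \infty)$; tensoring by the line bundle $\kappa$ does not change amplitude. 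Hence $\bE|_X$ also has amplitude in $[-1, \infty)$. Combining the two bounds, $\bE|_X$ has amplitude in $[-1, 0] \subset [-1,1]$, so $\varphi|_X$ is a perfect obstruction theory.

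\textbf{Main obstacle.} The only genuinely delicate point is the claim that $\bL_{X/\fB}$ has no cohomology in positive degrees when $X$ is an algebraic space, and that ``amplitude of a perfect complex'' behaves as expected under duality and shift; these are standard (the first is \cite{Illusie1971}, using that algebraic spaces are $1$-truncated in the relevant sense — in fact $0$-truncated — so the Lichtenbaum--Schlessinger-type obstruction space vanishes), but one should be slightly careful to phrase the argument using Tor-amplitude of perfect complexes rather than naive truncation, since $\bE$ is a priori only known to be a perfect complex and not to come with a chosen two-term presentation. So the crux is really the interplay: the \emph{upper} amplitude bound is forced by $X$ being a scheme/space (killing the stacky $\cH^1$ of $\bL$), and the \emph{lower} bound is then forced by the self-duality $\Xi$. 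I would also remark that $\sT$-equivariance plays no essential role in this lemma beyond ensuring all the objects and morphisms live in the equivariant derived category; the amplitude statements may be checked after forgetting the $\sT$-action.
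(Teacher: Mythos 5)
Your proposal is correct and follows essentially the same route as the paper: the vanishing of $\cH^{\ge 1}(\bL_{X/\fB})$ for the algebraic space $X$ forces $\cH^{\ge 1}(\bE|_X)=0$ via the obstruction-theory axioms, and the $\kappa$-symmetry $\Xi$ then kills cohomology below degree $-1$, giving amplitude $[-1,0]$. Your extra care in phrasing the dualization step via Tor-amplitude of perfect complexes is a harmless (indeed slightly more precise) rendering of the paper's cohomology-sheaf computation.
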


\begin{proof}
  Let $\varphi\colon \bE_\fX \to \bL_\fX$ be the $\kappa$-symmetric
  obstruction theory. Open immersions are flat, so $\varphi|_X$ is
  still a $\kappa$-symmetric obstruction theory. Since $X$ is an
  algebraic space, for $i > 0$,
  \[ \cH^{i}(\bE_{\fX})\Big|_X \cong \cH^{i}(\bL_\fX)\Big|_X = \cH^{i}(\bL_X) = 0. \]
  But then by symmetry, for $i < -1$,
  \[ \cH^i(\bE_\fX)\Big|_X \cong \kappa \otimes \cH^{-1-i}(\bE_\fX)^\vee\Big|_X = 0 \]
  as well. This implies $\bE_\fX$ has amplitude in $[-1, 0]$.
\end{proof}

\subsubsection{}

\begin{definition} \label{def:obstruction-theories-compatibility}
  Let $f\colon \fX \to \fY$ be a smooth morphism over $\fB$ in
  $\cat{Art}_\sT$. Suppose $\varphi_{\fX/\fB}\colon \bE_{\fX/\fB} \to
  \bL_{\fX/\fB}$ and $\varphi_{\fY/\fB}\colon \bE_{\fY/\fB} \to
  \bL_{\fY/\fB}$ are obstruction theories.
  \begin{itemize}
  \item They are {\it compatible under $f$} if they fit into a
    morphism of exact triangles
    \begin{equation} \label{eq:obstruction-theories-compatibility}
      \begin{tikzcd}
        \bL_f[-1] \ar{r} \ar[equal]{d} & f^*\bE_{\fY/\fB} \ar{r} \ar{d}{f^*\varphi_{\fY/\fB}} & \bE_{\fX/\fB} \ar{r}{+1} \ar{d}{\varphi_{\fX/\fB}} & {} \\
        \bL_f[-1] \ar{r} & f^*\bL_{\fY/\fB} \ar{r} & \bL_{\fX/\fB} \ar{r}{+1} & {}
      \end{tikzcd}
    \end{equation}
    where the bottom row is the exact triangle of cotangent complexes.

  \item They are {\it $\kappa$-symmetrically compatible under $f$},
    for a $\sT$-weight $\kappa$, if they fit into morphisms of exact
    triangles
    \begin{equation} \label{eq:obstruction-theories-symmetric-compatibility}
      \begin{tikzcd}
        \bL_f[-1] \ar[equals]{d} \ar{r} & \kappa \otimes \bF^\vee[1] \ar{d}{\eta^\vee[1]} \ar{r}{\zeta} & \bE_{\fX/\fB} \ar{d}{\zeta^\vee[1]} \ar{r}{+1} & {} \\
        \bL_f[-1] \ar[equals]{d} \ar{r} & f^*\bE_{\fY/\fB} \ar{d}{f^*\varphi_{\fY/\fB}} \ar{r}{\eta} & \bF \ar{d} \ar{r}{+1} & {} \\
        \bL_f[-1] \ar{r} & f^*\bL_{\fY/\fB} \ar{r} & \bL_{\fX/\fB} \ar{r}{+1} & {}
      \end{tikzcd}
    \end{equation}
    such that the third column is $\varphi_{\fX/\fB}$.
  \end{itemize}
  A {\it smooth pullback} (resp. {\it symmetrized (smooth) pullback})
  of $\varphi_{\fY/\fB}$ along $f$ is any obstruction theory
  $\varphi_{\fX/\fB}$ which is compatible (resp. symmetrically
  compatible) with $\varphi_{\fY/\fB}$ under $f$.
\end{definition}

\subsubsection{}

\begin{lemma} \label{lem:obstruction-theory-pl}
  Let $\Pi^\pl\colon \fX \to \fX^\pl$ be a $\sT$-equivariant
  $\bC^\times$-gerbe over $\fB$, and suppose $\varphi\colon
  \bE_{\fX/\fB} \to \bL_{\fX/\fB}$ is an obstruction theory such that
  $\bE_{\fX/\fB}$ has trivial $\bC^\times$-weight.
  \begin{enumerate}[label = (\roman*)]
  \item There is an obstruction theory $\tilde\varphi^\pl\colon \tilde
    \bE_{\fX^\pl/\fB} \to \bL_{\fX^\pl/\fB}$ compatible under
    $\Pi^\pl$ with $\varphi$, unique up to isomorphism.
  \item Suppose furthermore that $\varphi$ is $\kappa$-symmetric, and
    that \eqref{eq:traceless-Ext3-condition} below is the zero map.
    Then there is a $\kappa$-symmetric obstruction theory
    $\varphi^\pl\colon \bE_{\fX^\pl/\fB} \to \bL_{\fX^\pl/\fB}$
    symmetrically compatible under $\Pi^\pl$ with $\varphi$, unique up
    to isomorphism.
  \end{enumerate}
\end{lemma}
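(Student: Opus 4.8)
The plan is to build the obstruction theories on $\fX^\pl$ by descending suitable complexes along the $\bC^\times$-gerbe $\Pi^\pl$, after shaving off a canonical ``trace'' piece. Two facts about a $\sT$-equivariant $\bC^\times$-gerbe $\Pi^\pl\colon\fX\to\fX^\pl$ over $\fB$ will be used repeatedly: its relative cotangent complex is $\bL_{\Pi^\pl}\cong\cO_\fX[-1]$, concentrated in degree $+1$ with trivial $\bC^\times$-weight (the adjoint action of $\bC^\times$ on its Lie algebra is trivial); and $(\Pi^\pl)^*$ is fully faithful onto the full subcategory of objects of trivial $\bC^\times$-weight in $D^-_{\cat{QCoh},\sT}(\fX)$, matching perfect complexes with perfect complexes. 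Below, ``descends'' abbreviates ``has trivial $\bC^\times$-weight, hence is $(\Pi^\pl)^*$ of an object on $\fX^\pl$, unique up to isomorphism''. For part~(i), I would form the trace $\tr\colon\bE_{\fX/\fB}\xrightarrow{\varphi}\bL_{\fX/\fB}\to\bL_{\Pi^\pl}=\cO_\fX[-1]$ and set $\bE':=\cocone(\tr)$, giving a triangle $\cO_\fX[-2]\to\bE'\to\bE_{\fX/\fB}\xrightarrow{+1}$; since $\bE_{\fX/\fB}$ and $\cO_\fX[-1]$ have trivial $\bC^\times$-weight, so does $\bE'$, and its descent is the sought $\tilde\bE_{\fX^\pl/\fB}$, automatically perfect. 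The composite $\bE'\to\bE_{\fX/\fB}\xrightarrow{\varphi}\bL_{\fX/\fB}$ projects to $0$ in $\cO_\fX[-1]$, hence factors through $\cocone(\bL_{\fX/\fB}\to\cO_\fX[-1])=(\Pi^\pl)^*\bL_{\fX^\pl/\fB}$; descending this factorization yields $\tilde\varphi^\pl$, and diagram~\eqref{eq:obstruction-theories-compatibility} is then its defining datum. That $\tilde\varphi^\pl$ is an obstruction theory is a cohomology-sheaf chase after applying $(\Pi^\pl)^*$: since $\bL_{\Pi^\pl}[-1]=\cO_\fX[-2]$ lives in degree $+2$, one gets $\cH^i((\Pi^\pl)^*\tilde\varphi^\pl)=\cH^i(\varphi)$ for $i\le 1$, while surjectivity of $\cH^1(\bL_{\fX/\fB})\twoheadrightarrow\cH^1(\bL_{\Pi^\pl})=\cO_\fX$ (dual to the inclusion of $\mathrm{Lie}\,\bC^\times$ into the stabilizer Lie algebras provided by the gerbe structure), together with $\cH^{\ge 2}(\bL_{\fX/\fB})=0$, forces $\cH^{\ge 2}$ of $(\Pi^\pl)^*\tilde\bE_{\fX^\pl/\fB}$ and of $(\Pi^\pl)^*\bL_{\fX^\pl/\fB}$ to vanish; faithful flatness of $\Pi^\pl$ descends the conclusion. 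Uniqueness up to isomorphism follows since \eqref{eq:obstruction-theories-compatibility} forces $(\Pi^\pl)^*\tilde\bE_{\fX^\pl/\fB}\cong\cocone(\tr)$ and pins the map to $\bL_{\fX/\fB}$ up to $\Hom(\bE',\cO_\fX[-2])$, all of which descends.

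For part~(ii), the complex $\tilde\bE_{\fX^\pl/\fB}$ has K-class $[\bE_{\fX/\fB}]+\cO$, which is not $\kappa$-self-dual, so a further copy of $\kappa\otimes\cO$ must be removed. The symmetry $\Xi$ turns $\tr$ into its $\kappa$-twisted shifted dual $s\colon\kappa\otimes\cO_\fX[2]\xrightarrow{\kappa\,\tr^\vee[1]}\kappa\otimes\bE_{\fX/\fB}^\vee[1]\xrightarrow{\Xi^{-1}}\bE_{\fX/\fB}$ ($s$ being, up to twist, the Serre-dual of the unit $\cO_\fX\to\bE_{\fX/\fB}$), and the composite $\kappa\otimes\cO_\fX[2]\xrightarrow{s}\bE_{\fX/\fB}\xrightarrow{\varphi}\bL_{\fX/\fB}$ is — up to the evident twist — the map~\eqref{eq:traceless-Ext3-condition}, a class in an $\Ext^3$-type group. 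Assuming it vanishes, in particular $\tr\circ s=0$, so $s$ lifts to $\hat s\colon\kappa\otimes\cO_\fX[2]\to\bE'=\cocone(\tr)$, which descends to $\hat s^\pl\colon\kappa\otimes\cO_{\fX^\pl}[2]\to\tilde\bE_{\fX^\pl/\fB}$; I would set $\bE_{\fX^\pl/\fB}:=\cone(\hat s^\pl)$, so that $[\bE_{\fX^\pl/\fB}]=[\bE_{\fX/\fB}]+\cO-\kappa\otimes\cO$, matching the footnote after~\eqref{eq:obstruction-theory-pl}. As $\kappa\otimes\cO_{\fX^\pl}[2]$ sits in degree $-2$, coning it off does not alter $\cH^{\ge-1}$; moreover $\tilde\varphi^\pl\circ\hat s^\pl=0$ (pull back to $\fX$ and compose with $(\Pi^\pl)^*\bL_{\fX^\pl/\fB}\to\bL_{\fX/\fB}$ to recover $\varphi\circ s=0$; the kernel of that composition of $\Hom$-groups is a quotient of $\Hom(\kappa\otimes\cO_\fX[2],\cO_\fX[-2])=0$), so $\tilde\varphi^\pl$ factors through $\cone(\hat s^\pl)=\bE_{\fX^\pl/\fB}$ to give an obstruction theory $\varphi^\pl\colon\bE_{\fX^\pl/\fB}\to\bL_{\fX^\pl/\fB}$ agreeing with $\tilde\varphi^\pl$ on $\cH^{\ge-1}$.

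To produce $\Xi^\pl$ and diagram~\eqref{eq:obstruction-theories-symmetric-compatibility}, note first that since $s$ and $\tr$ are mutually $\kappa$-twisted shifted dual, dualizing (then twisting and shifting) the triangle $\bE'\to\bE_{\fX/\fB}\xrightarrow{\tr}\cO_\fX[-1]$ identifies $\bF:=\kappa\otimes(\bE')^\vee[1]$ with $\cone(s)$, and the octahedral axiom applied to $\kappa\otimes\cO_\fX[2]\xrightarrow{\hat s}\bE'\to\bE_{\fX/\fB}$ yields a triangle $(\Pi^\pl)^*\bE_{\fX^\pl/\fB}\to\bF\to\cO_\fX[-1]\xrightarrow{+1}$. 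Its rotation is the middle row of~\eqref{eq:obstruction-theories-symmetric-compatibility}; the top row is the rotation $\cO_\fX[-2]\to\kappa\otimes\bF^\vee[1]=\bE'\to\bE_{\fX/\fB}$ of the triangle above; the bottom row is the cotangent triangle; and the vertical arrow $\bF\to\bL_{\fX/\fB}$ exists because the composite $\cO_\fX[-2]\to(\Pi^\pl)^*\bE_{\fX^\pl/\fB}\xrightarrow{\varphi^\pl}(\Pi^\pl)^*\bL_{\fX^\pl/\fB}\to\bL_{\fX/\fB}$ consists of two consecutive arrows of the rotated cotangent triangle, hence vanishes. Finally, dualizing, twisting and shifting the triangle $\kappa\otimes\cO_\fX[2]\xrightarrow{\hat s}\bE'\to(\Pi^\pl)^*\bE_{\fX^\pl/\fB}$ exhibits $\kappa\otimes((\Pi^\pl)^*\bE_{\fX^\pl/\fB})^\vee[1]$ as the cocone of the \emph{same} arrow $\bF\to\cO_\fX[-1]$, producing the isomorphism $\Xi^\pl\colon\bE_{\fX^\pl/\fB}\xrightarrow{\sim}\kappa\otimes(\bE_{\fX^\pl/\fB})^\vee[1]$; the identity $(\Xi^\pl)^\vee[1]=\Xi^\pl$ follows from $\Xi^\vee[1]=\Xi$. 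Uniqueness up to isomorphism is proved as in~(i).

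The hard part will be that all of the above manipulates (co)cones, which are not functorial in the derived category: arranging the lift $\hat s$ and the identifications $\bF\cong\cone(s)\cong\kappa\otimes(\bE')^\vee[1]$ coherently enough that $(\Xi^\pl)^\vee[1]=\Xi^\pl$ holds on the nose, and that the rows and columns genuinely assemble into the morphism of triangles~\eqref{eq:obstruction-theories-symmetric-compatibility}, will require either a dg- or model-level argument (available for the moduli stacks relevant to this paper) or a rigidity lemma upgrading homotopy-commutative data to honest diagrams. The second delicate point is pinning down the exact shape of~\eqref{eq:traceless-Ext3-condition} and verifying that its vanishing is precisely what makes both the lift of $s$ and the compatible descent of $\tilde\varphi^\pl$ possible; everything else reduces to bookkeeping with the two facts about $\bC^\times$-gerbes recalled at the start.
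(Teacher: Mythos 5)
Your proposal is correct and follows essentially the paper's own route: part (i) is the cocone of the composite $\bE_{\fX/\fB}\xrightarrow{\varphi}\bL_{\fX/\fB}\to\bL_{\Pi^\pl}$ descended along the gerbe, and part (ii) cones off the (unique) lift of the $\kappa$-twisted dual map $\kappa\otimes\bL_{\Pi^\pl}^\vee[1]\to\bE_{\fX/\fB}$, with the vanishing of \eqref{eq:traceless-Ext3-condition} giving exactly the factorization of $\tilde\varphi^\pl$ through the cone. The ``hard part'' you flag at the end is resolved in the paper by the same degree observations you already use elsewhere: because $\bL_{\Pi^\pl}$ is a line bundle concentrated in degree $1$, every homotopy-ambiguity group that arises (e.g.\ $\Hom(\tilde\bE_{\fX^\pl/\fB},\bL_{\Pi^\pl}[-1])$ and $\Hom(\kappa\otimes\bL_{\Pi^\pl}^\vee[1],\bL_{\Pi^\pl}[-1])$) vanishes, so the lift $\hat s$, the map $\zeta$, and the remaining arrows are genuinely unique and the resulting $3\times 3$ diagram is forced to be self-dual under $\kappa\otimes(-)^\vee[1]$ --- no dg-level argument is needed; also note that $\tr\circ s=0$ holds automatically for degree reasons, independently of the assumed vanishing of \eqref{eq:traceless-Ext3-condition}.
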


In practice, if $\fX$ is a moduli stack of perfect complexes with its
natural obstruction theory, the condition that
\eqref{eq:traceless-Ext3-condition} is the zero map is automatically
satisfied, e.g. using the argument in \cite[Cor.
  3.18]{Tanaka2020}.\footnote{The argument there is written for a
stable locus in $\fX^\pl$, but works for the entirety of $\fX^\pl$ as
well.}

We implicitly use throughout the proof that objects (and morphisms
between them) on $\fX$ of trivial $\bC^\times$-weight are exactly
those pulled back from $\fX^\pl$ along $\Pi^\pl$.

\begin{proof}
  Consider the compatibility diagram
  \eqref{eq:obstruction-theories-compatibility} for the morphism
  $\Pi^\pl$. The desired $\tilde\varphi^\pl$ is given as
  $\cocone(\nu)$ in the diagram
  \[ \begin{tikzcd}
    \tilde \bE_{\fX^\pl/\fB} \ar[dashed]{r} \ar[dashed]{d}{\tilde\varphi^\pl} & \bE_{\fX/\fB} \ar{r}{\nu} \ar{d}{\varphi} & \bL_{\Pi^\pl} \ar[equals]{d} \ar[dashed]{r}{+1} & {} \\
    (\Pi^\pl)^*\bL_{\fX^\pl/\fB} \ar{r} & \bL_{\fX/\fB} \ar{r} & \bL_{\Pi^\pl} \ar{r}{+1} & {}
  \end{tikzcd} \]
  yielding a morphism of exact triangles. It is still an obstruction
  theory by long exact sequence. Recall that $\tilde\varphi^\pl$ is
  unique up to a homotopy in $\Hom(\tilde\bE_{\fX^\pl/\fB},
  \bL_{\Pi^\pl}[-1])$, but this vanishes for degree reasons because
  $\bL_{\Pi^\pl}$ is a locally free sheaf in cohomological degree $1$,
  and thus $\tilde\varphi^\pl$ is genuinely unique (up to
  isomorphism).

  If further $\varphi$ is $\kappa$-symmetric, then the solid
  commutative square below exists and may be completed into the
  commutative $3 \times 3$ diagram
  \[ \begin{tikzcd}
    \kappa \otimes \bL_{\Pi^\pl}^\vee[1] \ar[dashed,equals]{r} \ar[dashed]{d}{\mu} & \kappa \otimes \bL_{\Pi^\pl}^\vee[1] \ar{r} \ar{d}{\kappa \otimes \nu^\vee[1]} & 0 \ar{d} \ar[dashed]{r}{+1} & {} \\
    \tilde\bE_{\fX^\pl/\fB} \ar[dashed]{r} \ar[dashed]{d}{\zeta} & \bE_{\fX/\fB} \ar{r}{\nu} \ar[dashed]{d} & \bL_{\Pi^\pl} \ar[dashed]{r}{+1} \ar[dashed,equals]{d} & {} \\
    \bE_{\fX^\pl/\fB} \ar[dashed]{d}{+1} \ar[dashed]{r}{\kappa \otimes \zeta^\vee[1]} & \kappa \otimes \tilde\bE_{\fX^\pl/\fB}^\vee[1] \ar[dashed]{r}{\kappa \otimes \mu^\vee[1]} \ar[dashed]{d}{+1} & \bL_{\Pi^\pl} \ar[dashed]{r}{+1} \ar[dashed]{d}{+1} & {} \\
       {} & {} & {} & {}
  \end{tikzcd} \]
  whose rows and columns are exact triangles. Here, as before, we are
  using cohomological degree arguments to conclude: the solid square
  indeed commutes because $\nu \circ (\kappa \otimes \nu^\vee[1]) \in
  \Hom(\bL_{\Pi^\pl}^\vee[1], \bL_{\Pi^\pl}) = 0$; the map $\mu$ is
  unique up to homotopy in $\Hom(\bL_{\Pi^\pl}^\vee[1],
  \bL_{\Pi^\pl}[-1]) = 0$ and therefore is genuinely unique; the map
  $\zeta$ is unique up to homotopy in $\Hom(\tilde\bE_{\fX^\pl/\fB},
  \bL_{\Pi^\pl}[-1]) = 0$ and therefore is also genuinely unique. The
  latter two observations imply that the maps in the third row are
  also uniquely determined to be $\kappa \otimes \mu^\vee[1]$ and
  $\kappa \otimes \zeta^\vee[1]$, respectively, since otherwise their
  duals would make $\mu$ and $\zeta$ non-unique. So the entire $3
  \times 3$ diagram is self-dual under $\kappa \otimes (-)^\vee[1]$,
  and thus the $\kappa$-symmetry of $\bE_{\fX/\fB}$ lifts to make
  $\bE_{\fX^\pl/\fB}$ also $\kappa$-symmetric. Finally, we claim that
  $\tilde\varphi^\pl$ factors through $\zeta$ to yield the desired map
  $\varphi^\pl$. Since the left-most column is an exact triangle, this
  is equivalent to the vanishing of the composition
  \begin{equation} \label{eq:traceless-Ext3-condition}
    \kappa \otimes \bL_{\Pi^\pl}^\vee[1] \xrightarrow{\mu} \tilde\bE_{\fX^\pl/\fB} \xrightarrow{\tilde\varphi^\pl} (\Pi^\pl)^*\bL_{\fX^\pl/\fB}.
  \end{equation}
  By hypothesis, this vanishes. The induced $\varphi^\pl$ is still an
  obstruction theory by long exact sequence. It is unique up to
  isomorphism because both $\bE_{\fX^\pl/\fB}$ and $\zeta$ are.
\end{proof}

\subsubsection{}

Given a smooth morphism $f\colon \fX \to \fY$ over $\fB$, the proof of
Lemma~\ref{lem:obstruction-theory-pl} suggests a procedure to induce
($\kappa$-symmetrically) compatible obstruction theories on $\fY$
starting from one on $\fX$ (relative to $\fB$), and one may hope that
it works equally well ``in reverse'' to construct smooth or
symmetrized pullbacks along $f$. However, attempting to run this
procedure for general $f$ immediately runs into issues every time a
cohomological degree argument was used in the proof of
Lemma~\ref{lem:obstruction-theory-pl}. We forcibly remove such issues
by using the following \'etale-local generalization of obstruction
theories.

\subsubsection{}

\begin{definition} \label{bg:def:apot}
  Suppose $\fX$ is Deligne--Mumford and of finite presentation.
  Following \cite[Definition 5.1]{kiem_savvas_loc}, a
  $\sT$-equivariant {\it almost-perfect obstruction theory (APOT)} on
  $\fX$ relative to $\fB$ consists of:
  \begin{enumerate}[label=(\roman*)]
  \item a $\sT$-equivariant \'etale cover $\{U_i\to \fX\}_{i \in I}$
    over $\fB$ where the $U_i$ are schemes;\footnote{By \cite[Theorem
        4.3]{alper_hall_rydh_luna_stacks_20}, after possibly passing
      to a multiple cover of $\sT$, such a $\sT$-equivariant \'etale
      cover $\{U_i \to \fX\}_i$ always exists, and in fact the $U_i$
      can be taken to be affine.}
  \item for each $i$, a $\sT$-equivariant POT $\varphi_i\colon \bE_i
    \to \bL_{U_i/\fB}$.
  \end{enumerate}
  We often write $\varphi = (U_i, \varphi_i)_{i \in I}$ to denote this
  data and let $U_{ij} \coloneqq U_i \times_{\fX} U_j$. This data must
  satisfy the following conditions:
  \begin{enumerate}[label=(\alph*)]
  \item \label{spb:def:apot:i} there exist $\sT$-equivariant
    transition isomorphisms
    \[ \psi_{ij}\colon \cH^1(\bE_i^\vee)\Big|_{U_{ij}} \xrightarrow{\sim} \cH^1(\bE_j^\vee)\Big|_{U_{ij}} \]
    which give descent data for a sheaf $\cOb_\varphi$ on $\fX$ called
    the {\it obstruction sheaf}; \footnote{Here, we follow the
    presentation of Kiem--Savvas, although it is slightly inaccurate:
    the obstruction sheaf is really part of the \emph{data} defining
    the almost perfect obstruction theory.}
  \item \label{spb:def:apot:ii} for each pair $i,j$, there exists a
    $\sT$-equivariant \'etale covering $\{V_k\to U_{ij}\}_{k \in K}$,
    such that the isomorphism $\psi_{ij}$ of the obstruction sheaves
    lifts to a $\sT$-equivariant isomorphism of the POTs over each
    $V_k$.
  \end{enumerate}
  In addition, if the following conditions are satisfied for some
  $\sT$-weight $\kappa$, then the APOT is {\it $\kappa$-symmetric} if:
  \begin{enumerate}[resume,label=(\alph*)]
  \item \label{it:sym-apotiii} for each $i$, the POT $\varphi_i\colon
    \bE_i \to \bL_{U_i/\fB}$ is $\kappa$-symmetric; and
  \item \label{it:sym-apotiv} there exists a $\sT$-equivariant
    isomorphism $\kappa \otimes \cOb_\varphi \cong
    \cH^0(\bL_{\fX/\fB})$.
  \end{enumerate}
  We say an APOT $(U_i, \varphi_i)_{i \in I}$ {\it admits a global
    virtual tangent bundle} $\bT_{\fX}^\vir \in K_\sT^\circ(\fX)$ if
  $\bT_{\fX}^\vir$ restricts to $\bE_i^\vee$ on each $U_i$.

  In the setting of
  Definition~\ref{def:obstruction-theories-compatibility}, an APOT
  $\varphi_{\fX} = (U_i, \varphi_{\fX,i})_{i \in I}$ on $\fX$ and an
  obstruction theory $\varphi_{\fY}$ on $\fY$ are {\it (symmetrically)
    compatible} under $f$ if the diagrams
  \eqref{eq:obstruction-theories-compatibility} or
  \eqref{eq:obstruction-theories-symmetric-compatibility} exist
  locally, i.e. there exists a cover $\{V_{i,j} \to U_i\}_{i,j}$
  refining $\{U_i\}_{i \in I}$ such that $\varphi_{\fX,i}|_{V_{i,j}}$
  and $\varphi_\fY$ are (symmetrically) compatible for every $i, j$,
  and moreover these local morphisms are compatible with the
  isomorphisms in conditions~\ref{spb:def:apot:i} and
  \ref{spb:def:apot:ii}. (This definition may be altered in the
  obvious way if $\fY$ is an algebraic space with an APOT instead of
  an obstruction theory.)
\end{definition}

\subsubsection{}
\label{sec:APOTs}

\begin{theorem} \label{thm:APOTs}
  Let $X$ be an algebraic space, and $f\colon X \to \fY$ be a
  smooth morphism over $\fB$ in $\cat{Art}_\sT$. Suppose that $\fY$
  has a $\kappa$-symmetric obstruction theory $\varphi_{\fY/\fB}$
  relative to $\fB$, or that $\fY$ is also an algebraic space and has
  a $\kappa$-symmetric APOT $\varphi_{\fY/\fB}$ relative to $\fB$.
  \begin{enumerate}[label=(\roman*)]
  \item \label{it:APOT-sym-pullback} (Symmetrized pullback,
    \cite[Theorem 2.3.4]{klt_dtpt}) There exists a $\kappa$-symmetric
    APOT on $X$ relative to $\fB$ which is a symmetrized pullback of
    $\varphi_{\fY/\fB}$ along $f$.

  \item \label{it:APOT-functoriality} (Functoriality, \cite[Lemma
    2.3.12]{klt_dtpt}) The symmetrized pullback operation in
    \ref{it:APOT-sym-pullback} is functorial: if $g\colon X' \to X$ is
    a smooth morphism of algebraic spaces over $\fB$ in
    $\cat{Art}_\sT$, then the symmetrized pullback of
    $\varphi_{\fY/\fB}$ along $g \circ f$ agrees with the symmetrized
    pullback along $g$ of the symmetrized pullback along $f$ of
    $\varphi_{\fY/\fB}$.

  \item \label{it:APOT-virtual-cycle} (Virtual cycle, \cite[Def.
    4.1]{kiem_savvas_apot}) A $\sT$-equivariant APOT on $X$ relative
    to $\fB$ gives rise to an $\sT$-equivariant virtual structure
    sheaf $\cO_X^\vir\in K_\sT(X)$.

  \item \label{it:APOT-localization} (Localization, \cite[Theorem
    2.4.3]{klt_dtpt}) The symmetrized pullback APOT from
    \ref{it:APOT-sym-pullback} admits a global virtual tangent bundle
    \begin{equation} \label{eq:symmetrized-pullback-Tvir}
      \bT_{X/\fB}^\vir \coloneqq f^*\bT_{\fY/\fB}^\vir + \bT_f - \kappa^{-1} \bT_f^\vee
    \end{equation}
    where $\bT_f$ is the dual of the relative cotangent complex for
    $f$. If $X^{\sT}$ has the resolution property, the virtual
    structure sheaf $\cO_{X}^\vir$ from \ref{it:APOT-virtual-cycle}
    satisfies the $\sT$-equivariant localization formula
    \begin{equation} \label{eq:equivariant-localization}
      \cO_{X}^\vir = \iota_*\frac{\cO_{X^\sT}^\vir}{\se(\cN_\iota^\vir)}
    \end{equation}
    where the $\sT$-fixed locus $\iota\colon X^\sT \hookrightarrow X$
    inherits an APOT by restriction from $X$, and the virtual normal
    bundle
    \[ \cN_\iota^\vir \coloneqq \left(\iota^*\bT_{X/\fB}^\vir\right)^{\text{mov}} \]
    is the non-$\sT$-fixed part of $\iota^*\bT_{X/\fB}^\vir$.
  \end{enumerate}
\end{theorem}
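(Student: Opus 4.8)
The plan is to establish all four parts by passing to a $\sT$-equivariant \'etale cover of $X$ by (affine) schemes and carrying out the symmetrization locally, where the cohomological-degree obstructions that block a global construction -- cf. the discussion after Lemma~\ref{lem:obstruction-theory-pl} -- can be made to vanish; the APOT formalism is precisely the bookkeeping device for assembling the local data. For \ref{it:APOT-sym-pullback} I would fix a $\sT$-equivariant \'etale cover $\{U_i \to X\}_{i\in I}$ by affine schemes over $\fB$ (after passing to a finite cover of $\sT$ if needed, as in the footnote to Definition~\ref{bg:def:apot}), small enough that $f^*\varphi_{\fY/\fB}$ and $\bL_f$ admit explicit presentations by vector bundles and that each $U_i$ embeds in a smooth ambient scheme. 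On each $U_i$ one first forms the ordinary smooth pullback obstruction theory, with virtual tangent bundle $f^*\bT_{\fY/\fB}^\vir + \bT_f$, by taking the cone in the compatibility triangle \eqref{eq:obstruction-theories-compatibility}; one then symmetrizes it, producing a $\kappa$-symmetric POT $\varphi_i\colon\bE_i\to\bL_{U_i/\fB}$ with
\[
 \bE_i^\vee \;=\; f^*\bT_{\fY/\fB}^\vir + \bT_f - \kappa^{-1}\bT_f^\vee
\]
fitting into the diagram \eqref{eq:obstruction-theories-symmetric-compatibility} over $U_i$. Over such a chart, the obstruction to this symmetrization -- the local analogue of the vanishing of \eqref{eq:traceless-Ext3-condition} -- lies in an $\Ext$-group between complexes of vector bundles in controlled degrees (computable on the smooth ambient scheme), and vanishes there for the same reasons it did in Lemma~\ref{lem:obstruction-theory-pl}. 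The remaining work is to verify the gluing conditions~\ref{spb:def:apot:i}--\ref{spb:def:apot:ii}: the $\kappa$-symmetry and obstruction-theory property force $\kappa\otimes\cH^1(\bE_i^\vee)\cong\cH^0(\bL_{U_i/\fB})$, so condition~\ref{it:sym-apotiv} holds locally and the sheaves $\cH^1(\bE_i^\vee)$ descend to a well-defined obstruction sheaf $\cOb_\varphi$ on $X$; and any two local symmetrizations differ by an automorphism of the local two-term picture, hence agree as POTs after a further \'etale refinement, giving \ref{spb:def:apot:ii} and symmetric compatibility with $\varphi_{\fY/\fB}$. I expect this last step -- reconciling the choices made on different charts well enough to obtain a genuine $\kappa$-symmetric APOT -- to be the \emph{main technical obstacle}.

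Part \ref{it:APOT-functoriality} follows by comparing the local models for $g\circ f$ and for $g$ followed by $f$: there is an exact triangle $g^*\bT_f\to\bT_{g\circ f}\to\bT_g\xrightarrow{+1}{}$, and the operation $E\mapsto E+\bT-\kappa^{-1}\bT^\vee$ is additive in $\bT$ in $K$-theory, so on each common chart both constructions produce $\kappa$-symmetric POTs with the same virtual tangent bundle $(g\circ f)^*\bT_{\fY/\fB}^\vir + \bT_{g\circ f} - \kappa^{-1}\bT_{g\circ f}^\vee$; the comparison isomorphism is unique up to the same vanishing homotopies, hence compatible with the transition data, so the two APOTs on $X'$ coincide. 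Part \ref{it:APOT-virtual-cycle} is the Kiem--Savvas construction \cite{kiem_savvas_apot}: each POT $\varphi_i$ determines an intrinsic normal cone in the bundle stack associated to $\bE_i^\vee$; the transition isomorphisms glue these to a cone in the Artin stack over $X$ built from $\cOb_\varphi$, and intersecting with the zero section produces $\cO_X^\vir\in K_\sT(X)$, independent of all chart data.

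For \ref{it:APOT-localization}: that $\bT_{X/\fB}^\vir\coloneqq f^*\bT_{\fY/\fB}^\vir + \bT_f - \kappa^{-1}\bT_f^\vee$ is a global virtual tangent bundle is immediate, since by construction it restricts to $\bE_i^\vee$ on every $U_i$. The fixed locus $\iota\colon X^\sT\hookrightarrow X$ inherits an APOT by taking the $\sT$-fixed part of each chart's POT -- here the $\sT$-equivariance of the cover is used -- with virtual tangent bundle $(\iota^*\bT_{X/\fB}^\vir)^{\text{fix}}$, while the moving part $\cN_\iota^\vir = (\iota^*\bT_{X/\fB}^\vir)^{\text{mov}}$ has invertible K-theoretic Euler class $\se(\cN_\iota^\vir)$ once the non-trivial characters of $\sT$ are inverted, by the resolution property on $X^\sT$ and the nilpotence of Lemma~\ref{lem:k-theory-finiteness}. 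The formula \eqref{eq:equivariant-localization} is then the APOT analogue of the virtual $\sT$-localization theorem: one proves it by deformation to the normal cone of $X^\sT$ chart-by-chart and gluing along the transition data, or equivalently by reducing to the Deligne--Mumford statement already available for APOTs via the same cone machinery that defines $\cO_X^\vir$; either way it is formal once \ref{it:APOT-sym-pullback}--\ref{it:APOT-virtual-cycle} are in hand.
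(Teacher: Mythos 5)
Your parts \ref{it:APOT-sym-pullback}--\ref{it:APOT-virtual-cycle} follow essentially the paper's route: construct the diagram \eqref{eq:obstruction-theories-symmetric-pullback} affine \'etale-locally, where the obstructing classes vanish, and glue the resulting local $\kappa$-symmetric POTs into an APOT, then invoke Kiem--Savvas for $\cO_X^\vir$. (One small correction: the local vanishing is not ``for the same reasons as in Lemma~\ref{lem:obstruction-theory-pl}''---there it was a cohomological-degree argument specific to the $\bC^\times$-gerbe, with $\bL_{\Pi^\pl}$ a line bundle in degree $1$; here the obstructions lie in higher $\Ext$-groups which vanish because the charts are \emph{affine}. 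But the overall structure matches.)

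The genuine gap is in part \ref{it:APOT-localization}. You treat the localization formula \eqref{eq:equivariant-localization} as ``formal once (i)--(iii) are in hand,'' reducing it to ``the Deligne--Mumford statement already available for APOTs.'' That statement (Kiem--Savvas, \cite[Theorem 5.13]{kiem_savvas_loc}) carries an extra hypothesis: the local virtual normal bundles $\cN^\vir$ of the chartwise POTs must glue to a \emph{global} $\cN^\vir$, compatibly with the gluing of the obstruction sheaves. This is strictly stronger than the existence of a K-theory class $\bT_{X/\fB}^\vir$ restricting to $\bE_i^\vee$ on each chart, and it is \emph{not} automatically satisfied by the symmetrized-pullback APOT---the local symmetrizations are only identified after further \'etale refinement and only up to non-canonical isomorphism. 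The paper's proof closes this hole by pulling back along a sufficiently large affine bundle $a\colon A \to X$ on which the finitely many extension classes obstructing \eqref{eq:obstruction-theories-symmetric-pullback} vanish globally (\cite[Lemma 2.4.6]{klt_dtpt}), so that one obtains an honest global $\kappa$-symmetric POT $\bE_{X/\fB}^A$, whence $\bE_{X/\fB}^A \oplus \Omega_a$ is an APOT on $A$ with a genuinely global moving part; Kiem--Savvas localization then applies on $A$, and one descends to $X$ using that smooth pullback preserves virtual structure sheaves and that $a^*\colon K_\sT(X) \to K_\sT(A)$ is an isomorphism by the Thom isomorphism. Your alternative suggestion---``deformation to the normal cone of $X^\sT$ chart-by-chart and gluing along the transition data''---is not a proof either, since whether the chartwise deformations glue well enough to run the localization argument is exactly the point at issue; some device like the affine-bundle (or Jouanolou) trick is needed.
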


It is convenient to write $\bT_f^\vir \coloneqq \bT_f - \kappa^{-1}
\bT_f^\vee$.

\begin{proof}[Proof sketch.]
  We summarize the content of \cite[\S 2]{klt_dtpt} in the case that
  $\fY$ has a $\kappa$-symmetric obstruction theory
  $\varphi_{\fY}\colon \bE_{\fY/\fB} \to \bL_{\fY/\fB}$. Suppose that
  the natural map $\bL_f[-1] \to f^*\bL_{\fY/\fB}$ lifts along
  $\varphi_{\fY}$ to a map $\delta\colon \bL_f[-1] \to
  f^*\bE_{\fY/\fB}$, and that $\delta^\vee[1]\circ \delta = 0$ (using
  the $\kappa$-symmetry of $\bE_{\fY/\fB}$ to form this composition).
  Then, the following solid square exists and can be completed into
  the commutative diagram
  \begin{equation} \label{eq:obstruction-theories-symmetric-pullback}
    \begin{tikzcd}
      \bL_f^\vee[-1] \ar[dashed]{d} \ar[dashed]{r} & \kappa \otimes \bF^\vee[1] \ar[dashed]{r}{\zeta} \ar[dashed]{d}{\eta^\vee[1]} & \bE_{X/\fB} \ar[dashed]{r}{+1} \ar[dashed]{d}{\zeta^\vee[1]} & {} \\
      \bL_f^\vee[-1] \ar{r}{\delta} \ar{d} & f^*\bE_{\fY/\fB} \ar{d}{\delta^\vee[1]} \ar[dashed]{r}{\eta} & \bF \ar[dashed]{d} \ar[dashed]{r}{+1} & {} \\
      0 \ar{r} \ar[dashed]{d}{+1} & \kappa \otimes \bL_f^\vee[2] \ar[dashed]{d}{+1} \ar[dashed]{r} & \kappa \otimes \bL_f^\vee[2] \ar[dashed]{d}{+1} \ar[dashed]{r}{+1} & {} \\
      {} & {} & {}
    \end{tikzcd}
  \end{equation}
  of exact triangles. If moreover this diagram is symmetric under
  $\kappa \otimes (-)^\vee[1]$, then the composition $\bE_{X/\fB} \to
  \bF \to \bL_{X/\fB}$ is easily checked to be the desired symmetrized
  pullback. Note that $\bF \to \bL_{X/\fB}$ would become a smooth
  pullback of $\varphi_{\fY/\fB}$ along $f$.

  Of course, these hypotheses are not satisfied in general. However,
  the obstructions to these hypotheses being satisfied all lie in
  certain higher Ext groups that automatically vanish if $X$ were an
  affine scheme. Hence symmetrized pullbacks exist {\it affine
    \'etale-locally}, and some diagram chasing shows they glue
  appropriately, forming the desired $\kappa$-symmetric APOT
  $\varphi_{X/\fB}$ on $X$ relative to $\fB$. This strategy clearly
  also works if $\fY$ is an algebraic space with a $\kappa$-symmetric
  APOT instead of an obstruction theory. This proves
  \ref{it:APOT-sym-pullback}, and the functoriality property
  \ref{it:APOT-functoriality} is straightforward.

  For the torus-localization formula \ref{it:APOT-localization},
  recall that Kiem and Savvas prove a torus-localization formula
  assuming that the $\cN^\vir$ of each affine \'etale-local POT glue
  to form a global $\cN^\vir$ (compatibly with the gluing to form
  $\cOb$) \cite[Theorem 5.13]{kiem_savvas_loc}. This assumption is not
  satisfied by our APOT $\varphi_{X/\fB}$. However, it can be made to
  hold by pulling back along a sufficiently large affine bundle
  $a\colon A \to X$ such that the finite number of classes which
  obstruct the existence of
  \eqref{eq:obstruction-theories-symmetric-pullback} vanish
  \cite[Lemma 2.4.6]{klt_dtpt}, yielding a $\kappa$-symmetric POT
  $\bE_{X/\fB}^A$ for $X$ {\it on $A$}. Affine \'etale-locally, it is
  easy to check that $\bE_{X/\fB}^A \oplus \Omega_a$ form an APOT on
  $A$, which by functoriality is identified with the smooth pullback
  of $\varphi_{X/\fB}$ along $a$. But the $\sT$-moving part of
  $(\bE_{X/\fB}^A \oplus \Omega_a)^\vee|_{A^\sT}$ is obviously a
  global $\cN^\vir$, so the torus-localization formula of Kiem--Savvas
  is applicable. Since smooth pullback preserves virtual cycles, and
  $a^*\colon K_\sT(X) \to K_\sT(A)$ is an isomorphism by the Thom
  isomorphism theorem, the torus-localization formula on $A$ implies
  the desired one on $X$.
\end{proof}

\subsubsection{}

\begin{remark}
  We think of an APOT as an \'etale-local collection of POTs where
  only the local obstruction sheaves, not the POTs themselves, glue to
  form a global object. On the other hand, the virtual structure sheaf
  produced by Theorem~\ref{thm:APOTs}\ref{it:APOT-virtual-cycle} is
  global. While Theorem~\ref{thm:APOTs}\ref{it:APOT-virtual-cycle}
  holds for any $\sT$-equivariant APOT on $X$,
  Theorem~\ref{thm:APOTs}\ref{it:APOT-localization} genuinely requires
  the existence of the global virtual tangent bundle $\bT_{X}^\vir$,
  which is a non-trivial condition satisfied by APOTs arising from
  symmetrized pullback. Later, in
  Definition~\ref{def:symmetrized-virtual-structure-sheaf},
  $\bT_{X}^\vir$ will also be genuinely needed to construct {\it
    twisted} virtual structure sheaves.

  To be clear, even if we assume the resolution property for $X$
  (Remark~\ref{rem:localization-resolution-property}), the existence
  of a global virtual tangent bundle $\bT_X^\vir$ is still a
  non-trivial condition for a general APOT on $X$.
\end{remark}

\subsubsection{}

\begin{remark}[Resolution property] \label{rem:localization-resolution-property}
  In Theorem~\ref{thm:APOTs}\ref{it:APOT-localization}, the resolution
  property on $X^{\sT}$ is used in exactly one way: to resolve the
  obstruction theory $\iota^*\bE_{\fY/\fB}$ as a complex of vector
  bundles (necessarily two-term). This locally free resolution is then
  used to
  \begin{itemize}
  \item apply \cite[Lemma 2.4.6]{klt_dtpt} to make certain extension
    classes vanish, and
  \item define the Euler class $\se(\cN_\iota^\vir)$ for use in
    $\sT$-equivariant localization.
  \end{itemize}
  Note that the latter usage is superficial, because one may actually
  define K-theoretic Euler classes of perfect complexes in general and
  prove that \eqref{eq:equivariant-localization} still holds, as
  explained by \cite{Aranha2024} for Chow homology. So, if it is known
  by other means that the former usage is unnecessary, e.g. \cite[Ex.
    2.6, Rem. 2.7]{liu_ss_vw}, then the assumption that $X^{\sT}$ has
  the resolution property may be entirely removed from this paper.

  In all applications of symmetrized pullback in this paper, the
  source of the smooth morphism will be a semistable locus in an
  auxiliary framed stack for a moduli stack of some complexes or
  sheaves on a smooth quasi-projective variety. Thus, it will admit a
  GIT construction via Quot schemes, and is therefore quasi-projective
  and automatically has the resolution property. So, we do not worry
  about the resolution property throughout this paper.
\end{remark}

\subsubsection{}

\begin{theorem}[Comparison of virtual cycles, {\cite[Prop. 2.5.2]{klt_dtpt}}] \label{thm:APOT-comparison}
  Consider the following objects and morphisms in $\cat{Art}_\sT$:
  \[ \begin{tikzcd}
    Z \ar{d}{f} \ar[hookrightarrow]{r} & \bM \ar[loop right]{r}{\bC^\times} \ar{d}{g} \\
    \fY & \fX, 
  \end{tikzcd} \]
  where $f$ and $g$ are smooth, $\bM$ is an algebraic space with
  $\bC^\times$-action compatible with the $\sT$-action, and $Z \subset
  \bM$ is a $\bC^\times$-fixed component. Suppose both $\fX$ and $\fY$
  have symmetric obstruction theories. By
  Theorem~\ref{thm:APOTs}\ref{it:APOT-sym-pullback}, $Z$ has two
  symmetric APOTs:
  \begin{itemize}
  \item the one obtained by symmetrized pullback along $f$;
  \item the $\bC^\times$-fixed part of the restriction of the
    symmetric APOT on $\bM$ obtained by symmetrized pullback along
    $g$.
  \end{itemize}
  Let $\bT^\vir_Z$ and $\bT^\vir_{\bM}\big|_Z^{\text{fix}}$ denote
  their global virtual tangent bundles, respectively, from
  Theorem~\ref{thm:APOTs}\ref{it:APOT-localization}. If $\bM$ has the
  $(\sT \times \bC^\times)$-equivariant resolution property, and
  \begin{equation} \label{eq:APOT-comparison-Tvir}
    \bT^\vir_Z = \bT^\vir_{\bM}\Big|_Z^{\text{fix}},
  \end{equation}
  then the (symmetrized) virtual structure sheaves of these two APOTs
  on $Z$ coincide.
\end{theorem}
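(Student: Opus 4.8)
The plan is to reduce the comparison of two \emph{almost-perfect} obstruction theories on $Z$ to a comparison of two \emph{genuine} perfect obstruction theories on an affine bundle over $Z$, using the affine-bundle device from the proof of Theorem~\ref{thm:APOTs}\ref{it:APOT-localization}, and then to invoke a rigidity property of symmetrized-pullback obstruction theories.

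First I would pull $\bM$ back along a sufficiently large $(\sT \times \bC^\times)$-equivariant affine bundle $a\colon A \to \bM$ with $\bC^\times$ acting trivially on the fibers, chosen (by \cite[Lemma 2.4.6]{klt_dtpt}, exactly as in the proof of Theorem~\ref{thm:APOTs}\ref{it:APOT-localization}) so that the symmetrized pullback of $\varphi_{\fX/\fB}$ along $g \circ a$ is a genuine $\kappa$-symmetric POT on $A$, and — after possibly enlarging $A$ — so that the symmetrized pullback of $\varphi_{\fY/\fB}$ along $f \circ a|_{Z_A}$ is a genuine $\kappa$-symmetric POT on $Z_A \coloneqq a^{-1}(Z)$, the $\bC^\times$-fixed locus of $A$ lying over $Z$. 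Since $a^*\colon K_\sT(Z) \to K_\sT(Z_A)$ is an isomorphism (Thom isomorphism) and smooth pullback of APOTs leaves the virtual structure sheaf unchanged (Theorem~\ref{thm:APOTs}, via functoriality \ref{it:APOT-functoriality}), it suffices to prove that the two virtual structure sheaves agree on $Z_A$.

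On $Z_A$, functoriality of symmetrized pullback (Theorem~\ref{thm:APOTs}\ref{it:APOT-functoriality}) identifies the $f$-side APOT with the genuine $\kappa$-symmetric POT just constructed, while the $g$-side APOT is identified with the $\bC^\times$-fixed part of the restriction to $Z_A$ of the genuine $\kappa$-symmetric POT on $A$ — which is again a genuine $\kappa$-symmetric POT, by the standard fact (used already inside Theorem~\ref{thm:APOTs}\ref{it:APOT-localization}) that the $\bC^\times$-fixed part of a symmetric POT along a fixed locus is a symmetric POT; here one uses that taking fixed parts commutes with pullback along $a$ because $\bC^\times$ acts trivially on the fibers. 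Now formula \eqref{eq:symmetrized-pullback-Tvir} computes the global virtual tangent bundles of these two POTs, and since the affine-bundle correction $\bT_a^\vir$ is the same for both, hypothesis \eqref{eq:APOT-comparison-Tvir} shows that the two virtual tangent bundles on $Z_A$ coincide. Moreover, the $\kappa$-symmetry condition Definition~\ref{bg:def:apot}\ref{it:sym-apotiv} forces each obstruction sheaf to equal the intrinsic sheaf $\kappa^{-1} \otimes \cH^0(\bL_{Z_A/\fB})$, so the two POTs on $Z_A$ also share the same obstruction sheaf.

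It then remains to prove that two $\kappa$-symmetric POTs on the algebraic space $Z_A$ (which inherits the resolution property) that both arise as symmetrized pullbacks along smooth maps and that have the same global virtual tangent bundle and the same obstruction sheaf produce the same symmetrized virtual structure sheaf. My plan for this step is local: using the resolution property, fix a global two-term locally free resolution of the common virtual tangent bundle; for each POT the intrinsic normal cone $\mathfrak{C}_{Z_A/\fB}$ embeds into the total space of the resolving bundle via the obstruction-theory map to $\bL_{Z_A/\fB}$, and the virtual structure sheaf is the refined Gysin pullback of the structure sheaf of that cone, together with a $(\det \bT^\vir)^{1/2}$-twist that depends only on the K-theory class. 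The point to check is that the two embedded cones coincide; this is verified affine étale-locally using the explicit cocone presentation \eqref{eq:obstruction-theories-symmetric-pullback} of a symmetrized pullback, which pins down the obstruction-theory map up to the ambiguity that does not affect the cone. This verification — that the symmetrized-pullback structure rigidifies the obstruction-theory map enough for the cones to agree — is the technical heart of the argument and the step I expect to be the main obstacle; it is the content of \cite[Prop. 2.5.2]{klt_dtpt}.
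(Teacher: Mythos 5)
Your reduction step is essentially the paper's: pull back along a suitable equivariant affine bundle so that the would-be symmetrized-pullback POTs exist globally, use functoriality of symmetrized pullback and the Thom isomorphism to reduce to comparing two genuine $\kappa$-symmetric POTs on the fixed part of the bundle, and observe that hypothesis \eqref{eq:APOT-comparison-Tvir} (the affine-bundle corrections being equal on both sides) forces the two POTs to have the same class in $K_\sT^\circ$. Up to the cosmetic difference that the paper takes the $\bC^\times$-fixed part of a $(\sT\times\bC^\times)$-equivariant Jouanolou device $B\to\bM$ rather than assuming trivial $\bC^\times$-action on the fibers, this part is fine.

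The gap is in your last step. Two perfect obstruction theories $\varphi_1,\varphi_2\colon \bE_i \to \bL$ with $[\bE_1]=[\bE_2]$ in K-theory and isomorphic obstruction sheaves do \emph{not} obviously give the same virtual structure sheaf: the embedding of the (coarse) intrinsic normal cone into the obstruction sheaf stack is induced by the morphism $\varphi_i$ itself, not by the K-theory class, and the Gysin pullback depends on that embedding. Your claim that the cocone presentation \eqref{eq:obstruction-theories-symmetric-pullback} ``pins down the obstruction-theory map up to an ambiguity that does not affect the cone'' is not substantiated and is precisely the difficulty: the two symmetrized pullbacks are built from unrelated symmetric obstruction theories upstairs (on $\fY$ via $f$, versus the $\bC^\times$-fixed restriction of the one on $\fX$ via $g$), so there is no common diagram relating the two maps to $\bL$ --- the hypothesis only identifies their K-theory classes. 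Deferring this step to \cite[Prop.~2.5.2]{klt_dtpt} is circular, since that is exactly the statement being proved. The paper closes the gap differently: it uses a Jouanolou device so that the total space is an \emph{affine scheme} (this is where the equivariant resolution property is really used), and then invokes the result of \cite{Thomas2022} that on such a space the virtual cycle depends only on the K-theory class of the perfect obstruction theory. (Alternatively, as the paper's subsequent remark explains, in the actual applications one can avoid \cite{Thomas2022} by directly comparing --- in fact deforming between --- the two obstruction theories, as in \S\ref{rc:sec:master-space-argument}; but that is a separate argument, not the cone-coincidence claim you make.)
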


\begin{proof}[Proof sketch.]
  Denote the two virtual structure sheaves by $\cO_Z^{\vir}$ and
  $\cO_{Z \subset \bM}^\vir$ respectively.
  
  A {\it (equivariant) Jouanolou device} for $\bM$ is an (equivariant)
  affine bundle $b\colon B \to \bM$ such that $B$ is an affine scheme
  \cite{Jouanolou1973}. Since $\bM$ has the $(\sT \times
  \bC^\times)$-equivariant resolution property, there exists a $(\sT
  \times \bC^\times)$-equivariant Jouanolou device for $\bM$ \cite[\S
    2.5.3]{klt_dtpt}. In this stronger setting, all obstructions to
  the existence of the diagram
  \eqref{eq:obstruction-theories-symmetric-pullback} vanish after
  pullback along $b$, because $B$ is affine, yielding a global element
  and morphism $\hat\bE_{\bM}^B \to b^*\bL_\bM$ which would be a
  symmetrized pullback POT for $\bM$ if it existed on $\bM$. Moreover,
  $\Omega_b[-1] \to b^*\bL_\bM$ is the zero map because $B$ is affine,
  so
  \[ \bE_{\bM}^B \coloneqq \hat\bE_{\bM}^B \oplus \Omega_b \]
  is easily checked to be a POT for $B$. It is straightforward to
  check that it agrees with the smooth pullback along $b$ of the
  symmetrized pullback along $g$ of the symmetric obstruction theory
  on $\fX$.

  This procedure may be repeated for the $\bC^\times$-fixed part of
  the induced $(\sT \times \bC^\times)$-equivariant Jouanolou device
  $B|_Z \to Z$, which is a $\sT$-equivariant Jouanolou device $a\colon
  A \to Z$, to produce a POT
  \[ \bE_Z^A \coloneqq \hat\bE_Z^A \oplus \Omega_a \]
  on $A$ which agrees with the smooth pullback along $a$ of the
  symmetrized pullback along $f$ of the symmetric obstruction theory
  on $\fY$.

  The hypothesis \eqref{eq:APOT-comparison-Tvir} is equivalent to
  $(\bE_{\bM}^B|_A)^{\text{fix}} = \bE_Z^A \in K_\sT^\circ(A)$. Thus
  their induced virtual cycles agree \cite{Thomas2022}. Because smooth
  pullback preserves virtual cycles, this means $a^*\cO_Z^{\vir} =
  a^*\cO_{Z \subset \bM}^\vir$. We conclude because $a^*\colon
  K_\sT(Z) \to K_\sT(A)$ is an isomorphism by the Thom isomorphism
  theorem.
\end{proof}
  
\subsubsection{}

\begin{remark}
  Throughout this paper, we only use APOTs in order to construct and
  localize virtual cycles (on semistable=stable loci of auxiliary
  framed stacks) corresponding to the symmetrized pullback of a
  symmetric obstruction theory. Using this Jouanolou trick, this usage
  of APOTs may be completely avoided. First, let $a\colon A \to X$ be
  a $\sT$-equivariant Jouanolou device and construct the POT $\bE_X^A
  \coloneqq \hat\bE_X^A \oplus \Omega_a$ as above, let
  $\cO_X^{\vir,A}$ be its virtual structure sheaf, and then {\it
    define}
  \[ \cO_X^{\vir} \coloneqq (a^*)^{-1}\cO_X^{\vir,A}. \]
  In other words, instead of constructing a symmetrized pullback APOT
  on $X$ and then using the Kiem--Savvas machinery, we work throughout
  with a POT on $A$ (corresponding to the smooth pullback of the APOT
  on $X$ along $a$), do all K-theoretic constructions on $A$, and then
  apply the isomorphism $(a^*)^{-1}$ afterward. Then an analogue of
  Theorems~\ref{thm:APOTs} and \ref{thm:APOT-comparison} continues to
  hold. In particular, the proof of
  Theorem~\ref{thm:APOTs}\ref{it:APOT-localization} now only requires
  the torus localization formula for POTs and not APOTs.

  Note that the construction of a Jouanolou device is more involved
  than the construction \cite[Lemma 2.4.6]{klt_dtpt} of an affine
  bundle, whose total space may not be affine nor a scheme, such that
  certain extension classes vanish. Indeed, the latter suffices for
  all our actual applications of Theorem~\ref{thm:APOT-comparison}: by
  directly comparing the two obstruction theories --- see the argument
  in \S\ref{rc:sec:master-space-argument} --- we can avoid any use of
  \cite{Thomas2022}, the only place in the proof where we used that
  the total space of the affine bundle is an affine scheme.
\end{remark}

\subsection{Symmetrized enumerative invariants}

\subsubsection{}

\begin{definition} \label{def:symmetrized-virtual-structure-sheaf}
  Let $X \in \cat{Art}_\sT$ be an algebraic space with a
  $\kappa$-symmetric $\sT$-equivariant APOT obtained by symmetrized
  pullback (Theorem~\ref{thm:APOTs}\ref{it:APOT-sym-pullback}) along a
  smooth morphism (which may be the identity). By
  Theorem~\ref{thm:APOTs}\ref{it:APOT-localization}, the APOT has a
  global virtual tangent bundle $\bT_X^\vir$. Let
  \[ \cK_X^\vir \coloneqq \det(\bT_X^\vir)^\vee \]
  be the {\it virtual canonical bundle}. Assuming that $X^{\sT}$ has
  the resolution property, and under the conditions of
  Lemma~\ref{lem:virtual-canonical-half} below, define the {\it
    symmetrized virtual structure sheaf} (cf.
  \eqref{eq:equivariant-localization})
  \begin{equation} \label{eq:symmetrized-virtual-sheaf}
    \hat\cO_X^\vir \coloneqq \iota_*\left(\frac{\cO_{X^\sT}^\vir}{\se(N_\iota^\vir)} \otimes \left(\iota^*\cK_X^\vir\right)^{\frac{1}{2}}\right) \in K_{\tilde\sT}(X)_{\loc}
  \end{equation}
  where $\tilde\sT \twoheadrightarrow \sT$ is any cover where
  $\kappa^{1/2} \in \bk_{\tilde\sT}$ exists.
\end{definition}

\subsubsection{}

\begin{lemma}[cf. {\cite[Proposition 2.6]{Thomas2020}}] \label{lem:virtual-canonical-half}
  Let $\kappa = t^\theta$ be a $\sT$-weight and suppose that $E =
  \sum_\mu t^\mu E_\mu$ is $\kappa$-symmetric with $E_{\theta/2} = 0$.
  Pick any $\sT$-cocharacter $\sigma$, generic in the sense that
  \[ s(\mu) \coloneqq \inner{\sigma, \mu - \theta/2} \neq 0 \]
  for all $\mu$ such that $E_\mu \neq 0$. Then $\det(E)$ has a square
  root given by
  \[ \det(E)^{\frac{1}{2}} \coloneqq \kappa^{-\frac{1}{2} \rank E_\sigma^{>0}} \det(E_\sigma^{>0}), \]
  where $E^{>0}_\sigma \coloneqq \sum_{\mu :\, s(\mu) > 0} t^\mu E_\mu$, and
  this square root is independent of $\sigma$.
\end{lemma}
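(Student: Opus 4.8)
The plan is to verify the two assertions—that $\det(E)^{1/2} \coloneqq \kappa^{-\frac{1}{2}\rank E_\sigma^{>0}}\det(E_\sigma^{>0})$ squares to $\det(E)$, and that it is independent of the choice of generic $\sigma$—by direct computation using the $\kappa$-symmetry of $E$ together with the hypothesis $E_{\theta/2}=0$. First I would unpack what $\kappa$-symmetry means at the level of the weight decomposition $E = \sum_\mu t^\mu E_\mu$: the isomorphism $E \cong \kappa \otimes E^\vee$ (up to the shift, which does not affect $\det$ after accounting for signs, or which we may absorb since we are working with the underlying $K$-theory class) forces $E_\mu \cong E_{\theta-\mu}^\vee$ for every $\mu$, hence $\rank E_\mu = \rank E_{\theta-\mu}$ and $\det E_\mu = (\det E_{\theta-\mu})^\vee \otimes \kappa^{\rank E_\mu}$ — here I would be careful to track how the tensoring by $\kappa = t^\theta$ on a rank-$r$ piece multiplies its determinant by $\kappa^r$. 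The map $\mu \mapsto \theta - \mu$ is the reflection about $\theta/2$, and $s(\mu) = \inner{\sigma,\mu-\theta/2}$ changes sign under this reflection; since $E_{\theta/2}=0$, the weights split cleanly into the $s(\mu)>0$ part and its mirror image the $s(\mu)<0$ part, with no fixed part left over.

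Next I would compute $\det(E)$ by grouping weights in reflection pairs $\{\mu, \theta-\mu\}$ with $s(\mu)>0$: each such pair contributes $\det E_\mu \otimes \det E_{\theta-\mu} = \det E_\mu \otimes (\det E_\mu)^\vee \otimes \kappa^{\rank E_\mu} = \kappa^{\rank E_\mu}$ to $\det(E)$. Summing over all $\mu$ with $s(\mu)>0$ gives $\det(E) = \kappa^{\rank E_\sigma^{>0}}$... but this would be the determinant ignoring the $t^\mu$ line-bundle twists; the point is that the paper's $\det(E_\sigma^{>0})$ already includes the $t^\mu$ factors, i.e. $\det(E_\sigma^{>0}) = \bigotimes_{\mu: s(\mu)>0} t^{\mu\rank E_\mu} \otimes \det E_\mu$ as a genuine equivariant line bundle. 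So I should instead compute $\det(E) = \bigotimes_\mu t^{\mu \rank E_\mu}\otimes \det E_\mu$, pair up $\mu$ with $\theta-\mu$, and check that each pair contributes exactly $(\kappa^{-\frac12\rank E_\mu}\cdot t^{\mu\rank E_\mu}\det E_\mu)^{\otimes 2}$: indeed $t^{(\theta-\mu)\rank E_{\theta-\mu}}\det E_{\theta-\mu} = \kappa^{\rank E_\mu}\otimes t^{-\mu\rank E_\mu}\otimes (\det E_\mu)^\vee$, and multiplying this by $t^{\mu\rank E_\mu}\det E_\mu$ gives $\kappa^{\rank E_\mu}$, so the pair contributes $\kappa^{\rank E_\mu}\otimes (t^{\mu\rank E_\mu}\det E_\mu)^{\otimes 2}\otimes(t^{\mu\rank E_\mu}\det E_\mu)^{-\otimes 2}$... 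I need to rearrange so that the square root is manifestly $\kappa^{-\frac12\rank E_\mu} t^{\mu\rank E_\mu}\det E_\mu$ and its square is the pair's contribution; that is the computational heart and I expect it to come out after careful bookkeeping of the $\kappa$-twists.

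For independence of $\sigma$: two generic cocharacters $\sigma, \sigma'$ give the same sign to $s(\mu)$ for all but finitely many $\mu$ with $E_\mu\neq 0$; it suffices to handle the case where exactly one reflection pair $\{\mu_0,\theta-\mu_0\}$ has its sign flipped, and then chain such elementary moves. When we swap $\mu_0$ (formerly in the $>0$ part) for $\theta-\mu_0$, the candidate square root changes by replacing the factor $\kappa^{-\frac12\rank E_{\mu_0}}t^{\mu_0\rank E_{\mu_0}}\det E_{\mu_0}$ with $\kappa^{-\frac12\rank E_{\theta-\mu_0}}t^{(\theta-\mu_0)\rank E_{\theta-\mu_0}}\det E_{\theta-\mu_0}$; using the $\kappa$-symmetry identities above ($\rank E_{\theta-\mu_0}=\rank E_{\mu_0}$, $\det E_{\theta-\mu_0}=\kappa^{\rank E_{\mu_0}}\otimes(\det E_{\mu_0})^\vee$, and $t^{(\theta-\mu_0)r}=\kappa^r t^{-\mu_0 r}$), the new factor equals $\kappa^{-\frac12\rank E_{\mu_0}}\kappa^{\rank E_{\mu_0}}\kappa^{-\rank E_{\mu_0}} t^{\mu_0\rank E_{\mu_0}}\det E_{\mu_0}$... wait, the $t^{-\mu_0 r}$ and the original $t^{\mu_0 r}$ do not cancel inside a product over a single factor — but one must also remember that the reflection sends the line $t^{\mu_0}E_{\mu_0}$ to $t^{\theta-\mu_0}E_{\theta-\mu_0}$ and these are genuinely different equivariant objects, so the cleanest route is: show both candidate square roots square to the same thing $\det(E)$, which is $\sigma$-independent by the pairing computation, and then argue that a square root of a line bundle in $K_{\tilde\sT}$ (with $\kappa^{1/2}$ adjoined) is unique once we fix which of $\pm$ it is — but since there is a canonical choice (the one that specializes correctly as $\kappa\to 1$, or simply because line bundles have no $2$-torsion issues in the relevant Picard/weight group), independence follows. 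The main obstacle, I expect, is precisely this last point: pinning down in what sense the square root is \emph{canonical} rather than just \emph{existing}, so that "squares to the same thing" upgrades to "equal". I would resolve it by noting that $E_\sigma^{>0}$ and $E_{\sigma'}^{>0}$ differ by adding/removing reflection-pairs, each of which contributes a \emph{perfect square} line bundle (times a power of $\kappa$ whose half is already accounted for), so the two expressions are literally equal — not merely equal up to sign — once the $\kappa^{-\frac12\rank}$ prefactor is correctly distributed over the pair.
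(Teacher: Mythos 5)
Your overall strategy (decompose into reflection pairs $\{\mu,\theta-\mu\}$ and compute $\det$ pair by pair; for $\sigma$-independence compare the two choices by elementary pair flips) is exactly the paper's, but the execution has a genuine gap at the sign you decide to ``absorb''. For a $\kappa$-symmetric obstruction theory $\bE\cong\kappa\otimes\bE^\vee[1]$ the induced identity on K-theory classes is $E=-\kappa E^\vee$, so on weight pieces $E_\mu=-E_{\theta-\mu}^\vee$; in particular $\rank E_{\theta-\mu}=-\rank E_\mu$ and $\det(t^{\theta-\mu}E_{\theta-\mu})=\kappa^{-\rank E_\mu}\det(t^\mu E_\mu)$. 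With this sign each reflection pair contributes $\bigl(\kappa^{-\frac{1}{2}\rank E_\mu}\det(t^\mu E_\mu)\bigr)^{\otimes 2}$ to $\det(E)$, which is precisely the square of the claimed square root. Your sign-free convention $E_\mu\cong E_{\theta-\mu}^\vee$ instead makes each pair contribute $\kappa^{+\rank E_\mu}$, and no rearrangement will turn $\kappa^{\rank E_\mu}$ into $\kappa^{-\rank E_\mu}\otimes\det(t^\mu E_\mu)^{\otimes 2}$ --- this is exactly the spot where you write that you ``expect it to come out after careful bookkeeping,'' and it does not: with your convention the lemma as stated would simply be false (note also that the signed symmetry forces $\rank E=0$, a sanity check your convention fails). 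The shift $[1]$ is not cosmetic; it negates the class, dualizes the determinants, and supplies the $\kappa^{-\rank}$ that the formula needs.

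The fallback you propose for $\sigma$-independence --- both candidates square to $\det(E)$, and square roots of a line bundle are unique because there are ``no $2$-torsion issues'' --- is not justified: square roots of a line bundle form a torsor over the $2$-torsion of the Picard group, which on the relevant fixed loci need not vanish, so agreement of squares does not upgrade to equality. The correct argument is the direct comparison you began but abandoned: with the signed symmetry, $E_\sigma^{>0}-E_{\sigma'}^{>0}=F+\kappa F^\vee$ where $F$ is the sum of the flipped pieces $t^\mu E_\mu$, and $\det(F+\kappa F^\vee)=\kappa^{\rank F}=\kappa^{\frac{1}{2}(\rank E_\sigma^{>0}-\rank E_{\sigma'}^{>0})}$, so the two expressions are literally equal (equivalently, flipping one pair multiplies $\det E_\sigma^{>0}$ by $\kappa^{-\rank E_{\mu_0}}$ while changing $\rank E_\sigma^{>0}$ by $-2\rank E_{\mu_0}$, and these cancel against the prefactor). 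Your elementary-move computation stalled for the same reason as the first half: the cancellation only happens once the dual comes with the minus sign.
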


In practice, $\kappa^{1/2}$ will not even be an integral $\sT$-weight,
i.e. $\theta$ is not divisible by two in the character lattice of
$\sT$, so $E_{\theta/2} = 0$ automatically. We assume this is the
case, from now on.

\begin{proof}
  The $\kappa$-symmetry $E = -\kappa E^\vee$ identifies $E_\mu =
  -E_{\theta-\mu}^\vee$, and by hypothesis $\mu \mapsto \theta-\mu$ is
  a fixed-point-free involution on $\{\mu : E_\mu \neq 0\}$ which
  flips the sign of $s(-)$. Hence
  \begin{align*}
    \det(E)
    &= \bigotimes_{\mu :\, s(\mu) > 0} \det (t^\mu E_\mu) \otimes \bigotimes_{\mu :\, s(\mu) < 0} \det (t^\mu E_\mu) \\
    &= \bigotimes_{\mu :\, s(\mu) > 0} \det (t^\mu E_\mu) \otimes \bigotimes_{\mu :\, s(\mu) < 0} \det (-t^{\mu} E_{\theta-\mu}^\vee) \\
    &= \bigotimes_{\mu :\, s(\mu) > 0} \det (t^\mu E_\mu) \otimes \bigotimes_{\mu :\, s(\mu) > 0} \det (-t^{\theta-\mu} E_{\mu}^\vee) \\
    &= \bigotimes_{\mu :\, s(\mu) > 0} \det (t^\mu E_\mu) \otimes \bigotimes_{\mu :\, s(\mu) > 0} \det (t^{-\theta+\mu} E_{\mu}) \\
    &= \bigg(\bigotimes_{\mu :\, s(\mu) > 0} \det (t^\mu E_\mu)\bigg)^{\otimes 2} \kappa^{-\sum_{\mu :\, s(\mu) > 0} \rank E_\mu}.
  \end{align*}
  If $\sigma'$ is another choice of cocharacter, then
  \[ E_\sigma^{> 0} - E_{\sigma'}^{> 0} = F + \kappa F^\vee \]
  for some K-theory class $F$. The computation
  \[ \det(F + \kappa F^\vee) = \kappa^{\rank F} = \kappa^{\frac{1}{2} (\rank E_\sigma^{> 0} - \rank E_{\sigma'}^{> 0})} \]
  shows that the square roots defined by $\sigma$ and $\sigma'$ are
  the same.
\end{proof}

\subsubsection{}

\begin{definition}\label{bg:def:univ-enum-inv}
  Let $X$ be an algebraic space with $\sT$-action and a symmetrized
  virtual structure sheaf $\hat\cO_X^\vir$ as defined in
  \eqref{eq:symmetrized-virtual-sheaf}, and furthermore suppose that
  $X^\sT$ is a proper algebraic space with the resolution property.
  Define the {\it universal enumerative invariant}
  \begin{equation} \label{eq:univ-enum-inv}
    \begin{aligned}
      \sZ_X
      &\coloneqq \chi\left(X, \hat\cO_X^\vir \otimes -\right) \\
      &\coloneqq \chi\left(X^{\sT}, \frac{\cO^{\vir}_{X^{\sT}}}{\se(N_\iota^\vir)} \otimes \left(\iota^*\cK_X^\vir\right)^{\frac{1}{2}} \otimes \iota^*(-)\right) \in K_\circ^{\tilde\sT}(X)_\loc
    \end{aligned}
  \end{equation}
  where the first line is an abbreviation for the second (see
  \eqref{eq:symmetrized-virtual-sheaf}), and we are using the
  shorthand notation of \S\ref{sec:universal-invariants-shorthand} for
  elements of ``concrete'' K-homology.

  The typical setting is that $j\colon X \hookrightarrow \fX$ is a
  $\sT$-invariant open locus in a moduli stack $\fX \in \cat{Art}_\sT$
  with a $\sT$-equivariant $\kappa$-symmetric obstruction theory, and
  we will be interested in $j_*\sZ_X \in
  K_\circ^{\tilde\sT}(\fX)_{\loc}$.
\end{definition}

\subsubsection{}

\begin{lemma}[Pole cancellation] \label{lem:master-space-no-poles}
  Let $\sT$ and $\sS$ be tori with coordinates denoted $t$ and $s$
  respectively. Consider symmetrized pullback (as in
  Theorem~\ref{thm:APOTs}\ref{it:APOT-sym-pullback}) along a smooth
  $(\sT \times \sS)$-equivariant morphism $f\colon M \to \fX$. Suppose
  that:
  \begin{enumerate}[label = (\alph*)]
  \item the $(\sT \times \sS)$-equivariant obstruction theory on $\fX$
    is $\kappa$-symmetric for a $\sT$-weight $\kappa$;
  \item \label{it:no-poles-properness} the fixed locus $M^{\sT_w}$ is
    proper for all $(\sT \times \sS)$-weights $w = t^\mu s^\nu$ with
    $\nu \neq 0$, where
    \[ \sT_w \subset \ker(w) \]
    is the maximal torus in $\sT \times \sS$ where $w = 1$.
  \end{enumerate}
  Then, for the $(\sT \times \sS)$-equivariant APOT on $M$ obtained by
  symmetrized pullback,
  \[ \chi(M, \hat\cO_M^\vir \otimes \cE) \in \bk_{\sT,\loc}[\kappa^{\pm\frac{1}{2}}] \otimes_\bZ \bk_{\sS} \]
  for any $\cE \in K_{\sT \times \sS}^\circ(M) \otimes_{\bk_\sT}
  \bk_{\sT,\loc}$.
\end{lemma}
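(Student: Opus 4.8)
The plan is to reorganise the torus-localization formula defining $\chi(M,\hat\cO_M^\vir\otimes\cE)$ so that its potential poles along the ``$\sS$-directions'' manifestly cancel. Write $\sG\coloneqq\sT\times\sS$. By Definition~\ref{bg:def:univ-enum-inv} (together with \eqref{eq:symmetrized-virtual-sheaf}) and Theorem~\ref{thm:APOTs}\ref{it:APOT-localization} applied with the full torus $\sG$, the quantity in question equals $\chi\bigl(M^\sG,\ \se(\cN^\vir_{M^\sG/M})^{-1}\otimes(\iota^*\cK_M^\vir)^{1/2}\otimes\cO^\vir_{M^\sG}\otimes\iota^*\cE\bigr)$, where $\cN^\vir_{M^\sG/M}=(\bT^\vir_M|_{M^\sG})^{\mathrm{mov}}$. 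This makes sense because $M^\sG$ is proper and has the resolution property: it is closed in $M^{\sT_w}$ for any $\sG$-weight $w=t^\mu s^\nu$ with $\nu\neq 0$, which is proper by hypothesis~\ref{it:no-poles-properness} (and quasi-projective in practice, cf.\ Remark~\ref{rem:localization-resolution-property}). A priori, viewed as a function of the coordinates of $\sG$, this expression is a rational function whose poles lie on finitely many subtori $\{v=1\}$, with $v$ ranging over the $\sG$-weights of $\cN^\vir_{M^\sG/M}$ (contributed by the denominator $\se$) and the weights occurring in $\cE$ (which are pure $\sT$-weights, and hence harmless). So it suffices to show there is no pole along $\{w=1\}$ for every $\sG$-weight $w=t^\mu s^\nu$ with $\nu\neq 0$.

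Fix such a $w$ and set $\sT_w\coloneqq\ker(w)^\circ\subset\sG$, a codimension-one subtorus. The key step is that the same invariant may be computed by localizing with respect to $\sT_w$ in place of $\sG$:
\[\chi(M,\hat\cO_M^\vir\otimes\cE)=\chi\bigl(M^{\sT_w},\ \se(\cN^\vir_{M^{\sT_w}/M})^{-1}\otimes(\iota^*\cK_M^\vir)^{1/2}\otimes\cO^\vir_{M^{\sT_w}}\otimes\iota^*\cE\bigr),\]
where $\cN^\vir_{M^{\sT_w}/M}=(\bT^\vir_M|_{M^{\sT_w}})^{\mathrm{mov}}$ is the $\sT_w$-moving part of the virtual tangent bundle. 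This is the standard independence of the localizing torus: the proper space $M^{\sT_w}$ carries a residual $\sG$-action with fixed locus $M^\sG$, and its APOT restricts to that of $M^\sG$ by transitivity of Theorem~\ref{thm:APOTs}\ref{it:APOT-localization}; localising both displayed expressions further down to $M^\sG$ makes them agree, using additivity $\cN^\vir_{M^\sG/M}=\cN^\vir_{M^\sG/M^{\sT_w}}+\cN^\vir_{M^{\sT_w}/M}|_{M^\sG}$ of virtual normal bundles, multiplicativity of the K-theoretic Euler class, and the projection formula. All Euler classes here are invertible in $\bk_{\sG,\loc}$ since the moving bundles involved carry only $\sG$-nontrivial weights.

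Next, the crucial observation: no $\sG$-weight $u$ of $\cN^\vir_{M^{\sT_w}/M}$ is a power of $w$, since such a $u$ would be trivial on $\sT_w=\ker(w)^\circ$ and therefore could not lie in the $\sT_w$-moving part. Consequently the Laurent monomial $1-u^{-1}$ does not vanish identically on the subtorus $\{w=1\}$, so $\se(\cN^\vir_{M^{\sT_w}/M})^{-1}$ is regular at its generic point. The factors $\cO^\vir_{M^{\sT_w}}$ and $\iota^*\cE$ contribute no pole along $\{w=1\}$ (their only denominators are pure $\sT$-weights), and $(\iota^*\cK_M^\vir)^{1/2}$ contributes only $\kappa^{\pm1/2}\in\bk_{\tilde\sT}$; since pushforward along the proper morphism $M^{\sT_w}\to\pt$ is $\bk_\sG$-linear, it preserves regularity along $\{w=1\}$. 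Hence $\chi(M,\hat\cO_M^\vir\otimes\cE)$ has no pole along $\{w=1\}$. Running this for all $w=t^\mu s^\nu$ with $\nu\neq 0$, the only surviving poles of $\chi(M,\hat\cO_M^\vir\otimes\cE)$ come from denominators $1-u^{-1}$ with $u$ a pure $\sT$-weight; cancelling the (now provably absent) $\sS$-dependent denominators in the localization sum therefore expresses it over a common denominator in $\bk_\sT$, so it lies in $\bk_{\sT,\loc}[\kappa^{\pm1/2}]\otimes_\bZ\bk_\sS$, which is the claim.

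I expect the main obstacle to be the change-of-localizing-torus identity in the second paragraph: one must verify that the symmetrized, $\cK^\vir$-twisted virtual structure sheaf is sufficiently intrinsic that its pairing with $\cE$ may be evaluated on $M^{\sT_w}$ rather than $M^\sG$. This rests on transitivity of ``restriction of an APOT to a fixed locus'' and compatibility of the global virtual tangent bundles $\bT^\vir$ (Theorem~\ref{thm:APOTs}\ref{it:APOT-localization}) under iterated restriction, plus the bookkeeping that $\se(\cN^\vir_{M^{\sT_w}/M})$ is genuinely the $\sG$-equivariant Euler class of a bundle with no $\sG$-fixed part (a $\sG$-fixed weight would be $\sT_w$-fixed). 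Everything else is a routine inspection of weights.
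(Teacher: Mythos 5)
Your overall strategy coincides with the paper's: for each weight $w=t^\mu s^\nu$ with $\nu\neq 0$, rewrite the invariant as a localization over the intermediate fixed locus $M^{\sT_w}$ (proper by hypothesis) and observe that every factor appearing there is regular along $\{w=1\}$, so that after running over all such $w$ the only surviving denominators involve pure $\sT$-weights. Your key weight observation --- that no weight of the $\sT_w$-moving part of $\bT^\vir_M|_{M^{\sT_w}}$ is trivial on $\sT_w$, so $\se(\cN^\vir_{M^{\sT_w}/M})^{-1}$ has no pole along $\{w=1\}$ --- is exactly the paper's ``the right hand side is well-defined $\sT_w$-equivariantly'' step, and the properness bookkeeping ($M^{\sT\times\sS}$ closed in $M^{\sT_w}$) also matches.

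There is, however, a genuine gap in how you handle the twist $(\iota_w^*\cK_M^\vir)^{1/2}$ on $M^{\sT_w}$, which you dismiss as ``contributes only $\kappa^{\pm 1/2}$''. The symmetrized virtual structure sheaf is \emph{defined} (Definition~\ref{def:symmetrized-virtual-structure-sheaf}) through a square root constructed on the \emph{full} fixed locus via Lemma~\ref{lem:virtual-canonical-half}; to rewrite the invariant as an integral over $M^{\sT_w}$ twisted by $(\iota_w^*\cK_M^\vir)^{1/2}$ --- the change-of-localizing-torus identity on which your second paragraph rests --- one must first prove that such a square root exists in $K_{\sT\times\sS}(M^{\sT_w})[\kappa^{\pm\frac{1}{2}}]$. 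This is not automatic, because $M^{\sT_w}$ still carries a nontrivial residual torus action, and it is precisely where the paper's proof spends its second half: the piece $\bT_f-\kappa^{-1}\bT_f^\vee$ of \eqref{eq:symmetrized-pullback-Tvir} has an evident square root, while $\iota_w^*\det(f^*\bT_{\fX}^\vir)$ acquires one from Lemma~\ref{lem:virtual-canonical-half} applied to its $\sT_w$-weight decomposition, which works only because $\nu\neq 0$ guarantees that $\kappa$ restricts to a nontrivial $\sT_w$-weight. Without this step your intermediate formula over $M^{\sT_w}$ is not even well-formed, and your fallback of comparing both sides after localizing further down to $M^{\sT\times\sS}$ does not repair it, since that comparison presupposes a square root on $M^{\sT_w}$ whose restriction is the one used in $\hat\cO^\vir_M$. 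Note that the hypothesis $\nu\neq 0$ enters the argument twice --- once for properness, once for the square root --- so omitting the second use is not a cosmetic shortcut.
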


\begin{proof}
  For any $w$ appearing in condition~\ref{it:no-poles-properness},
  consider the obvious closed embeddings
  \[ \iota\colon M^{\sT \times \sS} \hookrightarrow M^{\sT_w} \xrightarrow{\iota_w} M. \]
  Since $M^{\sT_w}$ is proper by hypothesis, so is $M^{\sT \times
    \sS}$. So,
  \[ \chi(M, \hat\cO_M^\vir \otimes \cE) = \chi\left(M^{\sT \times \sS}, \frac{\cO_{M^{\sT \times \sS}}^\vir}{\se(N_\iota^\vir)} \otimes (\iota^*\cK_M^\vir)^{\frac{1}{2}} \otimes \iota^*\cE\right) \in \bk_{\sT \times \sS, \loc}[\kappa^{\pm\frac{1}{2}}] \]
  is well-defined, using Lemma~\ref{lem:virtual-canonical-half} to
  define $(\iota^*\cK_M^\vir)^{1/2}$. To show that it actually lies in
  $\bk_{\sT,\loc}[\kappa^{\pm 1/2}] \otimes_\bZ \bk_\sS$, we claim that
  \[ (\iota_w^*\cK_M^{\vir})^{\frac{1}{2}} \in K_{\sT \times \sS}(M^{\sT_w})[\kappa^{\pm \frac{1}{2}}] \]
  exists, and therefore, by localization with respect to a
  $\bC^\times$ (non-canonical) such that $\sT_w \times \bC^\times =
  \sT \times \sS$,
  \[ \chi(M, \hat\cO_M^\vir \otimes \cE) = \chi\left(M^{\sT_w}, \frac{\cO_{M^{\sT_w}}^\vir}{\se(N_{\iota_w}^\vir)} \otimes (\iota_w^*\cK_M^\vir)^{\frac{1}{2}} \otimes \iota_w^*\cE\right). \]
  The right hand side is clearly well-defined $\sT_w$-equivariantly,
  i.e. after specialization to $w = 1$, and therefore has no poles $(1
  - w)^{-1}$. Since this holds for all $w$ appearing in
  condition~\ref{it:no-poles-properness}, we are done.

  It remains to explain why a square root of $\iota_w^*\cK_M^\vir$
  exists. In the formula \eqref{eq:symmetrized-pullback-Tvir} for
  $\bT_M^\vir$, clearly
  \[ \det(\bT_f - \kappa^{-1} \bT_f^\vee) = \kappa^{\rank \bT_f} \det(\bT_f)^{\otimes 2} \]
  already has a square root, so it suffices to show that the remaining
  piece $\iota_w^*\det(f^*\bT_{\fX}^\vir)$ has a square root. Apply
  Lemma~\ref{lem:virtual-canonical-half} to the splitting of
  $\iota_w^*(f^*\bT_{\fX}^\vir)$ into its $\sT_w$-weight pieces.
  Because $\kappa$ is still non-trivial as a $\sT_w$-weight (here we
  use that $\nu \neq 0$), the lemma provides the desired square root.
\end{proof}

\subsubsection{}

\begin{remark}
  In the special case where a square root of $\cK_X^\vir$ actually
  exists, $(\iota^*\cK_X^\vir)^{1/2} = \iota^*(\cK_X^\vir)^{1/2}$ and
  therefore
  \[ \hat\cO_X^\vir = \cO_X^\vir \otimes (\cK_X^\vir)^{\frac{1}{2}} \]
  by the projection formula. For the rest of this paper, the following
  abuses of notation will be convenient.
  \begin{itemize}
  \item We will write everything {\it as if} such a square root of
    $\cK_X^\vir$ exists. By appropriately pushing and pulling from the
    fixed locus, all formulas and proofs may be modified to work in
    the general case.
  \item Expressions of the form $\hat\cO_X^\vir \otimes \cE$ will
    always be shorthand for $\iota_*(\cdots \otimes \iota^*\cE)$ when
    $\iota^*\cE$ is well-defined but $\cE$ is not. This is consistent
    with the conventions in
    \S\ref{sec:monoidal-stack-vertex-product-well-defined} and
    Proposition~\ref{prop:rigidity}.
  \end{itemize}
  Both are well-illustrated by the statement and proof of
  Lemma~\ref{lem:symmetrized-projective-bundle-formula} below.
\end{remark}

\subsubsection{}

\begin{definition}\label{def:symm-pullback-homology}
  Given a non-trivial $\sT$-weight $\kappa$ and a smooth proper
  $\sT$-equivariant map $\pi\colon \fX \to \fY$ of Artin stacks with
  $\sT$-action, the {\it $\kappa$-symmetrized pullback on K-homology}
  is
  \begin{align*}
    \hat\pi^*\colon K_\circ^\sT(\fY) &\to K_\circ^\sT(\fX) \\
    \phi &\mapsto \pi^*\phi \cap \left(\se(\kappa^{-1} \otimes \bL_\pi) \otimes \kappa^{-\frac{\dim \pi}{2}} \cK_\pi\right).
  \end{align*}
  Here $\dim \pi$ is the relative dimension of $\pi$, and $\cK_\pi
  \coloneqq \det(\bL_\pi)$ is the relative canonical.
\end{definition}

\subsubsection{}
\label{bg:rem:symm-pb-formula-alg-sp}

In the set-up of Definition~\ref{def:symm-pullback-homology}, suppose
further that $\fX = X$ and $\fY = Y$ are algebraic spaces, and that
$Y$ carries a $\sT$-equivariant $\kappa$-symmetric APOT. Endow $X$
with the symmetrized pullback APOT. Then both $X$ and $Y$ admit
symmetrized virtual structure sheaves, related by the formula
\begin{equation} \label{eq:symm-pullback-Ovir}
  \hat\cO_X^{\vir} = \pi^*\hat\cO_Y^{\vir} \otimes \se(\kappa^{-1} \otimes \bL_{\pi}) \otimes \kappa^{-\frac{\dim \pi}{2}}\cK_{\pi}.
\end{equation}
If, additionally, $Y^{\sT}$ is proper, then $X^{\sT}$ is proper as
well. Assume $X^{\sT}, Y^{\sT}$ are proper algebraic spaces with the
resolution property, so that there are universal enumerative
invariants
\begin{align*}
  \sZ_X &\coloneqq \chi(\hat\cO_X^{\vir} \otimes -) \in K_\circ^{\tilde\sT}(X)_{\loc}, \\
  \sZ_Y &\coloneqq \chi(\hat\cO_Y^{\vir} \otimes -) \in K_\circ^{\tilde\sT}(Y)_{\loc}.
\end{align*}
By the projection formula, \eqref{eq:symm-pullback-Ovir} becomes
\[ \sZ_X = \hat\pi^* \sZ_Y. \]
This motivates the definition of symmetrized pullback on K-homology.
We will use it to prove a K-homology version
(Lemma~\ref{lem:homology-projective-bundle-formula}) of the following
lemma.

\subsubsection{}

\begin{lemma}[Virtual projective bundle formula] \label{lem:symmetrized-projective-bundle-formula}
  Let $X$ be an algebraic space with $\sT$-action and a
  $\sT$-equivariant $\kappa$-symmetric APOT for a $\sT$-weight
  $\kappa$. Let $\cV \in \cat{Vect}_\sT(X)$, and equip $\bP(\cV)$ with the
  APOT obtained by symmetrized pullback along the natural projection
  \[ \pi\colon \bP(\cV) \to X. \]
  Let $s = \cO_{\bP(\cV)}(1)$ be the dual of the universal line bundle
  on $\bP(\cV)$. If $\sT$ acts trivially on $X$, then for any $f(s) \in
  K_\sT^\circ(X)[s^\pm]$,
  \begin{equation} \label{eq:symmetrized-projective-bundle-formula}
    (\kappa^{-\frac{1}{2}} - \kappa^{\frac{1}{2}}) \cdot \pi_*\left(f(s) \otimes \hat\cO_{\bP(\cV)}^{\vir}\right) = \rho_{K,z} \left(f(z) \frac{\hat\se_{z^{-1}}(\kappa^{-1} \cV^\vee)}{\hat\se_z(\cV)}\right) \otimes \hat\cO_X^{\vir}.
  \end{equation}
\end{lemma}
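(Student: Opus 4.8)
The plan is to reduce the assertion to an identity of rational functions in $z$, by combining the pushforward formula \eqref{eq:symm-pullback-Ovir} for symmetrized virtual structure sheaves with the ordinary projective bundle formula of Lemma~\ref{lem:projective-bundle-formula}.

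First, I would use \eqref{eq:symm-pullback-Ovir} to write $\hat\cO_{\bP(\cV)}^{\vir} = \pi^*\hat\cO_X^{\vir}\otimes G$, where $G \coloneqq \se(\kappa^{-1}\otimes\bL_\pi)\otimes\kappa^{-\frac{\dim\pi}{2}}\cK_\pi$. Since $\pi$ is smooth and proper representable, the projection formula yields $\pi_*(f(s)\otimes\hat\cO_{\bP(\cV)}^{\vir}) = \pi_*(f(s)\otimes G)\otimes\hat\cO_X^{\vir}$, so it remains to prove the purely K-theoretic identity
\[ (\kappa^{-\frac12}-\kappa^{\frac12})\cdot\pi_*\!\left(f(s)\otimes G\right) = \rho_{K,z}\!\left(f(z)\,\frac{\hat\se_{z^{-1}}(\kappa^{-1}\cV^\vee)}{\hat\se_z(\cV)}\right)\quad\text{in } K_\sT^\circ(X)_{\loc}. \]

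Next, the relative Euler sequence of $\pi\colon\bP(\cV)\to X$ gives, in $K_\sT^\circ(\bP(\cV))$, the identity $\bT_\pi = s\otimes\pi^*\cV-\cO$ (the convention matching $\pi_*s^0=\cO$ in Lemma~\ref{lem:projective-bundle-formula}, in which $s^{-1}\hookrightarrow\pi^*\cV$ is tautological), hence $\bL_\pi = s^{-1}\otimes\pi^*\cV^\vee-\cO$ and $\cK_\pi = s^{-r}\otimes\pi^*\det(\cV)^{-1}$ with $r\coloneqq\rank\cV = \dim\pi+1$. Using multiplicativity of $\se$ and the splitting principle, $G$ becomes an explicit element $G(s)\in K_\sT^\circ(X)[s^{\pm}]_{\loc}$: denoting the Chern roots of $\cV$ (with their $\sT$-weights absorbed) by $\cL$, one has $\se(\kappa^{-1}s^{-1}\otimes\pi^*\cV^\vee)=\prod_\cL(1-\kappa s\cL)$ and $\se(-\kappa^{-1}\cO)=(1-\kappa)^{-1}$. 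Then by Lemma~\ref{lem:projective-bundle-formula}(ii), $\pi_*(f(s)G(s)) = \rho_{K,z}\frac{f(z)G(z)}{\se_z(\cV)}$ with $\se_z(\cV)=\prod_\cL(1-z^{-1}\cL^{-1})$, and a direct manipulation of Chern roots — the same one as in Corollary~\ref{cor:symmetrized-projective-bundle-formula} — establishes the rational-function identity
\[ (\kappa^{-\frac12}-\kappa^{\frac12})\cdot\frac{G(z)}{\se_z(\cV)}\;=\;(-1)^r\kappa^{-\frac r2}\prod_\cL\frac{1-\kappa z\cL}{1-z\cL}\;=\;\frac{\hat\se_{z^{-1}}(\kappa^{-1}\cV^\vee)}{\hat\se_z(\cV)}. \]
Multiplying by $f(z)$, applying the $\bk_{\sT,\loc}$-linear operator $\rho_{K,z}$, and undoing the first reduction then gives the claim.

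I expect the main obstacle to be bookkeeping rather than conceptual: one must track all normalizations consistently — the convention for $s=\cO_{\bP(\cV)}(1)$ versus the tautological sub-line bundle (so that Lemma~\ref{lem:projective-bundle-formula} applies with $\se_z(\cV)$ and not $\se_z(\cV^\vee)$), the placement of the half-powers $z^{\pm1/2},\kappa^{\pm1/2},\cL^{\pm1/2}$ in $\hat\se$ versus $\se$ (Definition~\ref{def:k-theoretic-wedge}), and the sign $(-1)^{\rank}$ in \eqref{eq:wedge-symmetry} — so that exactly the scalar $\kappa^{-1/2}-\kappa^{1/2}$ is produced with no stray sign or weight; the decisive cancellation is $(\kappa^{-1/2}-\kappa^{1/2})/(1-\kappa)=\kappa^{-1/2}$, coming from the $-\cO$ summand of $\bT_\pi$. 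Finally, I would note that treating the Chern roots and $\sT$-weights as generic nonzero complex numbers throughout is legitimate because all the Euler classes involved become finite sums after restriction to $X^\sT$ and localization, by Lemma~\ref{lem:k-theory-finiteness}, which is already implicit in the definition of $\hat\cO_X^{\vir}$.
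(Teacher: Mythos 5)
Your proposal is correct and follows essentially the same route as the paper's proof: substitute \eqref{eq:symm-pullback-Ovir}, use the relative Euler sequence $\bT_\pi = s\pi^*\cV - 1$ together with the projection formula, apply the projective bundle formula \eqref{eq:projective-bundle-formula}, and then match the resulting rational function with $\hat\se_{z^{-1}}(\kappa^{-1}\cV^\vee)/\hat\se_z(\cV)$ (which the paper does directly from the definition of $\hat\se$, while you verify it by the same Chern-root computation as in Corollary~\ref{cor:symmetrized-projective-bundle-formula}). Your bookkeeping, including the cancellation $(\kappa^{-1/2}-\kappa^{1/2})/(1-\kappa)=\kappa^{-1/2}$ from the $-\cO$ summand of $\bT_\pi$, is exactly what the paper's displayed computation carries out.
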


\begin{proof}
  Use the formula \eqref{eq:symm-pullback-Ovir} for
  $\hat\cO_{\bP(\cV)}^\vir$ and the formula
  \[ \bT_\pi = s \otimes (\pi^* \cV - s^{-1}) = s\pi^* \cV - 1. \]
  The left hand side of
  \eqref{eq:symmetrized-projective-bundle-formula} becomes
  \begin{align*}
    &(\kappa^{-\frac{1}{2}} - \kappa^{\frac{1}{2}}) \cdot \pi_*\left(f(s) \otimes \pi^*\hat\cO_X^\vir \otimes \se(\kappa^{-1} s^{-1} \pi^* \cV^\vee - \kappa^{-1}) \otimes \kappa^{-\frac{\rank \cV-1}{2}} \det(s \pi^* \cV)^\vee \right) \\
    &= \pi_*\left(f(s) \otimes \se_{s^{-1}}(\kappa^{-1} \pi^* \cV^\vee) \otimes \kappa^{-\frac{\rank \cV}{2}} \det(s \pi^* \cV)^\vee\right) \otimes \hat\cO_X^\vir \\
    &= \rho_{K,z} \left(f(z) \frac{\se_{z^{-1}}(\kappa^{-1} \cV^\vee)}{\se_z(\cV)} \kappa^{-\frac{\rank \cV}{2}} \det(z \cV)^\vee\right) \otimes \hat\cO_X^\vir,
  \end{align*}
  where the first equality is the projection formula and the second
  equality is the projective bundle formula
  \eqref{eq:projective-bundle-formula}. This is equal to
  the right hand side by the definition of $\hat\se$.
\end{proof}

\subsubsection{}

\begin{corollary} \label{cor:symmetrized-projective-bundle-formula}
  In the setting of
  Lemma~\ref{lem:symmetrized-projective-bundle-formula},
  \[ \pi_* \hat\cO_{\bP(\cV)}^{\vir} = [\rank \cV]_\kappa \cdot \hat\cO_X^\vir \in K_{\tilde\sT}(X). \]
\end{corollary}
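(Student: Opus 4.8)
The plan is to obtain the corollary as a one-line specialization of the virtual projective bundle formula (Lemma~\ref{lem:symmetrized-projective-bundle-formula}), followed by an explicit residue computation. Taking $f(s) = 1$ in \eqref{eq:symmetrized-projective-bundle-formula} yields
\[ (\kappa^{-\frac12} - \kappa^{\frac12}) \cdot \pi_* \hat\cO_{\bP(\cV)}^{\vir} = \rho_{K,z}\!\left(\frac{\hat\se_{z^{-1}}(\kappa^{-1}\cV^\vee)}{\hat\se_z(\cV)}\right) \otimes \hat\cO_X^{\vir}, \]
so it suffices to show that the residue on the right equals $(\kappa^{-\frac12} - \kappa^{\frac12}) \cdot [\rank\cV]_\kappa$; the claim then follows by cancelling the factor $\kappa^{-\frac12} - \kappa^{\frac12}$, which is a unit in $\bk_{\tilde\sT,\loc}$ since $\kappa \neq 1$. (As elsewhere in \S\ref{sec:symmetrized-enumerative-invariants}, the identity is understood in localized K-theory.)

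For the residue, I would use the splitting principle to write the (full, equivariant) Chern roots of $\cV$ as $x_1, \ldots, x_r$ with $r = \rank\cV$; treating the $x_i$ as generic nonzero scalars is legitimate because the Euler classes appearing are finite sums (Lemma~\ref{lem:k-theory-finiteness}) and $\rho_K$ admits the residue-theorem description \eqref{eq:k-theoretic-residue-map-finite-poles}. Unwinding Definition~\ref{def:k-theoretic-euler-class}, one gets $\hat\se_z(\cV) = \prod_i (z^{1/2}x_i^{1/2} - z^{-1/2}x_i^{-1/2})$ and $\hat\se_{z^{-1}}(\kappa^{-1}\cV^\vee) = \prod_i (z^{-1/2}\kappa^{-1/2}x_i^{-1/2} - z^{1/2}\kappa^{1/2}x_i^{1/2})$, so the integrand factors as $g(z) = \prod_{i=1}^r h_i(z)$ with
\[ h_i(z) = \frac{\kappa^{-1/2} - \kappa^{1/2} z x_i}{z x_i - 1} = -\kappa^{1/2} + \frac{\kappa^{-1/2} - \kappa^{1/2}}{z x_i - 1}. \]
In particular $g$ is a rational function of $z$, regular at both $z = 0$ and $z = \infty$, with $h_i(0) = -\kappa^{-1/2}$ and $h_i(\infty) = -\kappa^{1/2}$, hence $g(0) = (-1)^r\kappa^{-r/2}$ and $g(\infty) = (-1)^r\kappa^{r/2}$.

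By the definition of $\rho_{K,z}$ as the $z^0$-coefficient of $g_- - g_+$, where $g_\pm$ are the expansions of $g$ around $z = \infty$ and $z = 0$ — honest Taylor series in $z^{-1}$ and $z$ respectively, with constant terms $g(\infty)$ and $g(0)$ — this gives $\rho_{K,z}(g) = g(\infty) - g(0) = (-1)^r(\kappa^{r/2} - \kappa^{-r/2})$. Finally I would check the elementary identity $(-1)^r(\kappa^{r/2} - \kappa^{-r/2}) = (\kappa^{-\frac12} - \kappa^{\frac12})\,[r]_\kappa$ directly from the definition \eqref{eq:quantum-integer}, using $(\kappa^{-1/2} - \kappa^{1/2})/(\kappa^{1/2} - \kappa^{-1/2}) = -1$. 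There is no real obstacle: the only points requiring care are that the splitting-principle/``generic root'' manipulations and the use of $\hat\se$ are valid, but these are precisely what Lemmas~\ref{lem:k-theory-finiteness} and \ref{lem:symmetrized-projective-bundle-formula} already provide, so the corollary is genuinely just this specialization plus residue bookkeeping.
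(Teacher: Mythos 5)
Your proposal is correct and follows essentially the same route as the paper: specialize the virtual projective bundle formula to $f(s)=1$, treat the Chern roots of $\cV$ as generic nonzero scalars (justified by Lemma~\ref{lem:k-theory-finiteness}), and evaluate $\rho_{K,z}$ as the difference of the limits at $z=\infty$ and $z=0$ of the factored integrand, which gives $(-1)^{\rank\cV}(\kappa^{\rank\cV/2}-\kappa^{-\rank\cV/2})=(\kappa^{-1/2}-\kappa^{1/2})[\rank\cV]_\kappa$. Your per-root simplification $h_i(z)=(\kappa^{-1/2}-\kappa^{1/2}zx_i)/(zx_i-1)$ is just a rewriting of the paper's product formula, and the remaining bookkeeping matches.
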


\begin{proof}
  We may do this computation analytically, i.e. treating the Chern
  roots $\cL$ of $\cV$ as generic non-zero complex numbers close to $1$.
  Then
  \begin{equation} \label{eq:k-theoretic-residue-rigidity}
    \rho_{K,z} \frac{\hat\se_{z^{-1}}(\kappa^{-1} \cV^\vee)}{\hat\se_z(\cV)} = \left(\lim_{z \to \infty} - \lim_{z \to 0}\right) \frac{\hat\se_{z^{-1}}(\kappa^{-1} \cV^\vee)}{\hat\se_z(\cV)}
  \end{equation}
  because both limits exist, namely:
  \[ \lim_{z \to \infty} = (-\kappa^{\frac{1}{2}})^{\rank \cV}, \qquad \lim_{z \to 0} = (-\kappa^{-\frac{1}{2}})^{\rank \cV} \]
  by writing
  \[ \frac{\hat\se_{z^{-1}}(\kappa^{-1} \cV^\vee)}{\hat\se_z(\cV)} = \prod_\cL \frac{(z\kappa \cL)^{-\frac{1}{2}} - (z\kappa\cL)^{\frac{1}{2}}}{(z\cL)^{\frac{1}{2}} - (z\cL)^{-\frac{1}{2}}}. \qedhere \]
\end{proof}

This computation motivates the sign $(-1)^{N-1}$ in the definition
\eqref{eq:quantum-integer} of $[N]_\kappa$.

\subsubsection{}

\begin{remark}
  The remarkably explicit form of the residue
  \eqref{eq:k-theoretic-residue-rigidity} as a quantum integer is the
  crucial ``rigidity'' property that enables us to obtain explicit
  wall-crossing formulas in practical applications later on. The
  existence of the limits really requires the $\kappa$-symmetry of the
  APOTs, as well as the usage of the symmetrized $\hat\se$ and
  $\hat\cO^\vir$ instead of $\se$ and $\cO^\vir$.
\end{remark}

\subsubsection{}

\begin{lemma}[Homology projective bundle formula] \label{lem:homology-projective-bundle-formula}
  Let $\Pi_X\colon \fX \to X$ be a $\sT$-equivariant
  $\bC^\times$-gerbe over an Artin stack $X$ with $\sT$-action, and
  $\cV \in \cat{Vect}_\sT(\fX)$ have weight $1$ with respect to
  $\Pi_X$. Let
  \[ P_{\fX}(\cV) \coloneqq \tot_{\fX}(\cV) \setminus \{0\} \]
  and consider the commutative diagram
  \[ \begin{tikzcd}
    P_{\fX}(\cV) \ar{drr}{\pi_X} \ar[hookrightarrow]{d}{j} \\
    \tot_{\fX}(\cV) \ar[shift left]{r} & \fX \ar[shift left]{l}{i} \ar{r}[swap]{\Pi_X} & X
  \end{tikzcd} \]
  where $j$ and $\pi_X$ are the natural inclusion and projection
  respectively, and $i$ is the zero section. Given $\phi \in
  K_\circ^\sT(X)$,
  \[ (\kappa^{-\frac{1}{2}} - \kappa^{\frac{1}{2}}) \cdot j_*\hat\pi_X^*\phi = \rho_{K,z} \left(i_* D(z)\left(\tilde \phi \cap \frac{\hat\se_{z^{-1}}(\kappa^{-1} \cV^\vee)}{\hat\se_z(\cV)}\right)\right) \]
  where $\tilde\phi \in K_\circ^\sT(\fX)$ is an
  arbitrary lift of $\phi$ (i.e. $\phi = (\Pi_X)_*\tilde\phi$).
\end{lemma}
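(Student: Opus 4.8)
The plan is to reduce to the projective bundle formula Lemma~\ref{lem:projective-bundle-formula} and to the computation carried out in the proof of Lemma~\ref{lem:symmetrized-projective-bundle-formula}, after passing to an algebraic‑space base via the concrete model of K‑homology. First I would represent $\tilde\phi$ by a triple $(Z, \fix, \cF)$ as in Definition~\ref{def:concrete-k-homology}, with $Z$ a proper algebraic space with the resolution property and trivial $\sT$-action, $\fix\colon Z \to \fX$, and $\cF \in K_\sT(Z)_\loc$; then $\phi = (\Pi_X)_*\tilde\phi$ is represented by $(Z, \Pi_X \circ \fix, \cF)$, and the cap product $\tilde\phi \cap \Theta(z)$ occurring on the right‑hand side is by definition (\S\ref{sec:monoidal-stack-vertex-product-well-defined}) computed on $Z$. (Such a $\tilde\phi$ exists, and $(\Pi_X)_*$ is surjective, whenever $\Pi_X$ admits a section --- by Lemma~\ref{lem:k-homology-pl}, the only situation arising in our applications; the statement presumes a lift is fixed, and independence of the choice follows since the left‑hand side depends only on $\phi$.)

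The main geometric input is that, because $\cV$ has gerbe‑weight $1$, base change along $\fix$ both trivializes the gerbe and turns $P_\fX(\cV)$ into an honest projective bundle. Indeed, $\fix$ furnishes a section of the pulled‑back gerbe, identifying it with $Z \times [\pt/\bC^\times]$, on which $\cV$ restricts to $\cV_Z \boxtimes \cL$ with $\cV_Z \coloneqq \fix^*\cV \in \cat{Vect}_\sT(Z)$ (still of weight $1$); hence $\tot_\fX(\cV) \times_X Z \cong [\tot_Z(\cV_Z)/\bC^\times]$ for the weight‑$1$ scaling action, whose zero section is the trivial gerbe $Z \times [\pt/\bC^\times]$ and whose complement is $P_\fX(\cV) \times_X Z \cong \bP_Z(\cV_Z)$, over which $\pi_X$ base‑changes to the projection $\pi_Z\colon \bP_Z(\cV_Z) \to Z$. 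Under this identification a gerbe‑weight‑$w$ summand $\cL^{\otimes w} \otimes \Pi_X^*(-)$ of a class on $\fX$ pulls back to $s^{\otimes w} \otimes \pi_Z^*(-)$, where $s \coloneqq \cO_{\bP_Z(\cV_Z)}(1)$.

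Next I would evaluate both sides on an arbitrary $\cE \in K_\sT^\circ(\tot_\fX(\cV))$; since $\tot_\fX(\cV) \to \fX$ is an affine bundle, homotopy invariance lets me take $\cE = p^*\cE_0$ for the projection $p$ and some $\cE_0 \in K_\sT^\circ(\fX)$. Unwinding Definition~\ref{def:symm-pullback-homology} for $\hat\pi_X^*$, the definitions of pushforward and cap product on K‑homology (\S\ref{sec:k-homology-properties}), and the identity $(D(z)\eta)(\cE_0) = \eta(z^{\deg}\cE_0)$ for the translation operator $D(z)$ attached to $\Psi$ (Definition~\ref{def:degree-map}, Theorem~\ref{thm:mVOA-monoidal-stack}), the left side becomes $(\kappa^{-\frac{1}{2}} - \kappa^{\frac{1}{2}}) \cdot \phi\big((\pi_X)_*(\rho^*\cE_0 \otimes \se(\kappa^{-1}\bL_{\pi_X}) \otimes \kappa^{-\dim\pi_X/2}\cK_{\pi_X})\big)$ with $\rho = p \circ j$, and the right side becomes $\rho_{K,z}[\tilde\phi(\Theta(z) \otimes z^{\deg}\cE_0)]$ with $\Theta(z) = \hat\se_{z^{-1}}(\kappa^{-1}\cV^\vee)/\hat\se_z(\cV)$. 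Writing $\tilde\phi = \fix_*\psi$ and $\phi = (\Pi_X \circ \fix)_*\psi$, applying flat base change along the Cartesian square above, and decomposing $\cE_0 = \bigoplus_w \cL^{\otimes w} \otimes \Pi_X^*\cE_0^{(w)}$ by gerbe‑weight (a finite sum, under which $z^{\deg}$ acts on the $w$‑th summand by $z^w$, and $\fix^*\Theta(z) = \hat\se_{z^{-1}}(\kappa^{-1}\cV_Z^\vee)/\hat\se_z(\cV_Z)$), everything reduces --- $\psi$ being arbitrary --- to the identity in $K_\sT^\circ(Z)_\loc$
\[ (\kappa^{-\frac{1}{2}} - \kappa^{\frac{1}{2}}) \cdot (\pi_Z)_*\Bigl(\sum_w s^{\otimes w} \otimes \pi_Z^*g^*\cE_0^{(w)} \otimes \se(\kappa^{-1}\bL_{\pi_Z}) \otimes \kappa^{-\dim\pi_Z/2}\cK_{\pi_Z}\Bigr) = \rho_{K,z}\Bigl(\sum_w z^w\, g^*\cE_0^{(w)} \cdot \frac{\hat\se_{z^{-1}}(\kappa^{-1}\cV_Z^\vee)}{\hat\se_z(\cV_Z)}\Bigr), \]
where $g = \Pi_X \circ \fix$. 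This is precisely the computation in the proof of Lemma~\ref{lem:symmetrized-projective-bundle-formula}: using $\bT_{\pi_Z} = s \otimes \pi_Z^*\cV_Z - 1$, the projection formula rewrites the Euler factor on the left as $\se_{s^{-1}}(\kappa^{-1}\pi_Z^*\cV_Z^\vee) \otimes \kappa^{-\rank\cV_Z/2}\det(s \otimes \pi_Z^*\cV_Z)^\vee$, and Lemma~\ref{lem:projective-bundle-formula}(ii) (applicable since $\sT$ acts trivially on $Z$) converts $(\pi_Z)_*$ of $s^{\otimes w} \otimes (\cdots)$ into $\rho_{K,z}$ of $z^w\,(\cdots)/\se_z(\cV_Z)$; collecting the $\kappa$- and $z$-powers into $\hat\se$ matches the two sides.

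I expect the main obstacle to be establishing the base‑change geometry above --- the isomorphism $P_\fX(\cV) \times_X Z \cong \bP_Z(\cV_Z)$ over $Z$ together with the dictionary ``gerbe‑weight $w \leftrightarrow \cO(1)^{\otimes w}$'' --- and pinning down the orientation conventions (the sign of $s$ relative to $\cL$, and of $\bT_{\pi_Z}$) so that they are consistent with Definition~\ref{def:degree-map} and with the proof of Lemma~\ref{lem:symmetrized-projective-bundle-formula}. Once that is in place, the remaining bookkeeping runs in parallel with the already‑established Lemmas~\ref{lem:projective-bundle-formula} and \ref{lem:symmetrized-projective-bundle-formula}.
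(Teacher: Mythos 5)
Your proposal is correct and is essentially the paper's own proof: both arguments use the weight-one hypothesis to turn $P_{\fX}(\cV)$ into an honest projective bundle after base change, identify gerbe weight $w$ with $s^{\otimes w}$ (the source of the $z^{\deg}$/$D(z)$ factor), and then reduce to the computations of Lemmas~\ref{lem:projective-bundle-formula} and \ref{lem:symmetrized-projective-bundle-formula}. The only difference is cosmetic --- you base-change along the concrete support $\fix\colon Z \to \fX$ of the lift, whereas the paper base-changes along $\Pi_X$ itself (obtaining $\bP_{\fX}(\cV) \to \fX$ and the identity $\tilde j^*(\Pi_{\tot(\cV)})^* = s^{\deg} i^*$) and only afterwards passes to an algebraic space via the equivariant localization axiom, which is the same reduction in G-theory --- so just note that your gerbe-weight decomposition $\cE_0 = \bigoplus_w \cL^{\otimes w} \otimes \Pi_X^*\cE_0^{(w)}$ is literally valid only after this trivializing base change (the weight-$w$ pieces need not be pulled back from $X$ on $\fX$ itself), which is all your argument actually uses.
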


Throughout the following proof, since $\tilde\phi$ is ultimately
computed in $K_\sT(-)$ rather than $K_\sT^\circ(-)$, without loss of
generality we may let $K_\sT^\circ(-)$ denote its image inside
$K_\sT(-)$.

\begin{proof}
  Let $\cL$ be the weight-$1$ line bundle on $[\pt/\bC^\times]$. Since
  $\pi_X$ is not necessarily a projective bundle, we replace it by one
  using the diagram
  \[ \begin{tikzcd}
    \tot_{[\pt/\bC^\times] \times \fX}(\cL \boxtimes \cV) \ar{d}{\Pi_{\tot(\cV)}} & \bP_{\fX}(\cV) \ar{l}{\tilde j} \ar{r}{\pi_{\fX}} \ar{d}{\Pi_{P(\cV)}} & \fX \ar{d}{\Pi_X} \\
    \tot_{\fX}(\cV) & P_{\fX}(\cV) \ar{l}{j} \ar{r}{\pi_X} & X.
  \end{tikzcd} \]
  Both squares are Cartesian: the base change of $\pi_X$ along $\Pi_X$
  is naturally identified with the projection $\tot(\cL \boxtimes \cV)
  \setminus \{0\} \to \fX$, because $\cV$ has weight one, but there is
  a natural isomorphism $\bP(\cV) \xrightarrow{\sim} \tot(\cL
  \boxtimes \cV) \setminus \{0\}$ which identifies the pullback of
  $\cL^\vee$ with the universal line bundle on $\bP(\cV)$. By
  functoriality and base change,
  \begin{equation} \label{eq:P-to-projective-bundle}
    j_*\hat\pi_X^*\phi = j_*\hat\pi_X^*(\Pi_X)_*\tilde\phi = j_*(\Pi_{P(\cV)})_* \hat\pi_{\fX}^*\tilde\phi = (\Pi_{\tot(\cV)})_*\tilde j_* \hat\pi_{\fX}^*\tilde\phi.
  \end{equation}

  The virtual projective bundle formula
  (Lemma~\ref{lem:symmetrized-projective-bundle-formula}) is now
  applicable to $\pi_{\fX}\colon \bP(\cV) \to \fX$, as follows.
  By the equivariant localization axiom for $\tilde\phi$, we may
  assume without loss of generality that $\fX$ is an algebraic space
  with trivial $\sT$-action. Then the proof of
  Lemma~\ref{lem:symmetrized-projective-bundle-formula}, applied to
  $\pi_{\fX}\colon \bP(\cV) \to \fX$ with $s = \cO_{\bP(\cV)}(1)$,
  shows that
  \begin{align} 
    &(\kappa^{-\frac{1}{2}} - \kappa^{\frac{1}{2}}) \cdot (\hat\pi_{\fX}^*\tilde\phi)(f(s)) \nonumber \\
    &=(\kappa^{-\frac{1}{2}} - \kappa^{\frac{1}{2}}) \cdot \tilde\phi\left((\pi_{\fX})_*\left(f(s) \otimes \se(\kappa^{-1} \bL_{\pi_{\fX}}) \otimes \kappa^{-\frac{\rank \cV-1}{2}} \cK_{\pi_{\fX}}\right)\right) \nonumber \\
    &= \rho_{K,z} \tilde\phi\left(f(z) \frac{\hat\se_{z^{-1}}(\kappa^{-1} \cV^\vee)}{\hat\se_z(\cV)}\right). \label{eq:homology-symmetrized-pullback-projective-bundle}
  \end{align}
  
  It remains to express the operation $\tilde
  j^*(\Pi_{\tot(\cV)})^*\colon K^\circ_\sT(\tot(\cV)) \to
  K^\circ_\sT(\bP(\cV))$ as an explicit polynomial in $s$. The square
  (of solid arrows)
  \[ \begin{tikzcd}
    \tot(\cL \boxtimes \cV) \ar{r}{\tilde p} \ar{d}{\Pi_{\tot(\cV)}} & {[\pt/\bC^\times]} \times \fX \ar{d}{\Psi} \\
    \tot(\cV) \ar[shift left]{r}{p} & \fX \ar[dotted,shift left]{l}{i}
  \end{tikzcd} \]
  is Cartesian, and the dotted arrow $i$ is the zero section of $\cV$.
  By the Thom isomorphism theorem, $i^*$ and $p^*$ are isomorphisms on
  $K_\sT(-)$, so
  \[ \tilde j^*(\Pi_{\tot(\cV)})^* = \tilde j^* \tilde p^* \Psi^* i^*. \]
  If $\cG$ is a $\sT$-equivariant sheaf on $\fX$ with weight
  decomposition $\cG = \bigoplus_{i \in \bZ} \cG_i$ with respect to
  $\Pi_X$, then
  \[ \tilde j^* \tilde p^* \Psi^* \cG = \bigoplus_{i \in \bZ} \tilde j^* \tilde p^*(\cL^i \boxtimes \cG_i) = \bigoplus_{i \in \bZ} s^i \otimes \pi_{\fX}^*\tilde \cG_i, \]
  since $s = \tilde j^* \tilde p^*(\cL \boxtimes \cO)$. It follows
  that
  \begin{equation} \label{eq:homology-symmetrized-pullback-projective-bundle-2}
    \tilde j^*(\Pi_{\tot(\cV)})^* = s^{\deg} i^*.
  \end{equation}
  Combining \eqref{eq:P-to-projective-bundle},
  \eqref{eq:homology-symmetrized-pullback-projective-bundle}, and
  \eqref{eq:homology-symmetrized-pullback-projective-bundle-2}, we
  obtain the desired result.
\end{proof}

\section{Semistable invariants}
\label{sec:semistable-invariants}

\subsection{Construction and main result}

\subsubsection{}

The goal of this section is to construct semistable invariants
satisfying Theorem~\ref{thm:sst-invariants}. Throughout this section,
we fix the following data:
\begin{itemize}
\item a weak stability condition $\tau$ on $\cat{A}$ satisfying
  Assumption~\ref{assump:semistable-invariants};
\item the class $\alpha \in C(\cat{A})$ for which we want to construct
  the semistable invariant $\sz_\alpha(\tau)$
  (Theorem~\ref{thm:sst-invariants});
\item a set $\Frs$ of framing functors satisfying
  Assumption~\ref{assump:semistable-invariants}\ref{assump:it:framing-functor},
  such that $\fM_\alpha^{\sst}(\tau) \subset \fM_\alpha^{\Fr,\pl}$.
\end{itemize}
The strategy is to define $\sz_\alpha(\tau) \coloneqq 0$ if
$\fM_\alpha^{\sst}(\tau) = \emptyset$, and otherwise to induct on
$r(\alpha) > 0$. It turns out it is enough to consider
$\tau$-semistable objects $E \in \cat{A}$ whose classes belong to the
finite set
\[ R_\alpha \coloneqq \{\alpha\} \cup \{\beta \in C(\cat{A}) : \alpha-\beta\in C(\cat{A}), \, \tau(\beta) = \tau(\alpha-\beta), \, \fM_{\beta}^{\sst}(\tau), \fM_{\alpha-\beta}^{\sst}(\tau)\neq \emptyset\} \]
from Definition~\ref{def:dominates-at}.

\subsubsection{}

\begin{definition} \label{def:pair-invariant}
  Let $Q$ denote the quiver
  \[ \begin{tikzcd}
    \overset{V}{\blacksquare} \ar{r}{\rho} & \overset{\Fr(E)}{\blackbullet}
  \end{tikzcd}. \]
  Given a framing functor $\Fr \in \Frs$, consider the auxiliary exact
  category $\tilde{\cat{A}}^{Q(\Fr)}$ parameterizing triples $(E, V,
  \rho)$ as labeled above. Define
   \begin{equation} \label{eq:pair-stability}
    \tau^Q(\beta, d) \coloneqq \begin{cases}
      \left(\tau(\beta), \frac{d}{r(\beta)}\right) & \beta \neq 0, \, \tau(\beta) = \tau(\alpha-\beta), \\
      \left(\tau(\beta), \infty\right), & \beta \neq 0, \, \tau(\beta)> \tau(\alpha - \beta), \\
      \left(\tau(\beta), -\infty\right), & \beta \neq 0, \, \tau(\beta)< \tau(\alpha - \beta), \\
      \left(\infty, 1\right) & \beta = 0,
    \end{cases}
  \end{equation}
  where pairs $(a, b)$ are ordered lexicographically, i.e. $(a, b) <
  (a', b')$ means either $a < a'$, or $a = a'$ and $b < b'$. This is a
  special case of Definition~\ref{def:flag-invariant} (with $N = 1$,
  $\vec\mu = 1$, $\lambda = 0$, and $x = 0$), thus it is effectively a
  weak stability condition on $\tilde{\cat{A}}^{Q(\Fr)}$ by
  Lemma~\ref{lem:joyce-framed-stack-stability}. By
  Lemma~\ref{lem:pairs-stack-sst=st} below, there are no strictly
  semistable objects when $d = 1$. By
  Assumption~\ref{assump:semistable-invariants}\ref{assump:it:properness},
  the open locus
  \[ \tilde\fM_{\alpha,1}^{Q(\Fr),\sst}(\tau^Q)^{\sT} \subset (\tilde\fM_{\alpha,1}^{Q(\Fr),\pl})^{\sT} \]
  is therefore an algebraic space and may be equipped with the
  $\kappa$-symmetric APOT obtained by symmetrized pullback
  (Theorem~\ref{thm:APOTs}) along the smooth forgetful map
  $\pi_{\fM_\alpha^{\Fr,\pl}}$, and thus the universal enumerative
  invariant
  \begin{equation} \label{eq:pairs-stack-enumerative-invariant}
    \tilde\sZ_{\alpha,1}^{\Fr}(\tau^Q) \coloneqq \chi\left(\tilde\fM_{\alpha,1}^{Q(\Fr), \sst}(\tau^Q), \hat{\cO}^{\vir} \otimes - \right) \in K_\circ^{\tilde\sT}(\tilde\fM_{\alpha,1}^{Q(\Fr),\pl})_{\loc}
  \end{equation}
  is well-defined (Definition~\ref{bg:def:univ-enum-inv}). Define also
  the K-homology element
  \[ \partial \coloneqq \chi\left(\tilde\fM_{0,1}^{Q(\Fr),\pl}, -\right) = \id \in K_\circ^{\tilde\sT}(\tilde\fM_{0,1}^{Q(\Fr),\pl})_\loc. \]
  Both $\tilde\fM_{0,1}^{Q(\Fr),\pl} = \pt$ and $\partial$ are
  independent of $\Fr$, but Lie brackets with $\partial$ depend on
  $\fr$.
\end{definition}

\subsubsection{}

\begin{lemma} \label{lem:pairs-stack-sst=st}
  Take $(E, V, \rho) \in \tilde{\cat{A}}^{Q(\Fr)}$ such that $E$ is
  $\tau$-semistable of class $\beta \in R_\alpha$ and $\dim V = 1$.
  Then $(E, V, \rho)$ is $\tau^Q$-stable if and only if it is
  $\tau^Q$-semistable, if and only if:
  \begin{itemize}
  \item $\rho \neq 0$;
  \item there does not exist $0 \neq E' \subsetneq E$ with $\tau(E') =
    \tau(E/E')$ and $\rho(V) \subset \Fr(E') \subset \Fr(E)$.
  \end{itemize}
  In particular, if $E$ is $\tau$-stable, then $(E, V, \rho)$ is
  $\tau^Q$-(semi)stable if and only if $\rho \neq 0$.
\end{lemma}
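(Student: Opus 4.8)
The plan is to unwind the definition of $\tau^Q$-(semi)stability for the object $(E,V,\rho)$ of class $(\beta,1)$ and show it reduces to the two bulleted conditions. First I would enumerate the sub-objects of $(E,V,\rho)$ in $\tilde{\cat{A}}^{Q(\Fr)}$: a sub-object is a pair $(E', V')$ with $E'\subseteq E$ a sub-object in $\cat{A}$, $V'\subseteq V$ a subspace (so $V'=0$ or $V'=V$ since $\dim V=1$), subject to the constraint that $\Fr$ is applied to $E'$ on the $\blackbullet$ vertex and $\rho$ must restrict, i.e. $\rho(V')\subseteq \Fr(E')\subseteq \Fr(E)$. So the relevant sub-objects come in three flavors: (i) $E'\subsetneq E$ nonzero with $V'=0$, of class $(\beta',0)$; (ii) $E'=E$ with $V'=0$, which is not proper; and (iii) $E'\subseteq E$ with $V'=V=1$, which forces $\rho(V)\subseteq \Fr(E')$, of class $(\beta',1)$ — and this is a proper nonzero sub-object precisely when $E'\subsetneq E$, or when $E'=0$ (impossible since then $\Fr(E')=0$ but $\rho(V)\subseteq\Fr(E')$ would force $\rho=0$, giving sub-object $(0,V)$ of class $(0,1)$ when $\rho=0$).

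Next I would compute $\tau^Q$ on the relevant classes using \eqref{eq:pair-stability}. Since $\beta\in R_\alpha$, we have $\tau(\beta)=\tau(\alpha-\beta)$, so $\tau^Q(\beta,1) = (\tau(\beta), 1/r(\beta))$. The key semistability inequalities are $\tau^Q(E',V') \le \tau^Q((E,V)/(E',V'))$ for all proper nonzero sub-objects (strict inequality for stability). For a sub-object of type (iii) with $E'\subsetneq E$ and class $(\beta',1)$, the quotient has class $(\beta-\beta',0)$, so $\tau^Q$ of the quotient is $(\tau(\beta-\beta'), \infty)$ if $\tau(\beta-\beta')>\tau(\alpha-(\beta-\beta'))$, etc. Here I must invoke Assumption~\ref{assump:semistable-invariants}\ref{assump:it:semi-weak-stability} (the conditions on $\beta'$ as class of a sub-object of the $\tau$-semistable $B=E$ with $\tau(\beta)=\tau(\alpha-\beta)$) to control the comparison: the assumption guarantees that the "bad" cases where $\tau(\beta')=\tau(\alpha-\beta')<\tau(\beta-\beta')$ or $\tau(\beta')<\tau(\alpha-(\beta-\beta'))=\tau(\beta-\beta')$ do not occur. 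By the equivalent reformulation noted in the footnote (and Lemma~\ref{wc:lem:R-sets}\ref{it:R-sets-ii}), $\tau(\beta')=\tau(\alpha-\beta')$ iff $\tau(\beta')=\tau(\beta-\beta')$; combined with $\tau$-semistability of $E$ (which gives $\tau(\beta')\le\tau(\beta-\beta')$), this pins down the first coordinate of the $\tau^Q$-comparison to always be an equality $\tau(\beta)=\tau(\beta')=\tau(\beta-\beta')$ in exactly the obstructing situations, so that the second ($d/r$) coordinate becomes decisive, and there $1/r(\beta') $ vs $1/r(\beta-\beta')$ is controlled by Assumption~\ref{assump:semistable-invariants}\ref{assump:it:rank-function} ($r$ additive and positive on such sub/quotients). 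Then $\tau^Q(E',V)\le\tau^Q(\text{quot})$ fails — producing instability — precisely when such an $E'$ with $\rho(V)\subseteq\Fr(E')$ exists with $\tau(E')=\tau(E/E')$; and when the first coordinates are genuinely unequal the see-saw-type inequalities from the weak stability condition $\tau$ automatically give the required strict inequality. For sub-objects of type (i) (class $(\beta',0)$, $V'=0$), the quotient is $(\beta-\beta',1)$ and the relevant comparison is $\tau^Q(\beta',0)\le\tau^Q(\beta-\beta',1)$; I would check this reduces to automatic inequalities plus, in the borderline case, exactly the conditions already extracted, so imposes nothing new. Finally, the sub-object $(0,V)$ of class $(0,1)$ occurs iff $\rho=0$, and $\tau^Q(0,1)=(\infty,1)$ is strictly larger than everything, so its presence immediately violates semistability — giving the condition $\rho\neq 0$.

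The main obstacle I anticipate is the careful bookkeeping in the borderline case where $\tau(\beta')=\tau(\beta-\beta')$: here one must simultaneously use (a) $\tau$-semistability of $E$, (b) Assumption~\ref{assump:semistable-invariants}\ref{assump:it:rank-function} to know $r(\beta')+r(\beta-\beta')=r(\beta)$ with both positive, hence $1/r(\beta) < \min(1/r(\beta'),1/r(\beta-\beta'))$ is \emph{false} in general — actually one needs $1/r(\beta')>1/r(\beta)$ iff $r(\beta')<r(\beta)$ which holds, so the $\tau^Q$-slope of the sub-object with framing exceeds $1/r(\beta)$ — and (c) Assumption~\ref{assump:semistable-invariants}\ref{assump:it:semi-weak-stability} to rule out the genuinely problematic first-coordinate configurations. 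Getting the direction of every inequality right, and confirming that "$\le$" and "$<$" always agree (no strictly semistable objects), is the delicate part; it is essentially a special case of the general flag-invariant analysis in Definition~\ref{def:flag-invariant} and Lemma~\ref{lem:joyce-framed-stack-stability}, which I would cite for the structural facts and then specialize. The last sentence of the statement (when $E$ is $\tau$-stable) is then immediate: there is no proper $E'$ with $\tau(E')=\tau(E/E')$ at all, so the second bullet is vacuous and only $\rho\neq 0$ remains.
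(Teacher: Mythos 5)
Your proposal is correct and takes essentially the same route as the paper's proof: enumerate the possible destabilizing sub-objects ($(0,V,0)$ exactly when $\rho=0$; $(E',V',\rho')$ with $V'=0$, which never destabilizes; $(E',V,\rho)$ with $\rho(V)\subseteq\Fr(E')$), use $\tau$-semistability of $E$ to reduce to the borderline case $\tau(E')=\tau(E/E')$, decide via the second (rank) coordinate using Assumption~\ref{assump:semistable-invariants}\ref{assump:it:rank-function}, and note that sub- and quotient slopes are never equal, so semistable equals stable. The only cosmetic difference is that you route the identification $\tau(\beta')=\tau(\beta-\beta')\Rightarrow\tau(\beta')=\tau(\alpha-\beta')$ through Assumption~\ref{assump:semistable-invariants}\ref{assump:it:semi-weak-stability} and Lemma~\ref{wc:lem:R-sets}\ref{it:R-sets-ii}, whereas in this borderline case it already follows from the weak see-saw property alone (as the paper leaves implicit), and your occasional comparison of the sub-object's slope with $\tau^Q(\beta,1)$ rather than with the quotient's slope is harmless since $\tau^Q$ satisfies the weak see-saw property.
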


\begin{proof}
  If $\rho = 0$, then $(0, V, 0) \subset (E, V, \rho)$ is a
  destabilizing sub-object, and if such a $E' \subset E$ exists then
  $(E',V,\rho) \subset (E,V,\rho)$ is a destabilizing sub-object.

  Conversely, suppose $(E',V',\rho') \subset (E,V,\rho)$ is a
  destabilizing sub-object. Suppose $E' = 0$. Since $(E',V',\rho')$ is
  not the zero object, $\dim V' > 0$, thus $\dim V' = \dim V$ and
  $\rho' = \rho = 0$. Otherwise, if $E' \neq 0$, since $E$ is
  $\tau$-semistable it must be that $\tau(E') = \tau(E/E')$. By
  Assumption~\ref{assump:semistable-invariants}\ref{assump:it:rank-function},
  $r(E) = r(E') + r(E/E')$ and $r(E'), r(E/E') > 0$. Since $\dim V =
  0$ cannot be destabilizing, $\dim V = 1$ and thus $\rho$ factors
  through $E'$.

  Finally, $\tau^Q(E',V',\rho') \neq \tau^Q(E/E',V/V',\rho/\rho')$ in
  every case, so $\tau^Q$-semistable is equivalent to $\tau^Q$-stable.
\end{proof}

\subsubsection{}
\label{sec:pairs-maps}

Throughout this section, we will frequently use the following maps and
commutative square:
\begin{equation} \label{eq:pairs-forgetful-rigidification-maps}
  \begin{tikzcd}
    \tilde\fM_{\alpha,0}^{Q(\Fr)} & \tilde\fM_{\alpha,1}^{Q(\Fr)} \ar[shift left]{r}{\Pi_{\alpha,1}^\pl} \ar{d}{\pi_{\fM_\alpha^{\Fr}}} & \tilde\fM_{\alpha,1}^{Q(\Fr),\pl} \ar{d}{\pi_{\fM_\alpha^{\Fr,\pl}}} \ar[shift left]{l}{I_{\alpha,1}} \\
    \fM_\alpha^{\Fr} \ar{u}{\iota^Q_\alpha} & \fM_\alpha^{\Fr} \ar{r}{\Pi_\alpha^\pl} & \fM_\alpha^{\Fr,\pl}.
  \end{tikzcd}
\end{equation}
Here $\iota^Q_\alpha$ are the natural isomorphisms given by $E \mapsto
(E, 0, 0)$; in particular, $\tilde\fM_{0,1}^{Q(\Fr),\pl} = \pt$. Both
$\iota^Q$ and the forgetful maps $\pi_{\fM^{\Fr}}$ induce morphisms of
graded monoidal $\sT$-stacks. The $I_{\alpha,1}$ are the canonical
de-rigidification maps (Definition~\ref{def:de-rigidification}), using
which we obtain elements
\[ I_*\tilde \sZ_{\alpha,1}^{\Fr}(\tau^Q) \in K_\circ^{\tilde\sT}(\tilde\fM_{\alpha,1}^{Q(\Fr)})_\loc, \qquad I_*\partial \in K_\circ^{\tilde\sT}(\tilde\fM_{0,1}^{Q(\Fr)})_\loc. \]
We continue to use the same symbols to denote their images in
$K_\circ^{\tilde\sT}(-)^\pl_{\loc}$.

\subsubsection{}

\begin{theorem}[Semistable invariants] \label{thm:sst-invariants-construction}
  Suppose $\tau$ is a weak stability condition on $\cat{A}$ for which
  Assumption~\ref{assump:semistable-invariants} holds. For classes
  $\alpha \in C(\cat A)$ and framing functors $\Fr \in \Frs$ such that
  $\fM_{\alpha}^{\sst}(\tau) \subset \fM_{\alpha}^{\Fr,\pl}$, the
  equations
  \begin{equation}\label{eq:sstable-def}
    I_*\tilde{\sZ}_{\alpha,1}^{\Fr}(\tau^Q) = \sum_{\substack{n>0 \\ \alpha = \alpha_1+\cdots+\alpha_n\\ \forall i: \,\tau(\alpha_i) = \tau(\alpha)\\ \;\;\fM_{\alpha_i}^{\sst}(\tau) \neq \emptyset}} \frac{1}{n!} \left[\iota^Q_*\sz^{\Fr}_{\alpha_n}(\tau), \left[\cdots,\left[\iota^Q_*\sz^{\Fr}_{\alpha_2}(\tau), \left[\iota^Q_*\sz^{\Fr}_{\alpha_1}(\tau), I_*\partial\right]\right]\cdots\right]\right],
  \end{equation}
  in the Lie algebra
  $K_\circ^{\tilde\sT}(\tilde\fM^{Q(\Fr)})^\pl_{\loc,\bQ}$
  (Theorem~\ref{thm:auxiliary-stack-vertex-algebra}), uniquely define
  elements
  \[ \sz_\alpha^{\Fr}(\tau) \in K_\circ^\sT(\fM_\alpha)^\pl_{\loc,\bQ} \]
  which are independent of $\Fr$ and satisfy all the properties listed
  in Theorem~\ref{thm:sst-invariants}.
\end{theorem}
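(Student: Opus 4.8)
\textbf{Proof proposal for Theorem~\ref{thm:sst-invariants-construction}.}

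The plan is to run the standard ``logarithm'' argument of \cite[\S 5]{Joyce2021}, but entirely within the Lie algebra $K_\circ^{\tilde\sT}(\tilde\fM^{Q(\Fr)})^\pl_{\loc,\bQ}$ rather than in an auxiliary combinatorial Lie algebra. The key structural observation is that \eqref{eq:sstable-def} has a triangular form: the right-hand side is a sum over ordered decompositions $\alpha=\alpha_1+\cdots+\alpha_n$ with $\tau(\alpha_i)=\tau(\alpha)$ and $\fM_{\alpha_i}^{\sst}(\tau)\neq\emptyset$, hence (using the rank function $r$ from Assumption~\ref{assump:semistable-invariants}\ref{assump:it:rank-function} with $r(\alpha)=\sum r(\alpha_i)$ and each $r(\alpha_i)>0$) the $n=1$ term is $[\iota^Q_*\sz_\alpha^{\Fr}(\tau),I_*\partial]$, and all other terms involve only $\sz_{\alpha_i}^{\Fr}(\tau)$ with $r(\alpha_i)<r(\alpha)$. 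So I would first prove that the map $\phi\mapsto[\iota^Q_*\phi,I_*\partial]$ from $K_\circ^\sT(\fM_\alpha)^\pl_{\loc,\bQ}$ into $K_\circ^{\tilde\sT}(\tilde\fM_{\alpha,1}^{Q(\Fr)})^\pl_{\loc,\bQ}$ is injective (this follows from the projective bundle/rigidity computations: $[-,I_*\partial]$ is, up to the nonzero scalar $\kappa^{-1/2}-\kappa^{1/2}$ and the quantum-integer factor $[\fr(\alpha)]_\kappa\neq 0$, the de-rigidified symmetrized pullback along the smooth forgetful map $\pi_{\fM_\alpha^{\Fr,\pl}}$, cf. Lemma~\ref{lem:homology-projective-bundle-formula} and Corollary~\ref{cor:symmetrized-projective-bundle-formula}, which has a one-sided inverse). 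Given injectivity, \eqref{eq:sstable-def} can be solved recursively on $r(\alpha)$: define $\sz_\alpha^{\Fr}(\tau):=0$ if $\fM_\alpha^{\sst}(\tau)=\emptyset$, and otherwise let $\sz_\alpha^{\Fr}(\tau)$ be the unique preimage under $[-,I_*\partial]$ of $\iota^Q_*\tilde\sZ_{\alpha,1}^{\Fr}(\tau^Q)$ minus the $n\geq 2$ terms, which are already determined by the inductive hypothesis.

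Next I would verify the listed properties. Property~\ref{item:vss-support} (support on $(\Pi^\pl_\alpha)^{-1}(\fM_\alpha^{\sst}(\tau))$) follows by induction: $\tilde\sZ_{\alpha,1}^{\Fr}(\tau^Q)$ is supported on $\tilde\fM_{\alpha,1}^{Q(\Fr),\sst}(\tau^Q)$, which maps into $\fM_\alpha^{\sst}(\tau)$ under the forgetful map (by Lemma~\ref{lem:pairs-stack-sst=st} the underlying sheaf of a $\tau^Q$-semistable pair of class $(\alpha,1)$ is $\tau$-semistable), and the Lie-bracket terms on the right-hand side are supported appropriately because $Y(-,z)$ and hence $[-,-]$ respects supports under $\Phi$. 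Property~\ref{item:vss-no-strictly-semistables} (when $\fM^{\st}_\alpha(\tau)=\fM^{\sst}_\alpha(\tau)$): in that case the $n=1$ term dominates in a strong sense — Lemma~\ref{lem:pairs-stack-sst=st} says $\tilde\fM_{\alpha,1}^{Q(\Fr),\sst}(\tau^Q)$ is exactly the locus of $\tau$-stable sheaves with $\rho\neq0$, i.e. a $\bP$-bundle over $\fM_\alpha^{\st}(\tau)$ minus its zero section quotient, so $\tilde\sZ_{\alpha,1}^{\Fr}(\tau^Q)=[\iota^Q_*\sz_\alpha^{\Fr}(\tau),I_*\partial]$ with no lower terms (those require proper sub/quotients, which do not exist in the stable=semistable case among classes in $R_\alpha$); pushing forward along $\pi_{\fM_\alpha^{\Fr,\pl}}$ and using Corollary~\ref{cor:symmetrized-projective-bundle-formula} identifies $(\Pi_\alpha^\pl)_*\sz_\alpha(\tau)$ with $\chi(\fM_\alpha^{\sst}(\tau),\hat\cO^{\vir}\otimes-)$. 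Property~\ref{item:vss-pairs-relation} is \eqref{eq:sstable-def} itself once we know independence of $\Fr$.

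The main obstacle, as in Joyce's treatment, is \textbf{independence of the framing functor} (and consequently property~\ref{item:vss-isomorphic-moduli}). The issue is that both the left side $\tilde\sZ_{\alpha,1}^{\Fr}(\tau^Q)$ and the Lie bracket $[-,-]$ on $\tilde\fM^{Q(\Fr)}$ depend on $\Fr$ through the dimension function $\fr$ — the bilinear element $\tilde\scE^{Q(\Fr)}_{(\alpha,\vec d),(\beta,\vec e)}$ involves $\bF^{Q(\Fr)}$, which records $\fr$. For two framing functors $\Fr_1,\Fr_2\in\Frs$, I would pass to a common refinement: by Assumption~\ref{assump:semistable-invariants}\ref{assump:it:framing-functor} there is $\Fr\in\Frs$ with $\fM_\beta^{\sst}(\tau)\subset\fM_\beta^{\Fr,\pl}$ for all $\beta\in R_\alpha\cup R_\alpha^{(1)}\cup R_\alpha^{(2)}$, and I would build a master space interpolating $\Fr_1$-framings and $\Fr$-framings (a $\bC^\times$-family of stability conditions on pairs framed by a two-vertex quiver $\Fr_1\leftarrow\bullet\to\Fr$, analogous to Proposition~\ref{prop:pairs-master-space-fixed-loci}), apply the master-space wall-crossing machinery and the pole-cancellation Lemma~\ref{lem:master-space-no-poles}, and show that the resulting identity forces $\sz_\alpha^{\Fr_1}(\tau)=\sz_\alpha^{\Fr}(\tau)=\sz_\alpha^{\Fr_2}(\tau)$ after comparing term by term using the injectivity of $[-,I_*\partial]$ and induction on $r(\alpha)$. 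This is exactly the content that Joyce handles in \cite[\S 5]{Joyce2021} with his Lie-algebra formalism; in our setting the bookkeeping is more delicate because of the $\kappa$-dependence and because the identities live on $\tilde\fM^{Q(\Fr)}$, but no genuinely new idea beyond the master-space argument and Lemma~\ref{lem:master-space-no-poles} is needed. Having established independence, one renames $\sz_\alpha^{\Fr}(\tau)=:\sz_\alpha(\tau)$, which lies in $K_\circ^\sT(\fM_\alpha)^\pl_{\loc,\bQ}$ rather than the $\tilde\sT$-localized group because the $\kappa^{\pm1/2}$ factors introduced by symmetrization cancel in the recursion (the $[\fr(\alpha)]_\kappa$ normalization and the rigidity limits combine to leave a $\sT$-equivariant class), and $\bQ$-coefficients are needed for the $1/n!$ in \eqref{eq:sstable-def}.
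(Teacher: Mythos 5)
There is a genuine gap at the very first step: you define $\sz_\alpha^{\Fr}(\tau)$ as ``the unique preimage under $[\iota^Q_*(-),I_*\partial]$'' of $I_*\tilde\sZ_{\alpha,1}^{\Fr}(\tau^Q)$ minus the $n\ge 2$ terms, but injectivity of that map (which you correctly get from the left inverse $\frac{1}{[\fr(\alpha)]_\kappa}(\pi_{\fM_\alpha^{\Fr}})_*$, i.e.\ Lemma~\ref{lem:pushforward-of-bracket-partial}) only gives \emph{uniqueness}, not \emph{existence} of a preimage. There is no a priori reason that the geometric class $I_*\tilde\sZ_{\alpha,1}^{\Fr}(\tau^Q)-\sum_{n>1}\cdots$ lies in the image of $\phi\mapsto[\iota^Q_*\phi,I_*\partial]$; that image is a rather special subspace of $K_\circ^{\tilde\sT}(\tilde\fM^{Q(\Fr)}_{\alpha,1})^\pl_{\loc,\bQ}$. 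This existence statement is exactly property~\ref{item:vss-pairs-relation}, and it is the hard geometric content of the theorem: in the paper it is \emph{not} solved by recursion alone, but by first defining $\sz_\alpha^{\Fr}(\tau)$ via the pushforward formula \eqref{eq:sstable-explicit} (which always makes sense), and then proving that \eqref{eq:sstable-def} holds for this candidate using the master space relation of Theorem~\ref{thm:pairs-master-relation} applied with $\Fr_1=\Fr_2=\Fr$, together with the exponential cancellation identity (Lemma~\ref{lem:wall-crossing-total-term}) and Lemma~\ref{lem:pushforward-of-partial-bracket}. Relatedly, your sentence ``Property~\ref{item:vss-pairs-relation} is \eqref{eq:sstable-def} itself once we know independence of $\Fr$'' is circular under your definition: with the unique-preimage definition, \eqref{eq:sstable-def} is tautological \emph{if} the preimage exists, and vacuous otherwise; independence of $\Fr$ does not supply the missing existence.

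Your treatment of framing independence is in the right spirit (a master space interpolating the two framings, pole cancellation via Lemma~\ref{lem:master-space-no-poles}), and matches the paper's strategy, but you underestimate what is needed. The paper uses the quiver $Q\wedge Q$ with an extra framing vertex $V_3$ mapping to both $V_1$ and $V_2$, precisely so that the $\sS$-fixed-locus contributions can be expressed as Lie brackets in the auxiliary algebra $K_\circ^{\tilde\sT}(\tilde\fM^{Q(\Fr_1)\wedge Q(\Fr_2)})^\pl$ (the interaction loci carry virtual normal bundles involving framing data, cf.\ Remark~\ref{rem:joyce-lie-bracket-vs-ours-1}, and the comparison of the restricted APOT with the symmetrized-pullback APOT needs Theorem~\ref{thm:APOT-comparison}); a bare two-vertex framed quiver as you sketch does not obviously produce a relation living in a Lie algebra where the inductive comparison can be run. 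Finally, the cancellation of the ``mixed'' higher terms in that relation is not term-by-term bookkeeping via injectivity but the algebraic identity $[e^{\ad_z}\delta_1,e^{\ad_z}\delta_2]=0$ of Lemma~\ref{lem:wall-crossing-total-term}; once that is in place, both framing independence and property~\ref{item:vss-pairs-relation} follow, the latter being the step your proposal omits.
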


The goal of this section is to prove
Theorem~\ref{thm:sst-invariants-construction}. In the remainder of
this subsection, we construct the invariants $\sz_\alpha^{\Fr}(\tau)$
and prove that they satisfy properties~\ref{item:vss-support},
\ref{item:vss-no-strictly-semistables} and
\ref{item:vss-isomorphic-moduli} in Theorem~\ref{thm:sst-invariants}.
The remaining property~\ref{item:vss-pairs-relation} and framing
independence require a master space relation
(Theorem~\ref{thm:pairs-master-relation}) and are proved in
\S\ref{sec:vss-framing-independence}.

\subsubsection{}

\begin{lemma} \label{lem:pushforward-of-bracket-partial}
  The operator
  \[ [\iota^Q_*(-), I_*\partial]\colon K_\circ^{\tilde\sT}(\fM_\alpha^{\Fr})^\pl_{\loc} \to K_\circ^{\tilde\sT}(\tilde\fM_{\alpha,1}^{Q(\Fr)})^\pl_{\loc} \]
  has a left inverse. Explicitly,
  \[ (\pi_{\fM_\alpha^{\Fr}})_* [\iota^Q_*\phi, I_*\partial] = [\fr(\alpha)]_\kappa \cdot \phi. \]
\end{lemma}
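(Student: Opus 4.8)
The plan is to compute $(\pi_{\fM_\alpha^{\Fr}})_*[\iota^Q_*\phi, I_*\partial]$ directly using the explicit formula for the Lie bracket on the auxiliary stack $\tilde\fM^{Q(\Fr)}$ from Theorem~\ref{thm:auxiliary-stack-vertex-algebra}, and to recognize the answer as a virtual projective bundle pushforward in the spirit of Corollary~\ref{cor:symmetrized-projective-bundle-formula}. The point is that the forgetful map $\pi_{\fM^{\Fr}_\alpha}\colon \tilde\fM^{Q(\Fr)}_{\alpha,1} \to \fM^{\Fr}_\alpha$ is, after restricting attention to the de-rigidified picture in \eqref{eq:pairs-forgetful-rigidification-maps}, the total space of the universal framing bundle $\cV^\vee_{t(e)}\otimes\cV_{h(e)} = \cO_V^\vee\otimes\cFr(\cE) \cong \cFr(\cE)$ (with the weight-$1$ line $V = \bC$), and applying $\Phi_{(\alpha,\vec 0),(0,1)}$ to glue on the class $(0,1)$ adds exactly this framing data. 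So the bracket with $I_*\partial$ geometrically ``creates a framing vector'', and pushing forward along $\pi_{\fM^{\Fr}_\alpha}$ and then integrating it back out is precisely a projective-bundle-type operation on the rank-$\fr(\alpha)$ bundle $\cFr(\cE)$.

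Concretely, I would proceed as follows. First, unwind $[\iota^Q_*\phi, I_*\partial](\cE)$ using \eqref{eq:monoidal-stack-lie-bracket} for $\tilde\fM^{Q(\Fr)}$: this involves $\hat\Theta^{Q(\Fr)}_{(\alpha,\vec 0),(0,1)}(z)$, which by the formula for $\tilde\scE^{Q(\Fr)}$ in Theorem~\ref{thm:auxiliary-stack-vertex-algebra} reduces (since $\scE_{\alpha,0} = 0$ and $\bF^{Q(\Fr)}_{(\alpha,\vec 0),(0,1)} = \cFr(\cE)^\vee\boxtimes\cV_V - 0$, using that there are no framing-to-framing arrows) to a product of symmetrized Euler classes of $\cFr(\cE)$ and $\kappa^{-1}\otimes\cFr(\cE)^\vee$. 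Then I would apply $(\pi_{\fM_\alpha^{\Fr}})_*$; because $\partial$ is the identity class on a point and the de-rigidification section $I$ trivializes the gerbe, the $z^{\deg}$ twist collapses and the computation becomes exactly the left-hand side of \eqref{eq:symmetrized-projective-bundle-formula} with $\cV = \cFr(\cE)$ and $f(s) = 1$ — or rather its K-homology incarnation, Lemma~\ref{lem:homology-projective-bundle-formula}. Invoking Corollary~\ref{cor:symmetrized-projective-bundle-formula} (equivalently, the rigidity computation \eqref{eq:k-theoretic-residue-rigidity} identifying $\rho_{K,z}\,\hat\se_{z^{-1}}(\kappa^{-1}\cV^\vee)/\hat\se_z(\cV) = (\kappa^{-1/2}-\kappa^{1/2})[\rank\cV]_\kappa$) yields the factor $[\fr(\alpha)]_\kappa$, and the residual pushforward-then-pullback along the vector bundle $\tot(\cFr(\cE))\to\fM^{\Fr}_\alpha$ is the identity on K-homology by the Thom isomorphism, leaving $\phi$. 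Finally, the left-inverse claim is immediate: since $[\fr(\alpha)]_\kappa$ is a unit in $\bk_{\tilde\sT,\loc}[\kappa^{\pm 1/2}]$ (it is a nonzero Laurent polynomial in $\kappa^{1/2}$, hence invertible after localization and passing to $\bQ$-coefficients), the operator $\tfrac{1}{[\fr(\alpha)]_\kappa}(\pi_{\fM_\alpha^{\Fr}})_*$ is a left inverse.

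The main obstacle I anticipate is bookkeeping rather than anything conceptual: one must carefully track the interaction between (i) the de-rigidification maps $I_{\alpha,1}$ and $\Pi^\pl_{\alpha,1}$ and the $\pl$-quotient (so that the formula genuinely descends to $K_\circ^{\tilde\sT}(-)^\pl$, using Lemma~\ref{lem:k-homology-pl} and the fact that the gerbe is trivial on the framed stack), (ii) the $z^{\deg_1}$ operator coming from the translation operator $D(z)$ in the vertex product, which must be shown to act trivially here because the relevant class is pulled back from the point, and (iii) the precise form of $\hat\Theta^{Q(\Fr)}$, in particular verifying that the $\scE_{\alpha,\beta}$ contribution drops out when $\beta = 0$ so that only the framing-bundle terms survive. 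Once these are pinned down, the identity is a direct application of the virtual projective bundle formula, exactly as in the proof of Proposition~\ref{prop:rigidity}.
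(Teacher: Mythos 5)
Your proposal is correct and follows essentially the same route as the paper: unwind the Lie bracket via $\hat\Theta^{Q(\Fr)}_{(\alpha,0),(0,1)}(z)$, note that $z^{\deg_1}$ is harmless in the $\pl$-group, compute $\rho_{K,z}$ of $\hat\Theta$ by the rigidity identity \eqref{eq:k-theoretic-residue-rigidity} using $\rank\tilde\scE^{Q(\Fr)}_{(\alpha,0),(0,1)}=\fr(\alpha)$, and invert the unit $[\fr(\alpha)]_\kappa$. The only cosmetic difference is your projective-bundle/Thom-isomorphism gloss, which is not needed (and slightly misleading, since no semistable locus or virtual class enters): after the residue one simply uses that $\pi_{\fM_\alpha^{\Fr}}\circ\Phi\circ(\iota^Q\times\id)=\pr_1$ together with $(I_*\partial)(\cO)=1$, exactly as in the paper's appeal to Proposition~\ref{prop:rigidity}.
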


\begin{proof}
  By the definition of the Lie bracket
  (Theorem~\ref{thm:auxiliary-stack-vertex-algebra}),
  \begin{align*}
    (\kappa^{-\frac{1}{2}} - \kappa^{\frac{1}{2}}) \cdot &\left((\pi_{\fM_\alpha^{\Fr}})_* [\iota^Q_*\phi, I_*\partial]\right)(\cE) \\
    &= \rho_K (\phi \boxtimes I_*\partial)(\iota^Q \times \id)^*\left(\hat\Theta_{(\alpha,0),(0,1)}(z) \otimes z^{\deg_1} \Phi_{(\alpha,0),(0,1)}^* \pi_{\fM_\alpha^{\Fr}}^* \cE\right) \\
    &= (\phi \boxtimes I_*\partial) \rho_K\left(\hat\Theta_{\alpha,(0,1)}(z) \otimes z^{\deg_1} \pr_1^* \cE\right)
  \end{align*}
  where $\hat\Theta_{\alpha,(0,1)}(z) \coloneqq (\iota^Q \times
  \id)^*\hat\Theta_{(\alpha,0),(0,1)}(z)$ and the second equality
  follows from Lemma~\ref{lem:pl-group-functoriality} and the
  commutative triangle
  \[ \begin{tikzcd}[column sep=large]
    \fM_\alpha^{\Fr} \times \tilde\fM_{0,1}^{Q(\Fr)} \ar{r}{\Phi \circ (\iota^Q \times \id)} \ar{dr}[swap]{\pr_1} & \tilde\fM_{\alpha,1}^{Q(\Fr)} \ar{d}{\pi_{\fM_\alpha^{\Fr}}} \\
    {} & \fM_\alpha^{\Fr}
  \end{tikzcd} \]
  where $\pr_1$ is projection to the first factor. In
  $K_\circ^{\tilde\sT}(-)^\pl$, by definition $z^{\deg_1} = \id$, and
  since $\cE$ itself is independent of $z$, it remains to compute that
  \[ \rho_K \hat\Theta_{\alpha,(0,1)}(z) = (-1)^{-\fr(\alpha)} \left(\kappa^{\frac{1}{2} \fr(\alpha)} - \kappa^{-\frac{1}{2} \fr(\alpha)}\right) \]
  using the same steps as in the proof of
  Proposition~\ref{prop:rigidity} and that $\rank
  \tilde\scE^{Q(\Fr)}_{(\alpha,0),(0,1)} = \fr(\alpha)$. Putting it all
  together, we get
  \[ \left(\pi^\pl_* [\iota^Q_*\phi, \partial]\right)(\cE) = [\fr(\alpha)]_\kappa \cdot (\phi \boxtimes I_*\partial)(\pr_1^* \cE) = [\fr(\alpha)]_\kappa \cdot \phi(\cE) \]
  as claimed. In the second equality we used that $(I_*\phi)(\cO) =
  \phi(\cO) = 1$.
\end{proof}

\subsubsection{}

\begin{definition}[Construction of semistable invariants] \label{def:sstable-explicit}
  If $\fM_\alpha^\sst(\tau) = \emptyset$, we define
  $\sz_\alpha^{\Fr}(\tau) \coloneqq 0$. Otherwise, the implicit
  characterization \eqref{eq:sstable-def} of $\sz_\alpha^{\Fr}$ can be
  written as
  \[ I_*\tilde\sZ_{\alpha,1}^{\Fr}(\tau^Q) = \left[\iota^Q_*\sz_\alpha^{\Fr}(\tau), I_*\partial\right] + \sum_{n>1} \cdots, \]
  where the Lie bracket is the $n=1$ term. Using
  Lemma~\ref{lem:pushforward-of-bracket-partial}, this can be turned
  into an explicit definition by applying $(\pi_{\fM_\alpha^{\Fr}})_*$
  to both sides and dividing by $[\fr(\alpha)]_\kappa$, namely:
  \begin{equation} \label{eq:sstable-explicit}
    \sz_\alpha^{\Fr}(\tau) = \frac{1}{[\fr(\alpha)]_\kappa} (\pi_{\fM_\alpha^{\Fr}})_* \bigg(I_*\tilde\sZ_{\alpha,1}^{\Fr}(\tau^Q) - \sum_{n>1} \cdots\bigg)
  \end{equation}
  where the omitted terms $\cdots$ only involve
  $\sz_\beta^{\Fr}(\tau)$ for $\beta \in R_\alpha$, so in particular
  $r(\beta) < r(\alpha)$. Since $\fM_\alpha^{\sst}(\tau) \subset
  \fM_\alpha^{\Fr,\pl}$, if $\beta$ appears in the decompositions in
  \eqref{eq:sstable-def} then $\fM_{\beta}^{\sst}(\tau) \subset
  \fM_{\beta}^{\Fr,\pl}$ as well because $\cat{A}^{\Fr}$ is closed
  under direct summands. Hence \eqref{eq:sstable-explicit} is a valid
  inductive definition of $\sz_\alpha^{\Fr}(\tau)$.
\end{definition}

\subsubsection{}

\begin{proposition}[Theorem~\ref{thm:sst-invariants}\ref{item:vss-support}] \label{prop:vss-support}
  $\sz_\alpha^{\Fr}(\tau)$ is supported on
  $(\Pi^\pl_\alpha)^{-1}(\fM_\alpha^{\sst}(\tau))$.
\end{proposition}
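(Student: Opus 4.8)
The plan is to argue by induction on $r(\alpha)$, exactly as in the inductive Definition~\ref{def:sstable-explicit}, and to trace the support through the explicit formula \eqref{eq:sstable-explicit}. Recall that support of an element of operational K-homology is closed under pushforward and compatible with the localization axiom, so it suffices to control the support of each term on the right hand side of \eqref{eq:sstable-explicit}.

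\begin{proof}
We argue by induction on $r(\alpha)$. If $\fM_\alpha^\sst(\tau) = \emptyset$ then $\sz_\alpha^{\Fr}(\tau) = 0$ by definition and there is nothing to prove. Otherwise, by Definition~\ref{def:sstable-explicit} we have
\[ \sz_\alpha^{\Fr}(\tau) = \frac{1}{[\fr(\alpha)]_\kappa} (\pi_{\fM_\alpha^{\Fr}})_* \bigg(I_*\tilde\sZ_{\alpha,1}^{\Fr}(\tau^Q) - \sum_{n>1} \frac{1}{n!} \left[\iota^Q_*\sz^{\Fr}_{\alpha_n}(\tau), \left[\cdots, \left[\iota^Q_*\sz^{\Fr}_{\alpha_1}(\tau), I_*\partial\right]\cdots\right]\right]\bigg), \]
so it is enough to show that each summand, before applying $(\pi_{\fM_\alpha^{\Fr}})_*$, is supported on $(\Pi^\pl_{\alpha,1})^{-1}(\pi_{\fM_\alpha^{\Fr,\pl}}^{-1}(\fM_\alpha^\sst(\tau)))$, since $\pi_{\fM_\alpha^{\Fr}}$ maps this locus into $(\Pi^\pl_\alpha)^{-1}(\fM_\alpha^\sst(\tau))$ and pushforward preserves support (Definition~\ref{def:supported-on}). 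For the first term $I_*\tilde\sZ_{\alpha,1}^{\Fr}(\tau^Q)$: the invariant $\tilde\sZ_{\alpha,1}^{\Fr}(\tau^Q) = \chi(\tilde\fM_{\alpha,1}^{Q(\Fr),\sst}(\tau^Q), \hat\cO^\vir \otimes -)$ is by construction (Definition~\ref{bg:def:univ-enum-inv}) supported on the open substack $\tilde\fM_{\alpha,1}^{Q(\Fr),\sst}(\tau^Q) \subset \tilde\fM_{\alpha,1}^{Q(\Fr),\pl}$, since its $\fix$ map factors through the $\sT$-fixed locus therein. By Lemma~\ref{lem:pairs-stack-sst=st}, a $\tau^Q$-stable triple $(E,V,\rho)$ with $\dim V = 1$ has $E$ necessarily $\tau$-semistable, i.e. the image of $\tilde\fM_{\alpha,1}^{Q(\Fr),\sst}(\tau^Q)$ under $\pi_{\fM_\alpha^{\Fr,\pl}}$ lies in $\fM_\alpha^\sst(\tau)$; hence $I_*\tilde\sZ_{\alpha,1}^{\Fr}(\tau^Q)$ is supported on the preimage as desired.

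For the terms with $n > 1$: by the inductive hypothesis, each $\sz^{\Fr}_{\alpha_i}(\tau)$ (with $\alpha_i \in R_\alpha$, hence $r(\alpha_i) < r(\alpha)$) is supported on $(\Pi^\pl_{\alpha_i})^{-1}(\fM_{\alpha_i}^\sst(\tau))$. The key point is that the Lie bracket $[-,-]$ on $K_\circ^{\tilde\sT}(\tilde\fM^{Q(\Fr)})^\pl$ of Theorem~\ref{thm:auxiliary-stack-vertex-algebra} is, by formula~\eqref{eq:monoidal-stack-lie-bracket}, built from $\Phi_*$ applied to (a cap-product modification of) an external tensor product $\tilde\phi \boxtimes \tilde\psi$; external tensor of elements supported on $\fX' \subset \fX$ and $\fY' \subset \fY$ is supported on $\fX' \times \fY' \subset \fX \times \fY$, and $\Phi_*$ of something supported on $\fX' \times \fY'$ is supported on $\Phi(\fX' \times \fY')$. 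Since $\Phi_{(\alpha_i,0),(\alpha_j,0)}$ and $\Phi_{(\alpha,0),(0,1)}$ send products of semistable loci (of classes summing to $\alpha$, all of the same $\tau$-slope) to semistable triples over $\fM_\alpha^\sst(\tau)$ — again using the $\Phi$-compatibility of stabilities and the closure of $\cat{A}^{\Fr}$ under direct sums, together with Lemma~\ref{lem:pairs-stack-sst=st} for the final $\Phi(-,-,\partial)$ step — each iterated bracket is supported on $(\Pi^\pl_{\alpha,1})^{-1}(\pi_{\fM_\alpha^{\Fr,\pl}}^{-1}(\fM_\alpha^\sst(\tau)))$. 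Applying $(\pi_{\fM_\alpha^{\Fr}})_*$ and combining, $\sz_\alpha^{\Fr}(\tau)$ is supported on $(\Pi^\pl_\alpha)^{-1}(\fM_\alpha^\sst(\tau))$.
\end{proof}

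The main obstacle is the bookkeeping in the $n>1$ case: one must check that the iterated bracket, as an operational K-homology class on the auxiliary stack, genuinely has its $\fix$-map factoring through the appropriate semistable locus, which requires unwinding how $\boxtimes$, $\cap$ with $\hat\Theta$, $\Phi_*$, and $D(z)$ interact with the notion of support in Definition~\ref{def:supported-on}; none of these individually is hard, but the composite needs care, and it is the one place where the specific monoidal geometry (that $\Phi$ maps a product of semistable loci of equal slope into the semistable locus) is used rather than formal properties of $\bK_\circ^{\tilde\sT}$.
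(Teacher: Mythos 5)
Your argument is correct in substance, but it takes a different (and considerably longer) route than the paper. The paper's own proof is a two-line trick: the explicit formula \eqref{eq:sstable-explicit} manifestly produces a class supported on $\fM_\alpha^{\Fr}$, and since every step of Definition~\ref{def:sstable-explicit} goes through verbatim after shrinking the domain of $\Fr$ to the full exact $\sT$-invariant subcategory of $\tau$-semistable objects, one may assume without loss of generality that $\fM_\beta^{\Fr} = (\Pi_\beta^\pl)^{-1}(\fM_\beta^{\sst}(\tau))$ for all $\beta$, whence the support statement is immediate. This sidesteps entirely the bookkeeping you flag as the main obstacle: there is no need to trace how $\boxtimes$, $\cap\,\hat\Theta$, $D(z)$ and $\Phi_*$ interact with supports, nor to know where $\Phi$ sends products of semistable loci. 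Your term-by-term induction does work, but two points deserve tightening. First, you invoke Lemma~\ref{lem:pairs-stack-sst=st} for the claim that $\tau^Q$-semistable pairs have $\tau$-semistable underlying object $E$; that lemma only goes the other way (it assumes $E$ semistable and characterizes pair stability). The fact you need is true and is the content of Lemma~\ref{wc:lem:general-semistable-loci}\ref{wc:lem:general-semistable-loci-ii} (for the more general $\tau^s_x$, following \cite[Prop.\ 10.3]{Joyce2021}), so cite that instead. Second, the image of $\Phi$ in the bracket terms does not consist of ``semistable triples'': e.g.\ $\Phi$ applied to $(E,0,0)$ and $(0,V,0)$ gives $(E,V,0)$, which is never $\tau^Q$-semistable; all you need, and all that is true, is that the underlying object is a direct sum of $\tau$-semistables of equal slope and hence $\tau$-semistable, so the support lies over $(\Pi^\pl_\alpha)^{-1}(\fM_\alpha^{\sst}(\tau))$ after applying $(\pi_{\fM_\alpha^{\Fr}})_*$. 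With those repairs your proof stands; what the paper's shrinking argument buys is that it proves the statement without any of this, while your version makes explicit the geometric fact that the auxiliary semistable loci live over the original semistable locus.
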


\begin{proof}
  The result of \eqref{eq:sstable-explicit} is clearly supported on
  $\fM_\alpha^{\Fr} \subset \fM_\alpha$. Since all steps in the
  construction of Definition~\ref{def:sstable-explicit} hold equally
  well after shrinking the domain of definition of $\Fr$ from
  $\cat{A}^{\Fr}$ to the full exact $\sT$-invariant subcategory of
  $\tau$-semistable objects, we may without loss of generality assume
  that $\fM_{\beta}^{\Fr} = (\Pi^\pl_\beta)^{-1}(\fM_{\beta}^{\sst})$
  for all classes $\beta$. This shows that $\sz_{\alpha}^{\Fr}(\tau)$
  is in fact supported on
  $(\Pi^\pl_\alpha)^{-1}(\fM_{\alpha}^{\sst})$.
\end{proof}

\subsubsection{}

\begin{proposition}[Theorem~\ref{thm:sst-invariants}\ref{item:vss-no-strictly-semistables}] \label{prop:vss-no-strictly-semistables}
  If there are no strictly $\tau$-semistable objects of class
  $\alpha$, then
  \[ (\Pi^\pl_\alpha)_*\sz_{\alpha}^{\Fr}(\tau) = \chi\left(\fM^{\sst}_{\alpha}(\tau), \hat\cO^{\vir}_{\fM_{\alpha}^{\sst}(\tau)}\otimes -\right). \]
\end{proposition}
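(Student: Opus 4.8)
The plan is to extract $\sz_\alpha^{\Fr}(\tau)$ from the implicit relation \eqref{eq:sstable-def} in the case where $\fM_\alpha^{\st}(\tau)=\fM_\alpha^{\sst}(\tau)$, and then identify the resulting K-homology class using the virtual projective bundle formula (or rather its homology incarnation, Lemma~\ref{lem:homology-projective-bundle-formula}). First I would observe that when there are no strictly $\tau$-semistable objects of class $\alpha$, the only decomposition contributing to the right hand side of \eqref{eq:sstable-def} with all $\fM_{\alpha_i}^{\sst}(\tau)\neq\emptyset$ and $\tau(\alpha_i)=\tau(\alpha)$ is potentially $\alpha=\alpha_1+\cdots+\alpha_n$ with $n>1$ — but any such proper decomposition would give a strictly $\tau$-semistable object of class $\alpha$ (an object $E_1\oplus\cdots\oplus E_n$ with $E_i$ semistable of the same slope), contradicting the hypothesis. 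So the sum collapses to the single term $n=1$, giving $I_*\tilde\sZ_{\alpha,1}^{\Fr}(\tau^Q)=[\iota^Q_*\sz_\alpha^{\Fr}(\tau), I_*\partial]$. Applying $(\pi_{\fM_\alpha^{\Fr}})_*$ and Lemma~\ref{lem:pushforward-of-bracket-partial}, this already shows $\sz_\alpha^{\Fr}(\tau) = [\fr(\alpha)]_\kappa^{-1}(\pi_{\fM_\alpha^{\Fr}})_* I_*\tilde\sZ_{\alpha,1}^{\Fr}(\tau^Q)$.

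Next I would compute the right hand side geometrically. By Lemma~\ref{lem:pairs-stack-sst=st}, when $\fM_\alpha^{\st}(\tau)=\fM_\alpha^{\sst}(\tau)$, a triple $(E,V,\rho)$ with $\dim V=1$ is $\tau^Q$-(semi)stable if and only if $E$ is $\tau$-stable and $\rho\neq 0$. Hence $\tilde\fM_{\alpha,1}^{Q(\Fr),\sst}(\tau^Q)$ is the complement of the zero section in the total space of the universal framing bundle $\cFr(\scE)$ over $\fM_\alpha^{\st,\Fr}$, i.e. the $\bC^\times$-rigidification of it is $\bP(\cFr(\scE))$ over $\fM_\alpha^{\st,\Fr,\pl}=\fM_\alpha^{\sst}(\tau)$. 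The symmetrized pullback APOT on this locus, pulled back along the smooth forgetful map $\pi_{\fM_\alpha^{\Fr,\pl}}$, is exactly the setup of Lemma~\ref{lem:symmetrized-projective-bundle-formula} with $\cV=\cFr(\scE)$. Thus $\tilde\sZ_{\alpha,1}^{\Fr}(\tau^Q)$, after de-rigidification and pushforward along $\pi_{\fM_\alpha^\Fr}$, is computed by the virtual projective bundle formula; taking $f(s)=1$ and invoking Corollary~\ref{cor:symmetrized-projective-bundle-formula}, the residue evaluates to the quantum integer $[\rank\cFr(\scE)]_\kappa = [\fr(\alpha)]_\kappa$, so that $(\pi_{\fM_\alpha^{\Fr}})_* I_*\tilde\sZ_{\alpha,1}^{\Fr}(\tau^Q) = [\fr(\alpha)]_\kappa\cdot\chi(\fM_\alpha^{\sst}(\tau),\hat\cO^\vir_{\fM_\alpha^{\sst}(\tau)}\otimes(\Pi_\alpha^\pl)^*(-))$.

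Combining the two steps: the factors of $[\fr(\alpha)]_\kappa$ cancel, and since $(\Pi_\alpha^\pl)_*$ of an element of the form $\chi(\fM_\alpha^{\sst}(\tau),\hat\cO^\vir\otimes(\Pi_\alpha^\pl)^*(-))$ is just $\chi(\fM_\alpha^{\sst}(\tau),\hat\cO^\vir\otimes-)$ by the push-pull formula in K-homology (using that $\Pi_\alpha^\pl$ is a $\bC^\times$-gerbe), we conclude $(\Pi^\pl_\alpha)_*\sz_\alpha^{\Fr}(\tau) = \chi(\fM^{\sst}_\alpha(\tau),\hat\cO^\vir_{\fM_\alpha^{\sst}(\tau)}\otimes-)$ as desired. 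The one point requiring care — and the main obstacle — is keeping track of where classes live during the rigidification/de-rigidification dance \eqref{eq:pairs-forgetful-rigidification-maps}: the invariant $\tilde\sZ_{\alpha,1}^\Fr(\tau^Q)$ is defined on $\tilde\fM_{\alpha,1}^{Q(\Fr),\pl}$, then pushed to $\tilde\fM_{\alpha,1}^{Q(\Fr)}$ via $I_*$, and only then does the forgetful map $\pi_{\fM_\alpha^\Fr}$ (rather than $\pi_{\fM_\alpha^{\Fr,\pl}}$) apply; one must check that the projective bundle $\bP(\cFr(\scE))\to\fM_\alpha^{\Fr,\pl}$ underlying the virtual projective bundle formula is compatible with this, i.e. that $\tilde\fM_{\alpha,1}^{Q(\Fr),\pl,\sst}\cong\bP_{\fM_\alpha^{\Fr,\pl}}(\cFr(\scE))$ and that the de-rigidification map $I$ corresponds to the natural inclusion $P_{\fM_\alpha^{\Fr}}(\cFr(\scE))\hookrightarrow\tot(\cFr(\scE))$. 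This is precisely the kind of bookkeeping that Lemma~\ref{lem:homology-projective-bundle-formula} was engineered to handle, so invoking it directly (with $\fX=\fM_\alpha^{\Fr}$, $X=\fM_\alpha^{\Fr,\pl}$, $\cV=\cFr(\scE)$) should make the argument clean.
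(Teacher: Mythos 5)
Your proposal is correct and follows essentially the same route as the paper: the hypothesis kills all $n>1$ terms (direct sums of same-slope semistables would be strictly semistable), Lemma~\ref{lem:pairs-stack-sst=st} identifies $\tilde\fM_{\alpha,1}^{Q(\Fr),\sst}(\tau^Q)$ as a $\bP^{\fr(\alpha)-1}$-bundle over $\fM_\alpha^{\sst}(\tau)=\fM_\alpha^{\st}(\tau)$, and Corollary~\ref{cor:symmetrized-projective-bundle-formula} cancels the $[\fr(\alpha)]_\kappa$. The only cosmetic difference is that the paper applies $(\Pi_\alpha^\pl)_*$ first and uses the commutative square \eqref{eq:pairs-forgetful-rigidification-maps} (with $\Pi^\pl\circ I=\id$) so the whole computation takes place on the rigidified side, which disposes of the de-rigidification bookkeeping you handle at the end via Lemma~\ref{lem:homology-projective-bundle-formula}.
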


\begin{proof}
  If terms with $n > 1$ exist in the sum in
  \eqref{eq:sstable-explicit}, then a choice of $[E_i] \in
  \fM_{\alpha_i}^\sst(\tau)$ for $i = 1, \ldots, n$ gives $[E_1 \oplus
    \cdots \oplus E_n] \in \fM_\alpha^\sst(\tau)$ which is strictly
  $\tau$-semistable. So only the $n = 1$ term contributes. Apply
  $(\Pi^\pl_\alpha)_*$ to both sides and use the commutative square
  \eqref{eq:pairs-forgetful-rigidification-maps} to obtain
  \[ (\Pi^\pl_\alpha)_*\sz_\alpha^{\Fr}(\tau) = \frac{1}{[\fr(\alpha)]_\kappa} (\pi_{\fM_\alpha^{\Fr,\pl}})_*\tilde{\sZ}_{\alpha,1}(\tau^Q). \]
  By
  Assumption~\ref{assump:semistable-invariants}\ref{assump:it:properness},
  $\fM_\alpha^{\sst}(\tau)^{\sT}$ is a proper algebraic space. By
  Lemma~\ref{lem:pairs-stack-sst=st}, the restriction
  \[ \pi_{\fM_\alpha^{\Fr,\pl}}\colon \tilde\fM_{\alpha,1}^{Q(\Fr),\sst}(\tau^Q) \to \fM_\alpha^{\sst}(\tau) = \fM_\alpha^{\st}(\tau) \]
  is a $\bP^{\fr(\alpha)-1}$-bundle, so by the projection formula and
  the virtual projective bundle formula
  (Corollary~\ref{cor:symmetrized-projective-bundle-formula}),
  \[ \frac{1}{[\fr(\alpha)]_\kappa} \chi\left(\tilde\fM_{\alpha,1}^{Q(\Fr),\sst}, \hat\cO^\vir \otimes (\pi_{\fM_\alpha^{\Fr,\pl}})^*(-)\right) = \chi\left(\fM_\alpha^\sst(\tau), \hat\cO^\vir \otimes -\right). \]
  Note that the right hand side is well-defined by
  Assumption~\ref{assump:semistable-invariants}\ref{assump:it:properness}.
\end{proof}

\subsubsection{}

\begin{proposition}[Theorem~\ref{thm:sst-invariants}\ref{item:vss-isomorphic-moduli}]
  If $\fM_\alpha^{\sst}(\tau) = \fM_\alpha^{\sst}(\tau')$ for all
  $\alpha$, then
  \[ \sz_\alpha^{\Fr}(\tau) = \sz_\alpha^{\Fr}(\tau') \]
  for all $\alpha$.
\end{proposition}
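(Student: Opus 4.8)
The plan is to induct on $r(\alpha)$, using
Assumption~\ref{assump:semistable-invariants}\ref{assump:it:rank-function}.
If $\fM_\alpha^{\sst}(\tau) = \emptyset$ then also
$\fM_\alpha^{\sst}(\tau') = \emptyset$ by hypothesis, and both
invariants are $0$ by definition. Otherwise one uses the explicit
inductive formula \eqref{eq:sstable-explicit}, which expresses
$\sz_\alpha^{\Fr}(\tau)$ in terms of two ingredients: the pair
invariant $\tilde\sZ_{\alpha,1}^{\Fr}(\tau^Q)$, and the ``correction''
sum over decompositions $\alpha = \alpha_1 + \cdots + \alpha_n$ with
$n>1$, $\tau(\alpha_i) = \tau(\alpha)$ and
$\fM_{\alpha_i}^{\sst}(\tau) \neq \emptyset$, of iterated Lie brackets
of the $\iota^Q_*\sz_{\alpha_i}^{\Fr}(\tau)$ with $I_*\partial$. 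The
constant $[\fr(\alpha)]_\kappa$ and the Lie algebra structure on
$K_\circ^{\tilde\sT}(\tilde\fM^{Q(\Fr)})^\pl_{\loc,\bQ}$ are
manifestly $\tau$-independent, so it suffices to show that these two
ingredients agree for $\tau$ and $\tau'$.

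For the correction sum, recall (as in
Definition~\ref{def:sstable-explicit}) that any $\beta$ appearing in
such a decomposition lies in $R_\alpha$ and hence has $r(\beta) <
r(\alpha)$, so by the inductive hypothesis $\sz_{\beta}^{\Fr}(\tau) =
\sz_{\beta}^{\Fr}(\tau')$. It then remains to check that the index set
of decompositions is the same for $\tau$ and $\tau'$. Since
$\fM_{\alpha_i}^{\sst}(\tau) = \fM_{\alpha_i}^{\sst}(\tau')$, the only
point is that ``$\tau(\alpha_i) = \tau(\alpha)$ for all $i$'' is
equivalent to ``$\tau'(\alpha_i) = \tau'(\alpha)$ for all $i$''. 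I
would prove this by choosing objects $E_i \in \fM_{\alpha_i}^{\sst}(\tau)
= \fM_{\alpha_i}^{\sst}(\tau')$: by standard properties of (weak)
stability conditions (a direct sum of $\tau$-semistable objects of
equal slope is $\tau$-semistable, and direct summands of a
$\tau$-semistable object share its slope), one has $\tau(\alpha_i) =
\tau(\alpha)$ for all $i$ if and only if $\bigoplus_i E_i$ is
$\tau$-semistable, i.e. $[\bigoplus_i E_i] \in \fM_\alpha^{\sst}(\tau)
= \fM_\alpha^{\sst}(\tau')$, which in turn holds if and only if
$\tau'(\alpha_i) = \tau'(\alpha)$ for all $i$.

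For the pair invariants, it suffices to show that
$\tilde\fM_{\alpha,1}^{Q(\Fr),\sst}(\tau^Q) =
\tilde\fM_{\alpha,1}^{Q(\Fr),\sst}(\tau'^Q)$ as open substacks of
$\tilde\fM_{\alpha,1}^{Q(\Fr),\pl}$: the APOT on this locus is obtained
by symmetrized pullback of the fixed, $\tau$-independent obstruction
theory on $\fM^{\Fr,\pl}_\alpha$, so equal substacks carry identical
symmetrized virtual structure sheaves and hence define the same
K-homology element. By Lemma~\ref{lem:pairs-stack-sst=st} --- together
with the observation, proved exactly as in that lemma, that
$\tau^Q$-semistability of a class-$(\alpha,1)$ triple forces $E$ to be
$\tau$-semistable --- a triple $(E, V, \rho)$ of class $(\alpha,1)$ is
$\tau^Q$-semistable if and only if $E$ is $\tau$-semistable, $\rho
\neq 0$, and there is no $0 \neq E' \subsetneq E$ with $\tau(E') =
\tau(E/E')$ and $\rho(V) \subset \Fr(E')$. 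The conditions $\rho \neq
0$ and $\rho(V) \subset \Fr(E')$ do not involve the stability
condition; ``$E$ is $\tau$-semistable'' reads $[E] \in
\fM_\alpha^{\sst}(\tau) = \fM_\alpha^{\sst}(\tau')$, hence is
equivalent to $E$ being $\tau'$-semistable; and, for $\tau$-semistable
$E$ and $0 \neq E' \subsetneq E$, the condition $\tau(E') = \tau(E/E')$
is equivalent --- by the weak see-saw property and the fact that sub-
and quotient objects of a $\tau$-semistable object of the same slope
are $\tau$-semistable --- to $E'$ and $E/E'$ both being
$\tau$-semistable of slope $\tau(E)$, which (applying the argument of
the previous paragraph to $E' \oplus E/E'$, of class $\alpha$)
coincides with the corresponding statement for $\tau'$. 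Hence the
three conditions characterizing $\tau^Q$-semistability agree with
those for $\tau'^Q$, the two semistable loci coincide, and the pair
invariants are equal; \eqref{eq:sstable-explicit} then yields
$\sz_\alpha^{\Fr}(\tau) = \sz_\alpha^{\Fr}(\tau')$.

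The step I expect to require the most care is this last one: verifying
that the auxiliary weak stability condition $\tau^Q$ --- in particular
the ``destabilizing subobject in the framing'' condition of
Lemma~\ref{lem:pairs-stack-sst=st} --- is genuinely determined by the
collection $\{\fM_\beta^{\sst}(\tau)\}_\beta$ of semistable loci.
Everything else is bookkeeping around the inductive structure of
\eqref{eq:sstable-explicit}.
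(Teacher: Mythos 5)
Your overall route is the paper's: induct on $r(\alpha)$ through the explicit formula \eqref{eq:sstable-explicit}, observe that the Lie-algebra data and the index set of decompositions are determined by the semistable loci, and reduce everything to the equality of the auxiliary pair-semistable loci $\tilde\fM_{\alpha,1}^{Q(\Fr),\sst}(\tau^Q)=\tilde\fM_{\alpha,1}^{Q(\Fr),\sst}((\tau')^Q)$ --- which is exactly what the paper's (very terse) proof asserts. Your treatment of the index set is correct for weak stability conditions: direct sums of semistables of equal slope are semistable, and summands of a semistable object all have slope equal to that of the sum by the weak see-saw property, so that part is fine, as is your observation that $(E',0,0)\subset(E,V,\rho)$ forces $E$ itself to be $\tau$-semistable.

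The gap is in the step you yourself flag as delicate. You invoke ``sub- and quotient objects of a $\tau$-semistable object of the same slope are $\tau$-semistable''. This is true for genuine stability conditions (see-saw property), but Theorem~\ref{thm:sst-invariants} is stated for \emph{weak} stability conditions, and there it fails. Take slope stability $\mu^H$ (Definition~\ref{def:VW-stability}) on a surface, $E=\cO_S^{\oplus 2}$, and $E'=\cI_Z\oplus 0$ for a nonempty $0$-dimensional $Z\subset S$: then $E$ is $\mu^H$-semistable and $\mu^H(E')=\mu^H(E/E')=\mu^H(E)$, since slope sees only the two leading Hilbert coefficients, but $E/E'\cong\cO_Z\oplus\cO_S$ is \emph{not} $\mu^H$-semistable (the subobject $\cO_Z$ has strictly larger slope than its quotient $\cO_S$); nothing in Assumption~\ref{assump:semistable-invariants}\ref{assump:it:semi-weak-stability} rules this out, as that assumption constrains only slopes of classes, not semistability of the sub/quotient objects. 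Consequently your asserted equivalence ``$\tau(E')=\tau(E/E')$ iff $E'$ and $E/E'$ are both $\tau$-semistable of slope $\tau(E)$'' fails in the forward direction, and with it your argument that the third condition of Lemma~\ref{lem:pairs-stack-sst=st} is determined by the collection $\{\fM_\beta^{\sst}(\tau)\}_\beta$. What is actually needed is: if \emph{some} $0\neq E'\subsetneq E$ with $\tau(E')=\tau(E/E')$ and $\rho(V)\subset\Fr(E')$ exists, then some possibly different $E''\supseteq E'$ with $\rho(V)\subset\Fr(E'')$ and with both $E''$ and $E/E''$ genuinely $\tau$-semistable of slope $\tau(\alpha)$ exists (in the example above $E''=\cO_S\oplus 0$ works); producing such an $E''$, e.g.\ by enlarging $E'$ along HN/Jordan--H\"older-type filtrations, requires an argument you have not supplied. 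Only after that replacement does your transfer to $\tau'$ via the semistable loci go through.
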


\begin{proof}
  This is clear from the inductive definition
  \eqref{eq:sstable-explicit}: the hypothesis implies
  \[ \tilde\fM_{\alpha,1}^{\sst}(\tau^Q) = \tilde\fM_{\alpha,1}^{\sst}((\tau')^Q), \]
  which implies $\tilde\sZ_{\alpha,1}^{\Fr}(\tau^Q) =
  \tilde\sZ_{\alpha,1}^{\Fr}((\tau')^Q)$, and, by induction on
  $r(\alpha)$, it follows that $\sz_\alpha^{\Fr}(\tau) =
  \sz_\alpha^{\Fr}(\tau')$.
\end{proof}

\subsection{The master Lie algebra}
\label{sec:semistable-invariants-master-space}

\subsubsection{}

\begin{definition}
  Let $Q \wedge Q$ denote the quiver
  \[ \begin{tikzcd}[row sep=tiny]
    & \overset{V_1}{\blacksquare} \ar{r}{\rho_1} & \overset{\Fr_1(E)}{\blackbullet} \\
    \overset{V_3}{\blacksquare} \ar{ur}{\rho_3} \ar{dr}[swap]{\rho_4} \\
    & \overset{V_2}{\blacksquare} \ar{r}{\rho_2} & \overset{\Fr_2(E)}{\blackbullet}.
  \end{tikzcd} \]
  Given framing functors $\Fr_1, \Fr_2 \in \Frs$ such that
  \[ \fM_\alpha^\sst(\tau) \subset \fM_\alpha^{\Fr_1,\pl} \cap \fM_\alpha^{\Fr_2,\pl}, \]
  consider the auxiliary exact category $\tilde{\cat{A}}^{(Q \wedge
    Q)(\vec\Fr)}$ (Definition~\ref{def:auxiliary-stack})
  parameterizing triples $(E, \vec V, \vec \rho)$ as labeled above.
  For clarity, we will write $Q(\Fr_1) \wedge Q(\Fr_2)$ instead of $(Q
  \wedge Q)(\vec\Fr)$, and $\cFr_i(\cE_j)$ for the universal bundle of
  $\Fr_i(E_j)$. Note the natural isomorphisms
  \begin{align*}
    \iota^{Q \wedge Q}_1\colon \tilde\fM_{\alpha,d}^{Q(\Fr_1)} \cap \pi_{\fM_\alpha^{\Fr_1}}^{-1}(\fM_\alpha^{\vec\Fr}) &\xrightarrow{\sim} \tilde\fM_{\alpha,(d,0,0)}^{Q(\Fr_1)\wedge Q(\Fr_2)}, \\
    \iota^{Q \wedge Q}_2\colon \tilde\fM_{\alpha,d}^{Q(\Fr_2)} \cap \pi_{\fM_\alpha^{\Fr_2}}^{-1}(\fM_\alpha^{\vec\Fr}) &\xrightarrow{\sim} \tilde\fM_{\alpha,(0,d,0)}^{Q(\Fr_1)\wedge Q(\Fr_2)},
  \end{align*}
  and their rigidified versions $\iota_i^{Q\wedge Q,\pl}$. These are
  morphisms of graded monoidal $\sT$-stacks; we will repeatedly use
  that $\iota_1^{Q \wedge Q} \circ I_{\alpha,d} = I_{\alpha,(d,0,0)}
  \circ \iota_1^{Q \wedge Q,\pl}$ and similarly for $\iota_2^{Q \wedge
    Q}$. Define 
  \begin{alignat*}{2}
    \partial_1 &\coloneqq (\iota^{Q \wedge Q,\pl}_1)_* \partial, \qquad & \partial_2 &\coloneqq (\iota^{Q \wedge Q,\pl}_2)_* \partial, \\
    \tilde\sZ_{\alpha,(1,0,0)}^{\Fr_1}(\tau^Q) &\coloneqq (\iota^{Q \wedge Q,\pl}_1)_* \tilde\sZ_{\alpha,1}^{\Fr_1}(\tau^Q), \qquad & \tilde\sZ_{\alpha,(0,1,0)}^{\Fr_2}(\tau^Q) &\coloneqq (\iota^{Q \wedge Q,\pl}_2)_* \tilde\sZ_{\alpha,1}^{\Fr_2}(\tau^Q).
  \end{alignat*}
  The latter are well-defined: by Lemma~\ref{lem:pairs-stack-sst=st},
  $\tilde\sZ_{\alpha,1}^{\Fr_i}(\tau^Q)$ is supported on the
  domain of $\iota^{Q \wedge Q,\pl}_i$.
\end{definition}

\subsubsection{}

\begin{theorem} \label{thm:pairs-master-relation}
  Using the Lie bracket in
  $K_\circ^{\tilde\sT}(\tilde\fM^{Q(\Fr_1)\wedge Q(\Fr_2)})^\pl_{\loc}$
  (Theorem~\ref{thm:auxiliary-stack-vertex-algebra}),
  \begin{equation} \label{eq:pairs-master-relation}
    0 = \left[I_*\tilde\sZ_{\alpha,(1,0,0)}^{\Fr_1}(\tau^Q), I_*\partial_2\right] + \left[I_*\partial_1, I_*\tilde\sZ_{\alpha,(0,1,0)}^{\Fr_2}(\tau^Q)\right] + \sum_{\substack{\alpha = \alpha_1+\alpha_2\\\forall i: \tau(\alpha_i) = \tau(\alpha)\\ \;\;\fM_{\alpha_i}^{\sst}(\tau) \neq \emptyset}} \left[I_*\tilde\sZ_{\alpha_1,(1,0,0)}^{\Fr_1}(\tau^Q), I_*\tilde\sZ_{\alpha_2,(0,1,0)}^{\Fr_2}(\tau^Q)\right]
  \end{equation}
  is a relation in degree $(\alpha, (1,1,0))$.
\end{theorem}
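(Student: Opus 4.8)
The plan is to run the master-space argument of Mochizuki--Joyce type, adapted to the $\kappa$-symmetric setting, for the one-parameter family of weak stability conditions on $\tilde{\cat A}^{Q(\Fr_1)\wedge Q(\Fr_2)}$ that interpolates between the one favoring the $V_1$-framing (producing $\tilde\sZ^{\Fr_1}$-type invariants) and the one favoring the $V_2$-framing (producing $\tilde\sZ^{\Fr_2}$-type invariants). Concretely I would use the master space $\bM_\alpha$ of Proposition~\ref{prop:pairs-master-space-fixed-loci}: a moduli space of objects of $\tilde{\cat A}^{Q(\Fr_1)\wedge Q(\Fr_2)}$, semistable for a stability condition on the wall, carrying an auxiliary $\bC^\times$-action with equivariant coordinate denoted $s$, equipped with the $\kappa$-symmetric APOT obtained by symmetrized pullback (Theorem~\ref{thm:APOTs}\ref{it:APOT-sym-pullback}) along the forgetful map $\bM_\alpha\to\fM_\alpha$. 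By Assumption~\ref{assump:semistable-invariants}\ref{assump:it:properness} the relevant fixed loci $\bM_\alpha^{\sT_w}$ (for $w$ as in Lemma~\ref{lem:master-space-no-poles}) are proper, so Lemma~\ref{lem:master-space-no-poles} gives that $\chi(\bM_\alpha,\hat\cO^\vir_{\bM_\alpha}\otimes-)$, and hence its pushforward to $\tilde\fM^{Q(\Fr_1)\wedge Q(\Fr_2)}_{\alpha,(1,1,0)}$ along the natural map, has no pole in $s$ and is therefore annihilated by $\rho_{K,s}$.

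The next step is to describe $\bM_\alpha^{\bC^\times}$. As in Lemma~\ref{lem:pairs-stack-sst=st}, the simple-wall structure of the stability condition forces a $\bC^\times$-fixed object to split as $E=E_1\oplus E_2$ with each $E_i$ either zero or $\tau$-semistable of slope $\tau(\alpha)$ with $\fM^\sst_{\alpha_i}(\tau)\neq\emptyset$, the $\Fr_1$-framing data carried by $E_1$ and the $\Fr_2$-framing data by $E_2$ (the $V_3$-vertex data being redundant on a fixed locus). This should yield
\[ \bM_\alpha^{\bC^\times}\cong\bigsqcup_{\alpha=\alpha_1+\alpha_2}\tilde\fM^{Q(\Fr_1),\sst}_{\alpha_1,1}\times\tilde\fM^{Q(\Fr_2),\sst}_{\alpha_2,1}, \qquad \tilde\fM^{Q(\Fr_i),\sst}_{0,1}\coloneqq\pt, \]
and the composite of $\bM_\alpha^{\bC^\times}\hookrightarrow\bM_\alpha$ with the natural forgetful and direct-sum maps realizes, on the $(\alpha_1,\alpha_2)$-component, exactly the ``external product then apply $\Phi$'' map underlying the bracket $[I_*\tilde\sZ^{\Fr_1}_{\alpha_1,(1,0,0)}(\tau^Q),I_*\tilde\sZ^{\Fr_2}_{\alpha_2,(0,1,0)}(\tau^Q)]$ (with $\tilde\sZ_{0,1}=\partial$ on the boundary components $\alpha_1=\alpha$ or $\alpha_2=\alpha$, which produce the first two summands of \eqref{eq:pairs-master-relation}).

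Then I would localize and take residues. The $\bC^\times$-localization formula (Theorem~\ref{thm:APOTs}\ref{it:APOT-localization}) rewrites $\chi(\bM_\alpha,\hat\cO^\vir_{\bM_\alpha}\otimes-)$ as $\sum_{\alpha_1+\alpha_2=\alpha}\chi\bigl(\bM_\alpha^{\bC^\times},\tfrac{\hat\cO^\vir}{\se(\cN^\vir)}\otimes-\bigr)$ on the corresponding component, so combining with the first paragraph, $0=\sum_{\alpha_1+\alpha_2=\alpha}\rho_{K,s}(\text{contribution of the }(\alpha_1,\alpha_2)\text{-fixed locus})$. To finish, I would identify each summand with a Lie bracket. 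By the formula \eqref{eq:symmetrized-pullback-Tvir} for $\bT^\vir$, the bilinearity and $\kappa$-symmetry of $\scE$, and the shape \eqref{eq:framed-stack-forgetful-map-cotangent} of $\bF^{Q(\Fr_1)\wedge Q(\Fr_2)}$, the movable part of $\bT^\vir_{\bM_\alpha}$ on the $(\alpha_1,\alpha_2)$-fixed locus equals $\tilde\scE^{Q(\Fr_1)\wedge Q(\Fr_2)}_{(\alpha_1,\vec d_1),(\alpha_2,\vec d_2)}$ plus its $(12)$-swapped dual, weighted by $s^{-1}$ and $s$ respectively --- that is, $\fix^*\hat\Theta^{Q(\Fr_1)\wedge Q(\Fr_2)}_{(\alpha_1,\vec d_1),(\alpha_2,\vec d_2)}(s)$ of Theorem~\ref{thm:auxiliary-stack-vertex-algebra} --- while the non-movable part agrees with the virtual tangent bundle of the product of the two pairs-stacks, so that Theorem~\ref{thm:APOT-comparison} (whose hypothesis \eqref{eq:APOT-comparison-Tvir} is then satisfied) identifies the restricted APOT with the external product of the symmetrized-pullback APOTs, whence $\hat\cO^\vir_{\bM_\alpha}/\se(\cN^\vir)=\hat\cO^\vir\boxtimes\hat\cO^\vir\otimes\hat\se_{s^{-1}}(\cdots)/\hat\se_s(\cdots)$. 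Applying $\rho_{K,s}$ then produces exactly $(\kappa^{-\frac12}-\kappa^{\frac12})\cdot[I_*\tilde\sZ^{\Fr_1}_{\alpha_1,(1,0,0)}(\tau^Q),I_*\tilde\sZ^{\Fr_2}_{\alpha_2,(0,1,0)}(\tau^Q)]$ by the definition of the bracket in Theorem~\ref{thm:auxiliary-stack-vertex-algebra}, with the master-space coordinate $s$ playing the role of the vertex variable $z$; summing and dividing by the nonzero scalar $\kappa^{-\frac12}-\kappa^{\frac12}\in\bk_{\tilde\sT,\loc}$ gives \eqref{eq:pairs-master-relation}.

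The main obstacle I expect is this last virtual-normal-bundle bookkeeping. Matching $(\bT^\vir_{\bM_\alpha}|_F)^{\mathrm{mov}}$ with the $s$-twisted bilinear element $\hat\Theta^{Q(\Fr_1)\wedge Q(\Fr_2)}(s)$ requires carefully tracking the quiver contribution $\bF^{Q(\Fr_1)\wedge Q(\Fr_2)}$ (which records the two framings and the way the $V_3$-vertex splits across $E_1\oplus E_2$), the $\kappa^{-1}(12)^*(-)^\vee$-summand built into $\tilde\scE^{Q(\Fr_1)\wedge Q(\Fr_2)}$, the $\bT_f-\kappa^{-1}\bT_f^\vee$ correction from \eqref{eq:symmetrized-pullback-Tvir}, and the compensating factors coming from $\hat\se$ and the virtual canonical square root --- all with the $\bC^\times$-weights arranged so that the movable part is precisely the off-diagonal $\Ext$-piece (together with the auxiliary-framing piece in the boundary terms). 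Verifying the hypothesis \eqref{eq:APOT-comparison-Tvir} of Theorem~\ref{thm:APOT-comparison} on each fixed component is the same computation in another guise, and the remaining properness and resolution-property hypotheses needed to invoke the localization and comparison theorems are routine given Assumption~\ref{assump:semistable-invariants}\ref{assump:it:properness} and Remark~\ref{rem:localization-resolution-property}.
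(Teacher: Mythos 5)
Your overall strategy is the paper's: take the master space $\bM_\alpha$ with the auxiliary $\bC^\times$-action, use properness of $\bM_\alpha^{\sT_w}$ and Lemma~\ref{lem:master-space-no-poles} to kill the residue of the total localized invariant, and identify the fixed-locus contributions with Lie brackets via the $\hat\Theta$-identification of virtual normal bundles and Theorem~\ref{thm:APOT-comparison}. Your treatment of the splitting loci $Z_{\alpha_1,\alpha_2}$ (with $\alpha_1,\alpha_2\neq 0$) matches \S\ref{sec:pairs-master-term-3} essentially verbatim. But there is a genuine gap in your description of the fixed locus: you claim
\[ \bM_\alpha^{\bC^\times}\cong\bigsqcup_{\alpha=\alpha_1+\alpha_2}\tilde\fM^{Q(\Fr_1),\sst}_{\alpha_1,1}\times\tilde\fM^{Q(\Fr_2),\sst}_{\alpha_2,1},\qquad \tilde\fM^{Q(\Fr_i),\sst}_{0,1}=\pt, \]
with the ``boundary'' components ($\alpha_1=\alpha$ or $\alpha_2=\alpha$) producing the two $\partial$-terms as degenerate products. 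This is false for the master space of Proposition~\ref{prop:pairs-master-space-fixed-loci}: on the component $Z_{\rho_4=0}$ the object $E$ does not split at all, and $\tau^{Q\wedge Q}$-stability forces \emph{both} framings $\rho_1$ and $\rho_2$ to be nonzero, so $Z_{\rho_4=0}$ is a $\bP^{\fr_2(\alpha)-1}$-bundle over $\tilde\fM^{Q(\Fr_1)\wedge Q(\Fr_2),\sst}_{\alpha,(1,0,0)}$ (and symmetrically $Z_{\rho_3=0}$ is a $\bP^{\fr_1(\alpha)-1}$-bundle), not a product with a point; likewise $V_3$ is not ``redundant'' there but is what rigidifies the surviving framing. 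The splitting components only occur for $\alpha_1,\alpha_2\neq 0$, exactly as in the sum in \eqref{eq:pairs-master-relation}.

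Consequently the identification of the contributions of $Z_{\rho_4=0}$ and $Z_{\rho_3=0}$ with $[I_*\tilde\sZ^{\Fr_1}_{\alpha,(1,0,0)}(\tau^Q),I_*\partial_2]$ and $[I_*\partial_1,I_*\tilde\sZ^{\Fr_2}_{\alpha,(0,1,0)}(\tau^Q)]$ cannot be read off the same way as for $Z_{\alpha_1,\alpha_2}$: after taking $\rho_{K,z}$ of the (purely weight-$\pm1$) normal-bundle factor, what remains is an integral over the projective bundle, and one must then show that pushing this forward equals the bracket with $\partial$. In the paper this is precisely the content of \S\ref{sec:pairs-master-term-1}: one identifies $Z_{\rho_4=0}=P_{\fX}(\cFr_2(\cE))$, checks via Theorem~\ref{thm:APOT-comparison} that its APOT is the symmetrized pullback along the bundle projection, and then invokes the homology projective bundle formula (Lemma~\ref{lem:homology-projective-bundle-formula}), whose proof is exactly the statement that the open complement of the zero section of $\tot_{\fX}(\cFr_2(\cE))$ --- with zero section the direct-sum map $\Phi_{(\alpha,(1,0,0)),(0,(0,1,0))}$ --- reproduces $(\kappa^{-1/2}-\kappa^{1/2})\cdot[\,\cdot\,,I_*\partial_2]$. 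Your proposal never invokes this lemma, and because of the incorrect fixed-locus description it has no mechanism to produce these two terms; this is the missing step. The rest of the argument (pole cancellation, $\hat\Theta$-bookkeeping, APOT comparison, division by $\kappa^{-1/2}-\kappa^{1/2}$) is sound and coincides with the paper's proof.
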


The pushforward of this relation along $\pi_{\fM^{\vec\Fr,\pl}_\alpha}$ is related to the master space relation used in \cite[Theorem
  4.2.7]{liu_eq_k_theoretic_va_and_wc}, or, in homology, \cite[Corollary 9.10]{Joyce2021}. We will give the same
geometric, wall-crossing proof. This will occupy the remainder of this
subsection. From now on, the class $\alpha$ is fixed once and for all.

\subsubsection{}

\begin{proof}[Proof of Theorem~\ref{thm:pairs-master-relation}.] 
On objects of the category $\tilde{\cat{A}}^{Q(\Fr_1)\wedge Q(\Fr_2)}$
with $0 \le \beta \le \alpha$, define the weak stability condition
(cf. \eqref{eq:pair-stability})
\begin{equation} \label{eq:master-pair-stability}
  \tau^{Q \wedge Q}(\beta, (d_1, d_2, d_3)) \coloneqq \begin{cases} \left(\tau(\beta), \frac{\epsilon d_1 + \epsilon d_2 + d_3}{r(\beta)}\right) & \beta \neq 0, \, \tau(\beta) = \tau(\alpha-\beta) \\ \left(\tau(\beta), \infty\right) & \beta \neq 0, \, \tau(\beta) > \tau(\alpha-\beta) \\ \left(\tau(\beta), -\infty\right) & \beta \neq 0, \, \tau(\beta) < \tau(\alpha-\beta) \\ \left(\infty, \frac{\epsilon d_1 + \epsilon d_2 + d_3}{d_1 + d_2 + d_3}\right) & \beta = 0 \end{cases}
\end{equation}
for a choice of $0 < \epsilon < 1/(2r(\alpha))$. The upper bound
ensures that there are no strictly $\tau^{Q \wedge Q}$-semistable
objects of class $(\alpha, (1, 1, 1))$. Comparing
\eqref{eq:master-pair-stability} with \eqref{eq:pair-stability}, it is
clear that $\iota_i^{Q \wedge Q}$ restrict to isomorphisms of
(semi)stable loci, for $i = 1, 2$.

Let $\sS \coloneqq \bC^\times$, with coordinate denoted $z$, act on
$\tilde\fM^{Q(\Fr_1)\wedge Q(\Fr_2)}$ by scaling $\rho_4$ with weight
$z$. This clearly commutes with the $\sT$-action and scaling
automorphisms and descends to an action on the rigidified stack, and
the forgetful maps are $\sS$-equivariant for the trivial
$\sS$-action on their targets.

\subsubsection{}

\begin{proposition}[{\cite[Propositions 9.5, 9.6]{Joyce2021}}] \label{prop:pairs-master-space-fixed-loci}
  The \emph{master space}
  \[ \bM_\alpha \coloneqq \tilde\fM^{Q(\Fr_1)\wedge Q(\Fr_2),\sst}_{\alpha,(1,1,1)}(\tau^{Q \wedge Q}) \]
  is $\sS$-invariant. Its $\sS$-fixed locus is a
  disjoint union of the following pieces.
  \begin{enumerate}[label = (\roman*)]
  \item Let $\iota_{\rho_4=0}\colon Z_{\rho_4=0} \coloneqq
    \{\rho_4=0\} \hookrightarrow \bM_\alpha$, with virtual normal
    bundle $z \cV_3^\vee \otimes \cV_2 - \kappa^{-1}z^{-1} \cV_3
    \otimes \cV_2^\vee$. By $\tau^{Q \wedge Q}$-stability, $\rho_1,
    \rho_2, \rho_3 \neq 0$. The forgetful map
    \[ \pi_{\tilde\fM^{Q(\vec\Fr),\pl}_{\alpha,(1,0,0)}}\colon Z_{\rho_4=0} \to \tilde\fM_{\alpha,(1,0,0)}^{Q(\Fr_1)\wedge Q(\Fr_2),\sst}(\tau^{Q \wedge Q}), \]
    which remembers only $\rho_1\colon V_1 \to \Fr_1(E)$, is a
    $\bP^{\fr_2(\alpha)-1}$-bundle.

  \item Let $\iota_{\rho_3=0}\colon Z_{\rho_3=0} \coloneqq
    \{\rho_3=0\} \hookrightarrow \bM_\alpha$, with virtual normal
    bundle $z^{-1} \cV_3^\vee \otimes \cV_1 - \kappa^{-1} z \cV_3
    \otimes \cV_1^\vee$. By $\tau^{Q \wedge Q}$-stability, $\rho_1,
    \rho_2, \rho_4 \neq 0$. The forgetful map
    \[ \pi_{\tilde\fM^{Q(\vec\Fr),\pl}_{\alpha,(0,1,0)}}\colon Z_{\rho_3=0} \to \tilde\fM_{\alpha,(0,1,0)}^{Q(\Fr_1)\wedge Q(\Fr_2),\sst}(\tau^{Q \wedge Q}), \]
    which remembers only $\rho_2\colon V_2 \to \Fr_2(E)$, is a
    $\bP^{\fr_1(\alpha)-1}$-bundle.

  \item \label{sst:it:complicated-locus} For each splitting $\alpha =
    \alpha_1 + \alpha_2$ with $\tau(\alpha_1) = \tau(\alpha_2)$ and
    $\fM_{\alpha_i}^{\sst}(\tau) \neq 0$ for $i = 1, 2$, let
    \[ \iota_{\alpha_1,\alpha_2}\colon Z_{\alpha_1,\alpha_2} \coloneqq \{E = E_1 \oplus E_2, \; \rho_i\colon V_i \to \Fr_i(E_i) \subset \Fr_i(E) \text{ for } i = 1, 2\} \hookrightarrow \bM_\alpha, \]
    where $\sS$ scales $V_3$, $V_1$, and $E_1$ with weight $z$.
    The virtual normal bundle is
    \begin{equation} \label{eq:pairs-master-space-interaction-Nvir}
      \begin{aligned}
        \cN_{\iota_{\alpha_1,\alpha_2}}^{\vir} = &\left(z^{-1} \cV_1^\vee \otimes \cFr_1(\cE_2) + z \cV_2^\vee \otimes \cFr_2(\cE_1) \right) - \kappa^{-1} \otimes (\cdots)^\vee \\
        &-(\pi_{\fM_{\alpha_1}^{\Fr_1}} \times \pi_{\fM_{\alpha_2}^{\Fr_2}})^*\left(z^{-1} \scE_{\alpha_1,\alpha_2} + z (12)^* \scE_{\alpha_2,\alpha_1}\right)
      \end{aligned}
    \end{equation}
    where $(\cdots)^\vee$ denotes the dual of the preceding terms, and
    $\scE_{\alpha_i,\alpha_j}$ are the bilinear elements of $\fM$. By
    $\tau^{Q \wedge Q}$-stability, $\rho_1, \ldots, \rho_4 \neq 0$.
    The forgetful map
    \begin{equation} \label{eq:pairs-master-space-complicated-locus}
      Z_{\alpha_1,\alpha_2} \xrightarrow{\sim} \tilde\fM_{\alpha_1,(1,0,0)}^{Q(\Fr_1)\wedge Q(\Fr_2),\sst}(\tau^{Q \wedge Q}) \times \tilde\fM_{\alpha_2,(0,1,0)}^{Q(\Fr_1)\wedge Q(\Fr_2),\sst}(\tau^{Q \wedge Q}),
    \end{equation}
    whose $i$-th factor is given by remembering only $\rho_i\colon V_i \to \Fr_i(E_i)$, is an isomorphism.
  \end{enumerate}
\end{proposition}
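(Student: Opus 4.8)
The plan is to follow Joyce's master space analysis \cite[Propositions 9.5, 9.6]{Joyce2021} at the level of underlying stacks, and to supply the virtual normal bundles from the formula \eqref{eq:symmetrized-pullback-Tvir} for the virtual tangent bundle of a symmetrized pullback APOT. For $\sS$-invariance: since $\sS$ scales only $\rho_4$, it fixes the underlying object $E$, the dimension vector, and the poset of sub-objects together with all of their numerical data, so $\tau^{Q\wedge Q}$-(semi)stability is $\sS$-invariant and $\bM_\alpha$ is $\sS$-invariant. Because $0 < \epsilon < 1/(2r(\alpha))$ rules out strictly $\tau^{Q\wedge Q}$-semistable objects of class $(\alpha,(1,1,1))$, the space $\bM_\alpha$ is an algebraic space carrying the $\kappa$-symmetric APOT obtained by symmetrized pullback along the forgetful map $\pi_{\fM^{\vec\Fr}_\alpha}$ (or, via the footnote to \eqref{eq:obstruction-theory-pl}, along its rigidification), with global virtual tangent bundle $\bT^\vir = \pi_{\fM^{\vec\Fr}_\alpha}^*\bT^\vir_{\fM^{\vec\Fr}_\alpha} + \bT_\pi - \kappa^{-1}\bT_\pi^\vee$, where $\bT_\pi$ is the restriction to the diagonal of $\bF^{Q(\Fr_1)\wedge Q(\Fr_2)}$ from \eqref{eq:framed-stack-forgetful-map-cotangent}.

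Next I would classify the $\sS$-fixed points. A point is $\sS$-fixed iff it admits a cocharacter $\sigma\colon \sS \to \Aut$ whose induced $\bZ$-gradings on $E$ (hence on $\cFr_i(E)$) and on the one-dimensional $V_1, V_2, V_3$ make $\rho_1, \rho_2, \rho_3$ weight $0$ and $\rho_4$ weight $\pm 1$; that one-parameter families of such isomorphisms are cocharacters is the standard argument of \cite[Prop. 9.5]{Joyce2021}. Combining these weight constraints with $\tau^{Q\wedge Q}$-stability --- which, by the see-saw analysis set up in \S\ref{sec:semistable-invariants-master-space}, forces the displayed $\rho_i$ to be nonzero and prevents a weight-homogeneous piece of $E$ without framing from destabilizing --- leaves exactly three mutually exclusive cases: $\rho_4 = 0$, where $\sigma$ may be taken trivial and $E$ is $\sigma$-pure, giving $Z_{\rho_4=0}$; $\rho_3 = 0$ and $\rho_4 \neq 0$, where $\sigma$ scales $V_3$ and $E$ is again $\sigma$-pure, giving $Z_{\rho_3=0}$; and $\rho_3, \rho_4 \neq 0$, where the weight-$0$ conditions on $\rho_1, \rho_3$ together with the weight-$\pm1$ condition on $\rho_4$ force $E$ to decompose with exactly two $\sigma$-weights present and the two framings to point into complementary summands $E = E_1 \oplus E_2$, giving $Z_{\alpha_1,\alpha_2}$ with $\tau(\alpha_1) = \tau(\alpha_2)$ and $\fM_{\alpha_i}^\sst(\tau) \neq \emptyset$. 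The descriptions of the forgetful maps then follow from the affine-bundle presentation $\tilde\fM^{Q(\vec\Fr)}_{\alpha,\vec d} = \tot(\bigoplus_e \cV_{t(e)}^\vee \otimes \cV_{h(e)})$ over $\fM^{\vec\Fr}_\alpha \times \prod_v [\pt/\GL(d_v)]$ of \S\ref{bg:sec:aux-stacks}: over $Z_{\rho_4=0}$ the forgotten pair $(V_2, \rho_2)$ modulo $\Aut(V_2)$ contributes $\bP^{\fr_2(\alpha)-1}$ while $(V_3, \rho_3)$ with $\rho_3 \neq 0$ is rigid (and symmetrically for $Z_{\rho_3=0}$), and on $Z_{\alpha_1,\alpha_2}$ both $\rho_3$ and $\rho_4$ are isomorphisms of one-dimensional spaces, rigid modulo $\Aut(V_3)$, giving the isomorphism \eqref{eq:pairs-master-space-complicated-locus}.

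Finally I would compute each virtual normal bundle as the $\sS$-moving part of $\bT^\vir = \pi_{\fM^{\vec\Fr}_\alpha}^*\bT^\vir_{\fM^{\vec\Fr}_\alpha} + \bT_\pi - \kappa^{-1}\bT_\pi^\vee$. On $Z_{\rho_4=0}$ and $Z_{\rho_3=0}$ the class $E$ carries no $\sigma$-weight, so $\pi_{\fM^{\vec\Fr}_\alpha}^*\bT^\vir_{\fM^{\vec\Fr}_\alpha}$ is $\sS$-fixed and among the quiver terms only the $\rho_4$- (resp. $\rho_3$-)deformation $\cV_3^\vee \otimes \cV_2$ (resp. $\cV_3^\vee \otimes \cV_1$) and its $-\kappa^{-1}$-dual move, the other of $\rho_3, \rho_4$ being cancelled against an $\End$-term since it is an isomorphism of one-dimensional spaces; this gives the stated normal bundles $z\cV_3^\vee \otimes \cV_2 - \kappa^{-1}z^{-1}\cV_3 \otimes \cV_2^\vee$ and $z^{-1}\cV_3^\vee \otimes \cV_1 - \kappa^{-1}z\cV_3 \otimes \cV_1^\vee$. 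On $Z_{\alpha_1,\alpha_2}$ one additionally uses the bilinearity \eqref{intro:eq:bilinear-element} to split $\bT^\vir_{\fM^{\vec\Fr}_\alpha}$ restricted to $E_1 \oplus E_2$ into its four $\scE_{\alpha_i,\alpha_j}$-pieces, whose moving part is $-(\pi_{\fM_{\alpha_1}^{\Fr_1}} \times \pi_{\fM_{\alpha_2}^{\Fr_2}})^*(z^{-1}\scE_{\alpha_1,\alpha_2} + z\,(12)^*\scE_{\alpha_2,\alpha_1})$, while the moving quiver terms contribute $z^{-1}\cV_1^\vee \otimes \cFr_1(\cE_2) + z\cV_2^\vee \otimes \cFr_2(\cE_1)$ and its $-\kappa^{-1}$-dual, matching \eqref{eq:pairs-master-space-interaction-Nvir}.

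The main obstacle is the completeness of the fixed-point classification in the second step --- ruling out components with three or more $\sigma$-weights on $E$ and pinning down which pairs $(\alpha_1, \alpha_2)$ occur --- which is where $\tau^{Q\wedge Q}$-stability, rather than mere semistability, must be exploited with care, exactly as in Joyce's proof of \cite[Props. 9.5, 9.6]{Joyce2021}. The $\kappa$-symmetry of the obstruction theory enters only through the uniform $-\kappa^{-1}(\cdots)^\vee$ summands in each virtual normal bundle, and Theorem~\ref{thm:APOTs} ensures there are no obstruction-theoretic subtleties beyond the perfect-obstruction-theory setting.
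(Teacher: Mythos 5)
Your proposal is correct and follows essentially the same route as the paper, which itself states this proposition by citing Joyce's Propositions 9.5--9.6 for the stack-level $\sS$-fixed-locus classification and bundle/isomorphism descriptions, with the virtual normal bundles read off as the $\sS$-moving part of the global virtual tangent bundle \eqref{eq:symmetrized-pullback-Tvir} of the symmetrized pullback APOT. Your weight bookkeeping for the three fixed components and the $-\kappa^{-1}(\cdots)^\vee$ contributions matches the stated formulas, and the deferral of the completeness of the fixed-point classification to Joyce's stability argument is exactly what the paper does.
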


Our convention is to write $\tilde\sT \times \sS$-equivariant sheaves
on $\sS$-fixed loci as a product of an explicit $\sS$-weight and a
$\tilde\sT$-equivariant sheaf. Obvious pullbacks are omitted.

\subsubsection{}
\label{sec:pairs-master-space-localization}

Assumption~\ref{assump:semistable-invariants}\ref{assump:it:properness}
implies $\bM_\alpha^{\sT \times \sS}$ is a proper algebraic space with
the resolution property, so by symmetrized pullback
(Theorem~\ref{thm:APOTs}) along $\pi_{\fM_\alpha^\pl}$, the universal
enumerative invariant
\[ \chi\left(\bM_\alpha, \hat\cO^{\vir}_{\bM_\alpha} \otimes -\right) \in K_\circ^{\tilde\sT \times \sS}(\tilde\fM^{Q(\Fr_1)\wedge Q(\Fr_2),\pl}_{\alpha,(1,1,1)})_\loc \]
is well-defined. Define the forgetful map
\[ \pi_{\tilde\fM^{Q(\vec\Fr),\pl}_{\alpha,(1,1,0)}}\colon \tilde\fM_{\alpha,(1,1,1)}^{Q(\Fr_1)\wedge Q(\Fr_2),\pl} \to \tilde\fM_{\alpha,(1,1,0)}^{Q(\Fr_1)\wedge Q(\Fr_2),\pl}. \]
Using the description of $\sS$-fixed loci in
Proposition~\ref{prop:pairs-master-space-fixed-loci}, $(\tilde\sT
\times \sS)$-equivariant localization says
\begin{align}
  &\chi\left(\bM_\alpha, \hat\cO_{\bM_\alpha}^{\vir} \otimes (\pi_{\tilde\fM^{Q(\vec\Fr),\pl}_{\alpha,(1,1,0)}})^*(-)\right) \label{eq:pairs-master-lhs} \\
  &= \chi\left(Z_{\rho_4=0}, \hat\cO_{Z_{\rho_4=0}}^{\vir} \otimes \frac{\hat\se(z^{-1}\kappa^{-1} \cV_3 \otimes \cV_2^\vee)}{\hat\se(z\cV_3^\vee \otimes \cV_2)} \otimes (\pi_{\tilde\fM^{Q(\vec\Fr),\pl}_{\alpha,(1,1,0)}} \circ \iota_{\rho_4=0})^*(-)\right) \label{eq:pairs-master-term-1} \\
  &\quad +\chi\left(Z_{\rho_3=0}, \hat\cO_{Z_{\rho_3=0}}^{\vir} \otimes \frac{\hat\se(z\kappa^{-1} \cV_3 \otimes \cV_1^\vee)}{\hat\se(z^{-1}\cV_3^\vee \otimes \cV_1)} \otimes (\pi_{\tilde\fM^{Q(\vec\Fr),\pl}_{\alpha,(1,1,0)}} \circ \iota_{\rho_3=0})^*(-)\right) \label{eq:pairs-master-term-2} \\
  &\quad +\sum_{\substack{\alpha=\alpha_1+\alpha_2\\\forall i: \, \tau(\alpha_i)=\tau(\alpha)\\\;\;\fM_{\alpha_i}^{\sst}(\tau) \neq \emptyset}} \chi\left(Z_{\alpha_1,\alpha_2}, \hat\cO_{Z_{\alpha_1,\alpha_2}}^\vir \otimes \frac{1}{\hat\se(\cN_{\iota_{\alpha_1,\alpha_2}}^{\vir})} \otimes (\pi_{\tilde\fM^{Q(\vec\Fr),\pl}_{\alpha,(1,1,0)}} \circ \iota_{\alpha_1,\alpha_2})^*(-) \right) \label{eq:pairs-master-term-3}.
\end{align}
Each term of the form $\chi(X, \hat\cO_X^\vir \otimes \cF)$ on the
right hand side is {\it notational shorthand} for
\[ \chi\bigg(X^{\sT}, \frac{\hat\cO_{X^{\sT}}^\vir \otimes \cF|_{X^{\sT}}}{\hat\se(\cN_{X^{\sT}/X}^\vir)^\vee}\bigg). \]
This is important because $(\tilde\sT \times \sS)$-equivariant classes
like $\hat\se(z\cV_3^\vee \otimes \cV_2)$ on $\sS$-fixed loci must be
further restricted to $(\sT \times \sS)$-fixed loci before their
inverses exist.

\subsubsection{}
\label{sec:pairs-master-pole-cancellation}

We will obtain the desired relation \eqref{eq:pairs-master-relation}
by applying the K-theoretic residue map $\rho_K$
(Definition~\ref{def:k-theoretic-residue-map}) in the variable $z$,
dividing both sides by $\kappa^{-1/2} - \kappa^{1/2}$, and applying
the de-rigidification $(I_{\alpha,(1,1,0)})_*$. To be clear, we will
only evaluate at elements in
\[ K_{\tilde\sT}^\circ(\tilde\fM_{\alpha,(1,1,0)}^{Q(\Fr_1)\wedge Q(\Fr_2),\pl})_\loc \subset K_{\tilde\sT \times \sS}^\circ(\tilde\fM_{\alpha,(1,1,0)}^{Q(\Fr_1)\wedge Q(\Fr_2),\pl})_{\loc}, \]
noting that the $\sS$-action is trivial so indeed this is an inclusion
of $\bk_{\tilde\sT,\loc}$-modules. By
Assumption~\ref{assump:semistable-invariants} and
Lemma~\ref{lem:master-space-no-poles}, the left hand side
\eqref{eq:pairs-master-lhs} takes values in
\[ \bk_{\tilde\sT,\loc} \otimes_\bZ \bk_\sS \subset \bk_{\tilde\sT \times \sS,\loc}. \]
Therefore it has no poles in $z$ except at $z=0$ and $z=\infty$. By
the description \eqref{eq:k-theoretic-residue-map-finite-poles} of
$\rho_K$, it follows that
\[ \rho_K \text{ of }\eqref{eq:pairs-master-lhs} = 0. \]

\subsubsection{}
\label{sec:pairs-master-term-1}

Next, consider \eqref{eq:pairs-master-term-1}. Applying $\rho_K$, the
only contribution is
\[ \rho_K \frac{\hat\se(z^{-1}\kappa^{-1} \cV_3 \otimes \cV_2^\vee)}{\hat\se(z\cV_3^\vee \otimes \cV_2)} = \kappa^{-\frac{1}{2}} - \kappa^{\frac{1}{2}} \]
because all other terms are sheaves on a $\sS$-fixed component.
Hence
\begin{align*}
  \rho_K \text{ of } \eqref{eq:pairs-master-term-1}
  &= (\kappa^{-\frac{1}{2}} - \kappa^{\frac{1}{2}}) \cdot \chi\left(Z_{\rho_4=0}, \hat\cO_{Z_{\rho_4=0}}^{\vir} \otimes (\pi_{\tilde\fM^{Q(\vec\Fr),\pl}_{\alpha,(1,1,0)}} \circ \iota_{\rho_4=0})^*(-)\right) \\
  &= (\kappa^{-\frac{1}{2}} - \kappa^{\frac{1}{2}}) \cdot (\pi_{\tilde\fM^{Q(\vec\Fr),\pl}_{\alpha,(1,1,0)}} \circ \iota_{\rho_4=0})_* \sZ_{Z_{\rho_4=0}} 
\end{align*}
using the notation of Definition~\ref{bg:def:univ-enum-inv} for
universal invariants. We will apply the homology projective bundle
formula (Lemma~\ref{lem:homology-projective-bundle-formula}), using
the notation there, as follows. By
Proposition~\ref{prop:pairs-master-space-fixed-loci},
\begin{equation} \label{eq:pairs-master-space-fixed-loci-1-bundle}
  \pi_{\tilde\fM_{\alpha,(1,0,0)}^{Q(\vec\Fr),\pl}}\colon Z_{\rho_4=0} \to X \coloneqq \tilde\fM^{Q(\Fr_1)\wedge Q(\Fr_2),\sst}_{\alpha,(1,0,0)}(\tau^{Q \wedge Q})
\end{equation}
identifies $Z_{\rho_4=0} = P_{\fX}(\cFr_2(\cE))$, where $\cFr_2(\cE)$
denotes the universal bundle of $\Fr_2(E)$ on the non-rigidified stack
$\fX$ of $X$. The APOT on $Z_{\rho_4=0}^{\sT}$ arises by restriction
of the APOT on $\bM_\alpha^{\sT \times \sS}$, which, a priori, might
not be the APOT obtained by symmetrized pullback along the $\sT$-fixed
part of \eqref{eq:pairs-master-space-fixed-loci-1-bundle}, but
Theorem~\ref{thm:APOT-comparison} assures us that the symmetrized
virtual structure sheaves of these two APOTs agree. Hence
\[ \sZ_{Z_{\rho_4=0}} = (\hat\pi_{\tilde\fM_{\alpha,(1,0,0)}^{Q(\vec\Fr),\pl}})^* \tilde\sZ_{\alpha,(1,0,0)}^{\Fr_1}(\tau^Q). \]
Next, when $\rank \cV_2 = 1$, it is identified with the weight-$1$
line bundle on $[\pt/\bC^\times]$, and therefore
$\tot_{[\pt/\bC^\times] \times \fX}(\cV_2^\vee \boxtimes \cFr_2(\cE))$
is identified with the product of forgetful maps
\[ \pi_{\tilde\fM_{\alpha,(1,0,0)}^{Q(\vec\Fr)}} \times \pi_{\tilde\fM_{0,(0,1,0)}^{Q(\vec\Fr)}} \colon \tilde\fM_{\alpha,(1,1,0)}^{Q(\Fr_1)\wedge Q(\Fr_2)} \to \tilde\fM_{\alpha,(1,0,0)}^{Q(\Fr_1)\wedge Q(\Fr_2)} \times \tilde\fM_{0,(0,1,0)}^{Q(\Fr_1)\wedge Q(\Fr_2)} \]
restricted to the pre-image of $\fX$. Its zero section is the direct
sum map $\Phi \coloneqq \Phi_{(\alpha,(1,0,0)),(0,(0,1,0))}$, and the
zero section of its rigidification $\tot_{\fX}(\cFr_2(\cE))$ is
therefore the composition $\Pi_{\alpha,(1,1,0)}^\pl \circ \Phi$. The
open immersion $j\colon Z_{\rho_4=0} \hookrightarrow
\tot_{\fX}(\cFr_2(\cE))$ is identified with
$\pi_{\tilde\fM^{Q(\vec\Fr),\pl}_{\alpha,(1,1,0)}} \circ
\iota_{\rho_4=0}$. Hence
\begin{align}
  &(\kappa^{-\frac{1}{2}} - \kappa^{\frac{1}{2}}) \cdot (\pi_{\tilde\fM^{Q(\vec\Fr),\pl}_{\alpha,(1,1,0)}} \circ \iota_{\rho_4=0})_* (\hat\pi_{\tilde\fM^{Q(\vec\Fr),\pl}_{\alpha,(1,0,0)}})^* \tilde\sZ_{\alpha,(1,0,0)}^{\Fr_1}(\tau^Q) \nonumber \\
  &= \rho_K (\Pi_{\alpha,(1,1,0)}^\pl \circ \Phi)_* (D(z) \times \id) \left((I_*\tilde\sZ_{\alpha,(1,0,0)}^{\Fr_1}(\tau^Q) \boxtimes I_*\partial_2) \cap \frac{\hat\se_{z^{-1}}(\kappa^{-1} \cFr_2(\cE)^\vee \boxtimes \cV_2)}{\hat\se_z(\cFr_2(\cE) \boxtimes \cV_2^\vee)}\right), \label{eq:pairs-master-term-1-explicit}
\end{align}
where we used that $(I_*\partial_2)(\cV_2^k) = 1$ for any $k \in \bZ$
in order to insert $\boxtimes \cV_2^\vee$ and $\boxtimes \cV_2$ into
the fraction on the right hand side. We do this because, comparing
with the bilinear elements $\tilde\scE^{Q(\Fr_1)\wedge
  Q(\Fr_2)}_{(\alpha,(1,0,0)),(0,(0,1,0))}$
(Theorem~\ref{thm:auxiliary-stack-vertex-algebra}),
\[ \frac{\hat\se_{z^{-1}}(\kappa^{-1} \cFr_2(\cE)^\vee \boxtimes \cV_2)}{\hat\se_z(\cFr_2(\cE) \boxtimes \cV_2^\vee)} = \hat\Theta(z) \coloneqq \hat\Theta_{(\alpha,(1,0,0)),(0,(0,1,0))}(z). \]
Plugging this into \eqref{eq:pairs-master-term-1-explicit}, it becomes
\begin{align*}
  &\rho_K (\Pi_{\alpha,(1,1,0)}^\pl)_* \Phi_* (D(z) \times \id)\left((I_*\tilde\sZ_{\alpha,(1,0,0)}^{\Fr_1}(\tau^Q) \boxtimes I_*\partial_2) \cap \hat\Theta(z)\right) \\
  &= (\kappa^{-\frac{1}{2}} - \kappa^{\frac{1}{2}}) \cdot (\Pi_{\alpha,(1,1,0)}^\pl)_* [I_*\tilde\sZ_{\alpha,(1,0,0)}^{\Fr_1}(\tau^Q), I_*\partial_2]
\end{align*}
by the definition (Theorem~\ref{thm:auxiliary-stack-vertex-algebra})
of the Lie bracket. This produces the first term in the desired
relation \eqref{eq:pairs-master-relation}.

\subsubsection{}

By the exact same reasoning,
\begin{align*}
  \rho_K \text{ of }\eqref{eq:pairs-master-term-2}
  &= -(\kappa^{-\frac{1}{2}} - \kappa^{\frac{1}{2}}) \cdot \chi\left(Z_{\rho_3=0}, \hat\cO_{Z_{\rho_3=0}}^{\vir} \otimes (\pi_{\tilde\fM^{Q(\vec\Fr),\pl}_{\alpha,(1,1,0)}} \circ \iota_{\rho_3=0})^*(-)\right) \\
  &= -(\kappa^{-\frac{1}{2}} - \kappa^{\frac{1}{2}}) \cdot (\Pi_{\alpha,(1,1,0)}^\pl)_* \left[I_*\tilde\sZ_{\alpha,(0,1,0)}^{\Fr_2}(\tau^Q), I_*\partial_1\right].
\end{align*}
This produces the second term in the desired relation
\eqref{eq:pairs-master-relation}.

\subsubsection{}
\label{sec:pairs-master-term-3}

Finally, consider \eqref{eq:pairs-master-term-3}. A priori, the source
and target of the $\sT$-fixed part of the isomorphism
$\pi_{\alpha_1,\alpha_2}$ may have different APOTs, coming from
restriction from $\bM_\alpha$ and symmetrized pullback from
$\fM_{\alpha_1}^\pl \times \fM_{\alpha_2}^\pl$ respectively, but
Theorem~\ref{thm:APOT-comparison} assures us that the symmetrized
virtual structure sheaves of these two APOTs agree. Now use the
$(\tilde\sT \times \sS)$-equivariantly commutative diagram
\[ \begin{tikzcd}[column sep=huge]
  \tilde\fM_{\alpha_1,1}^{\Fr_1,\sst}(\tau^Q) \times \tilde\fM_{\alpha_2,1}^{\Fr_2,\sst}(\tau^Q) \ar[hookrightarrow]{r}{\iota_{\alpha_1,\alpha_2}} \ar{d}[swap]{(I_{\alpha_1,(1,0,0)} \times I_{\alpha_2,(0,1,0)}) \circ (\iota_1^{Q \wedge Q,\pl} \times \iota_2^{Q \wedge Q,\pl})} & \bM_\alpha \ar{d}{I_{\alpha,(1,1,0)} \circ \pi_{\tilde\fM^{Q(\vec\Fr),\pl}_{\alpha,(1,1,0)}}} \\
  \tilde\fM_{\alpha_1,(1,0,0)}^{Q(\Fr_1)\wedge Q(\Fr_2)} \times \tilde\fM_{\alpha_2,(0,1,0)}^{Q(\Fr_1)\wedge Q(\Fr_2)} \ar{r}{\Phi} & \tilde\fM_{\alpha,(1,1,0)}^{Q(\Fr_1)\wedge Q(\Fr_2)}
\end{tikzcd} \]
where $\Phi \coloneqq \Phi_{(\alpha_1,(1,0,0)),(\alpha_2,(0,1,0))}$.
Then the terms in \eqref{eq:pairs-master-term-3} become
\begin{equation} \label{eq:pairs-master-term-3-explicit}
  \begin{aligned}
    \rho_K (\Pi_{\alpha,(1,1,0)}^\pl)_* \chi\bigg(&\tilde\fM_{\alpha_1,1}^{Q(\Fr_1),\sst}(\tau^Q) \times \tilde\fM_{\alpha_2,1}^{Q(\Fr_2),\sst}(\tau^Q), \\
    &\frac{\hat\cO^\vir \boxtimes \hat\cO^\vir}{\hat\se(\cN_{\iota_{\alpha_1,\alpha_2}}^\vir)} \otimes (I \times I)^*(\iota_1^{Q \wedge Q} \times \iota_2^{Q \wedge Q})^* z^{\deg_1} \Phi^*(-)\bigg),
  \end{aligned}
\end{equation}
where now all objects are only $\tilde\sT$-equivariant and we have
written $\sS$-weights explicitly; this is why $z^{\deg_1}$ appears in
front of $\Phi^*$. Furthermore, comparing the bilinear elements
$\tilde\scE^{Q(\Fr_1)\wedge
  Q(\Fr_2)}_{(\alpha_1,(1,0,0)),(\alpha_2,(0,1,0))}$
(Theorem~\ref{thm:auxiliary-stack-vertex-algebra}) with the formula
\eqref{eq:pairs-master-space-interaction-Nvir} for
$\cN_{\iota_{\alpha_1,\alpha_2}}^\vir$,
\[ \frac{1}{\hat\se(\cN_{\iota_{\alpha_1,\alpha_2}}^\vir)} = (I \times I)^*(\iota_1^{Q \wedge Q} \times \iota_2^{Q \wedge Q})^* \hat\Theta_{(\alpha_1,(1,0,0)),(\alpha_2,(0,1,0))}(z). \]
Plugging this into \eqref{eq:pairs-master-term-3-explicit}, it becomes
\begin{align*}
  &\rho_K (\Pi_{\alpha,(1,1,0)}^\pl)_* \left((\iota_1^{Q \wedge Q})_* I_*\tilde\sZ_{\alpha_1,1}^{\Fr_1}(\tau^Q) \boxtimes (\iota_2^{Q \wedge Q})_* I_*\tilde\sZ_{\alpha_2,1}^{\Fr_2}(\tau^Q)\right)\left(\hat\Theta(z) \otimes z^{\deg_1} \Phi^*(-)\right) \\
  &= (\kappa^{-\frac{1}{2}} - \kappa^{\frac{1}{2}}) \cdot \left[I_*\tilde\sZ_{\alpha_1,(1,0,0)}^{\Fr_1}(\tau^Q), I_*\tilde\sZ_{\alpha_2,(0,1,0)}^{\Fr_2}(\tau^Q)\right]
\end{align*}
by the definition (Theorem~\ref{thm:auxiliary-stack-vertex-algebra})
of the Lie bracket. This produces the last term in the desired
relation \eqref{eq:pairs-master-relation}, and concludes the proof of
Theorem~\ref{thm:pairs-master-relation}.
\end{proof}

\subsubsection{}

\begin{remark} \label{rem:joyce-lie-bracket-vs-ours-1}
  In \eqref{eq:pairs-master-term-3-explicit}, the virtual normal
  bundle $\cN^{\vir}_{\iota_{\alpha_1,\alpha_2}}$ depends
  non-trivially on framing data and is not a quantity which lives on
  the original moduli stack $\fM$. This means
  \eqref{eq:pairs-master-term-3-explicit} has no hope of being written
  in terms of the Lie bracket on $\fM$.

  Experts may object that, at this step in Joyce's machinery, his
  analogue of \eqref{eq:pairs-master-term-3-explicit} {\it is} written
  in terms of the Lie bracket on $\fM$ \cite[Prop. 9.9]{Joyce2021}.
  The crucial difference is that, in Joyce's setting, the piece of
  $\cN^{\vir}_{\iota_{\alpha_1,\alpha_2}}$ which involves framing data
  is a vector bundle, denoted $\bF$ (cf.
  \eqref{eq:framed-stack-forgetful-map-cotangent}), while in our
  setting this same piece is the symmetrized version $\bF -
  \kappa^{-1} \bF^\vee$. Hence, Joyce may cancel the contribution
  $1/\hat\se(\bF)$ in \eqref{eq:pairs-master-term-3-explicit} by
  inserting the class $\hat\se(\bF)$ into the integrand in
  \eqref{eq:pairs-master-lhs}, but we cannot similarly insert
  $\hat\se(\bF)/\hat\se(\kappa^{-1}\bF)$ because this is a localized
  class --- in particular, it has poles in $z$ --- which invalidates
  the pole-cancellation argument in
  \S\ref{sec:pairs-master-pole-cancellation}.
\end{remark}

\subsection{Proof of well-definedness and framing independence}
\label{sec:vss-framing-independence}

\subsubsection{}

\begin{lemma} \label{lem:wall-crossing-total-term}
  Let $A$ be a monoid consisting of classes $\alpha$ and
  \[ L = \bigoplus_{(\alpha,k_1,k_2) \in A \times \bZ_{\ge 0}^2} L_{\alpha,k_1,k_2}, \qquad [L_{\alpha,k_1,k_2}, L_{\beta,\ell_1,\ell_2}] \subset L_{\alpha+\beta, k_1+\ell_1, k_2+\ell_2} \]
  be an $(A \times \bZ_{\ge 0}^2)$-graded Lie algebra. Let:
  \begin{itemize}
  \item $\vec z \coloneqq \{z_\alpha \in L_{\alpha,0,0}\}_{\alpha \in A}$
    be an arbitrary collection of elements;
  \item $\delta_1 \in L_{0,1,0}$ and $\delta_2 \in L_{0,0,1}$ be
    elements such that $[\delta_1, \delta_2] = 0$.
  \end{itemize}
  Define $C_{\alpha_1,\ldots,\alpha_n}(\vec z, \delta_1,
  \delta_2)$ to be
  \begin{equation} \label{eq:wall-crossing-total-term}
    \sum_{m=0}^n \bigg[\frac{1}{m!} \Big[z_{\alpha_m}, \big[\cdots[z_{\alpha_2}, [z_{\alpha_1}, \delta_1]]\cdots\big]\Big], \frac{1}{(n-m)!} \Big[z_{\alpha_n}, \big[\cdots[z_{\alpha_{m+2}}, [z_{\alpha_{m+1}}, \delta_2]]\cdots\big]\Big]\bigg].
  \end{equation}
  Then
  \[ \sum_{\substack{n>1 \\ \alpha = \alpha_1+\cdots+\alpha_n\\ \forall i: \,\tau(\alpha_i) = \tau(\alpha)\\ \;\;\fM_{\alpha_i}^{\sst}(\tau) \neq \emptyset}} C_{\alpha_1,\ldots,\alpha_n}(\vec z, \delta_1, \delta_2) = 0. \]
\end{lemma}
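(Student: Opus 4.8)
\textbf{Proof plan for Lemma~\ref{lem:wall-crossing-total-term}.}
The plan is to prove this as a purely formal identity in the graded Lie algebra $L$, using a generating-series manipulation with an auxiliary formal variable tracking the total class. First I would fix the slope value $\mu \coloneqq \tau(\alpha)$ and restrict attention to the sub-Lie-algebra spanned by the $z_\beta$ with $\tau(\beta) = \mu$ and $\fM_\beta^\sst(\tau) \neq \emptyset$, together with $\delta_1$ and $\delta_2$; all the brackets appearing in the statement live there. To organize the sum, introduce a formal grading variable $q$ (or work directly in the completion of $L$ with respect to the $A$-grading, restricted to classes of slope $\mu$) and define the ``exponentiated adjoint'' elements
\[
  Z_1 \coloneqq \sum_{\substack{n \ge 0 \\ \beta_1,\ldots,\beta_n}} \frac{1}{n!} \big[z_{\beta_n}, [\cdots, [z_{\beta_1}, \delta_1]\cdots]\big], \qquad
  Z_2 \coloneqq \sum_{\substack{n \ge 0 \\ \beta_1,\ldots,\beta_n}} \frac{1}{n!} \big[z_{\beta_n}, [\cdots, [z_{\beta_1}, \delta_2]\cdots]\big],
\]
where each sum runs over $\beta_i$ of slope $\mu$ with nonempty semistable locus. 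Writing $\delta \coloneqq \sum_\beta \ad(z_\beta)$ for the (locally nilpotent, or formally convergent on each graded piece) operator, we have $Z_i = \exp(-\ad(-\sum_\beta z_\beta))\delta_i$ — more precisely $Z_i = \exp(\operatorname{ad} \zeta)\,\delta_i$ with $\zeta \coloneqq \sum_\beta z_\beta$, since $[z_{\beta_n},[\cdots[z_{\beta_1},\delta_i]]\cdots]$ summed with the $1/n!$ is exactly the $n$-th term of $\exp(\operatorname{ad}\zeta)\delta_i$.

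The key observation is then that the displayed double sum, including the would-be $n \le 1$ terms, is precisely the bracket $[Z_1, Z_2]$ expanded by total class $\alpha$: indeed $C_{\alpha_1,\ldots,\alpha_n}(\vec z, \delta_1,\delta_2)$ as defined in \eqref{eq:wall-crossing-total-term} collects, for a fixed ordered tuple $(\alpha_1,\ldots,\alpha_n)$, exactly the contributions to $[Z_1,Z_2]$ in which the first $m$ of the $z$'s feed into the $\delta_1$-tower and the last $n-m$ feed into the $\delta_2$-tower; summing over $m$ and then over all ordered decompositions $\alpha = \alpha_1 + \cdots + \alpha_n$ of each total class $\alpha$ reconstructs $[Z_1, Z_2]$ graded by $\alpha$. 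Therefore
\[
  \sum_{\substack{n \ge 0 \\ \alpha = \alpha_1+\cdots+\alpha_n \\ \forall i:\, \tau(\alpha_i)=\tau(\alpha),\; \fM_{\alpha_i}^\sst(\tau)\neq\emptyset}} C_{\alpha_1,\ldots,\alpha_n}(\vec z,\delta_1,\delta_2) = [Z_1, Z_2].
\]
Next I would show $[Z_1, Z_2] = 0$: since $Z_i = \exp(\operatorname{ad}\zeta)\delta_i$ and $\exp(\operatorname{ad}\zeta) = \operatorname{Ad}(\exp\zeta)$ is a Lie algebra automorphism of the relevant (pro-nilpotent) Lie algebra, we get $[Z_1,Z_2] = \exp(\operatorname{ad}\zeta)[\delta_1,\delta_2] = 0$ by the hypothesis $[\delta_1,\delta_2]=0$. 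Finally, to recover the stated identity — which omits the $n \le 1$ terms — I subtract them off: the $n=0$ term is $[\delta_1,\delta_2]=0$, and the $n=1$ term is $C_{\alpha_1}(\vec z,\delta_1,\delta_2) = [[z_{\alpha_1},\delta_1],\delta_2] + [\delta_1,[z_{\alpha_1},\delta_2]]$, which equals $[z_{\alpha_1},[\delta_1,\delta_2]]$ by the Jacobi identity, hence also $0$. Subtracting these vanishing terms from $[Z_1,Z_2]=0$ leaves exactly the $n>1$ sum equal to zero, as claimed.

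\textbf{On the main obstacle.} The genuine content is entirely combinatorial bookkeeping: matching the expression \eqref{eq:wall-crossing-total-term} term-by-term against the expansion of $[Z_1, Z_2]$ by total class, keeping careful track of the two independent $\bZ_{\ge 0}$-gradings $(k_1,k_2)$ recording the number of $z$'s routed to each $\delta$-tower and the ordering of the $\alpha_i$'s. I expect the one subtlety requiring care is convergence/finiteness: $\zeta = \sum_\beta z_\beta$ is an infinite sum, so $\exp(\operatorname{ad}\zeta)$ must be interpreted in the completion of $L$ with respect to the $A$-grading (each fixed total class $\alpha$ receives only finitely many contributions because $A$ is a monoid and $\alpha = \alpha_1 + \cdots + \alpha_n$ has boundedly many refinements once one restricts to the slope-$\mu$ classes with nonempty moduli — in the intended applications this is guaranteed by the rank function of Assumption~\ref{assump:semistable-invariants}\ref{assump:it:rank-function}, but for the abstract lemma one simply works degree-by-degree in $\alpha$). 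Once this is set up, the automorphism argument $\operatorname{Ad}(\exp\zeta)[\delta_1,\delta_2]=0$ is immediate and the Jacobi-identity cleanup of the $n=1$ term is routine.
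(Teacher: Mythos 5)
Your proposal is correct and follows essentially the same route as the paper: sum the $z_\beta$'s of the relevant slope, recognize the full sum (including $n\le 1$) as the $\alpha$-graded (indeed $(\alpha,1,1)$-graded) piece of $[e^{\ad \zeta}\delta_1, e^{\ad \zeta}\delta_2]$, kill it using that $e^{\ad\zeta}$ conjugation preserves brackets together with $[\delta_1,\delta_2]=0$, and remove the $n=0,1$ terms via the hypothesis and the Jacobi identity. The only cosmetic difference is that the paper phrases the automorphism step via $e^{\ad_u}v = e^u v e^{-u}$ in the completed universal enveloping algebra, while you invoke $\exp(\ad\zeta)$ as a Lie algebra automorphism directly; these are the same argument.
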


\begin{proof}
  In the completion of the universal enveloping algebra of $L$ with
  respect to the grading, consider the operator
  \[ \ad_z(-) \coloneqq [z, -], \qquad z \coloneqq \sum_{\substack{\beta:\,\tau(\beta)=\tau(\alpha),\\\;\;\;\fM_\beta^\sst(\tau) \neq \emptyset}} z_\beta. \]
  Then
  \[ \sum_{\substack{n\ge 0\\\alpha_1+\cdots+\alpha_n=\alpha\\\forall i: \tau(\alpha_i)=\tau(\alpha)\\\;\;\fM_{\alpha_i}^\sst(\tau) \neq \emptyset}} C_{\alpha_1,\ldots,\alpha_n}(\vec z, \delta_1, \delta_2) = (\alpha, 1,1)\text{-weight piece of } \left[e^{\ad_z} \delta_1, e^{\ad_z} \delta_2\right]. \]
  A standard combinatorial result in Lie theory says $e^{\ad_u} v =
  e^u v e^{-u}$ for any $u, v$, so
  \[ \left[e^{\ad_z} \delta_1, e^{\ad_z} \delta_2\right] = [e^z \delta_1 e^{-z}, e^z \delta_2 e^{-z}] = e^z [\delta_1, \delta_2] e^{-z} = 0. \]
  The last equality is by hypothesis. We are done since $C_\emptyset =
  [\delta_1, \delta_2] = 0$ and $C_{\alpha_1} = 0$ for all
  $\alpha_1 \in A$ by the Jacobi identity.
\end{proof}

\subsubsection{}

\begin{proposition} \label{prop:vss-framing-independence}
  Let $\Fr_1, \Fr_2 \in \Frs$ be two framing functors with
  $\fM_\alpha^\sst(\tau) \subset \fM_\alpha^{\vec\Fr,\pl}$. Then the
  elements $\sz_\alpha^{\Fr_i}(\tau)$, defined explicitly by
  \eqref{eq:sstable-explicit}, are independent of $i = 1,2$:
  \[ \sz_\alpha^{\Fr_1}(\tau) = \sz_\alpha^{\Fr_2}(\tau). \]
\end{proposition}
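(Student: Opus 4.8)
The plan is to use the master space relation of Theorem~\ref{thm:pairs-master-relation} together with the purely algebraic identity of Lemma~\ref{lem:wall-crossing-total-term} to pin down $\sz_\alpha^{\Fr_i}(\tau)$ uniquely, independently of $i$. Throughout, I work in the Lie algebra $K_\circ^{\tilde\sT}(\tilde\fM^{Q(\Fr_1)\wedge Q(\Fr_2)})^\pl_{\loc,\bQ}$, which receives all the relevant invariants via the morphisms $\iota_1^{Q\wedge Q}, \iota_2^{Q\wedge Q}$ of graded monoidal $\sT$-stacks; since these are morphisms of graded monoidal stacks, the induced pushforwards are Lie algebra homomorphisms (Theorem~\ref{thm:auxiliary-stack-vertex-algebra}), so $\sz_\alpha^{\Fr_i}(\tau)$, $\partial_i$, and $\tilde\sZ^{\Fr_i}$ all sit inside this one Lie algebra with their brackets intact. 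I proceed by induction on $r(\alpha)$, the base case $\fM_\alpha^\sst(\tau)=\emptyset$ being trivial since then all invariants vanish.

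First I would rewrite both defining relations \eqref{eq:sstable-def} for $\Fr_1$ and $\Fr_2$ inside the common Lie algebra: by the very definition of $\tilde\sZ^{\Fr_1}_{\alpha,(1,0,0)}(\tau^Q)$ and $\partial_1$ as pushforwards of the corresponding objects on $\tilde\fM^{Q(\Fr_1)}$, the relation \eqref{eq:sstable-def} for $\Fr_1$ becomes
\[
  I_*\tilde\sZ_{\alpha,(1,0,0)}^{\Fr_1}(\tau^Q) = \sum_{\substack{n>0 \\ \alpha = \alpha_1+\cdots+\alpha_n\\ \forall i: \,\tau(\alpha_i) = \tau(\alpha)\\ \;\;\fM_{\alpha_i}^{\sst}(\tau) \neq \emptyset}} \frac{1}{n!} \Big[(\iota_1^{Q\wedge Q})_*\iota^Q_*\sz^{\Fr_1}_{\alpha_n}(\tau), \big[\cdots,\big[(\iota_1^{Q\wedge Q})_*\iota^Q_*\sz^{\Fr_1}_{\alpha_1}(\tau), \partial_1\big]\cdots\big]\Big],
\]
and symmetrically for $\Fr_2$ with $\partial_2$. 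Now substitute these two expressions, together with the analogous one for each proper sub-class $\alpha_1+\alpha_2=\alpha$ appearing in the last sum of \eqref{eq:pairs-master-relation}, into the master relation \eqref{eq:pairs-master-relation}. By the inductive hypothesis, for every $\beta\in R_\alpha$ with $\beta\neq\alpha$ we already know $\sz_\beta^{\Fr_1}(\tau)=\sz_\beta^{\Fr_2}(\tau)$; call this common class $z_\beta$, and write $z_\alpha^{(i)}\coloneqq \sz_\alpha^{\Fr_i}(\tau)$ for $i=1,2$ (the two a priori distinct classes we want to identify). Collecting terms by the graded Lie-theoretic combinatorics, the master relation becomes exactly an instance of the identity
\[
  \sum_{\substack{n>1 \\ \alpha = \alpha_1+\cdots+\alpha_n\\ \forall j: \,\tau(\alpha_j) = \tau(\alpha)\\ \;\;\fM_{\alpha_j}^{\sst}(\tau) \neq \emptyset}} C_{\alpha_1,\ldots,\alpha_n}(\vec z,\, \partial_1,\, \partial_2) = 0
\]
from Lemma~\ref{lem:wall-crossing-total-term}, where $\vec z = \{z_\beta\}_{\beta\in R_\alpha}$ with $z_\alpha$ replaced by $z_\alpha^{(1)}$ in the "$\delta_1$-branch" and by $z_\alpha^{(2)}$ in the "$\delta_2$-branch". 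The point of Theorem~\ref{thm:pairs-master-relation} is precisely that the sum of the $n\ge 2$ terms of this shape vanishes, so after subtracting Lemma~\ref{lem:wall-crossing-total-term} applied to a single consistent choice of $z_\alpha$ (say $z_\alpha^{(1)}$ in both branches), what remains is the difference, which isolates exactly the $n=1$ contribution involving $z_\alpha^{(1)}-z_\alpha^{(2)}$.

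Concretely, after this cancellation the surviving relation reads
\[
  \big[\iota^Q_*(z_\alpha^{(1)}-z_\alpha^{(2)}) \text{ (pushed into the common stack)},\ [\partial_1,\partial_2]\big] + (\text{terms already known to agree}) = 0,
\]
but $[\partial_1,\partial_2]=0$ is not quite enough on its own, so I instead extract the coefficient of the bidegree $(1,0,0)$ versus $(0,1,0)$ pieces: the $n=1$ terms are $[\iota_1^{Q\wedge Q}{}_*\iota^Q_*z_\alpha^{(1)},\partial_2]$ from the first summand of \eqref{eq:pairs-master-relation} and, via \eqref{eq:sstable-def} for $\Fr_1$ and $\Fr_2$ again, the master relation forces $[\iota_1^{Q\wedge Q}{}_*\iota^Q_*(z_\alpha^{(1)}-z_\alpha^{(2)}),\partial_2]=0$ in bidegree $(\alpha,(1,1,0))$. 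Finally, apply $(\pi_{\fM_\alpha^{\vec\Fr}})_*$ and invoke Lemma~\ref{lem:pushforward-of-bracket-partial}, which gives $[\fr_2(\alpha)]_\kappa\cdot(z_\alpha^{(1)}-z_\alpha^{(2)})=0$; since $[\fr_2(\alpha)]_\kappa$ is invertible in $\bk_{\tilde\sT,\loc}[\kappa^{\pm 1/2}]$ (it is $\pm 1$ plus $\kappa$-corrections), we conclude $z_\alpha^{(1)}=z_\alpha^{(2)}$, completing the induction. The main obstacle is the bookkeeping in the middle step: one must check that the combinatorial expansion of the right-hand side of \eqref{eq:pairs-master-relation}, after substituting the nested-bracket formulas \eqref{eq:sstable-def}, reassembles \emph{exactly} into the $C_{\alpha_1,\ldots,\alpha_n}$ shape of Lemma~\ref{lem:wall-crossing-total-term} with the correct $\tfrac{1}{m!}\tfrac{1}{(n-m)!}$ weights and with $\partial_1,\partial_2$ in the roles of $\delta_1,\delta_2$; this is the analogue of Joyce's proof of framing independence \cite[\S 10]{Joyce2021}, and the Jacobi identity is used repeatedly to move brackets with $\partial_i$ past brackets among the $z_\beta$'s. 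Once that identification is made, the vanishing is automatic from Theorem~\ref{thm:pairs-master-relation}, and the inversion of the quantum integer does the rest.
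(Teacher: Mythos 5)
Your proposal follows the paper's own proof essentially step for step: induction on $r(\alpha)$, substitution of the characterization \eqref{eq:sstable-def} (with the inductive hypothesis equalizing all lower-order invariants) into the two-framing master relation \eqref{eq:pairs-master-relation}, cancellation of all $n\ge 2$ terms via Lemma~\ref{lem:wall-crossing-total-term} using $[I_*\partial_1,I_*\partial_2]=0$, and finally a pushforward plus division by an invertible quantum integer. So the approach is the paper's, and the argument goes through.

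One local bookkeeping slip in your middle step: after the $C_{\alpha_1,\ldots,\alpha_n}$ terms cancel, the surviving $n=1$ contributions are the \emph{double} brackets $\bigl[[z_\alpha^{(1)},I_*\partial_1],I_*\partial_2\bigr]$ and $\bigl[I_*\partial_1,[z_\alpha^{(2)},I_*\partial_2]\bigr]$, not a relation of the form $\bigl[\iota_{1*}^{Q\wedge Q}\iota^Q_*(z_\alpha^{(1)}-z_\alpha^{(2)}),\partial_2\bigr]=0$ (that expression sits in degree $(\alpha,(0,1,0))$, not $(\alpha,(1,1,0))$, and cannot be extracted directly from the master relation). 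Consequently one pushes forward twice — first forgetting $V_2$, then $V_1$, each time by the rigidity computation underlying Lemma~\ref{lem:pushforward-of-bracket-partial} — and obtains $[\fr_1(\alpha)]_\kappa\,[\fr_2(\alpha)]_\kappa\,(\sz_\alpha^{\Fr_1}(\tau)-\sz_\alpha^{\Fr_2}(\tau))=0$, rather than a single factor $[\fr_2(\alpha)]_\kappa$. Since both quantum integers are nonzero in $\bk_{\tilde\sT,\loc}$, the conclusion is unchanged, so this is a repairable slip rather than a gap in the method.
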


This proposition crucially requires the relation
\eqref{eq:pairs-master-relation} proved by
Theorem~\ref{thm:pairs-master-relation}.

\begin{proof}
  Let $l \ge 0$ be given, and suppose for induction that the
  proposition holds for all $r(\alpha) < l$. The base case $l = 0$ is
  vacuous since $r(\alpha) > 0$ for all $\alpha \neq 0$. Suppose now
  that $r(\alpha) = l$. Applying the induction hypothesis to
  \eqref{eq:sstable-def} yields
  \[ I_*\tilde{\sZ}_{\alpha,1}^{\Fr_i}(\tau^Q) = \left[\iota^Q_*\sz^{\Fr_i}_\alpha(\tau), I_*\partial\right] + \sum_{\substack{n>1 \\ \alpha = \alpha_1+\cdots+\alpha_n\\ \forall i: \,\tau(\alpha_i) = \tau(\alpha)\\ \;\;\fM_{\alpha_i}^{\sst}(\tau) \neq \emptyset}} \frac{1}{n!} \left[\iota^Q_*\sz^{\Fr_1}_{\alpha_n}(\tau), \left[\cdots,\left[\iota^Q_*\sz^{\Fr_1}_{\alpha_2}(\tau), \left[\iota^Q_*\sz^{\Fr_1}_{\alpha_1}(\tau), I_*\partial\right]\right]\cdots\right]\right] \]
  for $i = 1, 2$, since $r(\alpha_i) < r(\alpha)$ for all $\alpha_i$
  in the sum by the additivity of $r$. Apply $(\iota_i^{Q \wedge
    Q})_*$ to this and plug it into the relation
  \eqref{eq:pairs-master-relation} to get
  \begin{equation} \label{eq:sst-invariants-comparison}
    0 = (\pi_{\fM_\alpha^{\vec\Fr,\pl}})_*\bigg(\left[\left[z_\alpha^{\Fr_1}, I_*\partial_1\right], I_*\partial_2\right] + \left[I_*\partial_1, \left[z_\alpha^{\Fr_2}, I_*\partial_2\right]\right] + \sum_{\substack{n>1 \\ \alpha = \alpha_1+\cdots+\alpha_n\\ \forall i: \,\tau(\alpha_i) = \tau(\alpha)\\ \;\;\fM_{\alpha_i}^{\sst}(\tau) \neq \emptyset}} C_{\alpha_1,\ldots,\alpha_n}(\vec z^{\Fr_1}, I_*\partial_1, I_*\partial_2)\bigg),
  \end{equation}
  where $z_\beta^{\Fr_i} \coloneqq (\iota^{Q\wedge Q}_1 \circ
  \iota^Q)_* \sz_\beta^{\Fr_i}(\tau) = (\iota^{Q\wedge Q}_2 \circ
  \iota^Q)_* \sz_\beta^{\Fr_i}(\tau)$, and
  $C_{\alpha_1,\ldots,\alpha_n}$ is defined by
  \eqref{eq:wall-crossing-total-term}. Applying
  Lemma~\ref{lem:pushforward-of-bracket-partial} to the first two
  terms of \eqref{eq:sst-invariants-comparison} gives
  \[ [\fr_1(\alpha)]_\kappa \cdot [\fr_2(\alpha)]_\kappa \cdot \left(\sz_\alpha^{\Fr_1}(\tau) - \sz_\alpha^{\Fr_2}(\tau)\right). \]
  Since $[I_*\partial_1, I_*\partial_2] = 0$ by simple computation,
  Lemma~\ref{lem:wall-crossing-total-term} shows that the sum in
  \eqref{eq:sst-invariants-comparison} vanishes. Hence
  $\sz_\alpha^{\Fr_1}(\tau) = \sz_\alpha^{\Fr_2}(\tau)$. This
  concludes the inductive step.
\end{proof}

\subsubsection{}

\begin{proposition}[Theorem~\ref{thm:sst-invariants}\ref{item:vss-pairs-relation}]
  Let $\Fr \in \Frs$ be a framing functor. Then the elements
  $\sz_\alpha^{\Fr}(\tau)$, defined explicitly by
  \eqref{eq:sstable-explicit}, satisfy \eqref{eq:sstable-def}.
\end{proposition}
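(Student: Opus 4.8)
The plan is to reverse the explicit inductive construction of Definition~\ref{def:sstable-explicit}. Recall that $\sz_\alpha^{\Fr}(\tau)$ was \emph{defined} by applying $(\pi_{\fM_\alpha^{\Fr}})_*$ to the desired identity \eqref{eq:sstable-def} and dividing by $[\fr(\alpha)]_\kappa$, using Lemma~\ref{lem:pushforward-of-bracket-partial} as a left-inverse for $[\iota^Q_*(-),I_*\partial]$. So what must be shown is that applying $[\iota^Q_*(-),I_*\partial]$ back to this quotient recovers $I_*\tilde\sZ^{\Fr}_{\alpha,1}(\tau^Q)$ minus the higher terms — i.e.\ that the left-inverse is in fact a genuine inverse on the relevant class. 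Equivalently, writing $R_\alpha \coloneqq I_*\tilde\sZ^{\Fr}_{\alpha,1}(\tau^Q) - \sum_{n>1}(\cdots)$ for the higher-$n$ terms (which by induction on $r(\alpha)$ only involve $\sz_\beta^{\Fr}(\tau)$ with $r(\beta)<r(\alpha)$, hence are already known to satisfy their defining relations), we have $(\pi_{\fM_\alpha^{\Fr}})_* R_\alpha = [\fr(\alpha)]_\kappa \cdot \sz_\alpha^{\Fr}(\tau)$ by construction, and we must upgrade this to $R_\alpha = [\iota^Q_*\sz_\alpha^{\Fr}(\tau), I_*\partial]$.

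First I would set up the induction on $r(\alpha)$, noting that the $n>1$ terms in \eqref{eq:sstable-def} are unaffected because $r$ is additive on the relevant decompositions (Assumption~\ref{assump:semistable-invariants}\ref{assump:it:rank-function}), so by the inductive hypothesis \eqref{eq:sstable-def} holds for all $\beta\in R_\alpha$ with $\beta\neq\alpha$; thus it suffices to prove the identity for $\alpha$ itself. The key structural fact is that $R_\alpha$, like both sides of \eqref{eq:sstable-def}, is supported on the image of $[\iota^Q_*(-),I_*\partial]$ — concretely, on the locus of the framed stack $\tilde\fM^{Q(\Fr)}_{\alpha,1}$ where $\dim V = 1$ and the framing map $\rho$ is ``generic,'' which is exactly a projective bundle over $\fM_\alpha^{\Fr}$ by Lemma~\ref{lem:pairs-stack-sst=st} and Proposition~\ref{prop:pairs-master-space-fixed-loci}(i). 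On such a locus, $[\iota^Q_*(-),I_*\partial]$ is (up to the scalar $[\fr(\alpha)]_\kappa$) the homology projective bundle formula of Lemma~\ref{lem:homology-projective-bundle-formula}, which is an \emph{isomorphism} onto its image once one knows the source and target are related by this $\bP^{\fr(\alpha)-1}$-bundle structure — the composite $(\pi_{\fM_\alpha^{\Fr}})_* \circ [\iota^Q_*(-),I_*\partial] = [\fr(\alpha)]_\kappa\cdot\mathrm{id}$ is then not just a left-inverse but genuinely inverts the operator on that subspace.

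The cleanest way to make this precise is to run the \emph{same} master-space argument as in the proof of Theorem~\ref{thm:pairs-master-relation}, but with $\Fr_1 = \Fr_2 = \Fr$ and using the $n=1$ contribution directly. Alternatively — and I think this is the more efficient route — one observes that the argument proving Proposition~\ref{prop:vss-framing-independence} via the relation \eqref{eq:pairs-master-relation} and Lemma~\ref{lem:wall-crossing-total-term} actually proves more: specializing that computation with $\Fr_1 = \Fr_2 = \Fr$, the relation \eqref{eq:sst-invariants-comparison} becomes an identity purely in terms of $\sz^{\Fr}_\bullet(\tau)$ and $I_*\tilde\sZ^{\Fr}_{\bullet,1}(\tau^Q)$, and unwinding it (using Lemma~\ref{lem:pushforward-of-bracket-partial} and Lemma~\ref{lem:wall-crossing-total-term} exactly as there) yields that the difference between $R_\alpha$ and $[\iota^Q_*\sz_\alpha^{\Fr}(\tau),I_*\partial]$ pushes forward to zero along $\pi_{\fM_\alpha^{\Fr}}$ and is supported on the $\bP^{\fr(\alpha)-1}$-bundle locus; since $\pi_{\fM_\alpha^{\Fr}}$ restricted to that locus is the projective bundle and pushforward composed with $D(z)$-insertion (i.e.\ the map underlying Lemma~\ref{lem:homology-projective-bundle-formula}) is injective after the residue, the difference vanishes.

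The main obstacle is the injectivity claim: $(\pi_{\fM_\alpha^{\Fr}})_*$ alone is certainly \emph{not} injective on $K^{\tilde\sT}_\circ(\tilde\fM^{Q(\Fr)}_{\alpha,1})^{\pl}_{\loc,\bQ}$, so one cannot simply cancel it. What saves the argument is that every class in play — both $R_\alpha$ and $[\iota^Q_*\sz_\alpha^{\Fr}(\tau),I_*\partial]$ — is constrained to lie in the image of the homology projective bundle operation, and on that image the virtual projective bundle formula (Lemma~\ref{lem:symmetrized-projective-bundle-formula}, via Lemma~\ref{lem:homology-projective-bundle-formula}) expresses the operator as $\rho_{K,z}$ of an explicit rational function whose only $z^0$-contribution after the residue is the ``$n=1$'' quantum-integer term $[\fr(\alpha)]_\kappa$, making it invertible. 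Pinning down that both classes genuinely lie in this image — which for $R_\alpha$ uses the support statement of Lemma~\ref{lem:pairs-stack-sst=st} that $\tilde\sZ^{\Fr}_{\alpha,1}(\tau^Q)$ is concentrated on the $\rho\neq 0$ locus, together with the inductive hypothesis controlling the $n>1$ terms — is the step requiring the most care, and is where I would spend the bulk of the write-up.
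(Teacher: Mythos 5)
Your overall engine is the right one --- the master-space relation of Theorem~\ref{thm:pairs-master-relation} specialized to $\Fr_1=\Fr_2=\Fr$, together with induction on $r(\alpha)$ so that the $n>1$ terms are controlled by the inductive hypothesis --- and this is indeed how the paper proceeds. The gap is in how you propose to close the argument. You want to conclude by showing that both $[\iota^Q_*\sz_\alpha^{\Fr}(\tau),I_*\partial]$ and the remainder class $I_*\tilde\sZ^{\Fr}_{\alpha,1}(\tau^Q)-\sum_{n>1}(\cdots)$ lie in the image of the operator $[\iota^Q_*(-),I_*\partial]$, on which the pushforward $(\pi_{\fM_\alpha^{\Fr}})_*$ is injective up to the invertible scalar $[\fr(\alpha)]_\kappa$. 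The injectivity-on-the-image step is unobjectionable, but membership of the remainder in that image is essentially the statement being proved: by Lemma~\ref{lem:homology-projective-bundle-formula}, classes in the image are of the form $j_*\hat\pi^*\phi$ for the bundle $P_{\fM_\alpha^{\Fr}}(\cFr(\cE))$, and nothing you cite delivers this. The support statement from Lemma~\ref{lem:pairs-stack-sst=st} only says $\tilde\sZ^{\Fr}_{\alpha,1}(\tau^Q)$ is concentrated on $\{\rho\neq 0\}$; when strictly $\tau$-semistable objects of class $\alpha$ exist, the $\tau^Q$-stable locus is \emph{not} a projective bundle over anything on $\fM_\alpha$ (over a strictly semistable $E$ the fibre is only the open set of $\rho$ not factoring through a destabilizer), and the $n>1$ correction terms are not individually of bracket-with-$\partial$ form either. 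Moreover, the projective bundle you invoke from Proposition~\ref{prop:pairs-master-space-fixed-loci} (the locus $Z_{\rho_4=0}$) is a $\bP^{\fr_2(\alpha)-1}$-bundle over the \emph{pairs} stable locus $\tilde\fM^{Q(\Fr)\wedge Q(\Fr),\sst}_{\alpha,(1,0,0)}$, not over $\fM_\alpha^{\Fr}$, so it cannot be used to identify the image of $[\iota^Q_*(-),I_*\partial]$ in the way your sketch requires.

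The paper closes the loop without any injectivity claim, and the device that makes this possible is the one missing from your proposal. Rather than pushing the master-space relation all the way down to $\fM_\alpha^{\Fr}$ and trying to invert, one pushes forward only along the partial forgetful map $\pi_{\tilde\fM_{\alpha,1}^{Q(\Fr)}}\colon \tilde\fM^{Q(\Fr)\wedge Q(\Fr)}_{\alpha,(1,1,0)}\to\tilde\fM^{Q(\Fr)}_{\alpha,1}$, keeping the first framing node alive. Lemma~\ref{lem:pushforward-of-bracket-partial} handles the brackets against $\partial_2$, and the key extra ingredient, Lemma~\ref{lem:pushforward-of-partial-bracket}, shows that $(\pi_{\tilde\fM_{\alpha,1}^{Q(\Fr)}})_*\bigl[I_*\partial_1,(\iota_2^{Q\wedge Q})_*\psi\bigr]=\bigl[I_*\partial,\iota^Q_*(\pi_{\fM_\alpha^{\Fr}})_*\psi\bigr]$, i.e.\ these terms depend only on $(\pi_{\fM_\alpha^{\Fr}})_*\psi$ while the outer bracket with $\partial$ stays intact. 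At that point the explicit definition \eqref{eq:sstable-explicit} --- which is precisely a statement about the pushforward of the remainder --- can be substituted \emph{inside} the bracket, producing the missing $n=1$ term and yielding \eqref{eq:sstable-def} directly. If you pursue your route you would in any case have to prove an analogue of Lemma~\ref{lem:pushforward-of-partial-bracket}, so I would restructure the write-up around that lemma rather than around an image/injectivity argument.
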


\begin{proof}
  Let $l \ge 0$ be given, and suppose for induction that
  \eqref{eq:sstable-def} holds for all $r(\alpha) < l$. The base case
  $l = 0$ is vacuous since $r(\alpha) > 0$ for all $\alpha \neq 0$.
  Suppose now that $r(\alpha) = l$. Consider the relation
  \eqref{eq:pairs-master-relation} for $\Fr_1 = \Fr_2 = \Fr$. The
  induction hypothesis is applicable to the terms
  $I_*\tilde\sZ_{\alpha_1,(1,0,0)}^{\Fr}(\tau^Q)$ and
  $I_*\tilde\sZ_{\alpha_2,(0,1,0)}^{\Fr}(\tau^Q)$ because
  $r(\alpha_1) + r(\alpha_2) = r(\alpha)$ and therefore $r(\alpha_i) <
  r(\alpha)$ for $i = 1, 2$. The result is
  \begin{align*}
    0 = \Big[I_*\tilde\sZ_{\alpha,(1,0,0)}^{\Fr}(\tau^Q), I_*\partial_2\Big] + \Big[I_*\partial_1, I_*\tilde\sZ_{\alpha,(0,1,0)}^{\Fr}&{}(\tau^Q)\Big] \\
    +\sum_{\substack{n>1\\\alpha = \alpha_1+\cdots+\alpha_n\\\forall i: \tau(\alpha_i) = \tau(\alpha)\\ \;\;\fM_{\alpha_i}^{\sst}(\tau) \neq \emptyset}} \bigg(C_{\alpha_1,\ldots,\alpha_n}(\vec z^{\Fr}, I_*\partial_1, I_*\partial_2)
    &- \frac{1}{n!} \Big[\Big[z_{\alpha_n}, \big[\cdots[z_{\alpha_2}^{\Fr}, [z_{\alpha_1}^{\Fr}, I_*\partial_1]]\cdots\big]\Big], I_*\partial_2\Big] \\[-3em]
    &- \frac{1}{n!} \Big[I_*\partial_1, \Big[z_{\alpha_n}^{\Fr}, \big[\cdots[z_{\alpha_2}^{\Fr}, [z_{\alpha_1}^{\Fr}, I_*\partial_2]]\cdots\big]\Big]\Big]\bigg).
  \end{align*}
  where $z_\beta^{\Fr} \coloneqq (\iota_1^{Q\wedge Q}\circ
  \iota^Q)_*\sz_\alpha^{\Fr}(\tau) = (\iota_2^{Q\wedge Q}\circ
  \iota^Q)_*\sz_\alpha^{\Fr}(\tau)$ and $C_{\alpha_1,\ldots,\alpha_n}$
  is defined by \eqref{eq:wall-crossing-total-term}.
  Lemma~\ref{lem:wall-crossing-total-term} shows that the sum of
  $C_{\alpha_1,\ldots,\alpha_n}$ vanishes. Apply pushforward along the
  forgetful map
  \[ \pi_{\tilde\fM_{\alpha,1}^{Q(\Fr)}}\colon \tilde\fM_{\alpha,(1,1,0)}^{Q(\Fr)\wedge Q(\Fr)} \to \tilde\fM_{\alpha,(1,0,0)}^{Q(\Fr)\wedge Q(\Fr)} \xrightarrow[\sim]{(\iota_1^{Q\wedge Q})^{-1}} \tilde\fM_{\alpha,1}^{Q(\Fr)} \]
  and use Lemma~\ref{lem:pushforward-of-bracket-partial} to conclude
  that
  \begin{align*}
    0 = [\fr(\alpha)]_\kappa \cdot I_*\tilde\sZ_{\alpha,1}^{\Fr}(\tau^Q) &+ (\pi_{\tilde\fM_{\alpha,1}^{Q(\Fr)}})_*\left[I_*\partial_1, (\iota^{Q\wedge Q}_2)_*I_*\tilde\sZ_{\alpha,1}^{\Fr}(\tau^Q)\right] \\
    -\sum_{\substack{n>1\\\alpha = \alpha_1+\cdots+\alpha_n\\\forall i: \tau(\alpha_i) = \tau(\alpha)\\ \;\;\fM_{\alpha_i}^{\sst}(\tau) \neq \emptyset}} \frac{1}{n!} \bigg(
    &[\fr(\alpha)]_\kappa \cdot \Big[\iota^Q_*\sz_{\alpha_n}^{\Fr}(\tau), \big[\cdots[\iota^Q_*\sz_{\alpha_2}^{\Fr}(\tau), [\iota^Q_*\sz_{\alpha_1}^{\Fr}(\tau), I_*\partial]]\cdots\big]\Big] \\[-3em]
    &+ (\pi_{\tilde\fM_{\alpha,1}^{Q(\Fr)}})_*\Big[I_*\partial_1, (\iota^{Q\wedge Q}_2)_*\Big[\iota^Q_*\sz_{\alpha_n}^{\Fr}(\tau), \big[\cdots[\iota^Q_*\sz_{\alpha_2}^{\Fr}(\tau), [\iota^Q_*\sz_{\alpha_1}^{\Fr}(\tau), I_*\partial]]\cdots\big]\Big]\Big]\bigg).
  \end{align*}
  In the fourth term, we used that $(\iota^{Q\wedge Q}_2)_*$ is a Lie
  algebra homomorphism. By
  Lemma~\ref{lem:pushforward-of-partial-bracket} below, the second and
  fourth terms become
  \[ \bigg[I_*\partial, \iota^Q_* (\pi_{\fM_\alpha^{\Fr}})_* \bigg(I_*\tilde\sZ_{\alpha,1}^{\Fr}(\tau^Q)\; - \!\!\!\!\!\sum_{\substack{n>1\\\alpha = \alpha_1+\cdots+\alpha_n\\\forall i: \tau(\alpha_i) = \tau(\alpha)\\ \;\;\fM_{\alpha_i}^{\sst}(\tau) \neq \emptyset}} \frac{1}{n!} \Big[\iota^Q_*\sz_{\alpha_n}^{\Fr}(\tau), \big[\cdots[\iota^Q_*\sz_{\alpha_2}^{\Fr}(\tau), [\iota^Q_*\sz_{\alpha_1}^{\Fr}(\tau), \partial]]\cdots\big]\Big]\bigg)\bigg]. \]
  We don't know yet that the expression in round brackets equals
  $[\iota_*^Q \sz_\alpha^{\Fr}(\tau), I_*\partial]$ --- this is what
  we want to prove --- but upon applying $(\pi_{\fM_\alpha^{\Fr}})_*$,
  the expression becomes $[\fr(\alpha)]_\kappa \cdot
  \sz_\alpha^{\Fr}(\tau)$ by the definition
  \eqref{eq:sstable-explicit}. This produces the ``missing'' $n=1$
  piece in the third term. The resulting first and third terms,
  divided through by $[\fr(\alpha)]_\kappa$, is precisely the desired
  relation \eqref{eq:sstable-def}.
\end{proof}

\subsubsection{}

\begin{lemma} \label{lem:pushforward-of-partial-bracket}
  For any $\phi \in K_\circ^\sT(\tilde\fM_{\alpha,1}^{Q(\Fr)})^\pl_{\loc}$,
  \[ (\pi_{\tilde\fM_{\alpha,1}^{Q(\Fr)}})_*\left[(\iota_1^{Q \wedge Q})_*I_*\partial, (\iota_2^{Q \wedge Q})_*\phi\right] = \left[I_*\partial, \iota^Q_* (\pi_{\fM_\alpha^{\Fr}})_* \phi\right] \in K_\circ^\sT(\tilde\fM_{\alpha,1}^{Q(\Fr)})^\pl_{\loc}. \]
\end{lemma}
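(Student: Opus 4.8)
The plan is to unwind both Lie brackets using the vertex product formula of Theorem~\ref{thm:auxiliary-stack-vertex-algebra}. Recall that $[\psi_1,\psi_2](\cE) = \frac{1}{\kappa^{-1/2}-\kappa^{1/2}}\rho_{K,z}\big[(\tilde\psi_1\boxtimes\tilde\psi_2)(\hat\Theta(z)\otimes z^{\deg_1}\Phi^*\cE)\big]$, so each side becomes $\rho_{K,z}$ of an expression built from: the external product of $\partial$ with $\phi$ (resp.\ $(\pi_{\fM_\alpha^\Fr})_*\phi$), a cap with the appropriate $\hat\Theta$, the degree operator $z^{\deg_1}$ in the $\partial$-slot, and pushforward along a direct sum map. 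On the left this is the direct sum $\Phi^{Q\wedge Q} \coloneqq \Phi_{(0,(1,0,0)),(\alpha,(0,1,0))}$ followed by $\pi_{\tilde\fM_{\alpha,1}^{Q(\Fr)}}$; on the right it is $\Phi^{Q(\Fr)}_{(0,1),(\alpha,0)}$. I would reduce the lemma to two comparisons: that the two theta classes agree, and that the two (composite) direct sum maps agree after forgetting the $V_2$-framing.

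The $\hat\Theta$ comparison is a direct computation with the bilinear elements $\tilde\scE^{Q\wedge Q}$ and $\tilde\scE^{Q(\Fr)}$ of Theorem~\ref{thm:auxiliary-stack-vertex-algebra}, using the formula \eqref{eq:framed-stack-forgetful-map-cotangent} for the $\bF$-terms. Restricting $\bF^{Q\wedge Q}$ to $\tilde\fM_{0,(1,0,0)}^{Q(\Fr)\wedge Q(\Fr)}\times\tilde\fM_{\alpha,(0,1,0)}^{Q(\Fr)\wedge Q(\Fr)}$, every edge and every framing-vertex term that involves $\cV_2$, $\cV_3$, or the universal frame bundles of the first factor vanishes, because on the first factor $E=0$ (so $\cFr_i(\cE)=0$) and $\cV_2=\cV_3=0$, while on the second factor $\cV_1=\cV_3=0$; only the $\rho_1$-edge survives, giving $\bF^{Q\wedge Q}_{(0,(1,0,0)),(\alpha,(0,1,0))}=\cV_1^\vee\boxtimes\cFr(\cE)$ and $\bF^{Q\wedge Q}_{(\alpha,(0,1,0)),(0,(1,0,0))}=0$, and likewise $\scE_{0,\alpha}=\scE_{\alpha,0}=0$. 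The same computation for $Q(\Fr)$ gives $\bF^{Q(\Fr)}_{(0,1),(\alpha,0)}=\cV^\vee\boxtimes\cFr(\cE)$ and $\bF^{Q(\Fr)}_{(\alpha,0),(0,1)}=0$. Hence both theta classes are $\hat\se_{z^{-1}}(-\cV^\vee\boxtimes\cFr(\cE))\otimes\hat\se_{z}(\kappa^{-1}\cV\boxtimes\cFr(\cE)^\vee)$, and they match under the evident identifications ($\cV_1\leftrightarrow\cV$, frame bundles pulled back from $\fM_\alpha^\Fr$).

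The map comparison is the commutative square
\[
\begin{tikzcd}[column sep=large]
\tilde\fM_{0,(1,0,0)}^{Q(\Fr)\wedge Q(\Fr)}\times\tilde\fM_{\alpha,(0,1,0)}^{Q(\Fr)\wedge Q(\Fr)} \ar{r}{\Phi^{Q\wedge Q}} \ar{d}[swap]{\cong\,\times\,(\iota^Q\circ\,\pi_{\fM_\alpha^\Fr}\circ\,\cong)} & \tilde\fM_{\alpha,(1,1,0)}^{Q(\Fr)\wedge Q(\Fr)} \ar{d}{\pi_{\tilde\fM_{\alpha,1}^{Q(\Fr)}}} \\
\tilde\fM_{0,1}^{Q(\Fr)}\times\tilde\fM_{\alpha,0}^{Q(\Fr)} \ar{r}{\Phi^{Q(\Fr)}_{(0,1),(\alpha,0)}} & \tilde\fM_{\alpha,1}^{Q(\Fr)},
\end{tikzcd}
\]
whose commutativity is checked on points: both routes send $(\ast_{V_1},(E,V_2,\rho_2))$ to $(E,V_1\cong\bC,\rho_1=0)$, the vanishing $\rho_1=0$ being forced because the $\rho_1$-component of the first summand is a map $V_1\to\Fr(0)=0$. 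Granting this square, I would combine it with the fact that $\boxtimes$, $z^{\deg_1}$ and $D(z)$ commute with pushforward and pullback along morphisms of graded monoidal $\sT$-stacks (Lemma~\ref{lem:pl-group-functoriality}), with the projection formula for operational K-homology (\S\ref{sec:k-homology-properties}), and with the normalization $(I_*\partial)(\cdot)\equiv 1$ on classes independent of the relevant framing degree, to transport the left-hand expression through the square onto the right-hand one; applying $\rho_{K,z}$ and dividing by $\kappa^{-1/2}-\kappa^{1/2}$ then yields the claimed identity, which descends to the $(-)^\pl$-quotients since the lift $\tilde\phi$ of $\phi$ was arbitrary. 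The main obstacle is bookkeeping: carefully tracking the support conditions ($\rho_1=0$, $V_3=0$) on the auxiliary $(Q\wedge Q)$-stack, the identifications $\iota_1^{Q\wedge Q}$, $\iota_2^{Q\wedge Q}$, $\iota^Q$ and their rigidified variants, and confirming that the $\hat\Theta$-comparison is compatible with all these identifications — but no genuinely new input beyond Theorem~\ref{thm:auxiliary-stack-vertex-algebra} and Lemma~\ref{lem:pl-group-functoriality} is needed.
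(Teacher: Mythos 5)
Your proposal is correct and follows essentially the same route as the paper: unwind both brackets via Theorem~\ref{thm:auxiliary-stack-vertex-algebra}, observe that both $\hat\Theta$-classes reduce to expressions in $\cV^\vee\boxtimes\cFr(\cE)$ whose pullbacks agree, and verify a commutative diagram identifying $\pi_{\tilde\fM_{\alpha,1}^{Q(\Fr)}}\circ\Phi^{Q\wedge Q}$ with $\Phi^{Q(\Fr)}\circ(\id\times(\iota^Q\circ\pi_{\fM_\alpha^{\Fr}}))$, using that $z^{\deg_1}$ commutes with these pullbacks. The appeals to the projection formula and to $(I_*\partial)(\cdot)=1$ in your final assembly are unnecessary (the two integrands are literally equal after pullback), but this does not affect correctness.
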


\begin{proof}
  Writing out the definition of the Lie brackets on both sides, it
  suffices to verify that
  \begin{align*}
    &\rho_K (\iota_1^{Q \wedge Q} \times \iota_2^{Q \wedge Q})^* \hat\Theta_{(0,(1,0,0)),(\alpha,(0,1,0))}(z) \otimes z^{\deg_1} \Phi_{(0,(1,0,0)),(\alpha,(0,1,0))}^* (\pi_{\tilde\fM_{\alpha,1}^{Q(\Fr)}})^* \\
    &\stackrel{?}{=} \rho_K (\id \times \pi_{\fM_\alpha^{\Fr}})^*(\id \times \iota^Q)^* \hat\Theta_{(0,1),(\alpha,0)}(z) \otimes z^{\deg_1} \Phi_{(0,1),(\alpha,0)}^*
  \end{align*}
  This equality holds because
  \[ (\iota_1^{Q \wedge Q} \times \iota_2^{Q \wedge Q})^* \hat\Theta_{(0,(1,0,0)),(\alpha,(0,1,0))}(z) = (\id \times \pi_{\fM_\alpha^{\Fr}})^*(\id \times \iota^Q)^* \hat\Theta_{(0,1),(\alpha,0)}(z), \]
  as both only involve $(\iota_1^{Q \wedge Q} \times \iota_2^{Q \wedge
    Q})^* (\cV_1^\vee \boxtimes \cFr(\cE)) = \cV^\vee \boxtimes
  \cFr(\cE) = (\id \times \pi_{\fM_\alpha^{\Fr}})^*(\id \times
  \iota^Q)^* (\cV^\vee \boxtimes \cFr(\cE))$, and there is the
  commutative diagram
  \[ \begin{tikzcd}
    {} & \tilde\fM_{0,(1,0,0)}^{Q(\Fr)\wedge Q(\Fr)} \times \tilde\fM_{\alpha,(0,1,0)}^{Q(\Fr)\wedge Q(\Fr)} \ar{d} \ar{r}{\Phi} & \tilde\fM_{\alpha,(1,1,0)}^{Q(\Fr)\wedge Q(\Fr)} \ar{d}{\pi_{\tilde\fM_{\alpha,1}^{Q(\Fr)}}} \\
    \tilde\fM_{0,1}^{Q(\Fr)} \times \tilde\fM_{\alpha,1}^{Q(\Fr)} \ar{ur}{\iota_1^{Q\wedge Q} \times \iota_2^{Q \wedge Q}} \ar{r}[swap]{\id \times (\iota^Q \circ \pi_{\fM_\alpha^{\Fr}})} & \tilde\fM_{0,1}^{Q(\Fr)} \times \tilde\fM_{\alpha,0}^{Q(\Fr)} \ar{r}[swap]{\Phi} & \tilde\fM_{\alpha,1}^{Q(\Fr)},
  \end{tikzcd} \]
  and clearly $z^{\deg_1}$ commutes with pullback along any morphism
  in the triangle on the left.
\end{proof}

\section{Wall-crossing}
\label{sec:wall-crossing}

\subsection{Setup and proof strategy}
\label{wc:sec:wc-formula}

\subsubsection{}
\label{wc:sec:dominant-wcf-proof-strategy}

The goal of this section is to prove the dominant wall-crossing
formula (Theorem~\ref{thm:wcf}). We first outline the strategy, following
ideas of \cite[\S 10]{Joyce2021}.

A priori, there are two main issues. First, the semistable invariants
$\sz_\alpha(\tau)$ constructed by Theorem~\ref{thm:sst-invariants} are
obtained from the (geometric) auxiliary invariants
$I_*\tilde\sZ^{\Fr}_{\alpha,1}(\tau^Q)$ in
\eqref{eq:sstable-def-intro} via a complicated inversion process,
making it difficult to directly compare $\sz_\alpha(\tau)$ and
$\sz_\alpha(\mathring\tau)$. Second, moving from $\tau$-semistable to
$\mathring\tau$-semistable objects, classes which destabilize can do
so in uncontrollably complicated ways.

To fix these problems, we pass to an auxiliary framed stack like in
\S\ref{sec:semistable-invariants}, but with a more complicated quiver,
and, on it, construct a family of stability conditions and auxiliary
enumerative invariants $\tilde\sz_{\beta,\vec e}^{s,x}$ depending on
parameters $(s,x) \in [0,1] \times [-1,0]$. The rough strategy is
shown in Figure~\ref{fig:dominant-wc-strategy-1}.

\begin{figure}[!ht]
  \centering
  \begin{tikzpicture}
    \draw[->,thick] (-0.5,0)--(9.5,0) node[right]{$s$};
    \draw (1.5,0)--(1.5,.15);
    \draw (3,0)--(3,.15);
    \draw (7.5,0)--(7.5,.15);
    
    \node[above] at (1.5,0) {$s_1$};
    \node[above] at (3,0) {$s_2$};
    \node[above] at (5.25,0) {$\cdots$};
    \node[above] at (7.5,0) {$s_p$};
    
    \draw[->,thick] (0,0.5)--(0,-4.5) node[below]{$x$};
    
    \node[vertex] at (0,0) {};
    \node[below] at (0.5,0) {$(0,0)$};
    \node[vertex] at (9,0) {};
    \node[below] at (8.5,0) {$(1,0)$};
    \node[vertex] at (0,-4) {};
    \node[below] at (0.65,-3.4) {$(0,-1)$};
    \node[vertex] at (0,-3) {};
    \node[below] at (0.5,-2.4) {$(0,x_0)$};
    \node[vertex] at (9,-4) {};
    \node[below] at (8.35,-3.4) {$(1,-1)$};
    \node[vertex] at (9,-3) {};
    \node[below] at (8.5,-2.4) {$(1,x_0)$};
    
    \draw[thin, draw=gray, fill=gray, opacity=0.15]
    (0,0) -- (0,-4) -- (9,-4) -- (9,0) -- cycle;
    
    \node[above] at (-1,0) {$\tilde\sz_{\beta,\mathbf{e}}^{0,0}$};
    \node[] at (-1,-3) {$\tilde\sz_{\beta,\mathbf{1}}^{0,x_0}$};
    \node[above] at (10,0) {$\tilde\sz^{1,0}_{\beta,\mathbf{e}}$};
    \node[] at (10,-3) {$\tilde\sz_{\beta,\vec 1}^{1,x_0}$};
    \node[] at (-1,-5.5) {$\sz_{\beta}(\tau)$};
    \node[] at (10,-5.5) {$\sz_{\beta}(\mathring \tau)$};
    
    \draw[<->, thick, decorate, decoration=snake] (-1,-3.3)--node[left]{\S\ref{wc:sec:sst-inv-and-pair-inv}}(-1,-5.2);
    \draw[<->, thick, decorate, decoration=snake] (10,-3.3)--node[right]{\S\ref{wc:sec:sst-inv-and-pair-inv}}(10,-5.2);
    \draw[<->, thick, red, text=darkgray] (-0.4,-5.5)--node[above]{desired {\it dominant wall-crossing}} node[below]{Theorem~\ref{thm:wcf}}(9.4,-5.5);
    
    \draw[<->, thick, orange, text=darkgray] (-1,-2.7)--node[left,align=center]{vertical\\``wall''-crossing}(-1,0.1);
    \draw[<->, thick, orange, text=darkgray] (10,-2.7)--node[right, align=center]{vertical\\``wall''-crossing}(10,0.1);
    
    \node at (1.1,0.1) (sm1) {};
    \node at (1.9,0.1) (sp1) {};
    \draw[<->, thick, orange, text=darkgray, bend left=90, distance=22] (sm1) to (sp1);
    \node at (2.6,0.1) (sm2) {};
    \node at (3.4,0.1) (sp2) {};
    \draw[<->, thick, orange, text=darkgray, bend left=90, distance=22] (sm2) to (sp2);
    \node at (7.1,0.1) (sm3) {};
    \node at (7.9,0.1) (sp3) {};
    \draw[<->, thick, orange, text=darkgray, bend left=90, distance=22] (sm3) to (sp3);
    
    \node[above, text=darkgray] at (4.5,0.8) {horizontal wall-crossing};
    
    \draw[<->, thick, blue] (9.5,-5) to (9.5,-.5) --node[below]{our path} (-.5,-.5) -- (-.5,-5);
    
  \end{tikzpicture}
  \caption{Proof strategy for dominant wall-crossing formula.}
  \label{fig:dominant-wc-strategy-1}
\end{figure}
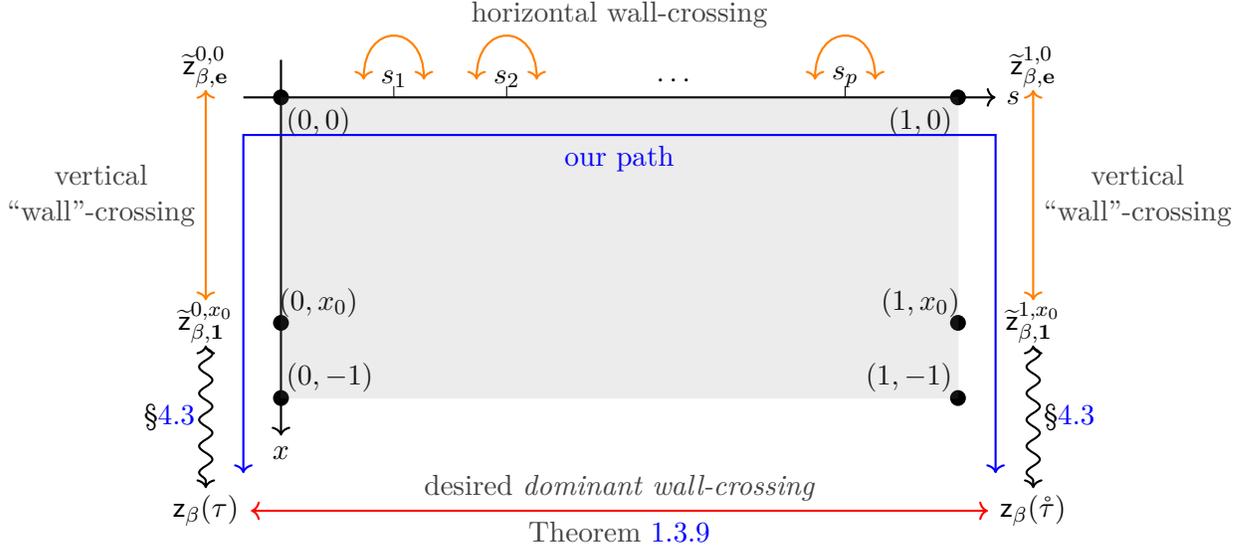

Specifically, in \S\ref{wc:sec:sst-inv-and-pair-inv}, we reformulate
the relation \eqref{eq:sstable-def-intro} which characterizes the
semistable invariants in terms of our new auxiliary invariants with
dimension vector $\vec 1 \coloneqq (1,\dots,1)$. Then, in
\S\ref{wc:sec:pair-inv-and-flag-inv}, we perform a series of
``wall-crossings'' as $x$ increases, to increase the increase the
framing dimensions until the framing is a full flag. Consequently, the
``horizontal'' wall-crossing in $s$ has only simple walls, and then
master space techniques become applicable; this takes place in
\S\ref{wc:sec:horizontal-wc}. Finally, these steps are put together in
\S\ref{wc:sec:putting-things-together} to conclude the proof.

\subsubsection{}

We now proceed with the setup necessary to explain the content of
\S\ref{wc:sec:aux-wc-strategy}. Throughout this section, we fix the
following data:
\begin{itemize}
\item stability conditions $\tau$ and $\mathring\tau$ on $\cat A$
  satisfying Assumption~\ref{assump:wall-crossing};
\item the class $\alpha \in C(\cat A)$ for which we want to prove the
  dominant wall-crossing formula (Theorem~\ref{thm:wcf});
\item a framing functor $\Fr \in \Frs$ such that
  $\fM_\beta^{\sst}(\tau) \subset \fM_\beta^{\Fr,\pl}$ for all $\beta
  \in R_\alpha$ (this is possible by
  Assumption~\ref{assump:semistable-invariants}\ref{assump:it:framing-functor}
  and finiteness of $R_\alpha$).
\end{itemize}
Let
\[ N \coloneqq \fr(\alpha). \]
We fix two extra pieces of data, to be used later in the stability
condition in Definition~\ref{def:flag-invariant}:
\begin{itemize}
\item let $\lambda$ be the function from
  Assumption~\ref{assump:wall-crossing}\ref{assump:it:lambda} scaled
  by a large positive constant so that, without loss of generality,
  $|\lambda(\beta)| > \binom{N+1}{2} r(\alpha)$ for all $\beta \in
  R_\alpha$ with $\lambda(\beta) \neq 0$;
\item choose $\vec \mu = (\mu_1,\dots,\mu_N) \in \bR^N$ with
  $1>\mu_1\gg \mu_2\gg\cdots \gg\mu_N>0$, generic in the sense that
  $\vec f \cdot \vec\mu = 0$ only has the solution $\vec f =\vec 0$ for
  finitely many equations. More concretely, by finiteness of $S_\alpha$ below, there are only finitely many possible equations \eqref{wc:eq:gen-artifical-generalities}, \eqref{wc:eq:uniqueness-genericity}, \eqref{wc:eq:ab-uniqueness-genericity}, and RHSs of \eqref{wc:eq:s-eq-genericity-nonzero}, which are the ones we assume admit only the trivial solution for our chosen $\vec \mu$\footnote{In fact, if the $\gg$-inequalities for $\vec \mu$ are sufficiently widened so that in each of these finitely many equalities any sum of the following $\vec \mu$ terms with any of the finite coefficients can never cancel the previous terms, this is the case without assuming genericity in the above form.}. (For the $\gg$-inequalities, it concretely suffices to take
  $\mu_{i+1}<\frac{\mu_i}{3RN^3}$ for all $i$, where
  $R=\prod_{\beta\in R_\alpha} r(\beta)$.)
\end{itemize}

\subsubsection{}
\label{wc:sec:S-and-R-sets}

We are mostly only interested in classes of strictly-semistabilizing
subobjects of objects of class $\alpha$. Recall the sets $R_\alpha$
and $\mathring R_\alpha$ of such classes for $\tau$ and $\mathring
\tau$, from Definition~\ref{def:dominates-at}, and furthermore define
$\hat R_\alpha$:
\begin{align*}
  R_{\alpha} &= \{\alpha\} \cup \{\beta \in C(\cat{A}) : \alpha-\beta\in C(\cat{A}), \, \tau(\beta) = \tau(\alpha-\beta), \, \fM_{\beta}^{\sst}(\tau), \fM_{\alpha-\beta}^{\sst}(\tau)\neq \emptyset\}, \\
  \hat{R}_{\alpha} &\coloneqq \{\alpha\} \cup \{\beta \in C(\cat{A}) : \alpha-\beta\in C(\cat{A}), \, \mathring\tau(\beta) = \mathring\tau(\alpha-\beta), \, \fM_{\beta}^{\sst}(\tau), \fM_{\alpha-\beta}^{\sst}(\tau)\neq \emptyset\}, \\
  \mathring R_{\alpha} &= \{\alpha\} \cup \{\beta \in C(\cat{A}) : \alpha-\beta\in C(\cat{A}), \, \mathring\tau(\beta) = \mathring\tau(\alpha-\beta), \, \fM_{\beta}^{\sst}(\mathring\tau), \fM_{\alpha-\beta}^{\sst}(\mathring\tau)\neq \emptyset\}.
\end{align*}
Throughout this section, we work only with sub- and quotient objects
of $\tau$-semistable objects in classes in $R_\alpha \sqcup \{0\}$.
For certain steps involving ``artificial'' invariants (see
Definition~\ref{wc:def:artificial-invs}), we restrict to
$\hat{R}_\alpha$. For certain steps involving $\mathring
\tau$-semistable invariants, we restrict to $\mathring R_\alpha$.

Passing to auxiliary stacks, we similarly consider sets of classes of
objects which appear in effective decompositions of semistable objects
of class $(\alpha,(1,2,\ldots,N))$:
\begin{align*}
  S_{\alpha} &\coloneqq \{(\beta,\vec{e}) : (\beta,\vec{e})\neq 0, \, \beta\in R_{\alpha} \sqcup \{0\},\, 0 \le e_i\le i \mbox{ for } 1\leq i\leq N\} \\
  \hat S_{\alpha} &\coloneqq \{(\beta,\vec{e}) : (\beta,\vec{e})\neq 0, \, \beta\in \hat R_{\alpha}\sqcup \{0\},\, 0 \le e_i \le i \mbox{ for } 1\leq i\leq N\} \\
  \mathring S_{\alpha} &\coloneqq \{(\beta,\vec{e}) : (\beta,\vec{e})\neq 0, \, \beta\in \mathring R_{\alpha} \sqcup \{0\},\, 0 \le e_i \le i \mbox{ for } 1\leq i\leq N\}
\end{align*}

Since
Assumptions~\ref{es:assump:semistable-invariants}\ref{es:assump:it:semistable-loci},
\ref{es:assump:it:rank-function}, and
\ref{es:assump:it:Bpe-sst-summands} are satisfied by hypothesis,
$R_\alpha$ is a finite set \cite[Lemma 9.1]{Joyce2021}. By
Lemma~\ref{wc:lem:R-sets}\ref{it:R-sets-i} below, therefore so are
$\hat R_\alpha$, $\mathring R_\alpha$, $S_\alpha$, $\hat S_\alpha$,
and $\mathring S_\alpha$.

\subsubsection{}

\begin{lemma} \label{wc:lem:R-sets}
  \begin{enumerate}[label = (\roman*)]
  \item \label{it:R-sets-i} $\mathring R_\alpha \subseteq
    \hat R_\alpha = \left\{\beta\in R_\alpha :
    \lambda(\beta)=0\right\} \subseteq R_\alpha$.
  \item \label{it:R-sets-ii} Suppose $E$ is $\tau$-semistable of class
    $\beta \in R_\alpha$. If $\beta'$ is the class of a sub-object $0
    \neq E' \subsetneq E$, then
    \[ \tau(\beta') < \tau(\beta - \beta') \iff \tau(\beta') < \tau(\alpha - \beta'), \]
    and the same for $=$ and $>$. If additionally $\beta \in \hat
    R_\alpha$, the same holds for $\mathring\tau$.
  \end{enumerate}
\end{lemma}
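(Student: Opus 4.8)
The plan is to treat the two parts separately, both by careful unwinding of the definitions of the sets $R_\alpha$, $\hat R_\alpha$, $\mathring R_\alpha$ together with the key structural facts: the weak see-saw property \eqref{eq:stability-condition}, the characterization of $\lambda$ in Assumption~\ref{assump:wall-crossing}\ref{assump:it:lambda}, the dominance hypotheses (recall $\tau$ weakly dominates $\mathring\tau$ at $\alpha$), and crucially Assumption~\ref{assump:semistable-invariants}\ref{assump:it:semi-weak-stability}. For part~\ref{it:R-sets-i}, I would first prove the middle equality $\hat R_\alpha = \{\beta \in R_\alpha : \lambda(\beta) = 0\}$. The inclusion $\supseteq$ is immediate: if $\beta \in R_\alpha$ with $\lambda(\beta) = 0$, then by the characterization of $\lambda$ (applied with $\alpha - \beta$ playing the role of the complement) we get $\mathring\tau(\beta) = \mathring\tau(\alpha - \beta)$, and the nonemptiness conditions on $\fM_\beta^\sst(\tau), \fM_{\alpha-\beta}^\sst(\tau)$ are the same ones appearing in the definition of $\hat R_\alpha$; so $\beta \in \hat R_\alpha$. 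For $\subseteq$: if $\beta \in \hat R_\alpha$ then $\mathring\tau(\beta) = \mathring\tau(\alpha-\beta)$, so $\lambda(\beta) = 0$ by the ``only if'' direction of the $\lambda$-characterization — but that characterization is stated for $\beta \in R_\alpha$, so I first need $\beta \in R_\alpha$. This is the crux: I must show $\mathring\tau(\beta) = \mathring\tau(\alpha-\beta)$ together with the $\tau$-nonemptiness conditions forces $\tau(\beta) = \tau(\alpha - \beta)$. Here is where the dominance enters — since $\tau$ dominates $\mathring\tau$ (in the ``at $\alpha$'' or even global sense being used), $\mathring\tau(\beta) = \mathring\tau(\alpha - \beta)$ implies $\tau(\beta) \le \tau(\alpha-\beta)$ and $\tau(\alpha-\beta) \le \tau(\beta)$, hence equality; combined with the nonemptiness this gives $\beta \in R_\alpha$. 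Then $\mathring R_\alpha \subseteq \hat R_\alpha$ follows because $\fM_\beta^\sst(\mathring\tau) \subseteq \fM_\beta^\sst(\tau)$ (weak dominance at $\alpha$, applied to $\beta \in \mathring R_\alpha$, via the inclusion $\mathring R_\alpha \subseteq R_\alpha$ and the stated containment of semistable loci), so $\fM_\beta^\sst(\mathring\tau) \ne \emptyset$ upgrades to $\fM_\beta^\sst(\tau) \ne \emptyset$, and similarly for $\alpha - \beta$; the slope condition $\mathring\tau(\beta) = \mathring\tau(\alpha-\beta)$ is literally the same. The inclusion $\hat R_\alpha \subseteq R_\alpha$ is then part of what we just proved.

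For part~\ref{it:R-sets-ii}, fix $\tau$-semistable $E$ of class $\beta \in R_\alpha$ and a subobject $0 \ne E' \subsetneq E$ of class $\beta'$. Since $E$ is $\tau$-semistable, $\tau(E') \le \tau(E/E')$, i.e. $\tau(\beta') \le \tau(\beta - \beta')$. By the weak see-saw property applied to $\beta = \beta' + (\beta - \beta')$, we have $\tau(\beta') \le \tau(\beta) \le \tau(\beta-\beta')$ (the other ordering being excluded since we just saw $\tau(\beta') \le \tau(\beta-\beta')$ and by the weak see-saw the middle term is squeezed). Now I want to relate this to $\tau(\beta')$ vs $\tau(\alpha - \beta')$. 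Write $\alpha - \beta' = (\beta - \beta') + (\alpha - \beta)$. Since $\beta \in R_\alpha$, there is a $\tau$-semistable object of class $\alpha - \beta$ with $\tau(\alpha - \beta) = \tau(\beta)$; so all three pieces $\beta'$, $\beta - \beta'$, $\alpha - \beta$ have slopes satisfying $\tau(\beta') \le \tau(\beta) = \tau(\alpha-\beta)$ and $\tau(\beta-\beta') \ge \tau(\beta)$. The equivalence $\tau(\beta') < \tau(\beta - \beta') \iff \tau(\beta') < \tau(\alpha - \beta')$ should then follow by chasing the weak see-saw on $\alpha - \beta' = (\beta - \beta') + (\alpha - \beta)$: if $\tau(\beta') < \tau(\beta-\beta')$, then since also $\tau(\beta') \le \tau(\alpha - \beta) = \tau(\beta)$, averaging/see-saw gives $\tau(\beta') < \tau(\alpha-\beta')$; conversely if $\tau(\beta') < \tau(\alpha - \beta')$ but $\tau(\beta') = \tau(\beta-\beta')$ then combining with $\tau(\beta') \le \tau(\alpha-\beta)$ via the see-saw on $\alpha - \beta'$ would force $\tau(\beta') \ge \tau(\alpha-\beta')$ unless $\tau(\beta') < \tau(\alpha-\beta)$, and a short case analysis using Assumption~\ref{assump:semistable-invariants}\ref{assump:it:semi-weak-stability} (which exactly forbids the configurations $\tau(\beta') = \tau(\alpha-\beta') < \tau(\beta-\beta')$ and $\tau(\beta') < \tau(\alpha - (\beta-\beta')) = \tau(\beta-\beta')$) rules out the bad cases. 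The $=$ case follows by negating both $<$ cases (using trichotomy and the $>$ case, which is symmetric under swapping $E'$ with a complementary subquotient, or proved identically). For the final sentence, if additionally $\beta \in \hat R_\alpha$, then by part~\ref{it:R-sets-i} $\lambda(\beta) = 0$ and $\mathring\tau(\beta) = \mathring\tau(\alpha - \beta)$, so the $\mathring\tau$-analogue of the slope inequalities holds for $\alpha - \beta$; repeating the same see-saw argument with $\mathring\tau$ in place of $\tau$ (noting $\mathring\tau$ is also a weak stability condition by Assumption~\ref{assump:wall-crossing}\ref{assump:it:tau-circ}, hence has the weak see-saw property), and invoking the $\mathring\tau$-version of Assumption~\ref{assump:semistable-invariants}\ref{assump:it:semi-weak-stability} — which follows from the $\tau$-version via part~\ref{it:R-sets-i} and the footnote's observation that \ref{assump:it:semi-weak-stability} is equivalent to a condition purely about which classes have equal slope, transported through $\hat R_\alpha$ — gives the $\mathring\tau$ equivalence.

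The main obstacle I anticipate is the case analysis in part~\ref{it:R-sets-ii}: the weak see-saw property gives only non-strict inequalities, so turning ``$\tau(\beta') < \tau(\beta - \beta')$'' into a strict statement about $\tau(\alpha - \beta')$ requires using Assumption~\ref{assump:semistable-invariants}\ref{assump:it:semi-weak-stability} precisely to exclude the degenerate middle cases where an inequality could collapse. I would organize this as a finite enumeration of sign patterns for the three comparisons $(\tau(\beta') \text{ vs } \tau(\beta-\beta'))$, $(\tau(\beta') \text{ vs } \tau(\alpha-\beta))$, $(\tau(\beta-\beta') \text{ vs } \tau(\alpha - \beta'))$, discard the ones forbidden by \ref{assump:it:semi-weak-stability} (after checking $\beta'$ indeed arises from a $\tau$-semistable object, so the assumption applies), and read off the equivalence from what survives; the footnote after \ref{assump:it:semi-weak-stability} already asserts the cleaner equivalent form $\tau(\beta') = \tau(\alpha-\beta') \iff \tau(\beta') = \tau(\beta-\beta')$, which is essentially the $=$ half of part~\ref{it:R-sets-ii}, so I would prove that reformulation first and deduce the $<$ and $>$ halves from it together with trichotomy and the basic see-saw squeeze. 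The part~\ref{it:R-sets-i} argument is more routine once one notices that the only subtlety is the direction requiring $\hat R_\alpha \subseteq R_\alpha$, which is handled by dominance as above.
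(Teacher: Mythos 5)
There is a genuine gap in your part~\ref{it:R-sets-i}, at exactly the step you identify as the crux. To get $\hat R_\alpha \subseteq R_\alpha$ you invoke the implication ``$\mathring\tau(\beta) \le \mathring\tau(\alpha-\beta) \Rightarrow \tau(\beta) \le \tau(\alpha-\beta)$'', which is the definition of $\tau$ \emph{dominating} $\mathring\tau$ (Definition~\ref{def:dominates-at}); neither that, nor dominance at $\alpha$, is among the hypotheses in force in \S\ref{sec:wall-crossing}. The only standing hypothesis is that $\tau$ \emph{weakly} dominates $\mathring\tau$ at $\alpha$, which gives no slope comparisons at all --- only $\mathring R_\alpha \subseteq R_\alpha$ and the containments $\fM_\beta^{\sst}(\mathring\tau) \subseteq \fM_\beta^{\sst}(\tau)$ for $\beta \in \mathring R_\alpha$. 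Working under weak dominance is precisely the point of Remark~\ref{intro:remark:dominant-wcf-conditions}, so your argument proves the lemma only under a strictly stronger assumption. The paper closes this step geometrically rather than slope-theoretically: for $\beta \in \hat R_\alpha$, pick $\tau$-semistable objects $E, F$ of classes $\beta$ and $\alpha-\beta$ (non-empty by the definition of $\hat R_\alpha$), argue that $E \oplus F$ lies in $\fM_\alpha^{\sst}(\mathring\tau)$, use the containment $\fM_\alpha^{\sst}(\mathring\tau) \subseteq \fM_\alpha^{\sst}(\tau)$ (weak dominance applied to the class $\alpha \in \mathring R_\alpha$ itself) to conclude $E \oplus F$ is $\tau$-semistable, and then the sub-objects $E, F \subset E \oplus F$ force $\tau(\beta) = \tau(\alpha) = \tau(\alpha-\beta)$, so $\beta \in R_\alpha$. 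Your treatment of the other containments (the identification of $\hat R_\alpha$ with the vanishing locus of $\lambda$ via Assumption~\ref{assump:wall-crossing}\ref{assump:it:lambda}, and $\mathring R_\alpha \subseteq \hat R_\alpha$ via the semistable-locus containments) is fine and matches the paper.

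Your part~\ref{it:R-sets-ii} is essentially the paper's argument: the paper also chains the weak see-saw along $\beta = \beta' + (\beta-\beta')$, $\alpha-(\beta-\beta') = \beta' + (\alpha-\beta)$, and $\alpha-\beta' = (\alpha-\beta) + (\beta-\beta')$, and uses Assumption~\ref{assump:semistable-invariants}\ref{assump:it:semi-weak-stability} to exclude exactly the two degenerate sign patterns, then upgrades the three one-way implications to equivalences by trichotomy. Be aware, though, that your intermediate claim that the see-saw ``averaging'' alone gives the strict inequality $\tau(\beta') < \tau(\alpha-\beta')$ is false without \ref{assump:it:semi-weak-stability} (the configuration $\tau(\beta') = \tau(\alpha-\beta') < \tau(\beta-\beta')$ must be excluded by that assumption, not by the see-saw); your final enumeration plan covers this, so it is a presentational slip rather than a gap. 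For the last sentence of the statement, however, your proposed derivation of an ``$\mathring\tau$-version of Assumption~\ref{assump:semistable-invariants}\ref{assump:it:semi-weak-stability}'' from the $\tau$-version via part~\ref{it:R-sets-i} is not justified: that assumption constrains slopes of classes $\beta'$ of arbitrary sub-objects, which need not lie in any of the sets $R_\alpha$, $\hat R_\alpha$, so knowledge of which classes lie in $\hat R_\alpha$ does not transport it to $\mathring\tau$. You should instead rerun the see-saw chain for $\mathring\tau$ directly, using $\mathring\tau(\beta) = \mathring\tau(\alpha-\beta)$ (from $\beta \in \hat R_\alpha$), and justify the exclusion of the two degenerate $\mathring\tau$-patterns in the specific situations where the $\mathring\tau$-statement is actually applied, rather than asserting a wholesale $\mathring\tau$-analogue of the assumption.
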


To emphasize,
Assumption~\ref{assump:semistable-invariants}\ref{assump:it:semi-weak-stability}
enters only in the proof of \ref{it:R-sets-ii}, which is later used to
prove that $\tau_x^s$ (Definition~\ref{def:flag-invariant}) is indeed
a weak stability condition on the relevant auxiliary stacks.

\begin{proof}
  For \ref{it:R-sets-i}, we follow ideas of \cite[Prop.
    10.2]{Joyce2021}. The proof there shows that
  $\fM_\alpha^{\sst}(\mathring \tau) \subset \fM_\alpha^{\sst}(\tau)$,
  so $\mathring R_\alpha \subset \hat R_\alpha$ is obvious. To show
  $\hat R_\alpha \subset R_\alpha$, take $\beta \in \hat R_\alpha$
  with $\beta \neq \alpha$. Pick elements $[E] \in
  \fM_\beta^{\sst}(\tau)$ and $[F] \in
  \fM_{\alpha-\beta}^{\sst}(\tau)$. Then $E \oplus F$ is $\mathring
  \tau$-semistable because $\mathring \tau(E) = \mathring \tau(F)$ by
  hypothesis. So $[E \oplus F] \in \fM_\alpha^{\sst}(\mathring \tau)
  \subset \fM_\alpha^{\sst}(\tau)$. But for $E, F, E \oplus F$ to all
  be $\tau$-semistable, it must be that $\tau(E) = \tau(E \oplus F) =
  \tau(F)$. Hence $\beta \in R_\alpha$ and thus $\hat R_\alpha \subset
  R_\alpha$. Finally, the claim that $\hat R_\alpha \subset R_\alpha$
  is exactly where $\lambda$ vanishes is the content of
  Assumption~\ref{assump:wall-crossing}\ref{assump:it:lambda}.

  For \ref{it:R-sets-ii}, note that $\alpha - \beta' = (\alpha -
  \beta) + (\beta - \beta')$. Suppose $\tau(\beta') <
  \tau(\beta-\beta')$. By applying the weak see-saw property twice,
  \begin{equation} \label{eq:semi-weak-stability-comparison}
    \tau(\beta') \le \tau(\alpha-(\beta-\beta')) \le \tau(\alpha-\beta) \le \tau(\alpha-\beta') \le \tau(\beta-\beta'),
  \end{equation}
  and at least one $\le$ must be strict. By
  Assumption~\ref{assump:semistable-invariants}\ref{assump:it:semi-weak-stability},
  it cannot be that only the first inequality is strict, and it also
  cannot be that only the last inequality is strict. Hence
  $\tau(\beta') < \tau(\alpha-\beta')$. Similarly one proves that
  $\tau(\beta') > \tau(\beta-\beta')$ implies $\tau(\beta') >
  \tau(\alpha-\beta')$ and similarly for $=$. Then these implications
  automatically become equivalences.
\end{proof}

\subsubsection{}

\begin{definition} \label{def:flag-invariant}
  Let $\vec Q$ denote the quiver
  \[ \begin{tikzcd}
    \overset{V_1}{\blacksquare} \ar{r}{\rho_1} & \overset{V_2}{\blacksquare} \ar{r}{\rho_2} & \cdots \ar{r}{\rho_{N-1}} & \overset{V_N}{\blacksquare} \ar{r}{\rho_N} & \overset{V_{N+1}\coloneqq \Fr(E)}{\blackbullet},
  \end{tikzcd} \]
  and consider the auxiliary exact category $\tilde{\cat A}^{\vec
    Q(\Fr)}$ parameterizing triples $(E, \vec V, \vec \rho)$ as
  labeled above. We put a two-parameter family of stability conditions
  on it. For $s \in [0,1]$ and $x \in [-1, 0]$, define
  \begin{equation}\label{wc:eq:joyce-framed-stack-stability}
    \tau_x^s\colon (\beta,\vec e) \mapsto \begin{cases}
      \left(\tau(\beta), \frac{s\lambda(\beta)+(\vec\mu+x\vec 1)\cdot \vec e}{r(\beta)}\right), & \beta \neq 0, \, \tau(\beta) = \tau(\alpha-\beta) \text{ or } \beta = \alpha, \\
      \left(\tau(\beta), \infty\right), & \beta \neq 0, \, \tau(\beta)> \tau(\alpha - \beta),\\
      \left(\tau(\beta), -\infty\right), & \beta \neq 0, \, \tau(\beta)< \tau(\alpha - \beta), \\
      \left(\infty, \frac{(\vec\mu+x\vec 1)\cdot \vec e}{\vec 1\cdot \vec e}\right), & \beta=0,\, (\vec\mu+x\vec 1)\cdot \vec e>0,\\
      \left(-\infty, \frac{(\vec\mu+x\vec 1)\cdot \vec e}{\vec 1\cdot \vec e}\right), &\beta=0,\, (\vec\mu+x\vec 1)\cdot \vec e\leq 0,
    \end{cases}
  \end{equation}
  where pairs $(a,b)$ are ordered lexicographically, i.e. $(a, b) <
  (a', b')$ means either $a < a'$, or $a = a'$ and $b < b'$. By
  Lemma~\ref{lem:joyce-framed-stack-stability}, this is effectively a
  stability condition on $\tilde{\cat{A}}^{\vec
    Q(\Fr)}$. \footnote{The extra
    Assumption~\ref{assump:semistable-invariants}\ref{assump:it:semi-weak-stability},
    not present in \cite[Assumption 5.2]{Joyce2021}, and our
    complicated definition of $\tau_x^s$, together fix an issue with
    \cite[Eq. (5.21)]{Joyce2021}, which is not a weak stability
    condition, i.e. does not satisfy \eqref{eq:stability-condition},
    if $\tau$ is a weak stability condition instead of a stability
    condition. See also Remark~\ref{rem:joyce-framed-stack-stability}. \label{footnote:joyce-framed-stack-stability}}

  Whenever there are no strictly $\tau_{x}^s$-semistable objects of
  class $(\beta, \vec{e})$, by
  Assumption~\ref{assump:wall-crossing}\ref{assump:it:properness-wcf},
  the $\sT$-fixed part of the open locus
  \[ \tilde\fM^{\vec Q(\Fr),\sst}_{\beta,\vec e}(\tau_x^s) \subset \tilde\fM^{\vec Q(\Fr),\pl}_{\beta,\vec e} \]
  is therefore an algebraic space and may be equipped with the
  $\kappa$-symmetric APOT obtained by symmetrized pullback
  (Theorem~\ref{thm:APOTs}) along the smooth forgetful
  map $\pi_{\fM_\alpha^\pl}$. Thus the universal enumerative invariant
  \[ \tilde\sZ_{\beta,\vec e}^{s,x} \coloneqq \chi\left(\tilde\fM_{\beta,\vec e}^{\vec Q(\Fr), \sst}(\tau_x^s), \hat{\cO}^{\vir} \otimes - \right) \in K_\circ^{\tilde\sT}(\tilde\fM_{\beta,\vec e}^{\vec Q(\Fr),\pl})_{\loc} \]
  is well-defined (Definition~\ref{bg:def:univ-enum-inv}).
\end{definition}

\subsubsection{}

\begin{lemma} \label{lem:joyce-framed-stack-stability}
  Let $(E, \vec V, \vec\rho) \in \cat{A}^{\vec Q(\Fr)}$ have class
  $(\beta, \vec e)$ such that $\beta \in R_\alpha \sqcup \{0\}$. Then
  for any non-zero sub-object $(E', \vec V', \vec\rho') \subsetneq (E,
  \vec V, \vec\rho)$, of class $(\beta', \vec e')$, either
  \begin{equation} \label{eq:joyce-framed-stack-is-a-stability-condition}
    \begin{aligned}
      &\tau_x^s(\beta', \vec e') \le \tau_x^s(\beta, \vec e) \le \tau_x^s(\beta-\beta', \vec e-\vec e') \text{ or }  \\
      &\tau_x^s(\beta', \vec e') \ge \tau_x^s(\beta, \vec e) \ge \tau_x^s(\beta-\beta', \vec e-\vec e').
    \end{aligned}
  \end{equation}
\end{lemma}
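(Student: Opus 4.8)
The plan is to reduce the see-saw inequality \eqref{eq:joyce-framed-stack-is-a-stability-condition} to two elementary comparisons — a comparison of first coordinates, and a mediant inequality for the second coordinates — by first pinning down which of the five clauses of \eqref{wc:eq:joyce-framed-stack-stability} each of the three classes $(\beta',\vec e')$, $(\beta,\vec e)$, $(\beta-\beta',\vec e-\vec e')$ falls into. Note that $\Fr$ exact gives $\vec 0\le\vec e'\le\vec e$ componentwise, so $\vec e-\vec e'$ is an honest non-negative dimension vector, and $\beta-\beta'$ is effective (the class of $E/E'$).

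First I would dispatch the degenerate cases. If $\beta=0$ then $\beta'=0=\beta-\beta'$, all three classes have trivial $\cat A$-part, and each $\tau_x^s$-value is of the form $\bigl(\pm\infty,\tfrac{(\vec\mu+x\vec 1)\cdot\vec f}{\vec 1\cdot\vec f}\bigr)$ with strictly positive, additive denominator; here \eqref{eq:joyce-framed-stack-is-a-stability-condition} is the classical see-saw for a ratio of additive quantities, together with the observation that when $(\vec\mu+x\vec 1)\cdot\vec e'$ and $(\vec\mu+x\vec 1)\cdot(\vec e-\vec e')$ have opposite (strict versus non-strict) signs the two outer values, being $(+\infty,\cdot)$ and $(-\infty,\cdot)$, already straddle anything. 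Similarly, if $E'=0$ (so $\beta'=0$, $\vec e'\neq 0$) or $E/E'=0$ (so $\beta-\beta'=0$), exactly one class has trivial $\cat A$-part with first coordinate $\pm\infty$, while the other two share first coordinate $\tau(\beta)$, so \eqref{eq:joyce-framed-stack-is-a-stability-condition} again follows from the first coordinates alone.

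The main case is $\beta\in R_\alpha$ with $E'\neq 0\neq E/E'$. Here I would use the weak see-saw property of $\tau$ on $\cat A$, applied to the effective decomposition $\beta=\beta'+(\beta-\beta')$, to order $\tau(\beta'),\tau(\beta),\tau(\beta-\beta')$; by the symmetry of the claim we may assume $\tau(\beta')\ge\tau(\beta)\ge\tau(\beta-\beta')$. Now comes the crucial step, determining each clause. Since $\beta\in R_\alpha$, the class $(\beta,\vec e)$ lies in the first clause (first coordinate $\tau(\beta)$), using ``or $\beta=\alpha$''. For $\beta'$ and $\beta-\beta'$ I would invoke Lemma~\ref{wc:lem:R-sets}\ref{it:R-sets-ii} — whose proof, resting on Assumption~\ref{assump:semistable-invariants}\ref{assump:it:semi-weak-stability}, on the weak see-saw applied also to $\alpha-\beta'=(\alpha-\beta)+(\beta-\beta')$, and on $\tau(\beta)=\tau(\alpha-\beta)$, operates purely at the level of the relevant classes — to convert the already-known comparisons of $\tau(\beta')$ with $\tau(\beta-\beta')$ into comparisons of $\tau(\beta')$ with $\tau(\alpha-\beta')$ and of $\tau(\beta-\beta')$ with $\tau(\alpha-(\beta-\beta'))$, which is exactly the data selecting the relevant clause. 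Whenever a piece lies strictly above $\tau(\beta)$ (so $\tau(\beta')>\tau(\beta)$, forcing $(\beta',\vec e')$ into the $(\tau(\beta'),+\infty)$-clause) or strictly below it (forcing $(\beta-\beta',\vec e-\vec e')$ into the $(\tau(\beta-\beta'),-\infty)$-clause), the inequality is immediate from first coordinates. The only genuinely remaining configuration is $\tau(\beta')=\tau(\beta)=\tau(\beta-\beta')$ with all three pieces in the first clause; then $\tau(E')=\tau(E/E')$, so Assumption~\ref{assump:semistable-invariants}\ref{assump:it:rank-function} gives $r(\beta)=r(\beta')+r(\beta-\beta')$ with all three positive, and since $\lambda$ is a group homomorphism the second coordinates $\tfrac{s\lambda(\cdot)+(\vec\mu+x\vec 1)\cdot\vec e_{(\cdot)}}{r(\cdot)}$ are additive numerators over additive positive denominators, so the mediant inequality finishes the proof.

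The hard part is the bookkeeping of this clause determination precisely because $\tau$ is only a \emph{weak} (non-strict) stability condition: the weak see-saw permits ``flat'' configurations (such as $\tau(\beta')=\tau(\beta)>\tau(\beta-\beta')$, or $\tau(\alpha-(\beta-\beta'))=\tau(\beta-\beta')$) that a genuine stability condition would exclude, and one must verify that in every such configuration the pieces still land in clauses whose first coordinates already resolve \eqref{eq:joyce-framed-stack-is-a-stability-condition}, and never in a position that would force a delicate mediant comparison directly between $(\beta,\vec e)$ and a single piece whose $r$-value is not controlled by additivity. This is exactly the subtlety that Assumption~\ref{assump:semistable-invariants}\ref{assump:it:semi-weak-stability}, together with the elaborate five-clause form of \eqref{wc:eq:joyce-framed-stack-stability} (see footnote~\ref{footnote:joyce-framed-stack-stability}), is engineered to handle; the genericity of $\vec\mu$ and the large rescaling of $\lambda$ — chosen so that the $\vec\mu$-contributions, which are bounded in absolute value by $\binom{N+1}{2}$ since $0\le e_i\le i$ and $|\mu_i+x|\le 1$, can never cancel an $s\lambda$-term or bridge an inequality gap — are used to rule out the residual boundary equalities.
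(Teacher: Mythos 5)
Your overall route is the paper's route: decide which clause of \eqref{wc:eq:joyce-framed-stack-stability} each of the three classes falls into, use Lemma~\ref{wc:lem:R-sets}\ref{it:R-sets-ii} (hence Assumption~\ref{assump:semistable-invariants}\ref{assump:it:semi-weak-stability}) to convert the comparison of $\tau(\beta')$ with $\tau(\beta-\beta')$ into the comparisons with $\tau(\alpha-\beta')$ and $\tau(\alpha-(\beta-\beta'))$ that select those clauses, and settle the only nondegenerate configuration $\tau(\beta')=\tau(\beta)=\tau(\beta-\beta')$ by additivity of $s\lambda(\cdot)+(\vec\mu+x\vec 1)\cdot(\cdot)$ and of $r$ (Assumption~\ref{assump:semistable-invariants}\ref{assump:it:rank-function}) together with the mediant inequality. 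That is exactly the paper's casework, just organized by a symmetry reduction instead of by the sign of $\tau(\beta')-\tau(\beta-\beta')$.

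There is, however, one step that fails as written: the case where exactly one of $\beta'$, $\beta-\beta'$ vanishes (your ``$E'=0$ or $E/E'=0$'' case, which is the paper's first case $\beta'=0$). Your claim that \eqref{eq:joyce-framed-stack-is-a-stability-condition} ``follows from the first coordinates alone'' is false: the two classes with nonzero $\cat{A}$-part both have first coordinate $\tau(\beta)$, so the comparison of $\tau_x^s(\beta,\vec e)$ with $\tau_x^s(\beta,\vec e-\vec e')$ is a second-coordinate comparison, and the content of this case is precisely that its direction is correlated with the sign of $(\vec\mu+x\vec 1)\cdot\vec e'$, the same sign that puts the framing-only piece into the $(+\infty,*)$ or $(-\infty,*)$ clause: if $(\vec\mu+x\vec 1)\cdot\vec e'>0$ then $\tau_x^s(0,\vec e')$ exceeds both and $\psi>\psi-(\vec\mu+x\vec 1)\cdot\vec e'/r(\beta)$, while otherwise both inequalities reverse. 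This short computation (which the paper spells out) is missing from your argument. Two lesser imprecisions of the same flavor: in the $\beta=0$ case the ``outer values straddle anything'' remark is not literally correct (a $(+\infty,a)$ outer value must still beat a $(+\infty,c)$ middle via $a\ge c$), though your mediant argument does cover it since the lexicographic value is monotone in the ratio; and in the main case the configurations $\tau(\beta')>\tau(\beta)=\tau(\beta-\beta')$ and its mirror are not decided by first coordinates alone but by the $\mp\infty$ second coordinates that your clause determination supplies. Finally, the closing assertion that genericity of $\vec\mu$ and the rescaling of $\lambda$ are needed to rule out boundary equalities here is off the mark: neither enters this lemma (they are used elsewhere, e.g.\ in Lemma~\ref{wc:lem:wall-crossing-horizontal-setup} and the vanishing arguments); in this proof all boundary equalities are absorbed by the clause structure and the mediant.
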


We say $\tau_x^s$ is {\it effectively a weak stability condition} on
$\tilde{\cat{A}}^{\vec Q(\Fr)}$ because, throughout this section (and
also in \S\ref{sec:semistable-invariants} with
$\tilde{\cat{A}}^{Q(\Fr)}$), we will only need to determine the
$\tau_x^s$-(semi)stability of objects $(E, \vec V, \vec \rho)$ with
$\beta \in R_\alpha \sqcup \{0\}$. Hence we may treat $\tau_x^s$ as a
genuine weak stability condition.

\begin{proof}
  We proceed by casework. Let $\tau_x^s(\beta, \vec e) = (\tau(\beta),
  \psi)$. Note that $\psi \in \bR$ because $\tau(\beta) = \tau(\alpha
  - \beta)$ by hypothesis.

  Suppose $\beta' = 0$. Then we must compare
  \[ (\pm \infty, *) \text{ and } (\tau(\beta), \psi) \text{ and } (\tau(\beta), \psi - (\vec\mu + x\vec 1)\cdot \vec e'). \]
  If $(\vec\mu + x\vec 1)\cdot \vec e' > 0$, then the sign is $+$ and
  the ordering is $>$ and $>$; otherwise the sign is $-$ and the
  ordering is $<$ and $\le$. In both cases,
  \eqref{eq:joyce-framed-stack-is-a-stability-condition} holds. By
  symmetry, this also holds if $\beta - \beta' = 0$.

  Suppose $\beta', \beta-\beta' \neq 0$ and $\tau(\beta') <
  \tau(\beta-\beta')$. By Lemma~\ref{wc:lem:R-sets}\ref{it:R-sets-ii},
  $\tau(\beta') < \tau(\alpha-\beta')$ and $\tau(\beta-\beta') <
  \tau(\alpha-(\beta-\beta'))$ (this genuinely requires
  Assumption~\ref{assump:semistable-invariants}\ref{assump:it:semi-weak-stability}),
  and thus we must compare
  \[ (\tau(\beta'), -\infty) \text{ and } (\tau(\beta), \psi) \text{ and } (\tau(\beta-\beta'), \infty). \]
  Regardless of whether $\tau(\beta') = \tau(\beta) <
  \tau(\beta-\beta')$ or $\tau(\beta') < \tau(\beta) =
  \tau(\beta-\beta')$, the ordering is therefore $<$ and $<$. So
  \eqref{eq:joyce-framed-stack-is-a-stability-condition} holds. By
  symmetry, this also holds if $\tau(\beta') > \tau(\beta-\beta')$.

  Suppose $\beta', \beta-\beta' \neq 0$ and $\tau(\beta') =
  \tau(\beta-\beta')$. Again by
  Lemma~\ref{wc:lem:R-sets}\ref{it:R-sets-ii}, $\tau(\beta') =
  \tau(\alpha-\beta')$ and $\tau(\beta-\beta') =
  \tau(\alpha-(\beta-\beta'))$ (this does not genuinely require
  Assumption~\ref{assump:semistable-invariants}\ref{assump:it:semi-weak-stability}),
  and thus we must compare the second entries in the first case of
  \eqref{wc:eq:joyce-framed-stack-stability}. By
  Assumption~\ref{assump:semistable-invariants}\ref{assump:it:rank-function},
  the denominator is positive and additive, i.e. $r(\beta'),
  r(\beta-\beta') > 0$ and $r(\beta) = r(\beta') + r(\beta-\beta')$.
  By Assumption~\ref{assump:wall-crossing}\ref{assump:it:lambda}, the
  numerator is also additive. Since $(a+b)/(c+d)$ always sits between
  $a/c$ and $b/d$ if $c, d > 0$, we conclude that
  \eqref{eq:joyce-framed-stack-is-a-stability-condition} holds.

  This exhausts all possible cases.
\end{proof}

\subsubsection{}

\begin{remark} \label{rem:joyce-framed-stack-stability}
  We will actually only use the following properties of $\tau^s_x$:
  \[ \tau^s_x\colon (\beta, \vec e) \mapsto \begin{cases}
      \left(\tau(\beta), \frac{s\lambda(\beta)+(\vec\mu+x\vec 1)\cdot \vec e}{r(\beta)}\right), & \beta \neq 0, \, \tau(\beta) = \tau(\alpha-\beta) \text{ or } \beta = \alpha, \\
      \left(\infty, \frac{(\vec\mu+x\vec 1)\cdot \vec e}{\vec 1\cdot \vec e}\right), & \beta=0,\, (\vec\mu+x\vec 1)\cdot \vec e>0,\\
      \left(-\infty, \frac{(\vec\mu+x\vec 1)\cdot \vec e}{\vec 1\cdot \vec e}\right), &\beta=0,\, (\vec\mu+x\vec 1)\cdot \vec e\leq 0,
    \end{cases} \]
  and that $\tau^s_x$ is effectively a weak stability condition. The
  second and third lines in the definition
  \eqref{wc:eq:joyce-framed-stack-stability}, along with
  Assumption~\ref{assump:semistable-invariants}\ref{assump:it:semi-weak-stability},
  are our {\it choice} of a construction of such a weak stability
  condition. Other constructions may be useful and/or necessary for
  applications outside the scope of this paper.
\end{remark}

\subsubsection{}

\begin{definition} \label{def:full-flags}
  A class $(\beta,\vec{e}) \in S_\alpha$ is a {\it flag} if $\beta
  \neq 0$ and
  \[ e_1 \le 1, \quad e_i \le e_{i+1} \le e_i+1 \text{ for } 1 \le i < N, \quad e_N \le \fr(\beta) \]
  and is a {\it full flag} if in addition $e_N = \fr(\beta)$. Note
  that, for flags, if $e_N \ge 1$ then there exists $1 \le j \le N$
  such that $e_j = 1$, and therefore there is a ``de-rigidification''
  map $I_{\beta,\vec e}$ (Definition~\ref{def:de-rigidification})
  which is inverse to the isomorphism $\Pi^\pl_{\beta,\vec e}$. This
  condition is automatically satisfied for full flags because $\beta
  \neq 0$ implies $\fr(\beta) > 0$.
  
  Let $\vec 0$ be the zero vector. For integers $a$ and $b$, let $\vec
  1_{[a,b]} \coloneqq (0, \ldots, 0, 1, \ldots, 1, 0, \ldots, 0)$
  where the $1$'s appear exactly in the positions in the interval
  $[a,b]$. In particular, if $a > b$ then $\vec 1_{[a,b]} = \vec 0$ by
  definition.
\end{definition}

\subsubsection{}
\label{wc:sec:va-homomorphisms}

To summarize, we will work with: the original moduli stack $\fM$ of
the abelian category $\cat{A}$, the auxiliary moduli stack
$\tilde\fM^{Q(\Fr)}$ (Definition~\ref{def:pair-invariant}) used to
define semistable invariants, and the auxiliary moduli stack
$\tilde\fM^{\vec Q(\Fr)}$ (Definition~\ref{def:flag-invariant}). Their
K-homologies are equipped with vertex algebra structures by
Theorems~\ref{thm:mVOA-monoidal-stack} and
\ref{thm:auxiliary-stack-vertex-algebra}. There are natural
isomorphisms of moduli stacks
\begin{equation} \label{eq:auxiliary-stack-zero-framing-isomorphism}
  \iota^{\vec Q}\colon \fM^{\Fr}_\beta \xrightarrow{\sim} \tilde\fM^{\vec Q(\Fr)}_{\beta,\vec 0},\quad [E] \mapsto [E, \vec 0, \vec 0]
\end{equation}
and, for any $1 \le a \le N$, natural morphisms of moduli stacks
\[ \iota^{\vec Q}_{[a,N]}\colon \tilde\fM_{\beta,e}^{Q(\Fr)} \to \tilde\fM_{\beta, e \cdot \vec 1_{[a,N]}}^{\vec Q(\Fr)}, \quad [E, V, \rho] \mapsto [E, \vec V, \vec\rho] \]
where $V_i = 0$ and $\rho_i = 0$ for $i<a$, $V_i=V$ for $a\leq i\leq
N$, $\rho_i=\id_V$ for $a\leq i< N$, and $\rho_N=\rho$.

It is easy to verify that both $\iota^{\vec Q}$ and $\iota^{\vec
  Q}_{[a,N]}$ are morphisms of graded monoidal $\sT$-stacks: the only
non-trivial part is to check that the contributions
\eqref{eq:framed-stack-forgetful-map-cotangent}, to the bilinear
elements of Theorem~\ref{thm:auxiliary-stack-vertex-algebra}, pull
back correctly. Hence the morphisms $\iota_*^{\vec Q}$ and
$\iota_{[a,N]*}^{\vec Q}$ are homomorphisms of vertex and Lie algebras.

\subsubsection{}

\begin{definition}[Auxiliary and artificial invariants] \label{wc:def:artificial-invs}
  Let $(\beta, \vec e) \in S_\alpha$ and $(s, x) \in [0,1] \in [-1,
    0]$. Define the {\it auxiliary invariants}
  \[ \tilde\sz^{s,x}_{\beta,\vec e} \in K^{\tilde{\sT}}_\circ(\tilde\fM^{\vec Q(\Fr)}_{\beta,\vec e})_{\loc,\bQ}^\pl \]
  in the following (disjoint) cases:
  \begin{enumerate}[label = (\roman*)]
  \item \label{wc:def:aux-inv-i} if there are no strictly
    $\tau_{x}^s$-semistable objects of class $(\beta, \vec{e})$ and
    $(\beta, \vec e)$ is a flag with $e_N \ge 1$, then
    \[ \tilde\sz^{s,x}_{\beta,\vec{e}} \coloneqq I_* \tilde \sZ^{s,x}_{\beta,\vec e} \]
    is the ``de-rigidified'' universal enumerative invariant
    of Definition~\ref{def:flag-invariant};
  \item \label{wc:def:aux-inv-ii} if $s=0$ and $\beta\in R_{\alpha}$,
    then $\tilde\sz_{\beta,\vec{0}}^{0,x} \coloneqq \iota_*^{\vec
      Q}(\sz_{\beta}(\tau))$ is a semistable invariant
    (Theorem~\ref{thm:sst-invariants});
  \item \label{wc:def:aux-inv-iii} if $s = 1$ and $\beta\in \mathring
    R_{\alpha}$, then $\tilde\sz_{\beta,\vec{0}}^{1,x} \coloneqq
    \iota_*^{\vec Q}(\sz_{\beta}(\mathring\tau))$ is a semistable
    invariant (Theorem~\ref{thm:sst-invariants});
  \item \label{wc:def:aux-inv-iv} if $\beta = 0$, then
    \[ \tilde\sz_{0,\vec e}^{s,x} \coloneqq \begin{cases} I_*\partial_{[a,b]} & \vec e = \vec 1_{[a,b]} \\ 0 & \text{otherwise}, \end{cases} \]
    where
    \[ \partial_{[a,b]} \coloneqq \chi\left(\tilde\fM_{0,\vec 1_{[a,b]}}^{\vec Q(\Fr),\sst}(\tau^s_x), -\right) = \id \in K_\circ^{\tilde\sT}(\tilde\fM_{0,\vec 1_{[a,b]}}^{\vec Q(\Fr),\pl})_{\loc}. \]
    By
    Lemma~\ref{wc:lem:general-semistable-loci}\ref{wc:lem:general-semistable-loci-i}
    below, $\tilde\fM_{0,\vec 1_{[a,b]}}^{\vec Q(\Fr),\sst}(\tau^s_x) =
    \pt$ is independent of $(s,x)$, justifying the notation
    $\partial_{[a,b]}$.
  \end{enumerate}
  In all the above cases, $\tilde\sz_{\beta,\vec e}^{s,x}$ is indeed
  supported on the (un-rigidified) $\tau_x^s$-semistable locus. Hence
  these cases are compatible with the following last case:
  \begin{enumerate}[resume, label = (\roman*)]
  \item \label{wc:def:aux-inv-v} if $\tilde\fM_{\beta,\vec e}^{\vec
    Q(\Fr),\sst}(\tau^s_x) = \emptyset$, then $\tilde\sz_{\beta,\vec
    e} \coloneqq 0$.
  \end{enumerate}
  Using these auxiliary invariants, define the \textit{artificial
    invariants} \footnote{\label{footnote:artificial-invariants} In
  contrast to \cite[\S 10.3]{Joyce2021}, these are defined using the
  honest Lie bracket on the K-homology of the auxiliary stack
  $\tilde\fM^{\vec Q(\Fr)}$. This complicates some aspects of the
  proof, in particular Lemma~\ref{wc:lem:vanishing-basic}, but we
  believe it is less ad-hoc.}
  \begin{equation} \label{eq:artificial-invariant}
    \hat\sz^{s,x}_{\beta,\vec e} \coloneqq \!\!\!\!\!\!\sum_{\substack{n\geq 1, \, (\beta_i,\vec{e}_i) \in S_{\alpha}\\ (\beta, \vec{e}) = (\beta_1, \vec{e}_1) + \cdots + (\beta_n, \vec{e}_n)}} \!\!\!\!\!\! \tilde{U}\left((\beta_1, \vec e_1),\ldots,(\beta_n, \vec e_n); \tau_{-1}^0, \tau_{x}^s\right) \cdot \left[\left[\cdots\left[\tilde\sz_{\beta_1, \vec{e}_1}^{0,-1}, \tilde\sz_{\beta_2, \vec{e}_2}^{0,-1} \right], \cdots\right],\tilde\sz_{\beta_n, \vec{e}_n}^{0,-1}\right].
  \end{equation}
  This expression is well-defined: by
  Lemma~\ref{wc:lem:general-semistable-loci} below, if
  $\tilde\fM_{\beta, \vec{e}}^{\vec Q(\Fr),\sst}(\tau_{-1}^0)\neq
  \emptyset$, then either $\beta=0$ or $\vec e=\vec 0$, and so the
  invariants $\tilde\sz_{\beta_i,\vec e_i}^{0,-1}$ are defined by
  cases \ref{wc:def:aux-inv-iv} and \ref{wc:def:aux-inv-ii} above,
  respectively.
\end{definition}

\subsubsection{}
\label{wc:sec:def-x0a}

\begin{definition}
  For $1 \le a \le N$, define
  \[ x_0(a) \coloneqq -\frac{1}{N-a+1}\sum_{i=a}^N \mu_i, \]
  and set $x_0(N+1) \coloneqq 0$. This is the solution to
  \begin{equation} \label{wc:eq:x0a}
    \left(\vec \mu + x_0(a)\vec 1\right)\cdot \vec 1_{[a,N]}=0,
  \end{equation}
  i.e. a quotient object of class $(0, \vec 1_{[a,N]})$ is
  $\tau_x^s$-destabilizing for an object of class $(\beta, \vec e)$ if
  and only if $x \le x_0(a)$. The condition $1 > \mu_1 \gg \cdots \gg
  \mu_N > 0$ ensures that
  \begin{equation} \label{wc:eq:x0a-monotonicity}
    -1 < x_0(1) < x_0(2) < \cdots < x_0(N) < 0.
  \end{equation}
\end{definition}

\subsubsection{}

\begin{lemma}[{\cite[Prop. 10.3]{Joyce2021}}]\label{wc:lem:general-semistable-loci}
  Suppose $s \in [0,1]$ and $x \in [-1, 0]$ and $(\beta,\vec{e})\in
  S_{\alpha}$, such that $\tilde\fM_{\beta, \vec{e}}^{\vec
    Q(\Fr),\sst}(\tau_x^s)\neq \emptyset$. Then
  \begin{enumerate}[label=(\roman*)]
  \item \label{wc:lem:general-semistable-loci-i} If $\beta = 0$, then
    $\vec{e} = k\vec{1}_{[a,b]}$ for some integers $1\leq a\leq b\leq
    N$ and $k > 0$, and $\tilde\fM_{0,\vec{1}_{[a,b]}}^{\vec
      Q(\Fr),\sst}(\tau_x^s)$ is the locus where $\rho_i$ are
    isomorphisms for $a\leq i<b$.
  \item \label{wc:lem:general-semistable-loci-ii} If $\beta\neq 0$,
    then $e_1\leq e_2\leq \cdots \leq e_N\leq \fr(\beta)$ and
    $\tilde\fM_{\beta,\vec e}^{\vec Q(\Fr),\sst}(\tau_x^s)$ is
    contained in the locus where all $\rho_i$ are injective and $E$ is
    $\tau$-semistable. Furthermore, if $s = 1$ and $(\beta, \vec e)
    \in \mathring S_\alpha$, then $E$ is also
    $\mathring\tau$-semistable.
  \item \label{wc:lem:general-semistable-loci-iii} If $\beta\neq 0$
    and $x\leq x_0(a)$ for $2\leq a \leq N$, then $e_{a-1} = e_a =
    \cdots = e_N$.
  \item \label{wc:lem:general-semistable-loci-iv} If $\beta \neq 0$
    and $x\leq x_0(1)$, then $\vec{e}=0$.
  \end{enumerate}
\end{lemma}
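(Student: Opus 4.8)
\textbf{Proof plan for Lemma~\ref{wc:lem:general-semistable-loci}.}
The plan is to analyze $\tau_x^s$-(semi)stability directly by testing against the distinguished sub- and quotient objects coming from the quiver $\vec Q$. The key observations I would use repeatedly are: (1) for $\beta = 0$, the object $(0, \vec V, \vec\rho)$ is a representation of the $A_{N+1}$-type quiver (with all arrows), so any non-injective $\rho_i$ produces a proper sub-representation supported on vertices $\{1, \ldots, i\}$ and any non-surjective one a quotient; (2) for $\beta \neq 0$, a sub-object $(E', \vec V', \vec\rho') \subsetneq (E, \vec V, \vec\rho)$ with $E' = E$ and $E' = 0$ are always available, and comparing their $\tau_x^s$-values to that of $(E, \vec V, \vec\rho)$ is governed entirely by the real number $(\vec\mu + x\vec 1)\cdot \vec e'$ via the last two cases of \eqref{wc:eq:joyce-framed-stack-stability}; (3) the ordering \eqref{wc:eq:x0a-monotonicity} of the thresholds $x_0(a)$, which follows from $1 > \mu_1 \gg \cdots \gg \mu_N > 0$.

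First I would prove \ref{wc:lem:general-semistable-loci-i}: assume $\beta = 0$ and $(0, \vec V, \vec\rho)$ is $\tau_x^s$-semistable. Testing against the sub-object supported on an initial segment of vertices, and the quotient supported on a terminal segment, forces each $\rho_i$ to be either zero or an isomorphism, and moreover the ``non-zero'' vertices must form a single interval $[a,b]$ (a gap would give a destabilizing sub- or quotient object), with all intermediate $\rho_i$ isomorphisms; this pins down $\vec e = k\vec 1_{[a,b]}$ and identifies the semistable locus. For \ref{wc:lem:general-semistable-loci-ii}, assume $\beta \neq 0$; injectivity of each $\rho_i$ follows because a non-injective $\rho_i$ gives a sub-object of class $(0, \vec 1_{[1,i]} \cdot (\dim\ker\rho_i))$ type, hence one of the form covered by case $\beta = 0$, which destabilizes (its $\tau_x^s$-value has first coordinate $\pm\infty$); $\tau$-semistability of $E$ follows from Lemma~\ref{wc:lem:general-semistable-loci}'s setup by testing sub-objects $(E', \Fr(E') \cap \cdots)$ and using Lemma~\ref{wc:lem:R-sets}\ref{it:R-sets-ii} together with the first case of \eqref{wc:eq:joyce-framed-stack-stability}; the inequalities $e_i \le e_{i+1}$ then follow from injectivity of $\rho_i$. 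The $\mathring\tau$-semistability claim when $s = 1$ and $(\beta, \vec e) \in \mathring S_\alpha$ uses the same argument with $\mathring\tau$ in place of $\tau$, which is why the hypothesis $(\beta, \vec e) \in \mathring S_\alpha$ (equivalently $\beta \in \mathring R_\alpha$) is needed to invoke Lemma~\ref{wc:lem:R-sets}\ref{it:R-sets-ii} for $\mathring\tau$.

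For \ref{wc:lem:general-semistable-loci-iii} and \ref{wc:lem:general-semistable-loci-iv}: with $\beta \neq 0$ and $x \le x_0(a)$, consider the quotient object $(E, \vec V, \vec\rho) \twoheadrightarrow (0, \vec V'', \vec\rho'')$ where $\vec V''$ is supported on the terminal segment $[a, N]$ with $V''_i = V_i / (\text{image of } V_{a-1})$-type data — concretely, quotient by the sub-object with full $E$ and $\vec V' = \vec 1_{[1,a-1]}$-supported piece. Its class in the framing directions is $\vec e - \vec e'$ with $\vec e'$ supported on $[1, a-1]$; by \eqref{wc:eq:x0a}, the condition $x \le x_0(a)$ makes $(\vec\mu + x\vec 1)\cdot \vec 1_{[a,N]} \le 0$, and a short computation shows that unless $e_{a-1} = e_a = \cdots = e_N$ this quotient violates \eqref{eq:joyce-framed-stack-is-a-stability-condition}; here I would use the genericity of $\vec\mu$ to rule out accidental equalities. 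Part \ref{wc:lem:general-semistable-loci-iv} is the case $a = 1$: $x \le x_0(1)$ forces $e_1 = \cdots = e_N$ by \ref{wc:lem:general-semistable-loci-iii}, and then the quotient of class $(0, \vec 1_{[1,N]})$ is destabilizing unless this common value is $0$, using $(\vec\mu + x\vec 1)\cdot\vec 1_{[1,N]} < 0$ for $x \le x_0(1)$ (strict, since $x_0(1) > -1$ and the inequality at $x_0(1)$ is equality, so for $x < x_0(1)$ it is strict — at $x = x_0(1)$ one argues via the lexicographic tie-break in the last two cases of \eqref{wc:eq:joyce-framed-stack-stability} that the quotient still has first coordinate $-\infty$). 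The main obstacle I anticipate is bookkeeping: carefully matching which segment-supported sub/quotient objects to test in each case, and verifying that the genericity assumptions on $\vec\mu$ exclude all the boundary coincidences, rather than any deep structural difficulty — this is essentially Joyce's \cite[Prop.~10.3]{Joyce2021} argument adapted to our $\tau^s_x$.
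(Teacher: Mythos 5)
Your overall route — proving the lemma by testing $\tau_x^s$-stability directly against segment-supported sub- and quotient objects — is exactly what the paper does, since its proof is a one-line appeal to Joyce's Prop.~10.3 argument. However, there is a genuine gap in your treatment of injectivity in part \ref{wc:lem:general-semistable-loci-ii}. You assert that a non-injective $\rho_i$ yields a sub-object of class $(0,\vec e\,')$ ``which destabilizes (its $\tau_x^s$-value has first coordinate $\pm\infty$)''. Only the value $+\infty$ destabilizes a \emph{sub}-object: by the last case of \eqref{wc:eq:joyce-framed-stack-stability}, when $(\vec\mu+x\vec 1)\cdot\vec e\,'\le 0$ the sub-object has slope $(-\infty,*)$, which is \emph{smaller} than the finite slope $(\tau(\beta),*)$ of the quotient, so it imposes no condition at all. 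This regime is not exotic: already for $\ker\rho_N$ placed at vertex $N$ the value is $\mu_N+x\le 0$ for all $x\le -\mu_N=x_0(N)$, i.e.\ for most of $[-1,0]$. Moreover the sub-object you name, of class $\dim\ker\rho_i\cdot\vec 1_{[1,i]}$, does not exist in general (pulling $\ker\rho_i$ back along the chain gives preimages of growing dimension). The correct mechanism is the sign dichotomy you only deploy in \ref{wc:lem:general-semistable-loci-iii}--\ref{wc:lem:general-semistable-loci-iv}: work with $T_j:=\ker(V_j\to\Fr(E))$ (all $\rho_i$ are injective if and only if $\vec T=0$); when $(\vec\mu+x\vec 1)\cdot\dim\vec T>0$ this sub-object destabilizes, and in the complementary regime one must instead exhibit a destabilizing \emph{quotient} of class $(0,\vec e\,'')$ with non-positive value (slope $(-\infty,*)$), obtained by quotienting by a sub-object of the form $(E,V_1,\dots,V_j,\mathrm{im},\dots,\mathrm{im})$ — the same device that proves (iii) and (iv), and the place where the constraints $x\lessgtr x_0(a)$ from \eqref{wc:eq:x0a} genuinely interact with injectivity. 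As written, this step of your plan would fail.

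Two smaller repairs. In \ref{wc:lem:general-semistable-loci-iii} the sub-object you quotient by is not supported on $[1,a-1]$: it must contain the images of $V_{a-1}$ at all later vertices, so the quotient class is $(e_a-e_{a-1},\dots,e_N-e_{a-1})$ on $[a,N]$; its value is $\le 0$ for $x\le x_0(a)$ by \eqref{wc:eq:x0a} together with $e_a\le\cdots\le e_N$ (a weighted-versus-unweighted average comparison), and no genericity of $\vec\mu$ is needed for this — non-positivity already gives first coordinate $-\infty$. For the $s=1$ claim in \ref{wc:lem:general-semistable-loci-ii}, ``the same argument with $\mathring\tau$ in place of $\tau$'' is not literally available, since $\tau^1_x$ never mentions $\mathring\tau$: the claim follows because a $\mathring\tau$-destabilizing $E'\subsetneq E$ has $\lambda(\beta')>0$ by Lemma~\ref{wc:lem:R-sets}\ref{it:R-sets-ii} and Assumption~\ref{assump:wall-crossing}\ref{assump:it:lambda}, and the rescaling $\abs{\lambda(\beta')}>\binom{N+1}{2}r(\alpha)$ makes the $s\lambda$-term dominate all $\vec\mu$-contributions, so the induced framed sub-object destabilizes; you identify the right inputs but should spell out this mechanism. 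With these corrections your plan coincides with the paper's proof, which simply invokes Joyce's argument for \cite[Prop.~10.3]{Joyce2021}.
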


\begin{proof}
  The same proof as for \cite[Prop. 10.3]{Joyce2021} holds, despite
  our more complicated definition
  \eqref{wc:eq:joyce-framed-stack-stability} of $\tau_x^s$.
\end{proof}

\subsubsection{}
\label{wc:sec:aux-wc-strategy}

The goal of this section is to prove the following formula relating
auxiliary and artificial invariants, for various $(\beta,\vec e) \in
S_\alpha$ and $(s,x) \in [0,1] \times [-1, 0]$:
\begin{equation}\label{wc:eq:aux-wc-formula}
  \tilde\sz^{s,x}_{\beta,\vec e} \labeleq{?} \hat\sz^{s,x}_{\beta,\vec e}.
\end{equation}
Specifically, by Lemma~\ref{wc:lemma:reduction-wc-to-aux-wc} below,
the final goal is to prove \eqref{wc:eq:aux-wc-formula} for
$(\beta,\vec e)=(\alpha,\vec 0)$ and $(s,x)=(1,-1)$. To do so, we
follow the path in Figure~\ref{fig:dominant-wc-strategy-2} for {\it
  both} the auxiliary and the artificial invariants.

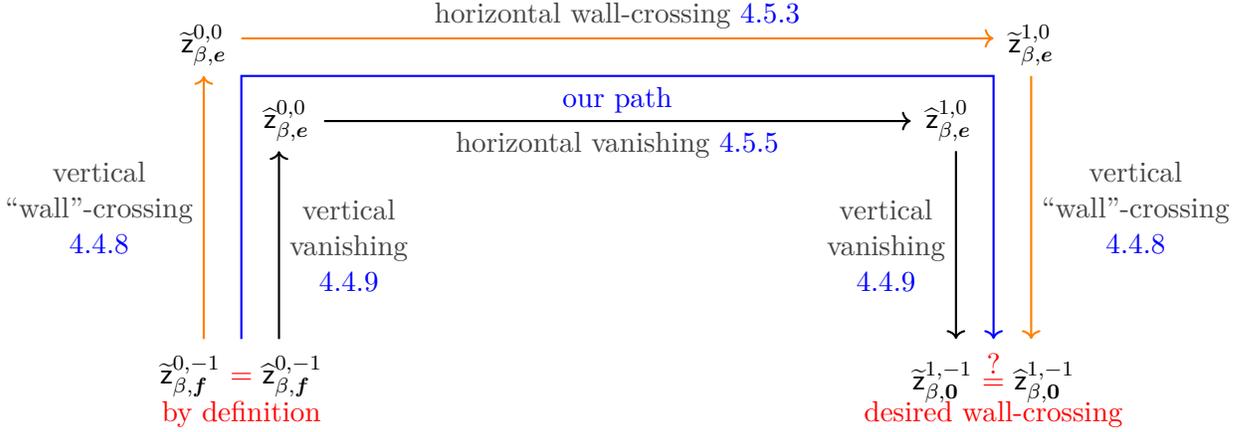
\begin{figure}[!ht]
  \centering
  \begin{tikzpicture}
    \node[above] at (-1,-.5) {$\tilde\sz_{\beta,\vec{e}}^{0,0}$};
    \node[] at (.1,-1.1) {$\hat\sz_{\beta,\vec{e}}^{0,0}$};

    \node[above] at (10,-.5) {$\tilde\sz^{1,0}_{\beta,\vec{e}}$};
    \node[] at (8.9,-1.1) {$\hat\sz_{\beta,\vec e}^{1,0}$};

    \node[] at (-.5,-4.5) {$\tilde\sz_{\beta,\vec f}^{0,-1}\ {\color{red}=}\ \hat\sz_{\beta,\vec f}^{0,-1}$};
    \node[] at (9.5,-4.5) {$\tilde\sz_{\beta,\vec 0}^{1,-1}\ {\color{red}\labeleq{?}}\ \hat\sz_{\beta,\vec 0}^{1,-1}$};

    \node[text=red] at (-.5,-5) {by definition};
    \node[text=red] at (9.5,-5) {desired wall-crossing};

    \draw[->, thick, orange, text=darkgray] (-1,-4)--node[left,align=center]{vertical\\``wall''-crossing\\ \ref{wc:lem:pair-flag}}(-1,-.5);
    \draw[->, thick, text=darkgray] (0,-4)--node[right,align=center]{vertical\\vanishing\\ \ref{wc:lem:pair-flag-vanishing}}(0,-1.5);

    \draw[->, thick, orange, text=darkgray] (10,-.5)--node[right,align=center]{vertical\\``wall''-crossing\\ \ref{wc:lem:pair-flag}}(10,-4);
    \draw[->, thick, text=darkgray] (9,-1.5)--node[left,align=center]{vertical\\vanishing\\ \ref{wc:lem:pair-flag-vanishing}}(9,-4);

    \draw[->, thick, orange, text=darkgray] (-.5,0)--node[above,align=center]{horizontal wall-crossing \ref{wc:prop:horizontal-wc}}(9.5,0);
    \draw[->, thick, text=darkgray] (0.6,-1.1)--node[below,align=center]{horizontal vanishing \ref{wc:prop:horizontal-vanishing}}(8.4,-1.1);
    
    \draw[<-, thick, blue] (9.5,-4) to (9.5,-.5) --node[below]{our path} (-.5,-.5) -- (-.5,-4);
    
  \end{tikzpicture}
  \caption{Proof strategy for \eqref{wc:eq:aux-wc-formula} for $(\beta,\vec e)=(\alpha,\vec 0)$ and $(s,x)=(1,-1)$.}
  \label{fig:dominant-wc-strategy-2}
\end{figure}

When crossing consecutive walls, wall-crossing terms from one wall
will themselves exhibit wall-crossing behavior at further walls. The
artificial invariants $\hat\sz$ simplify the bookkeeping of such
iterated wall-crossings. While the behavior of the auxiliary
invariants $\tilde\sz$ may be studied by geometric means, to prove
analogous results for the artificial invariants $\hat\sz$ requires a
careful analysis of the universal coefficients $\tilde U$ and the Lie
bracket. For this, we first prove several technical vanishing results
in \S\ref{wc:sec:gen-sst-inv}.

\subsubsection{}

\begin{lemma}\label{wc:lemma:reduction-wc-to-aux-wc}
  The dominant wall-crossing formula (Theorem~\ref{thm:wcf}) holds if
  and only if \eqref{wc:eq:aux-wc-formula} holds for $(\beta, \vec e)
  = (\alpha, \vec 0)$ and $(s,x) = (1,-1)$.
\end{lemma}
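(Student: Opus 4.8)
\textbf{Proof proposal for Lemma~\ref{wc:lemma:reduction-wc-to-aux-wc}.}

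The plan is to unwind both sides of \eqref{wc:eq:aux-wc-formula} in the special case $(\beta,\vec e)=(\alpha,\vec 0)$, $(s,x)=(1,-1)$, and show that each side is equivalent, via the semistable-invariant formalism, to the corresponding side of the dominant wall-crossing formula \eqref{eq:wcf}. For the left-hand side: when $x=-1\le x_0(1)$, Lemma~\ref{wc:lem:general-semistable-loci}\ref{wc:lem:general-semistable-loci-iv} forces $\vec e=\vec 0$ for any nonempty semistable locus with $\beta\neq 0$, so the only auxiliary invariant of class $(\alpha,\vec 0)$ at $(s,x)=(1,-1)$ is the one coming from case~\ref{wc:def:aux-inv-iii} of Definition~\ref{wc:def:artificial-invs}, namely $\tilde\sz_{\alpha,\vec 0}^{1,-1}=\iota_*^{\vec Q}(\sz_\alpha(\mathring\tau))$. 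Since $\iota^{\vec Q}$ is an isomorphism of graded monoidal $\sT$-stacks (\S\ref{wc:sec:va-homomorphisms}), and hence $\iota_*^{\vec Q}$ is an injective Lie algebra homomorphism, the equation $\tilde\sz_{\alpha,\vec 0}^{1,-1}=\hat\sz_{\alpha,\vec 0}^{1,-1}$ is equivalent to its preimage under $\iota_*^{\vec Q}$.

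For the right-hand side, I would expand $\hat\sz_{\alpha,\vec 0}^{1,-1}$ using its defining formula \eqref{eq:artificial-invariant}. By the same vanishing (Lemma~\ref{wc:lem:general-semistable-loci}\ref{wc:lem:general-semistable-loci-iv}, applied at the base point $(s,x)=(0,-1)$), each nonzero factor $\tilde\sz_{\beta_i,\vec e_i}^{0,-1}$ in that sum has either $\beta_i=0$ or $\vec e_i=\vec 0$; the $\beta_i=0$ factors are the $\partial_{[a,b]}$ from case~\ref{wc:def:aux-inv-iv}, and the $\vec e_i=\vec 0$ factors are $\iota_*^{\vec Q}(\sz_{\beta_i}(\tau))$ from case~\ref{wc:def:aux-inv-ii}. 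Since the total class is $(\alpha,\vec 0)$ with $\vec 0$ in the framing slots, no $\partial_{[a,b]}$ factor with $a\le b$ can appear (its framing degree is nonzero and cannot be cancelled), so every term in \eqref{eq:artificial-invariant} is a nested bracket of $\iota_*^{\vec Q}(\sz_{\beta_i}(\tau))$'s with $\alpha=\beta_1+\cdots+\beta_n$. Applying $(\iota_*^{\vec Q})^{-1}$ and using that it is a Lie homomorphism, together with the fact that $\tilde U((\beta_1,\vec 0),\ldots,(\beta_n,\vec 0);\tau_{-1}^0,\tau_{-1}^1)=\tilde U(\beta_1,\ldots,\beta_n;\tau,\mathring\tau)$ — which holds because $\tau_x^s$ restricted to classes with trivial framing vector agrees with $\tau$ when $s=0$ and $\mathring\tau$ when $s=1$ (by the first line of \eqref{wc:eq:joyce-framed-stack-stability}, since $\lambda(\beta)=0$ for $\beta\in R_\alpha$ forces $s\lambda(\beta)=0$, wait — one must be careful here: $\tau_1^{-1}$ on a class $\beta$ with $\lambda(\beta)\neq 0$ differs from $\tau$; but the universal coefficient $\tilde U$ only sees classes $\beta_i$ with $\fM_{\beta_i}^{\sst}(\tau)\neq\emptyset$ and $\tau(\beta_i)=\tau(\alpha)$, for which the relevant comparison of $S$-coefficients reduces, via the $\vec\mu$-genericity and the lexicographic ordering, to the comparison of $\tau$ and $\mathring\tau$) — the sum becomes exactly the right-hand side of \eqref{eq:wcf}. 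The key point justifying this reduction of $\tilde U$ is Lemma~\ref{wc:lem:R-sets}\ref{it:R-sets-i}, which identifies $\hat R_\alpha$ with the zero locus of $\lambda$ inside $R_\alpha$, so that on the classes actually contributing, $s\lambda(\beta)$ vanishes and $\tau^s_x$ genuinely interpolates between $\tau$ and $\mathring\tau$ in the first coordinate only.

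The main obstacle will be the bookkeeping in the previous paragraph: verifying cleanly that the universal coefficient $\tilde U((\beta_1,\vec 0),\ldots,(\beta_n,\vec 0);\tau_{-1}^0,\tau_{-1}^1)$ equals $\tilde U(\beta_1,\ldots,\beta_n;\tau,\mathring\tau)$ requires tracing through Definition~\ref{def:universal-coefficients} (the $S$-coefficients, double groupings, $\tau$- and $\tau'$-permissibility) and checking that the extra lexicographic second coordinate of $\tau_x^s$ never changes any of the relevant strict/non-strict inequalities on the classes under consideration — this uses that all contributing $\beta_i$ lie in $R_\alpha$ (so have equal $\tau$-slope $\tau(\alpha)$), that $\lambda$ vanishes on $R_\alpha$ by Lemma~\ref{wc:lem:R-sets}\ref{it:R-sets-i}, and that the $(\vec\mu + x\vec 1)\cdot\vec e_i$ terms are identically zero since all $\vec e_i = \vec 0$. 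Once that identification is in place, the equivalence is a purely formal manipulation with the injective Lie homomorphism $\iota_*^{\vec Q}$, and the ``if and only if'' follows because $\sz_\alpha(\tau)$, $\sz_\alpha(\mathring\tau)$ and all $\sz_{\beta_i}(\tau)$ are already uniquely determined by Theorem~\ref{thm:sst-invariants}.
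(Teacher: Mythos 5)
Your overall skeleton matches the paper's proof: identify $\tilde\sz^{1,-1}_{\alpha,\vec 0}=\iota^{\vec Q}_*\sz_\alpha(\mathring\tau)$ on the left, observe that in \eqref{eq:artificial-invariant} the framing vectors $\vec e_i\ge 0$ must all vanish so every factor is $\iota^{\vec Q}_*\sz_{\beta_i}(\tau)$ with $\beta_i\in R_\alpha$, pull back along the Lie algebra isomorphism $\iota^{\vec Q}_*$, and then reduce to the identity of universal coefficients
$\tilde U\bigl((\beta_1,\vec 0),\ldots,(\beta_n,\vec 0);\tau^0_{-1},\tau^1_{-1}\bigr)=\tilde U(\beta_1,\ldots,\beta_n;\tau,\mathring\tau)$.
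However, your justification of that last identity contains a genuine error. You assert that on the classes actually contributing, $\lambda(\beta_i)=0$ (invoking Lemma~\ref{wc:lem:R-sets}\ref{it:R-sets-i}), so that $\tau^s_x$ ``interpolates in the first coordinate only.'' Lemma~\ref{wc:lem:R-sets}\ref{it:R-sets-i} only says $\lambda$ vanishes on $\hat R_\alpha$, not on $R_\alpha$, and the contributing classes run over $R_\alpha$: for instance a splitting $\alpha=\beta_1+\beta_2$ with $\mathring\tau(\beta_1)>\mathring\tau(\beta_2)$ has $S(\beta_1,\beta_2;\tau,\mathring\tau)=-1\neq 0$ and $\lambda(\beta_1)>0>\lambda(\beta_2)$. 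Worse, if $\lambda$ did vanish on all contributing classes then $\tau^1_{-1}(\beta_i,\vec 0)=(\tau(\alpha),0)$ for every $i$, all $\tau^1_{-1}$-comparisons would be equalities, and the coefficient on the auxiliary side would collapse (cf.\ Lemma~\ref{wc:lem:U-properties}\ref{it:U-identity}) rather than reproduce $\tilde U(\cdots;\tau,\mathring\tau)$. The second coordinate $\lambda(\beta)/r(\beta)$ of $\tau^1_{-1}$ is precisely what encodes the $\mathring\tau$-ordering; your appeal to $\vec\mu$-genericity is also vacuous here since all framing vectors are $\vec 0$.

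What is actually needed, and what the paper supplies, is two things: a purely combinatorial reduction of the coefficient identity to pairwise slope comparisons (Joyce's Prop.~10.5(b), whose proof carries over verbatim), and the pairwise statement that for $\gamma,\delta\in R_\alpha$ with $\gamma+\delta\in\mathring R_\alpha$ one has $\mathring\tau(\gamma)\le\mathring\tau(\delta)\iff\tau^1_{-1}(\gamma,\vec 0)\le\tau^1_{-1}(\delta,\vec 0)$. The latter is proved in \S\ref{wc:sec:tilde-tau-tau1-equivalence} using Lemma~\ref{wc:lem:R-sets}\ref{it:R-sets-ii} to replace the comparison $\mathring\tau(\gamma)$ vs.\ $\mathring\tau(\delta)$ by $\mathring\tau(\gamma)$ vs.\ $\mathring\tau(\alpha-\gamma)$, then Assumption~\ref{assump:wall-crossing}\ref{assump:it:lambda} to translate this into signs of $\lambda$, and finally positivity and additivity of $r$ together with $\lambda(\gamma)+\lambda(\delta)=\lambda(\gamma+\delta)=0$ (which holds because $\gamma+\delta\in\mathring R_\alpha\subseteq\hat R_\alpha$ — this is the only place $\lambda$ vanishes). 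Your proof as written omits this argument and replaces it with a false vanishing claim, so the key step does not go through without being redone along these lines.
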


\begin{proof}
  Consider \eqref{wc:eq:aux-wc-formula} for $(\beta, \vec e) =
  (\alpha, \vec 0)$ and $(s,x) = (1,-1)$. We implicitly use the
  isomorphism $\iota^{\vec Q}_*$ of Lie algebras, induced from
  \eqref{eq:auxiliary-stack-zero-framing-isomorphism}, throughout.
  
  On the left hand side, the invariant $\tilde\sz^{1,-1}_{\alpha,\vec
    0}$ equals $\sz_{\alpha}(\mathring\tau)$ by definition.

  On the right hand side, all terms in the sum
  \eqref{eq:artificial-invariant} must be of the form
  $\tilde\sz_{\beta, \vec 0}^{0,-1}$, which equals $\sz_\beta(\tau)$
  by definition. Note that $(\beta,\vec 0)$ is in $S_\alpha$ if and
  only if $\beta$ is in $R_\alpha$. Finally, we claim
  \[ \tilde U\left((\beta_1,\vec 0),\ldots,(\beta_1,\vec 0); \tau_{-1}^0, \tau_{-1}^1\right) = \tilde U\left(\beta_1,\ldots,\beta_1; \tau, \mathring\tau\right). \]
  This follows from \cite[Prop. 10.5(b)]{Joyce2021}, whose proof only
  uses the combinatorics of the $\tilde{U}$-coefficients, which is
  unchanged for us, and also \cite[Eq. (10.15)]{Joyce2021}, which we
  prove below in \S\ref{wc:sec:tilde-tau-tau1-equivalence} with our
  weakened assumptions on $\lambda$ (see
  footnote~\ref{footnote:lambda}). Putting all this together, we get
  \[ \sum_{\substack{n\geq 1, \beta_i \in R_{\alpha}\\ \alpha = \beta_1 + \cdots + \beta_n}} \tilde{U}\left(\beta_1,\ldots,\beta_n; \tau, \mathring\tau\right) \cdot \left[\left[\cdots\left[\sz_{\beta_1}(\tau), \sz_{\beta_2}(\tau) \right], \cdots\right],\sz_{\beta_n}(\tau)\right]. \]

  These are exactly the left and right hand sides of the desired
  dominant wall-crossing formula.

\subsubsection{}
\label{wc:sec:tilde-tau-tau1-equivalence}

  We prove \cite[Eq. (10.15)]{Joyce2021}, which is the claim that,
  given $\gamma, \delta \in R_\alpha$ with $\beta\coloneqq
  \gamma+\delta\in \mathring R_\alpha$,
  \[ \mathring\tau(\gamma) \le \mathring\tau(\delta) \iff \tau_{-1}^1(\gamma,\vec 0)\leq \tau_{-1}^1(\delta,\vec 0). \]
  First, by Lemma~\ref{wc:lem:R-sets}\ref{it:R-sets-ii},
  \[ \mathring\tau(\gamma) \le \mathring\tau(\delta) \iff \mathring\tau(\gamma) \le \mathring\tau(\alpha-\gamma). \]
  This allows us to use
  Assumption~\ref{assump:wall-crossing}\ref{assump:it:lambda} for
  $\lambda$ (which is weaker than Joyce's version) to conclude that
  \[ \mathring\tau(\gamma) \le \mathring\tau(\delta) \iff \lambda(\gamma) \le 0 \le \lambda(\delta). \]
  The remainder of the argument is identical to Joyce's. Namely,
  \[ \lambda(\gamma) \le 0 \le \lambda(\delta) \iff \frac{\lambda(\gamma)}{r(\gamma)} \le \frac{\lambda(\delta)}{r(\delta)} \iff \tau_{-1}^1(\gamma,\vec 0) \le \tau_{-1}^1(\delta,\vec 0) \]
  where the first equivalence holds because $r(-) > 0$ and
  $\lambda(\gamma) + \lambda(\delta) = \lambda(\beta) = 0$, and the
  second holds by the definition of $\tau_x^s$ as $\tau(\gamma) =
  \tau(\alpha) = \tau(\delta)$.
\end{proof}

\subsection{Generalities on universal coefficients and artificial invariants}
\label{wc:sec:gen-sst-inv}

\subsubsection{}
\label{wc:sec:artificial-wcf}

In this subsection, we provide some general wall-crossing and
vanishing results for artificial invariants. In contrast to the
auxiliary invariants, whose wall-crossing behavior is studied using
geometric techniques, results about artificial invariants are obtained
purely using the combinatorics of the universal coefficients
$\tilde{U}$. We list the main results.
\begin{itemize}
\item (Lemma~\ref{wc:lem:U-properties}) The universal coefficients $U$
  satisfy certain combinatorial identities. These are used repeatedly
  throughout this section.
\item (Lemma~\ref{wc:lem:artificial-base-wcf}) $\hat\sz^{s,x}$ may be
  expressed in terms of $\hat\sz^{s',x'}$ instead of
  $\tilde\sz^{0,-1}$, yielding an ``artificial wall-crossing
  formula''.
\item (Lemma~\ref{wc:lem:vanishing-basic}) $\hat\sz$ satisfies an
  analogue of Lemma~\ref{wc:lem:general-semistable-loci}. We will use
  this later to simplify the artificial wall-crossing formula along
  various parts of our path in
  Figure~\ref{fig:dominant-wc-strategy-2}.
\end{itemize}

\subsubsection{}

\begin{lemma} \label{wc:lem:U-properties}
  Let $\cat{A}$ be an abelian category and $\alpha_1, \ldots, \alpha_n
  \in C(\cat A)$. Let $\tau$, $\tau'$, $\tau''$ be stability
  conditions.
  \begin{enumerate}[label = (\roman*)]
  \item \label{it:U-identity} (\cite[Thm. 3.11, (3.4)]{Joyce2021})
    \[ U(\alpha_1, \ldots, \alpha_n; \tau, \tau) = \begin{cases} 1 & n = 1 \\ 0 & \text{otherwise}.\end{cases} \]
  \item \label{it:U-composition} (\cite[Thm. 3.11, (3.5)]{Joyce2021})
    \begin{align*}
      U(\alpha_1, \ldots, \alpha_n; \tau, \tau'') = \!\!\!\!\sum_{\substack{m\ge 1\\0<a_1<\cdots<a_m=n\\\beta_i\coloneqq \alpha_{a_{j-1}+1}+\cdots+\alpha_{a_j}}} &\!\!\!\!U(\beta_1, \ldots, \beta_m; \tau', \tau'') \\[-2.5em]
      &\qquad \cdot \prod_{j=1}^m U(\alpha_{a_{j-1}+1}, \alpha_{a_{j-1}+2}, \ldots, \alpha_{a_j}; \tau, \tau').
    \end{align*}
  \item \label{it:U-vanishing} (\cite[Prop. 3.14]{Joyce2021}) Suppose
    $n \ge 2$ and for some $\emptyset \neq I \subsetneq \{1, \ldots,
    n\}$ we have $\tau(\alpha_i) < \tau(\alpha_j)$ for all $i \in I$
    and $j \notin I$, and $\tau'(\alpha_i) < \tau'(\alpha_1 + \cdots +
    \alpha_n)$ for all $i \in I$. Then
    \[ S(\alpha_1, \ldots, \alpha_n); \tau, \tau') = U(\alpha_1, \ldots, \alpha_n; \tau, \tau') = \tilde U(\alpha_1, \ldots, \alpha_n; \tau, \tau') = 0. \]
    The same holds if instead $\tau(\alpha_i) > \tau(\alpha_j)$ and
    $\tau'(\alpha_i) > \tau'(\alpha_1 + \cdots + \alpha_n)$ above.
  \end{enumerate}
\end{lemma}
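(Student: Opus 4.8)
The three identities are all purely combinatorial statements about the coefficients $S$, $U$, $\tilde U$ defined in Definition~\ref{def:universal-coefficients} and Lemma~\ref{lem:Utilde-definition}, and each is attributed to a specific result of Joyce; so the plan is simply to recall Joyce's arguments and check that nothing in our slightly modified setup (weak stability conditions, the weakened $\lambda$-hypothesis) affects them. Concretely:

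For \ref{it:U-identity}, I would unwind the definition of $U(\alpha_1,\ldots,\alpha_n;\tau,\tau)$: in the sum over $(\tau,\tau)$-permissible double groupings, the $\tau'$-permissibility condition for $\tau'=\tau$ forces each $\gamma_j$ to satisfy $\tau(\gamma_j)=\tau(\alpha_1+\cdots+\alpha_n)$, while $\tau$-permissibility forces $\tau(\beta_i)=\tau(\alpha_j)$ for the relevant $j$; combined with the see-saw property these pin down the factors $S(\beta_{b_{j-1}+1},\ldots,\beta_{b_j};\tau,\tau)$. One then checks directly from the definition of $S$ that $S(\beta_1,\ldots,\beta_k;\tau,\tau)$ vanishes unless $k=1$ (the conditions \ref{it:S-case-1}, \ref{it:S-case-2} both require a strict inequality $\tau'=\tau$ cannot supply), and that the surviving groupings sum to $1$ when $n=1$ and cancel otherwise. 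This is exactly the content of \cite[Thm.\ 3.11, (3.4)]{Joyce2021}, and since it involves only the ordering combinatorics, which is unchanged here, I would cite it after noting this.

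For \ref{it:U-composition}, the transitivity/associativity identity, the argument in \cite[Thm.\ 3.11, (3.5)]{Joyce2021} again manipulates only the defining sums over groupings, using the see-saw property to reorganize a $\tau\to\tau''$ double grouping as a $\tau'\to\tau''$ grouping of $\tau\to\tau'$ groupings. No properness, no geometry, and no strengthened stability axioms enter, so I would reproduce the statement and cite it. The one point worth a sentence is that all weak stability conditions here satisfy the weak see-saw property \eqref{eq:stability-condition}, which is all Joyce's combinatorial manipulations use.

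For \ref{it:U-vanishing}, I would follow \cite[Prop.\ 3.14]{Joyce2021}: under the stated hypothesis on the subset $I$, every term contributing to $S$, $U$, or $\tilde U$ contains a factor forced to vanish because some required strict inequality is violated --- splitting the chain at the "wall" between $I$ and its complement. For $S$ this is immediate from the definition of $S$; for $U$ one checks that no $(\tau,\tau')$-permissible double grouping can avoid a vanishing $S$-factor; for $\tilde U$ one uses the defining relation \eqref{eq:U-vs-Utilde} together with the vanishing for $U$ and an induction on $n$ (the free Lie algebra $L$ has no relations, so vanishing of all $U$-side terms of a given multidegree forces vanishing of the corresponding $\tilde U$). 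The symmetric statement with the inequalities reversed is proved identically. The mild subtlety --- and the only place I expect to slow down --- is making sure the inductive extraction of $\tilde U$ from $U$ via \eqref{eq:U-vs-Utilde} is carried out cleanly for weak (not just strict) stability conditions; but since \eqref{eq:U-vs-Utilde} is an identity in the universal enveloping algebra of a \emph{free} graded Lie algebra and makes no reference to the stability conditions beyond the numerical data $(\alpha_1,\ldots,\alpha_n)$, the argument goes through verbatim, and I would cite Joyce accordingly.

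\textbf{Main obstacle.} There is essentially no obstacle: all three parts are ``import from \cite{Joyce2021}'' after verifying that only the weak see-saw property and the combinatorics of $S$, $U$, $\tilde U$ are used, never properness or the extra Assumptions. The most delicate bookkeeping is in \ref{it:U-vanishing}, in passing from the $U$-vanishing to the $\tilde U$-vanishing through \eqref{eq:U-vs-Utilde}, but this is a formal induction in a free Lie algebra and presents no real difficulty.
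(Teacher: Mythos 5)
Your proposal matches the paper, which gives no independent proof of this lemma and simply cites \cite[Thm.\ 3.11, Prop.\ 3.14]{Joyce2021} for all three parts: the coefficients $S$, $U$, $\tilde U$ are purely combinatorial in the ordering data, so Joyce's arguments apply verbatim in the weak stability setting. Your added sketches of how the identities are proved are consistent with Joyce's arguments and introduce no gap.
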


\subsubsection{}

\begin{lemma}[Artificial wall-crossing formula] \label{wc:lem:artificial-base-wcf}
  \leavevmode
  \begin{enumerate}[label = (\roman*)]
  \item \label{it:artificial-wcf-i} For any $(\beta,\vec e)\in
    S_\alpha$ and any $(s_1,x_1),(s_2,x_2) \in [0,1] \times [-1,0]$,
    \begin{equation} \label{wc:eq:artificial-base-wcf}
      \hat\sz_{\beta,\vec{e}}^{s_1,x_1} = \sum_{\substack{n\geq 1, (\beta_i,\vec{e}_i)\in S_{\alpha}\\(\beta, \vec{e}) = (\beta_1, \vec{e}_1) + \cdots + (\beta_n, \vec{e}_n)}}\tilde{U}\left((\beta_1,\vec e_1),\ldots,(\beta_n,\vec e_n); \tau_{x_2}^{s_2}, \tau_{x_1}^{s_1}\right) \left[\left[\cdots\left[\hat\sz_{\beta_1,\vec e_1}^{s_2,x_2}, \hat\sz_{\beta_2, \vec e_2}^{s_2,x_2} \right], \cdots\right],\hat\sz_{\beta_n,\vec e_n}^{s_2,x_2}\right].
    \end{equation} 
  \item \label{it:artificial-wcf-ii} For any $(\beta,\vec e)\in
    \hat{S}_\alpha$ and $-1 \le x_2\le x_1 \le 0$,
    \begin{equation}\label{wc:eq:artificial-base-wcf-s1}
      \hat\sz_{\beta,\vec{e}}^{1,x_1} = \sum_{\substack{n\geq 1, (\beta_i,\vec{e}_i)\in \hat{S}_{\alpha}\\(\beta, \vec{e}) = (\beta_1, \vec{e}_1) + \cdots + (\beta_n, \vec{e}_n)}}\tilde{U}\left((\beta_1,\vec e_1),\ldots,(\beta_n,\vec e_n); \tau_{x_2}^{1}, \tau_{x_1}^{1}\right) \left[\left[\cdots\left[\hat\sz_{\beta_1,\vec e_1}^{1,x_2}, \hat\sz_{\beta_2,\vec e_2}^{1,x_2} \right], \cdots\right],\hat\sz_{\beta_n,\vec e_n}^{1,x_2}\right].
    \end{equation}
  \end{enumerate}
\end{lemma}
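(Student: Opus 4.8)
The plan is to deduce both parts of Lemma~\ref{wc:lem:artificial-base-wcf} from the definition \eqref{eq:artificial-invariant} of the artificial invariants, using the composition property of the universal coefficients $\tilde U$ (the $\tilde U$-analogue of Lemma~\ref{wc:lem:U-properties}\ref{it:U-composition}) together with the fact that $[-,-]$ is a Lie bracket. The key observation is that \eqref{eq:artificial-invariant} expresses $\hat\sz^{s,x}_{\beta,\vec e}$ as a sum of iterated brackets of the ``base'' invariants $\tilde\sz^{0,-1}_{\beta_i,\vec e_i}$ weighted by $\tilde U(\cdots; \tau^0_{-1}, \tau^s_x)$, so that the desired formula \eqref{wc:eq:artificial-base-wcf} amounts to a bookkeeping identity relating iterated brackets of $\hat\sz^{s_2,x_2}$ (each themselves an iterated bracket of the base invariants) back to iterated brackets of the base invariants.

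First I would fix notation: for a decomposition $(\beta,\vec e) = (\beta_1,\vec e_1)+\cdots+(\beta_n,\vec e_n)$ into classes in $S_\alpha$, write $B_n(\vec\phi) \coloneqq [[\cdots[\phi_1,\phi_2],\cdots],\phi_n]$ for the left-nested bracket. The statement of Lemma~\ref{lem:Utilde-definition} (the free graded Lie algebra result of \cite{joyce-config-iv}) shows that the $\tilde U$-coefficients are \emph{defined} precisely so that, after passing to the universal enveloping algebra, the left-nested-bracket generating function built from $\tilde U$ equals the associative-product generating function built from $U$; consequently the $\tilde U$-coefficients inherit from the $U$-coefficients the transitivity identity
\begin{equation*}
  \tilde U(\alpha_1,\ldots,\alpha_n;\tau,\tau'') \cdot B_n = \sum_{\substack{0 = a_0 < a_1 < \cdots < a_m = n}} \tilde U(\gamma_1,\ldots,\gamma_m; \tau',\tau'') \cdot B_m\Big(\big(\tilde U(\cdots;\tau,\tau') B\big)_j\Big),
\end{equation*}
where $\gamma_j = \alpha_{a_{j-1}+1}+\cdots+\alpha_{a_j}$, as an identity of multilinear operations on any Lie algebra generated by symbols $\epsilon(\alpha_i)$; this is exactly \cite[Prop. 10.5]{Joyce2021} in the purely combinatorial form, whose proof uses only the algebra of $\tilde U$ and the Jacobi identity. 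Applying this with $\tau = \tau^0_{-1}$, $\tau' = \tau^{s_2}_{x_2}$, $\tau'' = \tau^{s_1}_{x_1}$, and substituting the operation $\epsilon \mapsto \tilde\sz^{0,-1}$ into the K-homology Lie algebra $K_\circ^{\tilde\sT}(\tilde\fM^{\vec Q(\Fr)})^\pl_{\loc,\bQ}$ of Theorem~\ref{thm:auxiliary-stack-vertex-algebra}, the inner sums reassemble, via \eqref{eq:artificial-invariant} again, into the $\hat\sz^{s_2,x_2}_{\gamma_j,\vec\gamma_j}$, and the outer sum becomes the right-hand side of \eqref{wc:eq:artificial-base-wcf}. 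This proves part~\ref{it:artificial-wcf-i}.

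For part~\ref{it:artificial-wcf-ii}, the only extra point is that all classes appearing should be restricted to $\hat S_\alpha$ rather than all of $S_\alpha$, when $s_1 = s_2 = 1$ and $x_2 \le x_1$. Here I would invoke Lemma~\ref{wc:lem:R-sets}\ref{it:R-sets-i}, which gives $\mathring R_\alpha \subseteq \hat R_\alpha \subseteq R_\alpha$, and the structure of $\tau^1_x$: by Lemma~\ref{wc:lem:U-properties}\ref{it:U-vanishing}, a $\tilde U$-coefficient $\tilde U((\beta_1,\vec e_1),\ldots;\tau^1_{x_2},\tau^1_{x_1})$ vanishes unless all summands $(\beta_i,\vec e_i)$ share the same first coordinate $\tau(\beta_i) = \tau(\beta)$, and for the base invariants in case~\ref{wc:def:aux-inv-ii}/\ref{wc:def:aux-inv-iii} of Definition~\ref{wc:def:artificial-invs} to be non-zero one needs $\beta_i \in R_\alpha$ with $\lambda(\beta_i) = 0$, i.e. $\beta_i \in \hat R_\alpha$ by Lemma~\ref{wc:lem:R-sets}\ref{it:R-sets-i}; the $\beta = 0$ summands contribute $\partial_{[a,b]}$ which lie in $\hat S_\alpha$ as well. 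So every nonvanishing term already has all its classes in $\hat S_\alpha$, and restricting the index set of the sum changes nothing. The main obstacle I anticipate is purely notational: carefully matching the combinatorial reindexing in \cite[Prop. 10.5]{Joyce2021} (which is stated for single-graded classes $\alpha_i$) to our $S_\alpha$-graded setting with the flag dimension vectors $\vec e_i$, and verifying that the substitution of the K-homology Lie bracket is legitimate --- i.e. that all the relevant sums are finite, which follows from finiteness of $S_\alpha$ (established in \S\ref{wc:sec:S-and-R-sets}) and the support properties of the $\tilde\sz$. No genuinely new ideas beyond the free-Lie-algebra identity and the finiteness bookkeeping are needed.
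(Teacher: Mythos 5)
Your treatment of part~\ref{it:artificial-wcf-i} is essentially the paper's argument: unwind the definition \eqref{eq:artificial-invariant} of $\hat\sz^{s_2,x_2}$, apply the composition identity of Lemma~\ref{wc:lem:U-properties}\ref{it:U-composition} with $(\tau,\tau',\tau'')=(\tau^0_{-1},\tau^{s_2}_{x_2},\tau^{s_1}_{x_1})$, and transport it to the $\tilde U$-coefficients via Lemma~\ref{lem:Utilde-definition}; the finiteness bookkeeping you mention is exactly what is needed. That part is fine.

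Part~\ref{it:artificial-wcf-ii} has a genuine gap. You justify dropping the terms with some $(\beta_i,\vec e_i)\in S_\alpha\setminus\hat S_\alpha$ by claiming that they vanish because ``for the base invariants in case \ref{wc:def:aux-inv-ii}/\ref{wc:def:aux-inv-iii} of Definition~\ref{wc:def:artificial-invs} to be non-zero one needs $\beta_i\in R_\alpha$ with $\lambda(\beta_i)=0$.'' This is not true: case~\ref{wc:def:aux-inv-ii} ($s=0$, $\beta\in R_\alpha$) imposes no condition on $\lambda$, and the objects entering the right-hand side of \eqref{wc:eq:artificial-base-wcf-s1} are the \emph{artificial} invariants $\hat\sz^{1,x_2}_{\beta_i,\vec e_i}$, which are built from the $\tilde\sz^{0,-1}$'s and need not vanish for $\beta_i\in R_\alpha\setminus\hat R_\alpha$; for instance $\hat\sz^{1,x_2}_{\beta_i,\vec 0}$ contains the $n=1$ term $\tilde\sz^{0,-1}_{\beta_i,\vec 0}=\iota^{\vec Q}_*\sz_{\beta_i}(\tau)$, which is nonzero whenever $\fM^{\sst}_{\beta_i}(\tau)\neq\emptyset$. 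The correct mechanism is that the \emph{coefficients} $\tilde U(\ldots;\tau^1_{x_2},\tau^1_{x_1})$ of such terms vanish, and this is exactly where the rescaling $\abs{\lambda(\beta_i)}>\binom{N+1}{2}r(\alpha)$ is used: one first shows that in any nonvanishing term every class with $\beta_i=0$ has $\tau^1_{x_2}$-slope $(-\infty,*)$ (otherwise the set of indices with slope $(\infty,*)$ triggers the vanishing criterion of Lemma~\ref{wc:lem:U-properties}\ref{it:U-vanishing}); then, since $\lambda(\beta)=0$ (as $(\beta,\vec e)\in\hat S_\alpha$) but $\lambda(\beta_{i_0})\neq 0$ for some $i_0$, the set $I=\{i:\lambda(\beta_i)>0\}$ is nonempty and proper, and because $\lambda$ dominates the $(\vec\mu+x\vec 1)\cdot\vec e$ contributions one obtains the strict slope inequalities at both $\tau^1_{x_2}$ and $\tau^1_{x_1}$ needed to apply Lemma~\ref{wc:lem:U-properties}\ref{it:U-vanishing} again and kill $\tilde U$. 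Your sketch provides neither this argument nor a substitute; in particular your paraphrase of the vanishing lemma (``$\tilde U$ vanishes unless all summands share the same first coordinate'') is not what that lemma says and does not by itself rule out the offending terms.
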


\begin{proof}
  For \ref{it:artificial-wcf-i}, insert
  Definition~\ref{wc:def:artificial-invs} for the $\hat\sz^{s_2,x_2}$
  on the right hand side, and use the identity
  Lemma~\ref{wc:lem:U-properties}\ref{it:U-composition}
  \begin{align*}
    U(\alpha_1, \ldots, \alpha_n; \tau_{-1}^0, \tau_{x_1}^{s_1}) = \!\!\!\!\sum_{\substack{m\ge 1\\0<a_1<\cdots<a_m=n\\\beta_i\coloneqq \alpha_{a_{j-1}+1}+\cdots+\alpha_{a_j}}} &\!\!\!\!U(\beta_1, \ldots, \beta_m; \tau_{x_2}^{s_2}, \tau_{x_1}^{s_1}) \\[-2.5em]
    &\qquad \cdot \prod_{j=1}^m U(\alpha_{a_{j-1}+1}, \alpha_{a_{j-1}+2}, \ldots, \alpha_{a_j}; \tau_{-1}^0, \tau_{x_2}^{s_2})
  \end{align*}
  along with Lemma~\ref{lem:Utilde-definition} defining $\tilde U$.
  This immediately produces the definition of
  $\hat\sz_{\beta,\vec{e}}^{s_1,x_1}$.

  For \ref{it:artificial-wcf-ii}, we want to check that the formula in
  \ref{it:artificial-wcf-i} reduces to a sum over decompositions with
  pieces in $\hat{S}_\alpha \subset S_\alpha$. If $\beta=0$, there is
  nothing to show, so assume $\beta\neq 0$.

  We first claim that every class $(\beta_i,\vec{e}_i)$ appearing in a
  nonzero term of the sum in \eqref{wc:eq:artificial-base-wcf} for
  which $\beta_i=0$ satisfies $\tau_{x_2}^1(\beta_i,\vec{e}_i) =
  (-\infty, *)$ where $*$ denotes an arbitrary quantity. Otherwise,
  let $I$ be the collection of $i\in \{1,\ldots,n\}$ satisfying
  $\tau_{x_2}^1(\beta_i,\vec{e}_i) = (\infty,*)$. We know $I \neq
  \emptyset$ by hypothesis, and $I \subsetneq \{1, \ldots, n\}$
  because $\tau_{x_2}^1(\beta,\vec e) < (\infty, *)$. By construction,
  $\tau_{x_2}^1(\beta_i,\vec{e}_i)> \tau_{x_2}^1(\beta_j, \vec{e}_j)$
  for any $i\in I$ and $j\notin I$, and
  $\tau_{x_1}^1(\beta_i,\vec{e}_i) = (\infty, *) >
  \tau_{x_1}^1(\beta,\vec{e}) = (\tau(\beta), *)$ for all $i \in I$.
  Applying Lemma~\ref{wc:lem:U-properties}\ref{it:U-vanishing}, the
  coefficient $\tilde{U}$ in the term vanishes, which contradicts our
  assumption.
	
  Now suppose there is a non-zero term in the right hand side of
  \eqref{wc:eq:artificial-base-wcf} involving some $(\beta_i,
  \vec{e}_i)\in S_{\alpha}\setminus\hat{S}_{\alpha}$. By the choice of
  $\lambda$, we have $\lambda(\beta_i)\neq 0$. Since $\lambda(\beta) =
  0$, there must in fact be $i_1,i_2$ with $\lambda(\beta_{i_1})<0
  <\lambda(\beta_{i_2})$. Hence $I \coloneqq \{i\in \{1,\ldots,n\} :
  \lambda(\beta_i)>0\}$ is neither empty nor $\{1,\ldots,n\}$. Since
  $\lambda$ dominates\footnote{This is where we use that
  $\abs{\lambda(\beta)}>\frac{1}{2}N(N+1)r(\alpha)$ for $\beta\in
  R_\alpha\setminus \hat{R}_\alpha$.} the other terms in the stability
  condition for non-zero $\beta_i$, we have $\tau_{x_1}^1(\beta_i,
  \vec{e}_i) > \tau_{x_1}^1(\beta, \vec{e}) $ for $i\in I$. Similarly,
  $\tau_{x_2}^1(\beta_i, \vec{e}_i) > \tau_{x_2}^1(\beta_j,
  \vec{e}_j)$ for $i\in I$ and $j\not\in I$, since if $\beta_j\neq 0$,
  then the term involving $\lambda$ dominates, and if $\beta_j = 0$,
  then the right-hand side is $(-\infty, *)$. So, again, by
  Lemma~\ref{wc:lem:U-properties}\ref{it:U-vanishing} the
  corresponding $\tilde{U}$ vanishes, contradicting our assumption.
\end{proof}

\subsubsection{}
\label{wc:sec:general-vanishing-result}

\begin{lemma}[Artificial vanishings] \label{wc:lem:vanishing-basic}
  Let $(\beta, \vec e) \in S_\alpha$ and $(s,x)\in [0,1]\times
  [-1,0]$. Suppose $\hat\sz_{\beta,\vec{e}}^{s,x} \neq 0$.
  \begin{enumerate}[label = (\roman*)]
  \item \label{it:vanishing-basic-i} If $\beta = 0$, then $\vec{e} =
    \vec{1}_{[a,b]}$ for some integers $1\leq a\leq b\leq N$, and the
    invariant $\hat\sz_{0,\vec{e}}^{s,x}$ is supported on the locus
    where $\rho_i$ are isomorphisms for $a\leq i<b$.
  \item \label{it:vanishing-basic-ii} If $\beta \neq 0$, then $e_1\leq
    e_2\leq \cdots \leq e_N$ and the invariant
    $\hat\sz_{\beta,\vec{e}}^{s,x}$ is supported on the locus where
    $\rho_i$ is injective for $i<N$. Moreover, if $e_{i+1}\leq e_{i} +
    1$ for all $i=1,\ldots,N-1$, then $e_N\leq \fr(\beta)$ and the
    invariant $\hat\sz_{(\beta,\vec{e})}^{s,x}$ is supported on the
    locus where all $\rho_i$ are injective.
  \item \label{it:vanishing-basic-iii} If $\beta\neq 0$ and $x\leq
    x_0(a)$ for $2\leq a \leq r$, then $e_{a-1} = e_a = \cdots = e_N$.
  \item \label{it:vanishing-basic-iv} If $\beta \neq 0$ and $x\leq
    x_0(1)$, then $\vec{e}=0$.
  \end{enumerate}
\end{lemma}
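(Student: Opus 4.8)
\textbf{Proof plan for Lemma~\ref{wc:lem:vanishing-basic}.}
The plan is to prove all four statements by a single simultaneous induction on $n \coloneqq r(\beta)$ (with $n = 0$, i.e. $\beta = 0$, as base case), mirroring the structure of Lemma~\ref{wc:lem:general-semistable-loci} but now for the artificial invariants $\hat\sz$ rather than the geometric $\tilde\sz$. The key mechanism is the recursive definition \eqref{eq:artificial-invariant}: each $\hat\sz_{\beta,\vec e}^{s,x}$ is a sum of iterated Lie brackets of $\tilde\sz_{\beta_i,\vec e_i}^{0,-1}$, weighted by $\tilde U$-coefficients. Since the bracket $[-,-]$ is given by the explicit formula of Theorem~\ref{thm:auxiliary-stack-vertex-algebra} --- a residue of $(\phi \boxtimes \psi)(\hat\Theta(z) \otimes z^{\deg_1}\Phi^*(-))$ --- a bracket of invariants supported on substacks $\fX_i \subset \tilde\fM^{\vec Q(\Fr)}_{\beta_i,\vec e_i}$ is supported on the image of $\prod_i \fX_i$ under the direct-sum map $\Phi$. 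This is the ``support is closed under $\Phi$'' principle that lets geometric support statements propagate through the Lie-algebraic definition.

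First I would handle the base case $\beta = 0$: the invariants $\tilde\sz_{0,\vec e}^{0,-1}$ are nonzero only for $\vec e = \vec 1_{[a,b]}$ by Definition~\ref{wc:def:artificial-invs}\ref{wc:def:aux-inv-iv}, and in that case supported on the locus where $\rho_i$ is an isomorphism for $a \le i < b$ (this is the content of Lemma~\ref{wc:lem:general-semistable-loci}\ref{wc:lem:general-semistable-loci-i}); taking brackets and using that $\Phi$ on the zero-$\beta$ components is concatenation of framing flags, one checks the support and shape constraints are preserved, giving \ref{it:vanishing-basic-i}. For $\beta \neq 0$, I would argue that every class $(\beta_i, \vec e_i)$ appearing with a nonzero $\tilde U$-coefficient in \eqref{eq:artificial-invariant} must itself have $\tilde\sz_{\beta_i,\vec e_i}^{0,-1} \neq 0$, hence by Definition~\ref{wc:def:artificial-invs} it is either of type $(0, \vec 1_{[a,b]})$ or of type $(\beta_i, \vec 0)$ with $\beta_i \in R_\alpha$ (cases \ref{wc:def:aux-inv-iv} and \ref{wc:def:aux-inv-ii}, since these are $x = -1$, $s = 0$ invariants and Lemma~\ref{wc:lem:general-semistable-loci}\ref{wc:lem:general-semistable-loci-iv} forces $\vec e = 0$ when $\beta \neq 0$ at $x_0(1) \ge -1$... actually at $x=-1 < x_0(1)$). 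Then I use the monotonicity and injectivity statements of Lemma~\ref{wc:lem:general-semistable-loci}\ref{wc:lem:general-semistable-loci-ii} for each piece with $\beta_i \neq 0$, and the $\rho$-isomorphism statement for the $\beta_i = 0$ pieces, and track how $\Phi$ assembles the framing vectors $\vec e_i$ into $\vec e$: the direct sum adds the $\vec e_i$, and the composite $\vec\rho$ is block-built from the $\vec\rho_i$, so injectivity of each $\rho_i$ (resp. the $e_{i+1} \le e_i + 1$ refinement) is inherited, yielding \ref{it:vanishing-basic-ii}.

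For \ref{it:vanishing-basic-iii} and \ref{it:vanishing-basic-iv}, the point is that $x \le x_0(a)$ is a constraint that does \emph{not} directly appear in the $\tau_{-1}^0$-data of the generating invariants $\tilde\sz_{\beta_i,\vec e_i}^{0,-1}$; so one cannot just quote Lemma~\ref{wc:lem:general-semistable-loci}\ref{wc:lem:general-semistable-loci-iii}--\ref{wc:lem:general-semistable-loci-iv} piece by piece. Instead I would use the artificial wall-crossing formula Lemma~\ref{wc:lem:artificial-base-wcf}\ref{it:artificial-wcf-i} to re-express $\hat\sz_{\beta,\vec e}^{s,x}$ in terms of $\hat\sz_{\beta_i,\vec e_i}^{s',x'}$ at a parameter $(s', x')$ with $x' \le x_0(a)$ chosen conveniently (e.g. $s' = s$, $x' = x$ itself, reducing to a vanishing of $\tilde U$-coefficients), and then run a $\tilde U$-vanishing argument: if some $(\beta_i, \vec e_i)$ violated the conclusion, one exhibits a subset $I \subsetneq \{1,\dots,n\}$ of indices whose classes have strictly larger $\tau_x^s$-slope than the complement and than $(\beta, \vec e)$ --- precisely because a quotient of class $(0, \vec 1_{[a,N]})$ is destabilizing at $x \le x_0(a)$ by \eqref{wc:eq:x0a} --- and invokes Lemma~\ref{wc:lem:U-properties}\ref{it:U-vanishing} to kill the coefficient. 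This is the same technique as in the proof of Lemma~\ref{wc:lem:artificial-base-wcf}\ref{it:artificial-wcf-ii}. The main obstacle I anticipate is \ref{it:vanishing-basic-iii}: unlike the geometric setting where semistability directly forbids certain framing configurations, here one must carefully set up the index set $I$ and verify the strict slope inequalities at \emph{both} $\tau_{x_1}^{s_1}$ and $\tau_{x_2}^{s_2}$ simultaneously (the hypothesis of Lemma~\ref{wc:lem:U-properties}\ref{it:U-vanishing}), and the bookkeeping of which $(\beta_i, \vec e_i)$ can appear --- given the interplay between the $\lambda$-domination (as in \ref{it:artificial-wcf-ii}) and the $\vec\mu$-genericity --- is delicate. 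One should also double-check that the support statements are stable under the residue operation $\rho_{K,z}$ defining the bracket, which is routine since $\rho_{K,z}$ acts only on the $\bC^\times$-equivariant variable and commutes with the pushforward $\Phi_*$.
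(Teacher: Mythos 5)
There is a genuine gap, and it sits exactly where the lemma's real content lies: part \ref{it:vanishing-basic-ii}. Your plan is to observe that every piece $(\beta_i,\vec e_i)$ with $\tilde\sz^{0,-1}_{\beta_i,\vec e_i}\neq 0$ is either $(\beta_i,\vec 0)$ or $(0,\vec 1_{[a,b]})$, and then to ``assemble'' the monotonicity and injectivity of the pieces through $\Phi$ and the bracket. But the assembly claim is false: the decomposition $(\beta,\vec e)=(\beta',\vec 0)+(0,\vec 1_{[a,b]})$ with $b<N$ is perfectly admissible at the level of pieces, yet its sum $\vec e=\vec 1_{[a,b]}$ is not monotone increasing past index $b$, and on the direct-sum locus the map $\rho_b$ kills the $\bC$ coming from the second summand (it maps into a zero space there), so injectivity is \emph{not} inherited. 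In other words, the statement of \ref{it:vanishing-basic-ii} is not a shape-bookkeeping fact about the pieces; it is the assertion that all such ``bad'' terms in \eqref{eq:artificial-invariant} actually vanish. The paper establishes this by showing that either the $\tilde U$-coefficient dies (via Lemma~\ref{wc:lem:U-properties}\ref{it:U-vanishing}, using genericity of $\vec\mu$ to pin down a unique wall class $\vec 1_{[a,b]}$), or the Lie bracket itself vanishes because the bilinear element \eqref{eq:framed-stack-forgetful-map-cotangent} restricted to the product of the relevant support loci is zero --- and this last step \emph{uses} the support statements of \ref{it:vanishing-basic-ii} for smaller classes, which is why the support claims must be carried along in the argument rather than deduced afterwards.

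Relatedly, your global architecture (induction on $r(\beta)$, reading everything off the $\tau^0_{-1}$-generators) does not match what is needed. The paper runs a minimal-counterexample argument, lexicographic in $(\abs{\vec e},r(\beta))$, combined with a continuity/path argument in the parameter $(s,x)$: one moves along the segment from $(0,-1)$ to $(s,x)$, isolates the first parameter value where a claim fails, and applies the artificial wall-crossing formula \eqref{wc:eq:artificial-base-wcf} across that value, so that the $n=1$ term is controlled by the choice of $u_-$ below the infimum and the $n>1$ terms by minimality --- this is what makes ``the smaller pieces already satisfy \ref{it:vanishing-basic-ii}--\ref{it:vanishing-basic-iv}'' a legitimate hypothesis at the relevant parameter. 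Your sketch for \ref{it:vanishing-basic-iii}--\ref{it:vanishing-basic-iv} (destabilizing $(0,\vec 1_{[a,N]})$ quotients plus Lemma~\ref{wc:lem:U-properties}\ref{it:U-vanishing}) is in the right spirit but cannot be closed without that structure. Even your part \ref{it:vanishing-basic-i} leans on a false preservation statement (``shape constraints are preserved under brackets'': a bracket of $\partial_{[1,2]}$ and $\partial_{[2,3]}$ would have class $(0,(1,2,1))$, not of the form $\vec 1_{[a,b]}$); the correct route, as in the paper, is that on pure framing classes the $\tau^s_x$-comparisons coincide with the $\tau^0_{-1}$-ones, so all $U$-coefficients with $n>1$ vanish identically and $\hat\sz^{s,x}_{0,\vec e}=\tilde\sz^{0,-1}_{0,\vec e}$, after which Lemma~\ref{wc:lem:general-semistable-loci} applies.
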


The proof will occupy the remainder of this subsection, and roughly
follows the proof of \cite[Prop. 10.11]{Joyce2021}. The claims in
\ref{it:vanishing-basic-ii} about the support locus are genuinely new.
They are required by the proof itself, for the vanishings in
\S\ref{wc:sec:gen-van-res-proof-ii-iv-set-up-cases}, due to our
definition of artificial invariants; see
footnote~\ref{footnote:artificial-invariants}.

\subsubsection{}
\label{wc:sec:gen-van-res-proof-i}

\begin{proof}[Proof of Lemma~\ref{wc:lem:vanishing-basic}.]
  We prove case~\ref{it:vanishing-basic-i} by explicit computation.
  First, note that $\tau^s_x(0,\vec e)\leq \tau^s_x(0,\vec e')$ if and
  only if $\tau^0_{-1}(0,\vec e)\leq \tau^0_{-1}(0,\vec e')$, since
  the definition \eqref{wc:eq:joyce-framed-stack-stability} of
  $\tau^s_x$ on such classes has entries which are independent of $s$
  and only depend on a linear equation in $x$. Hence, in
  Definition~\ref{def:universal-coefficients} for $S(\cdots;
  \tau_{-1}^0, \tau_x^s)$ and $U(\cdots; \tau_{-1}^0, \tau_x^s)$, we
  may replace all occurrences of $\tau_x^s$ with $\tau_{-1}^0$. Then
  \[ U\left((0,\vec e_1),\ldots,(0,\vec e_n); \tau_{-1}^{0}, \tau_{x}^{s}\right) = U\left((0,\vec e_1),\ldots,(0,\vec e_n); \tau_{-1}^{0}, \tau_{-1}^{0}\right) = \begin{cases} 1 & n=1,\\ 0 & n>1, \end{cases} \]
  where the second equality is
  Lemma~\ref{wc:lem:U-properties}\ref{it:U-identity}. Thus the
  definition \eqref{eq:artificial-invariant} of $\hat\sz^{s,x}_{0,\vec
    e}$ becomes
  \begin{equation} \label{eq:artificial-invariant-pure-framing}
    \hat\sz^{s,x}_{0,\vec e} = \tilde \sz^{0,-1}_{0,\vec e}
  \end{equation}
  by relating $U$ and $\tilde U$ via
  Lemma~\ref{lem:Utilde-definition}, and the desired result follows
  from Definition~\ref{wc:def:artificial-invs}\ref{wc:def:aux-inv-iv}
  and Lemma~\ref{wc:lem:general-semistable-loci}.

\subsubsection{}
\label{wc:sec:gen-van-res-proof-ii-iv-set-up-induction}

  We treat the remaining cases~\ref{it:vanishing-basic-ii},
  \ref{it:vanishing-basic-iii}, and \ref{it:vanishing-basic-iv}
  together. Note that if $\beta = 0$ or $\vec e = 0$ then there is
  nothing to show, so from now on we only consider $(\beta, \vec e)$
  with $\beta \neq 0$ and $\vec e \neq 0$.

  We proceed by contradiction. The strategy goes as follows. Assume
  that the set
  \[ P_{\text{bad}} \coloneqq \left\{\begin{array}{c} (s,x) \in [0,1] \times [-1,0] \\ (\beta, \vec e) \in S_\alpha \end{array} : \begin{array}{c} \hat\sz^{s,x}_{\beta,\vec e} \neq 0 \\ \text{not all of \ref{it:vanishing-basic-ii}--\ref{it:vanishing-basic-iv} hold at } x \end{array}\right\} \]
  is non-empty. Observe that at $(s,x)=(0,-1)$,
  Definition~\ref{wc:def:artificial-invs} for
  $\hat\sz^{s,x}_{\beta,\vec e}$ becomes $\hat\sz^{s,x}_{\beta,\vec e}
  = \tilde \sz^{0,-1}_{\beta,\vec e}$ by
  Lemma~\ref{wc:lem:U-properties}\ref{it:U-identity}, and we inherit
  all desired results from Lemma~\ref{wc:lem:general-semistable-loci}
  about $\tilde \sz^{0,-1}_{\beta,\vec e}$. Hence
  \begin{equation} \label{eq:vanishing-basic-bad-set-minimal-case}
    ((0,-1),(\beta,\vec e)) \notin P_{\text{bad}}.
  \end{equation}
  So we will take a certain kind of minimal choice of
  $((s,x),(\beta,\vec e)) \in P_{\text{bad}}$, and then use the
  wall-crossing formula \eqref{wc:eq:artificial-base-wcf} for
  artifical invariants to express $\hat\sz_{\beta,\vec e}^{s,x}$ in
  terms of artificial invariants of ``smaller'' classes and $(s,x)$ to
  obtain a contradiction to minimality.

  Throughout this proof, given a K-homology class $\phi$ of class
  $(\gamma, \vec f)$, we say that ``$\phi$ satisfies
  \ref{it:vanishing-basic-ii}--\ref{it:vanishing-basic-iv} at $x'$''
  to mean that
  \ref{it:vanishing-basic-ii}--\ref{it:vanishing-basic-iv} are true
  with $(\gamma, \vec f)$, $x'$, and $\phi$ in place of $(\beta, \vec
  e)$, $x$, and $\hat\sz_{\beta,\vec e}^{s,x}$ respectively. Note that
  if $\phi$ satisfies
  \ref{it:vanishing-basic-ii}--\ref{it:vanishing-basic-iv} at $x'$, it
  automatically satisfies
  \ref{it:vanishing-basic-ii}--\ref{it:vanishing-basic-iv} for any
  $x'' \ge x'$.

\subsubsection{}

  We make the strategy in
  \S\ref{wc:sec:gen-van-res-proof-ii-iv-set-up-induction} precise.
  Since $S_\alpha$ is a finite set, choose any $((s,x), (\beta, \vec
  e)) \in P_{\text{bad}}$ with lexicographically minimal $(\abs{\vec
    e}, r(\beta))$, where $\abs{\vec e} \coloneqq e_1 + \cdots + e_N$.
  Consider the line $\{(us, u(x+1)-1)\}_{u \in [0,1]}$ connecting
  $(-1,0)$ with $(s,x)$ and let $u_0$ be the infimum of all $u$ such
  that $((us, u(x+1)-1), (\beta, \vec e)) \in P_{\text{bad}}$. Pick
  $u_+ \ge u_0 \ge u_-$, letting $(s_\star, x_\star) \coloneqq
  (u_\star s, u_\star(x+1)-1)$ for $\star \in \{0, +, -\}$, such that
  \begin{align}
    ((s_-, x_-), (\beta, \vec e)) &\notin P_{\text{bad}}, \label{eq:vanishing-basic-minus-point} \\
    ((s_+, x_+), (\beta, \vec e)) &\in P_{\text{bad}}, \label{eq:vanishing-basic-plus-point}
  \end{align}
  and such that $u_\pm$ are very close to $u_0$. This is possible
  because there can be only finitely many points along $u \in [0,1]$
  where the value of $\hat\sz^{us,u(x+1)-1}_{\beta,\vec e}$ changes
  --- including genuine walls as well as discontinuities in the family
  of stability conditions $(\tau^{us}_{u(x+1)-1})_{u \in [0,1]}$ ---
  and moreover \eqref{eq:vanishing-basic-bad-set-minimal-case}
  guarantees that $u_-$ exists.
  
  Consider the non-vanishing terms on the right hand side of the
  artificial wall-crossing formula \eqref{wc:eq:artificial-base-wcf}
  from $(s_-,x_-)$ to $(s_+,x_+)$:
  \begin{equation}\label{wc:eq:contradiction-small-wc-formula}
    \hat\sz_{\beta,\vec{e}}^{s_+,x_+} = \sum_{\substack{n\geq 1, (\beta_i,\vec{e}_i)\in S_{\alpha}\\(\beta, \vec{e}) = (\beta_1, \vec{e}_1) + \cdots + (\beta_n, \vec{e}_n)}}\tilde{U}\left((\beta_1,\vec e_1),\ldots,(\beta_n,\vec e_n); \tau_{x_-}^{s_-}, \tau_{x_+}^{s_+}\right) \left[\left[\cdots\left[\hat\sz_{\beta_1,\vec e_1}^{s_-,x_-}, \hat\sz_{\beta_2, \vec e_2}^{s_-,x_-} \right], \cdots\right],\hat\sz_{\beta_n,\vec e_n}^{s_-,x_-}\right].
  \end{equation}
  If the $n=1$ term is non-vanishing, it automatically satisfies the
  claims \ref{it:vanishing-basic-ii}--\ref{it:vanishing-basic-iv} at
  $x_-$, and therefore at $x_+$, due to
  \eqref{eq:vanishing-basic-minus-point}. So there must exist a
  non-vanishing term with $n>1$, otherwise $\hat\sz_{\beta,\vec
    e}^{s_+,x_+} = \hat\sz_{\beta,\vec e}^{s_-,x_-}$ contradicts
  \eqref{eq:vanishing-basic-plus-point}. Every non-zero artificial
  invariant in such non-vanishing term with $n>1$ satisfies the claims
  \ref{it:vanishing-basic-ii}--\ref{it:vanishing-basic-iv} at $x_-$ by
  minimality of $(\abs{\vec e}, r(\beta))$. We will show that this
  implies every non-vanishing term with $n>1$ satisfies the claims
  \ref{it:vanishing-basic-ii}--\ref{it:vanishing-basic-iv} at $x_+$,
  and therefore so does the left hand side, again contradicting
  \eqref{eq:vanishing-basic-plus-point}.

\subsubsection{}
\label{wc:sec:gen-van-res-proof-ii-iv-possible-classes}

  Pick, for the sake of contradiction, a term
  \begin{equation} \label{eq:vanishing-basic-bad-term}
    0 \neq \tilde{U}\left((\beta_1,\vec e_1),\ldots,(\beta_n,\vec e_n); \tau_{x_-}^{s_-}, \tau_{x_+}^{s_+}\right) \left[\left[\cdots\left[\hat\sz_{\beta_1,\vec e_1}^{s_-,x_-}, \hat\sz_{\beta_2, \vec e_2}^{s_-,x_-} \right], \cdots\right],\hat\sz_{\beta_n,\vec e_n}^{s_-,x_-}\right],
  \end{equation}
  with $n > 1$, which does not satisfy one or more of the claims
  \ref{it:vanishing-basic-ii}--\ref{it:vanishing-basic-iv} at $x_+$.
  We first find some restrictions on what the classes $\{(\beta_i,
  \vec e_i)\}_{i=1}^n$ and the values $x_+ \ge x_0 \ge x_-$ can be.

  First, if $\beta_i \neq 0$ for all $i$, then the properties
  \ref{it:vanishing-basic-ii}--\ref{it:vanishing-basic-iv} for the
  classes $(\beta_i,\vec e_i)$ are preserved under addition of those
  classes, and, additionally, the Lie bracket
  (Theorem~\ref{thm:auxiliary-stack-vertex-algebra}) preserves the
  support properties in \ref{it:vanishing-basic-ii}. So $\beta_i = 0$
  for some $1 \le i \le n$.

  Second, assume there exists an $i$ with $\beta_i=0$ and $(\vec \mu +
  x_0 \vec 1) \cdot \vec e_i < 0$, so that the set
  \[ I \coloneqq \left\{i\in \{1,\ldots,n\} : \beta_i=0 \text{ and } \left(\vec \mu + x_0 \vec 1\right)\cdot \vec e_i \text{ minimal among }i\text{ with }\beta_i=0\right\} \]
  is non-empty. Since $\beta \neq 0$, we also know $I \neq \{1,
  \ldots, n\}$. By continuity, since $x_+$ and $x_-$ are very close to
  $x_0$, we have also $(\vec \mu + x_\pm \vec 1) \cdot \vec e_i<0$ and
  that these values for any $i\in I$ are strictly smaller than those
  for any $i \notin I$.
  Lemma~\ref{wc:lem:U-properties}\ref{it:U-vanishing} and the
  definition of $\tau_{x_\pm}^{s_\pm}$ then imply that the coefficient
  $\tilde U$ in \eqref{eq:vanishing-basic-bad-term} is zero,
  contradicting the non-vanishing of this term. Hence, any $i$ with
  $\beta_i=0$ must have $(\vec \mu + x_0 \vec 1)\cdot \vec e_i\geq 0$.
  Analogously, any $i$ with $\beta_i=0$ must also have $(\vec \mu +
  x_0 \vec 1)\cdot \vec e_i\leq 0$. So,
  \[ \beta_i=0 \implies \left(\vec \mu + x_0 \vec 1\right) \cdot \vec e_i= 0. \]

  Third, if $\beta_i = 0$, then case~\ref{it:vanishing-basic-i},
  already proved in \S\ref{wc:sec:gen-van-res-proof-i}, implies $\vec
  e_i = \vec 1_{[a,b]}$ for some $1\leq a\leq b\leq N$. By the
  genericity of $\vec \mu$, at most one such pair $a,b$ satisfies
  \begin{equation} \label{wc:eq:vertical-wall-ei}
    \left(\vec \mu + x_0 \vec 1\right)\cdot \vec 1_{[a,b]} = 0,
  \end{equation}
  so $a$ and $b$ are really independent of $i$. More precisely, if another such pair $(a',b')$ exists, then we get the equation
  \begin{equation}\label{wc:eq:gen-artifical-generalities}
    \vec \mu \cdot \left(\frac{\vec 1_{[a,b]}}{-x_0(b-a+1)}-\frac{\vec 1_{[a',b']}}{-x_0(b'-a'+1)}\right)=0,
  \end{equation}
  which yields $\frac{\vec 1_{[a,b]}}{-x_0(b-a+1)}=\frac{\vec 1_{[a',b']}}{-x_0(b'-a'+1)}$ by genericity of $\vec \mu$. This in turn implies $a=a'$ and $b=b'$. Fix this $a$ and $b$
  for the remainder of this proof. Then \eqref{wc:eq:vertical-wall-ei}
  becomes a constraint on $x_0$; in particular, $x_0 \le x_0(a)$, and
  hence $x_- \le x_0(a)$ as well. Thus, depending on $a$,
  \ref{it:vanishing-basic-iii} or \ref{it:vanishing-basic-iv} apply
  for $x_-$ to any $(\beta_i,\vec e_i)$ with $\beta_i\neq 0$ by our
  minimality assumption on $(\beta,\vec e)$. In particular, for any $1
  \le i \le n$ with $\beta_i\neq 0$,
  \begin{equation} \label{wc:eq:gen-van-res-nonzero-beta}
    \begin{aligned}
      a > 1 &\implies e_{i,a-1}=\cdots = e_{i,N}, \\
      a = 1 &\implies \vec e_i= \vec 0.
    \end{aligned}
  \end{equation}

  Finally, note that \eqref{wc:eq:vertical-wall-ei} implies
  $-1<x_0<0$, and we know $x_+ \ge x_0$ by construction. Suppose $x_+
  = x_0$. Take
  \[ I \coloneqq \left\{i : \beta_i=0\right\}. \]
  Then $I \neq \{1, \ldots, n\}$ since $\beta \neq 0$, and we showed
  $I \neq \emptyset$ earlier. Also $\vec e_i = \vec 1_{[a,b]}$ for $i
  \in I$. So \eqref{wc:eq:vertical-wall-ei} implies that
  $\tau^{s_\pm}_{x_\pm}(\beta_i,\vec e_i) = (-\infty, *)$ for any
  $i\in I$. Again, Lemma~\ref{wc:lem:U-properties}\ref{it:U-vanishing}
  applies, a contradiction. So $x_+>x_0$.

\subsubsection{}
\label{wc:sec:gen-van-res-proof-ii-iv-set-up-cases}

  With these restrictions in mind, consider the following different
  cases:
  \begin{enumerate}[label = (\alph*)]
  \item \label{wc:it:gen-van-res-proof-1} $\beta_1=\beta_2=0$;
  \item \label{wc:it:gen-van-res-proof-2} $\beta_1=0$, $\beta_2\neq
    0$, and $b<N$;
  \item \label{wc:it:gen-van-res-proof-3} $\beta_i\neq 0$ for $i\leq
    m$, but $\beta_{m+1}=0$, for some $m<n$, and $b<N$;
  \item \label{wc:it:gen-van-res-proof-4} $b=N$.
  \end{enumerate}
  Recall from \S\ref{wc:sec:gen-van-res-proof-ii-iv-possible-classes}
  that there exists $1 \le i \le n$ with $\beta_i = 0$, so this list
  is exhaustive. We will derive a contradiction in each case. The
  first three cases are relatively straightforward: we will show that
  some Lie bracket in \eqref{eq:vanishing-basic-bad-term} vanishes, a
  contradiction.

  In case~\ref{wc:it:gen-van-res-proof-1}, we have shown that $\vec
  e_1=\vec e_2=\vec 1_{[a,b]}$, and in particular $(\beta_1,\vec e_1)
  = (\beta_2,\vec e_2)$, so the Lie bracket $[\hat\sz_{\beta_1,\vec
      e_1}^{s_-,x_-}, \hat\sz_{\beta_2,\vec e_2}^{s_-,x_-}]$ vanishes,
  a contradiction.

  In case~\ref{wc:it:gen-van-res-proof-2}, by our minimality
  assumption and \ref{it:vanishing-basic-ii}, $\vec e_1 = \vec
  1_{[a,b]}$ and $\hat\sz_{\beta_1,\vec 1_{[a,b]}}^{s_-,x_-}$ is
  supported on the locus where $\rho_j$ are isomorphisms for $a \le j
  < b$, and $\hat\sz_{\beta_2,\vec e_2}^{s_-,x_-}$ is supported on the
  locus where $\rho_j$ is injective for all $j < N$. Moreover,
  $e_{2,a-1} = \cdots = e_{2,N}$ by
  \eqref{wc:eq:gen-van-res-nonzero-beta}. The Lie bracket of two such
  classes vanishes by direct computation, because the K-theory class
  \eqref{eq:framed-stack-forgetful-map-cotangent} vanishes on the
  product of these two support loci. This is a contradiction.

  In case~\ref{wc:it:gen-van-res-proof-3}, we apply the argument of
  case~\ref{wc:it:gen-van-res-proof-2} to the Lie bracket of the first
  $m$ terms and the $(m+1)$-th term. All the necessary properties are
  preserved by taking Lie brackets of the first $m$ terms
  $\hat\sz_{\beta_i,\vec e_i}^{s_-,x_-}$ with $\beta_i \neq 0$, so the
  Lie bracket vanishes, a contradiction.

\subsubsection{}

  In case~\ref{wc:it:gen-van-res-proof-4}, we will show that the term
  \eqref{eq:vanishing-basic-bad-term} in question actually satisfies
  \ref{it:vanishing-basic-ii}--\ref{it:vanishing-basic-iv} at $x_+$, a
  contradiction.

  To start, recall from
  \S\ref{wc:sec:gen-van-res-proof-ii-iv-possible-classes} that $x_+ >
  x_0$, and then observe that $x_0 = x_0(a)$ by comparing
  \eqref{wc:eq:vertical-wall-ei} with \eqref{wc:eq:x0a} when $b = N$.
  Hence if $\beta_i = 0$ then $\vec e_i = \vec 1_{[a,N]}$, otherwise
  if $\beta_i \neq 0$ then \eqref{wc:eq:gen-van-res-nonzero-beta}
  holds. Both cases satisfy \ref{it:vanishing-basic-iii} and
  \ref{it:vanishing-basic-iv} at $x_+$, therefore $(\beta, \vec e)$
  satisfies \ref{it:vanishing-basic-iii} and
  \ref{it:vanishing-basic-iv} at $x_+$ as well. Similarly, both cases
  satisfy the first part of \ref{it:vanishing-basic-ii}, either by
  case~\ref{it:vanishing-basic-i} (because $b = N$) or
  case~\ref{it:vanishing-basic-ii}, hence so does the entire term
  \eqref{eq:vanishing-basic-bad-term}.

  It remains to prove the second part of \ref{it:vanishing-basic-ii}.
  So, assume additionally that $e_{j+1} \le e_j+1$ for all $j<N$. We
  want to show that $e_N \le \fr(\beta)$ and that the term
  \eqref{eq:vanishing-basic-bad-term} is supported on the locus where
  all $\rho_j$ are injective. By the analysis of
  \S\ref{wc:sec:gen-van-res-proof-ii-iv-possible-classes}, there can
  be at most one index $l$ with $\beta_l = 0$, since otherwise $\vec
  e$ would increase by at least two at the $a$-th step, violating this
  assumption. By \ref{it:vanishing-basic-i} and $b=N$, we know $\vec
  e_l = \vec 1_{[a,N]}$. From
  \eqref{eq:artificial-invariant-pure-framing} and
  Definition~\ref{wc:def:artificial-invs}, $\hat\sz_{0,\vec
    1_{[a,N]}}^{s_-,x_-} = I_*\partial_{[a,N]}$ and is therefore
  supported on the locus where the $\rho_j$ are isomorphisms for $a
  \le j < N$. The Lie bracket preserves the property of being
  supported on the locus where all $\rho_j$ are injective, and when
  $\beta_i \neq 0$ it also preserves the property $e_N \le
  \fr(\beta)$, so the only interesting Lie bracket in
  \eqref{eq:vanishing-basic-bad-term} is $[\phi, I_*\partial_{[a,N]}]$
  where
  \[ \phi \coloneqq \left[\left[\cdots\left[\hat\sz_{\beta_1,\vec e_1}^{s_-,x_-}, \hat\sz_{\beta_2,\vec e_2}^{s_-,x_-} \right], \cdots\right],\hat\sz_{\beta_{l-1}, \vec e_{l-1}}^{s_-,x_-}\right] \in K_\circ^{\tilde\sT}(\tilde\fM^{\vec Q(\Fr)}_{\beta_{[l-1]},\vec e_{[l-1]}})^\pl_{\loc,\bQ} \]
  is the output of the first $l-1$ Lie brackets and $(\beta_{[l-1]},
  \vec e_{[l-1]}) \coloneqq \sum_{i=1}^{l-1} (\beta_i, \vec e_i)$. By
  Lemma~\ref{lem:flag-forget-projective}\ref{lem:flag-forgetii} below,
  $[\phi, I_*\partial_{[a,N]}]$ is supported on the locus where all
  $\rho_j$ are injective. In particular, if $e_{[l-1],N} =
  \fr(\beta_{[l-1]})$ then this Lie bracket vanishes, a contradiction.
  So $e_{[l-1],N} < \fr(\beta_{[l-1]})$ and therefore $e_{[l],N} \le
  \fr(\beta_{[l]})$. Hence the term
  \eqref{eq:vanishing-basic-bad-term} satisfies the second part of
  \ref{it:vanishing-basic-ii} as well, a contradiction.

  We have ruled out all cases
  \ref{wc:it:gen-van-res-proof-1}--\ref{wc:it:gen-van-res-proof-4}. So
  the term \eqref{eq:vanishing-basic-bad-term} cannot exist. This
  concludes the proof of Lemma~\ref{wc:lem:vanishing-basic}.
\end{proof}

\subsection{Semistable and pair invariants}
\label{wc:sec:sst-inv-and-pair-inv}

\subsubsection{}
\label{wc:sec:sst-pair}

In this subsection, we begin the proof of Theorem~\ref{thm:wcf} with
the wiggly arrows in Figure~\ref{fig:dominant-wc-strategy-1}. We list
the main results.
\begin{itemize}
\item (Lemma~\ref{lem:sst-pair}) We rewrite the characterization
  (Theorem~\ref{thm:sst-invariants}\ref{item:vss-pairs-relation}) of
  the semistable invariants $\sz_\beta(\tau)$ and
  $\sz_\beta(\mathring\tau)$ using our auxiliary invariants
  $\tilde\sz^{s,x}$.
\item (Lemma~\ref{wc:lem:vanishing-sst-pair}) The artificial
  invariants $\hat\sz^{s,x}$ satisfy the {\it same} formula.
\end{itemize}

\subsubsection{}

\begin{lemma} \label{lem:sst-pair}
  \begin{enumerate}[label = (\roman*)]
  \item \label{it:sst-pair-a} Let $\beta \in R_\alpha$ and $\vec e =
    \vec{1}_{[a,N]}$ for some $1 \le a \le N$. Then there are
    isomorphisms
    \[ \tilde\fM^{\vec Q(\Fr),\sst}_{\beta,\vec{1}_{[a,N]}}\left(\tau^0_x\right) \cong \begin{cases} \emptyset & x \le x_0(a), \\ \tilde\fM^{Q(\Fr),\sst}_{\beta,1}(\tau^Q) & x > x_0(a), \end{cases} \]
    preserving symmetrized virtual structure sheaves. In particular,
    \begin{equation}\label{wc:eq:sst-pair-s0-inv-identity}
      \tilde\sz_{\beta,\vec{1}_{[a,N]}}^{0,x} = \begin{cases} 0 & x \le x_0(a), \\ \iota^{\vec Q}_{[a,N]*}I_*\tilde\sZ_{\beta,1}^{\Fr}(\tau^Q) & x > x_0(a) \end{cases}
    \end{equation}
    for the invariant \eqref{eq:pairs-stack-enumerative-invariant} of
    $\tilde\fM^{Q(\Fr),\sst}_{\beta,1}(\tau^Q)$. Moreover, for
    $x_0(a)<x<0$,
    \begin{equation}\label{wc:eq:sst-pair-s0-wc}
      \tilde\sz_{\beta,\vec{1}_{[a,N]}}^{0,x} = \sum_{\substack{n\geq 1, \beta_i\in R_{\alpha}\\\beta = \beta_1 + \cdots + \beta_n}} \frac{1}{n!} \left[\tilde\sz^{0,-1}_{\beta_n,\vec 0},\left[\cdots,\left[\tilde\sz_{\beta_2,\vec 0}^{0,-1}, \left[\tilde\sz_{\beta_1,\vec 0}^{0,-1},\tilde\sz^{0,-1}_{0,\vec{1}_{[a,N]}}\right]\right]\cdots\right]\right].
    \end{equation}

  \item \label{it:sst-pair-b} The results of \ref{it:sst-pair-a} also
    hold with $\mathring R_\alpha$, $\mathring\tau$, and $s=1$ in
    place of $R_\alpha$, $\tau$, and $s=0$ respectively. Namely,
    \eqref{wc:eq:sst-pair-s0-inv-identity} becomes
    \begin{equation}\label{wc:eq:sst-pair-s1-inv-identity}
      \tilde\sz_{\beta,\vec{1}_{[a,N]}}^{1,x} = \begin{cases} 0 & x \le x_0(a), \\ \iota^{\vec Q}_{[a,N]*}I_*\tilde\sZ_{\beta,1}^{\Fr}(\mathring\tau^Q) & x > x_0(a) \end{cases}
    \end{equation}
    and, for $x_0(a) < x < 0$, \eqref{wc:eq:sst-pair-s0-wc} becomes
    \begin{equation}\label{wc:eq:sst-pair-s1-wc}
      \begin{aligned}
        \tilde\sz_{\beta,\vec{1}_{[a,N]}}^{1,x} &= \sum_{\substack{n\geq 1, \beta_i\in \mathring R_{\alpha}\\\beta = \beta_1 + \cdots + \beta_n}} \frac{1}{n!} \left[\tilde\sz^{1,-1}_{\beta_n,\vec 0},\left[\cdots,\left[\tilde\sz_{\beta_2,\vec 0}^{1,-1}, \left[\tilde\sz_{\beta_1,\vec 0}^{1,-1},\tilde\sz^{1,-1}_{0,\vec{1}_{[a,N]}}\right]\right]\cdots\right]\right]\\
        &= \sum_{\substack{n\geq 1, \beta_i\in \hat{R}_{\alpha}\\\beta = \beta_1 + \cdots + \beta_n}} \frac{1}{n!} \left[\tilde\sz^{1,-1}_{\beta_n,\vec 0},\left[\cdots,\left[\tilde\sz_{\beta_2,\vec 0}^{1,-1}, \left[\tilde\sz_{\beta_1,\vec 0}^{1,-1},\tilde\sz^{1,-1}_{0,\vec{1}_{[a,N]}}\right]\right]\cdots\right]\right].
      \end{aligned}
    \end{equation}
  \end{enumerate}
\end{lemma}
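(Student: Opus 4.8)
The plan is to prove part \ref{it:sst-pair-a} first, treating the cases $x \le x_0(a)$ and $x > x_0(a)$ separately, and then explain why part \ref{it:sst-pair-b} follows by the same argument augmented by the dominance hypothesis. For the case $x \le x_0(a)$: by Lemma~\ref{wc:lem:general-semistable-loci}\ref{wc:lem:general-semistable-loci-iii} applied to objects of class $(\beta, \vec 1_{[a,N]})$ with $\beta \ne 0$, we would need $e_{a-1} = e_a = \cdots = e_N$, but $\vec 1_{[a,N]}$ has $e_{a-1} = 0 \ne 1 = e_N$ (when $a > 1$; the $a=1$ case uses \ref{wc:lem:general-semistable-loci-iv} forcing $\vec e = 0$). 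Hence the semistable locus is empty, giving $\tilde\sz_{\beta,\vec 1_{[a,N]}}^{0,x} = 0$.

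For the case $x_0(a) < x < 0$: the first step is to identify $\tilde\fM^{\vec Q(\Fr),\sst}_{\beta,\vec 1_{[a,N]}}(\tau_x^0)$ with $\tilde\fM^{Q(\Fr),\sst}_{\beta,1}(\tau^Q)$ via the map $\iota^{\vec Q}_{[a,N]}$. One checks that for $x$ in this range, a $\tau_x^0$-semistable object of this class has $\rho_i$ an isomorphism for $a \le i < N$ (since inserting a sub-framing supported on $[a,N]$ that is not all of $\vec 1_{[a,N]}$ would be destabilizing by the vertical-wall analysis — this is where $x > x_0(a)$ enters), so the framing data is determined by the single map $\rho_N\colon V \to \Fr(E)$, matching the quiver $Q$; and the remaining stability inequality on $E$ translates exactly into the $\tau^Q$-stability condition of Definition~\ref{def:pair-invariant} because $\vec\mu$ is generic and the relevant slope comparisons reduce to $\tau(\beta') $ vs. $\tau(\beta-\beta')$ together with $d/r(\beta')$ vs. $d'/r(\beta)$. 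Since $\iota^{\vec Q}_{[a,N]}$ is a morphism of graded monoidal $\sT$-stacks, the forgetful map $\pi_{\fM^\Fr_\beta}$ factors compatibly through both sides, so the symmetrized pullback APOTs agree and hence the symmetrized virtual structure sheaves match; this yields \eqref{wc:eq:sst-pair-s0-inv-identity}. Then \eqref{wc:eq:sst-pair-s0-wc} is obtained by combining this identity with Theorem~\ref{thm:sst-invariants}\ref{item:vss-pairs-relation} (equivalently Theorem~\ref{thm:sst-invariants-construction}): pushing \eqref{eq:sstable-def-intro} forward along $\iota^{\vec Q}_{[a,N]*}$ — which is a Lie algebra homomorphism by \S\ref{wc:sec:va-homomorphisms} — converts the bracket expression with $I_*\partial$ into one with $\tilde\sz^{0,-1}_{0,\vec 1_{[a,N]}} = I_*\partial_{[a,N]}$, using that $\iota^{\vec Q}_{[a,N]*}\iota^Q_*\sz_\beta(\tau) = \iota^{\vec Q}_*\sz_\beta(\tau) = \tilde\sz^{0,-1}_{\beta,\vec 0}$ and $\iota^{\vec Q}_{[a,N]*} I_*\partial = I_*\partial_{[a,N]} = \tilde\sz^{0,-1}_{0,\vec 1_{[a,N]}}$ (the last by Definition~\ref{wc:def:artificial-invs}\ref{wc:def:aux-inv-iv} and the fact that $\partial_{[a,N]}$ is $(s,x)$-independent).

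For part \ref{it:sst-pair-b}: the only subtlety is that Definition~\ref{def:flag-invariant}'s stability condition $\tau_x^1$ at $s=1$ uses $\lambda$, not $\mathring\tau$, to record the wall-crossing, so one must verify that $\tau_x^1$-semistability of objects of class $(\beta,\vec 1_{[a,N]})$ with $\beta \in \mathring R_\alpha$ coincides with $\mathring\tau^Q$-semistability. This uses \S\ref{wc:sec:tilde-tau-tau1-equivalence} (the equivalence $\mathring\tau(\gamma) \le \mathring\tau(\delta) \iff \tau_{-1}^1(\gamma,\vec 0) \le \tau_{-1}^1(\delta,\vec 0)$ for classes summing into $\mathring R_\alpha$) together with Lemma~\ref{wc:lem:general-semistable-loci}\ref{wc:lem:general-semistable-loci-ii}, which already guarantees that a $\tau_x^1$-semistable object of class in $\mathring S_\alpha$ has $E$ both $\tau$- and $\mathring\tau$-semistable. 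Given that, the same identification with $\iota^{\vec Q}_{[a,N]}$ and the same APOT-compatibility argument go through verbatim, and the analogue of Theorem~\ref{thm:sst-invariants}\ref{item:vss-pairs-relation} for $\mathring\tau$ gives the first line of \eqref{wc:eq:sst-pair-s1-wc}. The second line, with $\hat R_\alpha$ in place of $\mathring R_\alpha$, follows because $\mathring R_\alpha \subseteq \hat R_\alpha$ by Lemma~\ref{wc:lem:R-sets}\ref{it:R-sets-i} and the extra terms vanish: any $\beta_i \in \hat R_\alpha \setminus \mathring R_\alpha$ has $\fM^{\sst}_{\beta_i}(\mathring\tau) = \emptyset$ (unwinding the definitions of $\hat R_\alpha$ and $\mathring R_\alpha$), hence $\tilde\sz^{1,-1}_{\beta_i,\vec 0} = \iota^{\vec Q}_*\sz_{\beta_i}(\mathring\tau) = 0$.

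The main obstacle I expect is the precise verification that the stability condition $\tau_x^0$ (resp. $\tau_x^1$) restricted to the class $(\beta, \vec 1_{[a,N]})$ really does cut out exactly $\tilde\fM^{Q(\Fr),\sst}_{\beta,1}(\tau^Q)$ for $x$ in the open interval $(x_0(a),0)$ — i.e. matching up the lexicographic slope comparisons in \eqref{wc:eq:joyce-framed-stack-stability} with those in \eqref{eq:pair-stability}, and in particular checking that no sub-object with an ``intermediate'' framing vector (not of the form $\vec 1_{[a,N]}$ or $\vec 0$) can be semistabilizing, which requires using both the genericity of $\vec\mu$ and the strict inequality $x > x_0(a)$ to rule out framing jumps at positions $< a$. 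This is essentially the content of the proof of \cite[Prop.~10.7 or similar]{Joyce2021} transported to our setting, and while conceptually routine it is where the bookkeeping is heaviest; the APOT-compatibility (via Theorem~\ref{thm:APOT-comparison}) and the Lie-algebra-homomorphism manipulations are comparatively formal.
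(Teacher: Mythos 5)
Your route is essentially the paper's (emptiness for $x\le x_0(a)$ from the destabilizing quotient of class $(0,\vec 1_{[a,N]})$, identification with $\tilde\fM^{Q(\Fr),\sst}_{\beta,1}(\tau^Q)$ for $x>x_0(a)$, and then the bracket formulas by pushing Theorem~\ref{thm:sst-invariants}\ref{item:vss-pairs-relation} forward along the Lie algebra homomorphism $\iota^{\vec Q}_{[a,N]*}$), but one step is genuinely wrong as justified. For the second line of \eqref{wc:eq:sst-pair-s1-wc} you claim that $\beta_i\in\hat R_\alpha\setminus\mathring R_\alpha$ forces $\fM^{\sst}_{\beta_i}(\mathring\tau)=\emptyset$ ``by unwinding the definitions''. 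This implication is false in general: membership in $\mathring R_\alpha$ also requires $\fM^{\sst}_{\alpha-\beta_i}(\mathring\tau)\neq\emptyset$, so $\beta_i$ can drop out of $\mathring R_\alpha$ solely because the complementary class $\alpha-\beta_i$ has no $\mathring\tau$-semistable objects, while $\sz_{\beta_i}(\mathring\tau)$ need not vanish. The paper's proof fixes this by arguing on the whole decomposition: since $\beta\in\mathring R_\alpha$, one has $\fM^{\sst}_{\alpha-\beta}(\mathring\tau)\neq\emptyset$, and if every factor $\beta_j$ had nonempty $\mathring\tau$-semistable locus, then (all $\mathring\tau$-slopes being equal because $\beta_j\in\hat R_\alpha$) the direct sum of a semistable object of class $\alpha-\beta$ with semistable objects of the classes $\beta_j$, $j\neq i$, would be $\mathring\tau$-semistable of class $\alpha-\beta_i$, putting $\beta_i\in\mathring R_\alpha$, a contradiction; hence some factor (possibly not $\beta_i$ itself) has empty $\mathring\tau$-semistable locus and the whole bracket term vanishes. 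You need this argument, not the pointwise claim.

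Separately, the heart of part \ref{it:sst-pair-a} --- that for $x_0(a)<x<0$ the $\tau^0_x$-semistable locus in class $(\beta,\vec 1_{[a,N]})$ is exactly the image of $\tilde\fM^{Q(\Fr),\sst}_{\beta,1}(\tau^Q)$ under $\iota^{\vec Q}_{[a,N]}$, together with its $s=1$ analogue --- is only asserted in your proposal and flagged as the ``main obstacle''. The paper proves it by two-sided casework: one direction uses Lemma~\ref{wc:lem:general-semistable-loci}\ref{wc:lem:general-semistable-loci-ii} plus a slope comparison showing a $\tau^Q$-destabilizer induces a $\tau^0_x$-destabilizer; the converse uses $x>x_0(a)$ to exclude destabilizing sub-objects with framing vector $\vec 1_{[b,N]}$, $b>a$ (such a sub-object would force $x\le-\frac{1}{b-a}\sum_{i=a}^{b-1}\mu_i<x_0(a)$), and at $s=1$ the key input is that $\lambda$ vanishes on the relevant subclasses (via $\hat R_\alpha\subset R_\alpha$, Lemma~\ref{wc:lem:R-sets}\ref{it:R-sets-i}) so the $s=0$ computation carries over; your sketch points at the right ingredients but does not carry out this check. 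Also, matching the symmetrized virtual structure sheaves here needs only functoriality of symmetrized pullback (Theorem~\ref{thm:APOTs}\ref{it:APOT-functoriality}), since both APOTs are pulled back along forgetful maps commuting with the isomorphism; Theorem~\ref{thm:APOT-comparison} is not required.
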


\begin{proof}
  First, \eqref{wc:eq:sst-pair-s0-wc} and the first equality in
  \eqref{wc:eq:sst-pair-s1-wc} follow from
  \eqref{wc:eq:sst-pair-s0-inv-identity} and
  \eqref{wc:eq:sst-pair-s1-inv-identity} respectively, directly by
  applying the characterization
  Theorem~\ref{thm:sst-invariants}\ref{item:vss-pairs-relation} of
  semistable invariants and using that $\iota_{[a,N]*}^{\vec Q}$ is a
  Lie algebra homomorphism.

  For the second line of \eqref{wc:eq:sst-pair-s1-wc}, consider a term
  in the sum corresponding to $\beta = \beta_1 + \cdots + \beta_n$
  such that $\beta_i\in \hat{R}_\alpha\setminus \mathring R_\alpha$
  for some $1 \le i \le n$. Since $\beta \in \mathring R_\alpha$, we
  know $\fM_{\alpha-\beta}^{\sst}(\mathring\tau) \neq \emptyset$.
  Suppose for the sake of contradiction that
  $\fM_{\beta_j}^\sst(\mathring\tau) \neq \emptyset$ for all $1 \le j
  \le n$ as well. Since $\beta_j \in \hat R_\alpha$, we know
  $\mathring\tau(\beta_j) = \mathring\tau(\alpha) =
  \mathring\tau(\alpha - \beta)$ for all $j$. So, picking
  $\mathring\tau$-semistable objects $E_j$ and $E'$ of classes
  $\beta_j$ and $\beta$ respectively, $E' \oplus \bigoplus_{j \neq i}
  E_i$ is $\mathring\tau$-semistable of class $\alpha - \beta_i$, i.e.
  $\fM_{\alpha-\beta_i}^{\sst}(\mathring\tau) \neq \emptyset$. Then
  $\beta_i \notin \mathring R_\alpha$ implies
  $\fM_{\beta_i}^{\sst}(\mathring\tau) = \emptyset$, a contradiction.
  Hence $\fM_{\beta_j}^{\sst}(\mathring\tau) = \emptyset$ for some
  $j$, making the entire term vanish. This shows that the two lines of
  \eqref{wc:eq:sst-pair-s1-wc} are in fact equal.

  So it remains to prove \eqref{wc:eq:sst-pair-s0-inv-identity} and
  \eqref{wc:eq:sst-pair-s1-inv-identity}. Note that any object of
  class $(\beta,\vec{1}_{[a,N]})$ has a unique quotient object of
  class $(0,\vec{1}_{[a,N]})$. For $x\leq x_0(a)$ this quotient has
  slope $(-\infty, *)$, and is therefore destabilizing, which shows
  the first claims of both \ref{it:sst-pair-a} and
  \ref{it:sst-pair-b}. We now prove the second claims.

\subsubsection{}\label{wc:sec:s0-sst-pair-proof}

  We prove \eqref{wc:eq:sst-pair-s0-inv-identity} in
  \ref{it:sst-pair-a} for $x>x_0(a)$. Let $(E,\vec V,\vec \rho)$ be an
  object of class $(\beta,\vec 1_{[a,N]})$.

  Suppose it is $\tau^0_x$-semistable. By
  Lemma~\ref{wc:lem:general-semistable-loci}\ref{wc:lem:general-semistable-loci-ii},
  all framing maps $\rho_j$ are injective and $E$ is
  $\tau$-semistable. For our choice of dimension vector $\vec
  1_{[a,N]}$, this implies that $(E,\vec V,\vec \rho)$ is the image of
  a pair $(E,V,\rho)$ under $\iota_{[a,N]}^{\vec Q}$. We check that a
  $\tau^Q$-destabilizing sub-object $(E',V',\rho')$ of $(E,V,\rho)$
  induces a $\tau^0_x$-destabilizing sub-object $\iota_{[a,N]}^{\vec
    Q}(E', V', \rho')$ of $(E,\vec V, \vec\rho)$. Let $\beta'$ be the
  class of $E'$. Since $E$ is $\tau$-semistable, $\tau(\beta') =
  \tau(\beta - \beta')$ and $V' = V$ and $\rho'= \rho$. Then $r(\beta)
  = r(\beta-\beta') + r(\beta') > r(\beta')$. Hence
  \[ \tau^0_x(\beta, \vec{1}_{[a,N]})  = \left(\tau(\beta), \frac{(\vec{\mu}+ x\vec{1})\cdot \vec{1}_{[a,N]}}{r(\beta)}\right) < \left(\tau(\beta), \frac{(\vec{\mu}+ x\vec{1})\cdot \vec{1}_{[a,N]}}{r(\beta')}\right) = \tau^0_x(\beta',\vec{1}_{[a,N]}), \]
  which shows that $(E', \vec V, \vec\rho)$ is a
  $\tau^0_x$-destabilizing sub-object, as desired.

  Conversely, suppose $(E, \vec V, \vec \rho) = \iota_{[a,N]}(E, V,
  \rho)$ for some $\tau^Q$-semistable pair $(E, V, \rho)$. Then $V =
  V_N$ and $\rho = \rho_N$. Assume, for the sake of contradiction,
  that $(E,\vec V,\vec \rho)$ has a $\tau^0_x$-destabilizing
  sub-object $(E',\vec V',\vec \rho')$. If this sub-object has class
  $(\beta', \vec 0)$ for some $\beta' < \beta$, then, since $x >
  x_0(a)$, and hence $(\vec\mu + x\vec 1)\cdot \vec 1_{[a,N]} > 0$,
  the underlying object $E'$ must $\tau$-destabilize $E$,
  contradicting $\tau^Q$-semistability of $(E, V, \rho)$ and
  Lemma~\ref{lem:pairs-stack-sst=st}. So the sub-object must have
  class $(\beta', \vec 1_{[b,N]})$ for some $\beta' \le \beta$ and $a
  \le b \le N$. If $\beta' = \beta$, then
  \[ \vec \mu\cdot \vec{1}_{[a,N]} + x(N-a+1) \leq \vec \mu \cdot \vec{1}_{[b,N]} +x(N-b+1),\]
  from the second entry of the stability condition \eqref{wc:eq:joyce-framed-stack-stability}, which implies
  \[x \leq - \frac{1}{b-a} \sum_{i = a}^{b-1} \mu_i< x_0(a),\]
  contradicting our assumption $x>x_0(a)$. Hence $\beta' < \beta$. But
  then $(E', \vec V', \vec\rho')$ is a $\tau^Q$-destabilizing
  sub-object of $(E, V, \rho)$, a contradiction. We have ruled out all
  cases, so $(E, \vec V, \vec \rho)$ must be $\tau^0_x$-semistable.

\subsubsection{}\label{wc:sec:s1-sst-pair-proof}

  We prove \eqref{wc:eq:sst-pair-s1-inv-identity} in
  \ref{it:sst-pair-b} for $x>x_0(a)$ in a similar fashion.

  Suppose $(E,\vec V,\vec \rho)$ is $\tau_x^1$-semistable. By the same
  reasoning as in \S\ref{wc:sec:s0-sst-pair-proof}, it is the image of
  a pair $(E,V,\rho)$ under $\iota_{[a,N]}^{\vec Q}$, and $E$ is
  $\mathring\tau$-semistable. Again, we check that a
  $\tau^Q$-destabilizing sub-object $(E',V,\rho)$ of $(E,V,\rho)$
  induces a $\tau^1_x$-destabilizing sub-object $(E', \vec V, \vec
  \rho)$ of $(E,\vec V, \vec\rho)$. The same reasoning as in
  \S\ref{wc:sec:s0-sst-pair-proof} shows that $\mathring\tau(\beta') =
  \mathring\tau(\beta - \beta')$ and therefore $\mathring\tau(\beta')
  = \mathring\tau(\alpha-\beta')$. Thus $\beta' \in \hat R_\alpha$ and
  so $\lambda(\beta') = 0$. We also have $\lambda(\beta) = 0$ by
  assumption. Since $\hat R_\alpha \subset R_\alpha$, we have
  $\tau(\beta') = \tau(\beta-\beta')$ as well, thus $r(\beta) >
  r(\beta')$. Putting all this together, we get that $(E',\vec V, \vec
  \rho)$ is a $\tau^1_x$-destabilizing sub-object.

  Conversely, suppose $(E, \vec V, \vec \rho) = \iota_{[a,N]}(E, V,
  \rho)$ for some $\mathring \tau^Q$-semistable pair $(E, V, \rho)$.
  Since $E$ is $\mathring\tau$-semistable, any $\tau^1_x$-destabilizing
  sub-object $(E', \vec V', \vec \rho')$ must have $\mathring\tau(\beta') =
  \mathring\tau(\beta-\beta')$ where $\beta'$ is the class of $E'$. As before,
  this implies $\lambda(\beta') = 0$, and $\lambda(\beta) = 0$ by
  assumption. The rest of the argument is the same as in
  \S\ref{wc:sec:s0-sst-pair-proof}; we conclude that $(E, \vec V, \vec
  \rho)$ is $\tau^1_x$-semistable.
\end{proof}

\subsubsection{}

\begin{lemma} \label{wc:lem:vanishing-sst-pair}
  Let $\beta \neq 0$, and $1 \le a \le N$ and $x$ be such that $x_0(a)
  < x \le x_0(a+1)$.
  \begin{enumerate}[label = (\roman*)]
  \item \label{it:vanishing-sst-pair-a} If $(\beta,\vec 1_{[a,N]}) \in
    S_\alpha$, then
    \begin{equation}\label{wc:eq:van-sst-pair-s0-wc}
      \hat\sz_{\beta,\vec{1}_{[a,N]}}^{0,x} = \sum_{\substack{n\geq 1, \beta_i\in R_{\alpha}\\\beta = \beta_1 + \cdots + \beta_n}} \frac{1}{n!} \left[\hat\sz^{0,-1}_{\beta_n,\vec 0},\left[\cdots,\left[\hat\sz_{\beta_2,\vec 0}^{0,-1}, \left[\hat\sz_{\beta_1,\vec 0}^{0,-1},\hat\sz^{0,-1}_{0,\vec{1}_{[a,N]}}\right]\right]\cdots\right]\right].
    \end{equation}
    
  \item \label{it:vanishing-sst-pair-b} If $(\beta,\vec 1_{[a,N]})\in
    \hat S_\alpha$, then
    \begin{equation}\label{wc:eq:van-sst-pair-s1-wc}
      \hat\sz_{\beta,\vec{1}_{[a,N]}}^{1,x} = \sum_{\substack{n\geq 1, \beta_i\in \hat{R}_{\alpha}\\\beta = \beta_1 + \cdots + \beta_n}} \frac{1}{n!} \left[\hat\sz^{1,-1}_{\beta_n,\vec 0},\left[\cdots,\left[\hat\sz_{\beta_2,\vec 0}^{1,-1}, \left[\hat\sz_{\beta_1,\vec 0}^{1,-1},\hat\sz^{1,-1}_{0,\vec{1}_{[a,N]}}\right]\right]\cdots\right]\right].
    \end{equation}
  \end{enumerate}
\end{lemma}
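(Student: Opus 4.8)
The plan is to deduce Lemma~\ref{wc:lem:vanishing-sst-pair} from Lemma~\ref{lem:sst-pair} by the same logic used in \S\ref{wc:sec:putting-things-together} (and already in miniature in Lemma~\ref{wc:lemma:reduction-wc-to-aux-wc}): namely, the artificial invariants $\hat\sz^{s,x}$ are \emph{defined} (Definition~\ref{wc:def:artificial-invs}) so as to satisfy formally whatever wall-crossing identity the geometric invariants $\tilde\sz^{s,x}$ satisfy, modulo vanishing of ``bad'' terms. So the skeleton is: first invoke the artificial wall-crossing formula (Lemma~\ref{wc:lem:artificial-base-wcf}\ref{it:artificial-wcf-i} for \ref{it:vanishing-sst-pair-a}, and \ref{it:artificial-wcf-ii} for \ref{it:vanishing-sst-pair-b}) to write $\hat\sz_{\beta,\vec 1_{[a,N]}}^{s,x}$ as a sum of iterated Lie brackets of $\hat\sz_{\beta_i,\vec e_i}^{s,-1}$ against $\tilde U$-coefficients $\tilde U((\beta_i,\vec e_i)_i; \tau_{-1}^s, \tau_x^s)$; then use the artificial vanishings (Lemma~\ref{wc:lem:vanishing-basic}), the constraint $x_0(a)<x\le x_0(a+1)$, and the combinatorial identities Lemma~\ref{wc:lem:U-properties} to argue that only the terms with exactly one purely-framing class $(\beta_i,\vec e_i)=(0,\vec 1_{[a,N]})$ and the rest of the form $(\beta_j,\vec 0)$ survive, and that on those surviving terms the $\tilde U$-coefficient collapses to $\tfrac{1}{n!}$ with the bracketing as on the right-hand side of \eqref{wc:eq:van-sst-pair-s0-wc}.

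First I would set up \ref{it:vanishing-sst-pair-a}. Apply Lemma~\ref{wc:lem:artificial-base-wcf}\ref{it:artificial-wcf-i} with $(s_1,x_1)=(0,x)$ and $(s_2,x_2)=(0,-1)$, so that $\hat\sz_{\beta,\vec 1_{[a,N]}}^{0,x}$ becomes a sum over decompositions $(\beta,\vec 1_{[a,N]})=\sum_i (\beta_i,\vec e_i)$ in $S_\alpha$, with coefficient $\tilde U((\beta_i,\vec e_i)_i;\tau_{-1}^0,\tau_x^0)$. By Lemma~\ref{wc:lem:vanishing-basic}\ref{it:vanishing-basic-i} any term with some $\beta_i=0$ forces $\vec e_i=\vec 1_{[a_i,b_i]}$; by the genericity of $\vec\mu$ and the condition $x>x_0(a)$ (compare the argument in \S\ref{wc:sec:gen-van-res-proof-ii-iv-possible-classes}), the only purely-framing vector whose slope does not strictly destabilize is $\vec 1_{[a,N]}$, and $x\le x_0(a+1)$ forces there to be at most one such index and all $(\beta_j,\vec e_j)$ with $\beta_j\ne 0$ to have $\vec e_j=\vec 0$ (this is exactly Lemma~\ref{wc:lem:vanishing-basic}\ref{it:vanishing-basic-iii}/\ref{it:vanishing-basic-iv} applied at $x_-\le x_0(a)$, as in case \ref{wc:it:gen-van-res-proof-4}). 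So every surviving decomposition is a shuffle of one copy of $(0,\vec 1_{[a,N]})$ and classes $(\beta_j,\vec 0)$ with $\beta_j\in R_\alpha$. The remaining task is the combinatorial identity: for such decompositions the sum $\sum \tilde U(\cdots;\tau_{-1}^0,\tau_x^0)[\cdots]$ equals $\sum_{\beta=\beta_1+\cdots+\beta_n}\tfrac1{n!}[\hat\sz^{0,-1}_{\beta_n,\vec 0},[\cdots,[\hat\sz^{0,-1}_{\beta_1,\vec 0},\hat\sz^{0,-1}_{0,\vec 1_{[a,N]}}]]]$. This is precisely the ``$S(\cdots)$ collapses to $1/n!$'' computation underlying the appearance of $1/n!$ in \eqref{wc:eq:sst-pair-s0-wc} and in Theorem~\ref{thm:sst-invariants}\ref{item:vss-pairs-relation}; since $\tau$ and $\tau_x^0$ agree on the $\tau(\beta_j)=\tau(\beta)$ classes up to the linear-in-$x$ framing perturbation, one checks the $S$-signs in Definition~\ref{def:universal-coefficients} all come out so that $\tilde U$ of an ordered shuffle of $n$ ``equal-slope'' pieces plus one framing piece is $\tfrac1{n!}$ times an antisymmetrization, which is exactly the nested bracket above. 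I would phrase this by directly comparing with the already-established identity \eqref{wc:eq:sst-pair-s0-wc} for $\tilde\sz$: the $\tilde U$-combinatorics is literally the same, so the same manipulation that turns the geometric wall-crossing into \eqref{wc:eq:sst-pair-s0-wc} turns the artificial one into \eqref{wc:eq:van-sst-pair-s0-wc}.

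For \ref{it:vanishing-sst-pair-b} the argument is identical with $s=1$ in place of $s=0$, using Lemma~\ref{wc:lem:artificial-base-wcf}\ref{it:artificial-wcf-ii} (which already restricts the decomposition to classes in $\hat S_\alpha$, hence $\beta_j\in\hat R_\alpha$), and using that $\tilde\sz^{1,-1}_{\beta,\vec 0}=\hat\sz^{1,-1}_{\beta,\vec 0}$ by Lemma~\ref{wc:lem:U-properties}\ref{it:U-identity} at $(s,x)=(1,-1)$; note however that $(1,-1)$ is not the ``base point'' $(0,-1)$, so one either runs the artificial wall-crossing from $(0,-1)$ all the way to $(1,x)$ and factors it through $(1,-1)$ using Lemma~\ref{wc:lem:artificial-base-wcf}\ref{it:artificial-wcf-i} with the transitivity Lemma~\ref{wc:lem:U-properties}\ref{it:U-composition}, or, more cleanly, one first proves $\hat\sz^{1,-1}_{\beta,\vec 0}=\tilde\sz^{1,-1}_{\beta,\vec 0}$ at the special locus $x=-1$ (there $\tau_{-1}^1$ has no strictly semistables for flags by Lemma~\ref{wc:lem:general-semistable-loci}, so the $\tilde U$-sum collapses exactly as at $(0,-1)$) and then applies Lemma~\ref{wc:lem:artificial-base-wcf}\ref{it:artificial-wcf-ii} from $x_2=-1$ to $x_1=x$. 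The main obstacle is the purely combinatorial step identifying $\sum\tilde U(\cdots;\tau_{-1}^s,\tau_x^s)[\cdots]$ over the surviving shuffles with the symmetric expression $\sum\tfrac1{n!}[\cdots]$: one must verify that no ``mixed'' decomposition (where framing gets absorbed into a nonzero-$\beta$ piece, or where multiple framing pieces occur with nontrivial $\tilde U$) contributes, which is where Lemma~\ref{wc:lem:vanishing-basic}\ref{it:vanishing-basic-ii} (the support-locus statement, that $\hat\sz_{\beta_i,\vec 0}$ is supported where the $\rho_j$ are injective while $\hat\sz_{0,\vec 1_{[a,N]}}$ is supported where they are isomorphisms, and Lie brackets of such are controlled) and the genericity of $\vec\mu$ do the real work; everything else is bookkeeping parallel to \S\ref{wc:sec:s0-sst-pair-proof}.
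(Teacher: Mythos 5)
Your skeleton for part \ref{it:vanishing-sst-pair-a} is the paper's: apply the artificial wall-crossing formula (Lemma~\ref{wc:lem:artificial-base-wcf}\ref{it:artificial-wcf-i}) from $(0,-1)$ to $(0,x)$, use Lemma~\ref{wc:lem:vanishing-basic} to see that every piece of a contributing decomposition has $\beta_i=0$ or $\vec e_i=\vec 0$, use Lemma~\ref{wc:lem:U-properties}\ref{it:U-vanishing} together with $x_0(a)<x\le x_0(a+1)$ to force the unique framing piece to be $(0,\vec 1_{[a,N]})$, and then evaluate $\tilde U$. However, the way you close the combinatorial step is misplaced: \eqref{wc:eq:sst-pair-s0-wc} is not obtained by any $\tilde U$-manipulation --- it comes directly from Theorem~\ref{thm:sst-invariants}\ref{item:vss-pairs-relation} applied to the geometric identification \eqref{wc:eq:sst-pair-s0-inv-identity} --- so ``the same manipulation'' cannot be borrowed from there. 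What is actually needed is the explicit evaluation $\tilde U\left((\gamma_1,0),\ldots,(\gamma_k,0),(0,\vec 1_{[a,N]}),(\gamma_{k+1},0),\ldots,(\gamma_n,0);\tau^0_{-1},\tau^0_x\right)=(-1)^n/n!$ for $k=0$ and $0$ for $k>0$, which the paper gets from Proposition~\ref{prop:Utilde} (or Joyce's (10.33)), using $\tau^0_{-1}(\gamma_i,0)>\tau^0_{-1}(0,\vec 1_{[a,N]})$ and $\tau^0_x(\gamma_i,0)<\tau^0_x(0,\vec 1_{[a,N]})$ (the latter needs $x>x_0(a)$), the sign $(-1)^n$ then being absorbed by skew symmetry when reversing the nested bracket. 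Your ``$1/n!$ times an antisymmetrization over shuffles'' is not what happens: only the ordering with the framing piece first has nonzero $\tilde U$.

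For part \ref{it:vanishing-sst-pair-b}, your first option --- apply Lemma~\ref{wc:lem:artificial-base-wcf}\ref{it:artificial-wcf-ii} with $(s_2,x_2)=(1,-1)$, $(s_1,x_1)=(1,x)$ and repeat the part (i) analysis, using $\lambda\equiv 0$ on $\hat R_\alpha$ --- is exactly the paper's argument and is all that is needed, since \eqref{wc:eq:van-sst-pair-s1-wc} involves only artificial invariants. Your ``cleaner'' alternative, first proving $\hat\sz^{1,-1}_{\beta,\vec 0}=\tilde\sz^{1,-1}_{\beta,\vec 0}$, is both unnecessary and unsound: the sum defining $\hat\sz^{1,-1}_{\beta,\vec 0}$ uses $\tilde U(\cdots;\tau^0_{-1},\tau^1_{-1})$, and $\tau^0_{-1}\neq\tau^1_{-1}$ (they differ by $\lambda$), so Lemma~\ref{wc:lem:U-properties}\ref{it:U-identity} does not collapse it; indeed that equality is precisely Proposition~\ref{wc:prop:finally}, the endpoint of the entire wall-crossing argument, so invoking it here would be circular. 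A minor further point: the support statements of Lemma~\ref{wc:lem:vanishing-basic}\ref{it:vanishing-basic-ii} are not what does the work in this lemma --- once each piece satisfies $\beta_i=0$ or $\vec e_i=\vec 0$, no ``mixed'' absorption can occur, and framing intervals other than $[a,N]$ are excluded purely by the slope comparison, the inequality $x\le x_0(a+1)$ (giving slope $(-\infty,*)$), and Lemma~\ref{wc:lem:U-properties}\ref{it:U-vanishing}.
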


\begin{proof}
  We follow the proof of \cite[Prop. 10.13]{Joyce2021} directly. The
  strategy is to reduce the artificial wall-crossing formulas
  (Lemma~\ref{wc:lem:artificial-base-wcf}) to the claimed formula
  using the vanishings in Lemma~\ref{wc:lem:vanishing-basic}.

  We prove \ref{it:vanishing-sst-pair-a}. Consider the artificial
  wall-crossing formula \eqref{wc:eq:artificial-base-wcf} for
  parameters $(s_1,x_1)=(0,x)$, $(s_2,x_2) = (0,-1)$ and with class
  $(\beta,\vec 1_{[a,N]})$. Take a non-zero term on the right hand
  side corresponding to the decomposition
  $(\beta,\vec{1}_{[a,N]})=(\gamma_1,\vec f_1)+\cdots +(\gamma_m,\vec
  f_m)$. A fortiori, all $\hat\sz^{0,-1}_{\gamma_i,\vec f_i}$ are
  non-zero. Then, by Lemma~\ref{wc:lem:vanishing-basic}, each $i$ must
  satisfy either $\gamma_i=0$ or $\vec f_i=0$. In particular, we must
  have $m \ge 2$.
  
  Now assume for contradiction that none of the $(\gamma_i,\vec f_i)$
  are equal to $(0,\vec 1_{[a,N]})$. By
  Lemma~\ref{wc:lem:vanishing-basic}\ref{it:vanishing-basic-i}, all
  $i$ with $\gamma_i=0$ must have $\vec f_i=\vec 1_{[a_i,b_i]}$ for
  some $a\leq a_i\leq b_i\leq N$. By our assumption, at least two such
  indices exist. In particular, this implies $a<N$ and that there must
  be a unique index $i_0$ for which $\vec f_{i_0} = \vec 1_{[b,N]}$
  for some $b>a$. Take $I \coloneqq \{i_0\}$. We have that
  $\tau_{-1}^0(\gamma_{i_0}, \vec f_{i_0}) < \tau_{-1}^0(\gamma_j,\vec
  f_j)$ for any $j\neq i_0$, and, using that $x \leq x_0(a+1)$, also
  that $\tau_x^0(\gamma_{i_0}, \vec f_{i_0}) = (-\infty, *) <
  \tau_x^0(\beta,\vec e)$. By
  Lemma~\ref{wc:lem:U-properties}\ref{it:U-vanishing}, the
  corresponding coefficient $\tilde{U}$ vanishes, contradicting our
  initial assumption that we chose a non-zero term in the sum.

  Now we have the correct decompositions in the wall-crossing formula
  and it only remains to compute that
  \[ \tilde U((\gamma_1,0), \ldots, (\gamma_k,0), (0, \vec 1_{[a,N]}), (\gamma_{k+1},0), \ldots, (\gamma_n, 0); \tau^0_{-1}, \tau^0_x) = \begin{cases} (-1)^n/n! & k=0, \\ 0 & k > 0. \end{cases} \]
  This follows from either \cite[(10.33)]{Joyce2021} or
  Proposition~\ref{prop:Utilde}, using the facts that
  $\tau^0_x(\gamma_i, 0) = \tau^0_x(\gamma_j, 0)$ for all $i, j$ and
  $x \in [-1,0]$ and
  \[ \tau^0_{-1}(\gamma_i,0) > \tau^0_{-1}(0, \vec 1_{[a,N]}), \quad \tau^0_x(\gamma_i, 0) < \tau^0_x(0, \vec 1_{[a,N]}). \]
  Note that the last inequality requires $x > x_0(a)$. We absorb the
  $(-1)^n$ by re-ordering the Lie brackets, by skew symmetry.

  Part~\ref{it:vanishing-sst-pair-b} follows analogously using
  \eqref{wc:eq:artificial-base-wcf-s1} instead of
  \eqref{wc:eq:artificial-base-wcf}.
\end{proof}

\subsection{Pair and flag invariants}
\label{wc:sec:pair-inv-and-flag-inv}

\subsubsection{}

In this subsection, we follow the vertical arrows in
Figure~\ref{fig:dominant-wc-strategy-2}. We find ``vertical''
wall-crossing formulas for the invariants
$\tilde\sz_{\beta,\vec{e}}^{s,x}$ and $\hat\sz_{\beta,\vec{e}}^{s,x}$
where $s\in \{0,1\}$ and $x$ decreases from $0$ down to $-1$.
Typically, in these vertical wall-crossing steps, the framing
dimension vectors $\vec e$ get decomposed into smaller parts and the
class $\beta$ remains unchanged. Iterating this will yield classes of
the form $(\beta,\vec 1_{[a,N]})$ for various $1 \le a \le N$, which
we studied previously in \S\ref{wc:sec:sst-inv-and-pair-inv}.

Throughout, we assume $(s,x)\in [0,1]\times [-1,0]$, and, for the
class $(\beta, \vec e)$ of interest, we let $1 \le a \le N$ denote the
minimal index such that $e_a = e_N$. We list the main results.
\begin{itemize}
\item (Lemma~\ref{wc:lemma:pair-to-flag-moduli-setup}) For $s = 0, 1$
  and varying $x \in [-1,0]$, the only wall for the semistable loci
  $\tilde\fM^{\vec Q(\Fr),\sst}_{\beta,\vec e}(\tau_x^s)$ is at
  $x=x_0(a)$. There is a projective bundle relating classes $(\beta,
  \vec e)$ and $(\beta, \vec e - \vec 1_{[a,N]})$.
\item (Lemma~\ref{wc:lem:pair-flag}) For $s = 0,1$, there is a
  ``vertical'' wall-crossing formula for the auxiliary invariants
  $\tilde\sz^{s,x}$ relating $x > x_0(a)$ to $x = x_0(a)$ and the
  class $(\beta, \vec e)$ to the class $(\beta, \vec e-\vec
  1_{[a,N]})$. This uses the aforementioned projective bundle.
\item (Lemma~\ref{wc:lem:pair-flag-vanishing}) For $s = 0,1$, the
  artificial invariants $\hat\sz^{s,x}$ satisfy the {\it same}
  ``vertical'' wall-crossing formula. 
\end{itemize}

\subsubsection{}

\begin{definition} \label{wc:def:flag-stripping}
  Let $(\beta, \vec e) \in S_\alpha$ be a flag and assume that $e_N
  \ge 1$ so that $\vec e - \vec 1_{[a,N]} \ge \vec 0$. Define the
  morphism of moduli stacks
  \begin{align*}
    \pi\colon \tilde\fM^{\vec Q(\Fr)}_{\beta,\vec e} &\to \tilde\fM^{\vec Q(\Fr)}_{\beta,\vec e - \vec 1_{[a,N]}} \\
    (E, \vec V, \vec \rho) &\mapsto (E, \vec V', \vec \rho')
  \end{align*}
  where $\vec V'$ and $\vec\rho'$ are defined by
  \[ V_i' \coloneqq \begin{cases} V_i & 1\leq i\leq a-1, \\ V_{a-1} & a \le i \le N, \end{cases} \qquad \rho_i' \coloneqq \begin{cases} \rho_i & 1 \le i < a-1, \\ \id_{V_{a-1}} & a-1 \le i < N, \\ \rho_{N}\circ\cdots\circ \rho_{a-1} & i = N. \end{cases} \]
  Let $\pi^\pl$ denote the induced morphism of rigidified stacks.
  Moreover, for any class $(\gamma,\vec f)\in S_{\alpha}$, define the
  open locus
  \[ \tilde\fM^{\vec Q(\Fr),\inj}_{\gamma, \vec f} \coloneqq \{\rho_i \text{ injective } \forall 1 \le i \le N\} \subset \tilde\fM^{\vec Q(\Fr)}_{\gamma,\vec f} \]
  For $1 \le a \le N$, define also the open locus
  \[ \tilde\fM^{\vec Q(\Fr),\inj(<a)}_{\gamma,\vec{f}} \coloneqq \left\{\begin{array}{c} \rho_i \text{ injective } \forall 1 \le i < N\\ \rho_{N}\circ \cdots \circ \rho_{a-1} \text{ is injective}\end{array}\right\} \subset \tilde\fM^{\vec Q(\Fr)}_{\gamma,\vec f}. \]
  Clearly $j_a\colon \tilde\fM^{\vec Q(\Fr),\inj}_{\gamma, \vec f}
  \hookrightarrow \tilde\fM^{\vec Q(\Fr),\inj(<a)}_{\gamma, \vec f}$ is
  an open immersion for any $a$.
\end{definition}

\subsubsection{}

\begin{lemma} \label{lem:flag-forget-projective}
  Let $(\beta, \vec e) \in S_\alpha$ be a flag and assume that $e_N
  \ge 1$.
  \begin{enumerate}[label = (\roman*)]
  \item \label{lem:flag-forgeti} The restriction of $\pi$ to
    $\tilde\fM^{\vec Q(\Fr),\inj}_{\beta,\vec e}$ is a morphism
    \begin{equation} \label{eq:flag-stripping-projective-bundle}
      \pi\colon \tilde\fM^{\vec Q(\Fr),\inj}_{\beta,\vec e} \to \tilde\fM^{\vec Q(\Fr),\inj}_{\beta,\vec e - \vec 1_{[a,N]}}
    \end{equation}
    which is naturally a projective bundle for the vector bundle
    $\cFr(\cE)/\cV_{a-1}$ on
    $\tilde\fM_{\beta,\vec{e}-\vec{1}_{[a,N]}}^{\vec Q(\Fr),\inj}$.

  \item \label{lem:flag-forgetii} For any $\phi \in
    K_\circ^{\tilde\sT}(\tilde\fM_{\beta,\vec e-\vec 1_{[a,N]}}^{\vec
      Q(\Fr),\inj})^\pl_{\loc}$,
    \[ \left[\phi, I_*\partial_{[a,N]}\right] = j_{a*} \hat\pi^*\phi \in K_\circ^{\tilde\sT}\left(\tilde\fM_{\beta,\vec e}^{\vec Q(\Fr),\inj(<a)}\right)^\pl_{\loc} \]
    where $\hat\pi^*$ denotes symmetrized pullback in K-homology
    (Definition~\ref{def:symm-pullback-homology}) along
    \eqref{eq:flag-stripping-projective-bundle}.
  \end{enumerate}
\end{lemma}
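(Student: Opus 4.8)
The plan is to verify \ref{lem:flag-forgeti} by an explicit local description of $\pi$, and then to deduce \ref{lem:flag-forgetii} from the homology projective bundle formula (Lemma~\ref{lem:homology-projective-bundle-formula}) together with the definition of the Lie bracket. For \ref{lem:flag-forgeti}: on the injective locus $\tilde\fM^{\vec Q(\Fr),\inj}_{\beta,\vec e}$, the datum $(E, \vec V, \vec\rho)$ is equivalent to the data $(E,\vec V',\vec\rho')$ in the target (forgetting $V_a,\ldots,V_N$ and the arrows $\rho_{a-1},\ldots,\rho_{N-1}$, composing them into a single map), together with a choice of flag of subspaces $V_{a-1}\hookrightarrow V_a\hookrightarrow\cdots\hookrightarrow V_N\hookrightarrow \Fr(E)$ with successive quotients of dimension~$1$ except possibly the last. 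Since $e_a=\cdots=e_N$ by our choice of $a$ (minimal with $e_a=e_N$), in fact $V_{a-1}\to V_a\to\cdots\to V_N$ are all isomorphisms, so the only genuine choice is that of the line $V_N/V_{a-1}\subset \Fr(E)/V_{a-1}$ — wait, one must be careful: $e_N - e_{a-1} = 1$ since $(\beta,\vec e)$ is a flag and $a$ is minimal with $e_a = e_N$, so $e_{a-1} = e_N - 1$, and $V_{a-1}\subset V_N = V_{a-1}+\text{(a line)}$ inside $\Fr(E)$. Thus the fibre of $\pi$ over a point of $\tilde\fM^{\vec Q(\Fr),\inj}_{\beta,\vec e - \vec 1_{[a,N]}}$ is the projectivization of the rank-$(\fr(\beta)-e_{a-1})$ bundle $\cFr(\cE)/\cV_{a-1}$, which is exactly the claimed projective bundle structure. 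This identification is natural and $\sT$-equivariant, so it descends to rigidified stacks; checking that $\cO_{\bP(\cFr(\cE)/\cV_{a-1})}(1)$ is identified with the tautological class on the projective bundle is routine.

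For \ref{lem:flag-forgetii}, the idea is that $[\,\cdot\,, I_*\partial_{[a,N]}]$ is computed on $\tilde\fM^{\vec Q(\Fr)}$ by the formula in Theorem~\ref{thm:auxiliary-stack-vertex-algebra}: it is $\rho_{K,z}$ of $(\kappa^{-1/2}-\kappa^{1/2})^{-1}$ times $(\phi\boxtimes I_*\partial_{[a,N]})$ applied to $\hat\Theta^{\vec Q(\Fr)}(z)\otimes z^{\deg_1}\Phi^*(-)$, where $\Phi$ is the direct-sum map $\tilde\fM^{\vec Q(\Fr)}_{\beta,\vec e-\vec 1_{[a,N]}}\times\tilde\fM^{\vec Q(\Fr)}_{0,\vec 1_{[a,N]}}\to\tilde\fM^{\vec Q(\Fr)}_{\beta,\vec e}$. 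On the relevant support locus (where the first factor is in the injective locus and the second is the point $\tilde\fM_{0,\vec 1_{[a,N]}}^{\vec Q(\Fr),\sst}=\pt$), the bilinear element $\tilde\scE^{\vec Q(\Fr)}_{(\beta,\vec e-\vec 1_{[a,N]}),(0,\vec 1_{[a,N]})}$ specializes, using $(I_*\partial_{[a,N]})(\cV_i^k)=1$, to the symmetrized complex $\kappa^{-1}(12)^*\bF^\vee - \bF$ where $\bF = \cFr(\cE)^\vee\boxtimes\cV_N - (\text{framing edges among }a,\ldots,N)$; after the cancellations forced by $V_{a-1}\cong\cdots\cong V_N$, this is $\kappa^{-1}(\cFr(\cE)/\cV_{a-1})^\vee - (\cFr(\cE)/\cV_{a-1})$ up to a rank-$1$ shift, i.e. exactly $\bT_\pi^\vir$ for the projective bundle $\pi$ of part~\ref{lem:flag-forgeti}. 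Hence $\hat\Theta^{\vec Q(\Fr)}(z)$ becomes precisely the kernel $\hat\se_{z^{-1}}(\kappa^{-1}\cV^\vee)/\hat\se_z(\cV)$ with $\cV = \cFr(\cE)/\cV_{a-1}$ appearing in Lemma~\ref{lem:homology-projective-bundle-formula}. Applying that lemma (with $\fX\to X$ the relevant $\bC^\times$-gerbe and $\cV$ as above, noting that $\bP_{\fX}(\cV) = \tilde\fM^{\vec Q(\Fr),\inj}_{\beta,\vec e}$ sits inside $\tot_{\fX}(\cV)\setminus\{0\}$) then identifies $\rho_{K,z}$ of this expression with $(\kappa^{-1/2}-\kappa^{1/2})\cdot j_{a*}\hat\pi^*\phi$, giving the claim. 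One must also observe that all classes are supported on the injective sublocus and that, by the first part, $\pi$ restricted there really is the projective bundle, so symmetrized pullback $\hat\pi^*$ of Definition~\ref{def:symm-pullback-homology} is defined; the target $\tilde\fM^{\vec Q(\Fr),\inj(<a)}_{\beta,\vec e}$ appears because the open immersion $j_a$ records that the composite $\rho_N\circ\cdots\circ\rho_{a-1}$ is injective but individual $\rho_j$ for $a\le j<N$ are automatically isomorphisms on this locus.

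The main obstacle I expect is the precise bookkeeping in part~\ref{lem:flag-forgetii}: matching the virtual normal bundle / theta class $\hat\Theta^{\vec Q(\Fr)}$ obtained from the formula in Theorem~\ref{thm:auxiliary-stack-vertex-algebra} — which involves all $N$ framing vertices and edges of $\vec Q$ and the insertion trick $(I_*\partial_{[a,N]})(\cV_i^k)=1$ — with the clean rank-$(\fr(\beta)-e_{a-1})$ bundle $\cFr(\cE)/\cV_{a-1}$ and its relative tangent complex. This is exactly analogous to the computation carried out in \S\ref{sec:pairs-master-term-1} (where $\hat\Theta_{(\alpha,(1,0,0)),(0,(0,1,0))}(z)$ was identified with $\hat\se_{z^{-1}}(\kappa^{-1}\cFr_2(\cE)^\vee\boxtimes\cV_2)/\hat\se_z(\cFr_2(\cE)\boxtimes\cV_2^\vee)$), so the same strategy applies: use that $\partial_{[a,N]}$ kills all dependence on the $\cV_i$, use $e_{a-1}=\cdots=e_N-1$ and $e_a=\cdots=e_N$ to collapse the chain of isomorphisms, and read off the surviving terms. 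The degree operator $z^{\deg_1}$ on the $\pl$-stacks acts as the identity, which removes any subtlety there. Once this identification is in hand, the rest is a direct citation of Lemma~\ref{lem:homology-projective-bundle-formula}.
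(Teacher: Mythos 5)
Your proposal is correct and follows essentially the same route as the paper: part \ref{lem:flag-forgeti} is proved by identifying the injective locus with $\bP_{\fX}(\cFr(\cE)/\cV_{a-1})$ (the paper does this via the ambient identification $\tilde\fM^{\vec Q(\Fr),\inj(<a)}_{\beta,\vec e}\cong\tot_{\fX\times[\pt/\bC^\times]}(\cV\boxtimes\cL)$, with $j_a$ the nonvanishing locus and the zero section the direct-sum map $\Phi$), and part \ref{lem:flag-forgetii} by inserting powers of $\cL$ using $(I_*\partial_{[a,N]})(\cL^k)=1$, using the injectivity conditions to collapse $\cV_{a-1}\cong\cdots\cong\cV_N$ so that $\hat\Theta(z)$ becomes $\hat\se_{z^{-1}}(\kappa^{-1}\cV^\vee\boxtimes\cL)/\hat\se_z(\cV\boxtimes\cL^\vee)$ with $\cV=\cFr(\cE)/\cV_{a-1}$, and then citing Lemma~\ref{lem:homology-projective-bundle-formula}, exactly as in \S\ref{sec:pairs-master-term-1}. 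The bookkeeping you flag (the precise restricted form of $\tilde\scE^{\vec Q(\Fr)}_{(\beta,\vec e-\vec 1_{[a,N]}),(0,\vec 1_{[a,N]})}$ and the matching of the $D(z)$/$z^{\deg_1}$ factors between the bracket and the projective bundle formula) indeed works out as you anticipate and is the only real content beyond the plan.
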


To be clear, whenever the composition $\rho_N \circ \cdots \circ
\rho_{a-1}$ is injective, it identifies $V_{a-1}$ as a subspace of
$\Fr(E)$.

\begin{proof}
  To show \ref{lem:flag-forgeti}, let $\cL$ denote the universal line
  bundle on $[\pt/\bC^\times]$, let $\fX \coloneqq
  \tilde\fM_{\beta,\vec{e}-\vec{1}_{[a,N]}}^{\vec Q(\Fr),\inj}$ for
  short, and consider the map
  \begin{equation} \label{eq:flag-stripping-projective-bundle-base}
    \tilde\fM_{\beta,\vec{e}}^{\vec Q(\Fr),\inj(<a)} \to \fX \times [\pt/\bC^\times]
  \end{equation}
  whose first coordinate is the restriction of $\pi$ and whose second
  coordinate is given by $(E, \vec V, \vec \rho) \mapsto
  (V_a/V_{a-1})^\vee$. Then
  \eqref{eq:flag-stripping-projective-bundle-base} lifts to an
  isomorphism
  \begin{equation} \label{eq:flag-stripping-as-projective-bundle-ambient}
    \tilde\fM_{\beta,\vec{e}}^{\vec Q(\Fr),\inj(<a)} \xrightarrow{\sim} \tot_{\fX \times [\pt/\bC^\times]}(\cV \boxtimes \cL)
  \end{equation}
  sending $(E, \vec V, \vec\rho)$ to the induced map $\zeta\colon
  \cL^\vee \cong V_a/V_{a-1} \to \Fr(E)/V_{a-1}$. This is an
  isomorphism, since the map $\zeta$ recovers $V_a$ upon pullback
  along $\Fr(E) \to \Fr(E)/V_{a-1}$. The open inclusion $j_a$ is the
  locus where $\zeta \neq 0$. Thus this isomorphism restricts to an
  isomorphism
  \begin{equation} \label{eq:flag-stripping-as-projective-bundle}
    \tilde\fM_{\beta,\vec{e}}^{\vec Q(\Fr),\inj} \xrightarrow{\sim} \bP_{\fX}(\cV)
  \end{equation}
  as claimed. (Note that the $[\pt/\bC^\times]$ action on $\fX \times
  [\pt/\bC^*]$ is by weight $-1$ on the second factor.)

\subsubsection{}

  To show \ref{lem:flag-forgetii}, we apply the homology projective
  bundle formula (Lemma \ref{lem:homology-projective-bundle-formula}),
  using the notation there, as follows. This will essentially be the
  same argument as in \S\ref{sec:pairs-master-term-1}. Explicitly,
  take
  \[ X = \fM_{\beta, \vec{e}-\vec{1}_{[a,N]}}^{\vec Q(\Fr),\inj,\pl}, \quad \fX = \fM_{\beta, \vec{e}-\vec{1}_{[a,N]}}^{\vec Q(\Fr),\inj}, \quad \cV = \cFr(\cE) / \cV_{a-1}, \]
  and $\pi_{\fX}$ and $\pi_X$ are our $\pi$ and $\pi^\pl$, and
  \eqref{eq:flag-stripping-as-projective-bundle} identifies $j$ as our
  $j_a^\pl$. We identify the $[\pt/\bC^\times]$ in
  \eqref{eq:flag-stripping-projective-bundle-base} with
  $\tilde\fM_{0,\vec 1_{[a,N]}}^{\vec Q(\Fr),\inj}$. Combined with the
  isomorphism \eqref{eq:flag-stripping-as-projective-bundle-ambient},
  the zero section of $\tot_{\fX \times [\pt/\bC^\times]}(\cV
  \boxtimes \cL)$ is the map that adds a trivial summand $\bC$ to each
  of $V_a, \ldots, V_N$ and extends $\rho_N$ to be zero on this
  summand, i.e. it is identified with the direct sum map
  \[ \Phi \coloneqq \Phi_{(\beta,\vec e-\vec 1_{[a,N]}),(0,\vec 1_{[a,N]})}\colon \tilde\fM_{\beta,\vec{e}-\vec 1_{[a,N]}}^{\vec Q(\Fr),\inj} \times \tilde\fM_{0,\vec 1_{[a,N]}}^{\vec Q(\Fr),\inj} \to \tilde\fM_{\beta,\vec{e}}^{\vec Q(\Fr),\inj(<a)}. \]
  The zero section of its rigidification $\tot_{\fX}(\cV)$ is
  therefore the composition $\Pi_{\beta,\vec e}^\pl \circ \Phi$. Hence
  \begin{equation} \label{eq:flag-stripping-homology-projective-bundle-formula}
    \begin{aligned}
      &(\kappa^{-\frac{1}{2}} - \kappa^{\frac{1}{2}}) \cdot (j_a^\pl)_*\hat{\pi^\pl}^*(\Pi^\pl_{\beta,\vec e-\vec 1_{[a,N]}})_*\phi \\
      &= \rho_{K,z} (\Pi_{\beta,\vec e}^\pl \circ \Phi)_* (D(z) \times \id) \left((\phi \boxtimes I_*\partial_{[a,N]}) \cap \frac{\hat\se_{z^{-1}}(\kappa^{-1} \cV^\vee \boxtimes \cL)}{\hat\se_z(\cV \boxtimes \cL^\vee)}\right)
    \end{aligned}
  \end{equation}
  where we used that $(I_*\partial_{[a,N]})(\cL^k) = 1$ for any $k \in
  \bZ$ in order to insert $\boxtimes \cL^\vee$ and $\boxtimes \cL$
  into the fraction on the right hand side. We do this because, upon
  restriction to the support of $\phi$, the bilinear elements
  $\tilde\scE^{\vec Q(\Fr)}$
  (Theorem~\ref{thm:auxiliary-stack-vertex-algebra}) become
  \[ \tilde\scE^{\vec Q(\Fr)}_{(\beta,\vec e-\vec 1_{[a,N]}),(0,\vec 1_{[a,N]})} = -\cV_{a-1}^\vee \boxtimes \cL + \kappa^{-1} \cdot \cFr(\cE)^\vee \boxtimes \cL \]
  due to the injectivity conditions on the $\rho_i$, and therefore
  \[ \frac{\hat\se_{z^{-1}}(\kappa^{-1} \cV^\vee \boxtimes \cL)}{\hat\se_z(\cV \boxtimes \cL^\vee)} = \hat\Theta(z) \coloneqq \hat\Theta_{(\beta,\vec e-\vec 1_{[a,N]}),(0,\vec 1_{[a,N]})}(z) \]
  Finally, applying base change as in
  \eqref{eq:P-to-projective-bundle}, the left hand side of
  \eqref{eq:flag-stripping-homology-projective-bundle-formula} becomes
  $(\kappa^{-\frac{1}{2}} - \kappa^{\frac{1}{2}}) \cdot
  (\Pi_{\beta,\vec e}^\pl)_* j_{a*}\hat\pi^*\phi$. Plugging all this
  back into
  \eqref{eq:flag-stripping-homology-projective-bundle-formula}, we get
  \begin{align*}
    (\kappa^{-\frac{1}{2}} - \kappa^{\frac{1}{2}}) \cdot (\Pi_{\beta,\vec e}^\pl)_* j_{a*}\hat\pi^*\phi
    &= \rho_{K,z} (\Pi_{\beta,\vec e}^\pl)_*\Phi_* (D(z) \times \id) \left((\phi \boxtimes I_*\partial_{[a,N]}) \cap \hat\Theta(z)\right) \\
    &= (\kappa^{-\frac{1}{2}} - \kappa^{\frac{1}{2}}) \cdot (\Pi_{\beta,\vec e}^\pl)_*\left[\phi, I_*\partial_{[a,N]}\right]
  \end{align*}
  by the definition (Theorem~\ref{thm:auxiliary-stack-vertex-algebra})
  of the Lie bracket. We conclude by applying the de-rigidification
  map $(I_{\beta,\vec e})_*$ to both sides.
\end{proof}

\subsubsection{}

\begin{lemma}\label{wc:lemma:pair-to-flag-moduli-setup}
  Let $(\beta,\vec{e}) \in S_\alpha$ be a flag and assume that $e_N
  \ge 2$.
  \begin{enumerate}[label = (\roman*)]
  \item \label{wc:lemma:pair-to-flag-moduli-setupi} If $x \le x_0(a)$,
    then $\tilde\fM_{\beta,\vec{e}}^{\vec Q(\Fr),\sst}(\tau_x^s) =
    \emptyset$;
  \item \label{wc:lemma:pair-to-flag-moduli-setupii} If $x > x_0(a)$
    and $s=0$, then $\tilde\fM_{\beta, \vec{e}}^{\vec
      Q(\Fr),\sst}(\tau_x^s)$ is independent of $x$ and contains no
    strictly $\tau_x^s$-semistable objects. An object $(E,\vec V,\vec
    \rho)$ belongs to $\tilde\fM_{\beta, \vec{e}}^{\vec
      Q(\Fr),\sst}(\tau_x^s)$ if and only if
    \begin{itemize}
    \item $\rho_i$ is an isomorphism for $a\leq i \leq N$, and
    \item $(E,\vec V',\vec \rho')$, from
      Definition~\ref{wc:def:flag-stripping}, is
      $\tau_x^s$-semistable.
    \end{itemize}
  \item \label{wc:lemma:pair-to-flag-moduli-setupiii} If $x>x_0(a)$
    and $s=0$, then the virtual structure sheaf for the APOT on
    $\tilde\fM_{\beta,\vec{e}}^{\vec Q(\Fr),\sst}(\tau_x^s)$ obtained
    by symmetrized pullback from $\fM_{\beta}^{\pl}$ coincides with
    the one for the APOT obtained by symmetrized pullback along
    \[ \pi^\pl\colon \tilde\fM_{\beta,\vec{e}}^{\vec Q(\Fr),\sst}(\tau_x^s) \to \tilde\fM_{\beta,\vec{e}-\vec{1}_{[a,N]}}^{\vec Q(\Fr),\sst}(\tau_x^s), \]
    the restriction of the projective bundle
    \eqref{eq:flag-stripping-projective-bundle}.
  \end{enumerate}
  If additionally $(\beta,\vec{e})\in \mathring S_{\alpha}$, then
  \ref{wc:lemma:pair-to-flag-moduli-setupii} and
  \ref{wc:lemma:pair-to-flag-moduli-setupiii} also hold for $s=1$.
\end{lemma}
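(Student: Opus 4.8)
The plan is to prove the three parts in order, reducing each to a direct analysis of $\tau_x^s$-semistability of objects $(E, \vec V, \vec\rho)$ of class $(\beta, \vec e)$, using the flag hypothesis to pin down the framing data. Throughout, I would use that $a$ is the minimal index with $e_a = e_N$, so that $\vec e - \vec 1_{[a,N]}$ is again a flag (with smaller $e_N$), and that $e_N \ge 2$ guarantees $\vec e - \vec 1_{[a,N]} \ge \vec 0$ with $e_{a-1} = e_a - 1 \ge 1$.

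For part~\ref{wc:lemma:pair-to-flag-moduli-setupi}: the flag $(\beta, \vec e)$ with $e_N \ge 2$ and $e_a = e_N$ contains, as a sub-object, an object of class $(0, \vec 1_{[a,N]})$ obtained by taking the image of a complement of $V_{a-1}$ inside $V_a$ (extended by $\rho_i$ upward), or dually it has such a quotient; either way, by \eqref{wc:eq:x0a} the slope of this sub/quotient object is $(-\infty, *)$ when $x \le x_0(a)$ (here I would use the genericity of $\vec\mu$ to rule out equality of slopes), so it destabilizes. Hence $\tilde\fM_{\beta,\vec e}^{\vec Q(\Fr),\sst}(\tau_x^s) = \emptyset$. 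This step mirrors the opening paragraph of the proof of Lemma~\ref{lem:sst-pair} and \cite[Prop.~10.3]{Joyce2021}.

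For part~\ref{wc:lemma:pair-to-flag-moduli-setupii}: first, independence of $x$ in the open range $x > x_0(a)$ and the absence of strictly semistable objects follows from Lemma~\ref{wc:lem:general-semistable-loci} together with the observation that, by the definition of $a$, the only candidate wall in the variable $x$ for this class is at $x = x_0(a)$ --- any other vertical wall would force some $e_j$-pattern excluded by $(\beta,\vec e)$ being a flag with $e_a = e_N$. For the characterization, I would argue as in \S\ref{wc:sec:s0-sst-pair-proof}: if $(E,\vec V,\vec\rho)$ is $\tau_x^0$-semistable then by Lemma~\ref{wc:lem:general-semistable-loci}\ref{wc:lem:general-semistable-loci-ii} all $\rho_i$ are injective and $E$ is $\tau$-semistable, and then injectivity plus the dimension count $e_a = e_{a+1} = \cdots = e_N$ (forced here because $x > x_0(a)$ rules out $e$ jumping more than once past position $a$; more precisely, the sub-object of class $(0,\vec 1_{[b,N]})$ for any $a < b \le N$ coming from $V_b/V_{b-1}$ would have slope exceeding that of $(\beta,\vec e)$, contradicting $x > x_0(a)$ after a short computation identical to the one near the end of \S\ref{wc:sec:s0-sst-pair-proof}) forces $\rho_i$ to be an isomorphism for $a \le i \le N$. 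Given that, $(E,\vec V,\vec\rho)$ is determined by $(E,\vec V',\vec\rho')$ together with the splitting data, and a destabilizing sub-object of either one induces one of the other --- the only subtlety, the comparison of $r(\beta')$ with $r(\beta)$, is handled exactly as in \S\ref{wc:sec:s0-sst-pair-proof} using Assumption~\ref{assump:semistable-invariants}\ref{assump:it:rank-function}. The case $s = 1$ (under $(\beta,\vec e) \in \mathring S_\alpha$) runs identically, replacing $\tau$-semistability of $E$ with $\mathring\tau$-semistability and invoking Lemma~\ref{wc:lem:R-sets}\ref{it:R-sets-ii} and $\lambda(\beta') = 0$ as in \S\ref{wc:sec:s1-sst-pair-proof}.

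For part~\ref{wc:lemma:pair-to-flag-moduli-setupiii}: by \ref{wc:lemma:pair-to-flag-moduli-setupii}, the semistable locus lies inside $\tilde\fM_{\beta,\vec e}^{\vec Q(\Fr),\inj}$, on which Lemma~\ref{lem:flag-forget-projective}\ref{lem:flag-forgeti} identifies $\pi$ as the projective bundle $\bP_{\fX}(\cFr(\cE)/\cV_{a-1})$ over $\fX = \tilde\fM_{\beta,\vec e - \vec 1_{[a,N]}}^{\vec Q(\Fr),\inj}$; but on the semistable locus the relevant vector bundle is rank one (the $\rho_i$ are isomorphisms for $a \le i \le N$, so $\cFr(\cE)/\cV_{a-1}$ restricts to a line bundle there), and $\pi$ becomes an isomorphism onto the semistable locus downstairs. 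Both APOTs --- the one from symmetrized pullback along $\pi_{\fM_\beta^\pl}$ directly, and the one from symmetrized pullback along $\pi^\pl$ composed with symmetrized pullback from $\fM_\beta^\pl$ --- agree by the functoriality of symmetrized pullback (Theorem~\ref{thm:APOTs}\ref{it:APOT-functoriality}), since $\pi_{\fM_\beta^\pl}$ factors through $\pi^\pl$. Hence their virtual structure sheaves coincide. The $s = 1$ case is identical. The main obstacle I anticipate is the bookkeeping in part~\ref{wc:lemma:pair-to-flag-moduli-setupii} showing that $x > x_0(a)$ (rather than merely $x \ne x_0(a)$) forces $e_a = \cdots = e_N$ and all these $\rho_i$ to be isomorphisms: one must carefully compare the slopes $\tau_x^s(\beta,\vec e)$ and $\tau_x^s(\beta', \vec e')$ for the sub-objects of class $(0,\vec 1_{[b,N]})$ with $a < b \le N$, using the explicit $\gg$-inequalities on $\vec\mu$ from \S\ref{wc:sec:wc-formula} and the monotonicity \eqref{wc:eq:x0a-monotonicity}, and check that no boundary case at $x = x_0(b)$ for $b > a$ interferes in the open range $x_0(a) < x \le x_0(a+1)$.
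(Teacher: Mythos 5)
There is a genuine gap, and it sits exactly where the real content of part~\ref{wc:lemma:pair-to-flag-moduli-setupii} lies: the equivalence between $\tau_x^s$-semistability of $(E,\vec V,\vec\rho)$ and of its stripped version $(E,\vec V',\vec\rho')$. You assert that a destabilizing sub-object of either induces one of the other, with ``the only subtlety'' being the comparison of $r(\beta')$ with $r(\beta)$, handled ``exactly as in \S\ref{wc:sec:s0-sst-pair-proof}''. That reduction does not work here. In Lemma~\ref{lem:sst-pair} the sub-object and the ambient object carry the \emph{same} framing vector $\vec 1_{[a,N]}$, so the $(\vec\mu+x\vec 1)$-contributions to the two slopes cancel and only the rank function matters. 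In the present lemma the framing vectors of the corresponding destabilizers genuinely differ: passing between $(E,\vec V,\vec\rho)$ and $(E,\vec V',\vec\rho')$ changes the quotient's framing class by $\vec 1_{[a,N]}$, and intersecting a destabilizer of $(E,\vec V,\vec\rho)$ with the stripped sub-object changes its class by $\vec 1_{[b,N]}$ for some $b\ge a$. So the numerators do not cancel, and the rank comparison alone proves nothing. The paper's proof supplies the missing mechanism: it first invokes independence of the semistable locus in $x$ to reduce to $x$ just above $x_0(a)$, where $(\vec\mu+x\vec 1)\cdot\vec 1_{[a,N]}$ is (close to) zero, then compares the slopes as in \eqref{eq:flag-stripping-slopes} and \eqref{eq:flag-stripping-slopes-2} with a case analysis $b>a$ versus $b=a$. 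Your proposal never produces this, and your attention to destabilizers of class $(0,\vec 1_{[b,N]})$ misses the hard case of destabilizers with nonzero sheaf part and arbitrary framing. Moreover, the independence-of-$x$/no-strictly-semistables claim that this reduction relies on is itself only hand-waved in your proposal (``the only candidate wall is at $x_0(a)$''): walls in $x$ can arise from decompositions with both sheaf parts nonzero, and ruling them out is nontrivial; the paper defers precisely this to Joyce's proof of \cite[Prop.~10.14]{Joyce2021}.

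Two smaller points. In \ref{wc:lemma:pair-to-flag-moduli-setupi}, only the \emph{quotient} version of your argument is valid: a sub-object of class $(0,\vec 1_{[a,N]})$ need not exist (it would require $V_N$ to meet $\ker\rho_N$), and a sub-object of slope $(-\infty,*)$ would not destabilize anyway; the quotient of class $(0,\vec 1_{[a,N]})$ does exist and does the job (no genericity of $\vec\mu$ needed), giving an acceptable alternative to the paper's citation of Lemma~\ref{wc:lem:general-semistable-loci}\ref{wc:lem:general-semistable-loci-iii}--\ref{wc:lem:general-semistable-loci-iv}. In \ref{wc:lemma:pair-to-flag-moduli-setupiii}, your claim that $\cFr(\cE)/\cV_{a-1}$ becomes a line bundle on the semistable locus and that $\pi$ is an isomorphism there is false unless $(\beta,\vec e)$ is a full flag: $\rho_N$ is only injective, not an isomorphism, and the restriction of $\pi$ to semistable loci is a projective bundle of relative dimension $\fr(\beta)-e_{a-1}-1$ (consistently with the factor $[\fr(\beta)-e_{a-1}]_\kappa$ in Lemma~\ref{wc:lemma:pushforward-vertical-wall}). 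Fortunately this wrong claim is not needed: the functoriality argument you also give --- the commutative triangle over $\fM_\beta^{\pl}$ together with Theorem~\ref{thm:APOTs}\ref{it:APOT-functoriality} --- is exactly the paper's proof of \ref{wc:lemma:pair-to-flag-moduli-setupiii} and stands on its own, provided both semistable loci are algebraic spaces, which is where parts \ref{wc:lemma:pair-to-flag-moduli-setupi}--\ref{wc:lemma:pair-to-flag-moduli-setupii} enter.
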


\begin{proof}
  The claim \ref{wc:lemma:pair-to-flag-moduli-setupi} follows
  immediately from
  Lemma~\ref{wc:lem:general-semistable-loci}\ref{wc:lem:general-semistable-loci-iii}--\ref{wc:lem:general-semistable-loci-iv},
  since their conclusions cannot hold under our assumptions here.

  For \ref{wc:lemma:pair-to-flag-moduli-setupii}, we follow the proof
  of \cite[Prop. 10.14]{Joyce2021}. The statement that
  $\tilde\fM_{\beta, \vec{e}}^{\vec Q(\Fr),\sst}(\tau_x^s)$ is
  independent of $x$ and contains no strictly $\tau_x^s$-semistable
  objects, for both the $s=0$ and $s=1$ cases, holds by the same proof
  as in \cite[Proof of Prop. 10.14, (iii)]{Joyce2021}. So it remains
  to prove the equivalent characterization of semistable objects,
  which we do in \S\ref{wc:lemma:pair-to-flag-moduli-setup-proof-ii-1}
  and \S\ref{wc:lemma:pair-to-flag-moduli-setup-proof-ii-2} below.

  For \ref{wc:lemma:pair-to-flag-moduli-setupiii}, by
  Lemma~\ref{wc:lem:general-semistable-loci}\ref{wc:lem:general-semistable-loci-ii}
  and the results of \ref{wc:lemma:pair-to-flag-moduli-setupii},
  indeed \eqref{eq:flag-stripping-projective-bundle} restricts to
  $\pi^\pl$ as claimed, yielding a commutative triangle
  \[ \begin{tikzcd}
    \tilde\fM_{\beta, \vec{e}}^{\vec Q(\Fr),\sst}(\tau_x^s) \ar{dr}[swap]{\pi_{\fM_\beta^{\Fr}}} \ar{rr}{\pi^\pl} && \tilde\fM_{\beta,\vec{e}-\vec{1}_{[a,N]}}^{\vec Q(\Fr),\sst}(\tau_x^s) \ar{dl}{\pi_{\fM_\beta^{\Fr}}} \\
    & \fM_\beta^{\Fr} & 
  \end{tikzcd} \]
  where the vertical maps are the natural forgetful morphisms used to
  define APOTs on the auxiliary framed stacks. Both semistable loci in
  the upper row are algebraic spaces, so we conclude by the
  functoriality of APOTS
  (Theorem~\ref{thm:APOTs}\ref{it:APOT-functoriality}).

\subsubsection{}
\label{wc:lemma:pair-to-flag-moduli-setup-proof-ii-1}

  Suppose the element $(E, \vec V, \vec \rho)$ in $\tilde\fM_{\beta,
    \vec{e}}^{\vec Q(\Fr)}$ is $\tau^s_x$-semistable. Then by
  Lemma~\ref{wc:lem:general-semistable-loci}\ref{wc:lem:general-semistable-loci-ii}
  all $\rho_i$ must be injective and hence $\rho_i$ must be
  isomorphisms for $a \le i < N$. Furthermore, by the injectivity,
  $(E, \vec V', \vec\rho')$ becomes a subobject of $(E, \vec V,
  \vec\rho)$. We claim that any $\tau^s_x$-destabilizing sub-object
  $(E'', \vec V'', \vec\rho'')$ of $(E, \vec V', \vec\rho')$ is also a
  $\tau^s_x$-destabilizing sub-object of $(E, \vec V, \vec \rho)$. It
  suffices to show this for $x > x_0(a)$ very close to $x_0(a)$
  because $\tilde\fM_{\beta,\vec e}^{\vec Q(\Fr),\sst}(\tau^s_x)$ is
  independent of $x > x_0(a)$, and $\tilde\fM_{\beta,\vec e'}^{\vec
    Q(\Fr),\sst}(\tau^s_x)$ is also independent of $x > x_0(a')$,
  where $\vec e' \coloneqq \vec e - \vec 1_{[a,N]}$ and $a' < a$ is
  the minimal index with $e'_{a'} = e'_N$. Note that $a' < a$ implies
  $x_0(a') < x_0(a)$.

  Let $(\beta'', \vec e'')$ be the class of a destabilizing sub-object
  $(E'', \vec V'', \vec\rho'')$ of $(E, \vec V', \vec \rho')$. If
  $\beta'' = 0$ or $\tau(\beta'') > \tau(\beta - \beta'')$, then
  clearly $(E'', \vec V'', \vec\rho'')$ also destabilizes $(E, \vec V,
  \vec\rho)$. Otherwise, $\beta'' \neq 0$ and $\tau(\beta'') =
  \tau(\beta - \beta'')$, and we have to compare the quantities
  \begin{equation} \label{eq:flag-stripping-slopes}
    \begin{aligned}
      \tau^s_x(E/E'',\vec V'/\vec V'', \vec\rho'/\vec\rho'') &= \left(\tau(\beta-\beta''), \frac{s\lambda(\beta-\beta'') + (\vec \mu + x\vec 1) \cdot (\vec e - \vec 1_{[a,N]}-\vec e'')}{r(\beta-\beta'')}\right),\\
      \tau^s_x(E/E'', \vec V/\vec V'', \vec\rho/\vec \rho'') &= \left(\tau(\beta-\beta''), \frac{s\lambda(\beta-\beta'') + (\vec \mu + x\vec 1) \cdot (\vec e - \vec e'')}{r(\beta-\beta'')}\right).
    \end{aligned}
  \end{equation}
  These are equal at $x = x_0(a)$ because $(\vec\mu + x_0(a) \vec 1)
  \cdot \vec 1_{[a,N]} = 0$ by the definition of $x_0(a)$. By
  continuity in $x$, it follows that
  \[ \tau^s_x(E'', \vec V'', \vec \rho'') > \tau^s_x(E/E'',\vec V'/\vec V'', \vec\rho'/\vec\rho'') \implies \tau^s_x(E'', \vec V'', \vec \rho'') > \tau^s_x(E/E'',\vec V/\vec V'', \vec\rho/\vec\rho'') \]
  for $x > x_0(a)$ very close to $x_0(a)$. This is the desired claim.
  Hence $(E, \vec V', \vec \rho')$ is $\tau^s_x$-semistable, and the
  morphism $\pi$ is well-defined.

  If $s=1$ and $(\beta, \vec e) \in \mathring S_\alpha$, we modify the
  argument using properties of $\lambda$. Since $\beta \in \mathring
  R_\alpha$, we have $\lambda(\beta) = 0$. Given a
  $\tau^s_x$-destabilizing sub-object $(E'',\vec V'', \vec\rho'')$ of
  $(E, \vec V', \vec\rho')$, from $\tau(\beta'') = \tau(\beta-\beta'')$
  we conclude that also $\beta'',\beta-\beta'' \in \mathring
  R_{\alpha}$, so $\lambda(\beta'') = 0 = \lambda(\beta-\beta'')$.
  With this, the rest of the argument proceeds exactly as in the $s=0$
  case.

\subsubsection{}
\label{wc:lemma:pair-to-flag-moduli-setup-proof-ii-2}

  Conversely, suppose that $\rho_i$ is injective for $a\leq i \leq N$
  and that $(E, \vec V', \vec\rho')$ is $\tau^s_x$-semistable. We
  claim that any $\tau^s_x$-destabilizing sub-object $(E'', \vec V'',
  \vec\rho'')$ of $(E, \vec V, \vec \rho)$, with class denoted
  $(\beta'', \vec e'')$, induces a $\tau^s_x$-destabilizing sub-object
  of $(E, \vec V', \vec \rho')$. Namely, since all $\rho_i$ are
  injective, so are all $\rho''_i$ and thus $E'' \neq 0$ and we obtain
  a non-zero sub-object $(E'', \vec V''', \vec \rho''') \subset (E,
  \vec V', \vec \rho')$ from $(E'', \vec V'', \vec \rho'')$ by
  intersecting the two inside $(E, \vec V, \vec \rho)$. This
  sub-object has class $(\beta'', \vec e'' - \vec 1_{[b,N]})$ for some
  $b \ge a$. Moreover $(E'', \vec V''', \vec \rho''') \neq (E, \vec
  V', \vec \rho')$ because $(E, \vec V', \vec \rho')$ cannot
  $\tau^s_x$-destabilize $(E, \vec V, \vec\rho)$ for $x > x_0(a)$ by
  inspecting \eqref{eq:flag-stripping-slopes}. If $\tau(\beta'') >
  \tau(\beta - \beta'')$ then clearly $(E'', \vec V''', \vec \rho''')$
  also destabilizes $(E, \vec V', \vec \rho')$. Otherwise
  $\tau(\beta'') = \tau(\beta-\beta'')$ and we claim that
  \begin{equation} \label{eq:flag-stripping-slopes-2}
    \begin{aligned}
      \frac{s\lambda(\beta'') + (\vec\mu + x\vec 1) \cdot \vec e''}{r(\beta'')} &> \frac{s\lambda(\beta-\beta'') + (\vec\mu + x\vec 1) \cdot (\vec e - \vec e'')}{r(\beta - \beta'')} \\
      \implies \frac{s\lambda(\beta'') + (\vec\mu + x\vec 1) \cdot (\vec e'' - \vec 1_{[b,N]})}{r(\beta'')} &> \frac{s\lambda(\beta-\beta'') + (\vec\mu + x\vec 1) \cdot (\vec e - \vec e'' - \vec 1_{[a,b-1]})}{r(\beta - \beta'')},
    \end{aligned}
  \end{equation}
  i.e. $(E'', \vec V''', \vec\rho''')$ destabilizes $(E, \vec V', \vec
  \rho')$ anyways. To show this claim, like in
  \S\ref{wc:lemma:pair-to-flag-moduli-setup-proof-ii-1} we may assume
  $x > x_0(a)$ is very close to $x_0(a)$, in which case $(\mu + x\vec
  1) \cdot 1_{[a,N]} > 0$. If $b > a$ then $(\mu + x\vec 1) \cdot
  1_{[b,N]} < 0$ and we are done. Else if $b = a$, then the right hand
  sides of \eqref{eq:flag-stripping-slopes-2} are equal and the left
  hand sides of \eqref{eq:flag-stripping-slopes-2} are equal at $x =
  x_0(a)$, and therefore the desired implication holds by continuity
  in $x$.

  If $s=1$ and $(\beta, \vec e) \in \mathring R_\alpha$, again we
  conclude from $\tau(\beta'') = \tau(\beta-\beta'')$ that $\beta'',
  \beta - \beta'' \in \mathring R_{\alpha}$, so $\lambda(\beta) =
  \lambda(\beta'') = 0 = \lambda(\beta - \beta'')$. With this, the
  rest of the argument proceeds exactly as in the $s = 0$ case.
\end{proof}

\subsubsection{}

\begin{lemma} \label{wc:lem:pair-flag}
  Let $(\beta,\vec{e})\in S_{\alpha}$ be a flag and assume that $e_N
  \ge 2$. Then, for $s=0$,
  \begin{equation} \label{wc:eq:pair-flag-wc}
    \tilde\sz_{\beta,\vec{e}}^{s,x} = \begin{cases} 0 & x \leq x_0(a), \\ \left[\tilde\sz_{\beta, \vec{e}-\vec{1}_{[a,N]}}^{s,x'}, \tilde\sz_{0,\vec{1}_{[a,N]}}^{s,x'} \right] & x > x' \coloneqq x_0(a). \end{cases}
  \end{equation}
  If additionally $(\beta,\vec{e})\in \mathring S_{\alpha}$, then this
  also holds for $s=1$.
\end{lemma}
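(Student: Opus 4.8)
The plan is to combine the geometric setup of Lemma~\ref{wc:lemma:pair-to-flag-moduli-setup} with the homology projective bundle formula of Lemma~\ref{lem:flag-forget-projective}\ref{lem:flag-forgetii}. First I would dispose of the case $x \le x_0(a)$: by Lemma~\ref{wc:lemma:pair-to-flag-moduli-setup}\ref{wc:lemma:pair-to-flag-moduli-setupi}, the semistable locus $\tilde\fM_{\beta,\vec e}^{\vec Q(\Fr),\sst}(\tau_x^s)$ is empty, so $\tilde\sz_{\beta,\vec e}^{s,x} = 0$ by Definition~\ref{wc:def:artificial-invs}\ref{wc:def:aux-inv-v}. (Note $(\beta,\vec e)$ is a flag with $e_N \ge 2 \ge 1$, so case~\ref{wc:def:aux-inv-i} is the relevant one when the locus is nonempty.) So the substance is the case $x > x_0(a)$, where $\tilde\sz_{\beta,\vec e}^{s,x} = I_*\tilde\sZ_{\beta,\vec e}^{s,x}$, the de-rigidified universal enumerative invariant, and we want to identify it with $[\tilde\sz_{\beta,\vec e - \vec 1_{[a,N]}}^{s,x_0(a)}, \tilde\sz_{0,\vec 1_{[a,N]}}^{s,x_0(a)}]$.

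The key geometric input is Lemma~\ref{wc:lemma:pair-to-flag-moduli-setup}\ref{wc:lemma:pair-to-flag-moduli-setupii}--\ref{wc:lemma:pair-to-flag-moduli-setupiii}: for $x > x_0(a)$ the semistable locus is independent of $x$, has no strictly semistable objects, sits inside the injective locus, consists precisely of objects whose $\rho_i$ are isomorphisms for $a \le i < N$ over a $\tau_x^s$-semistable $(E, \vec V', \vec \rho')$, and the symmetrized virtual structure sheaf agrees with the one pulled back along the projective bundle $\pi^\pl\colon \tilde\fM_{\beta,\vec e}^{\vec Q(\Fr),\sst}(\tau_x^s) \to \tilde\fM_{\beta,\vec e - \vec 1_{[a,N]}}^{\vec Q(\Fr),\sst}(\tau_x^s)$. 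Since semistable loci are independent of $x$ for $x > x_0(a)$ (for both the class $(\beta, \vec e)$ and the class $(\beta, \vec e - \vec 1_{[a,N]})$, whose relevant threshold $x_0(a') < x_0(a)$ by the monotonicity \eqref{wc:eq:x0a-monotonicity}), we may evaluate everything at $x' = x_0(a)$ as a limit: $\tilde\sz_{\beta,\vec e - \vec 1_{[a,N]}}^{s,x'}$ and $\tilde\sz_{0,\vec 1_{[a,N]}}^{s,x'}$ are defined by taking the value just above the wall. (Here I use that $\tilde\fM_{0,\vec 1_{[a,N]}}^{\vec Q(\Fr),\sst}(\tau_x^s) = \pt$ is independent of $(s,x)$, by Lemma~\ref{wc:lem:general-semistable-loci}\ref{wc:lem:general-semistable-loci-i}, so $\tilde\sz_{0,\vec 1_{[a,N]}}^{s,x'} = I_*\partial_{[a,N]}$.) Now apply Lemma~\ref{lem:flag-forget-projective}\ref{lem:flag-forgetii} with $\phi = \tilde\sz_{\beta,\vec e - \vec 1_{[a,N]}}^{s,x'}$, which is supported on the injective locus $\tilde\fM_{\beta,\vec e - \vec 1_{[a,N]}}^{\vec Q(\Fr),\inj,\pl}$ by Lemma~\ref{wc:lem:general-semistable-loci}\ref{wc:lem:general-semistable-loci-ii}: this gives $[\tilde\sz_{\beta,\vec e - \vec 1_{[a,N]}}^{s,x'}, I_*\partial_{[a,N]}] = j_{a*}\hat\pi^*\tilde\sz_{\beta,\vec e - \vec 1_{[a,N]}}^{s,x'}$. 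The identification \eqref{bg:rem:symm-pb-formula-alg-sp} (that $\hat\pi^*$ on K-homology corresponds to symmetrized pullback of APOTs, i.e. $\sZ_X = \hat\pi^* \sZ_Y$) together with Lemma~\ref{wc:lemma:pair-to-flag-moduli-setup}\ref{wc:lemma:pair-to-flag-moduli-setupiii} shows that $\hat\pi^*\tilde\sz_{\beta,\vec e - \vec 1_{[a,N]}}^{s,x'}$, restricted appropriately and then pushed forward via $j_a$, is exactly $I_*\tilde\sZ_{\beta,\vec e}^{s,x} = \tilde\sz_{\beta,\vec e}^{s,x}$; the semistable locus of $(\beta,\vec e)$ lands in $\tilde\fM_{\beta,\vec e}^{\vec Q(\Fr),\inj}$ inside $\tilde\fM_{\beta,\vec e}^{\vec Q(\Fr),\inj(<a)}$, and the projective-bundle structure of $\pi$ matches the one used in Lemma~\ref{lem:flag-forget-projective}\ref{lem:flag-forgeti}. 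This chain of identifications yields \eqref{wc:eq:pair-flag-wc} for $s = 0$.

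For the $s = 1$ case under the additional hypothesis $(\beta, \vec e) \in \mathring S_\alpha$, every step above goes through verbatim: Lemma~\ref{wc:lemma:pair-to-flag-moduli-setup} explicitly asserts \ref{wc:lemma:pair-to-flag-moduli-setupii} and \ref{wc:lemma:pair-to-flag-moduli-setupiii} hold for $s = 1$ in this case, and Lemma~\ref{lem:flag-forget-projective} and \eqref{bg:rem:symm-pb-formula-alg-sp} are insensitive to the stability parameter. One just has to check that the intermediate class $(\beta, \vec e - \vec 1_{[a,N]})$ and the pieces appearing remain in $\mathring S_\alpha$ (they do, since $\beta$ is unchanged and the framing vectors only decrease), so that $\tilde\sz_{\beta,\vec e - \vec 1_{[a,N]}}^{1,x'}$ is well-defined in the relevant sense. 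The main obstacle I anticipate is purely bookkeeping: matching the precise open substacks ($\inj$ versus $\inj(<a)$ versus the honest semistable locus) across the isomorphisms \eqref{eq:flag-stripping-as-projective-bundle} and \eqref{eq:flag-stripping-as-projective-bundle-ambient}, and confirming that pushing forward along $j_a$ does not lose or gain support — i.e. that the $\tau_x^s$-semistable locus of $(\beta,\vec e)$ is exactly the preimage under $j_a$ of the image of the projective bundle over the $\tau_x^s$-semistable locus of $(\beta, \vec e - \vec 1_{[a,N]})$. This is where Lemma~\ref{wc:lemma:pair-to-flag-moduli-setup}\ref{wc:lemma:pair-to-flag-moduli-setupii}'s explicit characterization of semistable objects does the heavy lifting, but it requires care to invoke correctly.
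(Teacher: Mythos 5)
Your proposal is correct and follows essentially the same route as the paper: empty semistable locus for $x\le x_0(a)$ via Lemma~\ref{wc:lemma:pair-to-flag-moduli-setup}\ref{wc:lemma:pair-to-flag-moduli-setupi}, then for $x>x_0(a)$ the identification $\tilde\sz^{s,x}_{\beta,\vec e}=\hat\pi^*\tilde\sz^{s,x}_{\beta,\vec e-\vec 1_{[a,N]}}$ from \S\ref{bg:rem:symm-pb-formula-alg-sp} together with Lemma~\ref{wc:lemma:pair-to-flag-moduli-setup}\ref{wc:lemma:pair-to-flag-moduli-setupiii}, converted into the bracket $[\,\cdot\,, I_*\partial_{[a,N]}]$ by Lemma~\ref{lem:flag-forget-projective}\ref{lem:flag-forgetii}, and the identification of the pieces at $x$ with those at $x'$. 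The only slight imprecision is the ``limit'' phrasing: since $x'=x_0(a)>x_0(a')$, the invariant $\tilde\sz^{s,x'}_{\beta,\vec e-\vec 1_{[a,N]}}$ is defined directly (no strictly semistable objects at $x'$), no limiting procedure being needed.
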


\begin{proof}
  The claim for $x\leq x_0(a)$ follows from
  Lemma~\ref{wc:lemma:pair-to-flag-moduli-setup}\ref{wc:lemma:pair-to-flag-moduli-setupi}.
  Hence, assume $x_0(a)<x \leq 1$ and continue with the notation of
  Lemma~\ref{wc:lemma:pair-to-flag-moduli-setup}. We may assume that
  $\tilde\fM_{\beta,\vec e}^{\vec Q(\Fr),\sst}(\tau^s_x)\neq
  \emptyset$, otherwise there is nothing to prove. There are no
  strictly $\tau_{x'}^s$-semistable objects of class $(\beta, \vec e)$
  or $(\beta, \vec e-\vec 1_{[a,N]})$ by
  Lemma~\ref{wc:lemma:pair-to-flag-moduli-setup}\ref{wc:lemma:pair-to-flag-moduli-setupii}.
  We claim that the hypotheses of
  \S\ref{bg:rem:symm-pb-formula-alg-sp} hold and therefore
  \begin{equation} \label{wc:eq:pair-flag-wc-1}
    \tilde\sz_{\beta,\vec{e}}^{s,x} = \hat\pi^* \tilde\sz_{\beta,\vec{e}-\vec{1}_{[a,N]}}^{s,x}
  \end{equation}
  where $\hat\pi^*$ is symmetrized pullback on K-homology. (Here we
  have implicitly used the base change formula $I_*\hat{\pi^\pl}^* =
  \hat\pi^* I_*$.) Namely, by
  Assumption~\ref{assump:wall-crossing}\ref{assump:it:properness-wcf},
  the source and target of $\pi$ are both algebraic spaces with proper
  $\sT$-fixed loci with the resolution property, and, by
  Lemma~\ref{wc:lemma:pair-to-flag-moduli-setup}\ref{wc:lemma:pair-to-flag-moduli-setupiii},
  they carry APOTs related by symmetrized pullback. Moreover, by
  Lemma~\ref{wc:lem:general-semistable-loci}\ref{wc:lem:general-semistable-loci-ii},
  the $\tau_x^s$-semistable locus is contained in the locus where all
  $\rho_j$ are injective, so
  Lemma~\ref{lem:flag-forget-projective}\ref{lem:flag-forgetii} yields
  \begin{equation} \label{wc:eq:pair-flag-wc-2}
    \hat\pi^* \tilde\sz_{\beta,\vec{e}-\vec{1}_{[a,N]}}^{s,x} = \left[\tilde\sz_{\beta,\vec{e}-\vec{1}_{[a,N]}}^{s,x}, I_*\partial_{[a,N]}\right].
  \end{equation}
  We conclude by combining \eqref{wc:eq:pair-flag-wc-1} and
  \eqref{wc:eq:pair-flag-wc-2} with the
  Definition~\ref{wc:def:artificial-invs}\ref{wc:def:aux-inv-iv} that
  $\tilde\sz^{s,x'}_{0,\vec 1_{[a,N]}} = I_*\partial_{[a,N]}$. Note
  that
  Lemma~\ref{wc:lemma:pair-to-flag-moduli-setup}\ref{wc:lemma:pair-to-flag-moduli-setupii}
  implies $\tilde\sz_{\beta, \vec{e}-\vec{1}_{[a,N]}}^{s,x} =
  \tilde\sz_{\beta, \vec{e}-\vec{1}_{[a,N]}}^{s,x'}$.
\end{proof}

\subsubsection{}

\begin{lemma} \label{wc:lem:pair-flag-vanishing}
  Let $(\beta, \vec e) \in S_\alpha$ be a flag and assume that $e_N
  \ge 1$. Then, for $s = 0$,
  \begin{enumerate}[label = (\roman*)]
  \item \label{it:pair-flag-vanishing-i} if $x \ge x' > x_0(a)$, then
    $\hat\sz^{s,x}_{\beta, \vec e} = \hat\sz^{s,x'}_{\beta, \vec e}$;
  \item \label{it:pair-flag-vanishing-ii} if $x > x' = x_0(a)$ and
    $e_N \ge 2$, then
    \begin{equation} \label{wc:eq:pair-flag-wc-artificial}
      \hat\sz^{s,x}_{\beta, \vec e} = \left[\hat\sz^{s,x'}_{\beta, \vec e-\vec 1_{[a,N]}}, \hat\sz^{s,x'}_{0,\vec 1_{[a,N]}}\right].
    \end{equation}
  \end{enumerate}
  If additionally $(\beta,\vec e)\in \mathring S_\alpha$, then these
  also hold for $s = 1$.
\end{lemma}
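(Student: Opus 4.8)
The plan is to mirror the proof of the auxiliary (geometric) analogue Lemma~\ref{wc:lem:pair-flag}, but entirely in the combinatorial world of the universal coefficients $\tilde U$ and the Lie bracket, reducing the artificial wall-crossing formula (Lemma~\ref{wc:lem:artificial-base-wcf}) to the claimed shape using the vanishing results Lemma~\ref{wc:lem:vanishing-basic}. The argument is exactly parallel to Joyce's \cite[Prop. 10.15]{Joyce2021}, so it is mostly a matter of bookkeeping, with the one genuinely new ingredient being the appearance of honest Lie brackets on the auxiliary stack $\tilde\fM^{\vec Q(\Fr)}$ (as opposed to an artificial Lie algebra), which is handled by the support-tracking in Lemma~\ref{wc:lem:vanishing-basic}\ref{it:vanishing-basic-ii}.

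For part~\ref{it:pair-flag-vanishing-i}, I would apply the artificial wall-crossing formula \eqref{wc:eq:artificial-base-wcf} (or \eqref{wc:eq:artificial-base-wcf-s1} when $s=1$) going from $(s,x')$ to $(s,x)$, and argue that only the $n=1$ term survives. The key point is that the family of stability conditions $\{\tau^s_t\}_{t \in [x',x]}$ has no walls affecting the relevant classes: with $a$ the minimal index such that $e_a = e_N$, the only ``vertical wall'' for classes $(\beta,\vec e)$ with this $\vec e$ occurs at $x = x_0(a)$ (this is precisely the content of \eqref{wc:eq:x0a}), and by hypothesis $x \ge x' > x_0(a)$. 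More precisely, in any non-zero term with $n > 1$ there must be an index $i$ with $\beta_i = 0$ (by Lemma~\ref{wc:lem:vanishing-basic}\ref{it:vanishing-basic-i}, $\vec e_i = \vec 1_{[a_i,b_i]}$; and as in the proof of Lemma~\ref{wc:lem:vanishing-sst-pair}, when $b_i < N$ or $\beta_i \ne 0$ the corresponding Lie bracket or coefficient vanishes), and the remaining combinatorial constraint forces $\tau^s_{x'}$- and $\tau^s_x$-slope comparisons that make $\tilde U$ vanish via Lemma~\ref{wc:lem:U-properties}\ref{it:U-vanishing}, since $(\vec\mu + t\vec 1)\cdot \vec 1_{[a,N]} > 0$ for all $t > x_0(a)$. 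Hence $\hat\sz^{s,x}_{\beta,\vec e} = \hat\sz^{s,x'}_{\beta,\vec e}$.

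For part~\ref{it:pair-flag-vanishing-ii}, I would apply \eqref{wc:eq:artificial-base-wcf} going from $(s, x')$ with $x' = x_0(a)$ to $(s,x)$ with $x > x_0(a)$, and show the right-hand side collapses to the single term $[\hat\sz^{s,x'}_{\beta,\vec e - \vec 1_{[a,N]}}, \hat\sz^{s,x'}_{0,\vec 1_{[a,N]}}]$. Using Lemma~\ref{wc:lem:vanishing-basic} as in \S\ref{wc:sec:gen-van-res-proof-ii-iv-possible-classes}: in any surviving term, each $(\beta_i,\vec e_i)$ has either $\beta_i = 0$ (hence $\vec e_i = \vec 1_{[a_i,b_i]}$) or $\vec e_i$ satisfying \eqref{wc:eq:gen-van-res-nonzero-beta}; the assumption $e_N \ge 2$ with $\vec e$ a flag, together with the genericity of $\vec\mu$ forcing all the ``vertical wall'' indices to coincide, pins down that there is exactly one index $l$ with $\beta_l = 0$ and $\vec e_l = \vec 1_{[a,N]}$, while the others assemble into a class $(\beta, \vec e - \vec 1_{[a,N]})$ which is still a flag. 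The residual $\tilde U$-coefficient is computed exactly as in the proof of Lemma~\ref{wc:lem:vanishing-sst-pair}: using $\tau^s_x(\gamma_i,0) = \tau^s_x(\gamma_j,0)$, the inequalities $\tau^s_{x'}(\gamma_i,0) > \tau^s_{x'}(0,\vec 1_{[a,N]})$ and $\tau^s_x(\gamma_i,0) < \tau^s_x(0,\vec 1_{[a,N]})$ (the latter requiring $x > x_0(a)$), Proposition~\ref{prop:Utilde} (or \cite[(10.33)]{Joyce2021}) gives that only the decomposition with $l = n$ (i.e. the $(0,\vec 1_{[a,N]})$ factor last) contributes, with coefficient $1$; the remaining freedom in ordering the $\beta_i \ne 0$ pieces reassembles $\hat\sz^{s,x'}_{\beta, \vec e - \vec 1_{[a,N]}}$ by definition of the artificial invariants via \eqref{eq:artificial-invariant} and the Jacobi identity, while $\hat\sz^{s,x'}_{0,\vec 1_{[a,N]}} = \tilde\sz^{0,-1}_{0,\vec 1_{[a,N]}} = I_*\partial_{[a,N]}$ by \eqref{eq:artificial-invariant-pure-framing}. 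The $s=1$ case under the hypothesis $(\beta,\vec e) \in \mathring S_\alpha$ proceeds identically, using \eqref{wc:eq:artificial-base-wcf-s1} and restricting all decompositions to $\hat S_\alpha$ (justified by $\lambda$ vanishing on $\hat R_\alpha$).

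The main obstacle is not any single deep step but rather the careful casework in part~\ref{it:pair-flag-vanishing-ii} needed to rule out all decompositions except the intended one — in particular, verifying that the support-tracking in Lemma~\ref{wc:lem:vanishing-basic}\ref{it:vanishing-basic-ii} is strong enough that spurious Lie brackets (e.g. between two framing-only factors, or between factors whose $\rho$-injectivity patterns are incompatible) genuinely vanish rather than merely being controlled combinatorially. This is exactly the place where our formulation differs from \cite[\S 10.3]{Joyce2021} (see footnote~\ref{footnote:artificial-invariants}), and it is essential to invoke Lemma~\ref{lem:flag-forget-projective}\ref{lem:flag-forgetii} and the bilinearity/$\kappa$-symmetry of the elements $\tilde\scE^{\vec Q(\Fr)}$ to see that the relevant K-theory classes \eqref{eq:framed-stack-forgetful-map-cotangent} vanish on the product of the pertinent support loci.
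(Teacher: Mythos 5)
Your overall strategy — apply the artificial wall-crossing formula \eqref{wc:eq:artificial-base-wcf} (resp.\ \eqref{wc:eq:artificial-base-wcf-s1}) from $(s,x')$ to $(s,x)$ and kill the unwanted terms via Lemma~\ref{wc:lem:vanishing-basic} and Lemma~\ref{wc:lem:U-properties}\ref{it:U-vanishing} — is indeed the paper's, but the execution has a gap in both parts. In part~\ref{it:pair-flag-vanishing-i} you assert that any non-zero term with $n>1$ must contain a piece with $\beta_i=0$, and dismiss the rest ``as in the proof of Lemma~\ref{wc:lem:vanishing-sst-pair}''. That claim is unjustified, and it skips the essential case: decompositions with \emph{all} $\beta_i\neq 0$ and a non-trivial splitting of $\vec e$, for which the invariants $\hat\sz^{s,x'}_{\beta_i,\vec e_i}$ can perfectly well be non-zero and the wall at $x_0(a)$ is irrelevant. (In Lemma~\ref{wc:lem:vanishing-sst-pair} the framing vector is $\vec 1_{[a,N]}$, so non-framing pieces automatically have $\vec e_i=\vec 0$; for a general flag this fails.) The paper kills these terms by a separate argument you do not supply: by Lemma~\ref{wc:lem:vanishing-basic}\ref{it:vanishing-basic-ii} each $\vec e_i$ increases in steps of at most one, so exactly one piece carries the ``first $1$'' of $\vec e$ at the index $i_1(\vec e)$; taking $I=\{i\}$ for that piece, the hierarchy $\mu_{i_1(\vec e)}\gg\mu_{i'}$ together with $\abs{x},\abs{x'}\le -x_0(a)\ll\mu_{i_1(\vec e)}$ and $r(\beta_i)<r(\beta)$ yields the strict slope inequalities needed for Lemma~\ref{wc:lem:U-properties}\ref{it:U-vanishing}.

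In part~\ref{it:pair-flag-vanishing-ii} the same omission resurfaces in a more damaging form: after correctly pinning the unique $\beta_l=0$ piece to $(0,\vec 1_{[a,N]})$, you allow terms with several $\beta_i\neq 0$ pieces and propose to ``reassemble'' them into $\hat\sz^{s,x'}_{\beta,\vec e-\vec 1_{[a,N]}}$ via \eqref{eq:artificial-invariant} and the Jacobi identity. This does not work: those pieces enter the wall-crossing formula as brackets of $\hat\sz^{s,x'}$'s with coefficients $\tilde U(\cdots;\tau^{s}_{x'},\tau^{s}_{x})$, whereas \eqref{eq:artificial-invariant} expresses $\hat\sz^{s,x'}_{\beta,\vec e-\vec 1_{[a,N]}}$ through coefficients $\tilde U(\cdots;\tau^{0}_{-1},\tau^{s}_{x'})$ applied to brackets of $\tilde\sz^{0,-1}$'s, and by Lemma~\ref{wc:lem:U-properties}\ref{it:U-identity} there is no non-trivial expansion of it in terms of the $\hat\sz^{s,x'}$'s of its pieces. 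The correct route is to prove that \emph{every} term with $n\ge 3$ vanishes — again the first-$1$ argument, where $n\ge 3$ guarantees a second non-zero $\beta_j$ and hence $r(\beta_i)<r(\beta)$, and $(\vec\mu+x'\vec 1)\cdot\vec 1_{[a,N]}=0$ handles the comparison with the framing piece — to rule out $n=1$ via Lemma~\ref{wc:lem:vanishing-basic}\ref{it:vanishing-basic-iii} at $x'=x_0(a)$ (here $e_N\ge 2$ forces $e_{a-1}<e_N$), and then to compute the two $n=2$ coefficients $S=U=\pm 1$ directly from Definition~\ref{def:universal-coefficients}; Proposition~\ref{prop:Utilde} and the computation quoted from Lemma~\ref{wc:lem:vanishing-sst-pair} are not the relevant ones here, and Lemma~\ref{lem:flag-forget-projective}\ref{lem:flag-forgetii} is not needed in this proof at all.
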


\begin{proof}
  First, for either \ref{it:pair-flag-vanishing-i} or
  \ref{it:pair-flag-vanishing-ii}, use the artificial wall-crossing
  formula \eqref{wc:eq:artificial-base-wcf} with $(s_1, x_1) = (0, x)$
  and $(s_2, x_2) = (0, x')$ to express
  $\hat\sz_{\beta,\vec{e}}^{0,x}$ in terms of invariants
  $\hat\sz^{0,x'}_{\beta_i,\vec{e}_i}$. Consider a non-vanishing term
  in the sum, corresponding to a decomposition $(\beta,\vec{e}) =
  (\beta_1,\vec{e}_1) + \cdots + (\beta_n,\vec{e}_n)$. By
  Lemma~\ref{wc:lem:vanishing-basic}\ref{it:vanishing-basic-i}, the
  only classes of the form $(0, \vec e_i)$ which appear must have
  $\vec e_i = \vec 1_{[a_i,b_i]}$ for some $1 \le a_i \le b_i \le N$.

  Suppose for the sake of contradiction that $\tau^0_{x'}(0, \vec e_i)
  = (\infty, *)$ for some $i$. This cannot happen if $n=1$ as $\beta
  \neq 0$, so we may assume $n \ge 2$. Since $x' \le x$, by the
  definition of $\tau^s_x$ we also have $\tau_x^0(0,\vec{e}_i) =
  (\infty,*)$. Let $I \coloneqq \{i :
  \tau_{x'}^0(\beta_i,\vec{e}_i)=(\infty, *)\}$. This can not be all
  of $\{1, \ldots, n\}$ as $\beta\neq 0$, and is non-empty by
  assumption. Applying
  Lemma~\ref{wc:lem:U-properties}\ref{it:U-vanishing}, the coefficient
  $\tilde U$ in the term vanishes, which contradicts our assumption.
  Hence if $(\beta_i,\vec{e}_i) = (0,\vec{1}_{[a',b]})$ for some $i$,
  then necessarily $(\vec{\mu} + x'\vec{1})\cdot \vec{1}_{[a',b]} \le
  0$, or, equivalently,
  \begin{equation}\label{wc:eq:xprime-ineq}
    x' \le -\frac{\mu_{a'} + \cdots + \mu_b}{b-a'+1} \le x_0(a').
  \end{equation}

\subsubsection{}
\label{wc:sec:pair-flag-vanishing-i-proof}

  We prove \ref{it:pair-flag-vanishing-i}. In this case, $x_0(a) < x'$
  implies that, for \eqref{wc:eq:xprime-ineq} to hold, any term of the
  form $(0,\vec{1}_{[a',b]})$ must have $a'>a$. If such a term occurs,
  then there must also be a term of the form $(\beta_i,\vec{e}_i)$
  with $\beta_i\neq 0$ and with $e_{i,\ell} > e_{i,\ell+1}$ for some
  $a \le \ell < N$, which would contradict
  Lemma~\ref{wc:lem:vanishing-basic}\ref{it:vanishing-basic-ii}.
  Therefore $\beta_i\neq 0$ for all $i$. Applying
  Lemma~\ref{wc:lem:vanishing-basic}\ref{it:vanishing-basic-ii}, it
  follows that each $\vec{e}_i$ is monotonically increasing, i.e.
  $e_{i,\ell} \le e_{i,\ell+1}$ for all $1 \le i \le n$ and $1 \le
  \ell < N$. In fact, each $\vec e_i$ is monotonically increasing in
  steps of at most one, i.e. $e_{i,\ell+1} \le e_{i,\ell} + 1$ as
  well, otherwise the condition $e_{\ell+1} \le e_\ell+1$ cannot be
  satisfied.

  Now assume $n>1$. Since $(\beta, \vec e)$ is a flag and $e_N \ge 1$,
  there is a minimal index $i_1(\vec{e})$ such that
  $e_{i_1(\vec{e})}=1$. This implies that, within the decomposition
  $\vec e = \vec e_1 + \cdots + \vec e_n$, there is a unique index $i$
  with $e_{i,i_1(\vec{e})} = 1$. Let $I \coloneqq \{i\}$. For $j \neq
  i$,
  \[ \tau_{x'}^0(\beta_i,\vec{e}_i) = \left(\tau(\beta), \frac{(\vec{\mu}+{x'}\vec{1})\cdot\vec{e}_i}{r(\beta_i)}\right) > \left(\tau(\beta), \frac{(\vec{\mu} + x'\vec{1})\cdot \vec{e}_j}{r(\beta_j)}\right) =  \tau_{x'}^0(\beta_j,\vec{e}_j) \]
  by using that $\mu_{i_1(\vec{e})}\gg \mu_{i'}$ for $i'>i_1(\vec{e})$
  and that
  \[ \abs{x'}\leq - x_0(a)= \frac{\mu_{a} + \cdots +\mu_N}{N-a+1} \ll \mu_{i_1(\vec{e})}. \]
  Similarly, $\tau_x^0(\beta_i,\vec e_i) > \tau_x^0(\beta,\vec e)$
  since the $\mu_{i_1(\vec{e})}$ contribution dominates and
  $r(\beta_i)< r(\beta)$. Applying
  Lemma~\ref{wc:lem:U-properties}\ref{it:U-vanishing}, the coefficient
  $\tilde{U}$ in the term vanishes, which contradicts our assumption.
  Hence, no terms with $n>1$ can contribute to the artificial
  wall-crossing formula, proving \ref{it:pair-flag-vanishing-i}.

  If $s=1$ and $(\beta, \vec e) \in \mathring S_\alpha$, we use the
  artificial wall-crossing formula
  \eqref{wc:eq:artificial-base-wcf-s1} instead of
  \eqref{wc:eq:artificial-base-wcf}, and use that $\lambda(\beta_i) =
  0$ for classes $\beta_i \in \mathring R_\alpha$. With this, the rest
  of the argument proceeds exactly as in the $s=0$ case.

\subsubsection{}\label{wc:lem:pair-flag-vanishing-ii}

  We prove \ref{it:pair-flag-vanishing-ii}, where $x' = x_0(a)$. In
  this case, by \eqref{wc:eq:xprime-ineq}, any class of the form
  $(0,\vec{1}_{[a',b]})$ must have $a'\geq a$. The argument in
  \S\ref{wc:sec:pair-flag-vanishing-i-proof} rules out the case $a' >
  a$, so $a' = a$. Then the inequalities in \eqref{wc:eq:xprime-ineq}
  are all equalities, which implies $b=N$. Hence any class in the
  decomposition with $\beta_i = 0$ must have $\vec e_i = \vec
  1_{[a,N]}$. Since $(\beta, \vec e)$ is a flag, at most one such
  class may exist. If no such classes exist in the decomposition, the
  argument in \S\ref{wc:sec:pair-flag-vanishing-i-proof} applies to
  prove that $n=1$, but then our assumptions on $\vec e$ violate
  Lemma~\ref{wc:lem:vanishing-basic}\ref{wc:lem:general-semistable-loci-iii}
  and therefore $\hat\sz_{\beta,\vec{e}}^{0,x'}$ vanishes, a
  contradiction. So, every non-vanishing term has $n \ge 2$ and there
  is exactly one $i$ such that $\beta_i = 0$.

  Assume for the sake of contradiction that some non-vanishing term in
  \eqref{wc:eq:artificial-base-wcf} has $n \ge 3$. Then, as in
  \S\ref{wc:sec:pair-flag-vanishing-i-proof}, take $I \coloneqq \{i\}$
  where $\vec{e}_{i, i_1(\vec{e})} = 1$. Since $a$ is the minimal
  index with $e_a = e_N$ and $e_N \ge 2$ by hypothesis, $i$ cannot be
  the index of the class $(0,\vec 1_{[a,N]})$ in the decomposition.
  Then the argument of \S\ref{wc:sec:pair-flag-vanishing-i-proof}
  shows that the coefficient $\tilde U$ in the term vanishes, a
  contradiction. Here we use that $(\vec \mu+x'\vec 1)\cdot \vec
  1_{[a,N]}=0$ so that $\tau_{x'}^0(\beta_i, \vec e_i) >
  \tau_{x'}^0(0, \vec 1_{[a,N]})$, and the assumption $n \ge 3$ is
  necessary so that there is at least one other term with $\beta_j
  \neq 0$ for $j \neq i$, in order to conclude that $r(\beta_i) <
  r(\beta)$.

  Thus, the only non-vanishing terms have $n=2$ and correspond to the
  decomposition of $(\beta, \vec e)$ into
  $(\beta,\vec{e}-\vec{1}_{[a,N]})$ and $(0,\vec{1}_{[a,N]})$ in some
  order. In this case, one can compute the coefficient $\tilde U$
  directly from Definition~\ref{def:universal-coefficients}: 
  \begin{align*}
    U((0,\vec{1}_{[a,N]}),(\beta,\vec{e}-\vec{1}_{[a,N]});\tau_{x_0(a)}^s,\tau_x^s) &=  S((0,\vec{1}_{[a,N]}),(\beta,\vec{e}-\vec{1}_{[a,N]});\tau_{x_0(a)}^s,\tau_x^s) = -1 \\
    U((\beta,\vec{e}-\vec{1}_{[a,N]}), (0,\vec{1}_{[a,N]});\tau_{x_0(a)}^s,\tau_x^s) &=  S((\beta,\vec{e}-\vec{1}_{[a,N]}, (0,\vec{1}_{[a,N]}));\tau_{x_0(a)}^s,\tau_x^s) = 1.
  \end{align*}
  By Lemma~\ref{lem:Utilde-definition}, this shows that the only
  non-vanishing term is $[\hat\sz^{s,x'}_{\beta, \vec e-\vec
      1_{[a,N]}}, \hat\sz^{s,x'}_{0,\vec 1_{[a,N]}}]$.

  If $s=1$ and $(\beta, \vec e) \in \mathring S_\alpha$, again, we use
  the artificial wall-crossing formula
  \eqref{wc:eq:artificial-base-wcf-s1} instead of
  \eqref{wc:eq:artificial-base-wcf}, and follow the same argument as
  in the $s=0$ case.
\end{proof}

\subsection{The ``horizontal'' wall-crossing}
\label{wc:sec:horizontal-wc}

\subsubsection{}

In this subsection, we follow the horizontal direction in
Figure~\ref{fig:dominant-wc-strategy-2}. Namely, we consider how the
invariants $\tilde\sz_{\beta,\vec{e}}^{s,0}$ and
$\hat\sz_{\beta,\vec{e}}^{s,0}$ change as $s$ ranges from $0$ to $1$.
By Lemma~\ref{wc:lem:wall-crossing-horizontal-setup}, there are a
finite number of walls in $s$, whose locations depend on the class
$(\beta, \vec e)$, and each wall is {\it simple} in the sense that
each decomposition of a polystable object (i.e. a semi-stable object
whose destabilizing sub-objects are direct summands) has at most two
parts. Thus, unlike at $x=-1$, master space techniques are applicable.
We list the main reuslts.
\begin{itemize}
\item (Proposition~\ref{wc:prop:horizontal-wc}) There is a
  ``horizontal'' wall-crossing formula for the invariants
  $\tilde\sz_{\beta,\vec e}^{s,0}$ at each simple wall in $s$.
\item (Proposition~\ref{wc:prop:horizontal-vanishing}) The artificial
  invariants $\hat\sz_{\beta,\vec e}^{s,0}$ satisfy the {\it same}
  ``horizontal'' wall-crossing formula.
\end{itemize}

\subsubsection{}

\begin{lemma}\label{wc:lem:wall-crossing-horizontal-setup}
  Let $(\beta, \vec e) \in S_\alpha$ be a full flag. Suppose that
  there is a decomposition $(\beta, \vec e) = (\beta_1, \vec e_1) +
  \cdots + (\beta_n, \vec e_n)$ with $n \ge 2$, such that for some $s
  \in [0,1]$,
  \begin{equation} \label{eq:wall-crossing-horizontal-walls}
    \tau_0^s(\beta_1, \vec e_1) = \cdots = \tau_0^s(\beta_n, \vec e_n),
  \end{equation}
  and $\tilde\fM^{\vec Q(\Fr),\sst}_{\beta_i,\vec e_i}(\tau_0^s) \neq
  \emptyset$ for all $1 \le i \le n$. Then $n = 2$ and $s \in (0,1)$,
  each $(\beta_i, \vec e_i)$ is a full flag, and there are no strictly
  $\tau_0^s$-semistable objects of class $(\beta_i, \vec e_i)$. For a
  given $(\beta, \vec e)$, there are only finitely many possibilities
  for $s$, which we denote
  \[ 0 \eqqcolon s_0 < s_1 < \cdots < s_K < s_{K+1} \coloneqq 1. \]
\end{lemma}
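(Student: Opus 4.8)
The plan is to establish three separate assertions: (1) if a decomposition as in \eqref{eq:wall-crossing-horizontal-walls} with $n \ge 2$ exists, then in fact $n = 2$; (2) such $s$ must lie strictly between $0$ and $1$; (3) for fixed $(\beta, \vec e)$ there are only finitely many such $s$, and the pieces $(\beta_i, \vec e_i)$ are full flags with no strictly semistable objects. I would handle these essentially by unwinding the definition \eqref{wc:eq:joyce-framed-stack-stability} of $\tau_0^s$ together with the structural constraints coming from Lemma~\ref{wc:lem:general-semistable-loci} and Lemma~\ref{wc:lem:vanishing-basic}.

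First I would prove that each $(\beta_i, \vec e_i)$ is a full flag. Since $\tilde\fM^{\vec Q(\Fr),\sst}_{\beta_i,\vec e_i}(\tau_0^s) \neq \emptyset$, Lemma~\ref{wc:lem:general-semistable-loci}\ref{wc:lem:general-semistable-loci-ii} forces each $\vec e_i$ to be monotone with $e_{i,N} \le \fr(\beta_i)$, while the constraint $\vec e = \sum_i \vec e_i$ together with the flag condition on $(\beta, \vec e)$ — namely $e_{j+1} \le e_j + 1$ — prevents two of the $\vec e_i$ from jumping at the same step. Since $(\beta, \vec e)$ is a \emph{full} flag, $e_N = \fr(\beta) = \sum \fr(\beta_i) \ge \sum e_{i,N} = e_N$, so every inequality is an equality, giving $e_{i,N} = \fr(\beta_i)$ — each $(\beta_i, \vec e_i)$ is a full flag. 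In particular each $\beta_i \ne 0$, which rules out pure-framing pieces. Next, the equality \eqref{eq:wall-crossing-horizontal-walls} forces $\tau(\beta_1) = \cdots = \tau(\beta_n)$ (comparing first coordinates), so all $\beta_i \in R_\alpha$ (using the weak see-saw and that $\beta \in R_\alpha \sqcup \{0\}$, cf. Lemma~\ref{wc:lem:R-sets}); the condition $\tau(\beta_i) = \tau(\alpha - \beta_i)$ then puts us in the first case of \eqref{wc:eq:joyce-framed-stack-stability} for every $i$, so matching the second coordinates gives
\[
\frac{s\lambda(\beta_1) + \vec\mu \cdot \vec e_1}{r(\beta_1)} = \cdots = \frac{s\lambda(\beta_n) + \vec\mu \cdot \vec e_n}{r(\beta_n)}.
\]
(Here $x = 0$.) Crucially, each $(\beta_i, \vec e_i)$ being a full flag means $\vec e_i$ is determined by $\beta_i$ (it is $(0,1,\dots)$-type up to the values $\fr$ of successive sub-framings), so the numerators are functions of the $\beta_i$ alone. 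This is one linear equation in $s$ for each adjacent pair; generically in $s$ it has no solution, and when it does, the solution set is a single value of $s$, so for fixed $(\beta, \vec e)$ there are finitely many such $s$, since $S_\alpha$ is finite and hence there are finitely many possible decompositions. I would then argue $s \notin \{0, 1\}$: at $s = 0$ the common-slope equation becomes $\vec\mu \cdot \vec e_i / r(\beta_i)$ all equal, which by the genericity of $\vec\mu$ (the $\gg$-conditions, as used in \S\ref{wc:sec:def-x0a}) combined with $0 < e_{i,1} \le \cdots$ forces the $\vec e_i$ — hence the decomposition — to be trivial, contradicting $n \ge 2$; at $s = 1$ we are precisely in the setting of the target dominant wall-crossing and $(\beta, \vec e) = (\alpha, \vec 0)$ is excluded since full flags have $\vec e \ne \vec 0$, and more directly Lemma~\ref{wc:lem:general-semistable-loci}\ref{wc:lem:general-semistable-loci-ii} plus the hypothesis $\lambda(\beta) = 0$ structure rules out a nontrivial polystable splitting at the $\mathring\tau$-wall.

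\textbf{The $n = 2$ reduction.} This I expect to be the main obstacle. The idea is that a wall in the one-parameter family $\{\tau_0^s\}_{s \in [0,1]}$ is where some single linear function $s \mapsto s\lambda(\beta') + \vec\mu \cdot \vec e'$ (suitably normalized by ranks) crosses another; because $\lambda$ is a homomorphism and $r$ is additive, the function $s \mapsto (s\lambda(\beta_i) + \vec\mu \cdot \vec e_i)/r(\beta_i)$ for the summands, when they all agree at $s$, must agree because each equals the corresponding function for $(\beta, \vec e)$ by the usual mediant (``$(a+b)/(c+d)$ lies between $a/c$ and $b/d$'') argument — so at the wall $s$, the common value is forced and the $(\beta_i, \vec e_i)$ are exactly the HN-type pieces of the Jordan--Hölder filtration of a polystable object of class $(\beta, \vec e)$. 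The claim $n = 2$ is then the statement that this wall is \emph{simple}: this follows from the fact that, having fixed the slope $\tau(\beta) = \tau(\alpha)$ and being at a wall value of $s$, the affine-linear function $\beta' \mapsto s\lambda(\beta')$ takes at most two values on the classes appearing (it is linear, the $\vec\mu \cdot \vec e'$ contributions are $\gg$-negligible perturbations and the genericity of $\vec\mu$ ensures no accidental extra coincidences), because $\lambda(\beta_1) + \cdots + \lambda(\beta_n) = \lambda(\beta) = 0$ forces exactly a two-sided split. I would make this precise by showing that at a wall $s = s_k$, only one ``equation'' \eqref{wc:eq:s-eq-genericity-nonzero} (in the paper's numbering) can be satisfied by the genericity of $\vec\mu$, so the polystable object splits as a sum of two isotypic pieces, giving $n = 2$ and $(\beta, \vec e) = (\beta_1, \vec e_1) + (\beta_2, \vec e_2)$ with both summands full flags. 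Finally, the absence of strictly $\tau_0^{s_k}$-semistable objects of class $(\beta_i, \vec e_i)$ follows because a full flag framing on a single $\tau$-semistable $E$ rigidifies it completely (any destabilizing sub-object would have to be compatible with the full flag of framings, forcing it to be a direct summand with the same slope, contradiction with the additivity/positivity of $r$ and $\lambda$ exactly as in the proof of Lemma~\ref{lem:joyce-framed-stack-stability} and Lemma~\ref{lem:pairs-stack-sst=st}); one checks the strict inequality $\tau_0^{s_k}(\beta', \vec e') \ne \tau_0^{s_k}((\beta_i-\beta')', (\vec e_i - \vec e')')$ separating sub from quotient in every case.
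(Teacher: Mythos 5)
Your outline follows the same general route as the paper (use Lemma~\ref{wc:lem:general-semistable-loci} to force the pieces to be full flags, turn \eqref{eq:wall-crossing-horizontal-walls} into the linear equations \eqref{wc:eq:s-eq-genericity-nonzero}, invoke genericity of $\vec\mu$), but the two central assertions are not actually established. For $n=2$ your argument rests on claims that are false: a full flag $(\beta_i,\vec e_i)$ is \emph{not} determined by $\beta_i$ (the full-flag condition fixes $e_{i,N}=\fr(\beta_i)$ but not where the unit jumps occur), $\lambda(\beta)$ need not vanish ($\lambda$ vanishes on $\hat R_\alpha$, not on all of $R_\alpha$), and ``$\lambda(\beta_1)+\cdots+\lambda(\beta_n)=0$ forces a two-sided split'' is not a valid inference -- at a wall all the normalized slopes are \emph{equal}, so counting how many values $s\lambda(\beta_i)$ takes says nothing; moreover the $\vec\mu\cdot\vec e_i$ terms cannot be dismissed as negligible perturbations, since they are exactly what separates decompositions with repeated $\beta_i$'s. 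The paper's mechanism, which your sketch omits, is: assuming $n>2$, solve one equation of \eqref{wc:eq:s-eq-genericity-nonzero} for $s$ and substitute into the next, obtaining \eqref{wc:eq:uniqueness-genericity}; genericity of $\vec\mu$ then forces a vanishing linear combination $\eta\frac{\vec e_{i+2}}{r(\beta_{i+2})}-(1+\eta)\frac{\vec e_{i+1}}{r(\beta_{i+1})}+\frac{\vec e_i}{r(\beta_i)}=\vec 0$ with coefficients of mixed sign, which is impossible because the $\vec e_i$ are full flags summing to the full flag $\vec e$ (their first jumps occur at distinct indices). You also never prove that no two $\vec e_i$ are proportional, which is what makes the right-hand sides of \eqref{wc:eq:s-eq-genericity-nonzero} nonzero and hence each equation have a unique solution $s\neq 0$.

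Your treatment of ``no strictly $\tau_0^s$-semistable objects of class $(\beta_i,\vec e_i)$'' is likewise not a proof: the claim that a full-flag framing ``rigidifies completely'' is false in general -- strictly semistable objects of a full-flag class do exist at that class's own walls, and nothing about positivity of $r$ or $\lambda$ rules them out at the particular value $s=s_k$. The correct (and short) argument, as in the paper, is a reduction to the part just proven: a strictly $\tau_0^{s_k}$-semistable object of class $(\beta_1,\vec e_1)$ would yield a further equal-slope splitting, hence a decomposition of $(\beta,\vec e)$ with $n\ge 3$ pieces with nonempty semistable loci, contradicting $n=2$. Two smaller points: your derivation that each piece is a full flag quietly assumes $\beta_i\neq 0$ (needed to invoke Lemma~\ref{wc:lem:general-semistable-loci}\ref{wc:lem:general-semistable-loci-ii}), yet you deduce $\beta_i\neq 0$ from full-flag-ness; the fix is the paper's observation that a piece with $\beta_i=0$ has slope $(\pm\infty,*)$, which cannot equal the slope of a piece with $\beta_j\neq 0$, so $\beta=0$, contradicting that $(\beta,\vec e)$ is a flag. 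Finally, your exclusion of $s=1$ is only gestured at and does not constitute an argument.
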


\begin{proof}
  This is the same proof as for \cite[Lemma 6.2.3(i)]{klt_dtpt}, which
  we summarize here. By the definition
  \eqref{wc:eq:joyce-framed-stack-stability} of $\tau_x^s$, evidently
  $\beta_i \neq 0$ for all $i$. If $\vec e_i = \vec 0$ for some $i$,
  then $e_{j,N} > \fr(\beta_j)$ for some $j$, contradicting
  Lemma~\ref{wc:lem:general-semistable-loci} in class $(\beta_j, \vec
  e_j)$. Similarly Lemma~\ref{wc:lem:general-semistable-loci} implies
  each $(\beta_i, \vec e_i)$ must be a full flag. Since $\tau(\beta_i)
  = \tau(\beta)$ for all $i$, we have $\tau(\beta_i) = \tau(\alpha -
  \beta_i)$ for all $i$ and therefore only the first case of
  \eqref{wc:eq:joyce-framed-stack-stability} is relevant. No two $\vec
  e_i$ can be proportional, otherwise $\vec e$ cannot satisfy the
  condition that $e_{j+1} \le e_j+1$ for all $j$. 
  
  The condition \eqref{eq:wall-crossing-horizontal-walls} yields a collection of
  $n-1$ linear equations
  \begin{equation}\label{wc:eq:s-eq-genericity-nonzero}
    s\left(\frac{\lambda(\beta_i)}{r(\beta_i)}-\frac{\lambda(\beta_{i+1})}{r(\beta_{i+1})}\right) = \vec \mu \cdot \left(\frac{\vec e_{i+1}}{r(\beta_{i+1})}-\frac{\vec e_{i}}{r(\beta_{i})}\right)
  \end{equation}
  for the variable $s$. Now by genericity of $\vec \mu$ and since no two $\vec e_i$ can be proportional, neither side of the equation can be zero, so that there is a unique $s$ solving each equation. Assume by contradiction that $n>2$ for such a given $s$. Then we can solve for $s$ and insert into another equation to get
  \begin{equation}\label{wc:eq:uniqueness-genericity}
    \vec \mu \cdot \left(\frac{\vec e_{i+1}}{r(\beta_{i+1})}-\frac{\vec e_{i}}{r(\beta_{i})}\right) = \eta \vec \mu \cdot \left(\frac{\vec e_{i+2}}{r(\beta_{i+2})}-\frac{\vec e_{i+1}}{r(\beta_{i+1})}\right),
  \end{equation}
  for some non-zero constant $\eta$. By genericity of $\vec \mu$, this equation can only be satisfied if
  \begin{equation*}
    \eta \frac{\vec e_{i+2}}{r(\beta_{i+2})} - (1+\eta) \frac{\vec e_{i+1}}{r(\beta_{i+1})} + \frac{\vec e_{i}}{r(\beta_{i})} = \vec 0,
  \end{equation*}
  which contradicts the assumption that all $\vec e_i$ sum up to the full flag $\vec e$, since there is at least one negative and one positive coefficient in the equation.
  
  Finally, if there were
  a $\tau_0^s$-semistable object, in class $(\beta_1, \vec e_1)$
  without loss of generality, then there would be a further splitting
  $(\beta_1, \vec e_1) = (\beta_1', \vec e_1') + (\beta_1'', \vec
  e_1'')$ such that the splitting $(\beta, \vec e) = (\beta_1', \vec
  e_1') + (\beta_1'', \vec e_1'') + (\beta_2, \vec e_2)$ also
  satisfies the conditions of this lemma, contradicting $n \le 2$.
\end{proof}

\subsubsection{}
\label{sec:horizontal-wc}

\begin{proposition}\label{wc:prop:horizontal-wc}
  Let $(\beta, \vec e) \in S_\alpha$ be a full flag. Let $0 = s_0 <
  s_1 < \cdots < s_K < s_{K+1} = 1$ be as in
  Lemma~\ref{wc:lem:wall-crossing-horizontal-setup}.
  \begin{enumerate}[label = (\roman*)]
  \item \label{prop:hor-wc-arti} The semistable locus $\tilde\fM^{\vec
    Q(\Fr),\sst}_{\beta,\vec{e}}(\tau_0^s)$ depends only on the
    connected component of $s$ in $[0,1] \setminus \{s_1,\ldots,
    s_K\}$. Thus the same is true for the invariant
    $\tilde\sz^{s,0}_{\beta,\vec{e}}$.
  \item \label{prop:hor-wc-artii} Fix some $s=s_k\in\{s_1,\ldots,
    s_K\}$ and let $s_+\in (s_k,s_k+1)$ and $s_- \in (s_{k-1}, s_k)$.
    Then
    \begin{equation}\label{wc:eq:simple-hor-wc-formula}
      \tilde\sz^{s_+,0}_{\beta,\vec{e}} = \tilde\sz^{s_-,0}_{\beta,\vec{e}} + \sum_i \left[\tilde\sz^{s_-,0}_{\gamma_i,\vec{f}_i}, \tilde\sz^{s_-,0}_{\delta_i,\vec{g}_i}\right]
    \end{equation}
    where the sum ranges over all splittings $(\beta,\vec{e}) =
    (\gamma_i,\vec{f}_i) + (\delta, \vec{g}_i)$ appearing in
    Lemma~\ref{wc:lem:wall-crossing-horizontal-setup} for $s=s_k$ such
    that $\vec f_i > \vec g_i$. In fact $\vec f_i \eqqcolon \vec f$
    and $\vec g_i \eqqcolon \vec g$ are independent of $i$.
  \end{enumerate}
\end{proposition}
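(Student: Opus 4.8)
\textbf{Proof plan for Proposition~\ref{wc:prop:horizontal-wc}.}
The plan is to reduce the horizontal wall-crossing to a master space computation in the spirit of Theorem~\ref{thm:pairs-master-relation}, but now at a \emph{simple} wall, where the combinatorics collapse. First I would establish part~\ref{prop:hor-wc-arti}: within each connected component of $[0,1] \setminus \{s_1,\ldots,s_K\}$, the weak stability condition $\tau_0^s$ has no walls for objects of the classes $(\beta_i, \vec e_i)$ appearing in decompositions of $(\beta,\vec e)$ (this is exactly the content of Lemma~\ref{wc:lem:wall-crossing-horizontal-setup}: any coincidence of slopes forces $s \in \{s_1,\ldots,s_K\}$), hence $\tilde\fM^{\vec Q(\Fr),\sst}_{\beta,\vec e}(\tau_0^s)$ is locally constant in $s$ away from the $s_k$. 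Since the symmetrized virtual structure sheaf depends only on the semistable locus and its APOT (obtained by symmetrized pullback along $\pi_{\fM_\beta^\pl}$), the invariant $\tilde\sz^{s,0}_{\beta,\vec e}$ is likewise locally constant, giving \ref{prop:hor-wc-arti}.

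For part~\ref{prop:hor-wc-artii}, I would fix a wall $s = s_k$ and introduce a master space exactly as in \S\ref{sec:semistable-invariants-master-space}: take an auxiliary quiver with one extra framing arrow scaled by an auxiliary torus $\sS = \bC^\times$, and form $\bM_{\beta,\vec e}$ as a $\tau$-semistable locus interpolating between $s_-$ and $s_+$. By Lemma~\ref{wc:lem:wall-crossing-horizontal-setup}, at $s = s_k$ every destabilizing decomposition of a polystable object has exactly two parts $(\gamma,\vec f) + (\delta,\vec g)$ with $\vec f, \vec g$ independent of the summand (only the curve class $\gamma$ varies), and each part is a full flag with no strictly semistables. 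The $\sS$-fixed locus of $\bM_{\beta,\vec e}$ then decomposes, by the analogue of Proposition~\ref{prop:pairs-master-space-fixed-loci}, into: two ``boundary'' pieces which are projective bundles over $\tilde\fM^{\vec Q(\Fr),\sst}_{\beta,\vec e}(\tau_0^{s_\pm})$, and ``interaction'' pieces $Z_{\gamma_i,\delta_i} \cong \tilde\fM^{\vec Q(\Fr),\sst}_{\gamma_i,\vec f}(\tau_0^{s_-}) \times \tilde\fM^{\vec Q(\Fr),\sst}_{\delta_i,\vec g}(\tau_0^{s_-})$ with virtual normal bundles of the symmetrized form $z^{-1}\bF + z\,(12)^*\bF - \kappa^{-1}(\cdots)^\vee - (\text{pullback of } z^{-1}\scE_{\gamma,\delta} + z\,(12)^*\scE_{\delta,\gamma})$. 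Applying $(\tilde\sT \times \sS)$-equivariant localization (Theorem~\ref{thm:APOTs}\ref{it:APOT-localization}), then the pole-cancellation of Lemma~\ref{lem:master-space-no-poles} followed by $\rho_{K,z}$ and division by $\kappa^{-1/2} - \kappa^{1/2}$, exactly as in \S\ref{sec:pairs-master-term-1}--\S\ref{sec:pairs-master-term-3}, the two boundary terms produce $\tilde\sz^{s_+,0}_{\beta,\vec e} - \tilde\sz^{s_-,0}_{\beta,\vec e}$ (using Corollary~\ref{cor:symmetrized-projective-bundle-formula} for the projective bundles, and Theorem~\ref{thm:APOT-comparison} to match APOTs on fixed loci), while each interaction term, after comparing $1/\hat\se(\cN^\vir_{\iota_{\gamma_i,\delta_i}})$ with $\hat\Theta_{(\gamma_i,\vec f),(\delta_i,\vec g)}(z)$, produces precisely the Lie bracket $[\tilde\sz^{s_-,0}_{\gamma_i,\vec f}, \tilde\sz^{s_-,0}_{\delta_i,\vec g}]$. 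Rearranging gives \eqref{wc:eq:simple-hor-wc-formula}; the restriction to $\vec f_i > \vec g_i$ comes from orienting each unordered simple decomposition once, absorbing signs by skew-symmetry of the bracket.

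The main obstacle is the precise identification of the $\sS$-fixed locus of the master space and the verification that no \emph{other} fixed components appear --- in particular, one must rule out three-or-more-part degenerations, which is where simplicity of the wall (Lemma~\ref{wc:lem:wall-crossing-horizontal-setup}) is essential, and one must check that the master stability condition genuinely has no strictly semistable objects of the total class $(\beta, \vec e)$ with the extra framing dimension bumped up, so that $\bM_{\beta,\vec e}$ is a proper algebraic space (using Assumption~\ref{assump:wall-crossing}\ref{assump:it:properness-wcf}) with a well-defined symmetrized virtual structure sheaf. A secondary technical point is confirming that $s_\pm$ can be chosen in the open chambers adjacent to $s_k$ so that the semistable loci stabilize; this follows from part~\ref{prop:hor-wc-arti} together with finiteness of the wall set. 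Once the fixed-locus decomposition is in hand, the remaining manipulations are formally identical to those already carried out in the proof of Theorem~\ref{thm:pairs-master-relation}, so I would present them compactly by reference rather than repeating the pole-cancellation and residue computations in full.
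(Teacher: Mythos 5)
Your overall strategy coincides with the paper's: part~\ref{prop:hor-wc-arti} is argued the same way, and for part~\ref{prop:hor-wc-artii} the paper likewise builds a master space at the simple wall, localizes with respect to an auxiliary $\sS=\bC^\times$, and then refers back to the residue/pole-cancellation manipulations of Theorem~\ref{thm:pairs-master-relation}. But as written your plan has two genuine gaps. The first is that you never construct the master space; you explicitly flag ``the precise identification of the $\sS$-fixed locus'' as the main obstacle, yet that identification is exactly where the content lies. The paper's construction is quite specific: using the (already established) common $\vec f>\vec g$, it takes the smallest indices $a<b$ with $f_a>0$, $g_b>0$, enlarges the quiver $\vec Q$ by one extra one-dimensional framing vertex $\hat V_0$ receiving $\hat\rho_{-1}$ from $\hat V_a$ and mapping via $\hat\rho_0$ to $\hat V_b$ subject to the relation $\hat\rho_0\circ\hat\rho_{b-1}=0$, and perturbs the stability parameter to $(-\epsilon,\vec\mu)$ on the component of class $(\beta,(1,\vec e))$. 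It is this choice that forces the $\sS$-fixed components to be precisely the two divisors $\{\hat\rho_{-1}=0\}$, $\{\hat\rho_0=0\}$, identified \emph{directly} with $\tilde\fM^{\vec Q(\Fr),\sst}_{\beta,\vec e}(\tau_0^{s_\mp})$, together with the interaction loci \eqref{wc:eq:master-space-complicated-locus}. Your description of the boundary pieces as nontrivial projective bundles over the $s_\pm$ semistable loci (as in Proposition~\ref{prop:pairs-master-space-fixed-loci}) is not what happens here and is in tension with your own conclusion: a genuine $\bP^{N-1}$-bundle would contribute a quantum-integer factor $[N]_\kappa$ via Corollary~\ref{cor:symmetrized-projective-bundle-formula} (or a bracket against $\partial$, as in \eqref{eq:pairs-master-relation}), whereas \eqref{wc:eq:simple-hor-wc-formula} equates the plain invariants with no such factors.

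The second gap is the final sentence of the statement, namely that $\vec f_i=\vec f$ and $\vec g_i=\vec g$ are independent of $i$. You assert this parenthetically as if it were part of Lemma~\ref{wc:lem:wall-crossing-horizontal-setup}, but that lemma only gives $n=2$, full-flag parts, and absence of strictly semistables at the wall; it says nothing about different splittings at the same $s_k$ sharing the framing vectors. The paper proves this separately by a genericity-of-$\vec\mu$ argument (solving the wall equation \eqref{wc:eq:s-eq-genericity-nonzero} for two splittings, eliminating $s$, and using the full-flag constraint to force $\vec g_1=\vec g_2$), and this uniqueness is not cosmetic: the placement of the extra vertex in the master-space quiver uses the common indices $a<b$ coming from $\vec f,\vec g$, so the construction you defer actually depends on it. Supplying these two ingredients --- the explicit quiver/stability for $\bM_{\beta,\vec e}$ with its fixed-locus identification, and the uniqueness of $(\vec f,\vec g)$ --- is what is needed to turn your outline into the paper's proof; the remaining localization, APOT comparison via Theorem~\ref{thm:APOT-comparison}, and residue steps are indeed formally identical to Theorem~\ref{thm:pairs-master-relation}, as you say.
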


\begin{proof}
  The claim \ref{prop:hor-wc-arti} follows from
  Lemma~\ref{wc:lem:wall-crossing-horizontal-setup} and the continuity
  of the function $\tau_0^s$ for $s \in [0,1]$.

  For \ref{prop:hor-wc-artii}, the proof in \cite[\S 6.2]{klt_dtpt}
  applies with minor adjustments which we summarize here. First, we claim that, once
  $s$ is fixed, genericity of $\vec\mu$ implies that the integer
  vectors $(\vec f, \vec g)$ solving
  \eqref{eq:wall-crossing-horizontal-walls} are unique, and so $\vec f_i=\vec f$ and $\vec g_i=\vec g$ are indeed independent of $i$. 
  
  Assume that there are two pairs $((\gamma_i,\vec f_i), (\delta_i,\vec g_i))_{i=1,2}$ that decompose the $(\beta,\vec e)$ and satisfy \eqref{eq:wall-crossing-horizontal-walls} for a fixed $s$. Assume they're ordered such that $\vec f_>\vec g_i$. We can set up equation \eqref{wc:eq:s-eq-genericity-nonzero} for each pair, and as above genericity of $\vec \mu$ and the full flag conditions imply that neither side vanishes. Since $s$ is fixed, we can solve for it to obtain
  \begin{equation}\label{wc:eq:ab-uniqueness-genericity}
    \vec \mu \cdot \left(\frac{\vec g_{1}}{r(\delta_{1})}-\frac{\vec f_{1}}{r(\gamma_{1})}\right) = \eta \vec \mu \cdot \left(\frac{\vec g_{2}}{r(\delta_{2})}-\frac{\vec f_{2}}{r(\gamma_{2})}\right),
  \end{equation}
  for some non-zero constant $\eta$. By genericity of $\vec \mu$, this equation can only be satisfied if
  \begin{equation*}
    \frac{\vec g_{1}}{r(\delta_{1})}-\frac{\vec f_{1}}{r(\gamma_{1})} = \eta \left(\frac{\vec g_{2}}{r(\delta_{2})}-\frac{\vec f_{2}}{r(\gamma_{2})}\right).
  \end{equation*}
  Take $a$ to be the smallest index where $e_a>0$. At this index $a$, we must have $f_{1,a}=f_{2,a}=1$ and $g_{1,a}=g_{2,a}=0$, since $\vec f_i>\vec g_i$. Evaluating the above equation at index $a$, we get $\eta=\frac{r(\gamma_2)}{r(\gamma_1)}$. After inserting this coefficient, multiplying the equation by $r(\gamma_1)>0$, and adding $\vec e$, we get
  \begin{equation*}
    \left(1+\frac{r(\gamma_1)}{r(\delta_1)}\right)\vec g_1 = \left(1+\frac{r(\gamma_2)}{r(\delta_2)}\right)\vec g_2,  
  \end{equation*}
  where we used $\vec e -\vec f_i=\vec g_i$. But this means the two $\vec g_i$ are proportional with a positive constant, which implies $\vec g_1=\vec g_2$ as they're both full flags. This also implies $\vec f_1=\vec f_2$.
  
  Since $\vec f > \vec g$ by hypothesis, let $a < b$ be the smallest
  indices such that $f_a > 0$ and $g_b > 0$, and consider the quiver
  \[ \begin{tikzcd}[column sep=1em]
    \overset{\hat V_1}{\blacksquare} \ar{r} & \blacksquare \ar{r} & \cdots \ar{r} & \blacksquare \ar{r} & \overset{\hat V_a}{\blacksquare} \ar{r} \ar{drr}[swap]{\hat\rho_{-1}} & \blacksquare \ar{r} & \cdots \ar{r} & \blacksquare \ar{r} & \blacksquare \ar{r}{\hat\rho_{b-1}} \ar[dotted]{dll}[swap]{0} & \overset{\hat V_b}{\blacksquare} \ar{r} \ar{dlll}{\hat\rho_0} & \blacksquare \ar{r} & \cdots \ar{r} & \overset{\hat V_r}{\blacksquare} \ar{r} & \overset{\Fr(E)}{\bigbullet} \\
            {} & & & & & & \overset{\hat V_0}{\blacksquare}
  \end{tikzcd} \]
  where the dotted arrow is not an edge but rather the relation
  $\hat\rho_0 \circ \hat\rho_{b-1} = 0$. On the $(\beta,(1,\vec e))$
  component of the resulting auxiliary moduli stack, consider the
  stability condition defined using
  \eqref{wc:eq:joyce-framed-stack-stability} but with the parameter
  $(-\epsilon, \vec\mu)$ in place of $\vec\mu$, for very small
  $\epsilon > 0$. Let $\bM_{\beta,\vec e}$ denote the semistable
  locus; this is the {\it master space}. From it, we obtain the
  desired wall-crossing formula \eqref{wc:eq:simple-hor-wc-formula}
  following the same strategy as in
  \S\ref{sec:semistable-invariants-master-space}, as follows. Let $\sS
  \coloneqq \bC^\times$, with coordinate denoted $z$, act on
  $\bM_{\beta, \vec e}$ by scaling the map $\hat\rho_{-1}$ with weight
  $z$. The components of the $\sS$-fixed locus may be identified with
  $\tilde\fM_{\beta,\vec e}^{\vec Q(\Fr),\sst}(\tau_0^{s_-})$ (the
  divisor where $\hat\rho_{-1}=0$), $\tilde\fM_{\beta,\vec e}^{\vec
    Q(\Fr),\sst}(\tau_0^{s_+})$ (the divisor where $\hat\rho_0=0$),
  and products
  \begin{equation} \label{wc:eq:master-space-complicated-locus}
    Z_{(\gamma,\vec f), (\delta, \vec g)} \cong \tilde\fM^{\vec Q(\Fr),\sst}_{\gamma,\vec f}(\tau_0^s) \times \tilde\fM^{\vec Q(\Fr),\sst}_{\delta,\vec g}(\tau_0^s)
  \end{equation}
  for every splitting $(\beta, \vec e) = (\gamma, \vec f) + (\delta,
  \vec g)$ appearing in
  Lemma~\ref{wc:lem:wall-crossing-horizontal-setup} (embedded in
  $\bM_{\beta, \vec e}$ via the direct sum map). Under these
  identifications, the natural symmetrized virtual structure sheaves
  on these algebraic spaces match the induced symmetrized virtual
  structure sheaf on $\bM_{\beta,\vec e}^{\sS}$ by applying
  Theorem~\ref{thm:APOT-comparison}. We conclude using the same
  arguments as in the proof of
  Theorem~\ref{thm:pairs-master-relation}, using
  Assumption~\ref{assump:wall-crossing}\ref{assump:it:properness-wcf}.
\end{proof}

\subsubsection{}

\begin{remark} \label{rem:joyce-lie-bracket-vs-ours-2}
  In \eqref{wc:eq:simple-hor-wc-formula}, for the exact same reason as
  in Remark~\ref{rem:joyce-lie-bracket-vs-ours-1}, the Lie bracket
  must be the one on the auxiliary stacks $\tilde\fM^{\vec Q(\Fr)}$;
  there is no clever choice of integrand in the master space argument
  which produces a useful formula involving only the Lie bracket on
  $\fM$.
\end{remark}

\subsubsection{}

\begin{proposition} \label{wc:prop:horizontal-vanishing}
  Let $(\beta, \vec e) \in S_\alpha$ be a full flag. Let $0 = s_0 <
  s_1 < \cdots < s_K < s_{K+1} = 1$ be as in Lemma
  \ref{wc:lem:wall-crossing-horizontal-setup}.
  \begin{enumerate}[label = (\roman*)]
  \item \label{prop:hor-van-arti} Suppose there is a decomposition
    $(\beta, \vec e) = (\beta_1, \vec e_1) + \cdots + (\beta_n, \vec
    e_n)$ with $n \ge 2$, such that for some $s \in [0,1]$,
    \[ \tau_0^s(\beta_1, \vec e_1) = \cdots = \tau_0^s(\beta_n, \vec e_n), \]
    and $\hat\sz^{s,0}_{\beta_i,\vec{e}_i}\neq 0$ for all $1 \le i \le
    n$. Then $n = 2$ and $s \in \{s_1,\ldots,s_K\}$ and each
    $(\beta_i, \vec e_i)$ is a full flag.

  \item \label{prop:hor-van-artii} The invariant $\hat\sz_{\beta,\vec
    e}^{s,0}$ depends only on the connected component of $s$ in $[0,1]
    \setminus \{s_1, \ldots, s_K\}$.

  \item \label{prop:hor-van-artiii} Fix some $s=s_k\in\{s_1,\ldots,
    s_K\}$ and let $s_+\in (s_k,s_k+1)$ and $s_- \in (s_{k-1}, s_k)$.
    Then
    \begin{equation} \label{wc:eq:simple-hor-wc-formula-art}
      \hat\sz^{s_+,0}_{\beta,\vec{e}} = \hat\sz^{s_-,0}_{\beta,\vec{e}} + \sum_i \left[\hat\sz^{s_-,0}_{\gamma_i,\vec{f}_i}, \hat\sz^{s_-,0}_{\delta_i,\vec{g}_i}\right]
    \end{equation}
    where the sum ranges over all splittings $(\beta,\vec{e}) =
    (\gamma_i,\vec{f}_i) + (\delta, \vec{g}_i)$ appearing in
    \ref{prop:hor-van-arti} for $s=s_k$ such that $\vec f_i > \vec g_i$.
  \end{enumerate}
\end{proposition}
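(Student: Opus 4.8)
The plan is to prove Proposition~\ref{wc:prop:horizontal-vanishing} as the ``artificial'' mirror of Proposition~\ref{wc:prop:horizontal-wc}, using only the combinatorics of the universal coefficients $\tilde U$ (via Lemma~\ref{wc:lem:U-properties}) and the vanishing results of Lemma~\ref{wc:lem:vanishing-basic}, exactly in the style of Lemma~\ref{wc:lem:pair-flag-vanishing} and Lemma~\ref{wc:lem:vanishing-sst-pair}. Throughout I would fix the full flag $(\beta, \vec e)$ and use the artificial wall-crossing formula \eqref{wc:eq:artificial-base-wcf} to move between parameter values $(s,0)$ with $s$ ranging over $[0,1]$.

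For \ref{prop:hor-van-arti}: given a decomposition $(\beta, \vec e) = \sum_i (\beta_i, \vec e_i)$ with $n \ge 2$, $\hat\sz^{s,0}_{\beta_i,\vec e_i} \neq 0$, and equal $\tau_0^s$-slopes, I first invoke Lemma~\ref{wc:lem:vanishing-basic}\ref{it:vanishing-basic-i}--\ref{it:vanishing-basic-ii} at $x = 0$: since $\beta \neq 0$, the only classes with $\beta_i = 0$ would have $\vec e_i = \vec 1_{[a_i,b_i]}$, but then $\tau_0^s(0,\vec 1_{[a_i,b_i]})$ has infinite slope (either $(\infty,*)$ or $(-\infty,*)$, as $(\vec\mu + 0\cdot\vec 1)\cdot\vec 1_{[a_i,b_i]} = \mu_{a_i} + \cdots + \mu_{b_i} > 0$, so it is $(\infty,*)$), which cannot equal the finite slope $\tau_0^s(\beta,\vec e)$ that the other parts must share by \eqref{eq:wall-crossing-horizontal-walls}; hence all $\beta_i \neq 0$. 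Then Lemma~\ref{wc:lem:vanishing-basic}\ref{it:vanishing-basic-ii} forces each $\vec e_i$ to be monotonically increasing in steps of at most one, and since they sum to the full flag $\vec e$ (where $e_{j+1} \le e_j + 1$), each $\vec e_i$ must itself be a full flag with $e_{i,N} \le \fr(\beta_i)$, and in fact $e_{i,N} = \fr(\beta_i)$ by additivity. The constraint $n = 2$ and $s \in \{s_1, \ldots, s_K\}$ then follows from exactly the same linear-algebra argument over $\vec\mu$ as in the proof of Lemma~\ref{wc:lem:wall-crossing-horizontal-setup} (equations \eqref{wc:eq:s-eq-genericity-nonzero}, \eqref{wc:eq:uniqueness-genericity}): no two $\vec e_i$ can be proportional, so each slope equation has a unique solution $s$, and $n \ge 3$ would force a nontrivial linear relation among the $\vec e_i/r(\beta_i)$ contradicting genericity of $\vec\mu$ and the full-flag sum constraint.

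For \ref{prop:hor-van-artii}: for $s_-, s_+$ in the same connected component of $[0,1]\setminus\{s_1,\ldots,s_K\}$, apply \eqref{wc:eq:artificial-base-wcf} with $(s_1,x_1) = (s_+,0)$ and $(s_2,x_2) = (s_-,0)$. Any non-vanishing term with $n > 1$ would, by part~\ref{prop:hor-van-arti} applied at whichever $s \in [s_-,s_+]$ makes the slopes coincide, require $s \in \{s_1,\ldots,s_K\}$ --- but there is no such $s$ in the open interval $(s_-,s_+)$; more carefully, one shows as in \S\ref{wc:sec:pair-flag-vanishing-i-proof} that the coefficient $\tilde U$ vanishes using Lemma~\ref{wc:lem:U-properties}\ref{it:U-vanishing} with the index set $I$ singled out by the minimal-index trick $i_1(\vec e)$, since $\mu_{i_1(\vec e)} \gg \mu_{i'}$ for $i' > i_1(\vec e)$ makes that part's slope strictly dominate. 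Hence only the $n=1$ term survives and $\hat\sz^{s_+,0}_{\beta,\vec e} = \hat\sz^{s_-,0}_{\beta,\vec e}$. For \ref{prop:hor-van-artiii}: apply \eqref{wc:eq:artificial-base-wcf} from $s_-$ to $s_+$ across the single wall $s_k$. By part~\ref{prop:hor-van-arti}, the only non-vanishing terms have $n = 1$ (giving $\hat\sz^{s_-,0}_{\beta,\vec e}$, since $\tilde U((\beta,\vec e);\tau^{s_-}_0,\tau^{s_+}_0) = 1$) or $n = 2$ corresponding to a splitting $(\beta,\vec e) = (\gamma_i,\vec f_i) + (\delta_i,\vec g_i)$ into full flags with coincident slope at $s_k$; by the same argument as in \S\ref{wc:sec:horizontal-wc} (and as in the proof of Proposition~\ref{wc:prop:horizontal-wc}\ref{prop:hor-wc-artii}), genericity of $\vec\mu$ forces $\vec f_i \eqqcolon \vec f$ and $\vec g_i \eqqcolon \vec g$ to be independent of $i$, and one computes directly from Definition~\ref{def:universal-coefficients} that $U((\gamma_i,\vec f),(\delta_i,\vec g);\tau^{s_k}_0,\tau^{s_+}_0) = S(\ldots) = +1$ for the ordering with $\vec f > \vec g$ and $-1$ for the reverse, so that via Lemma~\ref{lem:Utilde-definition} and skew-symmetry of the Lie bracket only the $\vec f > \vec g$ splittings contribute with coefficient $+1$, yielding \eqref{wc:eq:simple-hor-wc-formula-art}.

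The main obstacle I anticipate is bookkeeping the genericity arguments on $\vec\mu$ precisely enough: one must ensure that the finitely many linear equations arising in \ref{prop:hor-van-arti} (from $n \ge 3$) and in the uniqueness of $(\vec f, \vec g)$ in \ref{prop:hor-van-artiii} are among the finitely many equations that the chosen $\vec\mu$ is assumed to solve only trivially (the list referenced in \S\ref{wc:sec:S-and-R-sets}: \eqref{wc:eq:gen-artifical-generalities}, \eqref{wc:eq:uniqueness-genericity}, \eqref{wc:eq:ab-uniqueness-genericity}, \eqref{wc:eq:s-eq-genericity-nonzero}). Since all the relevant classes $(\beta_i, \vec e_i)$ lie in the finite set $S_\alpha$, the equations are finite in number and the genericity hypothesis covers them, but the argument should be spelled out carefully. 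The second slightly delicate point is that $\hat\sz$ lives on the auxiliary stack $\tilde\fM^{\vec Q(\Fr)}$ and the Lie bracket is the honest one there, so the support-preservation properties in Lemma~\ref{wc:lem:vanishing-basic}\ref{it:vanishing-basic-ii} --- which are new precisely for this reason, per footnote~\ref{footnote:artificial-invariants} --- must be invoked when combining parts across a wall; but these are already established.
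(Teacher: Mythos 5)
Your part \ref{prop:hor-van-arti} is fine and is exactly the paper's route: rerun the proof of Lemma~\ref{wc:lem:wall-crossing-horizontal-setup} with Lemma~\ref{wc:lem:vanishing-basic} in place of Lemma~\ref{wc:lem:general-semistable-loci}. The problem is in parts \ref{prop:hor-van-artii} and \ref{prop:hor-van-artiii}. Both hinge on the claim that a non-vanishing coefficient $\tilde U\big((\beta_1,\vec e_1),\ldots,(\beta_n,\vec e_n);\tau_0^{s_-},\tau_0^{s_+}\big)$ with $n\ge 2$ forces all the slopes $\tau_0^{s'}(\beta_i,\vec e_i)$ to coincide at some intermediate $s'\in(s_-,s_+)$, so that part \ref{prop:hor-van-arti} can be invoked. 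You assume this ("whichever $s\in[s_-,s_+]$ makes the slopes coincide") without justification; the paper supplies it by citing \cite[Prop.~3.16]{Joyce2021} applied to the continuous family $\{\tau_0^s\}_{s\in[s_-,s_+]}$, and then part \ref{prop:hor-van-arti} gives the contradiction (for (ii)) or pins $s'=s_k$ (for (iii)). Some such intermediate-coincidence statement is genuinely needed and is not a formality: the vanishing criterion Lemma~\ref{wc:lem:U-properties}\ref{it:U-vanishing} requires a slope comparison with the total class at the target stability condition as well, so mere continuity of the slopes is not by itself enough.

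Your proposed substitute --- the minimal-index trick of \S\ref{wc:sec:pair-flag-vanishing-i-proof}, i.e.\ that the part containing $i_1(\vec e)$ has strictly dominating slope at $s_\pm$ because $\mu_{i_1(\vec e)}\gg\mu_{i'}$ --- does not work in the horizontal direction. At parameter $(s,0)$ with $s>0$ the second slope entry is $\bigl(s\lambda(\beta_i)+\vec\mu\cdot\vec e_i\bigr)/r(\beta_i)$, and $\lambda$ was rescaled so that $|\lambda(\beta)|$ dominates every possible $\vec\mu\cdot\vec e$ whenever $\lambda(\beta)\neq 0$; so for generic $s$ the ordering of slopes is governed by $s\lambda(\beta_i)/r(\beta_i)$, not by which part carries $\mu_{i_1(\vec e)}$. (In \S\ref{wc:sec:pair-flag-vanishing-i-proof} the trick works only because there $s\in\{0,1\}$ with $\lambda$ vanishing on the relevant classes, and the perturbation $|x'|\le -x_0(a)$ is tiny compared to $\mu_{i_1(\vec e)}$.) Indeed, if your dominance argument were valid at arbitrary $s$, it would kill \emph{every} $n\ge 2$ term, including the $n=2$ terms at the wall $s_k$, and \eqref{wc:eq:simple-hor-wc-formula-art} would have no bracket correction --- contradicting Proposition~\ref{wc:prop:horizontal-wc}. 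So you should replace that fallback by the citation of \cite[Prop.~3.16]{Joyce2021} (or prove an equivalent statement). A final small slip in \ref{prop:hor-van-artiii}: the coefficient to compute is $U\big((\gamma_i,\vec f),(\delta_i,\vec g);\tau_0^{s_-},\tau_0^{s_+}\big)$, not one with $\tau_0^{s_k}$ as first argument; the sign bookkeeping is done via Lemma~\ref{wc:lem:hor-wc-ordering}, and the numerical outcome is the $\pm1$ you state, so this is cosmetic once the stability conditions are corrected.
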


\begin{proof}
  The proof of \ref{prop:hor-van-arti} is exactly the proof of
  Lemma~\ref{wc:lem:wall-crossing-horizontal-setup}, but using
  Lemma~\ref{wc:lem:vanishing-basic} in place of
  Lemma~\ref{wc:lem:general-semistable-loci}.
  
  The remainder of the proof closely follows part of the proof of
  \cite[Prop. 10.17]{Joyce2021}.

  For \ref{prop:hor-van-artii}, let $s_-, s_+ \in [0, 1] \setminus
  \{s_1, \ldots, s_K\}$ lie in the same connected component, i.e. the
  interval $[s_-, s_+]$ does not contain any of the $s_k$. We will
  show $\hat\sz_{\beta,\vec e}^{s_+,0} = \hat\sz_{\beta,\vec
    e}^{s_-,0}$. Suppose there exists a non-vanishing term in the
  artificial wall-crossing formula \eqref{wc:eq:artificial-base-wcf}
  corresponding to a splitting $(\beta,\vec{e}) = (\beta_1,\vec{e}_1)
  + \cdots + (\beta_n,\vec{e}_n)$ with $n \ge 2$. By \cite[Prop.
    3.16]{Joyce2021} applied to the continuous family of stability
  conditions $\{\tau_0^s\}_{s\in[s_-,s_+]}$, there exists $s' \in
  (s_-,s_+)$ such that $\tau_0^{s'}(\beta_1,\vec e_1) = \cdots =
  \tau_0^{s'}(\beta_n, \vec e_n)$. By \ref{prop:hor-van-arti}, $s' \in
  \{s_1, \ldots, s_K\}$, a contradiction. Hence only the $n=1$ term is
  non-vanishing.

  We argue similarly for \ref{prop:hor-van-artiii}. For $s_\pm$
  sufficiently close to $s_k$, combining \cite[Prop. 3.16]{Joyce2021}
  with \ref{prop:hor-van-arti} implies all terms in the artificial
  wall-crossing formula \eqref{wc:eq:artificial-base-wcf} with $n \ge
  2$ vanish except those corresponding to the splittings $(\beta,\vec
  e) = (\gamma_i, \vec f_i) + (\delta_i, \vec g_i)$ appearing in
  \ref{prop:hor-van-arti} for $s=s_k$. For these non-vanishing terms, we
  may compute the coefficient $\tilde U$ directly from
  Definition~\ref{def:universal-coefficients}: ordering the pairs such
  that $\vec f_i > \vec g_i$, using Lemma~\ref{wc:lem:hor-wc-ordering}
  below, we get
  \[ \tau^{s_+}_0(\gamma,\vec f) < \tau^{s_+}_0(\delta,\vec g), \quad \tau^{s_-}_0(\gamma,\vec f) > \tau^{s_-}_0(\delta,\vec g), \]
  and therefore
  \begin{align*}
    U\left((\gamma_i,\vec f_i), (\delta_i, \vec g_i); \tau^{s_-}_0,\tau^{s_+}_0\right) = S\left((\gamma_i,\vec f_i), (\delta_i, \vec g_i); \tau^{s_-}_0,\tau^{s_+}_0\right) = 1, \\
    U\left((\delta_i, \vec g_i), (\gamma_i, \vec f_i); \tau^{s_-}_0,\tau^{s_+}_0\right) = S\left((\delta_i, \vec g_i), (\gamma_i, \vec f_i); \tau^{s_-}_0,\tau^{s_+}_0\right) = -1.
  \end{align*}
  By Lemma~\ref{lem:Utilde-definition}, this shows that the only
  non-vanishing term with $n \ge 2$ is
  $[\hat\sz^{s_-,0}_{\gamma_i,\vec{f}_i},
    \hat\sz^{s_-,0}_{\delta_i,\vec{g}_i}]$.
\end{proof}

\subsubsection{}

\begin{lemma} \label{wc:lem:hor-wc-ordering}
  Let $(\gamma, \vec{f}), (\delta,\vec{g}) \in S_\alpha$ be full flags
  such that their sum is a flag. Suppose that $\tau_0^s(\gamma, \vec
  f) = \tau_0^s(\delta, \vec g)$ for some $s > 0$. Then $\vec f > \vec
  g$ in the lexicographical ordering if and only if
  \[ \frac{\lambda(\gamma)}{r(\gamma)} < \frac{\lambda(\delta)}{r(\delta)}. \]
\end{lemma}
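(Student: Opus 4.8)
The plan is to convert the hypothesis $\tau_0^s(\gamma,\vec f)=\tau_0^s(\delta,\vec g)$ into a single scalar identity among $\vec\mu$, $\lambda$ and the rank function, and then read off the sign of $\lambda(\gamma)/r(\gamma)-\lambda(\delta)/r(\delta)$ from the leading coefficient in $\vec\mu$.

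\emph{Step 1: setting up.} Since $(\gamma,\vec f),(\delta,\vec g)\in S_\alpha$ are full flags, $\gamma,\delta\in R_\alpha$; moreover $\gamma\neq\alpha\neq\delta$, for otherwise $\gamma+\delta>\alpha$ would contradict $(\gamma+\delta,\vec f+\vec g)\in S_\alpha$. Hence $\fM_\gamma^\sst(\tau),\fM_\delta^\sst(\tau)\neq\emptyset$, so $r(\gamma),r(\delta)>0$ by Assumption~\ref{assump:semistable-invariants}\ref{assump:it:rank-function}, and $\tau(\gamma)=\tau(\alpha-\gamma)$, $\tau(\delta)=\tau(\alpha-\delta)$. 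Equality of the first coordinates of $\tau_0^s(\gamma,\vec f)$ and $\tau_0^s(\delta,\vec g)$ gives $\tau(\gamma)=\tau(\delta)$, so both classes lie in the first case of \eqref{wc:eq:joyce-framed-stack-stability}, and equality of the second coordinates at $x=0$ reads $\bigl(s\lambda(\gamma)+\vec\mu\cdot\vec f\bigr)/r(\gamma)=\bigl(s\lambda(\delta)+\vec\mu\cdot\vec g\bigr)/r(\delta)$. Clearing denominators yields
\[ \vec\mu\cdot\vec v \;=\; s\bigl(r(\gamma)\lambda(\delta)-r(\delta)\lambda(\gamma)\bigr),\qquad \vec v\coloneqq r(\delta)\vec f-r(\gamma)\vec g\in\bZ^N. \]

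\emph{Step 2: locating the first nonzero entry of $\vec v$.} First, $\vec f\neq\vec g$: otherwise the inductive claim below (applied with no restriction on the index) forces $\vec f=\vec 0$, hence $\fr(\gamma)=f_N=0$, contradicting $\gamma\neq0$. Let $j$ be the least index with $f_j\neq g_j$. I claim $f_i=g_i=0$ for all $i<j$: by induction on $i$, if $f_{i-1}=g_{i-1}=0$ (vacuously for $i=1$) and $i<j$, then $f_i=g_i$, while the flag condition on $\vec e\coloneqq\vec f+\vec g$ — the bound $e_1\le1$ when $i=1$, and $e_i\le e_{i-1}+1=1$ when $i\ge2$ — gives $f_i+g_i\le1$, whence $f_i=g_i=0$. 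Therefore $v_i=0$ for $i<j$, and $v_j=r(\delta)f_j-r(\gamma)g_j$; as $\vec f,\vec g$ are flags, $f_j,g_j\in\{0,1\}$ with $f_j\neq g_j$, so $\{f_j,g_j\}=\{0,1\}$ and $v_j=r(\delta)>0$ if $f_j=1$, while $v_j=-r(\gamma)<0$ if $g_j=1$. In particular $v_j\neq0$ is the first nonzero entry of $\vec v$, and $v_j>0$ exactly when $\vec f>\vec g$ in the lexicographic order.

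\emph{Step 3: genericity estimate and conclusion.} Writing $\vec\mu\cdot\vec v=\mu_j v_j+\sum_{i>j}\mu_i v_i$ and bounding $|v_i|\le 2RN$ (using $r(\gamma),r(\delta)\le R\coloneqq\prod_{\beta\in R_\alpha}r(\beta)$ and $f_i,g_i\le N$), the chosen $\gg$-inequalities $\mu_{i+1}<\mu_i/(3RN^3)$ force $\bigl|\sum_{i>j}\mu_i v_i\bigr|<\mu_j\le\mu_j|v_j|$, so $\operatorname{sign}(\vec\mu\cdot\vec v)=\operatorname{sign}(\mu_j v_j)=\operatorname{sign}(v_j)$; this is one of the finitely many genericity conditions on $\vec\mu$ stipulated in the setup. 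Combining Steps 1–3,
\[ \vec f>\vec g \;\iff\; v_j>0 \;\iff\; \vec\mu\cdot\vec v>0 \;\iff\; r(\gamma)\lambda(\delta)-r(\delta)\lambda(\gamma)>0 \;\iff\; \frac{\lambda(\gamma)}{r(\gamma)}<\frac{\lambda(\delta)}{r(\delta)}, \]
where the third equivalence uses $s>0$ and the last divides by $r(\gamma)r(\delta)>0$. The main obstacle is Step 2: one must exploit that two full flags whose sum is again a flag necessarily coincide — and vanish — on every coordinate strictly before their first point of disagreement, which is exactly what guarantees that the leading $\mu$-term of $\vec\mu\cdot\vec v$ records the lexicographic comparison of $\vec f$ and $\vec g$ rather than a spurious cancellation between the rank factors $r(\gamma),r(\delta)$; everything else is routine bookkeeping with \eqref{wc:eq:joyce-framed-stack-stability} and the standard hierarchy estimate for $\vec\mu$.
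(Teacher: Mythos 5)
Your proof is correct and takes essentially the same route as the paper's: equate the second components of $\tau_0^s(\gamma,\vec f)$ and $\tau_0^s(\delta,\vec g)$, rearrange into a single scalar identity between $\vec\mu$ and the $\lambda/r$ terms, and use the flag structure of $\vec f$, $\vec g$, and their sum together with $\mu_1\gg\cdots\gg\mu_N$ to read off the sign from the first index where the two vectors differ. Your Steps 2--3 merely spell out the details (impossibility of $\vec f=\vec g$, the vanishing before the first disagreement, and the explicit estimate justifying the ``$\gg$'' hierarchy) that the paper leaves implicit.
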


\begin{proof}
  Writing out $\tau_0^s(\gamma,\vec f)=\tau_0^s(\delta,\vec g)$
  explicitly and simplifying, we get
  \begin{equation}\label{eq:explicit-joyce-full-flag-splitting-condition}
    s\left(\frac{\lambda(\gamma)}{r(\gamma)}-\frac{\lambda(\delta)}{r(\delta)}\right)= \vec \mu\cdot \left(\frac{\vec g}{r(\delta)}-\frac{\vec f}{r(\gamma)}\right).
  \end{equation}
  Since $s>0$, we have $\frac{\lambda(\gamma)}{r(\gamma)}<
  \frac{\lambda(\delta)}{r(\delta)}$ if and only if $\vec \mu\cdot
  \left(\frac{\vec g}{r(\delta)}-\frac{\vec f}{r(\gamma)}\right)$ is
  negative. Let $a$ be the largest index such that $f_j = g_j = 0$ for
  all $j < a$; as $(\gamma, \vec f)$ and $(\delta, \vec g)$ are full
  flags, $a \le N$. Since $(\gamma, \vec f) + (\delta, \vec g)$ is a
  flag, either $f_a = 1$ and $g_a = 0$ or $f_a = 0$ and $g_a = 1$.
  Then, since $\mu_1\gg \mu_2\gg\cdots \gg \mu_N$ and $r(-) >0$, the
  right hand side of
  \eqref{eq:explicit-joyce-full-flag-splitting-condition} is negative
  if and only if $\vec f > \vec g$.
\end{proof}

\subsection{Putting everything together}
\label{wc:sec:putting-things-together}

\subsubsection{}

In this subsection, we finally prove Theorem~\ref{thm:wcf} by putting
together the individual wall-crossing steps in
Figure~\ref{fig:dominant-wc-strategy-2}. Recall that, by
Lemma~\ref{wc:lemma:reduction-wc-to-aux-wc}, the goal is to prove
\[ \tilde\sz^{s,x}_{\beta,\vec e} \labeleq{?} \hat\sz^{s,x}_{\beta,\vec e} \]
for $(\beta, \vec e) = (\alpha, \vec 0)$ at $(s, x) = (1, -1)$. This
is done by Proposition~\ref{wc:prop:finally}.

\subsubsection{}

\begin{lemma} \label{lem:pure-framing-ident}
  Let $(s, x) \in [0, 1] \times [-1, 0]$ and $1 \le a \le N$. Then
  \[ \hat\sz^{s,x}_{0,\vec 1_{[a,N]}} = \tilde\sz^{0,-1}_{0,\vec 1_{[a,N]}} = I_*\partial_{[a,N]} = \tilde\sz^{s,x}_{0,\vec 1_{[a,N]}}. \]
\end{lemma}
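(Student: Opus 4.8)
The plan is to establish the three equalities from left to right by unwinding the relevant definitions. The middle equality $\tilde\sz^{0,-1}_{0,\vec 1_{[a,N]}} = I_*\partial_{[a,N]}$ is precisely Definition~\ref{wc:def:artificial-invs}\ref{wc:def:aux-inv-iv}, read at $(s,x)=(0,-1)$: the class $(0,\vec 1_{[a,N]})$ is of the form $\vec e = \vec 1_{[a,b]}$ with $b=N$, so $\tilde\sz^{0,-1}_{0,\vec 1_{[a,N]}} \coloneqq I_*\partial_{[a,N]}$ by definition, and by Lemma~\ref{wc:lem:general-semistable-loci}\ref{wc:lem:general-semistable-loci-i} the semistable locus $\tilde\fM^{\vec Q(\Fr),\sst}_{0,\vec 1_{[a,N]}}(\tau^s_x) = \pt$ is independent of $(s,x)$, which justifies the uniform notation $\partial_{[a,N]}$ and simultaneously gives the rightmost equality $I_*\partial_{[a,N]} = \tilde\sz^{s,x}_{0,\vec 1_{[a,N]}}$ for arbitrary $(s,x)$.

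For the leftmost equality $\hat\sz^{s,x}_{0,\vec 1_{[a,N]}} = \tilde\sz^{0,-1}_{0,\vec 1_{[a,N]}}$, I would invoke the computation already carried out in \S\ref{wc:sec:gen-van-res-proof-i}. There it is shown, using that $\tau^s_x$ restricted to classes $(0,\vec e)$ has entries independent of $s$ and depending only linearly on $x$, that in Definition~\ref{def:universal-coefficients} one may replace every occurrence of $\tau^s_x$ by $\tau^0_{-1}$ when the arguments are pure-framing classes; combined with Lemma~\ref{wc:lem:U-properties}\ref{it:U-identity} this yields $U\bigl((0,\vec e_1),\ldots,(0,\vec e_n); \tau^0_{-1}, \tau^s_x\bigr) = \delta_{n,1}$. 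Via Lemma~\ref{lem:Utilde-definition} relating $U$ and $\tilde U$, the defining sum \eqref{eq:artificial-invariant} for $\hat\sz^{s,x}_{0,\vec e}$ collapses to its single $n=1$ term, giving exactly \eqref{eq:artificial-invariant-pure-framing}, namely $\hat\sz^{s,x}_{0,\vec e} = \tilde\sz^{0,-1}_{0,\vec e}$. Specializing $\vec e = \vec 1_{[a,N]}$ gives the claim.

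Chaining the three equalities then completes the proof. There is no real obstacle here: the lemma is essentially a bookkeeping statement packaging together Definition~\ref{wc:def:artificial-invs}\ref{wc:def:aux-inv-iv}, Lemma~\ref{wc:lem:general-semistable-loci}\ref{wc:lem:general-semistable-loci-i}, and the pure-framing specialization \eqref{eq:artificial-invariant-pure-framing} already derived in \S\ref{wc:sec:gen-van-res-proof-i}. The only point requiring a moment's care is confirming that $(0,\vec 1_{[a,N]})$ genuinely lies in $S_\alpha$ so that all the invariants in question are defined, but this is immediate since $\vec 1_{[a,N]}$ satisfies $0 \le e_i \le i$ for all $i$ and is nonzero.
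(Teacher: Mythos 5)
Your proposal is correct and follows the paper's own argument: the paper likewise derives the first equality from \eqref{eq:artificial-invariant-pure-framing} established in \S\ref{wc:sec:gen-van-res-proof-i}, and the remaining equalities directly from Definition~\ref{wc:def:artificial-invs} of the auxiliary invariants (case~\ref{wc:def:aux-inv-iv}). Your additional remarks on the $(s,x)$-independence of the semistable locus and membership of $(0,\vec 1_{[a,N]})$ in $S_\alpha$ are accurate bookkeeping consistent with the paper.
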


\begin{proof}
  The first equality is \eqref{eq:artificial-invariant-pure-framing}
  in \S\ref{wc:sec:gen-van-res-proof-i}, and the second and third
  equalities follow from the Definition~\ref{wc:def:artificial-invs}
  of auxiliary invariants.
\end{proof}

\subsubsection{}

\begin{lemma} \label{lem:pair-equation}
  Let $\beta \in R_\alpha$. Then, for $1 \le a \le N$ and $x_0(a) < x
  \le 0$,
  \[ \tilde\sz_{\beta,\vec{1}_{[a,N]}}^{0,x} = \hat\sz_{\beta,\vec{1}_{[a,N]}}^{0,x}. \]
\end{lemma}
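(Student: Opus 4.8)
\textbf{Proof plan for Lemma~\ref{lem:pair-equation}.}
The plan is to compare the geometric wall-crossing for the auxiliary invariant $\tilde\sz_{\beta,\vec{1}_{[a,N]}}^{0,x}$ (worked out in Lemma~\ref{lem:sst-pair}\ref{it:sst-pair-a}) against the combinatorial wall-crossing for the artificial invariant $\hat\sz_{\beta,\vec{1}_{[a,N]}}^{0,x}$ (worked out in Lemma~\ref{wc:lem:vanishing-sst-pair}\ref{it:vanishing-sst-pair-a}), and observe that they are literally the same formula. First I would fix the unique index $1 \le a' \le N$ with $x_0(a') < x \le x_0(a'+1)$; by hypothesis $x > x_0(a)$, and to invoke Lemma~\ref{wc:lem:vanishing-sst-pair}\ref{it:vanishing-sst-pair-a} I must know that $a' = a$, i.e. that $x \le x_0(a+1)$. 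This is where I would need to restrict the interval in the statement, or else argue that by Lemma~\ref{wc:lem:pair-flag-vanishing}\ref{it:pair-flag-vanishing-i} (vertical stability of $\hat\sz$ on a connected component of the $x$-line away from the $x_0(\cdot)$) together with the analogous geometric statement for $\tilde\sz$ (which follows from Lemma~\ref{lem:sst-pair}\ref{it:sst-pair-a}, since $\tilde\fM^{\vec Q(\Fr),\sst}_{\beta,\vec 1_{[a,N]}}(\tau^0_x) \cong \tilde\fM^{Q(\Fr),\sst}_{\beta,1}(\tau^Q)$ is manifestly independent of $x$ for $x > x_0(a)$), both sides are independent of $x \in (x_0(a), 0]$. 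Thus it suffices to verify the identity for $x_0(a) < x \le x_0(a+1)$.

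With that reduction in hand, the two formulas are:
\begin{equation*}
  \tilde\sz_{\beta,\vec{1}_{[a,N]}}^{0,x} = \sum_{\substack{n\geq 1, \beta_i\in R_{\alpha}\\\beta = \beta_1 + \cdots + \beta_n}} \frac{1}{n!} \left[\tilde\sz^{0,-1}_{\beta_n,\vec 0},\left[\cdots,\left[\tilde\sz_{\beta_1,\vec 0}^{0,-1},\tilde\sz^{0,-1}_{0,\vec{1}_{[a,N]}}\right]\cdots\right]\right]
\end{equation*}
from \eqref{wc:eq:sst-pair-s0-wc}, and the same formula with every $\tilde\sz$ replaced by $\hat\sz$ from \eqref{wc:eq:van-sst-pair-s0-wc}. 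So I only need to match the individual building blocks. By Definition~\ref{wc:def:artificial-invs}\ref{wc:def:aux-inv-ii} and Lemma~\ref{wc:lem:U-properties}\ref{it:U-identity}, $\hat\sz_{\beta_i,\vec 0}^{0,-1} = \tilde\sz_{\beta_i,\vec 0}^{0,-1}$ for $\beta_i \in R_\alpha$ (both equal $\iota_*^{\vec Q}\sz_{\beta_i}(\tau)$); and by Lemma~\ref{lem:pure-framing-ident}, $\hat\sz^{0,-1}_{0,\vec 1_{[a,N]}} = \tilde\sz^{0,-1}_{0,\vec 1_{[a,N]}} = I_*\partial_{[a,N]}$. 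Plugging these equalities into the nested Lie brackets term by term gives exactly the desired identity. (Both sums range over the same index set: $(\beta_i, \vec 0) \in S_\alpha \iff \beta_i \in R_\alpha$, and $(\beta, \vec 1_{[a,N]}) \in S_\alpha$ automatically since $\beta \in R_\alpha$ and $0 \le e_i \le i$ holds for the indicator vector.)

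The main obstacle — really the only non-formal point — is the interval issue flagged above: Lemma~\ref{wc:lem:vanishing-sst-pair}\ref{it:vanishing-sst-pair-a} is stated only for $x_0(a) < x \le x_0(a+1)$, whereas Lemma~\ref{lem:pair-equation} claims the identity for all $x_0(a) < x \le 0$. The resolution is the independence-of-$x$ argument sketched above: $\tilde\sz_{\beta,\vec 1_{[a,N]}}^{0,x}$ is manifestly $x$-independent on $(x_0(a),0]$ from the moduli-space description in Lemma~\ref{lem:sst-pair}\ref{it:sst-pair-a}, and $\hat\sz_{\beta,\vec 1_{[a,N]}}^{0,x}$ is $x$-independent on each component $(x_0(b), x_0(b+1)]$ by Lemma~\ref{wc:lem:pair-flag-vanishing}\ref{it:pair-flag-vanishing-i} — but one must chain these components together, which requires knowing $\hat\sz^{0,x_0(b)+\epsilon}_{\beta,\vec 1_{[a,N]}} = \hat\sz^{0,x_0(b)-\epsilon}_{\beta,\vec 1_{[a,N]}}$ at each intermediate wall $x_0(b)$ with $a < b \le N$; this is precisely the content of Lemma~\ref{wc:lem:pair-flag-vanishing}\ref{it:pair-flag-vanishing-ii} applied to $(\beta, \vec 1_{[a,N]})$, noting that for this flag the minimal index with $e_\cdot = e_N$ is $a$ itself so no further decomposition occurs at walls $x_0(b)$ with $b > a$. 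Everything else is a direct substitution.
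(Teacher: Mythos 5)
Your proposal is correct and follows essentially the same route as the paper's proof: establish the identity on $x_0(a) < x \le x_0(a+1)$ by comparing \eqref{wc:eq:sst-pair-s0-wc} with \eqref{wc:eq:van-sst-pair-s0-wc}, using $\hat\sz^{0,-1}_{\gamma,\vec f} = \tilde\sz^{0,-1}_{\gamma,\vec f}$ (from Lemma~\ref{wc:lem:U-properties}\ref{it:U-identity}) together with Lemma~\ref{lem:pure-framing-ident}, and then extend to all of $(x_0(a),0]$ by $x$-independence of both sides. One correction to your "chaining" discussion: Lemma~\ref{wc:lem:pair-flag-vanishing}\ref{it:pair-flag-vanishing-i} is not restricted to a connected component of the complement of the walls --- it holds for \emph{all} $x \ge x' > x_0(a)$, where $a$ is the minimal index with $e_a = e_N$, which for $\vec e = \vec 1_{[a,N]}$ is exactly your $a$ (the proof of part~\ref{it:pair-flag-vanishing-i} rules out contributions from walls $x_0(b)$ with $b>a$ internally), so no chaining is needed; moreover your appeal to part~\ref{it:pair-flag-vanishing-ii} would not apply anyway, since that part requires $e_N \ge 2$ and concerns the wall $x_0(a)$ itself, not the intermediate walls. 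Since part~\ref{it:pair-flag-vanishing-i} alone gives the needed $x$-independence of $\hat\sz^{0,x}_{\beta,\vec 1_{[a,N]}}$ on $(x_0(a),0]$ (and Lemma~\ref{lem:sst-pair}\ref{it:sst-pair-a} gives it for $\tilde\sz$, as you say), the argument goes through as in the paper.
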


\begin{proof}
  At $(s,x)=(0,-1)$, Definition~\ref{wc:def:artificial-invs} for
  artificial invariants becomes $\hat\sz^{0,-1}_{\gamma,\vec f} =
  \tilde\sz^{0,-1}_{\gamma,\vec f}$, for any $(\gamma, \vec f) \in
  S_\alpha$, by Lemma~\ref{wc:lem:U-properties}\ref{it:U-identity}.
  Combining this with Lemma~\ref{lem:pure-framing-ident}, the desired
  statement when $x_0(a) < x \le x_0(a+1)$ then follows directly
  because the right hand sides of \eqref{wc:eq:sst-pair-s0-wc} in
  Lemma~\ref{wc:sec:sst-pair}\ref{it:sst-pair-a} and
  \eqref{wc:eq:van-sst-pair-s0-wc} in
  Lemma~\ref{wc:lem:vanishing-sst-pair}\ref{it:vanishing-sst-pair-a}
  are equal. By
  Lemma~\ref{wc:lemma:pair-to-flag-moduli-setup}\ref{wc:lemma:pair-to-flag-moduli-setupii}
  and
  Lemma~\ref{wc:lem:pair-flag-vanishing}\ref{it:pair-flag-vanishing-i},
  this extends to any $x_0(a) < x \le 0$.
\end{proof}

\subsubsection{}

\begin{lemma} \label{lem:left-wall-ident}
  Let $(\beta, \vec e) \in S_{\alpha}$ be a flag. Assume that $e_N \ge
  1$ and let $1 \le a \le N$ be the minimal index such that $e_a =
  e_N$. Then, for any $x_0(a) < x \le 0$, 
  \[ \tilde\sz_{\beta,\vec{e}}^{0,x} = \hat\sz_{\beta,\vec{e}}^{0,x}.\]
\end{lemma}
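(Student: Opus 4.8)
The plan is to prove Lemma~\ref{lem:left-wall-ident} by descending induction on the number of ``stripping'' steps needed to reduce the flag $(\beta, \vec e)$ down to one of the base cases already established, namely the ``pair'' flags $(\beta, \vec 1_{[a,N]})$ treated in Lemma~\ref{lem:pair-equation} (or $\vec e = \vec 0$, which is trivial by Definition~\ref{wc:def:artificial-invs}\ref{wc:def:aux-inv-ii} and the $n=1$ identity Lemma~\ref{wc:lem:U-properties}\ref{it:U-identity}). Concretely, I would induct on $e_N \ge 0$: the cases $e_N = 0$ and $e_N = 1$ are already handled, so assume $e_N \ge 2$ and that the statement holds for all flags with smaller value of $e_N$ (and in particular for $(\beta, \vec e - \vec 1_{[a,N]})$, whose top entry is $e_N - 1$).

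First I would fix $x_0(a) < x \le 0$ and apply the two ``vertical wall-crossing'' results with matching shapes: on the geometric side, Lemma~\ref{wc:lem:pair-flag} gives
\[ \tilde\sz_{\beta,\vec{e}}^{0,x} = \left[\tilde\sz_{\beta, \vec{e}-\vec{1}_{[a,N]}}^{0,x'}, \tilde\sz_{0,\vec{1}_{[a,N]}}^{0,x'}\right], \qquad x' \coloneqq x_0(a), \]
while on the artificial side, Lemma~\ref{wc:lem:pair-flag-vanishing}\ref{it:pair-flag-vanishing-ii} gives
\[ \hat\sz_{\beta,\vec{e}}^{0,x} = \left[\hat\sz_{\beta, \vec{e}-\vec{1}_{[a,N]}}^{0,x'}, \hat\sz_{0,\vec{1}_{[a,N]}}^{0,x'}\right]. \]
Here I need $(\beta, \vec e) \in S_\alpha$ to be a flag with $e_N \ge 2$, which is the standing hypothesis; I also need to know that the minimal index with $(e - 1_{[a,N]})_{a'} = (e-1_{[a,N]})_N$ is some $a' \le a$ with $x_0(a') \le x_0(a) < x$, so that the inductive hypothesis applies to $(\beta, \vec e - \vec 1_{[a,N]})$ at the parameter $x' = x_0(a)$ — more precisely, at $x'' $ slightly larger, using that both $\tilde\sz$ and $\hat\sz$ are constant for $x$ in the relevant range by Lemma~\ref{wc:lemma:pair-to-flag-moduli-setup}\ref{wc:lemma:pair-to-flag-moduli-setupii} and Lemma~\ref{wc:lem:pair-flag-vanishing}\ref{it:pair-flag-vanishing-i}. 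Then the inductive hypothesis yields $\tilde\sz_{\beta,\vec e - \vec 1_{[a,N]}}^{0,x'} = \hat\sz_{\beta,\vec e - \vec 1_{[a,N]}}^{0,x'}$, and Lemma~\ref{lem:pure-framing-ident} (i.e. Lemma~\ref{lem:pair-equation}'s companion, stated as Lemma~\ref{lem:pure-framing-ident} above) gives $\tilde\sz_{0,\vec 1_{[a,N]}}^{0,x'} = I_*\partial_{[a,N]} = \hat\sz_{0,\vec 1_{[a,N]}}^{0,x'}$. Since the Lie bracket is the same operation on both sides, the two displayed formulas agree, proving $\tilde\sz_{\beta,\vec e}^{0,x} = \hat\sz_{\beta,\vec e}^{0,x}$.

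To complete the induction I should verify the base case is genuinely covered: when $e_N \le 1$, either $\vec e = \vec 0$ (handled by case~\ref{wc:def:aux-inv-ii} of Definition~\ref{wc:def:artificial-invs} together with Lemma~\ref{wc:lem:U-properties}\ref{it:U-identity}, which forces $\hat\sz^{0,-1}_{\beta,\vec 0} = \tilde\sz^{0,-1}_{\beta,\vec 0}$ and then extends to all $x$ via Lemma~\ref{wc:lem:pair-flag-vanishing}\ref{it:pair-flag-vanishing-i} and the geometric constancy), or $\vec e = \vec 1_{[a,N]}$ for the minimal $a$ with $e_a = e_N = 1$, which is exactly Lemma~\ref{lem:pair-equation}. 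The main obstacle I anticipate is bookkeeping around the parameter values: I must be careful that the wall-crossing formulas of Lemma~\ref{wc:lem:pair-flag} and Lemma~\ref{wc:lem:pair-flag-vanishing}\ref{it:pair-flag-vanishing-ii} are applied at compatible parameters (both reduce to $x' = x_0(a)$), that the stripped flag $(\beta, \vec e - \vec 1_{[a,N]})$ is still in $S_\alpha$ and is still a flag, and that the inductive hypothesis is invoked at a parameter $x$ lying in the correct interval $x_0(a') < x \le 0$ for its minimal index $a'$ — all of which follow from the monotonicity $x_0(1) < \cdots < x_0(N) < 0$ (Definition in \S\ref{wc:sec:def-x0a}) and the constancy statements, but need to be spelled out. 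No genuinely new geometric input is required beyond the lemmas already proved in \S\ref{wc:sec:sst-inv-and-pair-inv} and \S\ref{wc:sec:pair-inv-and-flag-inv}.
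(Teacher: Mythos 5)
Your proposal is correct and follows essentially the same route as the paper: induction on $e_N$ with base case $e_N=1$ given by Lemma~\ref{lem:pair-equation}, and inductive step comparing the right-hand sides of \eqref{wc:eq:pair-flag-wc} and \eqref{wc:eq:pair-flag-wc-artificial} via the induction hypothesis and Lemma~\ref{lem:pure-framing-ident}. The parameter bookkeeping you flag is unproblematic for exactly the reason you give (for a flag with $e_N\ge 2$ one has $a\ge 2$ and the stripped flag's minimal index is $<a$, so $x'=x_0(a)$ lies in the inductive range), which is how the paper handles it as well.
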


\begin{proof}
  Induct on $e_N \ge 1$. If $e_N = 1$, then the desired result is
  exactly Lemma~\ref{lem:pair-equation}. For $e_N > 1$, combining the
  induction hypothesis with Lemma~\ref{lem:pure-framing-ident}, the
  right hand sides of \eqref{wc:eq:pair-flag-wc} in
  Lemma~\ref{wc:lem:pair-flag} and
  \eqref{wc:eq:pair-flag-wc-artificial} in
  Lemma~\ref{wc:lem:pair-flag-vanishing}\ref{it:pair-flag-vanishing-ii}
  are equal. We are done by induction.
\end{proof}

\subsubsection{}

\begin{proposition} \label{wc:prop:ptt-horizontal-wc}
  Let $(\beta, \vec e) \in S_\alpha$ be a full flag. Let $s \in [0,1]
  \setminus \{s_1, \ldots, s_K\}$ where $0 < s_1 < \cdots < s_K < 1$
  are the walls in Lemma~\ref{wc:lem:wall-crossing-horizontal-setup}).
  Then
  \[ \tilde\sz^{s,0}_{\beta,\vec{e}} = \hat\sz^{s,0}_{\beta,\vec{e}}. \]
  In particular, $\tilde\sz^{1,0}_{\beta,\vec{e}} =
  \hat\sz^{1,0}_{\beta,\vec{e}}$.
\end{proposition}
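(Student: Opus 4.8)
The plan is to prove $\tilde\sz^{s,0}_{\beta,\vec e} = \hat\sz^{s,0}_{\beta,\vec e}$ for full flags by inducting on $s$ along the finitely many chambers of $[0,1] \setminus \{s_1,\ldots,s_K\}$ determined by Lemma~\ref{wc:lem:wall-crossing-horizontal-setup}. For the base case $s \in (0, s_1)$ (the leftmost chamber), I would argue that in this chamber the semistable locus $\tilde\fM^{\vec Q(\Fr),\sst}_{\beta,\vec e}(\tau_0^s)$ agrees with the one at $x = -1$ after the ``vertical'' wall-crossings of \S\ref{wc:sec:pair-inv-and-flag-inv}; more precisely, I would use Lemma~\ref{lem:left-wall-ident}, which gives $\tilde\sz^{0,x}_{\beta,\vec e} = \hat\sz^{0,x}_{\beta,\vec e}$ for $x$ close to $0$ (in particular $x = 0$), together with Proposition~\ref{wc:prop:horizontal-wc}\ref{prop:hor-wc-arti} and Proposition~\ref{wc:prop:horizontal-vanishing}\ref{prop:hor-van-artii}, which say that both $\tilde\sz^{s,0}$ and $\hat\sz^{s,0}$ are constant on the connected component of $s$ in $[0,1] \setminus \{s_1,\ldots,s_K\}$. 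This handles $s \in [0, s_1)$.

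For the inductive step, suppose the identity holds for all full flags $(\gamma, \vec f)$ at all $s' < s_k$ outside the walls, and let $s_- \in (s_{k-1}, s_k)$ and $s_+ \in (s_k, s_{k+1})$. By Proposition~\ref{wc:prop:horizontal-wc}\ref{prop:hor-wc-artii},
\[ \tilde\sz^{s_+,0}_{\beta,\vec e} = \tilde\sz^{s_-,0}_{\beta,\vec e} + \sum_i \left[\tilde\sz^{s_-,0}_{\gamma_i,\vec f_i}, \tilde\sz^{s_-,0}_{\delta_i,\vec g_i}\right], \]
and by Proposition~\ref{wc:prop:horizontal-vanishing}\ref{prop:hor-van-artiii},
\[ \hat\sz^{s_+,0}_{\beta,\vec e} = \hat\sz^{s_-,0}_{\beta,\vec e} + \sum_i \left[\hat\sz^{s_-,0}_{\gamma_i,\vec f_i}, \hat\sz^{s_-,0}_{\delta_i,\vec g_i}\right], \]
where both sums range over exactly the same set of splittings $(\beta, \vec e) = (\gamma_i, \vec f_i) + (\delta_i, \vec g_i)$ with each $(\gamma_i, \vec f_i)$ and $(\delta_i, \vec g_i)$ a full flag and $\vec f_i > \vec g_i$. (The key point is that Lemma~\ref{wc:lem:wall-crossing-horizontal-setup} and Proposition~\ref{wc:prop:horizontal-vanishing}\ref{prop:hor-van-arti} produce the identical index set; this needs to be invoked explicitly.) Applying the induction hypothesis to each $\tilde\sz^{s_-,0}_{\gamma_i,\vec f_i} = \hat\sz^{s_-,0}_{\gamma_i,\vec f_i}$ and $\tilde\sz^{s_-,0}_{\delta_i,\vec g_i} = \hat\sz^{s_-,0}_{\delta_i,\vec g_i}$ — valid because $\gamma_i, \delta_i \neq 0$ and $r(\gamma_i), r(\delta_i) < r(\beta)$, so these are ``smaller'' classes to which we may also apply the induction, and because $s_- < s_k$ lies in an earlier chamber — the two right-hand sides agree term by term, hence $\tilde\sz^{s_+,0}_{\beta,\vec e} = \hat\sz^{s_+,0}_{\beta,\vec e}$. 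Using the chamber-constancy of both invariants again extends this to all of $[0, s_{k+1})$, completing the induction.

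The main subtlety is the bookkeeping of the induction: the statement must be proved simultaneously for all full flags $(\beta, \vec e)$ ordered by $r(\beta)$ (or by $r(\beta)$ then $|\vec e|$), since the wall-crossing terms at $(\beta, \vec e)$ involve full flags $(\gamma_i, \vec f_i)$ and $(\delta_i, \vec g_i)$ of strictly smaller rank but with $\vec f_i, \vec g_i$ not obviously smaller than $\vec e$. I would set up the induction on $r(\beta)$ as the outer variable and $s$-chamber as the inner variable, so that at each stage the terms on the right are covered either by a strictly earlier stage of the outer induction (smaller rank) or by the inner induction (earlier chamber, same rank); full flags of smaller rank are fine because a full flag $(\gamma, \vec f)$ has $f_N = \fr(\gamma)$, and such classes are genuinely lower in the rank ordering. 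The only other thing to verify is that Lemma~\ref{lem:left-wall-ident} applies at $x = 0$ with the relevant minimal index $a$; since $(\beta, \vec e)$ is a full flag with $e_N = \fr(\beta) \geq 1$ this is immediate, and the chamber-constancy in $x > x_0(a)$ from Lemma~\ref{wc:lem:pair-flag-vanishing}\ref{it:pair-flag-vanishing-i} (and its auxiliary analogue) bridges the ``corner'' $(s, x) = (0, 0)$ of Figure~\ref{fig:dominant-wc-strategy-2} to the start of the horizontal path. Finally, the ``in particular'' follows by taking $s = 1 \in (s_K, s_{K+1})$.
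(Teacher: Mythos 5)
Your proof is correct and is essentially the paper's argument: an outer induction on $r(\beta)$ with base case at $s=0$ via Lemma~\ref{lem:left-wall-ident}, chamber-constancy from Propositions~\ref{wc:prop:horizontal-wc}\ref{prop:hor-wc-arti} and \ref{wc:prop:horizontal-vanishing}\ref{prop:hor-van-artii}, and a term-by-term matching of the simple wall-crossing formulas \eqref{wc:eq:simple-hor-wc-formula} and \eqref{wc:eq:simple-hor-wc-formula-art} using the induction hypothesis (the paper runs the $s$-direction as an infimum/contradiction argument rather than an explicit chamber induction, which is the same idea). The only point you gloss is that applying the induction hypothesis to the smaller classes $(\gamma_i,\vec f_i),(\delta_i,\vec g_i)$ at $s_-$ requires, as the paper notes, that $s_k$ is not a wall for those classes and that $s_-$ be chosen close enough to $s_k$ to avoid \emph{their} walls — ``earlier chamber'' in your argument refers to the walls of $(\beta,\vec e)$, not to theirs.
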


\begin{proof}
  Induct on $r(\beta)$. Namely, we may assume that the claim holds for
  all classes $(\beta_i,\vec{e}_i) \in S_\alpha$ with $r(\beta_i) <
  r(\beta)$. This is vacuously true if $r(\beta) = 1$, which is the
  base case.

  Consider the claim for the given $(\beta, \vec e)$.
  Lemma~\ref{lem:left-wall-ident} implies it holds at $s=0$. Suppose
  it does not hold for some $s > 0$, and let $s'$ be the infimum over
  all $s\in [0,1] \setminus \{s_1,\ldots,s_K\}$ for which it does not
  hold. Then we may pick $s_- < s_+$, both arbitrarily close to $s'$,
  such that $\tilde\sz^{s_-,0}_{\beta,\vec{e}} =
  \hat\sz^{s_-,0}_{\beta,\vec{e}}$ but
  $\tilde\sz^{s_+,0}_{\beta,\vec{e}} \neq
  \hat\sz^{s_+,0}_{\beta,\vec{e}}$. If $s' \notin \{s_1, \ldots,
  s_K\}$, then we may choose $s_\pm$ such that $[s_-, s_+] \cap \{s_1,
  \ldots, s_K\} = \emptyset$, and then
  \[ \tilde\sz^{s_+, 0}_{\beta,\vec{e}} = \tilde\sz^{s_-, 0}_{\beta,\vec{e}} = \hat\sz^{s_-, 0}_{\beta,\vec{e}} = \hat\sz^{s_+, 0}_{\beta,\vec{e}} \]
  by Proposition~\ref{wc:prop:horizontal-wc}\ref{prop:hor-wc-arti},
  the assumption, and
  Proposition~\ref{wc:prop:horizontal-vanishing}\ref{prop:hor-van-artii}
  respectively. This is a contradiction. On the other hand, if $s' =
  s_k$ for some $1 \le k \le K$, choose $s_-$ and $s_+$ sufficiently
  close to $s'$ so that both
  Proposition~\ref{wc:prop:horizontal-wc}\ref{prop:hor-wc-artii} and
  Proposition~\ref{wc:prop:horizontal-vanishing}\ref{prop:hor-van-artiii}
  hold. Consider the right hand sides of
  \eqref{wc:eq:simple-hor-wc-formula} and
  \eqref{wc:eq:simple-hor-wc-formula-art}. By
  Lemma~\ref{wc:lem:wall-crossing-horizontal-setup} and
  Proposition~\ref{wc:prop:horizontal-vanishing}\ref{prop:hor-van-arti},
  $s'$ is not a wall for any of the classes $(\gamma_i, \vec f_i)$ and
  $(\delta_i, \vec g_i)$, therefore the induction hypothesis yields
  $\tilde\sz_{\gamma_i,\vec f_i}^{s,0} = \hat\sz_{\gamma_i,\vec
    f_i}^{s',0}$ and $\tilde\sz_{\delta_i,\vec g_i}^{s,0} =
  \hat\sz_{\delta_i,\vec g_i}^{s',0}$ and thus the sums in
  \eqref{wc:eq:simple-hor-wc-formula} and
  \eqref{wc:eq:simple-hor-wc-formula-art} range over the same
  splittings. Similarly, by choosing $s_-$ sufficiently close to $s'$,
  we may assume it is not a wall for any of the classes $(\gamma_i,
  \vec f_i)$ and $(\delta_i, \vec g_i)$, so, again by the induction
  hypothesis, the terms in the sums in
  \eqref{wc:eq:simple-hor-wc-formula} and
  \eqref{wc:eq:simple-hor-wc-formula-art} are equal. Hence the two
  right hand sides are equal. But then the left hand sides say
  $\tilde\sz^{s_+, 0}_{\beta,\vec{e}} = \hat\sz^{s_+,
    0}_{\beta,\vec{e}}$, a contradiction.

  Thus the desired claim holds for the given $(\beta, \vec e)$,
  completing the induction step.
\end{proof}

\subsubsection{}

\begin{proposition} \label{wc:prop:finally}
  For any $\beta \in \mathring R_\alpha$,
  \[ \tilde\sz_{\beta,\vec{0}}^{1,-1} = \hat\sz_{\beta,\vec{0}}^{1,-1}.\]
  In particular, $\tilde\sz_{\alpha,\vec{0}}^{1,-1} =
  \hat\sz_{\alpha,\vec{0}}^{1,-1}$.
\end{proposition}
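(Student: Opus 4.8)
The plan is to traverse the path in Figure~\ref{fig:dominant-wc-strategy-2}, using the vertical and horizontal wall-crossing steps already established, together with the vanishing of the artificial invariants, to reduce the statement at $(s,x) = (1,-1)$ for $(\beta,\vec 0)$ to the statement at full-flag classes at $(s,x) = (1,0)$, which is exactly Proposition~\ref{wc:prop:ptt-horizontal-wc}. The argument runs by downward induction along the right vertical edge of Figure~\ref{fig:dominant-wc-strategy-2}, from $x = 0$ to $x = -1$, for the invariants $\tilde\sz^{1,x}_{\beta,\vec e}$ and $\hat\sz^{1,x}_{\beta,\vec e}$ of flags $(\beta,\vec e) \in \mathring S_\alpha$.

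The first step is to record the base case: by Proposition~\ref{wc:prop:ptt-horizontal-wc}, applied to all full flags $(\gamma,\vec f) \in \mathring S_\alpha$ (so in particular those with underlying class of any rank), one has $\tilde\sz^{1,0}_{\gamma,\vec f} = \hat\sz^{1,0}_{\gamma,\vec f}$. Note here that a full flag with underlying class $\beta$ has $\vec f$ with $f_N = \fr(\beta)$; every flag $(\beta,\vec e) \in \mathring S_\alpha$ with $e_N \ge 1$ embeds, upon increasing $e_N$ if necessary, into the analysis via the stripping map of Definition~\ref{wc:def:flag-stripping}. Then, exactly as in the proof of Lemma~\ref{lem:left-wall-ident} but with $s = 1$ and $\mathring R_\alpha$, $\mathring\tau$ in place of $s = 0$, $R_\alpha$, $\tau$ (legitimate since we restrict to $\mathring S_\alpha$, where the $\lambda$-vanishing from Lemma~\ref{wc:lem:R-sets}\ref{it:R-sets-i} holds and all the $s=1$ versions of Lemmas~\ref{lem:sst-pair}, \ref{wc:lem:vanishing-sst-pair}, \ref{wc:lem:pair-flag}, \ref{wc:lem:pair-flag-vanishing} are available), I would prove by induction on $e_N$ that $\tilde\sz^{1,x}_{\beta,\vec e} = \hat\sz^{1,x}_{\beta,\vec e}$ for every flag $(\beta,\vec e) \in \mathring S_\alpha$ with $e_N \ge 1$ and every $x$ with $x_0(a) < x \le 0$, where $a$ is the minimal index with $e_a = e_N$. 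Concretely: Lemma~\ref{lem:pair-equation}'s $s=1$ analogue gives the $e_N = 1$ case by comparing the second line of \eqref{wc:eq:sst-pair-s1-wc} with \eqref{wc:eq:van-sst-pair-s1-wc}; the inductive step compares the right-hand sides of the $s=1$ cases of \eqref{wc:eq:pair-flag-wc} and \eqref{wc:eq:pair-flag-wc-artificial}, which agree once the stripped class $(\beta,\vec e - \vec 1_{[a,N]})$ and the pure-framing class $(0,\vec 1_{[a,N]})$ are handled by the induction hypothesis and Lemma~\ref{lem:pure-framing-ident} respectively. This reaches the region $-1 < x < 0$ for all flags.

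The final step is the passage to $x = -1$ and $\vec e = \vec 0$. For a class $\beta \in \mathring R_\alpha$, the class $(\beta,\vec 0)$ lies in $\mathring S_\alpha$. By Lemma~\ref{wc:lem:general-semistable-loci}\ref{wc:lem:general-semistable-loci-iv} and Lemma~\ref{wc:lem:vanishing-basic}\ref{it:vanishing-basic-iv}, for $x \le x_0(1)$ the only classes $(\beta',\vec e') \in S_\alpha$ with $\vec e' = \vec 0$ support nonzero $\tilde\sz$ or $\hat\sz$, so the artificial wall-crossing formula \eqref{wc:eq:artificial-base-wcf-s1} from $(1,x)$ with $x_0(1) < x \le x_0(2)$ down to $(1,-1)$, applied to $(\beta,\vec 0)$, reduces to the $n=1$ term (all decompositions into classes with $\vec 0$ framing that could appear are forced to be trivial, since mixing in any pure-framing piece $(0,\vec 1_{[a,b]})$ at $s = 1$, $x = -1$ makes $\tau^1_{-1}(0,\vec 1_{[a,b]}) = (-\infty,*)$ and Lemma~\ref{wc:lem:U-properties}\ref{it:U-vanishing} kills the coefficient). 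Hence $\hat\sz^{1,-1}_{\beta,\vec 0} = \hat\sz^{1,x}_{\beta,\vec 0}$ for such $x$; the same holds for $\tilde\sz$ by Lemma~\ref{wc:lemma:pair-to-flag-moduli-setup}\ref{wc:lemma:pair-to-flag-moduli-setupii} (the $s=1$, $\mathring S_\alpha$ case) and Lemma~\ref{wc:lem:pair-flag}. Combined with the previous step at such $x$ (a flag with $\vec e = \vec 0$ trivially has $e_N = 0$, so one instead uses the $x_0(1) < x$ region where, by the same Lemmas~\ref{wc:lemma:pair-to-flag-moduli-setup} and \ref{wc:lem:pair-flag-vanishing}, both invariants are independent of $x$ and equal to each other once we invoke the $e_N = 1$ case with $(\beta,\vec 1_{[1,N]})$ and strip down), this yields $\tilde\sz^{1,-1}_{\beta,\vec 0} = \hat\sz^{1,-1}_{\beta,\vec 0}$, and in particular for $\beta = \alpha$. \textbf{The main obstacle} I anticipate is the bookkeeping at the very last corner: carefully tracking that, as $x$ crosses $x_0(1)$, the $\vec 0$-framing class $(\beta,\vec 0)$ only receives contributions from pure-framing pieces that are immediately annihilated by the $S$-coefficient vanishing, so that no genuine wall-crossing term appears — i.e. verifying that the only wall relevant to $(\beta,\vec 0)$ in the strip $-1 \le x \le 0$ is the "vertical" one and it contributes trivially. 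This requires combining Lemma~\ref{wc:lem:vanishing-basic}, the structure of $\tau^1_x$ on pure-framing classes, and Lemma~\ref{wc:lem:U-properties}\ref{it:U-vanishing} in the same pattern as the proof of Lemma~\ref{wc:lem:vanishing-sst-pair}, but now degenerating $(\beta,\vec 1_{[1,N]})$ to $(\beta,\vec 0)$.
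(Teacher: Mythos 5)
Your overall route (anchor at $(s,x)=(1,0)$ via Proposition~\ref{wc:prop:ptt-horizontal-wc}, then move down the right edge of Figure~\ref{fig:dominant-wc-strategy-2}) is the paper's route, but the way you anchor the framing induction is circular. At $s=0$ the whole chain in Lemmas~\ref{lem:pair-equation} and \ref{lem:left-wall-ident} is grounded in the fact that $\hat\sz^{0,-1}_{\gamma,\vec f}=\tilde\sz^{0,-1}_{\gamma,\vec f}$ holds \emph{by definition} of the artificial invariants. There is no $s=1$ analogue of this: comparing the second line of \eqref{wc:eq:sst-pair-s1-wc} with \eqref{wc:eq:van-sst-pair-s1-wc}, as you propose for your $e_N=1$ base case, requires $\tilde\sz^{1,-1}_{\beta_i,\vec 0}=\hat\sz^{1,-1}_{\beta_i,\vec 0}$ for every class in the decompositions, including the $n=1$ term $\beta_i=\beta$ — which is precisely Proposition~\ref{wc:prop:finally}, the statement you are trying to prove. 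So your induction on $e_N$ has no valid base, and the horizontal input at $(1,0)$ that you record at the start is never actually fed into the argument. The genuine logical direction at $s=1$ is the reverse of Lemma~\ref{lem:left-wall-ident}: one must start from the full-flag equality at $(1,0)$, strip \emph{down} to $(\beta,\vec 1_{[1,N]})$ at $x=x_0(2)$ using the $s=1$ cases of Lemmas~\ref{wc:lem:pair-flag} and \ref{wc:lem:pair-flag-vanishing} together with Lemma~\ref{wc:lemma:pushforward-vertical-wall} (pushing forward the brackets with $I_*\partial_{[a,N]}$ and cancelling the quantum integers $[\fr(\beta)-e_{a-1}]_\kappa$), and only then substitute \eqref{wc:eq:sst-pair-s1-wc} and \eqref{wc:eq:van-sst-pair-s1-wc} to \emph{extract} the $\vec 0$-framed equality.

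That extraction is where the second missing ingredient appears: an outer induction on $r(\beta)$ over $\beta\in\mathring R_\alpha$. The $n\ge 2$ terms of \eqref{wc:eq:sst-pair-s1-wc} and \eqref{wc:eq:van-sst-pair-s1-wc} involve classes $\beta_i$ of strictly smaller rank, so they cancel by that induction hypothesis; what remains is $[\tilde\sz^{1,-1}_{\beta,\vec 0},I_*\partial_{[1,N]}]=[\hat\sz^{1,-1}_{\beta,\vec 0},I_*\partial_{[1,N]}]$, and one more application of Lemma~\ref{wc:lemma:pushforward-vertical-wall} divides out $[\fr(\beta)]_\kappa$ to give the claim. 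Your proposal contains neither the rank induction nor any mechanism for cancelling the bracket with $\partial$, and the obstacle you flag (the behaviour of $(\beta,\vec 0)$ as $x$ crosses $x_0(1)$) is not the real difficulty: $\tilde\sz^{1,x}_{\beta,\vec 0}$ is by Definition~\ref{wc:def:artificial-invs} the semistable invariant $\iota^{\vec Q}_*\sz_\beta(\mathring\tau)$ independently of $x$, so no wall-crossing in $x$ is needed there at all.
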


\begin{proof}
  Induct on $r(\beta)$. Namely, we may assume that the claim holds for
  all classes $\beta_i \in \mathring R_\alpha$ with $r(\beta_i) <
  r(\beta)$. This is vacuously true if $r(\beta) = 1$, which is the
  base case.

  Choose $\vec e$ such that $(\beta, \vec e)$ is a full flag.
  Concretely, we choose $e_i \coloneqq \min\{i, \fr(\beta)\}$ for $1
  \le i \le N$. In particular, $(\beta, \vec e) \in \mathring
  S_\alpha$. Then
  \begin{equation} \label{wc:eq:ptt-right-vertical-top}
    \tilde\sz^{1,0}_{\beta,\vec e} = \hat\sz^{1,0}_{\beta, \vec e}
  \end{equation}
  by Proposition~\ref{wc:prop:ptt-horizontal-wc}. This implies the
  following.

\subsubsection{}

  \begin{lemma}
    \begin{equation} \label{wc:eq:ptt-right-vertical-pairs}
      \tilde\sz^{1,x_0(2)}_{\beta,\vec{1}_{[1,N]}} = \hat\sz^{1,x_0(2)}_{\beta,\vec{1}_{[1,N]}}.
    \end{equation}
  \end{lemma}

  \begin{proof}
    If $\fr(\beta) = 1$ then $\vec e = \vec 1_{[1,N]}$ and
    \eqref{wc:eq:ptt-right-vertical-top} becomes the desired claim
    upon using
    Lemmas~\ref{wc:lemma:pair-to-flag-moduli-setup}\ref{wc:lemma:pair-to-flag-moduli-setupii}
    and \ref{wc:lem:pair-flag-vanishing}\ref{it:pair-flag-vanishing-i}
    to move from $x=1$ to $x=x_0(2) > x_0(1)$.

    Else, $e_N = \fr(\beta) \ge 2$, and we use the following procedure
    to iteratively reduce to the case that $e_N = 1$. Applying
    Lemma~\ref{wc:lem:pair-flag} and
    Lemma~\ref{wc:lem:pair-flag-vanishing} for $s=1$ to the left and
    right hand sides of \eqref{wc:eq:ptt-right-vertical-top}
    respectively, along with Lemma~\ref{lem:pure-framing-ident},
    yields
    \[ \left[\tilde\sz^{1,x_0(a)}_{\beta,\vec{e}-\vec{1}_{[a,N]}}, I_*\partial_{[a,N]}\right] = \left[\widehat{\sZ}^{1,x_0(a)}_{\beta,\vec{e}-\vec{1}_{[a,N]}}, I_*\partial_{[a,N]}\right]. \]
    Note that, here, $a = \fr(\beta)$ by our specific choice of $\vec
    e$. Now use Lemma~\ref{wc:lemma:pushforward-vertical-wall} below
    to push the classes on both sides down to $\tilde\fM^{\vec
      Q(\Fr),\pl}_{\beta,\vec{e}-\vec{1}_{[a,N]}}$, to get
    \[ \tilde\sz^{1,x_0(a)}_{\beta,\vec{e}-\vec{1}_{[a,N]}} \cdot [1]_{\kappa} = \hat\sz^{1,x_0(a)}_{\beta,\vec{e}-\vec{1}_{[a,N]}} \cdot [1]_{\kappa}. \]
    Iterate this procedure $\fr(\beta) - 1$ times, where the $i$-th
    iteration subtracts $\vec 1_{[\fr(\beta)-i, N]}$ from the framing
    dimension, the result is
    \[ \tilde\sz^{1,x_0(2)}_{\beta,\vec{1}_{[1,N]}} \cdot [\fr(\beta)-1]_\kappa! = \hat\sz^{1,x_0(2)}_{\beta,\vec{1}_{[1,N]}} \cdot [\fr(\beta)-1]_\kappa! \]
    and we may cancel $[\fr(\beta)-1]_\kappa! \coloneqq
    \prod_{k=1}^{\fr(\beta)-1} [k]_\kappa$ from both sides.
  \end{proof}

\subsubsection{}

  Since $x_0(2) > x_0(1)$, we may substitute
  \eqref{wc:eq:sst-pair-s1-wc} from
  Lemma~\ref{lem:sst-pair}\ref{it:sst-pair-b} and
  \eqref{wc:eq:van-sst-pair-s1-wc} from
  Lemma~\ref{wc:lem:vanishing-sst-pair}\ref{it:vanishing-sst-pair-b}
  into the left and right hand sides of
  \eqref{wc:eq:ptt-right-vertical-pairs}. The terms of the sums which
  involve a non-trivial decomposition of $\beta$ cancel by the
  induction hypothesis. Applying Lemma~\ref{lem:pure-framing-ident},
  we are left with the equality
  \[ \left[\tilde\sz^{1,-1}_{\beta,\vec{0}}, I_*\partial_{[1,N]}\right] = \left[\hat\sz^{1,-1}_{(\beta,\vec{0})}, I_*\partial_{[1,N]}\right]. \]
  Use Lemma~\ref{wc:lemma:pushforward-vertical-wall} below to push the
  classes on both sides down to $\tilde\fM^{\vec
    Q(\Fr),\pl}_{\beta,\vec 0}$, to get
  \[ \tilde\sz^{1,-1}_{\beta,\vec{0}} \cdot [\fr(\beta)]_\kappa = \hat\sz^{1,-1}_{\beta,\vec{0}} \cdot [\fr(\beta)]_\kappa, \]
  which finishes the proof.
\end{proof}

\subsubsection{}

\begin{lemma}\label{wc:lemma:pushforward-vertical-wall}
  Let $(\beta, \vec e) \in S_\alpha$ be a flag. Assume that $e_N \ge
  1$ and let $1 \le a \le N$ be the minimal index such that $e_a =
  e_N$. Let
  \[ \pi\colon \tilde\fM^{\vec Q(\Fr)}_{\beta,\vec e} \to \tilde\fM^{\vec Q(\Fr)}_{\beta,\vec e-\vec 1_{[a,N]}} \]
  be the morphism of Definition~\ref{wc:def:flag-stripping}. Then
  \[ \pi_*\left[\phi, I_*\partial_{[a,N]}\right] = [\fr(\beta) - e_{a-1}]_\kappa \cdot \phi. \]
\end{lemma}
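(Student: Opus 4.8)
The plan is to reduce this to the homology projective bundle formula (Lemma~\ref{lem:homology-projective-bundle-formula}), or more precisely to its incarnation in Lemma~\ref{lem:flag-forget-projective}\ref{lem:flag-forgetii}, followed by a residue/rigidity computation in the spirit of Corollary~\ref{cor:symmetrized-projective-bundle-formula} and Lemma~\ref{lem:pushforward-of-bracket-partial}. The key point is that, although the map $\pi$ is defined on the whole stack $\tilde\fM^{\vec Q(\Fr)}_{\beta,\vec e}$, the Lie bracket $[\phi, I_*\partial_{[a,N]}]$ can be computed using the vertex product formula of Theorem~\ref{thm:auxiliary-stack-vertex-algebra}, and the relevant bilinear element $\tilde\scE^{\vec Q(\Fr)}_{(\beta,\vec e-\vec 1_{[a,N]}),(0,\vec 1_{[a,N]})}$ only involves the framing bundles. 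The subtlety versus Lemma~\ref{lem:flag-forget-projective}\ref{lem:flag-forgetii} is that there we worked on the locus $\tilde\fM^{\vec Q(\Fr),\inj(<a)}_{\beta,\vec e}$ where $\rho_N\circ\cdots\circ\rho_{a-1}$ is injective, so the fraction $\hat\se_{z^{-1}}(\kappa^{-1}\cV^\vee\boxtimes\cL)/\hat\se_z(\cV\boxtimes\cL^\vee)$ with $\cV = \cFr(\cE)/\cV_{a-1}$ of rank $\fr(\beta) - e_{a-1}$ appeared; here, we want the statement on the full stack with no injectivity assumption, so $\pi$ is a total-space map rather than a $\bP$-bundle and we must use the degree/translation operator directly.

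First I would unwind the definition of the Lie bracket: by Theorem~\ref{thm:auxiliary-stack-vertex-algebra} and Theorem~\ref{thm:mVOA-monoidal-stack-lie-algebra},
\[
(\kappa^{-\frac12} - \kappa^{\frac12})\cdot [\phi, I_*\partial_{[a,N]}](\cE) = \rho_{K,z}\,\bigl(\tilde\phi \boxtimes I_*\partial_{[a,N]}\bigr)\bigl(\hat\Theta(z) \otimes z^{\deg_1}\Phi^*\cE\bigr),
\]
where $\Phi = \Phi_{(\beta,\vec e-\vec 1_{[a,N]}),(0,\vec 1_{[a,N]})}$ and $\tilde\phi$ is a lift. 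Then I would push forward along $\pi_*$, using that $\pi\circ\Phi$ (after composing with the natural $\iota$-type identifications) is the identity on the first factor up to the forgetful map to $\tilde\fM^{\vec Q(\Fr)}_{\beta,\vec e-\vec 1_{[a,N]}}$, exactly as in the commutative triangle argument in the proof of Lemma~\ref{lem:pushforward-of-bracket-partial}. As there, $z^{\deg_1}$ acts trivially on the $^\pl$ groups by \eqref{eq:degree-map-pl-stack}, and $(I_*\partial_{[a,N]})(\text{anything of the form }\cV_a^k) = 1$, so the whole computation collapses to evaluating $\rho_{K,z}\,\pi_{\text{rel}*}\hat\Theta(z)$, where $\pi_{\text{rel}}$ is the projection killing off the direct summand.

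The main computational step — and the one I expect to be the main obstacle — is checking that
\[
\rho_{K,z}\,\frac{\hat\se_{z^{-1}}(\kappa^{-1}\,(\cFr(\cE)/\cV_{a-1})^\vee)}{\hat\se_z(\cFr(\cE)/\cV_{a-1})} = (-1)^{\fr(\beta)-e_{a-1}}\Bigl(\kappa^{\frac12(\fr(\beta)-e_{a-1})} - \kappa^{-\frac12(\fr(\beta)-e_{a-1})}\Bigr),
\]
i.e. that the residue is $(\kappa^{-\frac12}-\kappa^{\frac12})$ times $[\fr(\beta)-e_{a-1}]_\kappa$. This is precisely the rigidity computation \eqref{eq:k-theoretic-residue-rigidity} from the proof of Corollary~\ref{cor:symmetrized-projective-bundle-formula}, applied to the rank-$(\fr(\beta)-e_{a-1})$ bundle $\cFr(\cE)/\cV_{a-1}$; the two limits at $z\to 0$ and $z\to\infty$ exist and equal $(-\kappa^{-\frac12})^{\fr(\beta)-e_{a-1}}$ and $(-\kappa^{\frac12})^{\fr(\beta)-e_{a-1}}$ respectively by writing the fraction as a product over Chern roots. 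The care needed here is that, because $\pi$ is not literally a projective bundle on the full stack, I should either first restrict to an affine-étale cover (where the relevant virtual structure sheaves and Euler classes behave as in \S\ref{bg:rem:symm-pb-formula-alg-sp}) and glue, or — cleaner — simply invoke that $\hat\Theta(z)$ and its residue are universal K-theoretic expressions in $\cFr(\cE)$ and $\cV_{a-1}$ depending only on the rank difference $\fr(\beta)-e_{a-1}$, so the identity follows from the purely formal computation. Finally, dividing both sides by $(\kappa^{-\frac12}-\kappa^{\frac12})$ and reconciling with the evaluation $(I_*\partial_{[a,N]})(\cO) = 1$ as in Lemma~\ref{lem:pushforward-of-bracket-partial}, we obtain $\pi_*[\phi, I_*\partial_{[a,N]}] = [\fr(\beta)-e_{a-1}]_\kappa\cdot\phi$, as claimed. \qed
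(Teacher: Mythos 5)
Your proposal is correct and follows essentially the same route as the paper: the paper's proof simply says it is "completely analogous to Lemma~\ref{lem:pushforward-of-bracket-partial}", using the commutative triangle $\pi\circ\Phi=\pr_1$ and the fact that $\rank\tilde\scE^{\vec Q(\Fr)}_{(\beta,\vec e-\vec 1_{[a,N]}),(0,\vec 1_{[a,N]})}=\fr(\beta)-e_{a-1}$, after which the rigidity residue computation of Proposition~\ref{prop:rigidity} yields $[\fr(\beta)-e_{a-1}]_\kappa$. Your extra detour through the projective-bundle picture with $\cFr(\cE)/\cV_{a-1}$ is unnecessary (and that identification only holds on the injective locus), but your fallback — that the residue of $\hat\Theta(z)$ depends only on the rank of the bilinear element — is exactly the argument the paper uses, so the proof goes through as you wrote it.
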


\begin{proof}
  The proof is completely analogous to that of
  Lemma~\ref{lem:pushforward-of-bracket-partial}, but using the
  commutative diagram
  \[ \begin{tikzcd}
    \tilde\fM^{\vec Q(\Fr)}_{\beta,\vec e-\vec 1_{[a,N]}} \times \tilde\fM^{\vec Q(\Fr)}_{0,\vec 1_{[a,N]}} \ar{r}{\Phi} \ar{dr}[swap]{\pr_1} & \tilde\fM^{\vec Q(\Fr)}_{\beta,\vec e} \ar{d}{\pi} \\
    {} & \tilde\fM^{\vec Q(\Fr)}_{\beta,\vec e-\vec 1_{[a,N]}}
  \end{tikzcd} \]
  and that $\rank \tilde \scE^{\vec Q(\Fr)}_{(\beta, \vec e-\vec
    1_{[a,N]}),(0,\vec 1_{[a,N]})} = \fr(\beta) - e_{a-1}$.
\end{proof}

\section{Additional features}
\label{sec:additional-features}

\subsection{(Co)homological version}
\label{sec:cohomological-version}

\subsubsection{}

In this subsection, we prove cohomological analogues of the main
Theorems~\ref{thm:sst-invariants} and \ref{thm:wcf}.

The main and only technical point is to provide a suitable {\it
  equivariant homology theory} $A_*^\sT(-)$ for $\fX \in
\cat{Art}_\sT$, i.e. Artin stacks locally of finite type (over any
base field $k$, not just $\bC$). By ``suitable'' we mean that all
results in \S\ref{sec:background} concerning the K-homology group
$K_\circ^\sT(-)$ must have analogues with $A_*^\sT(-)$. We give one
possibility for $A_*^\sT(-)$ in
Definition~\ref{def:homology-theory-operational} --- a certain dual of
equivariant operational Chow cohomology --- and state its relevant
analogous properties in
\S\ref{sec:homology-basic-properties}--\S\ref{sec:homology-enumerative-invariants}.
The main Theorems~\ref{thm:sst-invariants} and \ref{thm:wcf} are then
stated and proved by replacing all the K-theoretic pieces in
\S\ref{sec:semistable-invariants} and \S\ref{sec:wall-crossing},
respectively, by their (co)homological analogues. This adaptation is
done in Theorems~\ref{thm:sst-invariants-cohom} and
\ref{thm:wcf-cohom}.

\subsubsection{}

Throughout (cf. Definition~\ref{def:equivariant-k-theory}), let
\[ \bh_\sT^* \coloneqq \bZ[\Char\sT] = \bZ[\zeta_\mu : \mu \in \Char\sT] \]
be the group ring of the character lattice of $\sT$, where the
generators $\zeta_\mu$ satisfy $\zeta_{\mu} + \zeta_{\mu'} =
\zeta_{\mu+\mu'}$ and lie in degree $1$. Canonically, $\Spec
\bh_\sT^*$ is the Lie algebra of $\sT$ and therefore $\bh_\sT^*$ is
the $\sT$-equivariant cohomology of a point for any reasonable
equivariant cohomology theory. Let $\bh_{\sT,\loc}^* \coloneqq \Frac
\bh_\sT^*$.

We emphasize that, unlike in K-theory, in the cohomological version of
our machinery there is no symmetrization of characteristic classes
(like in \eqref{eq:k-theoretic-wedge-symmetrized}) or virtual cycles
(like in \eqref{eq:symmetrized-virtual-sheaf}). Consequently there is
no need to replace $\sT$ with a double cover $\tilde\sT
\twoheadrightarrow \sT$ where $\kappa^{1/2}$ exists.

\subsubsection{}

Within the framework of this paper, we believe the simplest way to
construct the desired functor $A_*^\sT(-)$ is to begin with an {\it
  equivariant Chow cohomology theory} $A_\sT^*(-)$ and then apply the
constructions of \S\ref{sec:equivariant-k-homology} using $A_\sT^*(-)$
in place of $K_\sT^\circ(-)$. By ``Chow cohomology'' we mean a functor
which is to singular cohomology as Chow homology $\CH_*^\sT(-)$
\cite{Kresch1999} is to Borel--Moore homology. Recall that
$\CH_{-p}^\sT(\pt) \cong \bh_\sT^p$.

For $\fX \in \cat{Art}_\sT$, take $A_\sT^*(\fX)$ to be the natural
$\sT$-equivariant version of {\it operational Chow cohomology}
\cite[Appendix C]{Bae2022}. \footnote{This is the natural
generalization to Artin stacks of the bivariant Chow groups of
\cite[\S 17.1]{Fulton1998} for the identity morphism $\id\colon \fX
\to \fX$.} Namely, an element $c \in A_\sT^p(\fX)$ is a collection of
homomorphisms
\[ c_g\colon \CH_*^\sT(\fY) \to \CH_{*-p}^\sT(\fY), \]
for all $m \in \bZ$ and all morphisms $g\colon \fY \to \fX$ in
$\cat{Art}_\sT$ where $\fY$ is of finite type and stratified by global
quotient stacks, and these homomorphisms must be compatible with
representable proper pushforwards, flat pullback, and refined Gysin
pullback along representable lci morphisms. We will sometimes use the
conventional notation
\[ c \cap \omega \coloneqq c_g(\omega) \qquad \text{for } \omega \in \CH_*^\sT(\fY). \]
Let $A_\sT^*(X)_\bQ$ and $A_\sT^*(X)_{\loc}$ denote the analogous
operational Chow cohomology groups after base change from $\bh_\sT^*$
to $\bh_\sT^* \otimes_{\bZ} \bQ$ and $\bh_{\sT,\loc}^*$ respectively.

We claim that $\CH_*^\sT(-)$ and $A_\sT^*(-)$ are appropriate
cohomological analogues of $K_\sT(-)$ and $K_\sT^\circ(-)$
respectively.

\subsubsection{}
\label{sec:homology-basic-properties}

We review some basic properties of $A_\sT^*(-)$ and $\CH_*^\sT(-)$.
This is the cohomological analogue of
\S\ref{sec:k-theory-basic-properties}. We will only consider
$\CH_*^\sT(-)$ for Artin stacks stratified by global quotient stacks.
\begin{itemize}
\item Composition naturally makes $A_\sT^*(\fX)$ into a graded
  $\bh_\sT^*$-algebra, and by definition $\CH_*^\sT(\fX)$ is a graded
  $A_\sT^*(\fX)$-module.

\item If $\fX$ is equidimensional and stratified by global quotient
  stacks, then there is a well-defined homomorphism $- \cap
  [\fX]\colon A_\sT^*(\fX) \to \CH_{\dim \fX-*}^\sT(\fX)$. If $\fX$ is
  smooth, then this induces an isomorphism $A_\sT^*(\fX)_{\bQ} \cong
  \CH_{\dim \fX-*}^\sT(\fX)_\bQ$. If moreover $\fX$ is a scheme, then
  the base change to $\bQ$ is unnecessary. So in particular
  $A_\sT^*(\pt) \cong \bh_\sT^*$.

\item A $\sT$-equivariant morphism $f\colon \fX \to \fY$ induces a
  functorial pullback $f^*\colon A_\sT^p(\fY) \to A_\sT^p(\fX)$. If
  $f$ is proper and representable, there is a functorial pushforward
  $f_*\colon \CH_*^\sT(\fX) \to \CH_*^\sT(\fY)$ satisfying the {\it
    projection formula}
  \[ c \cap f_*\omega = f_*(f^*c \cap \omega), \qquad \omega \in \CH_*^\sT(\fX), \, c \in A_\sT^*(\fY). \]
  If in addition $f$ is flat, there is a functorial pullback
  $f^*\colon \CH_*^\sT(\fY) \to \CH_{*+\dim f}^\sT(\fX)$ and a
  functorial pushforward $f_*\colon A_\sT^*(\fX) \to A_\sT^{*+\dim
    f}(\fY)$ satisfying the {\it projection formula}
  \[ f_*c \cap \omega = f_*(c \cap f^*\omega), \qquad \omega \in \CH_*^\sT(\fY), \, c \in A_\sT^*(\fX). \]
  In particular, both $f^*$ and $f_*$ are $\bh_\sT^*$-linear.
\end{itemize}

\subsubsection{}

We provide the cohomological analogues of characteristic classes, the
residue map, and the projective bundle formula. This is the
cohomological analogue of the remainder of \S\ref{sec:k-theory}.
\begin{itemize}
\item (Chern/Segre classes, cf.
  Definition~\ref{def:k-theoretic-wedge}) Given $\cE \in
  \cat{Vect}_\sT(\fX)$ of rank $r$ and a formal variable $u$, let
  \[ c_u(\cE) \coloneqq \sum_i u^i c_i(\cE) = \prod_\cL (1 + u c_1(\cL)) \in A_\sT^*(\fX)[u] \]
  be its {\it total Chern class} in operational Chow cohomology, where
  $c_i(\cE)_g\colon \CH_*^\sT(\fY) \to \CH_{*-i}^\sT(\fY)$ is the
  operator $c_i(g^*\cE) \cap -$. Its power series inverse is the {\it
    total Segre class}
  \[ s_u(\cE) \eqqcolon \sum_i u^i s_i(\cE) \in A_\sT^*(\fX)\pseries*{u}, \]
  and therefore $c_u(-)$ extends to $K_0(\cat{Vect}_\sT(\fX))$ by
  defining $c_u(\cE_1 - \cE_2) \coloneqq c_u(\cE_1) s_u(\cE_2)$.

\item (Euler class) In the setting of
  Definition~\ref{def:k-theoretic-euler-class}, the {\it cohomological
    Euler class} is
  \[ e_u(\cE) \coloneqq \prod_\mu (u + \zeta_\mu)^{\rank \cE_\mu} c_{-(u + \zeta_\mu)^{-1}}(\cE_\mu) \in A_\sT^*(\fX)[u]\pseries*{(u + \zeta_\mu)^{-1} : \mu \in \Char(\sT)}. \]
  The filtration by powers of $I^\circ(X)$ on $K(X)$ is replaced by
  the dimension grading on $\CH_*(X)$, so the analogue of
  Lemma~\ref{lem:k-theory-finiteness} is that $\CH_N(X) = 0$ for all
  $N < 0$ if $X$ is a finite-type algebraic space, and, consequently,
  for $\cE \in K_0(\cat{Vect}_\sT(X))$,
  \[ e_u(\cE) \in A_\sT^*(X)[u][(u + \zeta_\mu)^{-1} : \mu \in \Char(\sT)]. \]
  Its expansion as a formal power series around $u^{-1} = 0$ is
  \begin{equation} \label{eq:cohomological-euler-class-expansion}
    u^{\rank \cE_1 - \rank \cE_2} \sum_{i,j \ge 0} u^{-i-j} c_i(\cE_1) \cdot s_j(\cE_2) \in A_\sT^*(X)\lseries*{z^{-1}}.
  \end{equation}
\end{itemize}

\subsubsection{}

\begin{definition}[cf. Definition~\ref{def:k-theoretic-residue-map}]
  Given a rational function $f \in \bh_{\sT \times \bC^\times,\loc}^*$
  with the cohomological weight of the $\bC^\times$ denoted $u$, let
  $f_- \in \bh_{\sT,\loc}^*\lseries*{u^{-1}}$ be its formal series
  expansion around $u = \infty$. The {\it ($\sT$-equivariant)
    cohomological residue map} is the $\bh_{\sT,\loc}^*$-module
  homomorphism
  \begin{align*}
    \rho\colon \bh_{\sT \times \bC^\times,\loc}^* &\to \bh_{\sT,\loc} \\
    f &\mapsto u^{-1} \text{ term in } f_-.
  \end{align*}
  Treating cohomological $\sT$-weights as generic non-zero complex
  numbers, this is equivalent to
  \[ \rho(f) = \Res_{u=\infty}(f \, du) = \sum_{p \in \bC} \Res_{u=p} (f \, du) \]
  where the second equality is the residue theorem. We write $\rho_u$
  to emphasize that the residue is being taken in the variable $u$.
\end{definition}

\subsubsection{}

\begin{lemma}[Cohomological projective bundle formula] \label{lem:cohomological-projective-bundle-formula}
  Take the setting of Lemma~\ref{lem:projective-bundle-formula}, with
  $h \coloneqq c_1(s) \in A_\sT^*(\bP(\cV))$.
  \begin{enumerate}[label = (\roman*)]
  \item There is an isomorphism of $\CH_*^\sT(X)$-modules
    \[ \CH_*^\sT(\bP(\cV)) \cong \bigoplus_{k=0}^{\rank \cV - 1} (h^k \cap \pi^*\CH_*^\sT(X)); \]
  \item If $\sT$ acts trivially on $X$, then for any $f(h) \in
    A_\sT^*(X)[h]$,
    \begin{equation} \label{eq:cohomological-projective-bundle-formula}
      \pi_* f(h) = \rho_u \frac{f(u)}{e_u(\cV)}.
    \end{equation}
  \end{enumerate}
\end{lemma}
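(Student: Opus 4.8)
The statement is the cohomological analogue of Lemma~\ref{lem:projective-bundle-formula}, and the plan is to mirror its proof verbatim, replacing $K$-theoretic ingredients by their Chow-cohomological counterparts already set up in this subsection. For part~(i), the $\CH_*^\sT$-module decomposition of $\CH_*^\sT(\bP(\cV))$ follows from the standard argument using the Beilinson resolution of the diagonal on $\bP(\cV) \times_X \bP(\cV)$; one pushes this through and tensors with $\CH_*^\sT$, exactly as cited in the $K$-theory case via \cite[\S 5.7]{Chriss1997}, so nothing new is needed. The $\sT$-equivariance is automatic since all maps involved are $\sT$-equivariant. (Over a field not of characteristic zero one should note that the resolution-of-the-diagonal computation of $\CH_*^\sT$ of a projective bundle does not require inverting any integers, so the integral statement is fine; it is only the comparison $A_\sT^*(\fX)_\bQ \cong \CH^\sT_{*}(\fX)_\bQ$ that needed $\bQ$-coefficients, and that is not used here.)

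For part~(ii), I would first reduce to $f(h) = h^k$ for $k \in \bZ$ by $\bh_\sT^*$-linearity. The left-hand side is the cohomological tautological pushforward: with $r \coloneqq \rank \cV$,
\[
  \pi_* h^k = \begin{cases} s_{k-r+1}(\cV), & k \ge r-1, \\ 0, & 0 \le k < r-1, \end{cases}
\]
and for $k < 0$ one uses that $\pi_* h^k$ is governed by the relation $\sum_{j=0}^{r} c_j(\cV)\, h^{r-j} = 0$ in $A_\sT^*(\bP(\cV))$ (which expresses $h^r$ in terms of lower powers) together with $\pi_* h^j = 0$ for $0 \le j \le r-2$ and $\pi_* h^{r-1} = 1$; this is the cohomological analogue of \eqref{eq:projective-bundle-tautological-pushforward}. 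To compute the right-hand side, expand $1/e_u(\cV)$ as a formal series in $u^{-1}$ using \eqref{eq:cohomological-euler-class-expansion} — note $e_u(\cV) = u^r s_u(\cV)^{-1}\cdot(\text{unit})$ when $\sT$ acts trivially, so $1/e_u(\cV) = u^{-r}\sum_{j\ge 0} u^{-j} s_j(\cV)$ — and extract the $u^{-1}$-coefficient of $u^k/e_u(\cV) = \sum_{j \ge 0} u^{k-r-j} s_j(\cV)$, which is $s_{k-r+1}(\cV)$ when $k \ge r-1$ and $0$ otherwise. These match the left-hand side. For $k < 0$ one instead checks the two sides satisfy the same recursion coming from the defining relation $\sum_j c_j(\cV) h^{r-j} = 0$, equivalently $\sum_j c_j(\cV) u^{r-j} = e_u(\cV)\cdot(\text{explicit unit})$, which makes the residue $\rho_u(u^k/e_u(\cV))$ drop to $0$ for $-r < k < 0$ and satisfy the correct shift for $k \le -r$; alternatively, and more cleanly, one observes that since $X$ is a finite-type algebraic space $\CH_N(X) = 0$ for $N$ sufficiently negative, so all the series involved are honest finite sums and one may treat the Chern roots of $\cV$ as generic complex numbers and compute the residue directly, as in the proof of Lemma~\ref{lem:k-theory-finiteness}.

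The only mild subtlety — the step I'd flag as needing care rather than difficulty — is the bookkeeping for $k < 0$: in $K$-theory this was handled by the clean symmetry \eqref{eq:wedge-symmetry} of $\wedge_{-z}^\bullet$, and the cohomological Euler class does not have an equally symmetric functional equation. I expect the smoothest route is the ``analytic'' one: since $X$ is a finite-type algebraic space, $\CH_*^\sT(X) = \CH_*(X) \otimes_\bZ \bh_\sT^*$ is bounded in the homological grading, hence $s_j(\cV) \cap (-) = 0$ for $j \gg 0$ on any fixed class, so the series $u^{-r}\sum_{j\ge0} u^{-j}s_j(\cV)$ is effectively a polynomial in $u^{-1}$ and $\rho_u$ is just ``coefficient of $u^{-1}$''; then $\rho_u(u^k/e_u(\cV)) = s_{k-r+1}(\cV)$ with the convention $s_m = 0$ for $m < 0$, valid for all $k \in \bZ$, and one checks this equals $\pi_* h^k$ using the projection formula and the relation $\sum_j c_j(\cV)h^{r-j}=0$. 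This is routine, so I will not grind through it.
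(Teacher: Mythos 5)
Your proof is correct and follows essentially the same route as the paper: reduce by linearity to $f(h)=h^k$, identify $\pi_*h^k$ with the (equivariant) Segre class $s_{k-\rank\cV+1}(\cV)$ via the projection formula and the definition of Segre classes, and match it with the $u^{-1}$-coefficient of $u^{k-\rank\cV}\sum_{j\ge 0}u^{-j}s_j(\cV)$ coming from the expansion of $1/e_u(\cV)$. Note only that the case $k<0$ you labour over does not arise, since the statement takes $f(h)\in A_\sT^*(X)[h]$ and $h$ is not invertible (unlike $s$ in the K-theoretic lemma), and that your side claim that $s_j(\cV)\cap(-)$ vanishes for $j\gg 0$ fails when $\cV$ carries nontrivial $\sT$-weights; neither point affects the argument for $k\ge 0$, which is exactly the paper's proof.
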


\begin{proof}
  Adapt the proof of Lemma~\ref{lem:projective-bundle-formula}. By
  linearity, it suffices to take $f(h) = h^k$ for $k \in \bZ_{\ge 0}$.
  The analogue of
  \eqref{eq:projective-bundle-tautological-pushforward} for the left
  hand side of \eqref{eq:cohomological-projective-bundle-formula} is
  \[ \pi_*(h^k) \cap \omega = \pi_*(h^k \cap \pi^*\omega) = s_{k - \rank \cV + 1}(\cV) \cap \omega, \]
  where the first equality is the projection formula and the second is
  the definition of the Segre class \cite[\S 3.1]{Fulton1998}. Since The
  analogue of \eqref{eq:projective-bundle-residue} for the right hand
  side of \eqref{eq:cohomological-projective-bundle-formula} is
  \[ u^{-1} \text{ term in } u^k \cdot u^{-\rank \cV} \sum_{i \ge 0} u^{-i} s_i(\cV) = s_{k - \rank \cV + 1}(\cV) \]
  using the expansion \eqref{eq:cohomological-euler-class-expansion}.
  We see that these match up exactly. 
\end{proof}

\subsubsection{}

\begin{definition} \label{def:homology-theory-operational}
  We provide an equivariant homology theory which requires minimal
  changes from the ``concrete'' K-homology theory of
  Definition~\ref{def:concrete-k-homology}. Let $A_*^\sT(\fX)_{\loc}$
  be the set of all triples $\phi = (Z_\phi, \fix_\phi, \omega_\phi)$
  where:
  \begin{itemize}
  \item $Z_\phi$ is a proper algebraic space with the resolution
    property;
  \item $\fix_\phi\colon Z_\phi \to \fX$ is a $\sT$-equivariant
    morphism for the trivial $\sT$-action on $Z_\phi$;
  \item $\omega_\phi \in \CH_*^\sT(Z_\phi)_{\loc}$ is an element in
    Chow homology.
  \end{itemize}
  Equip it with the group operation where $\fix_{\phi+\psi}$ is the
  obvious map from $Z_{\phi+\psi} \coloneqq Z_\phi \sqcup Z_\psi$ and
  $\omega_{\phi+\psi} \coloneqq \omega_\phi + \omega_\psi$, and the
  $\bh_\sT^*$-module structure given by $\zeta \cdot \phi \coloneqq
  (Z_\phi, \fix_\phi, \zeta \cdot \omega_\phi)$ for $\zeta \in
  \bh_\sT^*$. View $\phi \in A_p^\sT(\fX)_{\loc}$ as a
  $\bh_\sT^*$-linear homomorphism
  \begin{align*}
    \phi\colon A_\sT^*(\fX)_{\loc} &\to \bh_{\sT,\loc}^* \\
    c &\mapsto \int c \cap \omega_\phi \coloneqq \int c_{\fix_\phi}(\omega_\phi),
  \end{align*}
  where $\int$ denotes (proper) pushforward from $Z_\phi$ to a point,
  and identify two such triples if their associated homomorphisms are
  equal.
\end{definition}

\subsubsection{}

We verify that $A_*^\sT(-)_{\loc}$ is a {\it commutative
  $\sT$-equivariant operational homology theory}, meaning that it
satisfies the following properties. This is the analogue of
Definition~\ref{def:operational-k-homology} and
Proposition~\ref{prop:actual-k-homology}. Given $\phi \in
A_*^\sT(\fX)_{\loc}$ and a base $S \in \cat{Art}_\sT$, define (cf.
\eqref{eq:actual-k-homology-elements})
\begin{align*}
  \phi_S\colon A_\sT^*(\fX \times S)_{\loc} &\to A_\sT^*(S)_{\loc} \\
  c &\mapsto (\pi_S)_*(c_{\fix_\phi \times \id}(\pi_Z^*\omega_\phi))
\end{align*}
where $\pi_Z$ and $\pi_S$ are the projections from $Z_\phi \times S$
to $Z_\phi$ and $S$ respectively. Replace the $\bk_\sT$-algebra
$K_\sT^\circ(-)$ by the graded $\bh_\sT^*$-algebra $A_\sT^*(-)$, and
the $I^\circ$-adic filtration on $K^\circ(\fF \times S)$ by the
dimension grading on $A^*(-)$ (for the finiteness axiom). Then:
\begin{itemize}
\item the collection $\{\phi_S\}_S$, for all $S \in \cat{Art}_\sT$,
  satisfies all axioms in Definition~\ref{def:operational-k-homology};
  
\item $A_*^\sT(-)$ has the same structures as $\bK_\circ^\sT(-)$ in
  \S\ref{sec:k-homology-properties};

\item writing $A_\sT^*([\pt/\bC^\times]) \cong \bh_{\sT \times
  \bC^\times}^* = \bh_\sT^*[x]$, we have
  \begin{equation} \label{eq:homology-BGm}
    A_*^\sT([\pt/\bC^\times]) = \bZ[\zeta], \qquad \zeta^m(x^n) \coloneqq \delta_{m,n}
  \end{equation}
  by the same argument as in \cite[Theorem 2.3.2, Prop.
    2.3.5]{liu_eq_k_theoretic_va_and_wc} using that $\zeta^m = \int -
  \cap [\bP^k_\bC]$ for the map $\fix_{\zeta^m} \coloneqq \bP^k_\bC =
       [\bC^{m+1} \setminus \{0\}/\bC^\times] \hookrightarrow
       [\bC^{m+1}/\bC^\times] \to [\pt/\bC^\times]$.
\end{itemize}

\subsubsection{}

We provide the cohomological analogue of the vertex algebra of
\S\ref{sec:vertex-algebra}.
\begin{itemize}
\item (Series rings, cf. \S\ref{sec:series-rings}) For formal
  variables $u_1, \ldots, u_m$ and a $\bh_\sT^*$-module $M$, define
  \[ M\left[(u_1 + u_2 + \cdots + u_m)^{-1}\right]_\sT \coloneqq M\left[(i_1 u_1 + \cdots + i_m u_m + \zeta_\mu)^{-1} : \begin{array}{c} \mu \in \Char(\sT) \\ i_1,\ldots,i_m \in \bZ \setminus \{0\}\end{array}\right]. \]
  In the case of a single variable $u$, let $M\lseries*{u}_{\sT}
  \coloneqq M\pseries*{u}[u^{-1}]_\sT$.

\item (Equivariant expansion, cf. Definition~\ref{def:expansion}) For
  two formal variables $u$ and $v$, the {\it (additive) expansion}
  $\iota_{u,v}$ is the injective ring homomorphism
  \begin{align*}
    \iota_{u,v}\colon \bZ[(u - v)^{-1}] &\hookrightarrow \bZ[v^{-1}]\pseries*{u} \\
    (u - v)^k &\mapsto v^k (u - 1)^k.
  \end{align*}
  Since $1 - u \in \bZ\pseries*{u}$ is invertible, this is
  well-defined for all $k \in \bZ$. The {\it (additive) equivariant
    expansion}
  \[ \iota_{u,v}^\sT\colon \bh_\sT^*[(u - v)^{-1}]_\sT \to \bh_\sT^*[v^{-1}]_\sT\pseries*{u} \]
  is the $\bh_\sT^*$-algebra homomorphism given by applying the expansion
  \[ \iota_{iu, \zeta_\mu + jv}\colon \bh_\sT^*\left[(iu + jv + \zeta_\mu)^{-1}\right] \to \bh_\sT^*\left[(jv + \zeta_\mu)^{-1}\right]\pseries*{u^{-1}} \]
  to the monomial $(iu + jv + \zeta_\mu)^k$.
\end{itemize}

\subsubsection{}

\begin{definition}[cf. Definition~\ref{def:multiplicative-vertex-algebra}]
  A {\it $\sT$-equivariant additive vertex algebra} is the data of:
  \begin{enumerate}[label = (\roman*)]
  \item a $\bh_\sT^*$-module $V$ of {\it states} with a distinguished
    vacuum vector $\vac \in V$;
  \item an additive {\it translation operator} $D(u)
    \in \End(V)\pseries*{u}$, i.e. $D(u)D(v) = D(u+v)$;
  \item a {\it vertex product} $Y(-, u)\colon V \otimes V \to
    V\lseries{u}_\sT$.
  \end{enumerate}
  This data must satisfy the following axioms:
  \begin{enumerate}[label = (\alph*)]
  \item (vacuum) $Y(\vac, u) = \id$ and $Y(a, u)\vac \in V\pseries{u}$
    with $Y(a,0)\vac = a$;
  \item (skew symmetry) $Y(a, u)b = D(u) Y(b, -u) a$;
  \item (weak associativity) $Y(Y(a,u) b, v) \equiv Y(a, u+v) Y(b,
    v)$, where $\equiv$ means that when applied to any $c \in V$, both
    sides are additive equivariant expansions of the same element in
    \[ V\pseries*{u, v}\left[u^{-1}, v^{-1}, (u-v)^{-1}\right]_{\sT}. \]
  \end{enumerate}
\end{definition}

\subsubsection{}

\begin{theorem} \label{thm:cohVOA-monoidal-stack}
  In the setting of Theorem~\ref{thm:mVOA-monoidal-stack},
  \[ A_*^\sT(\fM) \coloneqq \bigoplus_\alpha A_*^\sT(\fM_\alpha) \]
  has the structure of a $\sT$-equivariant additive vertex algebra.
  \begin{itemize}
  \item The vacuum $\vac \in A_*^{\sT}(\fM_0)$ is given by
    the identity map $\bh_\sT^* \to \bh_\sT^*$.

  \item The translation operator is
    \[ D(u)\phi \coloneqq \sum_{k \ge 0} u^k \Psi_*\left(\zeta^k \boxtimes \phi\right) \in A_*^\sT(\fM)\pseries{u} \]
    where $A_*^\sT([\pt/\bC^\times]) = \bh_\sT^*[\zeta]$ as in
    \eqref{eq:homology-BGm}. Explicitly, $(D(u)\phi)(c) = \phi(\deg_u
    c)$ where $\deg_u$ is the \emph{degree operator}
    \[ \deg_u\colon A_\sT^*(\fM) \xrightarrow{\Psi^*} A_\sT^*([\pt/\bC^\times] \times \fM) \cong A_\sT^*(\fM)[u] \]
    associated to $\Psi$ (cf. Definition~\ref{def:degree-map}). Here
    we identify $A_\sT^*([\pt/\bC^\times]) = A_*^\sT([\pt/\bC^\times])
    = \bh_\sT^*[u]$.

  \item The vertex product is given on $\phi \in A_*^\sT(\fM_\alpha)$
    and $\psi \in A_*^\sT(\fM_\beta)$ by
    \[ \begin{aligned}
        Y(\phi, u) \psi
        &\coloneqq (\Phi_{\alpha,\beta})_* (D(u) \times \id) \left((\phi \boxtimes \psi) \cap \Theta_{\alpha,\beta}(u)\right) \\
        \CH_\sT^*(\fM_{\alpha+\beta}) \ni \zeta
        &\mapsto (\phi \boxtimes \psi)\left(\Theta_{\alpha,\beta}(u) \cup (\deg_u \times \id) \Phi_{\alpha,\beta}^* \zeta \right) \in \bh_\sT^*[u^\pm]_\sT
      \end{aligned} \]
    where (see \S\ref{sec:monoidal-stack-vertex-product-well-defined}
    for details)
    \[ \Theta_{\alpha,\beta}(u) \coloneqq e_{-u}\left(\scE_{\alpha,\beta}\right) \cup e_u\left((12)^*\scE_{\beta,\alpha}\right). \]
  \end{itemize}
\end{theorem}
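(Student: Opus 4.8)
The statement is the cohomological mirror of Theorem~\ref{thm:mVOA-monoidal-stack}, so the plan is to transport that proof verbatim, replacing the multiplicative series-ring formalism by the additive one. Concretely, I would proceed in three steps. First, I would record the purely formal fact that the whole argument of Theorem~\ref{thm:mVOA-monoidal-stack} only uses (i) the graded monoidal structure on $\fM$ with $\kappa$-symmetric bilinear elements, (ii) the fact that $A_*^\sT(-)$ is a commutative operational homology theory with $A_*^\sT([\pt/\bC^\times]) = \bh_\sT^*[\zeta]$ as in \eqref{eq:homology-BGm}, and (iii) one algebraic identity relating the degree operator to the expansion map. All of (i)--(ii) were established above for $A_*^\sT(-)$ in the paragraphs following Definition~\ref{def:homology-theory-operational}, so the only genuine computation left is the identity in (iii).

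For step two, I would verify the degree-map identity: with $\Theta_{\alpha,\beta}(u) \coloneqq e_{-u}(\scE_{\alpha,\beta}) \cup e_u((12)^*\scE_{\beta,\alpha})$, one needs
\[ (\deg_v \times \id)\,\Theta_{\alpha,\beta}(u) = \iota_{v,u}^\sT\,\Theta_{\alpha,\beta}(u+v). \]
This is the additive analogue of the identity $(w^{\deg}\times\id)\hat\Theta(z) = \iota_{w,z}^{\tilde\sT}\hat\Theta(wz)$ used in the proof of Theorem~\ref{thm:mVOA-monoidal-stack}. Its proof reduces, by the bilinearity axiom \eqref{eq:mVOA-bilinear-complex-conditions} (specifically $(\Psi_\alpha \times \id)^*\scE_{\alpha,\beta} = \cL^\vee \boxtimes \scE_{\alpha,\beta}$ and its three companions), to a statement about a single Chern root: $\deg_v$ twists a weight-$1$ line bundle $\cL$ by a formal shift $v$ in the equivariant parameter, so the Euler class $e_{-u}(\cdot)$ acquires the shift $u \mapsto u + v$ modulo the additive expansion. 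Unwinding the definitions of $e_u$, $\iota_{u,v}^\sT$, and the series rings $M[(u+v)^{-1}]_\sT$, this is a short bookkeeping exercise in shifting power series; it is the exact additive transcription of the "easy algebraic exercise'' invoked in the proof of Theorem~\ref{thm:mVOA-monoidal-stack}.

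Step three is then to observe that, granting step two, every vertex-algebra axiom (vacuum, skew symmetry, weak associativity) follows formally, exactly as in the proof of Theorem~\ref{thm:mVOA-monoidal-stack}, by pushing and pulling along the structure maps $\Phi$, $\Psi$, $\Omega$ and using associativity/commutativity of $\boxtimes$ and $\cup$. The well-definedness of $Y(-,u)$, i.e. that $\Theta_{\alpha,\beta}(u)$ applied to a class lands in $V\lseries{u}_\sT$ and not merely in a localized ring with uncontrolled poles, is handled precisely as in \S\ref{sec:monoidal-stack-vertex-product-well-defined}: one restricts to the proper fixed locus $\fF$ supplied by the equivariant localization axiom, where $e_u$ of a perfect complex is a genuine polynomial-times-power-series, and then invokes the finiteness axiom (now using the dimension grading on $\CH_*^\sT(\fF)$ in place of the $I^\circ$-adic filtration) to see the resulting sum is finite. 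I expect the main obstacle to be purely notational: keeping the additive expansion conventions $\iota_{u,v}$, the sign in $e_{-u}$ versus $e_u$, and the degree-shift direction all consistent, since a single sign error in step two propagates through every axiom. There is no new geometric input beyond what is already in the excerpt.
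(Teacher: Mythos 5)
Your proposal is correct and matches the paper's own (largely implicit) argument: the paper proves the additive statement by transporting the proof of Theorem~\ref{thm:mVOA-monoidal-stack} verbatim, with the only genuine computation being the additive analogue $(\deg_v \times \id)\,\Theta_{\alpha,\beta}(u) = \iota_{v,u}^\sT\,\Theta_{\alpha,\beta}(u+v)$ of the "easy algebraic exercise" there, and with well-definedness handled as in \S\ref{sec:monoidal-stack-vertex-product-well-defined} using the dimension grading on Chow groups in place of the $I^\circ$-adic filtration. Your three steps are exactly this, so nothing further is needed.
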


This applies to the auxiliary stacks in exactly the same way as in
Theorem~\ref{thm:auxiliary-stack-vertex-algebra}.

\subsubsection{}
\label{sec:coh-lie-algebra}

We provide the cohomological analogue of the Lie algebra and rigidity
calculation of \S\ref{sec:vertex-algebra}.
\begin{itemize}
\item (Homology $\pl$ groups, cf. Definition~\ref{def:pl-groups})
  Given a $\sT$-equivariant additive vertex algebra $(V, \vac, D, Y)$,
  let
  \[ V^\pl \coloneqq V / \im(1 - D(u)). \]
  Lemmas~\ref{lem:k-homology-pl} and \ref{lem:pl-group-functoriality}
  still hold as they rely only on formal properties of $A_*^\sT(-)$
  and $A_\sT^*(-)$.

\item (Lie algebra, cf.
  Theorem~\ref{thm:mVOA-monoidal-stack-lie-algebra})
  Theorem~\ref{thm:cohVOA-monoidal-stack} induces a Lie algebra
  structure on $A_*^\sT(\fM)^\pl$, with Lie bracket given by
  \begin{align}
    -\hbar \cdot [\phi, \psi](c)
    &\coloneqq \rho_u\left(Y(\tilde\phi, u) \tilde\psi\right)(c) \nonumber \\
    &= \rho_u\left[ (\tilde\phi \boxtimes \tilde\psi)\left(\Theta_{\alpha,\beta}(u) \otimes (\deg_u \times \id) \Phi_{\alpha,\beta}^*c\right)\right] \label{eq:monoidal-stack-cohomological-lie-bracket}
  \end{align}
  for $\phi \in A_*^\sT(\fM_\alpha)^\pl$ and $\psi \in
  A_*^\sT(\fM_\beta)^\pl$, and $\tilde\phi \in A_*^\sT(\fM_\alpha)$
  and $\tilde\psi \in A_*^\sT(\fM_\beta)$ are any lifts of $\phi$ and
  $\psi$ respectively.
  
\item (Rigidity, cf. Proposition~\ref{prop:rigidity}) Let $1 \in
  A_\sT^*(\fX)$ denote the identity operator. For any $\phi \in
  A_*^\sT(\fM_\alpha)^\pl$ and $\psi \in A_*^\sT(\fM_\beta)^\pl$,
  \[ [\phi, \psi](1) = (\phi \boxtimes \psi)\left( (-1)^{\rank(\scE_{\alpha,\beta})-1} \rank(\scE_{\alpha,\beta}) \cdot (1 \boxtimes 1) \right). \]
  This follows from the same computation as in the proof of
  Lemma~\ref{lem:cohomological-projective-bundle-formulas}\ref{it:rigidity}.
\end{itemize}

\subsubsection{}
\label{sec:homology-enumerative-invariants}

We provide the cohomological analogues of virtual cycles, enumerative
invariants, and the homology projective bundle formula.
\begin{itemize}
\item (Virtual cycle) In the setting of
  Definition~\ref{def:symmetrized-virtual-structure-sheaf}, let
  $[X]^{\vir} \in \CH_{\vdim X}^\sT(X)$ be the {\it virtual
    fundamental class} induced by the (A)POT on $X$
  \cite{kiem_savvas_apot, kiem_savvas_loc}, \footnote{The equivariant
    K-theoretic machinery of Kiem and Savvas continues to work in
    equivariant Chow homology, see e.g. \cite[Theorem 1.1]{Chang2011}.
    Note that an APOT induces a semi-perfect obstruction theory \cite[Appendix
      A]{kiem_savvas_apot}.} satisfying the $\sT$-equivariant
  localization formula 
  \[ [X]^\vir = \iota_* \frac{[X^\sT]^\vir}{e(\cN_\iota^\vir)}. \]
  Here $\vdim X$ is the {\it virtual dimension} of (the (A)POT on)
  $X$. Note that there is no symmetrization in cohomology, i.e.
  $[X]^\vir$ and $e(-)$ are the direct analogues of the K-theoretic
  $\hat\cO_X^\vir$ and $\hat\se(-)$ respectively.

\item (Universal enumerative invariant) In the setting of
  Definition~\ref{bg:def:univ-enum-inv}, let
  \[ \sZ_X \coloneqq \int - \cap [X]^\vir \coloneqq \int \frac{\iota^*(-)}{e(N_\iota^\vir)} \cap [X^{\sT}]^{\vir} \in A_*^\sT(X)_{\loc} \]
  where $\iota\colon X^\sT \hookrightarrow X$ is the $\sT$-fixed
  locus. In general, all Euler characteristics $\chi(X, \hat\cO_X^\vir
  \otimes -)$ must be replaced by $\int - \cap [X]^\vir$. For
  instance, the pole cancellation
  Lemma~\ref{lem:master-space-no-poles} becomes
  \[ \int \omega \cap [M]^\vir \in \bh_{\sT,\loc}^* \otimes_{\bZ} \bh_{\sS}^* \]
  for any $\omega \in A_{\sT \times \sS}^*(M) \otimes_{\bh_\sT^*}
  \bh_{\sT,\loc}^*$.

\item (Symmetrized pullback on homology) In the setting of
  Definition~\ref{def:symm-pullback-homology}, the {\it
    $\kappa$-symmetrized pullback on homology} is
  \begin{align*}
    \hat\pi^*\colon A_*^\sT(\fY) &\to A_*^\sT(\fX) \\
    \phi &\mapsto \pi^*\phi \cap \left(e(\kappa^{-1} \otimes \bL_\pi)\right).
  \end{align*}
  In the setting of \S\ref{bg:rem:symm-pb-formula-alg-sp}, the
  analogue of \eqref{eq:symm-pullback-Ovir} is
  \begin{equation} \label{eq:symm-pullback-Xvir}
    [X]^\vir = e(\kappa^{-1} \otimes \bL_\pi) \cap \pi^*[Y]^\vir.
  \end{equation}
\end{itemize}

\subsubsection{}

\begin{lemma} \label{lem:cohomological-projective-bundle-formulas}
  \begin{enumerate}[label = (\roman*)]
  \item (Virtual projective bundle formula) In the setting of
    Lemma~\ref{lem:symmetrized-projective-bundle-formula}, with $h
    \coloneqq c_1(s) \in A_\sT^*(\bP(\cV))$,
    \begin{equation} \label{eq:cohomological-virtual-projective-bundle-formula}
      -\hbar \cdot \pi_*(f(h) \cap [\bP(\cV)]^\vir) = \rho_u\left(f(u) \frac{e_{-u}(\kappa^{-1} \cV^\vee)}{e_u(\cV)}\right) \cap [X]^\vir
    \end{equation}
    where $\hbar \coloneqq c_1(\kappa) \in \bh_\sT^*$.

  \item \label{it:rigidity} (cf.
    Corollary~\ref{cor:symmetrized-projective-bundle-formula}) In
    particular,
    \[ \pi_* [\bP(\cV)]^\vir = (-1)^{\rank(\cV)-1} \rank(\cV) \cdot [X]^\vir \in \CH_*^\sT(X). \]

  \item (Homology projective bundle formula) In the setting of
    Lemma~\ref{lem:homology-projective-bundle-formula}, given $\phi
    \in A_*^\sT(X)$,
    \[ -\hbar \cdot j_*\hat\pi_X^*\phi = \rho_u\left(i_*D(u) \left(\tilde\phi \cap \frac{e_{-u}(\kappa^{-1} \cV^\vee)}{e_u(\cV)}\right)\right) \]
    where $\tilde\phi \in A_*^\sT(\fX)$ is an arbitrary lift of $\phi$
    (i.e. $\phi = (\Pi_X)_* \tilde\phi$).
  \end{enumerate}
\end{lemma}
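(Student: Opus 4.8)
The plan is to prove all three statements of Lemma~\ref{lem:cohomological-projective-bundle-formulas} as direct cohomological transcriptions of their K-theoretic counterparts, replacing every K-theoretic ingredient by the cohomological analogue set up in \S\ref{sec:homology-enumerative-invariants}: $\hat\cO^\vir \leadsto [-]^\vir$, $\hat\se \leadsto e$, the multiplicative residue $\rho_K$ by the additive residue $\rho_u$, the projective bundle formula \eqref{eq:projective-bundle-formula} by \eqref{eq:cohomological-projective-bundle-formula}, and the scalar $\kappa^{-1/2}-\kappa^{1/2}$ by $-\hbar = -c_1(\kappa)$. Concretely, for part (i) I would begin from \eqref{eq:symm-pullback-Xvir} applied to the symmetrized pullback $\pi\colon\bP(\cV)\to X$, use $\bT_\pi = s\pi^*\cV - 1$ (exactly as in the proof of Lemma~\ref{lem:symmetrized-projective-bundle-formula}) to expand $e(\kappa^{-1}\otimes\bL_\pi)$, push forward via the projection formula, and then apply \eqref{eq:cohomological-projective-bundle-formula} to reduce $\pi_*$ of a polynomial in $h$ to $\rho_u$ of the corresponding rational function in $u$; the factor $e_{-u}(\kappa^{-1}\cV^\vee)/e_u(\cV)$ emerges after reorganizing the Euler-class bookkeeping, just as $\hat\se_{z^{-1}}(\kappa^{-1}\cV^\vee)/\hat\se_z(\cV)$ did in the K-theoretic case. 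The only genuinely new bookkeeping is keeping track of the overall power $u^{\rank\cV}$ (which in K-theory was absorbed into $z^{\pm1/2}$ via $\hat\se$), but in cohomology it is visible and harmless.

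For part (ii), set $f(h)=1$ in \eqref{eq:cohomological-virtual-projective-bundle-formula}; it then suffices, as in the proof of Corollary~\ref{cor:symmetrized-projective-bundle-formula}, to compute the additive residue $\rho_u\big(e_{-u}(\kappa^{-1}\cV^\vee)/e_u(\cV)\big)$ analytically, treating Chern roots of $\cV$ as generic. Writing $e_{-u}(\kappa^{-1}\cV^\vee)/e_u(\cV) = \prod_\cL \frac{-(u+\hbar+c_1(\cL))}{u+c_1(\cL)}$ (up to sign conventions), the $u^{-1}$-coefficient of the expansion at $u=\infty$ is the difference of the two ``limit'' terms, which one checks equals $(-1)^{\rank\cV-1}\rank\cV$ — the additive degeneration of the symmetric quantum integer $[n]_\kappa$ as $\kappa\to1$, in agreement with the remark following Corollary~\ref{cor:symmetrized-projective-bundle-formula} that $\lim_{\kappa\to1}[n]_\kappa = (-1)^{n-1}n$. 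Dividing both sides of (i) by $-\hbar$ and matching with $[X]^\vir$ gives the claim.

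For part (iii), I would follow the proof of Lemma~\ref{lem:homology-projective-bundle-formula} verbatim. By the equivariant localization axiom for $\tilde\phi$ one reduces to $\fX$ an algebraic space with trivial $\sT$-action; then one replaces the non-projective $\pi_X$ by the genuine projective bundle $\pi_\fX\colon\bP(\cV)\to\fX$ via the same Cartesian diagram with $\tot_{[\pt/\bC^\times]\times\fX}(\cL\boxtimes\cV)$, applies part (i) of this lemma (which is the cohomological analogue of the input used there), and finally expresses $\tilde j^*(\Pi_{\tot(\cV)})^*$ as $s^{\deg}i^* \leadsto \deg_u\, i^*$ using the Thom isomorphism in Chow homology. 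Combining the base-change identity (the analogue of \eqref{eq:P-to-projective-bundle}), the virtual projective bundle formula, and this degree computation yields the stated formula. The main obstacle — such as it is — is purely notational: ensuring the additive expansion $\iota_{u,v}^\sT$ and the degree operator $\deg_u$ interact correctly so that the residue $\rho_u$ is applied to a bona fide rational function of $u$ after evaluation on a fixed Chow class; this is the direct analogue of the ``easy algebraic exercise'' $(w^{\deg}\times\id)\hat\Theta(z)=\iota_{w,z}^{\tilde\sT}\hat\Theta(wz)$ invoked in the proof of Theorem~\ref{thm:mVOA-monoidal-stack}, and requires no new ideas, only care with signs and the absence of square roots.
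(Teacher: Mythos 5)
Your proposal is correct and is essentially the paper's own proof: all three parts are proved by transcribing the K-theoretic arguments (Lemma~\ref{lem:symmetrized-projective-bundle-formula}, Corollary~\ref{cor:symmetrized-projective-bundle-formula}, Lemma~\ref{lem:homology-projective-bundle-formula}) with $\hat\cO^\vir\leadsto[-]^\vir$, $\hat\se\leadsto e$, $\rho_K\leadsto\rho_u$, and $\kappa^{-\frac12}-\kappa^{\frac12}\leadsto-\hbar$. Two small corrections to your bookkeeping: in (ii) the additive residue is a single Laurent-coefficient extraction at $u=\infty$, not a difference of two limits --- one expands $\prod_\omega\frac{-u-\hbar-\omega}{u+\omega}=(-1)^{\rank\cV}\left(1+\hbar\,\rank(\cV)\,u^{-1}+O(u^{-2})\right)$, so $\rho_u$ gives $(-1)^{\rank\cV}\hbar\,\rank(\cV)$, and the $\hbar$ then cancels against the $-\hbar$ on the left. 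In (iii) the Thom isomorphism you need is for the operational cohomology $A_\sT^*(-)$ (so that $i^*$ and $p^*$ are invertible on the classes your operational homology element consumes), not merely for Chow homology; this is not off-the-shelf, which is why the paper states and proves it separately as Lemma~\ref{lem:operational-chow-cohomology-thom-isomorphism}, though its proof is short.
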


\begin{proof}
  The proof of Lemma~\ref{lem:symmetrized-projective-bundle-formula}
  continues to hold using \eqref{eq:symm-pullback-Xvir} and the
  cohomological projective bundle formula
  (Lemma~\ref{lem:cohomological-projective-bundle-formula}).

  Here is the cohomological analogue of the rigidity computation in
  Corollary~\ref{cor:symmetrized-projective-bundle-formula}: write
  \begin{align*}
    \frac{e_{-u}(\kappa^{-1} \cV^\vee)}{e_u(\cV)}
    = \prod_\omega \frac{-u - \hbar - \omega}{u + \omega}
    &= (-1)^{\rank \cV} \prod_\omega \left( 1 + \frac{\hbar}{u + \omega} \right) \\
    &= (-1)^{\rank \cV} \left(1 + \hbar \cdot \rank(\cV) \cdot u^{-1} + O(u^{-2})\right)
  \end{align*}
  where $\omega$ ranges over all (cohomological) equivariant Chern
  roots of $\cV$. The result follows by applying $\rho_u$ to both
  sides.

  The proof of Lemma~\ref{lem:homology-projective-bundle-formula}
  continues to hold, using
  \eqref{eq:cohomological-virtual-projective-bundle-formula} and that
  $A_\sT^*(-)$ has a Thom isomorphism theorem
  (Lemma~\ref{lem:operational-chow-cohomology-thom-isomorphism}). This
  was essentially already observed in \cite[Example
    17.5.1]{Fulton1998} but we sketch the proof below for
  completeness.
\end{proof}

\subsubsection{}

\begin{lemma}[Thom isomorphism for $A_\sT^*(-)$] \label{lem:operational-chow-cohomology-thom-isomorphism}
  If $\cV \in \cat{Vect}_\sT(\fX)$ with projection $\pi\colon
  \tot(\cV) \to \fX$, then there is an isomorphism
  \[ \pi^*\colon A_\sT^*(\fX) \xrightarrow{\sim} A_\sT^*(\tot(\cV)). \]
\end{lemma}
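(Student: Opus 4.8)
The plan is to reduce the statement to the corresponding Thom isomorphism for equivariant Chow homology $\CH_*^\sT(-)$, which is standard, and then invoke the functorial/operational nature of $A_\sT^*(-)$. First I would recall that for $\cV \in \cat{Vect}_\sT(\fX)$ with zero section $i\colon \fX \hookrightarrow \tot(\cV)$, the pullback $\pi^*\colon \CH_*^\sT(\fY) \to \CH_{*+\dim\cV}^\sT(\fY)$ is an isomorphism for every finite-type $\fY$ over $\fX$ stratified by global quotient stacks, with inverse given by the refined Gysin pullback $i^!$ along the regular embedding $i$; this is the equivariant version of \cite[Theorem 3.3]{Kresch1999} (see also \cite[\S 3.3]{Fulton1998}), and it commutes with proper pushforward, flat pullback, and lci Gysin pullback. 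The key point is that $\pi\colon \tot(\cV) \to \fX$ is flat (indeed an affine bundle) and $i$ is a representable regular embedding, so both operations are among those with respect to which operational classes are required to be compatible.

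Given that, I would argue as follows. Define $\pi^*\colon A_\sT^*(\fX) \to A_\sT^*(\tot(\cV))$ on an operational class $c = (c_g)_g$ by setting, for any $g\colon \fY \to \tot(\cV)$, $(\pi^*c)_g \coloneqq c_{\pi \circ g}$ — this is well-defined because the compatibility conditions (with proper pushforward, flat pullback, refined Gysin pullback) for $\pi^*c$ follow immediately from those for $c$ together with the transitivity of these operations under composition $\fY \to \tot(\cV) \to \fX$. Conversely, define a map $A_\sT^*(\tot(\cV)) \to A_\sT^*(\fX)$ by $d \mapsto i^* d$, the operational pullback along the zero section $i$; note $i$ is lci and representable, so $i^*$ is defined, and it is a ring homomorphism. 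I claim these are mutually inverse. One direction, $i^* \pi^* = \id$, is immediate since $\pi \circ i = \id_\fX$. For the other direction, $\pi^* i^* = \id$ on $A_\sT^*(\tot(\cV))$, I would test an operational class $d$ against an arbitrary cycle $\omega \in \CH_*^\sT(\fY)$ for $g\colon \fY \to \tot(\cV)$: unwinding the definitions, $(\pi^* i^* d)_g(\omega) = (i^* d)_{\pi g}(\omega)$ involves the Gysin pullback $i^!$ applied after/before $d$, and one uses the commutation of $d$ with $i^!$ and the identity $i^! \pi^* = \id$ on $\CH_*^\sT(-)$ (the Chow-homology Thom isomorphism) to collapse this back to $d_g(\omega)$. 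Since operational classes are determined by their action on all such cycles, $\pi^* i^* d = d$.

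The main obstacle, such as it is, is purely bookkeeping: one must be careful that the self-intersection/excess-intersection formalism is set up so that ``$d$ commutes with $i^!$'' has precise meaning — the zero section $i\colon \fX \hookrightarrow \tot(\cV)$ is not a morphism over $\fX$ in the way required, so one should instead form the Cartesian square with $\pi$ and use that $i^!$ on $\CH_*^\sT(\fY)$ is computed via base change along the fibre square $\fY \times_{\tot(\cV)} \fX \to \fY$. Chasing this square, and using that $d$ commutes with refined Gysin pullback for the regular embedding appearing in the base change, gives the needed identity. This is exactly the argument sketched in \cite[Example 17.5.1]{Fulton1998} in the non-equivariant scheme case; the equivariant and stacky generalization requires no new idea, only that $\CH_*^\sT(-)$ and the refined Gysin maps of \cite{Kresch1999} have all the formal properties used in \emph{loc. cit.} I would conclude by noting that $\pi^*$ is manifestly $\bh_\sT^*$-linear and degree-preserving, so it is an isomorphism of graded $\bh_\sT^*$-algebras, completing the proof.
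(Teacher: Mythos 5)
Your route is genuinely different from the paper's. You try to exhibit the operational pullback $i^*$ along the zero section as a two-sided inverse of $\pi^*$, whereas the paper uses $i^*\pi^*=\id$ only for injectivity and proves surjectivity directly: given $c\in A_\sT^*(\tot(\cV))$ it defines a preimage by conjugating with the Chow-homology Thom isomorphism on the pulled-back bundle, $\bar c_g \coloneqq ((\pi'')^*)^{-1}\circ c_{g'}\circ (\pi'')^*$ for $\pi''\colon \tot(g^*\cV)\to\fY$, and then verifies $\pi^*\bar c=c$ using \emph{only} the compatibility of operational classes with flat pullback. Your route instead stands or falls with the identity $\pi^*i^*d=d$, i.e. $d_{i\circ\pi\circ g}=d_g$ for every $g\colon\fY\to\tot(\cV)$ --- a homotopy-invariance statement for operational classes which genuinely needs the Gysin-compatibility axiom --- and this is where your write-up has a gap.

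Concretely: first, the ``unwinding'' is not right --- by definition $(\pi^*i^*d)_g(\omega)=d_{i\circ\pi\circ g}(\omega)$, and no Gysin map appears, so commutation with $i^!$ must be brought in through a suitable fibre square. Second, the square you propose, $\fY\times_{\tot(\cV)}\fX\to\fY$ (base change of the zero section along $g$), cannot close the argument: compatibility of $d$ with $i^!$ through that square only produces an identity \emph{after} applying $i^!$, whose target is the Chow group of the zero locus of the section of $(\pi\circ g)^*\cV$ determined by $g$, in general a proper closed substack of $\fY$, so $d_g(\omega)$ is not recovered. The chase that works is: set $h\coloneqq\pi\circ g$, form $\pi''\colon\tot(h^*\cV)\to\fY$ with its canonical map $g'\colon\tot(h^*\cV)\to\tot(\cV)$, and note $g=g'\circ s_g$ and $i\circ h=g'\circ s_0$, where $s_g$ is the section of $\pi''$ determined by $g$ and $s_0$ the zero section, both representable regular embeddings. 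Applying the Gysin-compatibility of $d$ to $\beta\coloneqq(\pi'')^*\omega$ gives $s_g^!\,d_{g'}(\beta)=d_{g'\circ s_g}(s_g^!\beta)=d_g(\omega)$ and likewise $s_0^!\,d_{g'}(\beta)=d_{i\circ h}(\omega)$; since $s_g^!(\pi'')^*=\id=s_0^!(\pi'')^*$ (translate $s_g$ to $s_0$ by the fibrewise translation automorphism of $\tot(h^*\cV)$ over $\fY$) and $(\pi'')^*$ is an isomorphism by the Chow-homology Thom isomorphism, we get $s_g^!=s_0^!=((\pi'')^*)^{-1}$, hence $d_{i\circ\pi\circ g}=d_g$. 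With that repair your two-sided-inverse argument does go through; note, however, that the paper's surjectivity argument avoids Gysin compatibility altogether, which is what makes it the shorter route.
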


\begin{proof}
  Let $i\colon \fX \to \cV$ be the zero section. Then clearly $i^*
  \circ \pi^* = \id$ so $\pi^*$ is injective. Now we show $\pi^*$ is
  surjective. Given a morphism $g\colon \fY \to \fX$, form the
  Cartesian square
  \[ \begin{tikzcd}
    \tot_{\fY}(g^*\cV) \ar{r}{g'} \ar{d}{\pi''} & \tot_{\fX}(\cV) \ar{d}{\pi} \\
    \fY \ar{r}{g} & \fX
  \end{tikzcd} \]
  and, given $c \in A_\sT^*(\tot(\cV))$, define $\bar c \in
  A_\sT^*(\fX)$ by
  \[ \bar c_g(\omega) \coloneqq ((\pi'')^*)^{-1} c_{g'}((\pi'')^*\omega) \in \CH_*^\sT(\fY) \]
  for $\omega \in \CH_*^\sT(\fY)$. This is well-defined because
  $(\pi'')^*\colon \CH_*^\sT(\fY) \xrightarrow{\sim}
  \CH_*^\sT(\tot(g^*\cV))$ is an isomorphism by the usual Thom
  isomorphism theorem. We claim $c = \pi^* \bar c$. Unraveling
  notation, this boils down to the claim that
  \[ (\pi'')^* c_f(\omega) = c_{\pi' \circ f'}((\pi'')^*\omega) \]
  for a given morphism $f\colon \fY \to \tot(\cV)$ and the induced
  Cartesian squares
  \[ \begin{tikzcd}
    \tot(f^*\pi^*\cV) \ar{d}{\pi''} \ar{r}{f'} & \tot(\pi^*\cV) \ar{d}{\pi'} \ar{r}{\pi'} & \tot(\cV) \ar{d}{\pi} \\
    \fY \ar{r}{f} & \tot(\cV) \ar{r}{\pi} & \fX.
  \end{tikzcd} \]
  By definition the left hand side is $c_{\pi'' \circ
    f}((\pi'')^*\omega)$, so the claim holds by commutativity of the
  left-most square.
\end{proof}

\subsubsection{}

\begin{remark} \label{rem:khan-homology}
  As an alternative to our $A_*^\sT(-)$, the standard six-functor
  formalism for the derived $\infty$-category of sheaves on stacks may
  be used to construct an equivariant homology functor $H_*^\sT(-)$
  \cite{Khan2025}. For (topological) Artin stacks, this may be viewed
  as an equivariant version of singular homology. Although we have not
  checked this thoroughly, we believe it functions equally well as a
  cohomological analogue of our $K_\circ^\sT(-)$.

  Khan also explains how to refine this to a Chow-type functor ---
  ``equivariant motivic homology'' --- by upgrading to the six-functor
  formalism for motivic sheaves. However, for this paper, we have
  instead chosen to use our much more low-tech
  Definition~\ref{def:homology-theory-operational} for $A_*^\sT(-)$
  due to the relative familiarity and maturity of the theory of
  (operational) Chow groups.
\end{remark}

\subsubsection{}

\begin{theorem} \label{thm:sst-invariants-cohom}
  Suppose $\tau$ is a stability condition on $\cat{A}$ for which
  Assumption~\ref{assump:semistable-invariants} holds. Then there
  exists a unique collection
  \begin{equation}
    \left(\sz_{\alpha}(\tau)\in A_*^{\sT}(\fM_{\alpha})^\pl_{\loc,\bQ}\right)_{\alpha\in C(\cat{A})}
  \end{equation}
  of homology classes satisfying the same properties as in
  Theorem~\ref{thm:sst-invariants}, with the modifications:
  \begin{enumerate}[label = (\roman*')]
    \addtocounter{enumi}{1}

  \item for any $\alpha$ for which $\fM_\alpha^{\st}(\tau) =
    \fM_{\alpha}^{\sst}(\tau)$,
    \[ (\Pi^\pl_\alpha)_*\sz_{\alpha}(\tau) = \int - \cap [\fM^{\sst}_{\alpha}(\tau)]^{\vir}; \]

    \addtocounter{enumi}{1}

  \item for any framing functor $\Fr \in \Frs$, in the notation of
    Definition~\ref{def:pair-invariant} and \S\ref{sec:pairs-maps},
    \[ I_*\tilde\sZ_{\alpha,1}^{\Fr}(\tau^Q) = \sum_{\substack{n>0 \\ \alpha = \alpha_1+\cdots+\alpha_n\\ \forall i: \,\tau(\alpha_i) = \tau(\alpha)\\ \;\;\fM_{\alpha_i}^{\sst}(\tau) \neq \emptyset}} \frac{1}{n!} \left[\iota^Q_*\sz_{\alpha_n}(\tau), \left[\cdots,\left[\iota^Q_*\sz_{\alpha_2}(\tau), \left[\iota^Q_*\sz_{\alpha_1}(\tau), \partial\right]\right]\cdots\right]\right], \]
    in
    $A_*^{\sT}(\tilde\fM^{Q(\Fr)}_{\alpha,1})^\pl_{\loc,\bQ}$,
    with Lie bracket $[-, -]$ defined by
    Theorem~\ref{thm:cohVOA-monoidal-stack}. 
  \end{enumerate}
\end{theorem}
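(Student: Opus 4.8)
The plan is to run the entire construction of \S\ref{sec:semistable-invariants} verbatim, but with every K-theoretic ingredient replaced by its cohomological analogue provided above. Concretely, one replaces $K_\circ^{\tilde\sT}(-)_{\loc}$ by $A_*^\sT(-)_{\loc}$, the multiplicative vertex algebra structure of Theorem~\ref{thm:auxiliary-stack-vertex-algebra} by the additive one of Theorem~\ref{thm:cohVOA-monoidal-stack} (applied to the auxiliary stacks), the symmetrized virtual structure sheaves $\hat\cO_X^\vir$ by the virtual fundamental classes $[X]^\vir$ and $\hat\se$ by $e$ (see \S\ref{sec:homology-enumerative-invariants}), the symmetrized pullback $\hat\pi^*$ on K-homology by its cohomological counterpart, and the quantum integer $[n]_\kappa$ by its cohomological limit $(-1)^{n-1}n$ (noting that $-\hbar$ now plays the role of $\kappa^{-1/2}-\kappa^{1/2}$, via \S\ref{sec:coh-lie-algebra}). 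The universal coefficients $\tilde U$ and all combinatorial inputs are purely formal and unchanged.

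\textbf{Key steps.} First I would define the cohomological pair invariants $\tilde\sZ_{\alpha,1}^\Fr(\tau^Q) \coloneqq \int - \cap [\tilde\fM_{\alpha,1}^{Q(\Fr),\sst}(\tau^Q)]^\vir$ and $\partial$ exactly as in Definition~\ref{def:pair-invariant}, noting that Lemma~\ref{lem:pairs-stack-sst=st} holds unchanged since it is purely about the stability condition. Second, I would establish the cohomological version of the inversion lemma: the analogue of Lemma~\ref{lem:pushforward-of-bracket-partial} states $(\pi_{\fM_\alpha^\Fr})_*[\iota^Q_*\phi, \partial] = (-1)^{\fr(\alpha)-1}\fr(\alpha)\cdot\phi$, which follows from the cohomological rigidity computation in \S\ref{sec:coh-lie-algebra} (the coefficient $(-1)^{\rank-1}\rank$ replacing $[\rank]_\kappa$). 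Since $(-1)^{\fr(\alpha)-1}\fr(\alpha)\neq 0$ whenever $\fr(\alpha)>0$, and $\fr(\alpha)>0$ for $\alpha\neq 0$ by the injectivity axiom for framing functors, division by this factor is still valid after base-change to $\bQ$. This gives the explicit inductive definition \eqref{eq:sstable-explicit} in the cohomological setting, and properties (i'), (ii'), (iii') follow exactly as in Propositions~\ref{prop:vss-support}, \ref{prop:vss-no-strictly-semistables}, and the subsequent one, using the cohomological virtual projective bundle formula (Lemma~\ref{lem:cohomological-projective-bundle-formulas}) in place of Corollary~\ref{cor:symmetrized-projective-bundle-formula}. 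Third, I would prove the cohomological master space relation, the analogue of Theorem~\ref{thm:pairs-master-relation}: the geometric input is $\sT\times\sS$-equivariant localization on the master space $\bM_\alpha$, and the cohomological residue map $\rho_u$ kills the left-hand side by the cohomological pole-cancellation result in \S\ref{sec:homology-enumerative-invariants}; the three fixed-locus contributions are identified with Lie brackets via Lemma~\ref{lem:cohomological-projective-bundle-formulas} and Theorem~\ref{thm:APOT-comparison} (whose proof already works in Chow homology, as noted in Remark~\ref{rem:khan-homology} and the footnote on Kiem--Savvas). Finally, framing independence (Proposition~\ref{prop:vss-framing-independence}) and property (iv') follow from this relation combined with the purely Lie-theoretic Lemmas~\ref{lem:wall-crossing-total-term} and \ref{lem:pushforward-of-partial-bracket}, which are unchanged.

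\textbf{Main obstacle.} The only genuinely non-formal point is verifying that Theorem~\ref{thm:APOT-comparison} and the torus-localization formula of Theorem~\ref{thm:APOTs}\ref{it:APOT-localization} hold in equivariant Chow homology for the APOTs arising from symmetrized pullback. The excerpt asserts this (footnote to \S\ref{sec:homology-enumerative-invariants}: ``the equivariant K-theoretic machinery of Kiem and Savvas continues to work in equivariant Chow homology''), and the Jouanolou-trick argument in the proof sketch of Theorem~\ref{thm:APOT-comparison} is cohomology-agnostic, so this is a matter of bookkeeping rather than new ideas; still, it is where one must be careful. A secondary, purely cosmetic subtlety is that the cohomological Lie bracket \eqref{eq:monoidal-stack-cohomological-lie-bracket} carries a factor $-\hbar$ rather than $\kappa^{-1/2}-\kappa^{1/2}$, so one must track that $-\hbar$ is a non-zero-divisor in $\bh_{\sT,\loc}^*$ (it is, since $\kappa\neq 1$) wherever the K-theoretic proof divides by $\kappa^{-1/2}-\kappa^{1/2}$; this causes no difficulty. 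With these observations in place, every step of \S\ref{sec:semistable-invariants} transcribes line-by-line.
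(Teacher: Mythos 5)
Your proposal is correct and follows essentially the same route as the paper: the paper's own proof consists precisely of the observation that all of \S\ref{sec:semistable-invariants} transcribes verbatim under the replacements of \S\ref{sec:homological-version}--type ingredients, with the master-space localization done in $\CH_*^\sT(-)_{\loc}$ and the quantum integers un-quantized, $[n]_\kappa \leadsto (-1)^{n-1}n$, exactly as in your cohomological version of Lemma~\ref{lem:pushforward-of-bracket-partial}. Your additional care about Theorem~\ref{thm:APOT-comparison} and torus localization in equivariant Chow homology is a reasonable elaboration of what the paper dispatches via the Kiem--Savvas footnote, but it is not a different approach.
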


Here, $\tilde\sZ_{\alpha,1}^{\Fr}(\tau^Q)$ denotes the cohomological
version of the universal enumerative invariant in
Definition~\ref{def:pair-invariant}, i.e. as described in
\S\ref{sec:homology-enumerative-invariants}.

\begin{proof}
  All steps in \S\ref{sec:semistable-invariants} hold upon the
  replacements described in
  \S\ref{sec:homology-basic-properties}--\S\ref{sec:homology-enumerative-invariants}.
  In particular, the master space localization step in
  \S\ref{sec:pairs-master-space-localization} must be done in
  $\CH_*^\sT(-)_{\loc}$. Note that, in formulas, all quantum integers
  become ``un-quantized'', i.e. $[n]_\kappa \leadsto (-1)^{n-1} n$.
  For instance, Lemma~\ref{lem:pushforward-of-bracket-partial} becomes
  \[ (\pi_{\fM_\alpha^{\Fr}})_* [\iota^Q_*\phi, I_*\partial] = (-1)^{\fr(\alpha)-1} \fr(\alpha) \cdot \phi \]
  for $\phi \in A_*^{\sT}(\fM_\alpha^{\Fr})^\pl_{\loc}$, and a similar
  replacement happens in the explicit construction
  \eqref{eq:sstable-explicit} of semistable invariants.
\end{proof}

\subsubsection{}

\begin{theorem} \label{thm:wcf-cohom}
  Consider the situation of Theorem~\ref{thm:wcf}. Then the semistable
  invariants of Theorem~\ref{thm:sst-invariants-cohom} satisfy
  \[ \sz_\alpha(\mathring\tau) = \sum_{\substack{n>0\\\alpha = \alpha_1 + \cdots + \alpha_n\\\forall i:\, \tau(\alpha_i) = \tau(\alpha)\\\fM^{\sst}_{\alpha_i}(\tau)\neq \emptyset}}\tilde U\left(\alpha_1,\dots,\alpha_n;\tau,\mathring\tau\right)\left[\left[\cdots\left[\sz_{\alpha_1}(\tau),\sz_{\alpha_2}(\tau)\right],\cdots\right],\sz_{\alpha_n}(\tau)\right] \]
  in $A_*^{\sT}(\fM_\alpha)^\pl_{\loc,\bQ}$, with Lie
  bracket $[-, -]$ defined by
  Theorem~\ref{thm:cohVOA-monoidal-stack}.
\end{theorem}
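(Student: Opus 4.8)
The plan is to rerun the entire argument of \S\ref{sec:wall-crossing} with each K-theoretic ingredient replaced by its cohomological counterpart, exactly as was done to pass from Theorem~\ref{thm:sst-invariants} to Theorem~\ref{thm:sst-invariants-cohom}. First I would record the dictionary of replacements: the multiplicative vertex algebra of Theorems~\ref{thm:mVOA-monoidal-stack} and \ref{thm:auxiliary-stack-vertex-algebra}, together with its Lie bracket (Theorem~\ref{thm:mVOA-monoidal-stack-lie-algebra}), becomes the additive vertex algebra and Lie bracket of Theorem~\ref{thm:cohVOA-monoidal-stack}; the virtual, homology, and rigidity projective bundle formulas (Lemma~\ref{lem:symmetrized-projective-bundle-formula}, Corollary~\ref{cor:symmetrized-projective-bundle-formula}, Lemma~\ref{lem:homology-projective-bundle-formula}) become Lemma~\ref{lem:cohomological-projective-bundle-formulas}; the pole-cancellation Lemma~\ref{lem:master-space-no-poles} becomes its cohomological version from \S\ref{sec:homology-enumerative-invariants}; and the K-theoretic residue $\rho_K$ becomes the cohomological residue $\rho_u$. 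In every formula each quantum integer $[n]_\kappa$ is replaced by $(-1)^{n-1}n$; for instance Lemma~\ref{lem:pushforward-of-bracket-partial} becomes $(\pi_{\fM_\alpha^{\Fr}})_*[\iota^Q_*\phi, I_*\partial] = (-1)^{\fr(\alpha)-1}\fr(\alpha)\cdot\phi$, the key rigidity residue now being supplied by Lemma~\ref{lem:cohomological-projective-bundle-formulas}\ref{it:rigidity}.

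With this dictionary fixed, the proof structure of \S\ref{wc:sec:wc-formula}--\S\ref{wc:sec:putting-things-together} transfers essentially unchanged. The combinatorial skeleton --- the auxiliary exact categories $\tilde{\cat A}^{\vec Q(\Fr)}$, the two-parameter family $\tau_x^s$, the $R$- and $S$-sets, and the universal coefficients $U$, $\tilde U$ with their identities (Lemma~\ref{wc:lem:U-properties}) --- depends only on the numerics of $\cat A$ and is literally the same, so all of \S\ref{wc:sec:gen-sst-inv}, in particular the artificial wall-crossing formula (Lemma~\ref{wc:lem:artificial-base-wcf}) and the artificial vanishings (Lemma~\ref{wc:lem:vanishing-basic}), holds verbatim. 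The geometric steps then go through once their K-theoretic inputs are swapped out: the master-space relation of \S\ref{sec:semistable-invariants-master-space} (Theorem~\ref{thm:pairs-master-relation}) and the horizontal wall-crossing Proposition~\ref{wc:prop:horizontal-wc} are proved by equivariant localization, now carried out in $\CH_*^\sT(-)_{\loc}$ as in \S\ref{sec:pairs-master-space-localization}, together with the comparison of virtual cycles Theorem~\ref{thm:APOT-comparison}; the ``vertical'' projective-bundle steps of \S\ref{wc:sec:pair-inv-and-flag-inv} use the cohomological projective bundle formulas. Finally I would follow the path of Figure~\ref{fig:dominant-wc-strategy-2}: Lemma~\ref{wc:lemma:reduction-wc-to-aux-wc} reduces the theorem to the identity $\tilde\sz_{\alpha,\vec 0}^{1,-1} = \hat\sz_{\alpha,\vec 0}^{1,-1}$, which is established by the inductions of \S\ref{wc:sec:putting-things-together} (Propositions~\ref{wc:prop:ptt-horizontal-wc} and \ref{wc:prop:finally}).

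The hard part will be bookkeeping of a different sort: confirming that every K-theoretic lemma invoked in \S\ref{sec:wall-crossing} has a faithful cohomological shadow, and in particular that the virtual-cycle technology underpinning the master-space arguments is available in equivariant Chow homology with the precise functoriality used. One needs a virtual fundamental class $[X]^\vir \in \CH_{\vdim X}^\sT(X)$ attached to a symmetrized-pullback APOT (via the semi-perfect obstruction theory it induces, cf. \cite{kiem_savvas_apot}), obeying the $\sT$-equivariant localization formula and the comparison statement Theorem~\ref{thm:APOT-comparison}, and with symmetrized pullback along smooth maps and projective bundles behaving as in \cite{klt_dtpt}. As recorded in \S\ref{sec:homology-enumerative-invariants}, the Kiem--Savvas machinery \cite{kiem_savvas_loc, kiem_savvas_apot} works verbatim in Chow homology (see e.g.\ \cite{Chang2011}); and since nothing is symmetrized in the cohomological setting --- no $\hat\se$, no $\hat\cO^\vir$, hence no passage to a double cover $\tilde\sT$ --- this half of the argument is in fact strictly simpler than its K-theoretic original. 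Modulo that verification, the proof is entirely formal.
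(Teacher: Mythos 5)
Your proposal is correct and follows essentially the same route as the paper: the paper's proof likewise consists of rerunning \S\ref{sec:wall-crossing} under the replacement dictionary of \S\ref{sec:homology-basic-properties}--\S\ref{sec:homology-enumerative-invariants}, noting that the master-space localization in Proposition~\ref{wc:prop:horizontal-wc} is carried out in $\CH_*^\sT(-)_{\loc}$ and that quantum integers un-quantize (e.g. Lemma~\ref{wc:lemma:pushforward-vertical-wall} becomes $\pi_*[\phi, I_*\partial_{[a,N]}] = (-1)^{\fr(\beta)-e_{a-1}}(\fr(\beta)-e_{a-1})\cdot\phi$). Your additional remarks about the combinatorial skeleton being unchanged and the Chow-homology virtual-cycle technology are exactly the verifications the paper delegates to \S\ref{sec:cohomological-version}.
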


\begin{proof}
  All steps in \S\ref{sec:wall-crossing} hold upon the replacements
  described in
  \S\ref{sec:homology-basic-properties}--\S\ref{sec:homology-enumerative-invariants}.
  In particular, the master space localization step in
  Proposition~\ref{wc:prop:horizontal-wc} must be done in
  $\CH_*^\sT(-)_{\loc}$. As in the proof of
  Theorem~\ref{thm:sst-invariants-cohom} above, note that
  Lemma~\ref{wc:lemma:pushforward-vertical-wall} becomes
  \[ \pi_*\left[\phi, I_*\partial_{[a,N]}\right] = (-1)^{\fr(\beta)-e_{a-1}} (\fr(\beta) - e_{a-1}) \cdot \phi. \qedhere \]
\end{proof}

\subsection{Restrictions of classes and moduli}
\label{sec:restrictions}

\subsubsection{}
\label{es:sec:description}

In this subsection, we explain how certain conditions in the setting
for the main Theorems~\ref{thm:sst-invariants} and \ref{thm:wcf} may
be weakened. We consider three situations.
\begin{itemize}
\item The abelian category $\cat{A}$ may be replaced by an exact
  subcategory $\cat{B} \subseteq \cat{A}$. Put differently, the
  underlying category is allowed to be exact and not necessarily
  abelian, as long as it sits inside an ambient abelian category. The
  existence of an ambient abelian category is needed to work with (weak)
  stability conditions.

\item We may work with only a given subset of classes $C(\cat{B})_\pe
  \subseteq C(\cat{B})$ which we call {\it permissible}. Consequently,
  all assumptions and data involved in constructing semistable
  invariants and wall-crossing need only be defined on permissible
  classes instead of all classes, and only permissible classes
  participate in wall-crossing.

\item The moduli stack $\fM$ (of $\cat{B}$) may be replaced by certain
  locally closed substacks $\fN \subseteq \fM$. The symmetric bilinear
  elements $\scE$ need only be defined for $\fN$, e.g. $\fM$ may not
  carry a symmetric obstruction theory at all. The setting of the
  equivariant vertices in \S\ref{sec:DT-PT} and \S\ref{sec:PT-BS} is a
  good example.
\end{itemize}
While we treat these three situations simultaneously in this
subsection, they can occur independently and it is not necessary to
use all three generalizations simultaneously. The first two situations
were already covered in \cite{Joyce2021}. The third situation requires
some care to show that the invariants that come up on the right hand
side of the wall-crossing formula are invariants of the substack and
not the original moduli stack.

\subsubsection{}
\label{es:sec:restricted-monoidal-stacks}

\begin{definition} \label{def:restricted-graded-monoidal-stack}
  A {\it (commutative) partial additive monoid} is a set $\sA$,
  together with a symmetric relation $R\subseteq \sA\times\sA$, an
  identity element $0\in\sA$, and a partial binary operation $+:R\to
  \sA$, such that:
  \begin{itemize}
  \item $(0,\alpha)\in R$ and $0+\alpha=\alpha+0=\alpha$ for every
    $\alpha\in\sA$;
  \item for every $\alpha,\beta,\gamma\in\sA$, we have
    $(\alpha,\beta)\in R$ and $(\alpha+\beta,\gamma)\in R$ if and only
    if $(\beta,\gamma)\in R$ and $(\alpha,\beta+\gamma)\in R$, and in
    both cases, we have $(\alpha+\beta)+\gamma=\alpha+(\beta+\gamma)$;
  \item $+$ is commutative where it is defined.
  \end{itemize}
  A {\it graded partially-monoidal $\sT$-stack with bilinear element}
  is the data of (cf. Definition~\ref{def:graded-monoidal-stack}):
  \begin{enumerate}[label = (\roman*)]
  \item an Artin stack $\fM = \bigsqcup_\alpha \fM_\alpha$, where
    $\alpha$ ranges over a {\it partial} additive monoid $\sA$ and the
    torus $\sT$ acts on each $\fM_\alpha$;
  \item for every $\alpha \in \sA$, and every $\beta \in \sA$ where
    $\alpha + \beta$ is defined, $\sT$-equivariant morphisms
    \begin{align*}
      &\Phi_{\alpha,\beta}\colon \fM_\alpha \times \fM_\beta \to \fM_{\alpha+\beta}, \\
      &\Psi_\alpha \colon [\pt/\bC^\times] \times \fM_\alpha \to \fM_\alpha,
    \end{align*}
    and elements $\scE_{\alpha,\beta} \in K_\sT^\circ(\fM_\alpha \times
    \fM_\beta)$.
  \end{enumerate}
  This data must satisfy the same axioms as in
  Definition~\ref{def:graded-monoidal-stack} whenever the necessary
  additions are defined. Finally, suppose $\fM = \bigsqcup_{\alpha \in
    \sA} \fM_\alpha$ is a graded partially-monoidal $\sT$-stack (not
  necessarily with bilinear element). A {\it restricted graded
    partially-monoidal $\sT$-stack with ($\kappa$-symmetric) bilinear
    element} is the data of an injective morphism of partial monoids
  $c\colon \sB \to \sA$ and locally closed $\sT$-substacks
  \[ i_\beta\colon \fN_\beta \subset \fM_{c(\beta)} \]
  for every $\beta \in \sB$, such that:
  \begin{enumerate}[label = (\alph*)]
  \item $\fN \coloneqq \bigsqcup_{\beta \in \sB} \fN_\beta$ has the
    structure of a graded partially monoidal $\sT$-stack with
    ($\kappa$-symmetric) bilinear element;
  \item the graded monoidal structure on $\fN$ is inherited from
    $\fM$, i.e.
    \[ \Phi^{\fM}_{c(\beta_1),c(\beta_2)} \circ (i_{\beta_1} \times i_{\beta_2}) = i_{\beta_1+\beta_2} \circ \Phi^{\fN}_{\beta_1,\beta_2}, \quad \Psi^{\fM}_{c(\beta)} \circ (\id_{[\pt/\bC^\times]} \times i_\beta) = i_\beta \circ \Psi^\fN_{\beta} \]
    for any $\beta_1, \beta_2 \in \sB$ such that $\beta_1 + \beta_2$
    is defined and for any $\beta \in \sB$. Here, the superscripts on
    $\Phi$ and $\Psi$ are to distinguish the graded monoidal structure
    on $\fM$ from that on $\fN$.
  \end{enumerate}
  A {\it morphism} of two restricted graded partially-monoidal
  $\sT$-stacks with ($\kappa$-symmetric) bilinear elements is a
  morphism of their ambient graded monoidal $\sT$-stacks which
  restricts to a morphism of graded partially monoidal $\sT$-stacks
  with ($\kappa$-symmetric) bilinear elements.
\end{definition}

\subsubsection{}
\label{es:sec:partial-vertex-algebra}

Let $\fN$ be a restricted graded partially-monoidal $\sT$-stack with
bilinear element. All constructions in \S\ref{sec:vertex-algebra}
continue to work. In particular, they produce a {\it partial
  $\tilde\sT$-equivariant multiplicative vertex algebra} structure on
$\bfK_\circ^{\tilde\sT}(\fN)$ and a {\it partial Lie algebra}
structure on $\bfK_\circ^{\tilde\sT}(\fN)^\pl$, meaning that the
vertex product and Lie bracket are only defined on objects in classes
$\alpha_1, \alpha_2 \in \sA$ whenever the addition $\alpha_1 +
\alpha_2$ is defined. All results concerning the vertex product and
Lie bracket continue to hold whenever they are defined.

\subsubsection{}

\begin{assumption} \label{es:assump:exact-subcategory}
  Let $\cat{B} \subset \cat{A}$ be an exact sub-category. In order to
  replace $\cat{A}$ by $\cat{B}$ throughout, we require the following
  assumptions.
  \begin{enumerate}[label=(\alph*)]
  \item \label{es:assump:it:B-closed} $\cat{B} \subset \cat{A}$ is
    closed under isomorphisms in $\cat{A}$, i.e. if $B \in \cat{B}$
    and $B' \in \cat{A}$ with $B' \cong B$ then $B' \in \cat{B}$, and
    direct summands in $\cat{A}$, i.e. if $B_1 \oplus B_2 \in \cat{B}$
    for $B_1, B_2 \in \cat{A}$ then $B_1, B_2 \in \cat{B}$.

  \item \label{es:assump:it:permissible-classes} (cf.
    \S\ref{sec:abelian-category-moduli-stack}) Let $C(\cat{B}) \subset
    K(\cat{B})$ be the cone of non-zero effective
    classes.\footnote{Here $K(\cat{B}) \subset K(\cat{A})$ is the
      subset of classes arising from the Grothendieck group
      $K_0(\cat{B})$.} There is a partial sub-monoid $C(\cat{B})_{\pe}
    \subset C(\cat{B})$ of {\it permissible classes}, and, for every
    $\beta \in C(\cat{B})_{\pe}$, there is a moduli stack $\fM_\beta$,
    Artin and locally of finite type, parameterizing objects $B \in
    \cat{B}$ of class $\beta$. Direct sum and scaling automorphisms
    make $\fM \coloneqq \bigsqcup_{\beta \in C(\cat{B})_{\pe}}
    \fM_\beta$ into a graded partially-monoidal $\sT$-stack (not
    necessarily with bilinear element). We say an object $A \in
    \cat{A}$ is {\it permissible} if its class lies in
    $C(\cat{B})_{\pe}$.

  \item \label{es:assumption-restricted-substack} (cf.
    \S\ref{sec:equivariant-3CY-category}) For every $\beta \in
    C(\cat{B})_{\pe}$, there is a locally closed $\sT$-invariant
    moduli substack $\fN_\beta \subset \fM_\beta$, such that
    \[ \fN = \bigsqcup_{\beta \in C(\cat{B})_{\pe}} \fN_\beta \]
    is a restricted graded partially-monoidal $\sT$-substack of $\fM$
    with $\kappa$-symmetric bilinear elements defined from
    $\kappa$-symmetric bilinear perfect complexes
    \[ \scE_{\beta_1,\beta_2} \in \cat{Perf}_\sT(\fN_{\beta_1} \times \fN_{\beta_2}). \]
    If $\Delta\colon \fN \to \fN \times \fN$ denotes the diagonal map,
    then there are $\sT$-equivariant $\kappa$-symmetric obstruction
    theories
    \begin{align*}
      \varphi_\beta\colon \Delta^*\scE_{\beta,\beta}^\vee[-1] &\to \bL_{\fN_\beta} \\
      \varphi_\beta^\pl\colon \bE_{\beta}^\pl &\to \bL_{\fN_\beta^\pl},
    \end{align*}
    for each $\beta \in C(\cat{B})_\pe$, such that $\varphi_\beta$ and
    $\varphi_\beta^\pl$ are $\kappa$-symmetrically compatible under
    $\Pi_\beta^\pl$, and the various $\varphi_\beta$ are compatible
    with $\Phi$ whenever it is defined.
  \end{enumerate}
\end{assumption}

For $\beta \in C(\cat{B})_{\pe}$, we adopt the same notation for
various loci in $\fN_\beta$ as for $\fM_\beta$. For example, the
(semi)stable loci $\fN_\beta^{\mathrm{(s)st}}(\tau) \subset
\fN_\beta^\pl$ are given by restriction of
$\fM_\beta^{\mathrm{(s)st}}(\tau) \subset \fM_\beta^\pl$ along
$\fN_\beta^\pl \hookrightarrow \fM_\beta^\pl$. Recall that open
immersions are preserved under base change.

\subsubsection{}
\label{es:sec:auxiliary-stacks}

In the setting of Assumption~\ref{es:assump:exact-subcategory}, the
constructions in \S\ref{sec:auxiliary-stacks} are generalized as
follows.
\begin{itemize}
\item Define a {\it framing functor for $\cat{B}$} following
  Definition~\ref{bg:def:framing-functor}, by replacing $\cat{A}$ by
  $\cat{B}$ and imposing the conditions there only on the moduli
  substacks $\fN_\beta$ for $\beta \in C(\cat{B})_{\pe}$, and the
  objects $[E] \in \fN_\beta$ that they parameterize. For instance,
  $\fM_\beta^{\Fr} \subset \fM_\beta$ may not be an open substack, but
  its restriction to $\fN_\beta^{\Fr} \subset \fN_\beta$ must be open.

\item Define the auxiliary category $\tilde{\cat{B}}^{Q(\vec \Fr)}$
  following Definition~\ref{def:auxiliary-stack}, and let
  $\tilde\fN^{Q(\vec\Fr)}$ be the moduli stack parameterizing triples
  $(E, \vec V, \vec\rho)$ such that $[E] \in \fN^{\vec\Fr} \coloneqq
  \bigcap_{v \in Q_0^\circ} \fN^{\Fr_v}$. The content of
  \S\ref{bg:sec:aux-stacks} holds for $\tilde\fN^{Q(\vec\Fr)}$, but
  not necessarily for the moduli stack $\tilde\fM^{Q(\vec\Fr)}$ of
  $\tilde{\cat{B}}^{Q(\vec \Fr)}$ itself.
\end{itemize}
Then Theorem~\ref{thm:auxiliary-stack-vertex-algebra} produces a
restricted graded partially-monoidal $\sT$-stack structure on
$\tilde\fN^{Q(\vec\Fr)} \subset \tilde\fM^{Q(\vec\Fr)}$ with unchanged
$\kappa$-symmetric bilinear elements, a partial multiplicative vertex
algebra structure on $\bfK_\circ^{\tilde\sT}(\tilde\fN^{Q(\vec\Fr)})$,
and a partial Lie algebra structure on
$\bfK_\circ^{\tilde\sT}(\tilde\fN^{Q(\vec\Fr)})^\pl$.

\subsubsection{}
\label{es:sec:assumptions-semistable-invariants}

\begin{assumption} \label{es:assump:semistable-invariants}
  Let $\tau$ be a stability condition on $\cat{A}$. We modify some
  conditions in Assumption~\ref{assump:semistable-invariants} as
  follows, in order to construct semistable invariants.
  \begin{enumerate}[label = (\alph*')]
  \item \label{es:assump:it:tau-artinian} $\cat{B}$ admits {\it
    $\tau$-HN filtrations for permissible classes}: given an object $A
    \in \cat{B}$ of class in $C(\cat{B})_{\pe}$, the $\tau$-HN
    filtration $0 = A_0 \subsetneq A_1 \subsetneq \cdots \subsetneq
    A_n = A$ of $A \in \cat{A}$ (from
    Assumption~\ref{assump:semistable-invariants}\ref{assump:it:tau-artinian})
    satisfies $A_i \in \cat{B}$ for all $i = 1, \ldots, n$.

  \item \label{es:assump:it:semistable-loci} $\tau$-(semi)stability is
    {\it open for permissible classes}: $\fN^{\st}_\alpha(\tau)
    \subset \fN^{\sst}_\alpha(\tau) \subset \fN_\alpha^\pl$ are open
    substacks of finite type for all $\alpha \in C(\cat{B})_{\pe}$.

  \item \label{es:assump:it:framing-functor} There exists a set $\Frs$
    of framing functors (as discussed in
    \S\ref{es:sec:auxiliary-stacks}) such that for any finite
    collection of classes $\{\alpha_i\}_{i \in I} \subset
    C(\cat{B})_{\pe}$, there is some $\Fr \in \Frs$ such that
    $\fN_{\alpha_i}^{\sst}(\tau) \subset \fN_{\alpha_i}^{\Fr,\pl}$ for
    all $i \in I$.
  \end{enumerate}
  When $C(\cat{B})_{\pe} \subsetneq C(\cat{B})$, we gain the freedom
  to identify, in the new
  assumption~\ref{es:assump:it:inert-classes} below, a subset
  $C(\cat{B})_{\inert} \subsetneq C(\cat{B})$ which makes it easier to
  construct our weak stability conditions on auxiliary stacks.
  \begin{enumerate}[resume, label = (\alph*')]
  \item \label{es:assump:it:rank-function} There exists a ``rank
    function'' $r\colon C(\cat{A}) \to \bZ$ such that
    \begin{itemize}
    \item if $A \in \cat{A}$ is $\tau$-semistable and permissible then
      $r(A) > 0$, and moreover
    \item if $A' \subset A$ with $\tau(A') = \tau(A/A')$, then $r(A) =
      r(A') + r(A/A')$ and $r(A'), r(A/A') > 0$.
    \end{itemize}

  \customitem[(T)] \label{es:assump:it:inert-classes} There exists
    a set of {\it inert classes} $C(\cat{B})_{\inert} \subset
    C(\cat{B})$ such that:
    \begin{itemize}
    \item $\tau(C(\cat{B})_{\inert}) \cap \tau(C(\cat{B})_{\pe}) =
      \emptyset$; \footnote{Consequently, inert classes do not
      participate in wall-crossing --- hence the name ``inert''.}
    \item if $\gamma \in C(\cat{B})_{\inert}$ and $\delta \in C(\cat{B})$
      such that $\gamma+\delta \in C(\cat{B})_{\pe}$, then
      $\tau(\gamma+\delta) = \tau(\delta)$ and $r(\gamma+\delta) =
      r(\delta)$.
    \end{itemize}

  \item \label{es:assump:it:semi-weak-stability} Let $0 < \beta <
    \alpha$ be the classes of $\tau$-semistable and permissible
    objects $0 \neq B \subsetneq A$ in $\cat{A}$ with $\tau(\beta) =
    \tau(\alpha-\beta)$. Then for the class $\beta'$ of any sub-object
    $0 \neq B' \subsetneq B$:
    \begin{itemize}
    \item if $\tau(\beta') < \tau(\alpha-(\beta-\beta')) =
      \tau(\beta-\beta')$, then $\beta' \in C(\cat{B})_{\inert}$;
    \item if $\tau(\beta') = \tau(\alpha-\beta') <
      \tau(\beta-\beta')$, then $\beta-\beta' \in C(\cat{B})_{\inert}$.
    \end{itemize}
  \end{enumerate}
  We need to be able to construct virtual cycles and their enumerative
  invariants, now for the restricted stacks $\fN_\alpha$ instead of
  $\fM_\alpha$.
  \begin{enumerate}[resume, label = (\alph*')]
  \item \label{es:assump:it:properness} The following algebraic spaces
    are proper and have the resolution property (see
    Remark~\ref{rem:localization-resolution-property}):
    \begin{itemize}
    \item $\fN_\alpha^{\sst}(\tau)^\sT$ for all $\alpha \in
      C(\cat{B})_{\pe}$ with no strictly $\tau$-semistable objects in
      $\fN$;
    \item $\tilde\fN_{\alpha,1}^{Q(\Fr),\sst}(\tau^Q)^\sT$ (the
      auxiliary stack in Definition~\ref{def:pair-invariant}) for all
      $\alpha \in C(\cat{B})_{\pe}$ and $\Fr \in \Frs$;
    \item $\bN_\alpha^{\sT_w}$ (the master space in
      Proposition~\ref{prop:pairs-master-space-fixed-loci} after base
      change along $\fN \hookrightarrow \fM$) for all $w$ in
      Lemma~\ref{lem:master-space-no-poles}, and all $\alpha \in
      C(\cat{B})_{\pe}$ and $\Fr_1, \Fr_2 \in \Frs$.
    \end{itemize}
  \end{enumerate}
  Furthermore, we make the following extra assumption.
  \begin{enumerate}[label = (\alph*), resume]
  \item \label{es:assump:it:Bpe-sst-summands} Suppose $B_1, B_2 \in
    \cat{B}$ have classes $\beta_1$ and $\beta_2$, and are
    $\tau$-semistable with $\tau(B_1) = \tau(B_2)$. If $\beta_1 +
    \beta_2 \in C(\cat{B})_{\pe}$, then $\beta_1, \beta_2 \in
    C(\cat{B})_{\pe}$. Moreover, in this case, we require that the
    following commutative square is Cartesian:
    \[ \begin{tikzcd}
      \fN_{\beta_1} \times \fN_{\beta_2} \ar{d}{\Phi_{\beta_1,\beta_2}} \ar[hookrightarrow]{r} & \fM_{\beta_1} \times \fM_{\beta_2} \ar{d}{\Phi_{\beta_1,\beta_2}} \\
      \fN_{\beta_1+\beta_2} \ar[hookrightarrow]{r} & \fM_{\beta_1+\beta_2}.
    \end{tikzcd} \]
  \end{enumerate}
  Without loss of generality, by shrinking $C(\cat{B})_{\pe}$ if
  necessary, we may assume that if $\beta \in C(\cat{B})_{\pe}$ then
  $\fN_\beta^{\sst}(\tau) \neq \emptyset$.
\end{assumption}

\subsubsection{}

\begin{theorem}[Semistable invariants] \label{es:thm:sst-invariants}
  Suppose $\tau$ is a stability condition on $\cat{A}$ for which
  Assumption~\ref{es:assump:semistable-invariants} holds. Then there
  exists a unique collection
  \begin{equation}
    \left(\sz_{\alpha}(\tau)\in K_{\circ}^{\tilde{\sT}}(\fN_\alpha)^\pl_{\loc,\bQ}\right)_{\alpha\in C(\cat{B})_{\pe}}
  \end{equation}
  of K-homology classes satisfying the same properties as in
  Theorem~\ref{thm:sst-invariants} with $\fM$ replaced by $\fN$ and
  all classes required to belong to $C(\cat{B})_{\pe}$.
\end{theorem}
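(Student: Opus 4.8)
The plan is to reprove Theorem~\ref{es:thm:sst-invariants} by running the construction of \S\ref{sec:semistable-invariants} verbatim, with $\fM$ replaced by $\fN$, the auxiliary stack $\tilde\fM^{Q(\Fr)}$ replaced by $\tilde\fN^{Q(\Fr)}$, and every class restricted to lie in $C(\cat{B})_{\pe}$, working in the partial multiplicative vertex algebra and partial Lie algebra on $\bfK_\circ^{\tilde\sT}(\tilde\fN^{Q(\Fr)})$ produced in \S\ref{es:sec:partial-vertex-algebra} and \S\ref{es:sec:auxiliary-stacks}. The first preliminary step is to check that the construction never leaves the restricted, permissible world. By Assumption~\ref{es:assump:semistable-invariants}\ref{es:assump:it:Bpe-sst-summands} any decomposition $\alpha = \alpha_1 + \cdots + \alpha_n$ of a permissible class into classes of $\tau$-semistable objects of equal $\tau$-slope has each $\alpha_i$ permissible, so $R_\alpha \subset C(\cat{B})_{\pe}$, and $R_\alpha$ is finite by \cite[Lemma 9.1]{Joyce2021}, whose hypotheses are exactly \ref{es:assump:it:semistable-loci}, \ref{es:assump:it:rank-function}, and \ref{es:assump:it:Bpe-sst-summands}. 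Moreover the restricted forgetful maps $\pi_{\fN_\alpha^{\Fr}}\colon \tilde\fN_{\alpha,\vec d}^{Q(\Fr)} \to \fN_\alpha^{\Fr}$ are smooth, being base changes of $\pi_{\fM_\alpha^{\Fr}}$, so Theorem~\ref{thm:APOTs} produces symmetrized pullback APOTs on the semistable$=$stable loci inside $\tilde\fN$, and the restricted properness/resolution hypotheses of Assumption~\ref{es:assump:semistable-invariants}\ref{es:assump:it:properness} make the corresponding universal enumerative invariants well-defined.

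The second preliminary step is the analogue of Lemma~\ref{lem:pairs-stack-sst=st}: the pair stability condition $\tau^Q$ of Definition~\ref{def:pair-invariant} is effectively a weak stability condition on $\tilde{\cat{B}}^{Q(\Fr)}$ for classes with $\beta \in R_\alpha \sqcup \{0\}$, with no strictly semistable objects in dimension one. The only delicate point is the weak see-saw property, which in the unrestricted setting rested on Assumption~\ref{assump:semistable-invariants}\ref{assump:it:semi-weak-stability} through Lemma~\ref{wc:lem:R-sets}\ref{it:R-sets-ii}. In the restricted setting its role is played by the modified \ref{es:assump:it:semi-weak-stability} together with the inert-class axiom \ref{es:assump:it:inert-classes}: when a sub- or quotient class $\beta'$ of a strictly-semistabilizing $\beta$ fails the naive slope comparison, \ref{es:assump:it:semi-weak-stability} forces $\beta'$ (or its complement) into $C(\cat{B})_{\inert}$, and then $\tau(C(\cat{B})_{\inert}) \cap \tau(C(\cat{B})_{\pe}) = \emptyset$ together with $r(\gamma+\delta) = r(\delta)$ and $\tau(\gamma+\delta) = \tau(\delta)$ for inert $\gamma$ resolves the casework exactly as in the proofs of Lemma~\ref{wc:lem:R-sets}\ref{it:R-sets-ii} and Lemma~\ref{lem:joyce-framed-stack-stability}. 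With this in hand, the pair invariant $\tilde\sZ_{\alpha,1}^{\Fr}(\tau^Q)$ and the operator $\partial$ are defined on $\tilde\fN^{Q(\Fr)}$ precisely as in Definition~\ref{def:pair-invariant}, and the inductive construction of Definition~\ref{def:sstable-explicit} goes through: Lemma~\ref{lem:pushforward-of-bracket-partial} holds verbatim (its proof uses only the rigidity computation of Proposition~\ref{prop:rigidity} and $\rank \tilde\scE^{Q(\Fr)}_{(\alpha,0),(0,1)} = \fr(\alpha)$, both unchanged), so \eqref{eq:sstable-def} can be inverted by applying $(\pi_{\fN_\alpha^{\Fr}})_*$ and dividing by $[\fr(\alpha)]_\kappa$, defining $\sz_\alpha(\tau)$ by induction on $r(\alpha)$. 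Properties \ref{item:vss-support}, \ref{item:vss-no-strictly-semistables}, and \ref{item:vss-isomorphic-moduli} then follow exactly as in Propositions~\ref{prop:vss-support}, \ref{prop:vss-no-strictly-semistables}, and the subsequent proposition, the second using the virtual projective bundle formula (Corollary~\ref{cor:symmetrized-projective-bundle-formula}).

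The main work, as expected, is property~\ref{item:vss-pairs-relation} together with framing independence, which require the master space relation of Theorem~\ref{thm:pairs-master-relation} in the restricted setting. One forms the master space $\bN_\alpha$ by base change along $\fN \hookrightarrow \fM$; it is $\sS$-invariant, and one must check that its $\sS$-fixed locus decomposes as in Proposition~\ref{prop:pairs-master-space-fixed-loci}. The two ``$\rho=0$'' components present no new issue. The crucial point is the interaction component $Z_{\alpha_1,\alpha_2}$ attached to a splitting $\alpha = \alpha_1 + \alpha_2$: it must identify with the product \eqref{eq:pairs-master-space-complicated-locus} of pair stacks \emph{for the substacks} $\tilde\fN_{\alpha_i,\cdots}^{Q(\Fr_1)\wedge Q(\Fr_2)}$, not for $\fM$. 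This is exactly what Assumption~\ref{es:assump:semistable-invariants}\ref{es:assump:it:Bpe-sst-summands} delivers: $\alpha_1, \alpha_2$ are permissible, and the Cartesian square there says the direct-sum map on $\fN$ is the base change of that on $\fM$, so the locus $\{E = E_1 \oplus E_2\}$ inside $\bN_\alpha$ is the fibre product of the corresponding locus in $\bM_\alpha$ with $\fN_{\alpha_1} \times \fN_{\alpha_2}$ over $\fM_{\alpha_1} \times \fM_{\alpha_2}$, which is precisely the restricted pair stack. The virtual normal bundle \eqref{eq:pairs-master-space-interaction-Nvir} involves only the bilinear elements $\scE_{\alpha_i,\alpha_j}$ restricted to $\fN$, which exist by \ref{es:assumption-restricted-substack}, and Theorem~\ref{thm:APOT-comparison} applies to compare virtual structure sheaves on the fixed loci since, by \ref{es:assump:it:properness}, all relevant spaces are algebraic spaces with the resolution property. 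With these identifications the localization computation of \S\ref{sec:pairs-master-space-localization}--\S\ref{sec:pairs-master-term-3} proceeds unchanged, now in the partial Lie bracket (defined because $\alpha = \alpha_1 + \alpha_2$ is an allowed sum). Finally, Lemma~\ref{lem:wall-crossing-total-term} is purely Lie-algebraic and holds in the partial Lie algebra since every bracket appearing involves only classes $\le \alpha$, so Proposition~\ref{prop:vss-framing-independence} and the proof of property~\ref{item:vss-pairs-relation} carry over word for word, completing the proof.
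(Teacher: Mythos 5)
Most of your proposal follows the paper's route: rerun \S\ref{sec:semistable-invariants} with $\fM$ replaced by $\fN$ and classes restricted to $C(\cat{B})_{\pe}$, use the closure assumptions to keep the inductive definition of Definition~\ref{def:sstable-explicit} inside permissible classes, and use the Cartesian square in Assumption~\ref{es:assump:semistable-invariants}\ref{es:assump:it:Bpe-sst-summands} to identify the interaction locus $Z_{\alpha_1,\alpha_2}$ in the master space with a product of \emph{restricted} pair stacks. That part is correct and is exactly what the paper does.

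There is, however, a genuine gap in your second paragraph: you keep the auxiliary stability $\tau^Q$ of Definition~\ref{def:pair-invariant} (and, implicitly, \eqref{eq:master-pair-stability}) literally as written and claim its weak see-saw property ``resolves exactly as in the proofs of Lemma~\ref{wc:lem:R-sets}\ref{it:R-sets-ii} and Lemma~\ref{lem:joyce-framed-stack-stability}'' using Assumptions~\ref{es:assump:semistable-invariants}\ref{es:assump:it:semi-weak-stability} and \ref{es:assump:it:inert-classes}. It does not. In the restricted setting those lemmas only survive for $\beta',\beta-\beta' \notin C(\cat{B})_{\inert}$; the modified \ref{es:assump:it:semi-weak-stability} no longer excludes the configuration $\tau(\beta') < \tau(\beta-\beta')$ together with $\tau(\beta') < \tau(\alpha-\beta')$ — it merely forces $\beta'$ to be inert. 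Concretely, take $\beta \in R_\alpha$, a $\tau$-semistable $B$ of class $\beta$ with a sub-object of inert class $\beta'$ satisfying $\tau(\beta') < \tau(\beta) = \tau(\beta-\beta')$ and $\tau(\beta') < \tau(\alpha-\beta')$ (nothing in the restricted assumptions rules this out), and consider the framed sub-object $(B',V,\rho) \subset (B,V,\rho)$ with $d'=d=1$. Then the unmodified \eqref{eq:pair-stability} gives $\tau^Q(\beta',1) = (\tau(\beta'),-\infty)$, $\tau^Q(\beta,1) = (\tau(\beta),1/r(\beta))$ and $\tau^Q(\beta-\beta',0) = (\tau(\beta),0)$ (using $r(\beta-\beta')=r(\beta)$ from inertness), so neither chain of inequalities in \eqref{eq:joyce-framed-stack-is-a-stability-condition} holds: $\tau^Q$ is not a weak stability condition, and everything downstream (HN filtrations, the analogue of Lemma~\ref{lem:pairs-stack-sst=st}, openness of the auxiliary semistable loci, the master-space stability) is unsupported. (Note also that positivity of $r$ is only assumed for permissible semistable classes, so the division by $r(\beta')$ in the first case of \eqref{eq:pair-stability} is not even justified for inert $\beta'$.) The paper's proof repairs exactly this point by \emph{redefining} the auxiliary weak stability conditions \eqref{eq:pair-stability} and \eqref{eq:master-pair-stability} according to \eqref{es:eq:joyce-framed-stack-stability}, i.e.\ bucketing inert classes with $\beta=0$ and assigning them slope $(\pm\infty,\cdot)$ determined by the framing data; it is this redefinition, not the inert axioms applied to the old formula, that restores the see-saw (your casework happens to work in examples like Vafa--Witten theory, where inert classes have extremal $\tau$-slope, but not in general). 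Your argument needs this modification inserted wherever you invoke the analogue of Lemma~\ref{lem:joyce-framed-stack-stability}; with it, the rest of your outline goes through as you describe.
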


\begin{proof}
  All steps in \S\ref{sec:semistable-invariants} hold upon replacing
  $\cat{A}$ by $\cat{B}$ and $\fM$ by $\fN$ whenever a geometric
  construction is required, and working only with classes $\alpha \in
  C(\cat{B})_{\pe}$, and adapting the auxiliary weak stability
  conditions \eqref{eq:pair-stability} and
  \eqref{eq:master-pair-stability} according to
  \eqref{es:eq:joyce-framed-stack-stability} below. Note that
  stability conditions are always still defined on $\cat{A}$ and its
  auxiliary categories.
  Assumptions~\ref{es:assump:exact-subcategory}\ref{es:assump:it:B-closed},
  \ref{es:assump:semistable-invariants}\ref{es:assump:it:tau-artinian},
  and (the first half of)
  \ref{es:assump:semistable-invariants}\ref{es:assump:it:Bpe-sst-summands}
  guarantee that the classes $\alpha_1,\ldots,\alpha_n$ in
  \eqref{eq:sstable-def-intro} belong to $C(\cat{B})_{\pe} \subset
  C(\cat{B})$. Generally speaking, the restriction to
  $C(\cat{B})_{\pe}$ and to $\cat{B} \subset \cat{A}$ works exactly as
  in \cite{Joyce2021}.

  We discuss the restriction to $\fN \subset \fM$. To work with
  invariants supported on $\fN$, we must ensure that, in all steps
  where an object $[E] \in \fN$ is decomposed, the resulting summands
  still lie in $\fN$ instead of the ambient moduli stack $\fM$. This
  discussion is independent of any auxiliary structures, and is where
  we use the extra
  Assumption~\ref{es:assump:semistable-invariants}\ref{es:assump:it:Bpe-sst-summands}.
  Concretely, it is used in the explicit construction of semistable
  invariants in Definition~\ref{def:sstable-explicit} to ensure the
  inductive definition is valid, and again in
  Proposition~\ref{prop:pairs-master-space-fixed-loci}\ref{sst:it:complicated-locus}
  to guarantee that both instances of $\tilde\fM$ in
  \eqref{eq:pairs-master-space-complicated-locus} become $\tilde\fN$,
  i.e. that, in the splitting $E = E_1 \oplus E_2$ appearing there,
  the summands $E_i$ lie in $\fN_{\alpha_i} \subset \fM_{\alpha_i}$.

  We make some additional comments. The universal enumerative
  invariant $\tilde\sZ_{\alpha,1}^{\Fr}(\tau^Q)$ from
  \eqref{eq:pairs-stack-enumerative-invariant} is well-defined by
  Assumption~\ref{es:assump:exact-subcategory}\ref{es:assumption-restricted-substack}.
  The key Theorem~\ref{thm:sst-invariants-construction} becomes
  \[ I_*\tilde{\sZ}_{\alpha,1}^{\Fr}(\tau^Q) = \sum_{\substack{n>0, \, \alpha_1,\ldots,\alpha_n \in C(\cat{B})_{\pe} \\ \alpha = \alpha_1+\cdots+\alpha_n\\ \forall i: \,\tau(\alpha_i) = \tau(\alpha)\\ \;\;\fM_{\alpha_i}^{\sst}(\tau) \neq \emptyset}} \frac{1}{n!} \left[\iota^Q_*\sz^{\Fr}_{\alpha_n}(\tau), \left[\cdots,\left[\iota^Q_*\sz^{\Fr}_{\alpha_2}(\tau), \left[\iota^Q_*\sz^{\Fr}_{\alpha_1}(\tau), I_*\partial\right]\right]\cdots\right]\right] \]
  for $\alpha \in C(\cat{B})_{\pe}$ and framing functors $\Fr$ such
  that $\fM_\alpha^{\sst}(\tau) \subset \fM_\alpha^{\Fr,\pl}$, and the
  Lie bracket is defined by the discussion in
  \S\ref{es:sec:partial-vertex-algebra}. Note that, since
  $\sz_\beta(\tau)$ is supported on $\fN_\beta^{\sst}(\tau)$, the
  condition $\fM_{\alpha_i}^{\sst}(\tau) \neq \emptyset$ in the sum
  may be replaced by $\fN_{\alpha_i}^{\sst}(\tau) \neq \emptyset$.
\end{proof}

\subsubsection{}

\begin{assumption} \label{es:assump:wall-crossing}
  Let $\tau$ and $\mathring\tau$ be stability conditions on $\cat{A}$.
  We relax some conditions in Assumption~\ref{assump:wall-crossing} as
  follows, in order to obtain a wall-crossing formula.
  \begin{enumerate}[label = (\alph*')]
  \item \label{es:assump:it:tau-circ} The stability condition $\tau$
    satisfies Assumption~\ref{es:assump:semistable-invariants}. In
    addition, $\cat{B}$ is $\mathring\tau$-Artinian for permissible
    classes
    (Assumption~\ref{es:assump:semistable-invariants}\ref{es:assump:it:tau-artinian}),
    $\mathring\tau$-(semi)stability is open for permissible classes
    (Assumption~\ref{es:assump:semistable-invariants}\ref{es:assump:it:semistable-loci}),
    and $\mathring\tau$ satisfies
    Assumption~\ref{es:assump:semistable-invariants}\ref{es:assump:it:Bpe-sst-summands}.

  \item \label{es:assump:it:lambda} For any $\alpha \in
    C(\cat{B})_{\pe}$, there exists a group homomorphism
    \[ \lambda\colon K(\cat{A})\to \bR \]
    such that for any class $\beta \in R_\alpha$, we have
    $\lambda(\beta) > 0$ (resp. $\lambda(\beta) < 0$) if and only if
    $\mathring\tau(\beta) > \mathring\tau(\alpha-\beta)$ (resp.
    $\mathring\tau(\beta) < \mathring\tau(\alpha-\beta)$).

  \item \label{es:assump:it:properness-wcf} The following algebraic
    spaces are proper and have the resolution property (see
    Remark~\ref{rem:localization-resolution-property}):
    \begin{itemize}
    \item $\tilde\fN_{\alpha,\vec d}^{\vec Q(\Fr),\sst}(\tau_x^s)^\sT$
      for all $\Fr \in \Frs$ and $\alpha \in C(\cat{B})_{\pe}$ and
      $\vec d$ with no strictly $\tau^s_x$-semistable objects;
    \item $\bN_{\alpha,\vec d}^{\sT_w}$ for all $w$ in
      Lemma~\ref{lem:master-space-no-poles}, where $\bN_{\alpha,\vec
        d}$ is the master space in the proof of
      Proposition~\ref{wc:prop:horizontal-wc} after base change along
      $\fN \hookrightarrow \fM$, for all classes $\alpha \in
      C(\cat{B})_{\pe}$ and $\vec d$ and $\Fr \in \Frs$.
    \end{itemize}
  \end{enumerate}
\end{assumption}

\subsubsection{}

\begin{theorem}[Dominant wall-crossing formula] \label{es:thm:wcf}
  Let $\tau$ and $\mathring\tau$ be weak stability conditions on
  $\cat{A}$ for which Assumption~\ref{es:assump:wall-crossing} holds.
  Suppose $\tau$ weakly dominates $\mathring\tau$ at $\alpha \in
  C(\cat{B})_{\pe}$. Then the semistable invariants of
  Theorem~\ref{es:thm:sst-invariants} satisfy
  \[ \sz_\alpha(\mathring\tau) = \sum_{\substack{n>0, \, \alpha_1,\ldots,\alpha_n \in C(\cat{B})_{\pe} \\\alpha = \alpha_1 + \cdots + \alpha_n\\\forall i:\, \tau(\alpha_i) = \tau(\alpha)\\\fN^{\sst}_{\alpha_i}(\tau)\neq \emptyset}}\tilde U\left(\alpha_1,\dots,\alpha_n;\tau,\mathring\tau\right)\left[\left[\cdots\left[\sz_{\alpha_1}(\tau),\sz_{\alpha_2}(\tau)\right],\cdots\right],\sz_{\alpha_n}(\tau)\right] \]
  in $K_\circ^{\tilde\sT}(\fN_\alpha)^\pl_{\loc,\bQ}$, with Lie
  bracket $[-, -]$ as in \S\ref{es:sec:auxiliary-stacks}.
\end{theorem}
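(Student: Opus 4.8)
The plan is to run the proof of Theorem~\ref{thm:wcf} from \S\ref{sec:wall-crossing} essentially verbatim, performing the substitutions $\cat{A} \rightsquigarrow \cat{B}$ and $\fM \rightsquigarrow \fN$ wherever a geometric construction (semistable locus, symmetrized APOT, virtual cycle, master space) is needed, constraining every class that appears to lie in $C(\cat{B})_{\pe}$, and working with the partial multiplicative vertex algebra and partial Lie algebra structures of \S\ref{es:sec:partial-vertex-algebra} on $\bfK_\circ^{\tilde\sT}(\tilde\fN^{\vec Q(\Fr)})$. Stability conditions remain defined on all of $\cat{A}$ and its auxiliary categories throughout; the auxiliary two-parameter family $\tau^s_x$ on $\tilde{\cat{B}}^{\vec Q(\Fr)}$ is the version of \eqref{wc:eq:joyce-framed-stack-stability} adapted via the inert classes $C(\cat{B})_{\inert}$, exactly as in the proof of Theorem~\ref{es:thm:sst-invariants}. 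Finiteness of $R_\alpha$, $\hat R_\alpha$, $\mathring R_\alpha$, $S_\alpha$, $\hat S_\alpha$, $\mathring S_\alpha$ still follows from \cite[Lemma 9.1]{Joyce2021} together with Lemma~\ref{wc:lem:R-sets}\ref{it:R-sets-i}, using Assumptions~\ref{es:assump:semistable-invariants}\ref{es:assump:it:semistable-loci}, \ref{es:assump:it:rank-function}, and \ref{es:assump:it:Bpe-sst-summands}.

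Concretely, I would first reduce, via the analogue of Lemma~\ref{wc:lemma:reduction-wc-to-aux-wc}, to the auxiliary identity $\tilde\sz^{1,-1}_{\alpha,\vec 0} = \hat\sz^{1,-1}_{\alpha,\vec 0}$ for $\alpha \in C(\cat{B})_{\pe}$, where now all sums over decompositions $\alpha = \alpha_1+\cdots+\alpha_n$ range only over permissible $\alpha_i$: that the HN factors and the equal-slope semistable summands of a permissible object are themselves permissible is Assumptions~\ref{es:assump:exact-subcategory}\ref{es:assump:it:B-closed}, \ref{es:assump:semistable-invariants}\ref{es:assump:it:tau-artinian}, and \ref{es:assump:semistable-invariants}\ref{es:assump:it:Bpe-sst-summands}. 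Then I would traverse the path of Figure~\ref{fig:dominant-wc-strategy-2}. The combinatorial results of \S\ref{wc:sec:gen-sst-inv} on artificial invariants involve only the universal coefficients $\tilde U$ and the (partial) Lie bracket, so they carry over unchanged; Lemma~\ref{wc:lem:R-sets}\ref{it:R-sets-ii}, needed to see that $\tau^s_x$ is effectively a weak stability condition, now follows from Assumption~\ref{es:assump:semistable-invariants}\ref{es:assump:it:semi-weak-stability} together with $\tau(C(\cat{B})_{\inert}) \cap \tau(C(\cat{B})_{\pe}) = \emptyset$ and Assumption~\ref{es:assump:semistable-invariants}\ref{es:assump:it:inert-classes}. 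The vertical wall-crossing steps (\S\ref{wc:sec:pair-inv-and-flag-inv}) and the horizontal master-space step (\S\ref{wc:sec:horizontal-wc}) go through with $\fN$ in place of $\fM$: properness of the restricted master spaces $\bN_{\alpha,\vec d}$ and of the restricted auxiliary loci is Assumption~\ref{es:assump:wall-crossing}\ref{es:assump:it:properness-wcf}; the symmetrized pullback APOTs, their global virtual tangent bundles, and the virtual-cycle comparison on fixed loci are available on $\fN$ by Assumption~\ref{es:assump:exact-subcategory}\ref{es:assumption-restricted-substack} together with Theorems~\ref{thm:APOTs} and \ref{thm:APOT-comparison}; and pole cancellation is Lemma~\ref{lem:master-space-no-poles}. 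Finally I would assemble the pieces as in \S\ref{wc:sec:putting-things-together}.

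The only genuinely new point relative to \S\ref{sec:wall-crossing}, and hence the main obstacle, is ensuring that every object-theoretic decomposition encountered stays inside $\fN$ rather than merely inside the ambient $\fM$. This is relevant in exactly two places: the master-space fixed-locus analysis analogous to Proposition~\ref{prop:pairs-master-space-fixed-loci}\ref{sst:it:complicated-locus}, where a fixed point is a splitting $E = E_1 \oplus E_2$ of a semistable object, and the horizontal master space of Proposition~\ref{wc:prop:horizontal-wc}\ref{prop:hor-wc-artii}, where a polystable object splits into two semistable pieces of equal $\tau$-slope. In both cases Assumption~\ref{es:assump:semistable-invariants}\ref{es:assump:it:Bpe-sst-summands} forces $\beta_1,\beta_2 \in C(\cat{B})_{\pe}$ and, crucially, that the square identifying $\Phi_{\beta_1,\beta_2}^{-1}(\fN_{\beta_1+\beta_2})$ with $\fN_{\beta_1}\times\fN_{\beta_2}$ is Cartesian; hence the relevant fixed component is $\tilde\fN_{\beta_1,\cdot}\times\tilde\fN_{\beta_2,\cdot}$ and the invariants appearing in the wall-crossing terms are invariants of the restricted stacks, as required. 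Once this is established, the virtual normal bundle computations, the pole-cancellation argument, and the identification of wall-crossing terms with Lie brackets are identical to the unrestricted case, and the formula follows.
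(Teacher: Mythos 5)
Your proposal follows essentially the same route as the paper's own proof: rerun \S\ref{sec:wall-crossing} with $\cat{A}\rightsquigarrow\cat{B}$, $\fM\rightsquigarrow\fN$, classes restricted to $C(\cat{B})_{\pe}$, the inert-class-adapted auxiliary stability condition \eqref{es:eq:joyce-framed-stack-stability} in place of \eqref{wc:eq:joyce-framed-stack-stability}, and Assumption~\ref{es:assump:semistable-invariants}\ref{es:assump:it:Bpe-sst-summands} invoked exactly where the master-space complicated loci must be identified with products of restricted stacks. The only cosmetic difference is that the paper spells out the case-by-case re-verification of Lemma~\ref{lem:joyce-framed-stack-stability} (with inert classes routed through the $\pm\infty$ branches) where you cite Lemma~\ref{wc:lem:R-sets}\ref{it:R-sets-ii} somewhat more loosely, but the substance is the same.
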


Note that all appearances of $\fM$ in the dominance conditions of
Definition~\ref{def:dominates-at} should be replaced with $\fN$.

\begin{proof}
  All steps in \S\ref{sec:wall-crossing} hold upon replacing $\cat{A}$
  by $\cat{B}$ and $\fM$ by $\fN$ whenever a geometric construction is
  required, and working only with classes $\alpha \in
  C(\cat{B})_{\pe}$, and replacing the auxiliary weak stability
  condition \eqref{wc:eq:joyce-framed-stack-stability} with the more
  general \eqref{es:eq:joyce-framed-stack-stability} below. The same
  considerations as in the proof of
  Theorem~\ref{es:thm:sst-invariants} continue to hold. The extra
  Assumption~\ref{es:assump:semistable-invariants}\ref{es:assump:it:Bpe-sst-summands}
  guarantees that $R_\alpha \subset C(\cat{B})_{\pe}$, and is also is
  used in the ``horizontal'' wall-crossing step in
  Proposition~\ref{wc:prop:horizontal-wc} to guarantee that all
  occurrences of $\fM$ in the right hand side of
  \eqref{wc:eq:master-space-complicated-locus} may be replaced by
  $\fN$.

  We now discuss the auxiliary weak stability condition. The extra
  Assumption~\ref{es:assump:semistable-invariants}\ref{es:assump:it:inert-classes}
  on inert classes, and the weakened
  Assumptions~\ref{es:assump:semistable-invariants}\ref{es:assump:it:rank-function}
  and \ref{es:assump:it:semi-weak-stability}, allow us to replace the
  auxiliary weak stability condition
  \eqref{wc:eq:joyce-framed-stack-stability} with the more general
  \begin{equation}\label{es:eq:joyce-framed-stack-stability}
    \tau_x^s\colon (\beta,\vec e) \mapsto \begin{cases}
      \left(\tau(\beta), \frac{s\lambda(\beta)+(\vec\mu+x\vec 1)\cdot \vec e}{r(\beta)}\right), & \beta \notin C(\cat{B})_{\inert} \sqcup \{0\}, \, \tau(\beta) = \tau(\alpha-\beta) \text{ or } \beta = \alpha, \\
      \left(\tau(\beta), \infty\right), & \beta \notin C(\cat{B})_{\inert} \sqcup \{0\}, \, \tau(\beta) > \tau(\alpha - \beta),\\
      \left(\tau(\beta), -\infty\right), & \beta \notin C(\cat{B})_{\inert} \sqcup \{0\}, \, \tau(\beta) < \tau(\alpha - \beta), \\
      \left(\infty, \frac{s\lambda(\beta)+(\vec\mu+x\vec 1)\cdot \vec e}{\vec 1\cdot \vec e}\right), & \beta \in C(\cat{B})_{\inert} \sqcup \{0\},\, s\lambda(\beta) + (\vec\mu+x\vec 1)\cdot \vec e>0,\\
      \left(-\infty, \frac{s\lambda(\beta)+(\vec\mu+x\vec 1)\cdot \vec e}{\vec 1\cdot \vec e}\right), &\beta \in C(\cat{B})_{\inert} \sqcup \{0\},\, s\lambda(\beta) + (\vec\mu+x\vec 1)\cdot \vec e\leq 0.
    \end{cases}
  \end{equation}
  Lemma~\ref{wc:lem:R-sets}\ref{it:R-sets-ii} continues to hold
  assuming that $\beta', \beta-\beta' \notin C(\cat{B})_{\inert}$.
  Lemma~\ref{lem:joyce-framed-stack-stability} also continues to hold,
  namely the three cases in its proof are now:
  \begin{itemize}
  \item $\beta'$ or $\beta-\beta'$ belong to $C(\cat{B})_{\inert}
    \sqcup \{0\}$, in which case we use
    Assumption~\ref{es:assump:semistable-invariants}\ref{es:assump:it:inert-classes}
    to make the original argument go through;
  \item $\beta', \beta-\beta' \notin C(\cat{B})_{\inert} \sqcup \{0\}$
    and $\tau(\beta') \neq \tau(\beta-\beta')$, in which case we use
    Assumption~\ref{es:assump:semistable-invariants}\ref{es:assump:it:semi-weak-stability}
    to make the original argument go through;
  \item $\beta', \beta-\beta' \notin C(\cat{B})_{\inert} \sqcup \{0\}$
    and $\tau(\beta') = \tau(\beta-\beta')$, in which case the
    original argument continues to hold verbatim.
  \end{itemize}
  Consequently, $\tau^s_x$ is still effectively a weak stability
  condition on $\tilde{\cat{A}}^{\vec Q(\Fr)}$.
\end{proof}

\subsection{Reduced obstruction theories}
\label{sec:reduced-obstruction-theories}

\subsubsection{}
\label{rc:sec:reduced-virtual-classes}

In this subsection, we explain how to prove generalizations of the
main Theorems~\ref{thm:sst-invariants} and \ref{thm:wcf} for {\it
  reduced} (semistable) enumerative invariants, given a collection of
cosections for the symmetric obstruction theory.

Recall from \S\ref{sec:equivariant-3CY-category} that the moduli stack
$\fM_\alpha$, for every $\alpha$, is equipped with a $\sT$-equivariant
$\kappa$-symmetric obstruction theory which we denote
\[ \varphi_\alpha\colon \bE_{\fM_\alpha} \to \bL_{\fM_\alpha}. \]
Throughout this subsection we suppose that, for each $\alpha$, we are
given a finite-dimensional vector space $\sU_\alpha$ and a morphism
\begin{equation} \label{eq:cosection}
  \sigma_\alpha\colon \bE_{\fM_\alpha}^\vee[1] \to \sU_\alpha \otimes \cO_{\fM_\alpha}
\end{equation}
whose $h^0$ induces a surjective morphism
\begin{equation} \label{eq:cosection-of-obs}
  h^0(\sigma_\alpha)\colon \cOb_\alpha \twoheadrightarrow \sU_\alpha \otimes \cO_{\fM_\alpha}.
\end{equation}
Let $o_\alpha \coloneqq \dim \sU_\alpha$. We say that {\it the
  obstruction theory on $\fM_\alpha$ has $o_\alpha$ cosections}, and
that the {\it the obstruction sheaf $\cOb_\alpha$ has $o_\alpha$
  surjective cosections}. Note that $o_\alpha$ may be $0$ for some or
even all classes $\alpha$.

\subsubsection{}

\begin{lemma} \label{lem:cosection-for-reductions}
  Let $\Pi^\pl\colon \fX \to \fX^\pl$ be a $\sT$-equivariant
  $\bC^\times$-gerbe in $\cat{Art}_\sT$ over a smooth equidimensional
  base Artin stack $\fB$. Suppose that $\fX$ has an obstruction theory
  $\varphi\colon \bE_{\fX/\fB} \to \bL_{\fX/\fB}$, of trivial
  $\bC^\times$-weight, with cosections $\sigma\colon
  \bE_{\fX/\fB}^\vee[1] \to \sU \otimes \cO_{\fX}$.
  \begin{enumerate}[label = (\roman*)]
  \item \label{it:cosection-for-smooth-reduction} If
    $\tilde\varphi^\pl\colon \tilde\bE_{\fX^\pl/\fB} \to
    \bL_{\fX^\pl/\fB}$ is an obstruction theory on $\fX^\pl$
    which is compatible with $\varphi$ under $\Pi^\pl$, then it also has
    cosections $\tilde\sigma^\pl\colon \tilde\bE_{\fX^\pl/\fB}^\vee[1]
    \to \sU \otimes \cO_{\fX^\pl}$.
  \item \label{it:cosection-for-symmetrized-reduction} If
    $\varphi^\pl\colon \bE_{\fX^\pl/\fB} \to \bL_{\fX^\pl/\fB}$ is a
    $\kappa$-symmetric obstruction theory on $\fX^\pl$ which is
    $\kappa$-symmetrically compatible with $\varphi$ under $\Pi^\pl$,
    then it also has cosections $\sigma^\pl\colon
    \bE_{\fX^\pl/\fB}^\vee[1] \to \sU \otimes \cO_{\fX^\pl}$.
  \end{enumerate}
\end{lemma}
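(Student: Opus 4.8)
The plan is to transport the cosection $\sigma$ through the same exact triangles that were used to build $\tilde\varphi^\pl$ and $\varphi^\pl$ in (the proof of) Lemma~\ref{lem:obstruction-theory-pl}. The key structural input, just as there, is that $\bL_{\Pi^\pl}$ is a line bundle concentrated in cohomological degree $1$, so that every ``correction term'' distinguishing the obstruction theories on $\fX$ and $\fX^\pl$ is invisible to $\cH^0$ and to morphisms into the degree-$0$ sheaf $\sU\otimes\cO$. I will also use that $\Pi^\pl$ is faithfully flat with $R\Pi^\pl_*\cO_\fX = \cO_{\fX^\pl}$, so $(\Pi^\pl)^*$ is fully faithful on derived categories and $\cO_{\fX^\pl}$-module surjectivity can be checked after pullback along $\Pi^\pl$. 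Since the source $\bE_{\fX/\fB}^\vee[1]$ and target $\sU\otimes\cO_\fX$ of $\sigma$ both have trivial $\bC^\times$-weight, I may harmlessly assume $\sigma$ does too (replacing it by its weight-$0$ component if necessary, which leaves \eqref{eq:cosection-of-obs} intact).

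For part~\ref{it:cosection-for-smooth-reduction} I would begin from the compatibility triangle \eqref{eq:obstruction-theories-compatibility} for $f = \Pi^\pl$, namely $(\Pi^\pl)^*\tilde\bE_{\fX^\pl/\fB} \to \bE_{\fX/\fB} \xrightarrow{\nu} \bL_{\Pi^\pl} \xrightarrow{+1}$ (the triangle whose cocone is $\tilde\varphi^\pl$). Dualizing and shifting by $[1]$ yields
\[
\bL_{\Pi^\pl}^\vee[1] \longrightarrow \bE_{\fX/\fB}^\vee[1] \longrightarrow (\Pi^\pl)^*\tilde\bE_{\fX^\pl/\fB}^\vee[1] \xrightarrow{+1} \bL_{\Pi^\pl}^\vee[2].
\]
As $\bL_{\Pi^\pl}^\vee[1]$ is a line bundle in cohomological degree $-2$, one has $\Hom(\bL_{\Pi^\pl}^\vee[1], \sU\otimes\cO_\fX) = 0$, so $\sigma$ factors through the middle map as $\bar\sigma \circ (\bE_{\fX/\fB}^\vee[1] \to (\Pi^\pl)^*\tilde\bE_{\fX^\pl/\fB}^\vee[1])$, and full faithfulness descends $\bar\sigma$ to $\tilde\sigma^\pl\colon \tilde\bE_{\fX^\pl/\fB}^\vee[1] \to \sU\otimes\cO_{\fX^\pl}$. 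Taking $\cH^0$ of the same triangle (and using once more that $\bL_{\Pi^\pl}^\vee[1]$ lives in degree $-2$) shows that $\bE_{\fX/\fB}^\vee[1] \to (\Pi^\pl)^*\tilde\bE_{\fX^\pl/\fB}^\vee[1]$ induces an isomorphism $\cOb_\varphi \cong (\Pi^\pl)^*\cOb_{\tilde\varphi^\pl}$; hence surjectivity of $h^0(\sigma)$ forces surjectivity of $h^0(\bar\sigma)$, and therefore of $h^0(\tilde\sigma^\pl)$.

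For part~\ref{it:cosection-for-symmetrized-reduction} I would additionally invoke the triangle $\kappa \otimes \bL_{\Pi^\pl}^\vee[1] \xrightarrow{\mu} \tilde\bE_{\fX^\pl/\fB} \xrightarrow{\zeta} \bE_{\fX^\pl/\fB} \xrightarrow{+1}$ extracted from the symmetric compatibility diagram \eqref{eq:obstruction-theories-symmetric-compatibility} in the proof of Lemma~\ref{lem:obstruction-theory-pl}, through which $\tilde\varphi^\pl$ factors to give $\varphi^\pl$. Having already produced $\tilde\sigma^\pl$ in part~\ref{it:cosection-for-smooth-reduction}, I set $\sigma^\pl \coloneqq \tilde\sigma^\pl \circ \zeta^\vee[1]\colon \bE_{\fX^\pl/\fB}^\vee[1] \to \tilde\bE_{\fX^\pl/\fB}^\vee[1] \to \sU\otimes\cO_{\fX^\pl}$. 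Dualizing and shifting the triangle gives $\bE_{\fX^\pl/\fB}^\vee[1] \xrightarrow{\zeta^\vee[1]} \tilde\bE_{\fX^\pl/\fB}^\vee[1] \xrightarrow{\mu^\vee[1]} \kappa^{-1} \otimes \bL_{\Pi^\pl} \xrightarrow{+1}$, and since $\kappa^{-1}\otimes\bL_{\Pi^\pl}$ is a line bundle in cohomological degree $1$, passing to $\cH^0$ shows $\zeta^\vee[1]$ induces an isomorphism $\cOb_{\varphi^\pl} \cong \cOb_{\tilde\varphi^\pl}$; thus $h^0(\sigma^\pl) = h^0(\tilde\sigma^\pl) \circ h^0(\zeta^\vee[1])$ is surjective onto $\sU\otimes\cO_{\fX^\pl}$.

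I expect the diagram chases and the degree counts to be routine. The one step needing genuine care is the descent along the gerbe --- justifying that $\bar\sigma$, and hence $\tilde\sigma^\pl$ and $\sigma^\pl$, are really pulled back from $\fX^\pl$ --- which is precisely where $R\Pi^\pl_*\cO_\fX = \cO_{\fX^\pl}$ (equivalently the reductivity of $\bC^\times$) and the trivial-$\bC^\times$-weight hypotheses on $\bE_{\fX/\fB}$ and $\sigma$ come in, exactly as such arguments are used in the proof of Lemma~\ref{lem:obstruction-theory-pl}. All objects and maps above are $\sT$-equivariant, so the $\sT$-equivariant structure requires no extra work.
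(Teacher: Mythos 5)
Your proposal is correct and follows essentially the same route as the paper's proof: factor $\sigma$ through the (dual-shifted) compatibility triangle using that morphisms out of $\bL_{\Pi^\pl}^\vee[1]$ into $\sU\otimes\cO$ vanish for degree reasons, descend along the gerbe, and then deduce surjectivity on $h^0$ from the long exact sequence (and, for part~(ii), compose with the map $\bE_{\fX^\pl/\fB}^\vee[1]\to\tilde\bE_{\fX^\pl/\fB}^\vee[1]$ and use $h^0(\bL_{\Pi^\pl})=0$). The only differences are cosmetic: you make the descent and weight-zero reductions explicit where the paper identifies trivial-weight objects on $\fX$ with objects on $\fX^\pl$, and you record obstruction-sheaf isomorphisms where the paper only needs surjectivity.
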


Hence \eqref{eq:cosection} induces cosections
\[ \sigma_\alpha^\pl\colon \bE_{\fM_\alpha^\pl}^\vee[1] \to \sU_\alpha \otimes \cO_{\fM_\alpha^\pl} \]
for the $\kappa$-symmetrically compatible symmetric obstruction theory
$\varphi_\alpha^\pl\colon \bE_{\fM_\alpha^\pl} \to
\bL_{\fM_\alpha^\pl}$ on $\fM_\alpha^\pl$.

\begin{proof}
  As in the proof of Lemma~\ref{lem:obstruction-theory-pl}, we
  implicitly identify sheaves on $\fX^\pl$ with sheaves on $\fX$ of
  trivial $\bC^\times$-weight, and omit writing $(\Pi^\pl)^*$.
  
  For \ref{it:cosection-for-smooth-reduction}, apply $(-)^\vee[1]$ to
  the top row of \eqref{eq:obstruction-theories-compatibility}. Since
  $\Hom(\bL_{\Pi^\pl}^\vee[1], \cO) = 0$ for degree reasons, $\sigma$
  induces a morphism $\tilde\sigma^\pl\colon
  \tilde\bE_{\fX^\pl/\fB}^\vee[1] \to \sU \otimes \cO$. The associated
  long exact sequence provides a commutative square
  \[ \begin{tikzcd}
    h^0(\bE_{\fX/\fB}^\vee[1]) \ar{r}{\xi} \ar[twoheadrightarrow]{d}{h^0(\sigma)} & h^0(\tilde\bE_{\fX^\pl/\fB}^\vee[1]) \ar{d}{h^0(\tilde\sigma^\pl)} \\
    \sU \otimes \cO_{\fX} \ar[equals]{r} & \sU \otimes \cO_{\fX}
  \end{tikzcd} \]
  so surjectivity of $h^0(\sigma)$ implies surjectivity of
  $h^0(\tilde\sigma^\pl)$, making $\tilde\sigma^\pl$ into a cosection.

  For \ref{it:cosection-for-symmetrized-reduction}, take the middle
  row of \eqref{eq:obstruction-theories-symmetric-compatibility}
  (where $\tilde\bE_{\fX^\pl/\fB}^\vee[1]$ was denoted $\bF$) and
  consider the composition
  \[ \sigma^\pl\colon \bE_{\fX^\pl/\fB}^\vee[1] \xrightarrow{\eta} \tilde\bE_{\fX^\pl/\fB}^\vee[1] \xrightarrow{\tilde\sigma^\pl} \sU \otimes \cO. \]
  We know $h^0(\tilde\sigma^\pl)$ is surjective, and $h^0(\eta)$ is
  surjective because $h^0(\bL_{\Pi^\pl}) = 0$. Hence $h^0(\sigma^\pl)$
  is also surjective, making $\sigma^\pl$ into a cosection.
\end{proof}

\subsubsection{}

\begin{remark}
  It is possible that the results of this subsection continue to hold
  if we are only given the cosections \eqref{eq:cosection-of-obs} of
  the obstruction sheaf, instead of the cosections
  \eqref{eq:cosection} of the obstruction theory. Certainly, the
  construction (\S\ref{rc:sec:different-approaches}) of reduced
  virtual cycles uses only cosections of the obstruction sheaf.
  However, as the proof of Lemma~\ref{lem:cosection-for-reductions}
  demonstrates, in order to ``lift''/``reduce'' cosections along
  smooth or symmetrized pullbacks, it seems more convenient to assume
  that cosections are given at the level of the obstruction theory
  instead of the obstruction sheaf.
\end{remark}

\subsubsection{}
\label{rc:sec:different-approaches}

We review some generalities about reduced virtual cycles. Suppose we
have restricted to a locus $X \subset \fM_\alpha^\pl$ which is an
algebraic space, so that $\varphi_\alpha$ becomes a {\it perfect}
obstruction theory
(Lemma~\ref{lem:perfect-symmetric-obstruction-theory}). Suppose
furthermore that there are $o_\alpha > 0$ cosections
\eqref{eq:cosection-of-obs} of the obstruction sheaf. Then the virtual
cycle associated to the perfect obstruction theory $\varphi_\alpha$
will vanish, e.g. by \cite[Remark 3.3]{kiem_li_loc_vir_str_sh}.

In this setting, there are two different ways of obtaining a {\it
  reduced} virtual cycle. The most natural and conceptual method is to
take the co-cone of $\sigma_\alpha$ and hope it is still a perfect
obstruction theory. When it is, in which case we refer to it as a {\it
  reduced POT}, we may then take its associated virtual cycle.
However, in general it is not \cite[Appendix
  A]{mpt-curves-k3-modular}. Instead, one can reduce the virtual cycle
directly. This approach, due to Kiem and Li, works generally and will
be explained in detail in \S\ref{rc:sec:reduced-apot} below. The
result is a {\it reduced virtual structure sheaf}
\[ \cO_X^{\vir,\red} \in K_\sT(X) \]
which satisfies all the usual properties of a virtual structure sheaf
and agrees with the one defined via the reduced POT whenever it
exists. Since extensions by copies of $\cO$ do not affect the
determinant, the {\it symmetrized} reduced virtual structure sheaf
$\hat\cO_X^{\vir,\red}$ is defined by the same twist
\eqref{eq:symmetrized-virtual-sheaf} as in the non-reduced case. As in
Definition~\ref{bg:def:univ-enum-inv}, this allows us to define the
{\it reduced universal enumerative invariant}
\[ \sZ_X^{\red} \coloneqq \chi\left(X, \hat\cO_X^{\vir,\red} \otimes -\right) \in K_\circ^{\tilde\sT}(X)_{\loc}. \]
By convention, if $o_\alpha = 0$ then we set $\cO_X^{\vir,\red}
\coloneqq \cO_X^\vir$, and then in particular $\sZ_X^{\red} = \sZ_X$.

\subsubsection{}

\begin{assumption} \label{assump:cosections}
  We make the following assumptions on the cosections
  \eqref{eq:cosection}, in order to construct reduced semistable
  invariants and obtain a wall-crossing formula.
  \begin{enumerate}[label = (\alph*)]
  \item $o_\alpha + o_\beta \ge o_{\alpha + \beta}$ for all $\alpha$
    and $\beta$.
  \end{enumerate}
\end{assumption}

\subsubsection{}

\begin{theorem}[Reduced semistable invariants] \label{thm:red-sst-invariants}
  Suppose $\tau$ is a stability condition on $\cat{A}$ for which
  Assumption~\ref{assump:semistable-invariants} holds, and
  $\{\sigma_\alpha\}_{\alpha \in C(\cat{A})}$ is a collection of
  cosections \eqref{eq:cosection} for which
  Assumption~\ref{assump:cosections} holds. Then there
  exists a unique collection
  \begin{equation}
    \left(\sz_{\alpha}^{\red}(\tau)\in K_{\circ}^{\tilde{\sT}}(\fM_{\alpha})^\pl_{\loc,\bQ}\right)_{\alpha\in C(\cat{A})}
  \end{equation}
  of K-homology classes satisfying the same properties as in
  Theorem~\ref{thm:sst-invariants}, with the modifications:
  \begin{enumerate}[label = (\roman*')]
    \addtocounter{enumi}{1}

  \item for any $\alpha$ for
    which $\fM_\alpha^{\st}(\tau) = \fM_{\alpha}^{\sst}(\tau)$,
    \[ (\Pi^\pl_\alpha)_*\sz_{\alpha}^{\red}(\tau) = \chi\left(\fM^{\sst}_{\alpha}(\tau), \hat\cO^{\vir,\red}_{\fM_{\alpha}^{\sst}(\tau)}\otimes -\right); \]

    \addtocounter{enumi}{1}

  \item for any framing functor $\Fr \in \Frs$, in the notation of
    Definition~\ref{def:pair-invariant} and \S\ref{sec:pairs-maps},
    \[ I_*\tilde\sZ_{\alpha,1}^{\Fr,\red}(\tau^Q) = \sum_{\substack{n>0 \\ \alpha = \alpha_1+\cdots+\alpha_n\\o_\alpha = o_{\alpha_1} + \cdots + o_{\alpha_n}\\ \forall i: \,\tau(\alpha_i) = \tau(\alpha)\\ \;\;\fM_{\alpha_i}^{\sst}(\tau) \neq \emptyset}} \frac{1}{n!} \left[\iota^Q_*\sz_{\alpha_n}^{\red}(\tau), \left[\cdots,\left[\iota^Q_*\sz_{\alpha_2}^{\red}(\tau), \left[\iota^Q_*\sz_{\alpha_1}^{\red}(\tau), \partial\right]\right]\cdots\right]\right], \]
    where Lemma~\ref{lem:lifting-Obs-cosections} below provides the
    reduced version $\tilde\sZ_{\alpha,1}^{\Fr,\red}(\tau^Q)$ of
    $\tilde\sZ_{\alpha,1}^{\Fr}(\tau^Q)$.
  \end{enumerate}
\end{theorem}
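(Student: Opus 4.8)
The plan is to mirror the proof of Theorem~\ref{thm:sst-invariants-construction} (i.e.\ the entirety of \S\ref{sec:semistable-invariants}) with reduced virtual cycles in place of the usual ones, treating the cosection-counting constraint $o_\alpha = o_{\alpha_1} + \cdots + o_{\alpha_n}$ carefully. First I would set up the reduced auxiliary pair invariants: given $\Fr \in \Frs$, Lemma~\ref{lem:cosection-for-reductions} and Lemma~\ref{lem:lifting-Obs-cosections} produce cosections on the symmetrized-pullback APOT over $\tilde\fM_{\alpha,1}^{Q(\Fr),\sst}(\tau^Q)$, hence a reduced symmetrized virtual structure sheaf and a reduced invariant $\tilde\sZ_{\alpha,1}^{\Fr,\red}(\tau^Q)$; by Assumption~\ref{assump:cosections} the number of cosections is subadditive, which is precisely what makes it possible to track them through the auxiliary-stack decompositions. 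The definition of $\sz_\alpha^{\red}(\tau)$ then follows the inductive scheme of Definition~\ref{def:sstable-explicit}: set $\sz_\alpha^{\red}(\tau)\coloneqq 0$ if $\fM_\alpha^{\sst}(\tau)=\emptyset$, and otherwise invert the $n=1$ term of the displayed relation using the left-inverse of $[\iota^Q_*(-), I_*\partial]$ from Lemma~\ref{lem:pushforward-of-bracket-partial}, which is purely formal and unaffected by reduction.

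Next I would redo the master space argument. The point is that the master space $\bM_\alpha$ of Proposition~\ref{prop:pairs-master-space-fixed-loci}, built from a smooth morphism to $\fM_\alpha^\pl$, inherits cosections from $\sigma_\alpha$ by the same pullback lemmas; its $\sS$-fixed loci $Z_{\rho_4=0}$, $Z_{\rho_3=0}$, $Z_{\alpha_1,\alpha_2}$ all inherit reduced APOTs compatibly, and the comparison Theorem~\ref{thm:APOT-comparison} continues to identify the reduced symmetrized virtual structure sheaves because the reduction only modifies the obstruction sheaf by copies of $\cO$, leaving $\bT^\vir$ and its determinant, and hence $\hat\cO^{\vir}$-type twists, to behave identically once the reduced classes are matched. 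The pole-cancellation step of \S\ref{sec:pairs-master-pole-cancellation} is unchanged since Lemma~\ref{lem:master-space-no-poles} applies verbatim to reduced invariants. The one genuinely new ingredient is the cosection bookkeeping on the interaction term $Z_{\alpha_1,\alpha_2}$: here $\cOb$ of the reduced APOT on $Z_{\alpha_1,\alpha_2}$ splits off the cosections of $\cOb_{\alpha_1}$ and $\cOb_{\alpha_2}$ (plus possibly the $\scE$-interaction piece), so the reduced virtual class on $Z_{\alpha_1,\alpha_2}$ is the external product of the reduced classes on the two factors \emph{only when} $o_\alpha = o_{\alpha_1}+o_{\alpha_2}$, and vanishes otherwise by the usual ``too many cosections'' argument \cite[Remark 3.3]{kiem_li_loc_vir_str_sh}. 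This is what inserts the constraint $o_\alpha = o_{\alpha_1} + \cdots + o_{\alpha_n}$ into the sum; iterating the two-term case over a general decomposition gives the general constraint.

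Having established the reduced master space relation (the analogue of Theorem~\ref{thm:pairs-master-relation}, now with an extra $o$-constraint on the splitting sum), the framing-independence argument of \S\ref{sec:vss-framing-independence} carries over: Lemma~\ref{lem:wall-crossing-total-term} is a purely Lie-theoretic identity and is insensitive to the reduction, and the proofs of Propositions~\ref{prop:vss-framing-independence} and Theorem~\ref{thm:sst-invariants}\ref{item:vss-pairs-relation} only use the master space relation plus the left-inverse of $[\iota^Q_*(-), I_*\partial]$. Properties~\ref{item:vss-support} and \ref{item:vss-isomorphic-moduli} are immediate from the inductive construction, and property~\ref{item:vss-no-strictly-semistables}$'$ follows from the virtual projective bundle formula (Corollary~\ref{cor:symmetrized-projective-bundle-formula}) applied to the reduced virtual structure sheaf --- the $\bP^{\fr(\alpha)-1}$-bundle structure of $\pi_{\fM_\alpha^{\Fr,\pl}}$ over $\fM_\alpha^{\st}(\tau)$ is geometric and reduction commutes with smooth pullback.

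The main obstacle I expect is the precise statement and proof of Lemma~\ref{lem:lifting-Obs-cosections} --- showing that the cosections on $\fM_\alpha^\pl$ lift to the auxiliary framed stacks (and master spaces) through symmetrized pullback in a way that is compatible with the APOT gluing data of Definition~\ref{bg:def:apot}, so that the reduced virtual cycle is well-defined \'etale-locally and glues. The subtlety is that symmetrized pullback only exists affine \'etale-locally before gluing, and one must check that the reduction operation (co-cone of the cosection on the obstruction sheaf, followed by Kiem--Li's direct reduction of the virtual cycle as in \S\ref{rc:sec:reduced-apot}) is compatible with the transition isomorphisms $\psi_{ij}$ of the obstruction sheaves and with the symmetrized-pullback functoriality of Theorem~\ref{thm:APOTs}\ref{it:APOT-functoriality}. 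Once this compatibility is in place, everything else is a faithful transcription of \S\ref{sec:semistable-invariants} with ``$\vir$'' replaced by ``$\vir,\red$'' and the extra $o$-constraint inserted wherever an external product of virtual cycles over a splitting locus appears.
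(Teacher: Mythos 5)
Your overall skeleton is the same as the paper's: rerun \S\ref{sec:semistable-invariants} verbatim with reduced classes, construct the reduced auxiliary invariants by lifting cosections along symmetrized pullback (this is exactly Lemma~\ref{lem:lifting-Obs-cosections}, including the gluing/affine-bundle-independence issue you flag at the end), and let the cosection count at the interaction locus $Z_{\alpha_1,\alpha_2}$ insert the constraint $o_\alpha=o_{\alpha_1}+\cdots+o_{\alpha_n}$. However, there is a genuine gap at the one step you call ``cosection bookkeeping''. On $Z_{\alpha_1,\alpha_2}$ you must compare two reduced classes: the restriction of the master-space APOT reduced by the $o_\alpha$ cosections lifted from $\fM_\alpha$ (this is what localization on $\bM_\alpha$ actually produces), and the symmetrized-pullback APOT from $\fM_{\alpha_1}^\pl\times\fM_{\alpha_2}^\pl$ reduced by the $o_{\alpha_1}+o_{\alpha_2}$ cosections $\sigma_{\alpha_1}\boxplus\sigma_{\alpha_2}$ (this is what the Lie bracket of reduced invariants produces). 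Assumption~\ref{assump:cosections} is purely numerical; no compatibility of $\Phi^*\sigma_\alpha$ with $\sigma_{\alpha_1}\boxplus\sigma_{\alpha_2}$ is assumed, so your assertion that the obstruction sheaf of the ($o_\alpha$-)reduced theory ``splits off the cosections of $\cOb_{\alpha_1}$ and $\cOb_{\alpha_2}$'' has no basis, and the appeal to Theorem~\ref{thm:APOT-comparison} does not help: that theorem identifies only the \emph{unreduced} symmetrized virtual structure sheaves via equality of $\bT^\vir$; the two constructions have different local POT morphisms and different cosections, and reduced classes are not a priori insensitive to either.

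The paper closes this gap with two ingredients you do not supply. First, a deformation-invariance statement for reduced cycles of compatible families of APOTs with surjective cosections (Lemma~\ref{lem:red-APOT-virtual-cycle-functoriality}), whose proof requires the Kiem--Li/KKP double-deformation-space and twisted-cosection machinery of \S\ref{af:sec:rd-functoriality-proof} --- this is not a formal consequence of the unreduced theory. Second, in \S\ref{rc:sec:master-space-argument}, after matching the underlying K-theory objects of the two would-be POTs on a common affine bundle, one chooses an arbitrary projection $q\colon\cO^{o_{\alpha_1}+o_{\alpha_2}}\twoheadrightarrow\cO^{o_\alpha}$ and linearly interpolates \emph{both} the local obstruction-theory morphisms \emph{and} the cosections over $\bA^1$; openness of perfection and of surjectivity plus connectedness of the base then give $\sZ^{\red}_{Z\subset\bM}=\sZ^{\red}_Z$ when $q$ is an isomorphism, and when $o_{\alpha_1}+o_{\alpha_2}>o_\alpha$ the leftover $d$ surjective cosections (transported through the same deformation) force $\sZ^{\red}_{Z\subset\bM}=0$. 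Note also that the Kiem--Li ``too many cosections'' vanishing you cite applies to unreduced classes; to kill the already-$o_\alpha$-reduced class one needs precisely this deformation identification first, since the extra cosections live on the other obstruction theory. Without these two steps your master-space relation, and hence the whole inductive construction, is not established.
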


\subsubsection{}

\begin{theorem}[Reduced dominant wall-crossing formula] \label{thm:red-wcf}
  Consider the situation of Theorem~\ref{thm:wcf}, and suppose
  $\{\sigma_\alpha\}_{\alpha \in C(\cat{A})}$ is a collection of
  cosections \eqref{eq:cosection} for which
  Assumption~\ref{assump:cosections} holds. Then the reduced
  semistable invariants of Theorem~\ref{thm:red-sst-invariants}
  satisfy
  \[ \sz_\alpha^{\red}(\mathring\tau) = \sum_{\substack{n>0\\\alpha = \alpha_1 + \cdots + \alpha_n\\ o_\alpha = o_{\alpha_1} + \cdots + o_{\alpha_n}\\\forall i:\, \tau(\alpha_i) = \tau(\alpha)\\\fM^{\sst}_{\alpha_i}(\tau)\neq \emptyset}}\tilde U\left(\alpha_1,\dots,\alpha_n;\tau,\mathring\tau\right)\left[\left[\cdots\left[\sz_{\alpha_1}^{\red}(\tau),\sz_{\alpha_2}^{\red}(\tau)\right],\cdots\right],\sz_{\alpha_n}^{\red}(\tau)\right]. \]
\end{theorem}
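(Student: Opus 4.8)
The plan is to re-run the proof of Theorem~\ref{thm:wcf} — that is, all of \S\ref{sec:wall-crossing}, taking as given the reduced semistable invariants $\sz_\alpha^{\red}(\tau)$ and their characterization from Theorem~\ref{thm:red-sst-invariants} — with the virtual structure sheaves $\hat\cO^\vir$ replaced everywhere by the reduced ones $\hat\cO^{\vir,\red}$, while keeping track of the number $o$ of cosections present at each step. The crucial structural observation is that the Lie algebra structure on $K_\circ^{\tilde\sT}(-)^\pl_{\loc}$ depends only on the bilinear elements $\scE_{\alpha,\beta}$ and the framing contributions $\bF$ (Theorems~\ref{thm:mVOA-monoidal-stack} and \ref{thm:auxiliary-stack-vertex-algebra}), which are unchanged in the reduced setting. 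Hence the entire Lie-theoretic and combinatorial apparatus of \S\ref{wc:sec:gen-sst-inv}--\S\ref{wc:sec:putting-things-together} — the universal coefficients $\tilde U$, the artificial invariants, the passage from pairs to flags to full flags — carries over verbatim, and the only genuinely new input is geometric: how $\hat\cO^{\vir,\red}$ interacts with rigidification, smooth/symmetrized pullback, projective-bundle pushforwards, and torus-localization on master spaces.

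For rigidification and symmetrized pullback, Lemma~\ref{lem:cosection-for-reductions} shows the cosections descend along $\Pi^\pl$, and since the relative obstruction sheaf of a smooth morphism vanishes, the obstruction sheaf of a symmetrized-pullback APOT is the pullback of the original obstruction sheaf and carries the pulled-back cosections (Lemma~\ref{lem:lifting-Obs-cosections}). Thus every auxiliary framed stack $\tilde\fM^{\vec Q(\Fr)}_{\beta,\vec e}$ and every master space appearing in \S\ref{sec:wall-crossing} inherits $o_\beta$ surjective cosections and hence a well-defined reduced virtual structure sheaf. For projective-bundle pushforwards: in every application of the virtual and homology projective-bundle formulas (Lemma~\ref{lem:symmetrized-projective-bundle-formula}, Corollary~\ref{cor:symmetrized-projective-bundle-formula}, Lemma~\ref{lem:homology-projective-bundle-formula}, Lemma~\ref{lem:flag-forget-projective}) the bundle $\cV$ is a framing bundle, contributing only a vector-bundle summand to the relative tangent complex; consequently the cosections on $\bP(\cV)$ are pulled back from the base, the reduction commutes with $\pi_*$, and these formulas hold for $\hat\cO^{\vir,\red}$ unchanged. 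In particular the inversion producing $\sz_\alpha^{\red}(\tau)$ from the reduced pair invariants, the pushforward lemmas of \S\ref{sec:wall-crossing}, and Lemma~\ref{wc:lemma:pushforward-vertical-wall} all go through.

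The main obstacle is the torus-localization on master spaces, and it is exactly here that the constraint $o_\alpha = o_{\alpha_1}+\cdots+o_{\alpha_n}$ is produced. In the master-space relations (Theorem~\ref{thm:pairs-master-relation} and Proposition~\ref{wc:prop:horizontal-wc}), the fixed loci split into divisorial loci $Z_{\rho_i=0}$ — projective bundles over a pair-type stack in the \emph{same} class, hence carrying the same $o_\alpha$ cosections, which produce the ``boundary'' terms as before — and interaction loci $Z_{\alpha_1,\alpha_2}\cong(\text{pair stack for }\alpha_1)\times(\text{pair stack for }\alpha_2)$. On $Z_{\alpha_1,\alpha_2}$ the reduced APOT inherited by restriction from the master space is reduced only by the $o_\alpha$ cosections $\sigma_\alpha$, but $Z_{\alpha_1,\alpha_2}$ also carries the $o_{\alpha_1}+o_{\alpha_2}$ cosections furnished by $\sigma_{\alpha_1}$ and $\sigma_{\alpha_2}$ through the direct-sum splitting $\cOb_{E_1\oplus E_2}\supset\cOb_{E_1}\oplus\cOb_{E_2}$. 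By Assumption~\ref{assump:cosections} we always have $o_{\alpha_1}+o_{\alpha_2}\ge o_\alpha$. When the inequality is strict, the obstruction sheaf underlying the restricted reduced APOT still admits a surjective cosection onto a nonzero trivial bundle (of rank at least $o_{\alpha_1}+o_{\alpha_2}-o_\alpha$, by rank--nullity applied to $\sigma_\alpha$), so its reduced virtual structure sheaf vanishes by cosection localization and the interaction term disappears; when $o_{\alpha_1}+o_{\alpha_2}=o_\alpha$, the two reduced structures agree — their global reduced virtual tangent bundles match — and Theorem~\ref{thm:APOT-comparison} identifies the restricted class with the product of the reduced pair invariants, yielding the term $[\sz_{\alpha_1}^{\red},\sz_{\alpha_2}^{\red}]$ exactly as in the unreduced case. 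Iterating through all the intermediate master-space steps, an $n$-fold bracket survives precisely when every partial sum is $o$-additive, which by subadditivity of $o$ is equivalent to the single constraint $o_\alpha=\sum_i o_{\alpha_i}$; substituting this into the proof of Theorem~\ref{thm:wcf} — with the $\tilde U$-coefficients and all Lie-algebra identities untouched — produces the stated formula. The point demanding the most care, rather than formal transcription, is the K-theoretic vanishing just invoked: that $\hat\cO^{\vir,\red}$ genuinely vanishes, not merely acquires a factor formally resembling an Euler class of a trivial bundle, when the obstruction sheaf admits an extra surjective cosection onto $\cO$; in cohomology this is immediate, and in K-theory it should follow from the cosection-localized construction of the reduced virtual structure sheaf recalled in \S\ref{rc:sec:reduced-apot}, but establishing it is where the argument must engage with the details of that construction.
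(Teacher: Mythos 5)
Your overall plan coincides with the paper's: rerun \S\ref{sec:wall-crossing} with $\hat\cO^{\vir,\red}$, lift cosections to the auxiliary stacks via Lemma~\ref{lem:lifting-Obs-cosections}, and locate the new constraint $o_\alpha=\sum_i o_{\alpha_i}$ at the interaction loci of the master spaces. But the step where you actually produce that constraint has a genuine gap. On $Z_{\alpha_1,\alpha_2}$ there are two \emph{a priori unrelated} collections of cosections: the $o_\alpha$ cosections lifted from $\sigma_\alpha$ through the master space, and the $o_{\alpha_1}+o_{\alpha_2}$ cosections lifted from $\sigma_{\alpha_1}\oplus\sigma_{\alpha_2}$ through the product. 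Assumption~\ref{assump:cosections} imposes only the numerical inequality $o_{\alpha_1}+o_{\alpha_2}\ge o_\alpha$; there is no compatibility of the $\sigma$'s with $\Phi_{\alpha_1,\alpha_2}$. Consequently: (a) in the case $o_{\alpha_1}+o_{\alpha_2}=o_\alpha$ you cannot conclude that ``the two reduced structures agree'' from matching virtual tangent bundles plus Theorem~\ref{thm:APOT-comparison} --- that theorem compares \emph{unreduced} symmetrized virtual structure sheaves, and a reduced class depends on which cosection one reduces by, not only on the K-theory class of $\bT^\vir$; (b) in the strict case, your ``rank--nullity'' claim that $\ker(\sigma_\alpha)\subset\cOb$ still carries a surjective cosection onto a trivial bundle of rank $\ge d$ is unjustified: fibrewise the image of $(\sigma_{\alpha_1}\oplus\sigma_{\alpha_2})|_{\ker\sigma_\alpha}$ in $\cO^{o_{\alpha_1}+o_{\alpha_2}}$ has dimension $\ge d$, but this subspace can vary from point to point, so no single projection yields a global surjective cosection, and again nothing ties $\sigma_\alpha$ to the $\sigma_{\alpha_i}$ in the first place.

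The paper resolves exactly this point by a deformation argument (\S\ref{rc:sec:master-space-argument}) that you never invoke: after passing to a common affine bundle where the two would-be symmetrized-pullback POTs have \emph{isomorphic underlying complexes}, one chooses a projection $q\colon\cO^{o_{\alpha_1}+o_{\alpha_2}}\twoheadrightarrow\cO^{o_\alpha}$ and linearly interpolates \emph{both} the POT morphisms \emph{and} the cosections, $\sigma_t=t\,q\circ\sigma_Z+(1-t)\sigma_{Z\subset\bM}$; perfectness and surjectivity are open conditions, and the deformation invariance of $\cO^{\vir,\red}$ (Lemma~\ref{lem:red-APOT-virtual-cycle-functoriality}, proved by a Kiem--Li/KKP double-deformation argument) identifies the reduced class from the master-space side with the class built from the product-side POT reduced by $q\circ\sigma_Z$. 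If $q$ is an isomorphism this is $\sZ_Z^{\red}$, giving the bracket term; if $o_{\alpha_1}+o_{\alpha_2}>o_\alpha$, the complementary components of $\sigma_Z$ now genuinely restrict to $d>0$ extra surjective cosections (they are components of one surjection), and cosection localization kills the term. So the vanishing you flag at the end as ``the point demanding the most care'' is indeed standard once the data are compatible; the missing idea is how to make the two cosection data comparable at all, which is what the simultaneous interpolation and Lemma~\ref{lem:red-APOT-virtual-cycle-functoriality} are for. (A minor additional slip: the obstruction sheaf of a symmetrized pullback is not just the pullback of $\cOb_{\fY}$ --- it also receives $\kappa^{-1}\Omega_f$ --- but since you only use the lifted cosections of Lemma~\ref{lem:lifting-Obs-cosections}, this does not affect your argument.)
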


Note that, comparing Theorems~\ref{thm:red-sst-invariants} and
\ref{thm:red-wcf} to the original Theorems~\ref{thm:sst-invariants}
and \ref{thm:wcf}, the only difference is the extra constraint
$o_{\alpha_1}+\cdots+o_{\alpha_n}=o_\alpha$ in the sums over
decompositions $\alpha = \alpha_1 + \cdots + \alpha_n$.

\subsubsection{}
\label{rc:sec:wall-crossing-formula}

The proof of Theorems~\ref{thm:red-sst-invariants} and
\ref{thm:red-wcf} work exactly as in \S\ref{sec:semistable-invariants}
and \S\ref{sec:wall-crossing}, with the following two modifications.
\begin{enumerate}
\item Virtual cycles on auxiliary spaces are induced from APOTs
  obtained by symmetrized pullback (Theorem~\ref{thm:APOTs}). To
  obtain {\it reduced} virtual cycles on auxiliary spaces, we
  therefore need to lift the cosections \eqref{eq:cosection} to the
  symmetrized pullback APOT in a compatible way. This is done by
  Lemma~\ref{lem:lifting-Obs-cosections}.

\item In the master space wall-crossing steps of
  \S\ref{sec:pairs-master-term-3} and \S\ref{sec:horizontal-wc},
  virtual cycles induced by APOTs of different origins are compared
  using Theorem~\ref{thm:APOT-comparison}. To compare {\it reduced}
  virtual cycles, we therefore also need to compare cosections. This
  is done in \S\ref{rc:sec:master-space-argument} using a general
  deformation invariance result for reduced virtual cycles
  (Lemma~\ref{lem:red-APOT-virtual-cycle-functoriality}), and produces
  the $o_{\alpha_1}+\cdots+o_{\alpha_n}=o_\alpha$ condition.
\end{enumerate}
These constructions will occupy the remainder of this subsection. 

\subsubsection{}
\label{rc:sec:reduced-apot}

We first provide some detail on the construction of reduced
(K-theoretic) virtual cycles associated to APOTs with surjective
cosections, and their deformation invariance. The tools for this are
mostly already in \cite[\S 4]{kiem_li_loc_vir_str_sh}, and the basic
idea is already implicitly present in \cite[\S 4]{Kiem2013}; see also
\cite[\S 3.6]{mpt-curves-k3-modular}. We freely use the
notation in \S\ref{sec:APOTs}, and the following construction should
be compared with the construction there.

Suppose $\fX$ is Deligne--Mumford and of finite presentation, and let
$\varphi$ be a $\sT$-equivariant APOT on $\fX$ relative to $\fB$.
Given a surjective cosection
\[ \varsigma\colon \cOb_\varphi \twoheadrightarrow \sU \otimes \cO_\fX, \]
let $\cOb_\varphi(\varsigma)$ denote the sheaf stack associated with
$\ker(\varsigma)$, in the terminology of \cite[\S
  2]{kiem_savvas_apot}. Then the argument in \cite[\S
  4.2]{kiem_savvas_loc} shows that the coherent sheaf
$\cO_{\fc_\varphi}$ of the intrinsic normal cone stack $\fc_\varphi
\subset \cOb_\varphi$ is actually supported on the closed substack
$\cOb_\varphi(\varsigma)$. A standard d\'evissage argument, see e.g.
\cite[(4.9)]{kiem_savvas_loc}, expresses $\cO_{\fc_\varphi}$ as a
linear combination of classes supported on $\cOb_\varphi(\varsigma)$.
Pulling these classes back along $f\colon \cOb_\varphi(\varsigma) \to
\cOb_\varphi$ \cite[(3.10)]{kiem_savvas_loc} produces actual classes
on $\cOb_\varphi(\varsigma)$. Applying
$0^!_{\cOb_\varphi(\varsigma)}\colon K_\sT(\cOb_\varphi(\varsigma))
\to K_\sT(\fX)$, the K-theoretic Gysin map of the sheaf stack
$\cOb_\varphi(\varsigma)$, we can define a Gysin map \footnote{Since
  we start with a sheaf supported on $\cOb_\varphi(\varsigma)$, on
  charts $E$ of $\cOb_\varphi$, these are pushed forward from the
  associated chart $E(\varsigma)$ of $\cOb_\varphi(\varsigma)$, and
  this operation locally simply applies
  $0^!_{\cOb_\varphi(\varsigma)}$ to this underlying sheaf on
  $E(\varsigma)$. This way, we avoid having to use a genuine
  d\'evissage argument for sheaf stacks.}
\[ 0^!_{\cOb_\varphi,\varsigma}\coloneqq 0^!_{\cOb_\varphi(\varsigma)}\circ f^*\colon \cat{Coh}_\sigma(\cOb_\varphi) \to K_\sT(\fX). \]
The {\it reduced virtual structure sheaf} is then defined as
\[ \cO_\fX^{\vir,\red} \coloneqq 0^!_{\cOb_\varphi,\varsigma}\cO_{\fc_\varphi} \in K_\sT(\fX).\]

\subsubsection{}

\begin{lemma}[Lifting cosections along symmetrized pullbacks] \label{lem:lifting-Obs-cosections}
  Let $\fX$ be an algebraic space and $f\colon X \to \fY$ be a smooth
  morphism in $\cat{Art}_\sT$ over a base $\fB$. Suppose
  $\varphi_{\fY/\fB}\colon \bE_{\fY/\fB} \to \bL_{\fY/\fB}$ is a
  $\kappa$-symmetric obstruction theory on $\fY$ relative to $\fB$
  with cosections $\sigma_\fY\colon \bE_{\fY/\fB}^\vee[1] \to \sU
  \otimes \cO_{\fY}$. Then the obstruction sheaf of the symmetrized
  pullback APOT $\varphi_{X/\fB}$ has cosections
  $\cOb_{\varphi_{X/\fB}} \twoheadrightarrow \sU \otimes \cO_X$.
\end{lemma}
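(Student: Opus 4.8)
The statement is the analogue, for cosections, of the symmetrized-pullback construction of APOTs in Theorem~\ref{thm:APOTs}\ref{it:APOT-sym-pullback}, and the natural strategy is to run that construction and carry the cosection along for the ride. Recall that the symmetrized pullback APOT $\varphi_{X/\fB}$ is built affine \'etale-locally: on an affine \'etale chart $U \to X$ one lifts the natural map $\bL_f[-1] \to f^*\bL_{\fY/\fB}$ along $\varphi_\fY$ to a map $\delta\colon \bL_f[-1] \to f^*\bE_{\fY/\fB}$ with $\delta^\vee[1] \circ \delta = 0$ (possible over an affine), and then the diagram \eqref{eq:obstruction-theories-symmetric-pullback} produces a $\kappa$-symmetric POT $\bE_{X/\fB}|_U \to \bL_{X/\fB}|_U$ fitting into the exact triangle $\kappa \otimes \bF^\vee[1] \xrightarrow{\zeta} \bE_{X/\fB} \to \bL_f^\vee[2]$, where $\bF = \tilde\bE$ sits in $f^*\bE_{\fY/\fB} \xrightarrow{\eta} \bF \to \kappa \otimes \bL_f^\vee[2]$. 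My plan is: first pull back the given cosection $\sigma_\fY\colon \bE_{\fY/\fB}^\vee[1] \to \sU \otimes \cO_\fY$ to $f^*\bE_{\fY/\fB}^\vee[1] \to \sU \otimes \cO_X$; then use the triangles above to extend it to a cosection $\sigma_{X/\fB}\colon \bE_{X/\fB}^\vee[1] \to \sU \otimes \cO_X$ of the local POT, exactly mirroring the proof of Lemma~\ref{lem:cosection-for-reductions}\ref{it:cosection-for-symmetrized-reduction}. Concretely, dualizing the first triangle gives $\bL_f[2-1]\, \dots$; the key observation is that the map $\bE_{X/\fB}^\vee[1] \to f^*\bF^\vee[1]$ dual to $\zeta^\vee[1]$, composed with the map $f^*\bF^\vee[1] \to f^*\bE_{\fY/\fB}^\vee[1]$ dual to $\eta$, lands us in a position to apply $\sigma_\fY$. (Note $\eta^\vee[1] = \kappa\otimes(\text{something})$ up to the symmetry, and the relevant connecting maps to $\bL_f^\vee$-type terms vanish on $h^0$ because $h^0(\bL_f^\vee[k]) = 0$ for the degrees in play, since $f$ is smooth.)

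The surjectivity of the induced $h^0$ is then checked by the same long-exact-sequence argument as in Lemma~\ref{lem:cosection-for-reductions}. Taking $h^1$ of the duals, one gets a commutative diagram with $\cOb_{\varphi_{X/\fB}}|_U = h^1(\bE_{X/\fB}^\vee|_U)$ surjecting onto (a quotient fitting over) $h^1(f^*\bE_{\fY/\fB}^\vee)$, modulo contributions from $h^*(\bL_f)$-terms which are locally free sheaves in degrees that do not interfere with the surjection onto $\sU \otimes \cO_X$. Chasing the relevant triangle, $h^0(\sigma_{X/\fB})$ is surjective because $h^0$ of the pulled-back $\sigma_\fY$ is, and because the transition from $\bF$ to $\bE_{X/\fB}$ only quotients by a locally free sheaf supported in the ``right'' cohomological degree — precisely the structure exploited in Lemma~\ref{lem:cosection-for-reductions}\ref{it:cosection-for-symmetrized-reduction}. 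This gives a local cosection $\sigma_{\varphi_{X/\fB},i}$ on each chart.

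Finally I would glue. The obstruction sheaf $\cOb_{\varphi_{X/\fB}}$ is, by definition of an APOT (Definition~\ref{bg:def:apot}\ref{spb:def:apot:i}), a global sheaf obtained by gluing the local $h^1(\bE_i^\vee)$ via the transition isomorphisms $\psi_{ij}$; since our local cosections are all constructed functorially from the single global object $f^*\sigma_\fY$ (which manifestly restricts compatibly), the composites $\cOb_{\varphi_{X/\fB}}|_{U_{ij}} \xrightarrow{\psi_{ij}} \cdots \to \sU \otimes \cO$ agree, so the $\sigma_{\varphi_{X/\fB},i}$ patch to a global cosection $\cOb_{\varphi_{X/\fB}} \twoheadrightarrow \sU \otimes \cO_X$. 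Here one uses that on a further common refinement where $\psi_{ij}$ lifts to an isomorphism of POTs (Definition~\ref{bg:def:apot}\ref{spb:def:apot:ii}), the two local cosections are literally identified, so the weaker compatibility of obstruction sheaves suffices for the gluing.

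The main obstacle I anticipate is the gluing/compatibility step rather than the local construction: one must verify that the local cosections $\sigma_{\varphi_{X/\fB},i}$ are compatible not just with the transition isomorphisms $\psi_{ij}$ of obstruction sheaves, but with the finer gluing data (the $V_k \to U_{ij}$ covers on which $\psi_{ij}$ lifts to a POT isomorphism), since the diagram-completion in \eqref{eq:obstruction-theories-symmetric-pullback} involves choices ($\delta$, and the fillers $\zeta,\eta$) that are only unique up to homotopies valued in $\Hom(-,\bL_f[\text{something}])$-groups. I expect these ambiguity groups to vanish for degree reasons — exactly as in the uniqueness arguments in Lemma~\ref{lem:obstruction-theory-pl} and the proof of Theorem~\ref{thm:APOTs} — so that the induced maps on $h^0$, and hence the cosection of the obstruction sheaf, are canonically determined and automatically compatible; making this precise is the bulk of the work but is routine diagram-chasing of the kind already carried out in \cite{klt_dtpt}.
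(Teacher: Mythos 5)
Your local construction is essentially the paper's: pull back $\sigma_{\fY}$, compose with $\eta^\vee[1]$ to get a cosection of $\bF^\vee[1]$, and factor it through $(\bE_{X/\fB})^\vee[1]$ using the column of \eqref{eq:obstruction-theories-symmetric-pullback}, with surjectivity on $h^0$ coming from $h^0(\bL_f[2])=0$. The gap is in your gluing step. You assert that the ambiguities in the diagram completion ``vanish for degree reasons,'' but they do not. The factorizations of $\sigma_{\bF}$ through $(\bE_{X/\fB})^\vee[1]$ form a torsor under the next Hom-group in the triangle $\kappa^{-1}\otimes\bL_f[-1] \to \bF^\vee[1] \to (\bE_{X/\fB})^\vee[1] \to \kappa^{-1}\otimes\bL_f$, namely $\Hom(\kappa^{-1}\otimes\bL_f,\, \sU\otimes\cO) \cong \sU\otimes H^0(\kappa\otimes\bT_f)$, which is $H^0$ of a vector bundle on an affine chart and is nonzero whenever $f$ has positive relative dimension (it is not one of the negative-degree or higher-Ext groups that vanish in Lemma~\ref{lem:obstruction-theory-pl}). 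Likewise the lift $\delta$ is genuinely non-unique --- this is precisely why symmetrized pullback produces only an APOT and not a POT. Concretely, $\cOb_{\varphi_{X/\fB}}$ sits in an extension $0\to f^*\cOb_{\varphi_{\fY}}\to \cOb_{\varphi_{X/\fB}}\to K\to 0$ with $K\subseteq \kappa^{-1}\otimes\Omega_f$, and your construction only pins down the cosection on the subsheaf $f^*\cOb_{\varphi_{\fY}}$; its extension over $K$ depends on the chart-by-chart choices, so the induced maps on obstruction sheaves need not agree on overlaps via $\psi_{ij}$, and ``routine diagram-chasing'' will not close this.

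The paper avoids the gluing problem altogether: it makes a \emph{single global} choice by pulling back along one affine bundle $a\colon A\to X$ (chosen, as in the proof of Theorem~\ref{thm:APOTs}, so that the relevant obstruction classes --- including $\sigma_{\bF}^A\circ\zeta^\vee[1]$ --- vanish globally on $A$), constructs the cosection of $\bE_{X/\fB}^A$ there, uses $h^0\bigl((\bE_{X/\fB}^A)^\vee[1]\bigr)=a^*\cOb_{\varphi_{X/\fB}}$ to transport it to the obstruction sheaf, and then checks independence of the chosen affine bundle by comparing two choices over their fiber product. If you want to keep an \'etale-local approach, you would need an additional argument controlling the non-canonical extension over $K$ (or a global mechanism replacing it, as the affine bundle does); as written, the compatibility on overlaps is unproved and is exactly the hard point.
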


Combined with the construction of \S\ref{rc:sec:reduced-apot}, this
allows us to construct reduced universal enumerative invariants of
semistable=stable loci in auxiliary stacks.

In theory, we could define what it means for an APOT itself to have
cosections and check that the cosections of $\varphi_{\fY/\fB}$ lift
compatibly to cosections of $\varphi_{X/\fB}$ by lifting the
cosections along each affine \'etale chart, but this is cumbersome and
we will not need this.

\begin{proof}
  As in the proof of Theorem~\ref{thm:APOTs}, let $a\colon A \to X$ be
  a suitable affine bundle and $\bE_{X/\fB}^A$ be the object on $A$
  representing the symmetrized pullback of $\varphi_{\fY/\fB}$ along
  $f$. By construction,
  \[ h^0\left((\bE_{X/\fB}^A)^\vee[1]\right) = a^*\cOb_{\varphi_{X/\fB}}, \]
  and therefore it suffices to construct cosections for
  $\bE_{X/\fB}^A$ and then apply the isomorphism $(a^*)^{-1}$. This is
  done by lifting the cosection $\sigma_{\fY/\fB}$ along ($a^*$ of)
  various parts of the compatibility diagram
  \eqref{eq:obstruction-theories-symmetric-pullback}.
  \begin{itemize}
  \item Apply $(-)^\vee[1]$ to the middle row of
    \eqref{eq:obstruction-theories-symmetric-pullback} to get the
    composition
    \[ \sigma_{\bF}^A\colon \bF^\vee[1] \xrightarrow{\eta^\vee[1]} a^*f^*\bE_{\fY/\fB}^\vee[1] \to \sU \otimes \cO_A. \]
    Note that $h^0(\eta^\vee[1])$ is surjective because the next term
    is $h^0(\bL_f^\vee[2]) = 0$.
  \item Apply $(-)^\vee[1]$ to the right-most column of
    \eqref{eq:obstruction-theories-symmetric-pullback} to obtain an
    exact triangle
    \[ \kappa^{-1} \otimes a^*\bL_f[-1] \xrightarrow{\zeta^\vee[1]} \bF^\vee[1] \to (\bE_{X/\fB}^A)^\vee[1] \xrightarrow{+1}. \]
    We may choose the affine bundle $a$ such that $\sigma_{\bF}^A
    \circ \zeta^\vee[1] = 0$. Then $\sigma_{\bF}^A$ factors through
    \[ \sigma_X^A\colon (\bE_{X/\fB}^A)^\vee[1] \to \sU \otimes \cO_A. \]
    Note that $h^0(\sigma_X^A)$ is surjective because
    $h^0(\sigma_{\bF}^A)$ is.
  \end{itemize}
  So we have constructed the desired cosections $\sigma_X^A$.

\subsubsection{}

  We verify that this lifting procedure is independent of the choice
  of affine bundle as follows. Given any two suitable affine bundles
  $a_1\colon A_1\to X$ and $a_2\colon A_2\to X$, we may form their
  fiber product
  \[ \begin{tikzcd}
      A \ar{d}[swap]{\pr_1} \ar{r}{\pr_2} & A_2 \ar{d}{a_2} \\
      A_1 \ar{r}{a_1} & X,
  \end{tikzcd} \]
  which continues to be an affine bundle. By flatness of $\pr_i$, we
  can pull back the construction
  \eqref{eq:obstruction-theories-symmetric-pullback} of
  $\bE_{X/\fB}^{A_i}$ along $\pr_i$, and use $a_1 \circ \pr_1 = a_2
  \circ \pr_2$ to identify
  \[ \pr_1^* \bE_{X/\fB}^{A_1} = \pr_2^* \bE_{X/\fB}^{A_2} \]
  along with all the relevant morphisms, including
  $\pr_i^*\sigma_X^{A_i}$. Upon applying $((a_i \circ \pr_i)_*)^{-1}$,
  this shows that the two constructed cosections of
  $\cOb_{\varphi_{X/\fB}}$ are equal.
\end{proof}

\subsubsection{}

\begin{lemma}[Deformation invariance of $\cO^{\vir,\red}$] \label{lem:red-APOT-virtual-cycle-functoriality}
  Consider a Cartesian square
  \[ \begin{tikzcd}
    \fY \ar{r}{u}\ar{d} & \fX \ar{d} \\
    Z \ar{r}{v} & W
  \end{tikzcd} \]
  in $\cat{Art}_\sT$, where $v\colon Z \to W$ is a regular closed
  immersion of smooth varieties. Suppose we are given
  $\sT$-equivariant APOTs
  \[ \varphi = (U_i, \, \varphi_i\colon \bE_i \to \bL_{U_i})_{i \in I}, \qquad \varphi' = (V_i \coloneqq U_i \times_{\fX} \fY, \, \varphi_i'\colon \bE_i' \to \bL_{V_i})_{i \in I}, \]
  on $\fX$ and $\fY$ respectively, relative to a base $\fB$, which are
  compatible under $u$ in the sense that there is a morphism of exact
  triangles (cf.
  Definition~\ref{def:obstruction-theories-compatibility})
  \begin{equation} \label{eq:functoriality-compatibility-diagram}
    \begin{tikzcd}
      \cN_{Z/W}\big|_{V_i} \ar{d} \ar{r} & \bE_i|_{V_i} \ar{r}{g_i} \ar{d}{\varphi_i\big|_{V_i}} & \bE'_i \ar{d}{\varphi'_i} \ar{r}{+1} & {} \\
      \bL_{V_i/U_i}[-1] \ar{r} & \bL_{U_i/\fB}\big|_{V_i} \ar{r} & \bL_{V_i/\fB} \ar{r}{+1} & {}
    \end{tikzcd}
  \end{equation}
  compatible with the isomorphisms in the Definition~\ref{bg:def:apot}
  of an APOT. Then any surjective cosections $\sigma\colon
  \cOb_\varphi \twoheadrightarrow \cO_{\fX}^{\oplus o}$ induce
  surjective cosections
  \[ \sigma'\colon \cOb_{\varphi'} \twoheadrightarrow \cOb_\varphi|_\fY \xrightarrow{\sigma|_\fY} \cO_\fY^{\oplus o}, \]
  and moreover, letting $v^! \coloneqq v_*\cO_Z \otimes_{\cO_{\fX}}
  -\colon K_\sT(\fX) \to K_\sT(\fY)$ be the Gysin map,
  \[ \cO^\red_{\fY} = v^!\cO^\red_{\fX} \in K_\sT(\fY). \]
\end{lemma}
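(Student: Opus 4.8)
The plan is to first build the cosections $\sigma'$ from the compatibility diagram \eqref{eq:functoriality-compatibility-diagram}, and then to prove the identity $\cO^\red_{\fY} = v^!\cO^\red_{\fX}$ by deformation to the normal cone, reducing to the case where $v$ is the zero section of a vector bundle, where the two sides can be compared directly. For the cosections: dualize, for each $i$, the top exact triangle of \eqref{eq:functoriality-compatibility-diagram} to obtain $(\bE_i')^\vee \to \bE_i^\vee|_{V_i} \to \cN_{Z/W}^\vee|_{V_i} \xrightarrow{+1}$. Since $v$ is a regular immersion, $\cN_{Z/W}$ (and hence $\cN_{Z/W}^\vee$) is a locally free sheaf concentrated in cohomological degree $0$, so the long exact sequence in cohomology sheaves forces $h^1((\bE_i')^\vee) \twoheadrightarrow h^1(\bE_i^\vee|_{V_i})$, i.e. a surjection $\cOb_{\varphi_i'} \twoheadrightarrow \cOb_{\varphi_i}|_{V_i}$ with kernel a quotient of $\cN_{Z/W}^\vee|_{V_i}$. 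The hypothesis that the diagrams \eqref{eq:functoriality-compatibility-diagram} are compatible with the gluing isomorphisms of Definition~\ref{bg:def:apot} means these local surjections glue to a surjection $p\colon \cOb_{\varphi'} \twoheadrightarrow \cOb_\varphi|_\fY$ of sheaves on $\fY$; then $\sigma' \coloneqq (\sigma|_\fY) \circ p$ is surjective as a composite of surjections, proving the first assertion. One records that $\cOb_{\varphi'}(\sigma') = p^{-1}(\cOb_\varphi(\sigma)|_\fY)$, which is what makes the bookkeeping below work.

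For the equality of reduced virtual structure sheaves, I would argue as follows. Both $\cO^\red$ and $v^!$ are compatible with \'etale base change on $\fX$ --- $v^!$ because it is a derived tensor operation, and $\cO^\red$ because the intrinsic normal cone, its cosection refinement $\cOb_\varphi(\varsigma)$, and the sheaf-stack Gysin map $0^!_{\cOb_\varphi,\varsigma}$ are all constructed \'etale-locally (Definition~\ref{bg:def:apot}, \cite{kiem_savvas_apot, kiem_savvas_loc}, \S\ref{rc:sec:reduced-apot}). Hence we may assume $\fX = U$ is affine with a global $\sT$-equivariant POT $\bE \to \bL_{U/\fB}$ (of amplitude $[-1,0]$ by Lemma~\ref{lem:perfect-symmetric-obstruction-theory}, with $\bE^\vee$ resolved by a two-term complex of bundles) and $\fY = V = U \times_W Z$. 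Deformation to the normal cone of $v$, carried out compatibly with obstruction theories (Behrend--Fantechi functoriality in the APOT form of \cite{kiem_savvas_apot, kiem_li_loc_vir_str_sh}; see also \cite[\S 4]{Kiem2013} and \cite[\S 3.6]{mpt-curves-k3-modular}), together with the deformation invariance of reduced virtual structure sheaves \cite[\S 4]{kiem_li_loc_vir_str_sh}, reduces us to $W = \tot_Z(N)$ for a vector bundle $N \to Z$ with $v$ the zero section, so that $V = s^{-1}(0) \subseteq U$ for the tautological section $s$ of $p^*N$ coming from $p\colon U \to W$. In this case $(\bE')^\vee \simeq \bE^\vee|_V \oplus v^*N[-1]$ compatibly with $\varphi'$ and with $p$, hence $\cOb_{\varphi'} = \cOb_\varphi|_V \oplus v^*N$ with $p$ the first projection, and the intrinsic normal cone $\fc_{\varphi'}$ is the pullback $q^*\fc_\varphi$ along $q \colon \cOb_{\varphi'} \to \cOb_\varphi|_V$. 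Since $\sigma' = (\sigma|_V)\circ p$ by construction, $\cOb_{\varphi'}(\sigma')$ is the preimage of $\cOb_\varphi(\sigma)|_V$ under the bundle projection, so $\cO_{\fc_{\varphi'}}$ --- which is supported there --- is the pullback of $\cO_{\fc_\varphi}$; applying $0^!_{\cOb_{\varphi'},\sigma'}$, which then factors as $v^! \circ (0^!_{\cOb_\varphi,\sigma}|_V)$ with the $v^!$ supplied by the Koszul complex of $s$, gives $\cO^\red_V = v^!\cO^\red_U$. Undoing the deformation yields the general case, and this also produces the $o_{\alpha_1}+\cdots+o_{\alpha_n}=o_\alpha$-type constraint when the lemma is applied in \S\ref{rc:sec:master-space-argument}.

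The main obstacle is the last step of the previous paragraph: verifying that, under the deformation to the normal cone and the resulting bundle projection, the \emph{support} statement ``$\cO_{\fc_\varphi}$ is supported on $\cOb_\varphi(\varsigma)$'' is preserved and interacts correctly with the Koszul pullback $v^!$, so that the sheaf-stack Gysin map $0^!_{\cOb_{\varphi'},\sigma'}$ genuinely factors through $v^!$. For ordinary (unreduced) virtual classes the compatibility of intrinsic normal cones under deformation to the normal cone of a regular immersion is classical (Behrend--Fantechi, and in the APOT setting Kiem--Savvas); the new content is entirely in carrying along the cosection, which is exactly why it is essential that $\sigma'$ be the composite $(\sigma|_\fY)\circ p$ through the canonical surjection $p\colon\cOb_{\varphi'} \to \cOb_\varphi|_\fY$, rather than merely an abstract cosection agreeing with $\sigma|_\fY$ in some weaker sense.
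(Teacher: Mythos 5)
Your construction of $\sigma'$ is fine: dualizing the top row of \eqref{eq:functoriality-compatibility-diagram} and using that $\cN_{Z/W}^\vee$ is a sheaf in degree $0$ does give local surjections $\cOb_{\varphi'_i} \twoheadrightarrow \cOb_{\varphi_i}|_{V_i}$ which glue by the compatibility hypothesis, and this matches the composite structure asserted in the statement. The problem is the second half. First, you cannot ``assume $\fX=U$ is affine'': the asserted identity lives in $K_\sT(\fY)$, and equality of K-theory classes is not an \'etale-local (or even Zariski-local) condition --- restriction to a cover is not jointly injective on $K_\sT(-)$, and the whole point of the APOT formalism is that the local models do not glue, only the obstruction sheaf and the coarse cone do. The paper's proof never localizes: it runs the Kim--Kresch--Pantev/Kiem--Savvas double deformation space argument globally, gluing the local cones $h^1(\cone(\kappa_i)^\vee)$ into a global sheaf stack $\fK$ over $\fY\times\bP^1$ precisely by means of the ``compatible with the APOT gluing isomorphisms'' hypothesis, which your reduction discards.

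Second, your reduction to the zero-section case invokes ``deformation invariance of reduced virtual structure sheaves'' across the deformation to the normal cone of $v$. But that statement is essentially equivalent in strength to the lemma itself (the paper \emph{deduces} deformation invariance of $\cO^{\vir,\red}$ from this lemma in \S\ref{af:rmk:deformation-invariance}, by Gysin pullback to fibres --- which are regular immersions); Kiem--Li's results give it only once you have produced a cosection of the obstruction sheaf of the \emph{family} over the deformation space restricting to the two given cosections at the special and general fibres, together with the support statement for the family cone. That construction --- the twisted cosection $\bar\sigma\colon \fK \to \cO_{\fY\times\bP^1}(-1)$ built from diagram \eqref{eq:cd-K-cosection}, and the proof that $\cO_{\fc_{\fY\times\bP^1/\cM_\fX^\circ}}$ has reduced support in $\fK(\bar\sigma)$ --- is exactly the content of the paper's proof, and it is exactly the step you flag as ``the main obstacle'' and leave unproved. (Also, in your zero-section model the splitting $(\bE')^\vee \simeq \bE^\vee|_V \oplus v^*N[-1]$ is not automatic; you only have a triangle, and the argument should not rely on it splitting.) So the proposal defers the key construction rather than carrying it out, and its two reduction steps are not justified as stated.
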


The proof, in \S\ref{af:sec:rd-functoriality-proof}, will follow from
various ingredients already in the literature.

\subsubsection{}
\label{af:rmk:deformation-invariance}

Recall that the universal enumerative invariant $\sZ_X \coloneqq
\chi(X, \hat\cO_X^\vir \otimes -)$ is {\it deformation invariant} in
the sense \cite[\S 3.5]{Fantechi2010} that if
\[ \begin{tikzcd}
  X_b \ar[hookrightarrow]{r} \ar{d} & \fX \ar{d} \\
  \{b\} \ar[hookrightarrow]{r}{i_b} & B,
\end{tikzcd} \]
is a flat family of such $X$'s with compatible $\sT$-equivariant
APOTs, then for every $E \in K_\sT^\circ(\fX)$, the quantity
$\sZ_{X_b}(E|_{X_b})$ is locally constant in $b$. This deformation
invariance follows directly from $i_b^!\cO^\vir_{\fX} =
\cO^\vir_{X_b}$ and the local constancy of Euler characteristic in
proper flat families.

Since Lemma~\ref{lem:red-APOT-virtual-cycle-functoriality} implies
$i_b^!\cO^{\vir,\red}_{\fX} = \cO^{\vir,\red}_{X_b}$, it immediately
implies the same deformation invariance for the reduced universal
enumerative invariant $\sZ_X^{\red}$.

Note that if $\fX = X \times B \to B$ is a trivial family over $B$,
with projection $p\colon \fX \to X$, then evaluation on the image of
$p^*\colon K_\sT^\circ(X) \to K_\sT^\circ(\fX)$ shows that the element
$\sZ_{X_b} \in K_\circ^{\tilde\sT}(X)$ is locally constant in $b$.

\subsubsection{}
\label{af:sec:rd-functoriality-proof}

\begin{proof}[Proof of Lemma~\ref{lem:red-APOT-virtual-cycle-functoriality}.]
  We follow the proof of \cite[Theorem A.1]{kiem_savvas_loc} closely.
  Let $\cM_{\fX}^\circ \to \bP^1$ denote the deformation of $\fX$ to
  the intrinsic normal cone $\fC_{\fX} = \fC_{\fX/\fB}$. Following a
  standard argument in \cite[Theorem 1]{kkp-funct-vir-class},
  Kiem--Savvas use a ``double deformation space'' $\fW$, which is the
  deformation of $\fY\times \bP^1$ to its normal cone stack in
  $\cM_{\fX}^\circ$, to prove \cite[(A.8)]{kiem_savvas_loc}
  \[ \cO_{\fC_{\fY}} = \cO_{\fC_{\fY/\fC_{\fX}}} \in K_\sT(\fN_{\fY\times\bP^1/\cM_{\fX}^\circ}). \]
  As in the proof of \cite[Theorem 2.1.6]{klt_dtpt}, applying the
  ``coarsifying'' operation $(\pi_{\fY\times
    \bP^1/\cM_\fX^\circ})_\diamond$ defined in \cite[Def.
    2.1.8]{klt_dtpt}, we obtain
  \begin{equation} \label{eq:coarse-normal-cone-eq-ks}
    \cO_{\fc_{\fY}} = \cO_{\fc_{\fY/\fC_{\fX}}}\in K_\sT(\fn_{\fY\times\bP^1/\cM_{\fX}^\circ}).
  \end{equation}
  Following \cite{kkp-funct-vir-class}, consider the morphism
  \begin{equation}\label{eq:kappa-def-cd}
    \begin{tikzcd}
      \bE_i\big|_{V_i}(-1) \ar{r}{\kappa_i} \ar{d} & \bE_i\big|_{V_i}\oplus \bE_i' \ar{r}\ar{d} & \cone(\kappa_i) \ar{r}{+1}\ar{d} & {}\\
      \bL_{U_i/\fB}\big|_{V_i}(-1) \ar{r}{\lambda_i} & \bL_{U_i/\fB}\big|_{V_i}\oplus \bL_{V_i/\fB} \ar{r} & \cone(\lambda_i) \ar{r}{+1} & {}
    \end{tikzcd}
  \end{equation}
  of exact triangles, for each $i \in I$. Here, $\kappa_i = (x_0 \cdot
  \id, x_1 \cdot g_i)$ where $[x_0 : x_1] \in \bP^1$ are coordinates,
  and $\lambda_i$ is the restriction of the global morphism $\lambda$
  from \cite[Prop. 1]{kkp-funct-vir-class}, which gives us
  $h^1/h^0(\cone(\lambda)^\vee)=\cN_{\fY\times \bP^1/\cM_\fX^\circ}$.
  By \cite{kkp-funct-vir-class}, we get a closed embedding
  \[ \cN_{\fY\times \bP^1/\cM_\fX^\circ}\hookrightarrow \cK \]
  of vector bundle stacks, where $\cK$ is glued from
  $h^1/h^0(\cone(\kappa_i)^\vee)$. By the defining gluing property of
  APOTs and \eqref{eq:functoriality-compatibility-diagram}, the closed
  embeddings $h^1(\cone(\lambda_i)^\vee)\hookrightarrow
  h^1(\cone(\kappa_i)^\vee)$ glue to a compatible closed embedding
  \[ \fn_{\fY\times \bP^1/\cM_\fX^\circ}\hookrightarrow \fK \]
  of sheaf stacks, where $\fK$ is glued from
  $h^1(\cone(\kappa_i)^\vee)$.

\subsubsection{}

  The upper row of \eqref{eq:kappa-def-cd} is used in
  \cite[Eq. (5.7)]{Kiem2013} to induce a surjective twisted
  cosection $\tilde{\sigma}\colon \cK \twoheadrightarrow
  \cO_{\fY\times \bP^1}(-1)$. Similarly, the upper row of
  \eqref{eq:kappa-def-cd} together with
  \eqref{eq:functoriality-compatibility-diagram} are used to obtain
  the commutative diagram
  \begin{equation} \label{eq:cd-K-cosection}
    \begin{tikzcd}
      \fK \ar{r}\ar{d} & \cOb_\varphi|_\fY\oplus \cOb_{\varphi'} \ar{r}\ar{d} & \cOb_\varphi|_\fY(1) \ar{r}\ar{d} & 0\\
      \cO_{\fY\times \bP^1}(-1) \ar{r} & \cO_{\fY\times \bP^1}\oplus \cO_{\fY\times \bP^1} \ar{r} & \cO_{\fY\times \bP^1}(1) \ar{r} & 0
    \end{tikzcd}
  \end{equation}
  of sheaves, which gives us a twisted cosection $\bar\sigma\colon
  \fK \to \cO_{\fY\times \bP^1}(-1)$. Collecting all the embeddings and
  cosections, we get a commutative diagram
  \begin{equation}\label{eq:cosections-emb-cd}
    \begin{tikzcd}
      \fC_{Y \times \bP^1/\cM_\fX^\circ} \ar[hookrightarrow]{r}\ar[twoheadrightarrow]{d} & \cN_{\fY\times \bP^1/\cM_\fX^\circ} \ar[hookrightarrow]{r}\ar{d}{\pi_{\fY\times \bP^1/\cM_\fX^\circ}} & \cK \ar[twoheadrightarrow]{rd}{\tilde{\sigma}}\ar{d}{\pi_\cK} & {}\\
      \fc_{Y\times \bP^1/\cM_\fX^\circ} \ar[hookrightarrow]{r} & \fn_{\fY\times \bP^1/\cM_\fX^\circ} \ar[hookrightarrow]{r} & \fK \ar[twoheadrightarrow]{r}{\bar\sigma} & \cO_{\fY\times \bP^1}(-1).
    \end{tikzcd}
  \end{equation}
  The fiber of $\fK$ over $0 \in \bP^1$ is $u^*\cOb_\varphi \oplus
  \cN_{Z/W}|_{\fY}$ and the fiber over $1 \in \bP^1$ is
  $\cOb_{\varphi'}$. By construction, the twisted cosection
  $\bar\sigma$ restricts to the corresponding cosections over these
  fibers. Then, as in \cite[Lemma 5.3]{Kiem2013}, the
  class $\cO_{\fC_{Y\times \bP^1/\cM_\fX^\circ}}$ has reduced support
  in $\cK(\tilde{\sigma})$, the vector bundle stack associated to
  $\ker \tilde\sigma$. Applying the ``coarsifying'' operation
  $(\pi_{\fY\times \bP^1/\cM_\fX^\circ})_\diamond$, and using
  \eqref{eq:cosections-emb-cd}, we see that $\fc_{Y\times
    \bP^1/\cM_\fX^\circ}$ has reduced support in $\fK(\bar\sigma)$, so
  that \eqref{eq:coarse-normal-cone-eq-ks} holds in $\fK(\bar\sigma)$.
  Applying $0^!_{\cOb_\fY(\sigma')}$ and using the usual properties of
  Gysin maps yields the desired equality.
\end{proof}

\subsubsection{}
\label{rc:sec:master-space-argument}

\begin{proof}[Proof of Theorems~\ref{thm:red-sst-invariants} and \ref{thm:red-wcf}.]
  With all auxiliary reduced universal enumerative invariants defined
  using Lemma~\ref{lem:lifting-Obs-cosections}, it remains to modify
  the master space wall-crossing steps of
  \S\ref{sec:pairs-master-term-3} and \S\ref{sec:horizontal-wc} to use
  the reduced invariants. The rest of the content in
  \S\ref{sec:semistable-invariants} and \S\ref{sec:wall-crossing} go
  through with only obvious modifications.

  The key modification is the usage of a reduced version of
  Theorem~\ref{thm:APOT-comparison} in the master space wall-crossing
  steps. Namely, let $\beta = \gamma + \delta \in C(\cat{A})$ and
  consider the following in $\cat{Art}_\sT$:
  \[ \begin{tikzcd}
    Z \ar{d}{f} \ar[hookrightarrow]{r} & \bM \ar[loop right]{r}{\bC^\times} \ar{d}{g} \\
    \fM_{\gamma} \times \fM_{\delta} & \fM_\beta, 
  \end{tikzcd} \]
  where $f$ and $g$ are smooth, $\bM$ is an algebraic space with
  $\bC^\times$-action compatible with the $\sT$-action (the master
  space), and $Z \subset \bM$ is a $\bC^\times$-fixed component (the
  complicated locus \eqref{eq:pairs-master-space-complicated-locus} or
  \eqref{wc:eq:master-space-complicated-locus}). Let $\sZ_Z^{\red}$
  and $\sZ_{Z \subset \bM}^{\red}$ be the reduced universal
  enumerative invariants associated to symmetrized pullback APOT on
  $Z$ and the $\bC^\times$-fixed part of the restriction of the
  symmetrized pullback APOT on $\bM$, respectively. We claim
  \[ \sZ_{Z \subset \bM}^{\red} = \begin{cases} \sZ_Z^{\red} & o_\beta = o_{\gamma} + o_{\delta} \\ 0 & \text{otherwise}. \end{cases} \]

  We prove this by adapting the proof of
  Theorem~\ref{thm:APOT-comparison}. First, build compatible affine
  bundles $a\colon A \to Z$ and $b\colon B \to \bM$ which remove the
  obstructions to the existence of the diagram
  \eqref{eq:obstruction-theories-symmetric-pullback} which constructs
  the would-be symmetrized pullback POTs $\hat\bE_Z^A \to a^*\bL_Z$
  and $\hat\bE_{\fM}^B \to b^*\bL_{\bM}$ for $Z$ and $\bM$
  respectively. We claim that
  \begin{equation} \label{eq:APOT-comparison-underlying-POTs}
    \hat\bE_Z^A \oplus \Omega_a \cong \left((\hat\bE_{\bM}^B \oplus \Omega_b)\big|_A\right)^{\text{fix}}
  \end{equation}
  as objects in the derived category. (Here, the superscript
  $\text{fix}$ means the $\bC^\times$-fixed part.) This is because
  both sides are constructed via taking cones, which is unique up to
  isomorphism, of objects which may be identified as
  \[ f^*\left((\bE_{\fM_{\gamma}} \boxplus \bE_{\fM_{\delta}}\right) = \left((g^*\bE_{\fM_{\beta}})\big|_Z\right)^{\text{fix}}, \quad \bL_f = \left(\bL_g\big|_Z\right)^{\text{fix}}, \]
  by the bilinearity of the obstruction theory on $\fM$ and of $\bL_f$
  and $\bL_g$, and moreover $\Omega_a \cong
  (\Omega_b|_A)^{\text{fix}}$ by construction.

  Hence, for any \'etale cover $\{U_i \to A\}_{i \in I}$, the two
  sides of \eqref{eq:APOT-comparison-underlying-POTs} define APOTs
  $\varphi_Z^A$ and $\varphi_{Z \subset \bM}^A$, respectively, whose
  underlying objects $\bE_{Z,i}^A \cong \bE_{Z \subset \bM,i}^A$ are
  isomorphic but whose local POTs $\varphi_{Z,i}^A, \varphi_{Z \subset
    \bM,i}^A\colon \bE_{Z,i}^A \to \bL_{U_i}$ may be different.
  Moreover, by the proof of Lemma~\ref{lem:lifting-Obs-cosections},
  the obstruction sheaves $\cOb_Z^A$ and $\cOb_{Z \subset \bM}^A$ of
  these APOTs carry surjective cosections $\sigma_Z^A$ and $\sigma_{Z
    \subset \bM}^A$, lifted from $\fM_{\gamma} \times \fM_{\delta}$
  and $\fM_\beta$ respectively, and these cosections may be different.

  We follow the idea of the deformation argument in \cite[Proof of
    Prop. 9.6(c)]{Joyce2021} in order to identify the reduced
  universal enumerative invariants. Let $d \coloneqq
  o_{\gamma}+o_{\delta}-o_\beta$; note that $d \ge 0$ by
  Assumption~\ref{assump:cosections}. Choose a projection $q\colon
  \cO_A^{o_{\gamma}+o_{\delta}} \twoheadrightarrow \cO_A^{o_{\beta}}$.
  We consider the family of objects
  \begin{align*}
    \varphi_{i,t} \coloneqq t\varphi_{Z,i}^A + (1-t)\varphi_{Z \subset \bM,i}^A&\colon \bE_{Z,i}^A \to \bL_{U_i}, \quad i \in I, \\
    \sigma_t \coloneqq t q \circ \sigma_Z^A + (1 - t) \sigma_{Z \subset\bM}^A&\colon  \to \cO_A^{o_{\beta}}
  \end{align*}
  on $A$ linearly interpolating between $(\varphi_Z^A, q \circ
  \sigma_Z^A)$ and $(\varphi_{Z \subset \bM}^A, \sigma_{Z \subset
    \bM}^A)$. Relative to the trivial flat family $A \times \bA^1 \to
  \bA^1$, the conditions that $\varphi_{i,t}$ is a {\it perfect}
  obstruction theory and that $\sigma_t$ is surjective are open
  conditions on $t \in \bA^1$. Restricting to the open subset where
  these conditions hold, which includes $0, 1 \in \bA^1$ by
  construction, we obtain a compatible family of APOTs (whose
  obstruction sheaves have surjective cosections) over a connected
  base. Then Lemma~\ref{lem:red-APOT-virtual-cycle-functoriality} and
  the discussion of \S\ref{af:rmk:deformation-invariance} apply, and
  yield the deformation invariance
  \[ \sZ_Z^{\red} = \sZ_{Z \subset \bM}^{\red} \in K_\circ^{\tilde\sT}(Z)_{\loc} \]
  if $q$ is an isomorphism, i.e. if $o_{\gamma} + o_{\delta} =
  o_\beta$. Otherwise, if $o_{\gamma} + o_{\delta} > o_\beta$, then
  the other projection $\cO_A^{o_{\gamma}+o_{\delta}}
  \twoheadrightarrow \cO_A^d$ gives $d > 0$ extra cosections of the
  $\cOb_{Z \subset \bM}^A$, which makes $\sZ_{Z \subset \bM}^{\red}$
  vanish.
\end{proof}

\section{Example: equivariant DT/PT/BS theory}
\label{sec:DT-PT-BS}

\subsection{Wall-crossings of simple type}
\label{sec:wall-crossings-simple-type}

\subsubsection{}

\begin{definition} \label{def:wall-crossing-simple-type}
  Let $\cat{A}$ be an abelian category, and let $A \subset C(\cat{A})$
  be a subset of effective classes. Let $\{\tau_t\}_{t \in [-1,1]}$ be
  a one-parameter family of weak stability conditions on $\cat{A}$,
  and let $\tau_\pm$ denote $\tau_{\pm 1}$ for short. We say
  $\{\tau_t\}_t$ defines a {\it wall-crossing problem of simple type
    for $A$} if there exists $B \subseteq \tilde B \subset C(\cat{A})
  \setminus A$ such that:
  \begin{enumerate}[label = (\alph*)]
  \item $\tilde B + A \subseteq A$;
  \item if $\alpha \in A$ and $\alpha = \alpha_1 + \cdots + \alpha_n$
    is a splitting such that $\tau_0(\alpha_1) = \cdots =
    \tau_0(\alpha_n)$, then there is a unique $i \in \{1, \ldots, n\}$
    such that
    \begin{equation} \label{eq:simple-type-splitting}
      \alpha_i \in A, \qquad \alpha_j \in \tilde B \text{ for all } j \neq i;
    \end{equation}
  \item for all $t \in [-1, 1]$, the function $\tau_t$ is constant on
    $A$ and on $B$, and, denoting these values by $\tau_t(A)$ and
    $\tau_t(B)$,
    \[ \tau_{t_-}(A) > \tau_{t_-}(B), \qquad \tau_0(A) = \tau_0(B), \qquad \tau_{t_+}(A) < \tau_{t_+}(B) \]
    for any $t_- < 0$ and any $t_+ > 0$;
  \item for any classes $\gamma, \gamma' \in C(\cat{A})$,
    \begin{equation} \label{eq:simple-type-inert-class-1}
      \tau_0(\gamma) < \tau_0(\gamma') \implies \tau_t(\gamma) < \tau_t(\gamma') \text{ for all } t \in [-1, 1];
    \end{equation}
  \item for any classes $\gamma \in \tilde B \setminus B$, we have
    $\tau_+(\gamma) \neq \tau_+(A)$. Moreover,
    \begin{equation} \label{eq:simple-type-inert-class-2}
      \begin{aligned}
        \tau_+(\gamma) > \tau_+(A) &\implies \tau_-(\gamma) > \tau_-(A), \\
        \tau_+(\gamma) < \tau_+(A) &\implies \tau_-(\gamma) < \tau_-(A),
      \end{aligned}
    \end{equation}
    and the same for $\tau_\pm(B)$ in place of $\tau_\pm(A)$.
  \end{enumerate}
  The intuition is that, for classes in $A$, there is a wall at $t=0$
  at which the slopes of classes in $A$ and classes in $\tilde B$ can
  coincide, but only the classes in $B \subset \tilde B$ truly
  ``cross'' the wall.
\end{definition}

\subsubsection{}

\begin{proposition} \label{prop:Utilde}
  Let $\{\tau_t\}_{t \in [-1,1]}$ define a wall-crossing problem of
  simple type. Let $\alpha \in A$ and
  \[ \alpha = \alpha_1 + \cdots + \alpha_n, \quad n > 0, \]
  be a splitting into effective classes. Then
  \[ \tilde U(\alpha_1, \ldots, \alpha_n; \tau_-, \tau_+) = \begin{cases} 1/(n-1)! & \alpha_1 \in A, \; \alpha_2, \ldots, \alpha_n \in B, \\ 0 & \text{otherwise}. \end{cases} \]
\end{proposition}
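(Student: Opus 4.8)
I would prove this by first computing the auxiliary coefficients $S$ and $U$ directly from Definition~\ref{def:universal-coefficients}, using the structural constraints imposed by a wall-crossing problem of simple type, and only then pass to $\tilde U$ via Lemma~\ref{lem:Utilde-definition}. The starting point is the observation that, because $\{\tau_t\}_t$ is of simple type, the only splittings $\alpha = \alpha_1 + \cdots + \alpha_n$ that can contribute a non-zero $S(\alpha_1,\ldots,\alpha_n;\tau_-,\tau_+)$ are highly restricted: condition~\ref{eq:simple-type-splitting} of Definition~\ref{def:wall-crossing-simple-type} forces (once one has verified that a non-vanishing $S$ requires all the $\alpha_i$ to have equal $\tau_0$-slope, which I would extract from cases \ref{it:S-case-1}--\ref{it:S-case-2} combined with \eqref{eq:simple-type-inert-class-1}) exactly one index $i$ with $\alpha_i \in A$ and all others in $\tilde B$.

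\textbf{Key steps.} First I would show: if $S(\alpha_1,\ldots,\alpha_n;\tau_-,\tau_+) \neq 0$, then $\tau_0(\alpha_1) = \cdots = \tau_0(\alpha_n)$. Indeed, in each of the cases \ref{it:S-case-1}, \ref{it:S-case-2} the hypothesis involves $\tau_-$ comparing $\alpha_i$ with $\alpha_{i+1}$ and $\tau_+$ comparing the partial sums $\alpha_1+\cdots+\alpha_i$ and $\alpha_{i+1}+\cdots+\alpha_n$; if some $\tau_0$-slope strictly dominated, \eqref{eq:simple-type-inert-class-1} would propagate this to both $\tau_-$ and $\tau_+$, and one checks the two cases become incompatible. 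So the splitting is one to which Definition~\ref{def:wall-crossing-simple-type}\ref{eq:simple-type-splitting} applies: there is a unique $i_0$ with $\alpha_{i_0}\in A$ and $\alpha_j \in \tilde B$ for $j\neq i_0$. Next, using condition~(c) of Definition~\ref{def:wall-crossing-simple-type} (the slopes $\tau_t(A)$ and $\tau_t(B)$ cross transversally at $t=0$) together with condition~(e) (classes in $\tilde B\setminus B$ have $\tau_+$-slope $\neq \tau_+(A)$, and the sign of $\tau_\pm(\gamma)-\tau_\pm(A)$ is consistent), I would argue that if any $\alpha_j$ with $j\neq i_0$ lies in $\tilde B\setminus B$, then $S = 0$: such a $\gamma$ either strictly dominates or is strictly dominated at both $\tau_-$ and $\tau_+$ relative to the neighboring partial sums, which contradicts the alternating-sign structure required for a non-zero $S$. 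This leaves only splittings with $\alpha_{i_0}\in A$ and $\alpha_j \in B$ for $j\neq i_0$. Finally, for such a splitting I would compute $S$ explicitly: since $\tau_-(A) > \tau_-(B)$ and $\tau_+(A) < \tau_+(B)$, case~\ref{it:S-case-1} of Definition~\ref{def:universal-coefficients} occurs at index $i$ precisely when $\alpha_i \in B$ and $\alpha_{i+1}$ is a ``later'' block, i.e. when $i < i_0$, and case~\ref{it:S-case-2} occurs when $i \geq i_0$; a short count shows $S = (-1)^{i_0 - 1}$ when $\alpha_{i_0}$ is in position $i_0$ and all earlier blocks are in $B$.

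\textbf{From $U$ to $\tilde U$.} Having pinned down $S$, I would feed this into the double-grouping formula for $U$ in Definition~\ref{def:universal-coefficients}. The $\tau$-permissibility and $\tau'$-permissibility conditions, combined with the simple-type structure, should force each inner block $\beta_i$ to again be of the ``one $A$-class, rest $B$-classes'' shape, and each outer block $\gamma_j$ to contain exactly the $A$-class or be entirely in $B$; the sum over groupings then collapses to a manageable combinatorial sum, which I expect reduces $U(\alpha_1,\ldots,\alpha_n;\tau_-,\tau_+)$ to something like $(-1)^{k-1}/k \cdot (\text{multinomial factors})$ where $k$ is determined by how $B$-classes cluster. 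Then Lemma~\ref{lem:Utilde-definition} (equivalently the change of basis from the associative-algebra generators to the free-Lie-algebra generators) converts the generating identity \eqref{eq:U-vs-Utilde} into the claimed value $\tilde U = 1/(n-1)!$ when $\alpha_1 \in A$, $\alpha_2,\ldots,\alpha_n \in B$ and $0$ otherwise. Concretely, I would verify that $\sum U(\alpha_1,\ldots,\alpha_n)\,\epsilon(\alpha_1)\cdots\epsilon(\alpha_n)$ equals $\exp$ of a sum over $B$-classes applied to a single $A$-generator, matching the shape of Proposition~\ref{prop:wcf-simple-type}, and read off $\tilde U$ by comparing with the left-hand (iterated-bracket) side of \eqref{eq:U-vs-Utilde}.

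\textbf{Main obstacle.} The delicate point is the vanishing argument for splittings involving $\tilde B \setminus B$: one must carefully track how the partial-sum comparisons in Definition~\ref{def:universal-coefficients} interact with condition~(e) of Definition~\ref{def:wall-crossing-simple-type}, since a block in $\tilde B\setminus B$ sits on the ``wrong'' side of the wall at $\tau_0$ and the sign of its contribution must be shown to be forced in a way that kills $S$ (and hence all of $U$, and hence $\tilde U$, by Lemma~\ref{wc:lem:U-properties}\ref{it:U-vanishing}). The rest is bookkeeping: once the relevant splittings are identified, computing $S$, then $U$, then $\tilde U$ is a finite combinatorial exercise, and the final $1/(n-1)!$ is exactly the coefficient that makes the exponential-of-adjoint formula in Proposition~\ref{prop:wcf-simple-type} come out right, so the target value is essentially dictated by consistency with that proposition.
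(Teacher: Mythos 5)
Your overall route is the same as the paper's (kill splittings outside $A \sqcup B$ via Lemma~\ref{wc:lem:U-properties}\ref{it:U-vanishing}, compute $S$, feed it into the double-grouping formula for $U$, then read off $\tilde U$ from the exponential identity $e^{-X}Ye^{X}=e^{-\ad_X}Y$), but your explicit computation of $S$ is wrong, and the error is not cosmetic. You claim that for a splitting with the unique $A$-class in position $i_0$ and all other classes in $B$, case~\ref{it:S-case-1} holds for $i<i_0$ and case~\ref{it:S-case-2} holds for $i\ge i_0$, giving $S=(-1)^{i_0-1}$ for every $i_0$. But for any index $i>i_0$ both $\alpha_i$ and $\alpha_{i+1}$ lie in $B$, so $\tau_-(\alpha_i)=\tau_-(\alpha_{i+1})$ and case~\ref{it:S-case-2} fails (it requires a \emph{strict} $\tau_-$-inequality); case~\ref{it:S-case-1} also fails there, because the partial sum $\alpha_1+\cdots+\alpha_i$ contains the $A$-class, hence lies in $A$ (condition $\tilde B + A\subseteq A$) and has $\tau_+$-slope $\tau_+(A)$, while $\alpha_{i+1}+\cdots+\alpha_n$ is a sum of $B$-classes whose slope is $\tau_+(B)>\tau_+(A)$ by the weak see-saw property. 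So $S$ vanishes unless the $A$-class sits in position $n$ or $n-1$, where it equals $(-1)^{n-1}$ and $(-1)^{n-2}$ respectively (the paper's Lemma~\ref{lem:S-calculation}); your formula is only correct in those two cases.

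This error propagates: already for $n=3$ with $\alpha_1\in A$, $\alpha_2,\alpha_3\in B$, your $S$-values give $U=3/2$ in the double-grouping sum, whereas the correct answer is $U(\alpha_1,\ldots,\alpha_n)=(-1)^{i-1}/((n-i)!(i-1)!)$ (here $1/2$), which is what is needed for the generating series to collapse to $e^{-\sum_B\epsilon}\,(\sum_A\epsilon)\,e^{\sum_B\epsilon}$ and hence for $\tilde U=1/(n-1)!$. Two further cautions: the collapse of the $U$-sum uses a genuine combinatorial identity ($\sum_{0=a_0<\cdots<a_m=n}(-1)^m\prod_k 1/(a_k-a_{k-1})!=(-1)^n/n!$, the paper's Lemma~\ref{lem:U-combinatorics}) which your sketch leaves as vague "bookkeeping"; and your closing remark that the value $1/(n-1)!$ is "dictated by consistency with Proposition~\ref{prop:wcf-simple-type}" is circular, since that proposition is deduced from the present one, so the coefficient must be obtained by the direct computation rather than read off from the target formula. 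The vanishing step for splittings involving $\tilde B\setminus B$ or unequal $\tau_0$-slopes is essentially right as you describe it, provided you apply Lemma~\ref{wc:lem:U-properties}\ref{it:U-vanishing} directly (which kills $S$, $U$, and $\tilde U$ simultaneously) rather than arguing only that $S=0$.
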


Proposition~\ref{prop:Utilde}, along with the wall-crossing formula
(Theorem~\ref{thm:wcf}), immediately implies
Proposition~\ref{prop:wcf-simple-type}. The goal of this subsection is
to prove Proposition~\ref{prop:Utilde} from first principles, i.e.
Definition~\ref{def:universal-coefficients} of $\tilde U$. So, for the
remainder of this subsection, assume we are in the setting of
Proposition~\ref{prop:Utilde} above.

\subsubsection{}

\begin{lemma} \label{lem:S-and-U-vanishing}
  Unless $\alpha_1, \ldots, \alpha_n \in A \sqcup B$, we have
  \[ S(\alpha_1, \ldots, \alpha_n; \tau_-, \tau_+) = U(\alpha_1, \ldots, \alpha_n; \tau_-, \tau_+) = \tilde U(\alpha_1, \ldots, \alpha_n; \tau_-, \tau_+) = 0. \]
\end{lemma}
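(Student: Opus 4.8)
The plan is to prove this by induction on $n$, using the definitions of $S$, $U$, and $\tilde U$ from Definition~\ref{def:universal-coefficients} together with the structural properties of a wall-crossing problem of simple type. The key observation is that a nonzero value of $S(\alpha_1,\ldots,\alpha_n;\tau_-,\tau_+)$ forces, for each $i$, one of the two sign-alternation conditions \ref{it:S-case-1}/\ref{it:S-case-2} to hold; I will show that these conditions, combined with conditions (c)--(e) of Definition~\ref{def:wall-crossing-simple-type}, are incompatible with having some $\alpha_j \notin A \sqcup B$.

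First I would reduce to the case of $S$. Indeed, $U$ is built from $S$ by summing over $(\tau_-,\tau_+)$-permissible double groupings, and each term is a product of $S$-factors of the shorter sequences $(\beta_{b_{j-1}+1},\ldots,\beta_{b_j})$. By condition (a) of Definition~\ref{def:wall-crossing-simple-type}, $\tilde B + A \subseteq A$, and since $B \subseteq \tilde B$, any grouping $\beta_i = \alpha_{a_{i-1}+1}+\cdots+\alpha_{a_i}$ of classes from $A \sqcup B$ again lies in $A \sqcup B$ (a sum involving at least one $A$-class lands in $A$, a sum of only $B$-classes lands in $B$ since $B$ is closed under the relevant additions as a ``constant slope'' set — this needs a small check). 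Hence if some $\alpha_j \notin A \sqcup B$, then in every double grouping the block $\gamma_{\cdot}$ containing $\alpha_j$ is a sum of classes not all in $A \sqcup B$, so by the $S$-vanishing (applied to that block) each term in the sum defining $U$ vanishes; thus $U$ vanishes. Then $\tilde U$ vanishes because the relation \eqref{eq:U-vs-Utilde} of Lemma~\ref{lem:Utilde-definition} is triangular: $\tilde U(\alpha_1,\ldots,\alpha_n;\tau_-,\tau_+)$ is determined by the $U$-coefficients of $(\alpha_1,\ldots,\alpha_n)$ and of its coarsenings, all of which vanish when some $\alpha_j \notin A \sqcup B$ (a coarsening replaces a consecutive block by its sum, which if it contains $\alpha_j$ is again not in $A \sqcup B$ by the structural properties, so the corresponding $U$ still vanishes).

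So the heart of the matter is: if some $\alpha_j \notin A \sqcup B$, then $S(\alpha_1,\ldots,\alpha_n;\tau_-,\tau_+) = 0$. Suppose for contradiction that $S \neq 0$, so for every $i = 1,\ldots,n-1$ either \ref{it:S-case-1} or \ref{it:S-case-2} holds, involving the comparisons $\tau_-(\alpha_i)$ vs.\ $\tau_-(\alpha_{i+1})$ and $\tau_+(\alpha_1+\cdots+\alpha_i)$ vs.\ $\tau_+(\alpha_{i+1}+\cdots+\alpha_n)$. I would argue that $\alpha_j \notin A \sqcup B$ implies $\alpha_j \notin \tilde B$ or $\alpha_j \in \tilde B \setminus B$: in the first case, condition (b) fails to even apply unless $\alpha_1 + \cdots + \alpha_n \in A$ forces the splitting to have the form \eqref{eq:simple-type-splitting}, contradicting $\alpha_j \notin \tilde B$; in the second case, $\alpha_j \in \tilde B \setminus B$ means by condition (e) that $\tau_+(\alpha_j) \neq \tau_+(A)$ and the sign of $\tau_\pm(\alpha_j) - \tau_\pm(A)$ is constant in $\pm$, and similarly for $B$. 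The main obstacle — and the step I expect to be most delicate — is combining the sign-alternation structure of $S$ with condition (d) (monotonicity of the strict inequality $\tau_0(\gamma)<\tau_0(\gamma')$ across $t$) and condition (e) to derive the contradiction: one has to track how the partial sums $\alpha_1+\cdots+\alpha_i$ interact with the location of the ``bad'' index $j$, showing that the required alternation pattern at $\tau_+$ (built from comparisons against $\tau_+(A)$ or $\tau_+(B)$ via the structural decomposition of partial sums into $A$- and $B$-pieces plus the bad piece) cannot be matched by the $\tau_-$ pattern unless $\alpha_j$ is actually a $B$-class. I would make this precise by first handling the case $\alpha_j \notin \tilde B$ (where $\alpha_1 + \cdots + \alpha_n = \alpha \in A$ together with condition (b) gives an immediate contradiction, since the splitting with $\tau_0$-equal slopes must then place all non-$A$ parts in $\tilde B$), and then the case $\alpha_j \in \tilde B \setminus B$, where I use that such a $\gamma$ has $\tau_t(\gamma)$ staying strictly on one side of $\tau_t(A)$ and of $\tau_t(B)$ for all $t$, which pins down the $S$-alternation conditions at index $j-1$ and $j$ and forces a sign clash.

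Finally, once Lemma~\ref{lem:S-and-U-vanishing} is in place, Proposition~\ref{prop:Utilde} follows by a direct computation of $\tilde U$ on sequences with all parts in $A \sqcup B$: condition (c) says $\tau_-(A) > \tau_-(B)$ and $\tau_+(A) < \tau_+(B)$, so in the permissible double groupings only the grouping respecting the ``$A$ first, then all $B$'s'' order survives (again by the $S$-vanishing criterion of Lemma~\ref{wc:lem:U-properties}\ref{it:U-vanishing} applied with $I$ the set of $B$-indices), giving the stated value $1/(n-1)!$; this last part is the routine calculation I would not grind through here.
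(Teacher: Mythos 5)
There is a genuine gap: the central claims of your plan are either deferred or do not go through as stated, and you miss the short route the paper takes. The paper's proof is a direct application of Lemma~\ref{wc:lem:U-properties}\ref{it:U-vanishing}: if the $\tau_0(\alpha_i)$ are not all equal, take $I$ to be the set of indices of minimal (or maximal) $\tau_0$-slope; condition \eqref{eq:simple-type-inert-class-1} transfers the strict $\tau_0$-inequalities to $\tau_-$ (between the $\alpha_i$) and to $\tau_+$ (against the total class), and the criterion kills $S$, $U$ and $\tilde U$ simultaneously. If the $\tau_0(\alpha_i)$ are all equal, then condition (b) forces every $\alpha_i$ into $A \sqcup \tilde B$ with exactly one in $A$, so a bad class must lie in $\tilde B \setminus B$, and condition \eqref{eq:simple-type-inert-class-2} supplies the strict $\tau_\pm$-inequalities needed to apply the same criterion to $I_>$ or $I_<$. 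Your sketch never actually handles the first case (when the $\tau_0$-slopes differ, condition (b) does not apply at all, and this is precisely where (d) is needed), and the ``sign clash'' argument for the $\tilde B \setminus B$ case is exactly the step you defer as delicate --- but that deferred step is the whole content of the lemma, and it is handled in the paper not by analyzing the alternation pattern in the definition of $S$ but by exhibiting the subset $I$ and quoting the criterion. Ironically, you invoke that very criterion at the end for Proposition~\ref{prop:Utilde}; applied with the right $I$ it is the entire proof here.

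Beyond the deferred step, two of your reduction moves are flawed. First, $U$-vanishing does not follow from $S$-vanishing the way you propose: in Definition~\ref{def:universal-coefficients} the $S$-factors entering $U$ are evaluated on the sums $\beta_i = \alpha_{a_{i-1}+1}+\cdots+\alpha_{a_i}$, and a block containing the bad class can perfectly well sum into $A \sqcup B$ (nothing in Definition~\ref{def:wall-crossing-simple-type} forbids $\gamma \in \tilde B\setminus B$, or $\gamma\notin\tilde B$, with $\gamma+\delta\in A$), so the $S$-vanishing you would have proved does not apply to those factors; your parenthetical closure claim goes in the wrong direction for what you need. Second, the passage to $\tilde U$ via ``coarsenings'' misreads \eqref{eq:U-vs-Utilde}: expanding the iterated brackets in the universal enveloping algebra relates $\tilde U(\alpha_1,\ldots,\alpha_n)$ to the $U$-coefficients of \emph{reorderings} of the same sequence, not to coarsenings, so the triangularity you appeal to is not the right structure (this part is repairable, since the hypothesis is permutation-invariant, but it is not what you wrote). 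The cleanest fix is to abandon the $S\Rightarrow U\Rightarrow\tilde U$ bootstrap entirely and argue as the paper does, via the single criterion of Lemma~\ref{wc:lem:U-properties}\ref{it:U-vanishing}.
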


Note that $\alpha_1, \ldots, \alpha_n \in A \sqcup B$ implies that
$\tau_0(\alpha_1) = \cdots = \tau_0(\alpha_n)$ and thus
\eqref{eq:simple-type-splitting} holds. Hence there is a unique $i \in
\{1, \ldots, n\}$ such that
\begin{equation} \label{eq:simple-type-splitting-true}
  \alpha_i \in A, \qquad \alpha_j \in B \text{ for all } j \neq i.
\end{equation}

\begin{proof}
  Suppose the $\tau_0(\alpha_i)$ are not all equal, so either $\min_i
  \tau_0(\alpha_i) < \tau_0(A)$ or $\tau_0(A) < \max_i
  \tau_0(\alpha_i)$. Suppose $\min_i \tau_0(\alpha_i) < \tau_0(A)$.
  Let $I$ be the set of $i$ for which $\tau_0(\alpha_i)$ is minimal.
  For such $i \in I$, we have $\tau_0(\alpha_i) < \tau_0(A)$ and
  $\tau_0(\alpha_i) < \tau_0(\alpha_j)$ for $j \notin I$. By
  \eqref{eq:simple-type-inert-class-1}, $\tau_+(\alpha_i) < \tau_+(A)$
  and $\tau_-(\alpha_i) < \tau_-(\alpha_j)$ for $j \notin I$. The
  desired vanishing then follows from
  Lemma~\ref{wc:lem:U-properties}\ref{it:U-vanishing} applied to $I$.
  The other case $\tau_0(A) < \max_i \tau_0(\alpha_i)$ is completely
  analogous.
  
  Suppose the $\tau_0(\alpha_i)$ are all equal, and therefore
  \eqref{eq:simple-type-splitting} holds, but at least $\alpha_i$
  belongs to $\tilde B \setminus B$. Set $I_> \coloneqq \{i : \alpha_i
  \in \tilde B \setminus B, \, \tau_+(\alpha_i) > \tau_+(A)\}$ and
  similarly define $I_<$. For $i \in I_>$,
  \eqref{eq:simple-type-inert-class-2} yields $\tau_-(\alpha_i) >
  \tau_-(A)$ ($> \tau_-(B)$). For $i \in I_<$,
  \eqref{eq:simple-type-inert-class-2} yields $\tau_-(\alpha_i) >
  \tau_-(A), \tau_-(B)$. Hence, if $I_> \neq \emptyset$ (resp. $I_<
  \neq \emptyset$), then the desired vanishing follows from
  Lemma~\ref{wc:lem:U-properties}\ref{it:U-vanishing} applied to $I_>$
  (resp. $I_<$).
\end{proof}

\subsubsection{}

\begin{lemma} \label{lem:S-calculation}
  \[ S(\alpha_1, \ldots, \alpha_n; \tau_-, \tau_+) = \begin{cases} (-1)^{n-1} & \alpha_n \in A, \; \alpha_1, \ldots, \alpha_{n-1} \in B, \\ (-1)^{n-2} & \alpha_{n-1} \in A, \; \alpha_1, \ldots, \alpha_{n-2}, \alpha_n \in B, \\ 0 & \text{otherwise.} \end{cases} \]
\end{lemma}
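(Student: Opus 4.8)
The plan is to compute $S(\alpha_1, \ldots, \alpha_n; \tau_-, \tau_+)$ directly from Definition~\ref{def:universal-coefficients}, using the casework structure already set up in Lemma~\ref{lem:S-and-U-vanishing}. First I would observe that, by Lemma~\ref{lem:S-and-U-vanishing}, the coefficient $S$ vanishes unless $\alpha_1, \ldots, \alpha_n \in A \sqcup B$, so it remains to treat that case, in which \eqref{eq:simple-type-splitting-true} holds: there is a unique index $i$ with $\alpha_i \in A$ and $\alpha_j \in B$ for all $j \neq i$. In this situation $\tau_0$ is constant on all the $\alpha_k$ (equal to $\tau_0(A) = \tau_0(B)$), and, crucially, $\tau_-$ and $\tau_+$ are each constant on $\{\alpha_j : j \neq i\}$ (equal to $\tau_-(B)$ resp. $\tau_+(B)$) while $\tau_-(\alpha_i) = \tau_-(A)$ and $\tau_+(\alpha_i) = \tau_+(A)$.

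Next I would unwind the definition of $S$. Recall $S(\alpha_1, \ldots, \alpha_n; \tau, \tau') = (-1)^r$ (or $0$) where for each $k = 1, \ldots, n-1$ one checks whether case \ref{it:S-case-1} (namely $\tau(\alpha_k) \le \tau(\alpha_{k+1})$ and $\tau'(\alpha_1 + \cdots + \alpha_k) > \tau'(\alpha_{k+1} + \cdots + \alpha_n)$) or case \ref{it:S-case-2} (the reverse inequalities) holds, and $r$ counts the case-\ref{it:S-case-1} indices. Here $\tau = \tau_-$ and $\tau' = \tau_+$. For the first condition, $\tau_-(\alpha_k) \le \tau_-(\alpha_{k+1})$: since $\tau_-(A) > \tau_-(B)$, this holds iff NOT ($\alpha_k \in A$ and $\alpha_{k+1} \in B$), i.e. iff $k \neq i$ or the positions are such that $\alpha_k \notin A$. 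So case \ref{it:S-case-1} is \emph{forced} for every $k < i$ and for every $k > i$, and for $k = i$ (where $\alpha_i \in A$, $\alpha_{i+1} \in B$) we necessarily fall into case \ref{it:S-case-2} \emph{provided} the $\tau_+$ inequality there is consistent. For the $\tau_+$ partial sums: $\alpha_1 + \cdots + \alpha_k$ contains $\alpha_i$ iff $k \ge i$, and any partial sum containing $\alpha_i$ lies in $A$ (using $\tilde B + A \subseteq A$ and $B \subseteq \tilde B$), hence has $\tau_+$-slope $\tau_+(A)$; a partial sum not containing $\alpha_i$ is a sum of classes in $B$, hence (using that $B$ sums stay... ) has slope $\tau_+(B)$. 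Since $\tau_+(A) < \tau_+(B)$, for $k < i$ we get $\tau_+(\alpha_1 + \cdots + \alpha_k) = \tau_+(B) > \tau_+(A) = \tau_+(\alpha_{k+1} + \cdots + \alpha_n)$ — consistent with case \ref{it:S-case-1}; for $k \ge i$ we get $\tau_+(\alpha_1 + \cdots + \alpha_k) = \tau_+(A) < \tau_+(B) = \tau_+(\alpha_{k+1} + \cdots + \alpha_n)$ — consistent with case \ref{it:S-case-1} only for $k > i$ (where the $\tau_-$ inequality also forced case \ref{it:S-case-1}) and consistent with case \ref{it:S-case-2} exactly at $k = i$.

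Assembling: if $\alpha_i \in A$ is the unique $A$-class, then at every $k \neq i$ we are in case \ref{it:S-case-1} and at $k = i$ we are in case \ref{it:S-case-2}, so $S$ is defined (not zero) and equals $(-1)^{(n-1) - 1} = (-1)^{n-2}$ — \emph{unless} $i = n$, in which case there is no index $k = i$ at all, every $k \in \{1, \ldots, n-1\}$ is in case \ref{it:S-case-1}, and $S = (-1)^{n-1}$. This recovers exactly the two nonzero cases in the claim: $\alpha_n \in A$ gives $(-1)^{n-1}$, and $\alpha_{n-1} \in A$ (so $i = n-1$) gives $(-1)^{n-2}$. It remains to rule out $1 \le i \le n-2$: I would argue that in that case the claim asserts $S = 0$, but my computation above gave $(-1)^{n-2} \neq 0$. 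This apparent contradiction is the main obstacle — I must re-examine the $\tau_-$ inequality at index $i+1$ (where $\alpha_{i+1} \in B$ and $\alpha_{i+2} \in B$ if $i \le n-2$, so $\tau_-(\alpha_{i+1}) = \tau_-(\alpha_{i+2})$, giving case \ref{it:S-case-1}, fine) — so the genuine obstruction is at some index $k$ \emph{strictly between} $i$ and $n$: there, I claim the $\tau_+$ partial-sum condition and the $\tau_-$ slope condition become \emph{inconsistent} (one forces case \ref{it:S-case-1}, the other case \ref{it:S-case-2}, or neither holds), forcing $S = 0$. Concretely, for $i < k < n$: $\tau_-(\alpha_k) = \tau_-(\alpha_{k+1}) = \tau_-(B)$ gives $\le$, so case \ref{it:S-case-1} requires $\tau_+(\alpha_1 + \cdots + \alpha_k) > \tau_+(\alpha_{k+1} + \cdots + \alpha_n)$; but $\alpha_1 + \cdots + \alpha_k$ contains $\alpha_i$ so has slope $\tau_+(A)$, while $\alpha_{k+1} + \cdots + \alpha_n$ is a $B$-sum with slope $\tau_+(B) > \tau_+(A)$ — so the inequality fails, and neither case holds, hence $S = 0$. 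This closes the argument, and I would write it up as: (i) invoke Lemma~\ref{lem:S-and-U-vanishing} for the non-$A\sqcup B$ case; (ii) set up the slope values of $\tau_\pm$ on the $\alpha_k$ and on partial sums; (iii) check the two cases at each index $k$ as a function of the position of $i$; (iv) conclude the three-way split, with the key point being that an index strictly between $i$ and $n$ kills $S$, leaving only $i = n$ and $i = n-1$.
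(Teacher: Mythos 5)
Your proposal is correct and follows essentially the same route as the paper: reduce to the splitting \eqref{eq:simple-type-splitting-true} via Lemma~\ref{lem:S-and-U-vanishing}, then evaluate the two conditions \ref{it:S-case-1} and \ref{it:S-case-2} index by index using $\tau_-(A) > \tau_-(B)$, $\tau_+(A) < \tau_+(B)$, and the fact that any partial sum containing the unique $A$-class lies in $A$ (from $\tilde B + A \subseteq A$). The only cosmetic differences are that the paper rules out $i \le n-2$ by testing just the index $k = n-1$, where the complementary sum is the single class $\alpha_n$, while you test an arbitrary $k$ with $i < k < n$, and the fact you left half-stated --- that a sum of classes in $B$ has $\tau_+$-slope exactly $\tau_+(B)$ --- does hold, by the weak see-saw property of $\tau_+$ (and is used implicitly in the paper's nonzero cases as well).
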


\begin{proof}
  By Lemma~\ref{lem:S-and-U-vanishing}, $S = 0$ unless the splitting
  \eqref{eq:simple-type-splitting-true} holds.
  \begin{itemize}
  \item If $i < n-1$, then $\tau_-(\alpha_{n-1}) =
    \tau_-(\alpha_n)=\tau_-(B)$, but
    $\tau_+(\alpha_1+\cdots+\alpha_{n-1}) = \tau_+(A) < \tau_+(B) =
    \tau_+(\alpha_n)$. Hence $S = 0$ by definition.
  \item If $i = n-1$, then condition~\ref{it:S-case-1} in the
    definition of $S$ is satisfied for all $i < n-1$, and
    condition~\ref{it:S-case-2} is satisfied for $i = n-1$. Hence $S =
    (-1)^{n-2}$.
  \item If $i = n$, then condition~\ref{it:S-case-1} in the definition
    of $S$ is satisfied for all $i$. Hence $S = (-1)^{n-1}$. \qedhere
  \end{itemize}
\end{proof}

\subsubsection{}

\begin{lemma} \label{lem:U-calculation}
  \[ U(\alpha_1, \ldots, \alpha_n; \tau_-, \tau_+) = \begin{cases} \frac{(-1)^{i-1}}{(n-i)!(i-1)!} & \alpha_i \in A, \; \alpha_j \in B \text{ for all } j \neq i \\ 0 & \text{otherwise}. \end{cases} \]
\end{lemma}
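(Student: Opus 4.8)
The plan is to compute $U(\alpha_1, \ldots, \alpha_n; \tau_-, \tau_+)$ directly from its definition (Definition~\ref{def:universal-coefficients}) as a sum over $(\tau_-, \tau_+)$-permissible double groupings, using Lemma~\ref{lem:S-and-U-vanishing} to restrict attention to the case $\alpha_1, \ldots, \alpha_n \in A \sqcup B$ and Lemma~\ref{lem:S-calculation} for the values of the $S$-factors. By Lemma~\ref{lem:S-and-U-vanishing}, we may assume there is a unique index $i$ with $\alpha_i \in A$ and all other $\alpha_j \in B$; otherwise every term vanishes.

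First I would analyze which double groupings contribute. Recall a double grouping is a pair of nested partitions $0 = a_0 < \cdots < a_m = n$ and $0 = b_0 < \cdots < b_l = m$, giving classes $\beta_p = \alpha_{a_{p-1}+1} + \cdots + \alpha_{a_p}$ and $\gamma_q = \beta_{b_{q-1}+1} + \cdots + \beta_{b_q}$. Since $\tilde B + A \subseteq A$ and $B \subseteq \tilde B$, each $\beta_p$ lies in $A$ if its defining block contains the index $i$, and in $B$ otherwise; likewise $\gamma_q \in A$ iff its block contains the ``$A$-block'' of $\beta$'s. The $\tau_-$-permissibility condition requires $\tau_-(\beta_p) = \tau_-(\alpha_j)$ for $j$ in the $p$-th block; since $\tau_-(A) \ne \tau_-(B)$ (they are $> $ and the classes in $A$ and $B$ have distinct $\tau_-$-slopes), this forces each block of the $a$-partition to consist entirely of $B$-classes, with the sole exception of the block containing $i$, which must be the singleton $\{i\}$. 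So the $a$-partition is: the singleton $\{i\}$, together with an arbitrary partition of $\{1, \ldots, n\} \setminus \{i\}$ into consecutive blocks. Similarly $\tau_+$-permissibility requires $\tau_+(\gamma_q) = \tau_+(A)$ for all $q$; since $\gamma_q \in A$ or $\gamma_q \in B$ and $\tau_+(A) \ne \tau_+(B)$, every $\gamma_q$ must lie in $A$, which forces $l = 1$, i.e. there is a single $\gamma$-block equal to all of $m$.

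With $l = 1$, the formula for $U$ collapses: the prefactor $(-1)^{l-1}/l = 1$, the single $S$-factor is $S(\beta_1, \ldots, \beta_m; \tau_-, \tau_+)$, and we multiply by $\prod_p 1/(a_p - a_{p-1})!$. By Lemma~\ref{lem:S-calculation}, $S(\beta_1, \ldots, \beta_m; \tau_-, \tau_+)$ is nonzero only when the unique $A$-valued $\beta_p$ is in position $m$ or $m-1$; but since the only $A$-valued block is the singleton $\{i\}$ and the blocks are consecutive, the position of that block among $\beta_1, \ldots, \beta_m$ is determined by how many $\alpha$-indices precede $i$, namely it is the $p$ with $a_{p-1} = i-1$, $a_p = i$. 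The $S$-factor contributes $(-1)^{m-1}$ if $\{i\}$ is the last block (i.e. $i = n$), $(-1)^{m-2}$ if it is the second-to-last, and $0$ otherwise. Thus the sum reduces to: the first $i-1$ indices $\{1, \ldots, i-1\}$ are grouped into some number $r$ of consecutive blocks, then the singleton $\{i\}$, then the last $n-i$ indices are grouped into blocks such that $\{i\}$ is in position $m-1$ or $m$ among the $\beta$'s — meaning the indices after $i$ form exactly one block (if $i = n-1$, one trailing block; or $n-i = 0$). Carefully bookkeeping the signs and the multinomial weights $\prod 1/(a_p - a_{p-1})!$, and summing over all consecutive-block partitions of $\{1,\ldots,i-1\}$, one obtains a generating-function identity; the relevant fact is that $\sum$ over compositions of $k$ of $\prod 1/(\text{part})!$ with alternating sign $(-1)^{\#\text{parts}-1}$ telescopes to $1/k!$ (this is the $\log$-of-$\exp$ combinatorial identity, equivalently the statement $\sum_{\text{compositions } (c_1,\ldots,c_r) \text{ of } k} (-1)^{r-1} \prod 1/c_j! = 1/k!$ for $k \ge 1$, and $= 0$ interpreted suitably). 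Matching the two allowed positions of $\{i\}$ and the resulting sign, the total is $(-1)^{i-1}/((n-i)!(i-1)!)$.

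The main obstacle I anticipate is the careful sign and multinomial bookkeeping in this last step: one must correctly track how the sign $(-1)^{m-1}$ or $(-1)^{m-2}$ from $S$ interacts with the number of blocks $r$ in the partition of $\{1,\ldots,i-1\}$ (which contributes to $m$), verify that the trailing indices $\{i+1,\ldots,n\}$ are forced to be a single block (or empty) so that their contribution is simply $1/(n-i)!$, and then apply the composition identity to the $\{1,\ldots,i-1\}$ part to collapse $\sum_r (-1)^{r-1}\sum_{\text{compositions}} \prod 1/c_j! = 1/(i-1)!$. Everything else — the permissibility analysis and the reduction to $l=1$ — is straightforward casework given Lemmas~\ref{lem:S-and-U-vanishing} and \ref{lem:S-calculation} and the hypotheses $\tau_-(A) \ne \tau_-(B)$, $\tau_+(A) \ne \tau_+(B)$.
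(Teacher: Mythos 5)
Your route is essentially the paper's: reduce via Lemma~\ref{lem:S-and-U-vanishing} to a unique $\alpha_i\in A$ with all other $\alpha_j\in B$, use $\tau_-$-permissibility to force the block containing $i$ to be the singleton $\{i\}$ and $\tau_+$-permissibility to force $l=1$, then apply Lemma~\ref{lem:S-calculation} to conclude that only groupings with $\{i\}$ in the last or second-to-last position survive, so the trailing indices $\{i+1,\dots,n\}$ form a single block of weight $1/(n-i)!$ and everything reduces to an alternating sum over compositions of $i-1$. This is exactly how the paper argues, and the permissibility analysis is fine (one small caveat you share with the paper: a sum of classes in $B$ need not lie in $B$, so when feeding the $\beta_p$'s into Lemma~\ref{lem:S-calculation} one should observe that only the slopes matter and that $\tau_\pm(\beta_p)=\tau_\pm(B)$ by the weak see-saw property).

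The genuine defect is the combinatorial identity you invoke at the end, which is precisely the step you flagged as delicate. The correct statement is $\sum_{(c_1,\dots,c_r)\text{ a composition of }k}(-1)^{r-1}\prod_j 1/c_j! = (-1)^{k-1}/k!$ --- this is the paper's Lemma~\ref{lem:U-combinatorics}, with generating function $f/(1+f)=1-e^{-t}$ for $f=e^t-1$ (not the log-of-exp identity, which carries an extra $1/r$). Your version, $=1/k!$, drops the factor $(-1)^{k-1}$; already for $k=2$ the compositions $(2)$ and $(1,1)$ give $\tfrac12-1=-\tfrac12\neq\tfrac12$. This is not a harmless slip, because that factor is the sole source of the sign $(-1)^{i-1}$ in the statement: combining $S=(-1)^{m-2}$ (resp.\ $(-1)^{m-1}$ when $i=n$) with $m=r+2$ (resp.\ $r+1$), each surviving term carries sign $(-1)^r$, where $r$ is the number of blocks partitioning $\{1,\dots,i-1\}$, and the composition sum then yields $(-1)^{i-1}/(i-1)!$, hence $U=(-1)^{i-1}/((n-i)!\,(i-1)!)$. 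The sign does not come from ``matching the two allowed positions of $\{i\}$'' --- both positions produce the same pattern $(-1)^r$ --- and with the identity as you state it the bookkeeping would output $-1/((n-i)!\,(i-1)!)$, which is wrong for all odd $i\geq 3$. Once the identity is corrected, your computation closes and coincides with the paper's proof.
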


\begin{proof}
  By Lemma~\ref{lem:S-and-U-vanishing} and the definition of a simple
  type wall-crossing problem, $U = 0$ unless we are in the first case.
  We use the notation of Definition~\ref{def:universal-coefficients}
  for $U$. Note that $\tau_\pm(A) \neq \tau_\pm(B)$. Then the
  $\tau_-$-permissible condition for the double grouping implies that
  if $a_{j-1} < i \le a_j$ then actually $a_{j-1} + 1 = a_j$, i.e.
  $\beta_j = \alpha_i$, and the $\tau_+$-permissible condition for the
  double grouping implies that $l = 1$. Hence
  \[ U(\alpha_1, \ldots, \alpha_n; \tau_-, \tau_+) = \sum_{\substack{m>0\\0=a_0<a_1<\cdots<a_m=n\\\exists j:\, a_{j-1}+1=a_j}} S(\beta_1, \beta_2, \ldots, \beta_m; \tau_-, \tau_+) \prod_{k=1}^m \frac{1}{(a_k - a_{k-1})!}. \]
  By Lemma~\ref{lem:S-calculation}, a term in this sum is non-zero if
  and only if $a_{m-1} = i$ or $a_m = i$. The former case is only possible
  if $i < n$, giving
  \[ U(\alpha_1, \ldots, \alpha_n; \tau_-, \tau_+) = \frac{1}{(n-i)!} \sum_{\substack{m>0\\0=a_0<a_1<\cdots<a_{m-2}=i-1}} (-1)^{m-2} \prod_{k=1}^{m-2} \frac{1}{(a_k - a_{k-1})!}. \]
  The latter case is possible only if $i = n$, giving
  \[ U(\alpha_1, \ldots, \alpha_n; \tau_-, \tau_+) = \sum_{\substack{m>0\\0=a_0<a_1<\cdots<a_{m-1}=n-1}} (-1)^{m-1} \prod_{k=1}^{m-1} \frac{1}{(a_k - a_{k-1})!}. \]
  We conclude using the following Lemma~\ref{lem:U-combinatorics}.
\end{proof}

\subsubsection{}

\begin{lemma} \label{lem:U-combinatorics}
  \[ \sum_{\substack{m>0\\0=a_0<a_1<\cdots<a_m=n}} (-1)^m \prod_{k=1}^m \frac{1}{(a_k - a_{k-1})!} = \frac{(-1)^n}{n!}. \]
\end{lemma}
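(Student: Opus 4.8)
The plan is to recognize the sum as a coefficient extraction in a power series identity. Specifically, I would work in the ring $\bQ\pseries{x}$ and consider the power series $f(x) \coloneqq \sum_{k \ge 1} \frac{x^k}{k!} = e^x - 1$, which has zero constant term. The left-hand side of the claimed identity is, by definition of composition of power series, exactly the coefficient of $x^n$ in the composite
\[ \sum_{m \ge 1} (-1)^m f(x)^m = \sum_{m \ge 1} (-f(x))^m = \frac{-f(x)}{1 + f(x)} = \frac{1 - e^x}{e^x} = e^{-x} - 1. \]
Here the geometric series manipulation is valid because $f(x)$ has no constant term, so $f(x)^m$ contributes only in degrees $\ge m$ and the sum is a well-defined element of $\bQ\pseries{x}$. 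Then the coefficient of $x^n$ in $e^{-x} - 1$ is $\frac{(-1)^n}{n!}$ for $n \ge 1$, which is precisely the right-hand side.

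The only step requiring a moment of care is matching the combinatorial sum with the coefficient of $x^n$ in $f(x)^m$. Expanding $f(x)^m = \big(\sum_{j \ge 1} \frac{x^j}{j!}\big)^m$, the coefficient of $x^n$ is $\sum \prod_{k=1}^m \frac{1}{j_k!}$, where the sum is over tuples $(j_1, \ldots, j_m)$ of positive integers with $j_1 + \cdots + j_m = n$. Setting $a_k \coloneqq j_1 + \cdots + j_k$ gives a bijection between such tuples and chains $0 = a_0 < a_1 < \cdots < a_m = n$, with $j_k = a_k - a_{k-1}$, so indeed the coefficient of $x^n$ in $\sum_{m\ge 1}(-1)^m f(x)^m$ is the left-hand side of the lemma. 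I do not anticipate any genuine obstacle here; it is a routine generating-function computation, and the whole proof is a few lines.

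Alternatively, one could give a purely inductive proof on $n$: isolating the $m=1$ term (which contributes $-\frac{1}{n!}$) and re-indexing the remaining terms by their first block $a_1 = j$ yields the recursion $S_n = -\frac{1}{n!} - \sum_{j=1}^{n-1} \frac{1}{j!} S_{n-j}$ where $S_n$ denotes the left-hand side, which is the convolution recursion characterizing $\frac{(-1)^n}{n!}$; but the generating-function argument is cleaner and I would present that one.
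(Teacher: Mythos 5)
Your proof is correct, but it takes a different route from the paper. The paper argues by induction on $n$: it peels off the last block of the chain (either $m=1$, or the data of $0<b<n+1$ plus a shorter chain ending at $b$), obtaining the convolution recursion $c_{n+1} = \frac{-1}{(n+1)!} - \sum_{b=1}^n \frac{c_b}{(n+1-b)!}$, and then closes the induction using $\sum_{b=0}^{n+1}(-1)^b\binom{n+1}{b}=0$. Your main argument instead packages the whole sum as the coefficient of $x^n$ in $\sum_{m\ge 1}(-1)^m(e^x-1)^m = e^{-x}-1$ in $\bQ\pseries{x}$, which is legitimate since $e^x-1$ has zero constant term (so $1+f$ is invertible and the sum is degree-wise finite), and your bijection between compositions $j_1+\cdots+j_m=n$ and chains $0=a_0<\cdots<a_m=n$ is exactly right. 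The generating-function proof is shorter and makes the answer transparent (it is literally the expansion of $e^{-x}-1$), while the paper's induction is more elementary and self-contained, avoiding any formal-power-series formalism; your sketched ``alternative'' inductive proof, with blocks peeled from the first position rather than the last, is essentially the paper's argument in disguise. Either version is acceptable.
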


\begin{proof}
  Let $c_n$ denote the left hand side. We prove $c_n = (-1)^n/n!$ by
  induction on $n$. The base case $n = 1$ is clear. For the inductive
  step, note that a sequence $0 = a_0 < a_1 < \cdots < a_m = n+1$ with
  $m > 0$ either has $m = 1$, or is equivalent to the data of an
  integer $0 < b < n+1$ and a sequence $0 = a_0 < a_1 < \cdots <
  a_{m-1} = b$ with $m-1 > 0$. Hence
  \[ c_{n+1} = \frac{-1}{(n+1)!} + \sum_{b=1}^n c_b \cdot \frac{-1}{(n+1-b)!} = -\sum_{b=0}^n (-1)^b \frac{1}{b! (n+1-b)!} = \frac{(-1)^{n+1}}{(n+1)!} \]
  where the second equality is the induction hypothesis, and the third
  equality follows from $\sum_{b=0}^{n+1} (-1)^b \binom{n+1}{b} = 0$.
\end{proof}

\subsubsection{}

\begin{proof}[Proof of Proposition~\ref{prop:Utilde}.]
  By Lemma~\ref{lem:U-calculation},
  \begin{align*}
    &\sum_{\substack{n \ge 1\\\alpha = \alpha_1+\cdots+\alpha_n}} U(\alpha_1,\ldots,\alpha_n; \tau, \tilde\tau) \epsilon(\alpha_1) \epsilon(\alpha_2) \cdots \epsilon(\alpha_n) \\
    &= \sum_{n \ge 1} \sum_{i=1}^n \frac{(-1)^{i-1}}{(n-i)! (i-1)!} \sum_{\substack{\alpha_i \in A\\\forall j \neq i: \, \alpha_j \in B}} \epsilon(\alpha_1)\epsilon(\alpha_2) \cdots \epsilon(\alpha_n) \\
    &= \exp\bigg(-\sum_{\beta \in B} \epsilon(\beta)\bigg) \bigg(\sum_{\beta \in A} \epsilon(\beta)\bigg)\exp\bigg(\sum_{\beta \in B} \epsilon(\beta)\bigg).
  \end{align*}
  Using the identity $e^{-X} Y e^{X} = e^{-\ad_X} Y$, where $\ad_X
  \coloneqq [X, -]$, this becomes
  \[ \sum_{n \ge 1} \sum_{\substack{\alpha_1 \in A\\\alpha_2,\ldots,\alpha_n \in B}} \frac{1}{(n-1)!} \left[\left[ \cdots \left[[\epsilon(\alpha_1), \epsilon(\alpha_2)], \epsilon(\alpha_3)\right], \ldots\right], \epsilon(\alpha_n)\right]. \]
  The desired formula now follows from the definition
  \eqref{eq:U-vs-Utilde} of $\tilde U$.
\end{proof}

\subsection{DT/PT vertex correspondence}
\label{sec:DT-PT}

\subsubsection{}

Given a smooth quasi-projective variety $X$, let $\cat{Coh}_{\le
  d}(X)$ be the abelian category of coherent sheaves on $X$ whose
support is proper of dimension $\le d$. (The absence of a subscript
$\le d$ means no constraint on the dimension.) Let
\begin{align*}
  \cat{A}_X &\coloneqq \inner{\cO_X, \cat{Coh}_{\le 1}(X)[-1]}_{\text{ex}} \subset \cat{D}_X, \\
  \cat{D}_X &\coloneqq \inner{\cO_X, \cat{Coh}_{\le 1}(X)[-1]}_{\text{tr}} \subset D^b\cat{Coh}(X)
\end{align*}
be the smallest extension-closed\footnote{Extensions are taken in the
abelian category $\cat{Coh}^\dag(X)[-1]$, where $\cat{Coh}^{\dag}(X)$
is the tilt of $\cat{Coh}(X)$ with respect to the torsion pair
$(\cat{Coh}_{\le 1}(X), \cat{Coh}_{\le 1}(X)^\perp)$; see \cite[\S
  3.1]{toda_dtpt}.} (resp. triangulated) full subcategory
containing $\cO_X$ and $\cat{Coh}_{\le 1}(X)[-1]$. It is known that
$\cat{A}_X$ is the heart of a bounded t-structure on $\cat{D}_X$
\cite[Lemma 3.5]{toda_dtpt} and therefore is an abelian category, and
moreover it is Noetherian \cite[Lemma 6.2]{toda_dtpt} and clearly
$\bC$-linear. This is the ambient abelian category of interest in this
subsection.

Let $\fM_X$ denote the moduli stack parameterizing objects in
$\cat{A}_X$. When it is unambiguous or unimportant, we omit the
subscript $X$ from $\cat{A}_X$ and $\fM_X$. Clearly, direct sum and
scaling automorphisms in $\cat{A}$ make $\fM$ into a graded monoidal
stack.

\subsubsection{}

Specifically, for DT/PT, take $X = (\bP^1)^3$ with the natural scaling
action of $\sT \coloneqq (\bC^\times)^3$. Let $\iota_i\colon D_i
\hookrightarrow X$ be the three $\sT$-invariant boundary divisors
given by setting the $i$-th coordinate on $X$ to $\infty$, let $D
\coloneqq D_1 \cup D_2 \cup D_3$ be their union, and $\bC^3 \coloneqq
X \setminus D$. Let $\kappa$ denote the $\sT$-weight of the trivial
(but not equivariantly trivial) canonical bundle $\cK_{\bC^3}$.

Objects $I \in \cat{A}_X$ have Chern character
\[ \ch(I) = (r, 0, -\beta_C, -n) \]
where $r = \rank(I)$ and $n = \ch_3(I)$ are integers, and $\beta_C =
(\beta_1,\beta_2,\beta_3) \in H_2(X; \bZ)$. On $\cat{A}_X$, define the
family of weak stability conditions
\[ \tau_\xi(r, 0, -\beta_C, -n) \coloneqq \begin{cases} 3\pi/4 & r \neq 0, \\ \pi/2 & r = 0, \, \beta_C \neq 0, \\ 3\pi/4 + \xi & r = 0, \, \beta_C = 0, \, n \neq 0, \end{cases} \]
for $\xi \in (-\pi/4, \pi/4)$. This is a continuous family with
exactly one wall at $\xi = 0$ for objects of rank $r=1$, so let
$\tau^{\PT}$ and $\tau^{\DT}$ denote the cases $\xi > 0$ and $\xi < 0$
respectively.

We proceed to identify $\tau_\xi$-semistable objects of rank $r \le
1$.

\subsubsection{}

\begin{lemma}[{\cite[Lemma 3.11(i), (ii)]{toda_dtpt}}] \label{lem:DT-PT-rank-1-pairs}
  Take $I \in \cat{A}$ with $\rank I = 1$. There is an isomorphism
  \[ I \cong [\cO_X \xrightarrow{s} \cF], \quad \cF \in \cat{Coh}_{\le 1}(X), \, \coker s \in \cat{Coh}_{\le 0}(X), \]
  if and only if $\cH^1(I) \in \cat{Coh}_{\le 0}(X)$. Here $\cO_X$ is
  in degree zero and $\cF$ is in degree one.
\end{lemma}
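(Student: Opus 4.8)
The statement is essentially a decomposition of rank-$1$ objects in the tilted heart $\cat{A}_X$ in terms of complexes $[\cO_X \to \cF]$, and I expect the argument to split into two implications, each a short exercise with the t-structure data. The plan is as follows. First, assume $\cH^1(I) \in \cat{Coh}_{\le 0}(X)$. Since $I \in \cat{A}_X = \langle \cO_X, \cat{Coh}_{\le 1}(X)[-1]\rangle_{\mathrm{ex}}$, the object $I$ has cohomology concentrated in degrees $0$ and $1$ with respect to the standard t-structure on $D^b\cat{Coh}(X)$ (this is part of the construction of $\cat{A}_X$ as a heart, cf.\ \cite[\S 3.1]{toda_dtpt}), so there is an exact triangle $\cH^0(I) \to I \to \cH^1(I)[-1] \xrightarrow{+1}$. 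Because $\rank I = 1$, one identifies $\cH^0(I)$ as a rank-$1$ torsion-free sheaf, hence (on the smooth $3$-fold $X$) it embeds into its double dual, a line bundle; comparing Chern characters ($\ch_1(I) = 0$) forces $\cH^0(I) \cong \cI_Z$ for a subscheme $Z$ with no divisorial part, i.e.\ $\dim Z \le 1$. The connecting map gives a class in $\operatorname{Hom}(\cH^1(I)[-1], \cH^0(I)[1]) = \operatorname{Ext}^2(\cH^1(I), \cI_Z)$; I would instead use the inclusion $\cI_Z \hookrightarrow \cO_X$ to rewrite $I$ as a two-term complex. Concretely, from $\cI_Z \to \cO_X \to \cO_Z \xrightarrow{+1}$ and the triangle for $I$, an octahedron produces a triangle $\cO_X \to \operatorname{cone}(\cO_X \to \cdots) \to \cdots$; the cleanest route is to observe that $I[1]$ maps to $\cH^1(I)$ and build the map $s\colon \cO_X \to \cF$ with $\cF$ the cone, using that $\cH^1(I) \in \cat{Coh}_{\le 0}$ to control $\coker s$.

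The more hands-on way, which I would actually carry out: let $s\colon \cO_X \to \cF$ be \emph{defined} by setting $\cF \coloneqq \operatorname{cone}(\cI_Z \hookrightarrow I')$ where $I'$ is an appropriate shift, chosen so that $\cF$ sits in degrees $0$ only; then chase the long exact cohomology sequence. The key numerical inputs are $\rank \cF = 0$ and $\dim \operatorname{supp}\cF \le 1$ (inherited from $Z$ and from $\cH^1(I) \in \cat{Coh}_{\le 0}$), and the vanishing $\cH^{-1}(\cF) = 0$, which is exactly where the hypothesis $\cH^1(I)\in\cat{Coh}_{\le 0}(X)$ is used: it ensures the map $\cO_X \to \cF$ does not ``absorb'' a $1$-dimensional piece into the kernel. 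One then checks $\coker(s)$ is supported in dimension $0$ by comparing with $\cO_Z$ and $\cH^1(I)$ in the cohomology sequence. This gives the forward direction.

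For the converse, suppose $I \cong [\cO_X \xrightarrow{s} \cF]$ with $\cF \in \cat{Coh}_{\le 1}(X)$ and $\coker s \in \cat{Coh}_{\le 0}(X)$, $\cO_X$ in degree $0$. Then the standard cohomology sheaves are $\cH^0(I) = \ker s$ and $\cH^1(I) = \coker s$. By hypothesis $\coker s \in \cat{Coh}_{\le 0}(X)$, so $\cH^1(I) \in \cat{Coh}_{\le 0}(X)$ immediately. (One should also note $\ker s$ is a rank-$1$ torsion-free sheaf, so $I \in \cat{A}_X$, consistent with the setup.) This direction is essentially immediate from the definition of the mapping cone.

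The main obstacle, and the only place requiring care, is the forward direction: producing the actual morphism $s\colon \cO_X \to \cF$ and the identification $I \cong [\cO_X \to \cF]$ rather than just an abstract exact triangle. The subtlety is that $\operatorname{Ext}^2(\cH^1(I), \cI_Z)$ need not vanish, so one cannot naively split $I$; instead one must use the morphism $\cI_Z \to \cO_X$ to absorb the potential obstruction, which is precisely the role of the ideal sheaf structure on $\cH^0(I)$. I expect this to go through by an octahedral axiom argument comparing the triangles $\cI_Z \to I \to \cH^1(I)[-1]$, $\cI_Z \to \cO_X \to \cO_Z$, and $\cO_X \to \cF \to (\text{shift of } \cH^1(I), \cO_Z)$, together with the hypothesis to ensure the resulting $\cF$ is a genuine sheaf (no negative cohomology) with $0$-dimensional cokernel. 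I would cite \cite[Lemma 3.11]{toda_dtpt} for the precise bookkeeping, as the statement here is exactly (i)--(ii) of that reference.
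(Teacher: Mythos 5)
Your backward direction is fine and is exactly the paper's (it is immediate from the definition of the cocone), and your forward-direction skeleton — truncate to get $\cH^0(I)\to I\to\cH^1(I)[-1]$, identify $\cH^0(I)\cong\cI_Z$ with $\dim Z\le 1$, then use $\cI_Z\hookrightarrow\cO_X$ and a $3\times 3$/octahedron comparison to produce $I\cong[\cO_X\xrightarrow{s}\cF]$ — is essentially the paper's route (the paper obtains $\cH^0(I)$ as an ideal sheaf from Toda's filtration rather than from torsion-freeness plus double duals; your torsion-freeness claim is true but is asserted rather than proved, and needs either the tilt description of $\cat{A}_X$ or the filtration argument showing $\cH^0(I)\subseteq\cO_X$). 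The genuine gap is that you never carry out, or even identify, the one step that makes the diagram close: to complete your three triangles into a map of triangles (equivalently, to produce $s$, equivalently to lift the connecting class $e\in\Ext^2(\cQ,\cI_Z)$, $\cQ\coloneqq\cH^1(I)$, to a class in $\Ext^1(\cQ,\cO_Z)$ along $\Ext^1(\cQ,\cO_Z)\to\Ext^2(\cQ,\cI_Z)\to\Ext^2(\cQ,\cO_X)$) one needs the vanishing
\[ \Hom(\cQ[-2],\cO_X)=\Ext^2(\cQ,\cO_X)\cong H^1(X,\cQ\otimes\cK_X)^\vee=0, \]
by Serre duality on the projective $3$-fold $X$, and this holds precisely because $\cQ$ is $0$-dimensional. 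This is the \emph{only} place the hypothesis $\cH^1(I)\in\cat{Coh}_{\le 0}(X)$ enters, and without it the commuting square your octahedron requires need not exist (indeed for $1$-dimensional $\cQ$ this obstruction group can be nonzero, and the lemma is false, as your own converse shows).

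By contrast, you locate the use of the hypothesis at ``$\cH^{-1}(\cF)=0$'' / preventing a $1$-dimensional piece from being absorbed into the kernel, and at ensuring $\cF$ is a sheaf with $0$-dimensional cokernel; none of this is where the hypothesis acts. Once the map exists, the third row of the $3\times 3$ diagram exhibits $\cF$ as an extension $0\to\cO_Z\to\cF\to\cQ\to 0$, so $\cF\in\cat{Coh}_{\le 1}(X)$ automatically and $\coker s\cong\cQ$ is $0$-dimensional by hypothesis, with nothing further to check. Also, your ``hands-on'' definition $\cF\coloneqq\cone(\cI_Z\hookrightarrow I')$ cannot be right for any shift: e.g. $\cone(\cI_Z\to I)\cong\cQ[-1]$; the correct description is $\cF=\cone(I\to\cO_X)$, whose existence presupposes exactly the morphism whose construction is the content of the lemma. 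So as written the proposal is a plan that defers the actual mathematical content (the Serre-duality vanishing and the diagram chase) to the citation of Toda.
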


Moreover, $\im s$ is the structure sheaf of some $1$-dimensional
subscheme $C \subset X$, but we will not use this.

\begin{proof}
  We review Toda's proof, for the reader's convenience.

  If $I$ is of the specified form, clearly $\cH^1(I) = \coker s \in
  \cat{Coh}_{\le 0}(X)$.
  
  Conversely, given any $I \in \cat{A}$ of rank one, decompose it into
  the short exact sequences
  \begin{equation} \label{eq:DT-PT-rank-1-objects}
    \begin{aligned}
      0 \to \cE[-1] \to &I_1 \to \cO_X \to 0, \\
      0 \to I_1 \to &I \to \cQ[-1] \to 0,
    \end{aligned}
  \end{equation}
  where $\cQ \coloneqq \cH^1(I)$, so $\cH^1(I_1) = 0$. Note that this
  means $I_1$ is the ideal sheaf of a $1$-dimensional subscheme $C
  \subset X$. Consider the composition $\cQ[-2] \to I_1 \to \cO_X$.
  Since $\cQ \in \cat{Coh}_{\le 0}(X)$, this map lies in
  \begin{equation} \label{eq:DT-PT-pair-factorization}
    \Hom(\cQ[-2], \cO_X) \cong H^1(X, \cQ \otimes \cK_X)^\vee = 0.
  \end{equation}
  Hence the following solid square exists and can be completed into
  the commutative diagram
  \begin{equation} \label{eq:pair-formation}
    \begin{tikzcd}
      \cQ[-2] \ar{r} \ar{d} & I_1 \ar[dashed]{r} \ar{d} & I \ar[dashed]{r}{+1} \ar[dashed]{d} & {} \\
      0 \ar{r} \ar[dashed]{d} & \cO_X \ar[dashed]{r} \ar[dashed]{d} & \cO_X \ar[dashed]{r}{+1} \ar[dashed]{d}{s} & {} \\
      \cQ[-1] \ar[dashed]{r} \ar[dashed]{d}{+1} & \cE \ar[dashed]{r} \ar[dashed]{d}{+1}  & F \ar[dashed]{r}{+1} \ar[dashed]{d}{+1} & {} \\
         {}& {}& {}
    \end{tikzcd}
  \end{equation}
  of exact triangles, for some $F \in D^b\cat{Coh}(X)$. Rotating the
  bottom row, $F$ is an extension of $\cE, \cQ \in \cat{Coh}_{\le
    1}(X)$ and so $F \in \cat{Coh}_{\le 1}(X)$ as well. Since
  $\cH^1(I_1) = 0$, we can identify $\coker s \cong \cQ$ in
  $\cat{Coh}(X)$. Finally, the right-most column gives the desired
  isomorphism.
\end{proof}

\subsubsection{}
\label{sec:DT-PT-N-stack}

We define a sequence of substacks of $\fM_{1,0,-\beta_C,-n}$, for any
$\beta_C$ and $n$. First, let
\[ \fM^\circ_{1,0,-\beta_C,-n} \subset \fM_{1,0,-\beta_C,-n} \]
be the moduli substack of objects $I \in \cat{A}$ with $\dim \cH^1(I)
= 0$. By Lemmas~\ref{lem:DT-PT-rank-1-pairs}, all objects
parameterized by this substack are isomorphic to pairs $s\colon\cO_X
\to \cF$ where $\cF \in \cat{Coh}_{\le 1}(X)$. Let $\cQ \coloneqq
\coker(s)$ ($= \cH^1(I)$). Second, let
\[ \fN_{\beta_C,n} \subset \fM^\circ_{1,0,-\beta_C,-n} \]
be the moduli substack consisting of the pairs such that:
\begin{enumerate}[label=(\alph*)]
\item \label{it:cond:DT-PT-N-stack-support} (support) $\supp \cQ$
  avoids $D$ and $\supp \cF$ lies in the non-singular locus $D^\circ
  \subset D$;
\item \label{it:cond:DT-PT-N-stack-transverality} (transversality)
  $L^k\iota_i^*\cF = 0$ for $k > 0$, for $i = 1, 2, 3$.
\end{enumerate}
These conditions imply the existence of {\it evaluation maps}
\begin{align*}
  \ev_i\colon \fN_{\beta_C,n} &\to \fHilb(D_i^\circ, \beta_i) \\
  [\cO_X \xrightarrow{s} \cF] &\mapsto [\cO_{D_i^\circ} \xrightarrow{s} \iota_i^*\cF]
\end{align*}
for $i = 1, 2, 3$, which land in the Hilbert stack\footnote{The moduli
stack which is the {\it trivial} $\bC^\times$-gerbe over the usual
Hilbert scheme.} of $\beta_i$ points on $D_i^\circ \coloneqq D_i \cap
D^\circ \cong \bC^2$. Finally, given points $p_i \in \fHilb(D_i^\circ,
\beta_i)$, define the substack
\[ \fN_{(p_1,p_2,p_3),n} \coloneqq (\ev_1 \times \ev_2 \times \ev_3)^{-1}(p_1, p_2, p_3) \subset \fN_{\beta_C,n}. \]

\subsubsection{}

\begin{lemma} \label{lem:DT-PT-rigidification}
  The $\bC^\times$-rigidification map $\Pi^\pl\colon
  \fM^\circ_{1,0,-\beta_C,-n} \to \fM^{\circ,\pl}_{1,0,-\beta_C,-n}$ is a
  trivial $\bC^\times$-gerbe.
\end{lemma}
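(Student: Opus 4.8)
The statement is that the rigidification map $\Pi^\pl\colon \fM^\circ_{1,0,-\beta_C,-n} \to \fM^{\circ,\pl}_{1,0,-\beta_C,-n}$ is a \emph{trivial} $\bC^\times$-gerbe. By the criterion recalled just after Lemma~\ref{lem:k-homology-pl} (namely \cite[Lemma 3.21]{Laumon2000}), a $\bC^\times$-gerbe is trivial if and only if it admits a section, equivalently if and only if it is a trivial $[\pt/\bC^\times]$-bundle. So the plan is to construct a section, i.e. to exhibit, canonically and functorially in the base, a line bundle (or an appropriate rigidifying datum) on every family of objects parameterized by $\fM^\circ_{1,0,-\beta_C,-n}$ that trivializes the scaling automorphisms.

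\textbf{Key steps.} First, I would use Lemma~\ref{lem:DT-PT-rank-1-pairs}: every object $I$ parameterized by $\fM^\circ_{1,0,-\beta_C,-n}$ is isomorphic to a two-term complex $[\cO_X \xrightarrow{s} \cF]$ with $\cF \in \cat{Coh}_{\le 1}(X)$ and $\coker s \in \cat{Coh}_{\le 0}(X)$, with $\cO_X$ in degree $0$. The key point is that this presentation is \emph{canonical}: $\cO_X$ is recovered as $\cH^0(I)$ (using that $\cH^0(I)$ is a rank-one torsion-free sheaf which, by the structure of $\cat{A}_X$ and $\dim\cH^1(I)=0$, must be exactly $\cO_X$), and the map $s$ is the induced one. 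In families over a base scheme $S$, a flat family of such objects gives $\cH^0(\scI) \cong \cO_{X\times S} \otimes \cL$ for a line bundle $\cL$ on $S$ — because $\cH^0$ is a family of rank-one subsheaves of the constant sheaf $\cO_X$, which is rigid, the ambiguity is exactly a line bundle pulled back from $S$. Twisting by $\cL^{-1}$ normalizes the family so that $\cH^0$ is literally $\cO_{X\times S}$, and this normalization is unique. This normalization procedure is precisely a section of $\Pi^\pl$: on $\bC$-points it picks out the canonical representative $[\cO_X \to \cF]$, and on automorphism groups it kills the scaling $\bC^\times$ (an automorphism $\lambda\cdot\id$ of $I$ acts on $\cH^0(I)=\cO_X$ by $\lambda$, which is forbidden once we demand $\cH^0$ be the structure sheaf on the nose). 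I would phrase this via the universal object: on $\fM^\circ_{1,0,-\beta_C,-n}$ there is a universal family $\scI$, and $\cH^0(\scI)$ is a line-bundle twist of $\cO_{X\times\fM^\circ}$; the $\bC^\times$-gerbe structure is exactly the ambiguity in this twist, so the map "remember everything except the trivialization of $\cH^0$" is the desired section $I_\beta^\pl$ of $\Pi^\pl$. Functoriality in $S$ is immediate from the construction, so we get a genuine morphism of stacks, and one checks $\Pi^\pl \circ I^\pl = \id$ directly.

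\textbf{Main obstacle.} The only real subtlety is verifying that $\cH^0$ of a flat family of objects in $\cat{A}_X^\circ$ is indeed (locally) constant up to a line bundle from the base — i.e. that the subsheaf $\cH^0(\scI_s) \subset \cO_X$ varies rigidly. This uses that $\cH^0(I) = \cO_X$ for every single object (no moduli there, since $\Hom(\cO_X,\cO_X)=\bC$ and there are no rank-one torsion-free quotients of $\cO_X$ supported in codimension $\ge 2$ other than $\cO_X$ itself inside this category), together with a base-change argument for $\cH^0$ in the (non-standard) t-structure on $\cat{D}_X$; one can either work with the explicit pair presentation and invoke cohomology-and-base-change for $\pi_*$ of the family of ideal sheaves $\scI_1$ from \eqref{eq:DT-PT-rank-1-objects}, or appeal directly to \cite[Lemma 3.11]{toda_dtpt} for the family statement. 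Once that rigidity is in hand, the rest is formal. The analogous statement for $\fM^\circ$ replaced by $\fN_{\beta_C,n}$ or $\fN_{(p_1,p_2,p_3),n}$ then follows by restriction, since open/closed immersions pull back trivial gerbes to trivial gerbes.
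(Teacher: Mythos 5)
Your overall route is the same as the paper's: present every object via Lemma~\ref{lem:DT-PT-rank-1-pairs} as a pair $[\cL \xrightarrow{s} \cF]$ with $\cL \cong \cO_X$, interpret rigidification as the choice of a trivialization of $\cL$, obtain a section of $\Pi^\pl$ by forgetting that trivialization, and conclude triviality from the existence of a section via \cite[Lemma 3.21]{Laumon2000}.

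However, the concrete mechanism you use to build the section has a genuine error. For $I = [\cO_X \xrightarrow{s} \cF]$ with $\cO_X$ in degree $0$, the cohomology sheaf $\cH^0(I)$ is $\ker(s)$, i.e.\ the ideal sheaf $\cI_C$ of the $1$-dimensional subscheme $C = \supp(\im s)$; it equals $\cO_X$ only when $\beta_C = 0$. (If instead you meant cohomology in the tilted t-structure, then $\cH^0(I) = I$ itself, which is also not $\cO_X$.) Consequently the family claim ``$\cH^0(\scI) \cong \cO_{X \times S} \otimes \cL$ for a line bundle $\cL$ on $S$'' is false — the fibers are the varying ideal sheaves $\cI_{C_s}$ — so the proposed normalization ``trivialize $\cH^0(\scI)$'' does not define the section as written, and the rigidity argument you flag as the main obstacle is being run on the wrong object. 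The idea is repairable: replace $\cH^0(I)$ by a genuinely canonical rank-one gadget which is $\cO_X$ up to scale, e.g.\ the reflexive hull $\cH^0(I)^{\vee\vee} \cong \cO_X$ (as $C$ has codimension $2$), the determinant $\det(I)$, or, closest to the structure of $\cat{A}_X$, the quotient of $I$ by its maximal subobject in $\cat{Coh}_{\le 1}(X)[-1]$ — this maximal subobject is exactly $\cF[-1]$, since $\Hom(\cT[-1], \cO_X) \cong H^2(X, \cT \otimes \cK_X)^\vee = 0$ for $\cT$ of dimension $\le 1$ — and then take trivializations of that canonical quotient as the rigidifying data. The paper's proof sidesteps the issue entirely: it never recovers $\cO_X$ as a cohomology sheaf, but only uses that $\Aut(\cL) \cong \bC^\times$, so that rigidifying a pair is (non-canonically) the same as fixing an isomorphism $\phi\colon \cL \xrightarrow{\sim} \cO_X$, and ``forget $\phi$'' is the section. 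Your final remark that the statement for $\fN_{\beta_C,n}$ and $\fN_{(p_1,p_2,p_3),n}$ follows by restriction is fine.
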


Consequently, by Lemma~\ref{lem:k-homology-pl}, if $I$ is any section
of $\Pi^\pl$, then there are isomorphisms
\begin{equation} \label{eq:DT-PT-rigidification}
  K_\circ^{\tilde\sT}(\fM^{\circ,\pl}_{1,0,-\beta_C,-n})_{\loc} \mathrel{\mathop{\rightleftarrows}^{I_*}_{(\Pi^\pl)_*}} K_\circ^{\tilde\sT}(\fM^\circ_{1,0,-\beta_C,-n})^\pl_{\loc}.
\end{equation}

\begin{proof}
  By Lemma~\ref{lem:DT-PT-rank-1-pairs}, all objects parameterized by
  $\fM^\circ_{1,0,-\beta_C,-n}$ are pairs of the form
  \[ [\cL \xrightarrow{s} \cF], \qquad \cL \cong \cO_X, \; \cF \in \cat{Coh}_{\le 1}(X). \]
  Since $\Aut(\cL) \cong \bC^\times$, the $\bC^\times$-rigidification
  of such a pair is (non-canonically) equivalent to fixing the extra
  data of an isomorphism $\phi\colon \cL \xrightarrow{\sim} \cO_X$.
  Hence $\Pi^\pl$ has a section given by forgetting this extra data,
  and this section trivializes $\Pi^\pl$ \cite[Lemma
    3.21]{Laumon2000}.
\end{proof}

\subsubsection{}

\begin{lemma} \label{lem:DT-PT-moduli-stacks}
  The moduli stack $\fM_{1,0,-\beta_C,-n}$ is Artin and locally of
  finite type. The inclusions
  \[ \fN_{(p_1,p_2,p_3),n} \subset \fN_{\beta_C,n} \subset \fM^\circ_{1,0,-\beta_C,-n} \subset \fM_{1,0,-\beta_C,-n} \]
  are closed, open, and open respectively.
\end{lemma}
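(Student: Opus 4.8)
The four assertions are of a standard ``moduli stack of objects in a nice abelian category, cut out by open/closed conditions'' flavor, so the plan is to treat them one at a time, using the description of $\cat{A}_X$ as the heart of a bounded t-structure on $\cat{D}_X$. For the first assertion --- that $\fM_{1,0,-\beta_C,-n}$ is Artin and locally of finite type --- I would invoke the general existence results for moduli of objects in a t-structure heart: the stack of objects in $D^b\cat{Coh}(X)$ is Artin and locally of finite type by \cite{Lieblich2006a}, and the heart $\cat{A}_X$ of a reasonable (Noetherian, bounded) t-structure cuts out an open substack by openness of the conditions $\cH^i(I) = 0$ for $i \ne 0$ (these are open by semicontinuity); one then fixes the numerical class $(1,0,-\beta_C,-n)$, which is a locally closed condition on the open piece. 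This is essentially the argument Toda uses in \cite[\S 6]{toda_dtpt}, specialized to rank one; indeed, since we are told $\cat{A}_X$ is Noetherian, the stack of objects of bounded class is of finite type.

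\textbf{The openness of the inner two inclusions.} For $\fM^\circ_{1,0,-\beta_C,-n} \subset \fM_{1,0,-\beta_C,-n}$, the condition defining $\fM^\circ$ is $\dim \cH^1(I) = 0$, equivalently (by Lemma~\ref{lem:DT-PT-rank-1-pairs}) that $\cH^1(I)$ is $0$-dimensional rather than $1$-dimensional; since the total length/numerical class is fixed, ``$\cH^1(I)$ has no $1$-dimensional part'' is an open condition on a family (the locus where a coherent sheaf $\cH^1$ of the universal object has generic fiber dimension drop is open). For $\fN_{\beta_C,n} \subset \fM^\circ_{1,0,-\beta_C,-n}$, conditions~\ref{it:cond:DT-PT-N-stack-support} and \ref{it:cond:DT-PT-N-stack-transverality} are each open: ``$\supp \cQ$ avoids the closed set $D$'' and ``$\supp \cF$ lies in the open set $D^\circ$'' are open conditions on the support of universal sheaves, and vanishing of the higher derived restrictions $L^k\iota_i^*\cF$ for $k>0$ is open by the usual semicontinuity of Tor applied to the family (on the locus where $\cF$ is flat over $D_i$, or more precisely Tor-independent, which is open). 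So I would spell out, for each of these, the standard ``constructibility + generization-stability'' or direct semicontinuity argument.

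\textbf{Closedness of the fiber inclusion, the main obstacle.} The genuinely nontrivial point is that $\fN_{(p_1,p_2,p_3),n} \subset \fN_{\beta_C,n}$ is \emph{closed}. By construction it is the fiber of the morphism $\ev_1 \times \ev_2 \times \ev_3 \colon \fN_{\beta_C,n} \to \prod_i \fHilb(D_i^\circ,\beta_i)$ over the point $(p_1,p_2,p_3)$, so it suffices to know that this evaluation morphism is representable and that $\prod_i \fHilb(D_i^\circ,\beta_i)$ --- being a trivial $\bC^\times$-gerbe over a product of Hilbert schemes of points on affine surfaces --- is separated (or at least that the points $p_i$ are closed points, whose preimages are closed). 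The Hilbert scheme $\Hilb(\bC^2,\beta_i)$ is quasi-projective hence separated, so each $p_i$ is a closed point and its preimage under a morphism of stacks is a closed substack. Thus the real work is (i) checking that $\ev_i$ is a well-defined morphism of stacks --- which follows because on $\fN_{\beta_C,n}$ the transversality condition \ref{it:cond:DT-PT-N-stack-transverality} makes $\iota_i^*(-)$ commute with base change, so $[\cO_{D_i^\circ} \to \iota_i^*\cF]$ varies in a flat family --- and (ii) identifying the target as the Hilbert stack so that $[\cO_{D_i^\circ}\to\iota_i^*\cF]$, whose cokernel is $0$-dimensional of length $\beta_i$ by \ref{it:cond:DT-PT-N-stack-support}, is indeed a point of $\fHilb(D_i^\circ,\beta_i)$. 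I expect step (i), verifying base-change compatibility of the evaluation maps, to be the one requiring the most care, since it is exactly where conditions \ref{it:cond:DT-PT-N-stack-support} and \ref{it:cond:DT-PT-N-stack-transverality} are used; once that is in place, closedness of $\fN_{(p_1,p_2,p_3),n}$ is immediate from separatedness of the Hilbert schemes.
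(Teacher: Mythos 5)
Your overall route is the paper's route: Artin/locally-of-finite-type via Lieblich's moduli of complexes plus Toda's openness result, and the three inclusions handled by semicontinuity (openness) and by taking the fiber of the evaluation maps over a point (closedness). Two corrections, one of substance. First, your justification of the first assertion contains a wrong step: membership in the heart $\cat{A}_X$ is \emph{not} the condition ``$\cH^i(I)=0$ for $i\neq 0$'' --- $\cat{A}_X$ is a tilted heart, and its rank-one objects are genuinely two-term complexes with $\cH^1(I)=\cQ$ typically nonzero (see Lemma~\ref{lem:DT-PT-rank-1-pairs} and \eqref{eq:DT-PT-rank-1-objects}), so that semicontinuity argument does not apply. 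The argument the paper actually uses (and which your citation of Toda already points to) is that $\fM_{1,0,-\beta_C,-n}$ is an \emph{open} substack of Lieblich's stack $\fMum_X$ of complexes with $\Ext^{<0}_X(I,I)=0$, which is Artin and locally of finite type \cite{Lieblich2006}; the openness is \cite[\S 6.3, Step 1]{toda_dtpt} and is not a naive cohomology-vanishing statement. Relatedly, your aside that Noetherianity of $\cat{A}_X$ gives finite type for fixed class is both unneeded (the lemma only claims locally of finite type) and not a consequence of Noetherianity alone.

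Second, you have the weight of the argument inverted: the closedness of $\fN_{(p_1,p_2,p_3),n}\subset\fN_{\beta_C,n}$ is the easy part. The evaluation maps $\ev_i$ were already constructed in \S\ref{sec:DT-PT-N-stack} (their existence is exactly what conditions \ref{it:cond:DT-PT-N-stack-support} and \ref{it:cond:DT-PT-N-stack-transverality} buy), so their well-definedness is not part of this lemma's burden; and the preimage of a closed point under any morphism of stacks is closed --- no representability or separatedness discussion is needed, since the $p_i$ are closed points of quasi-projective Hilbert schemes. Conversely, the openness statements you treat as routine are where the paper's (brief) content lies: $\dim\cH^1(I)=0$ is open, the support condition is open, and the transversality condition $L^k\iota_i^*\cF=0$ for $k>0$ is open by upper semicontinuity; your sketches of these are fine in spirit, though the phrasing ``on the locus where $\cF$ is Tor-independent, which is open'' is circular as written and should just be the semicontinuity statement itself.
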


\begin{proof}
  Let $\fMum_X$ be the moduli stack of objects $I \in D\cat{Coh}(X)$
  with $\Ext_X^{<0}(I, I) = 0$. This is an Artin stack, locally of
  finite type \cite{Lieblich2006}. Since $\fM_{1,0,-\beta_C,-n}$ is an
  open substack of $\fMum_X$ \cite[\S 6.3, Step 1]{toda_dtpt}, it is
  also Artin and locally of finite type.

  The first inclusion is obviously closed. For the second inclusion,
  the support condition~\ref{it:cond:DT-PT-N-stack-support} is clearly
  open, and the transversality
  condition~\ref{it:cond:DT-PT-N-stack-transverality} is open by the
  upper semi-continuity of cohomology. Finally, the last inclusion is
  open because $\dim \cH^1(I) = 0$ is an open condition.
\end{proof}

\subsubsection{}

\begin{lemma} \label{lem:DT-PT-obstruction-theories}
  Let $\fN = \fN_{(p_1,p_2,p_3),n}$ for short. There is a
  $\kappa$-symmetric $\sT$-equivariant obstruction theory on
  $\fN^\pl$, given by $(\bF[1])^\vee$ where
  \begin{equation} \label{eq:DT-PT-obstruction-theories}
    \bF \coloneqq Rp_*R\cHom_X(\scI, \scI(-D)) \in \cat{Perf}_\sT(\fN^\pl)
  \end{equation}
  for the universal family $\scI$ on $p\colon \fN \times X \to \fN$.
  If $\kappa^{1/2}$ exists, then $\det \bF$ admits a square root.
\end{lemma}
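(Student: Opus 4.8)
The plan is to construct the obstruction theory on $\fN^\pl$ and verify the $\kappa$-symmetry directly, following the standard DT-type recipe via the Atiyah class, adapted to this specific setting. First I would observe that $\fN = \fN_{(p_1,p_2,p_3),n}$ is, by Lemma~\ref{lem:DT-PT-moduli-stacks}, a locally closed substack of $\fM_{1,0,-\beta_C,-n}$, which is open in $\fMum_X$. On $\fMum_X$ there is the standard obstruction theory from the Atiyah class of the universal complex, as in \cite{Lieblich2006} (see also \cite{Ricolfi2021,Kuhn2024}); the issue is that the naive candidate $Rp_*R\cHom_X(\scI, \scI)$ is not $\kappa$-symmetric because $X = (\bP^1)^3$ is not Calabi--Yau --- it is only \emph{equivariantly} Calabi--Yau on the open piece $\bC^3$. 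The twist by $\cO_X(-D)$ in \eqref{eq:DT-PT-obstruction-theories} is precisely the fix: since $\cK_X \cong \cO_X(-2D)$, we have $\cK_X(D) \cong \cO_X(-D)$, and the restriction of $\cO_X(-D)$ to $\bC^3 = X \setminus D$ is equivariantly isomorphic to $\kappa \otimes \cO_{\bC^3}$. The key point is that all the relevant $\Ext$ groups are supported on $\bC^3$ (since the objects $\scI$ restricted to $D^\circ$ behave like pairs $\cO_{D^\circ} \to \iota^*\cF$ with prescribed, rigid restrictions $p_i$), so the twist only ``sees'' the Calabi--Yau weight $\kappa$.

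The main steps, in order: (1) Restrict the Atiyah-class obstruction theory from $\fMum_X$ to $\fN$, obtaining $\varphi\colon (Rp_*R\cHom_X(\scI,\scI))_0^\vee[-1] \to \bL_{\fN}$ on the traceless part (or use that $\fN$ fixes a line bundle determinant so that the trace-free part is what matters; cf. the discussion around Lemma~\ref{lem:obstruction-theory-pl}). (2) Verify that replacing $\scI$ by $\scI(-D)$ in the second argument, i.e. passing to $\bF = Rp_*R\cHom_X(\scI, \scI(-D))$, still gives an obstruction theory: this requires checking that the cohomology sheaves of $\bF[1]$ in degrees $\le 1$ agree with those of the untwisted complex, which holds because the difference $Rp_*R\cHom_X(\scI, \scI \otimes \cO_D)$ is concentrated in the ``wrong'' degrees once one uses the transversality conditions~\ref{it:cond:DT-PT-N-stack-transverality} and the support conditions~\ref{it:cond:DT-PT-N-stack-support}. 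This is essentially \cite[\S 6.3]{toda_dtpt} combined with the analysis in \cite[\S 6.1]{klt_dtpt}. (3) Establish the $\kappa$-symmetry: apply relative Serre duality for $p\colon \fN \times X \to \fN$, using $\cK_{X} \cong \cO_X(-2D)$, to get
\[
  (Rp_*R\cHom_X(\scI, \scI(-D)))^\vee \cong Rp_*R\cHom_X(\scI(-D), \scI) \otimes \cK_X[3] \cong Rp_*R\cHom_X(\scI, \scI(-D)) \otimes \cO_X(-D)[3]\big|_{\text{eff.}}.
\]
Since everything is supported away from $D$ (or more precisely, the contribution of a neighborhood of $D$ is rigid and contributes only the weight $\kappa$), $\cO_X(-D)$ restricts to $\kappa$, yielding $\bF^\vee \cong \kappa^{-1} \otimes \bF[3]$, which is the required symmetry $\Xi\colon \bE \xrightarrow{\sim} \kappa \otimes \bE^\vee[1]$ after the shift $\bE = (\bF[1])^\vee$. (4) Descend from $\fN$ to $\fN^\pl$ using Lemma~\ref{lem:obstruction-theory-pl}(ii), for which one needs the vanishing \eqref{eq:traceless-Ext3-condition}; by the footnote after Lemma~\ref{lem:obstruction-theory-pl}, this follows from the argument in \cite[Cor. 3.18]{Tanaka2020} applied to this moduli of complexes. (5) Finally, the square root of $\det\bF$ when $\kappa^{1/2}$ exists is immediate from Lemma~\ref{lem:virtual-canonical-half}: the $\kappa$-symmetry of $\bF$ (in K-theory, $\bF = -\kappa^{-1} \bF^\vee$) puts us exactly in the hypothesis of that lemma, with $E = \bF$, $\theta$ the character of $\kappa$, and $E_{\theta/2} = 0$ since $\kappa^{1/2}$ is not an integral weight.

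The hard part will be step (3), the precise bookkeeping of the $\kappa$-symmetry near the divisor $D$. The subtlety is that $\bF = Rp_*R\cHom_X(\scI, \scI(-D))$ is defined over all of $X$, not just $\bC^3$, so one cannot literally restrict to $\bC^3$ and invoke equivariant Calabi--Yau; instead one must show that the ``boundary part'' $Rp_*R\cHom_D(\iota^*\scI, \iota^*\scI(-D)|_D)$ either vanishes or is itself $\kappa$-symmetric in a way compatible with the duality, using the rigidity of the fixed points $p_i \in \fHilb(D_i^\circ, \beta_i)$ and the fact that on $\fN_{(p_1,p_2,p_3),n}$ the restriction maps $\ev_i$ are constant. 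This is carried out in detail in \cite[\S 6.1]{klt_dtpt} for the related setup of $\toda_dtpt$, and the proof here should reduce to invoking that analysis. One should also be careful that the obstruction theory, a priori only defined on the open $\fM^\circ_{1,0,-\beta_C,-n}$ and then restricted along the closed immersion $\fN_{(p_1,p_2,p_3),n} \hookrightarrow \fN_{\beta_C,n}$, genuinely restricts to an obstruction theory on the closed substack --- this uses that the evaluation maps $\ev_i$ are smooth (so that the fibers $\fN_{(p_1,p_2,p_3),n}$ inherit obstruction theories by the standard cone construction, as in Definition~\ref{def:obstruction-theories-compatibility}).
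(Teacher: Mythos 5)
Your overall skeleton (Atiyah-class obstruction theory on $\fMum_X$, restriction to the open locus, construction on the closed substack via compatibility of Atiyah classes with the pullbacks $\iota_i^*$, Serre duality for the symmetry, then the square root) matches the paper's route, but the two substantive steps are not executed correctly. The main problem is step (3): you assert that ``all the relevant $\Ext$ groups are supported on $\bC^3$'' and then argue the symmetry by restricting $\cO_X(-D)$ to $\bC^3$ and trying to show the boundary contribution near $D$ vanishes or is rigid. This support claim is false --- the pairs parameterized by $\fN_{(p_1,p_2,p_3),n}$ have $\cF$ meeting $D$ whenever $(\lambda,\mu,\nu)\neq(\emptyset,\emptyset,\emptyset)$, so $R\cHom_X(\scI,\scI(-D))$ is not supported away from $D$ --- and no such localization is needed. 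The point of the twist is that, $\sT$-equivariantly, $\cK_X \cong \kappa\otimes\cO_X(-2D)$, so relative Serre duality for $p$ gives globally
\[
  \bF^\vee \;\cong\; Rp_*\bigl(R\cHom_X(\scI(-D),\scI)\otimes \cK_X\bigr)[3] \;\cong\; \kappa\otimes Rp_*R\cHom_X(\scI,\scI(-D))[3] \;=\; \kappa\otimes\bF[3],
\]
i.e. $(\bF[1])^\vee$ is $\kappa$-symmetric on the nose, with no analysis near $D$ and no rigidity of the fixed boundary points $p_i$ entering at all. Your displayed duality formula (with the stray factor ``$\otimes\,\cO_X(-D)[3]|_{\mathrm{eff.}}$'' living on $X$ rather than on $\fN$) does not typecheck and signals that this computation was not actually carried out; as written, your ``hard part'' is a program for proving something that is both unnecessary and partly false.

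Two further points. First, your step (5) derives the square root of $\det\bF$ from Lemma~\ref{lem:virtual-canonical-half}, but that lemma requires a decomposition of the complex into $\sT$-weight pieces, i.e. it applies only over loci with trivial $\sT$-action (this is exactly how it is used in Definition~\ref{def:symmetrized-virtual-structure-sheaf}, on fixed loci). The statement here is that $\det\bF$ has a square root on all of $\fN^\pl$, where $\sT$ acts nontrivially, so your argument proves strictly less than claimed; the paper instead invokes the global square-root construction of \cite[\S 6]{Nekrasov2016}. Second, the detour through traceless parts / fixed determinant in your step (1) is out of place in this rank-one pairs setting: the twist by $-D$ already makes $\bF$ have weight zero for the scaling $\bC^\times$ (indeed $\Hom(\scI,\scI(-D))=0$), and descent to $\fN^\pl$ is immediate from the triviality of the gerbe (Lemma~\ref{lem:DT-PT-rigidification}); no appeal to the cosection-type vanishing \eqref{eq:traceless-Ext3-condition} is needed. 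Your closing remark that the fibers of the evaluation maps inherit the (twisted) obstruction theory via compatibility of Atiyah classes with $\iota_i^*$ is the right mechanism and is where the paper cites \cite[Proposition 3.3.2]{klt_dtpt}; that part of your plan is fine, but the vague claim in step (2) that the twisted and untwisted complexes have the same cohomology sheaves in low degrees is not the right way to phrase it (they differ precisely in the degrees the twist is designed to change).
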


To be precise, $\bF$ is a perfect complex on $\fN$, but we identify
sheaves on $\fN^\pl$ with sheaves on $\fN$ of weight $0$ with respect
to the global $\bC^\times$ stabilizer.

Consequently, after passing to a double cover $\tilde\sT \to \sT$ so
that $\kappa^{1/2}$ exists, a symmetrized virtual structure sheaf
$\hat\cO^\vir \otimes (\det \bF)^{1/2}$ exists on any stable locus of
$\fN$.

\begin{proof}
  We use Lemma~\ref{lem:DT-PT-moduli-stacks} and the notation in its
  proof. Recall that $\fMum_X$ has a $\sT$-equivariant obstruction
  theory corresponding to $Rp_*R\cHom_X(\scI, \scI)$ \cite[Theorem
    B]{Ricolfi2021}. Then the open locus $\fN_{\beta_C,n} \subset
  \fMum_X$ inherits this obstruction theory from $\fMum_X$. Then a
  standard argument \cite[Proposition 3.3.2]{klt_dtpt} using the
  compatibility of Atiyah classes with pullbacks (e.g. along
  $\iota_i$) constructs the desired obstruction theory on the closed
  substack $\fN \subset \fN_{\beta_C,n}$. Finally, $(\bF[1])^\vee$ is
  $\kappa$-symmetric by Serre duality, and $\det \bF$ admits a square
  root by \cite[\S 6]{Nekrasov2016}.
\end{proof}

\subsubsection{}

For $M \in \{\DT, \PT\}$ and torus-fixed points $\lambda, \mu, \nu \in
\Hilb(\bC^2)$, let
\begin{equation} \label{eq:DT-PT-vertices-1}
  \sV_{\lambda,\mu,\nu}^M = \sum_{n \in \bZ} Q^n \chi\left(\fN_{(\lambda,\mu,\nu),n}^{\sst}(\tau^M), \hat\cO^\vir \otimes -\right).
\end{equation}
Lemma~\ref{lem:DT-PT-stability-chambers} below shows that the spaces
$\fN_{(\lambda,\mu,\nu),n}^{\sst}(\tau^M)$ are exactly the DT and PT
moduli spaces $M^{\sst}_{(\lambda,\mu,\nu),n}$ from
\S\ref{sec:intro:equivariant-vertices}. In particular, they are
algebraic spaces with proper $\sT$-fixed loci, so the symmetrized
virtual cycle may be defined using the obstruction theory of
Lemma~\ref{lem:DT-PT-obstruction-theories}. Thus
\eqref{eq:DT-PT-vertices-1} is well-defined and agrees with the
definition \eqref{eq:DT-PT-vertices} of DT and PT vertices for $X$.

\subsubsection{}

\begin{lemma} \label{lem:DT-PT-wall-semistables}
  Take $I \in \cat{A}$ with $\tau_0(I) = 3\pi/4$. Then $I$ is
  $\tau_0$-semistable if and only if $\cH^1(I) \in \cat{Coh}_{\le
    0}(X)$.
\end{lemma}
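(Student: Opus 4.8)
The starting point is that $\tau_0$ takes only the two values $\pi/2 < 3\pi/4$ on $C(\cat{A})$. Hence, for $I$ with $\tau_0(I) = 3\pi/4$, a subobject $E'$ can destabilize only if $\tau_0(E') = 3\pi/4$ and $\tau_0(I/E') = \pi/2$; and since $\rank$ and $\beta_C$ are additive, $\tau_0(I/E') = \pi/2$ forces $\rank(I/E') = 0$ and $\beta_C(I/E') \neq 0$, whence $\rank E' = \rank I$, so that $\tau_0(E') = 3\pi/4$ automatically when $\rank I \ge 1$. So, assuming $\rank I \ge 1$, the object $I$ is $\tau_0$-semistable if and only if it has no nonzero proper quotient $\cQ$ in $\cat{A}$ with $\rank\cQ = 0$ and $\beta_C(\cQ) \neq 0$. (The case $\rank I = 0$ is degenerate: $\tau_0(I) = 3\pi/4$ then forces $\beta_C(I) = 0$, so by effectivity no quotient of $I$ can have nonzero curve class and $I$ is automatically $\tau_0$-semistable, while a direct inspection of the structure of $\cat{A}$ gives $\cH^1(I) \in \cat{Coh}_{\le 0}(X)$; thus both sides hold, and for the rest I assume $\rank I \ge 1$.) I will use two structural facts about $\cat{A} = \cat{A}_X$, both following from its construction and its description as a heart \cite[Lemma 3.5]{toda_dtpt}: (i) every object of $\cat{A}$ of rank $0$ is of the form $\cG[-1]$ with $\cG \in \cat{Coh}_{\le 1}(X)$, so if moreover $\beta_C \neq 0$ then $\cG$ is genuinely $1$-dimensional; and (ii) for every $I \in \cat{A}$ the truncation triangle $\cH^0(I) \to I \to \cH^1(I)[-1] \xrightarrow{+1}$ is a short exact sequence in $\cat{A}$ — equivalently, $\cH^0(I) \in \cat{A}$. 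Also recall that $\cat{A}$ is an abelian subcategory of $D^b\cat{Coh}(X)$ whose objects have cohomology only in degrees $0$ and $1$.

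For the backward direction, I would assume $\cH^1(I) \in \cat{Coh}_{\le 0}(X)$ and let $q\colon I \twoheadrightarrow \cQ$ be any nonzero proper quotient in $\cat{A}$ with $\rank\cQ = 0$. By fact (i), $\cQ \cong \cH^1(\cQ)[-1]$; applying the long exact cohomology sequence to $0 \to \ker q \to I \to \cQ \to 0$, and using $\cH^2(\ker q) = 0$, yields a surjection $\cH^1(I) \twoheadrightarrow \cH^1(\cQ)$. Hence $\cH^1(\cQ)$ is $0$-dimensional and $\beta_C(\cQ) = \ch_2(\cH^1(\cQ)) = 0$. So no destabilizing quotient exists, and $I$ is $\tau_0$-semistable.

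For the forward direction, I would argue by contraposition: assume $\cH^1(I) \notin \cat{Coh}_{\le 0}(X)$. Let $T \subset \cH^1(I)$ be the maximal $0$-dimensional subsheaf, so that $\cG \coloneqq \cH^1(I)/T$ is a nonzero pure $1$-dimensional sheaf. Composing the $\cat{A}$-epimorphism $I \twoheadrightarrow \cH^1(I)[-1]$ (from fact (ii)) with the evident $\cat{A}$-epimorphism $\cH^1(I)[-1] \twoheadrightarrow \cG[-1]$ gives an $\cat{A}$-epimorphism $I \twoheadrightarrow \cG[-1]$; its kernel $K$ has rank $\rank I \ge 1$, so $K$ is a nonzero proper subobject with $\tau_0(K) = 3\pi/4 > \pi/2 = \tau_0(\cG[-1])$. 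Hence $I$ is not $\tau_0$-semistable.

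The one nontrivial ingredient is fact (ii): without it one only obtains a morphism $I \to \cH^1(I)[-1]$ in $\cat{A}$ whose $\cat{A}$-image could a priori be $0$-dimensional, and the forward direction collapses. My plan to establish it is an induction along a filtration $0 = I_0 \subset I_1 \subset \cdots \subset I_m = I$ with each $I_k/I_{k-1}$ equal to $\cO_X$ or an object of $\cat{Coh}_{\le 1}(X)[-1]$ (available since $\cat{A}$ is the extension closure of these): applying the long exact cohomology sequence to $0 \to I_{k-1} \to I_k \to I_k/I_{k-1} \to 0$, one finds that $\cH^0(I_k)$ sits in a short exact sequence $0 \to \cH^0(I_{k-1}) \to \cH^0(I_k) \to \cK \to 0$ with $\cK$ either $0$ or an ideal sheaf $\cI_W \subseteq \cO_X$ for some subscheme $W$ with $\dim W \le 1$ (the latter because $\cK = \ker(\cO_X \to \cH^1(I_{k-1}))$ and the image of that map lies in $\cat{Coh}_{\le 1}(X)$); since $\cO_X$, such $\cI_W$, and $\cH^0(I_{k-1})$ all lie in $\cat{A}$, and $\cat{A}$ is closed under sheaf extensions of its sheaf objects, so does $\cH^0(I_k)$. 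This is likely already implicit in \cite{toda_dtpt}; everything else is the two-value/rank-additivity bookkeeping together with long exact sequences, which is routine.
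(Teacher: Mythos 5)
Your proof is correct and follows essentially the same route as the paper: the forward direction exhibits the destabilizing quotient of $I$ by (a purification of) $\cH^1(I)[-1]$, and the backward direction uses the surjection $\cH^1(I)\twoheadrightarrow\cH^1(B)$ onto rank-zero quotients $B$ to rule out destabilizers, just as in the paper's case analysis. Your only addition is making explicit, and proving by induction on a filtration, the fact that $\cH^0(I)\in\cat{A}$ so that $I\twoheadrightarrow\cH^1(I)[-1]$ is an epimorphism in $\cat{A}$ — a step the paper uses without comment — and that argument is sound.
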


\begin{proof}
  Note that $\tau_0(I) = 3\pi/4$ implies either $\rank I > 0$ or
  $\ch(I) = (0, 0, 0, *)$.

  Suppose $\dim \cH^1(I) > 0$. Then $\tau_0(\cH^1(I)[-1]) = \pi/2$ and
  therefore the quotient $I \twoheadrightarrow \cH^1(I)[-1]$ is
  $\tau_0$-destabilizing for $I$.

  Suppose $\dim \cH^1(I) = 0$. If $\ch(I) = (0, 0, 0, *)$, then $I$ is
  automatically $\tau_0$-semistable. Otherwise, let $0 \to A \to I \to
  B \to 0$ be a short exact sequence in $\cat{A}$.
  \begin{itemize}
  \item If $\rank A, \rank B, \rank I > 0$, then $\tau_0(A) =
    \tau_0(I) = \tau_0(B)$.
  \item If $\rank A = 0$, then $\rank B = \rank I > 0$ and therefore
    $\tau_0(A) \le \tau_0(B) = \tau_0(I)$.
  \item If $\rank B = 0$, then $\rank A = \rank I > 0$. The surjection
    $\cH^1(I) \twoheadrightarrow \cH^1(B)$, combined with $B \in
    \cat{Coh}_{\le 1}(X)[-1]$, shows that $\ch(B) = (0,0,0,*)$. Thus
    $\tau_0(A) = \tau_0(I) = \tau_0(B)$.
  \end{itemize}
  In all cases, the short exact sequence is not $\tau_0$-destabilizing
  for $I$, so $I$ is $\tau_0$-semistable.
\end{proof}

\subsubsection{}

\begin{lemma} \label{lem:DT-PT-stability-chambers}
  Let $I = [\cO_X \xrightarrow{s} \cF]$ where $\cF \in \cat{Coh}_{\le
    1}(X)$.
  \begin{enumerate}[label = (\roman*)]
  \item \label{it:DT-stability-X} $I$ is $\tau^{\DT}$-stable if and
    only if $I$ is a DT stable pair, i.e. $s$ is surjective.
  \item \label{it:PT-stability-X} $I$ is $\tau^{\PT}$-stable if and
    only if $I$ is a PT stable pair, i.e. $\cF \in \cat{Coh}_{\le
      0}(X)^\perp$ and $\coker s \in \cat{Coh}_{\le 0}(X)$.
  \end{enumerate}
\end{lemma}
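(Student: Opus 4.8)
The plan is to prove each direction by relating $\tau_\xi$-stability in the categorical sense to the geometric DT and PT stability conditions on the pair $I = [\cO_X \xrightarrow{s} \cF]$. Throughout I would use that $\rank I = 1$ and $\tau_0(I) = 3\pi/4$, so that by Lemma~\ref{lem:DT-PT-wall-semistables} any $\tau_\xi$-semistable $I$ automatically has $\cH^1(I) \in \cat{Coh}_{\le 0}(X)$; in particular the pair description of Lemma~\ref{lem:DT-PT-rank-1-pairs} is available, and moreover $\coker s = \cH^1(I)$ is already $0$-dimensional. This means the only substantive content in each part is the purity-type condition distinguishing DT from PT: for DT we must characterize when $s$ is surjective (equivalently $\im s = \cO_C$ with $C$ pure, equivalently $\cF$ has no $0$-dimensional subsheaf that is a quotient object), and for PT we must show the $\tau^{\PT}$-(semi)stable objects are exactly those with $\cF$ pure (i.e. $\cF \in \cat{Coh}_{\le 0}(X)^\perp$).

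First, for \ref{it:PT-stability-X}: given $I$ with $\coker s \in \cat{Coh}_{\le 0}(X)$, I would analyze sub- and quotient objects of $I$ in $\cat{A}$. A nonzero torsion subsheaf $\cT \subset \cF$ of dimension $0$ gives an injection $\cT[-1] \hookrightarrow I$ in $\cat{A}$ (using that $\cT[-1]$ has no map to $\cO_X$), and since $\tau_\xi(\cT[-1]) = 3\pi/4 + \xi > 3\pi/4 = \tau_\xi(I)$ for $\xi > 0$, this is $\tau^{\PT}$-destabilizing. Conversely, if $\cF$ is pure $1$-dimensional, I would show every proper nonzero sub-object $A \subsetneq I$ has $\tau^{\PT}(A) < \tau^{\PT}(I)$: either $A \cong \cL[-1]$ for some subsheaf arising from $\cF$, in which case purity forces $\cL$ to be $1$-dimensional so $\tau_\xi(A) = \pi/2 < 3\pi/4$, or $\rank A = 1$ in which case $A = I$ since $I$ has a unique rank-one ``sub-line'' $\cO_X$. (Here I'd use the cohomology long exact sequence in the tilted heart, as in the proof of Lemma~\ref{lem:DT-PT-rank-1-pairs}.) This also shows semistable $=$ stable in the PT chamber.

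Next, for \ref{it:DT-stability-X}: if $s$ is surjective then $I = \cI_C$ is an ideal sheaf, and I would check directly that its only proper sub-objects in $\cat{A}$ of rank $1$ is itself, while rank-$0$ sub-objects are of the form $\cL[-1]$ with $\cL \subset \cO_C$ a subsheaf, hence $1$-dimensional (as $\cO_C$ has $1$-dimensional support by $\dim\cH^1 = 0$... more precisely using that $C$ may have embedded points but $\cI_C$ has no $0$-dimensional quotient), giving $\tau_\xi(\cL[-1]) = \pi/2 < 3\pi/4 - |\xi| = \tau^{\DT}(I)$ for $\xi < 0$. Conversely, if $s$ is not surjective, then $\cQ \coloneqq \coker s \neq 0$ is $0$-dimensional, and the quotient $I \twoheadrightarrow \cQ[-1]$ in $\cat{A}$ has $\tau_\xi(\cQ[-1]) = 3\pi/4 + \xi < 3\pi/4 = \tau_\xi(I)$ for $\xi < 0$, so it is $\tau^{\DT}$-destabilizing. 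Again this shows semistable $=$ stable.

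The main obstacle I anticipate is bookkeeping the sub-object and quotient-object structure of $I$ inside the non-standard abelian category $\cat{A}_X = \langle \cO_X, \cat{Coh}_{\le 1}(X)[-1]\rangle_{\mathrm{ex}}$: a sub-object of $I$ in $\cat{A}$ need not come from a subsheaf of $\cF$ in any naive way, and one must carefully use the t-structure on $\cat{D}_X$ (and the torsion-pair tilt of \cite[\S 3.1]{toda_dtpt}) to enumerate the possibilities for $\ch$ of a sub-object, exactly as Toda does in \cite[Lemma 3.11]{toda_dtpt} and as in the proof of Lemma~\ref{lem:DT-PT-rank-1-pairs}. In practice this reduces to a short list of cases on $(\rank A, \dim\supp)$, and the vanishing $\Hom(\cQ[-2], \cO_X) = 0$ from \eqref{eq:DT-PT-pair-factorization} is the key technical input that keeps the list short; but making sure no exotic sub-object is overlooked is where care is needed. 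I would structure the proof to invoke Lemma~\ref{lem:DT-PT-wall-semistables} first to dispose of $\cH^1$, then cite \cite[Lemma 3.11, Prop. 3.12]{toda_dtpt} for the classification of sub-objects, so that the argument above goes through essentially verbatim.
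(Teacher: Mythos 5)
Your overall strategy (dispose of $\cH^1$ via Lemma~\ref{lem:DT-PT-wall-semistables}, then compare slopes of sub- and quotient objects) is the same route the paper takes, but as written your case analysis contains a genuine error. The claim that a rank-one sub-object $A \subsetneq I$ forces $A = I$ ``since $I$ has a unique rank-one sub-line $\cO_X$'' is false whenever $\coker s \neq 0$: for any subsheaf $\im s \subseteq \cF' \subsetneq \cF$ (e.g.\ $\cF' = \im s$) the sub-pair $[\cO_X \to \cF']$ is a proper rank-one sub-object of $I$ in $\cat{A}$, with quotient $(\cF/\cF')[-1]$. These sub-objects are exactly the ones that matter in the PT chamber, and they have $\tau_\xi(A) = 3\pi/4 = \tau_\xi(I)$, so your criterion ``every proper sub-object has $\tau^{\PT}(A) < \tau^{\PT}(I)$'' is both unverifiable for them and, more importantly, not the definition of stability here: $\tau_\xi$ is only a \emph{weak} stability condition, so the see-saw property fails and one must compare $\tau(A)$ with $\tau(I/A)$, not with $\tau(I)$. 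With the correct comparison a PT pair survives because $\tau_\xi(A) = 3\pi/4 < 3\pi/4 + \xi = \tau_\xi((\cF/\cF')[-1])$ for $\xi>0$; with your comparison every PT pair with $\coker s \neq 0$ would appear strictly semistable, so the converse direction of \ref{it:PT-stability-X} does not go through as written. (Two smaller slips of the same kind: the slope of $I$ is $3\pi/4$ throughout the family, not ``$3\pi/4 - |\xi|$''; and rank-zero sub-objects of an ideal sheaf need not be $1$-dimensional when $C$ has embedded points --- they fail to destabilize in the DT chamber only because their slope $3\pi/4+\xi$ is below the quotient's slope $3\pi/4$, not because they are absent.)

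For comparison, the paper's proof avoids this bookkeeping entirely: it observes that for $\xi<0$ the only destabilizing configurations are rank-zero \emph{quotient} objects (in $\cat{Coh}_{\le 1}(X)[-1]$), and for $\xi>0$ the only ones are sub-objects in $\cat{Coh}_{\le 0}(X)[-1]$, and then invokes Lemma~\ref{lem:DT-PT-wall-semistables} together with Lemma~\ref{lem:PT-BS-purity}\ref{it:PT-purity} (which identifies such sub-objects with $0$-dimensional subsheaves of $\cF$) to translate these conditions into surjectivity of $s$, respectively purity of $\cF$ plus $\coker s \in \cat{Coh}_{\le 0}(X)$. If you repair your argument by (a) always comparing sub against quotient and (b) classifying rank-one sub-objects via the long exact sequence in $\cH^\bullet$ (their quotients are $T[-1]$ with $T$ a quotient of $\cH^1(I)$), you recover exactly this proof, which is also Toda's \cite[Prop.\ 3.12]{toda_dtpt}.
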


Recall if $\cat{T} \subset \cat{C}$ is a subcategory of an abelian
category, $\cat{T}^\perp \coloneqq \{C \in \cat{C} : \Hom(\cat{T}, C)
= 0\}$. For consistency, note that the condition ``$s$ is surjective''
in \ref{it:DT-stability-X} is equivalent to ``$\cF \in \cat{T}^\perp$
and $\coker s \in \cat{T}$ for $\cat{T} = 0$''.

\begin{proof}
  This is \cite[Prop. 3.12]{toda_dtpt}, but for us it follows
  immediately from Lemmas~\ref{lem:DT-PT-wall-semistables} (and
  \ref{lem:PT-BS-purity}\ref{it:PT-purity}), because:
  \begin{itemize}
  \item $I$ is $\tau^{\DT}$-stable if and only if $I$ is
    $\tau_0$-semistable and has no quotient objects in $\cat{Coh}_{\le
    1}(X)$;
  \item $I$ is $\tau^{\PT}$-stable if and only if $I$ is
    $\tau_0$-semistable and has no sub-objects in $\cat{Coh}_{\le
    0}(X)$. \qedhere
  \end{itemize}
\end{proof}

\subsubsection{}
\label{sec:DT-PT-WCF-proof}

\begin{proof}[Proof of Theorem~\ref{thm:vertex-correspondences} for DT/PT]
  This is essentially a review of the setup in \cite[\S
    6.1]{klt_dtpt}. We provide the ingredients and check the necessary
  assumptions for the wall-crossing formula (Theorem~\ref{thm:wcf})
  with restricted classes and moduli stacks
  (\S\ref{sec:restrictions}). The wall-crossing from $\tau^{\DT}$ to
  $\tau^{\PT}$ is realized as the composition (as in
  \S\ref{sec:general-wall-crossing}) of the two dominant
  wall-crossings for $(\tau, \mathring\tau) = (\tau_0, \tau^{\DT})$
  and $(\tau, \mathring\tau) = (\tau_0, \tau^{\PT})$.

  Analogously to $\fN_{(\lambda,\mu,\nu),n}$, define the following
  open moduli substack of $\fM_{0,0,0,-n}$:
  \begin{align*}
    \fQ_n
    &\coloneqq \{[0 \to \cF] \in \fM_{0,0,0,-n} : L\iota_i^*\cF = 0, \, i = 1, 2, 3\} \\
    &\cong \{\cF \in \cat{Coh}_{\le 0}(\bC^3) : \ch_3(\cF) = n\}.
  \end{align*}
  Note that all such $\cF$ are automatically $\tau_0$-semistable, i.e.
  $\fQ_n^{\sst}(\tau_0) = \fQ_n^\pl$.

  It is easy to check that $\{\tau_\xi\}_{\xi}$ defines a
  wall-crossing problem of simple type
  (Definition~\ref{def:wall-crossing-simple-type}) for the subsets $A$
  and $B \eqqcolon \tilde B$ in \eqref{eq:DT-PT-A-B-classes} below.
  Thus, Proposition~\ref{prop:wcf-simple-type} applies. Note that the
  moduli stacks $\fQ_n$ are independent of $\lambda, \mu, \nu$. This
  produces the desired wall-crossing formula \eqref{eq:DT-PT}.

\subsubsection{Assumption~\ref{es:assump:exact-subcategory}}

  (\ref{es:assump:exact-subcategory}\ref{es:assump:it:B-closed})
  Automatic since we are taking $\cat{B} = \cat{A}$.

  (\ref{es:assump:exact-subcategory}\ref{es:assump:it:permissible-classes})
  Set $\beta_C \coloneqq (|\lambda|,|\mu|,|\nu|)$ and let
  $C(\cat{A})_{\pe} \coloneqq A \sqcup B$ where
  \begin{equation} \label{eq:DT-PT-A-B-classes}
    \begin{aligned}
      A &\coloneqq \{(1,0,-\beta_C,-n) : \fN_{(\lambda,\mu,\nu),n}^{\sst}(\tau_0) \neq \emptyset\}, \\
      B &\coloneqq \{(0,0,0,-n) : \fQ_n^{\sst}(\tau_0) \neq \emptyset\}.
    \end{aligned}
  \end{equation}
  This is a partial monoid
  (Definition~\ref{def:restricted-graded-monoidal-stack}) by defining
  the monoid addition $+$ only between classes in $B$ and $B$ and
  between classes in $B$ and $A$. Note that $B + A \subset A$ because
  $\tau_0(A) = \tau_0(B)$, and the direct sum of two
  $\tau_0$-semistable objects of the same slope is still
  $\tau_0$-semistable. By Lemma~\ref{lem:DT-PT-moduli-stacks} (and an
  easy modification of the argument there), the moduli stacks
  $\{\fM_\alpha\}_{\alpha \in C(\cat{A})_{\pe}}$ are Artin, locally of
  finite type, and form a graded partially-monoidal $\sT$-stack.

  (\ref{es:assump:exact-subcategory}\ref{es:assumption-restricted-substack})
  The locally closed substacks
  \[ \fN_{(\lambda,\mu,\nu),n} \subset \fM_{1,0,-\beta_C,-n} \text{ and } \fQ_n \subset \fM_{0,0,0,-n}, \]
  ranging over all classes in $C(\cat{A})_{\pe}$, form a restricted
  graded partially-monoidal $\sT$-stack with $\kappa$-symmetric
  bilinear elements given by
  \begin{align}
    \scE\Big|_{\fN \times \fN} &\coloneqq Rp_*R\cHom(\scI_1, \scI_2(-D)) \oplus \cO_{\Delta} \oplus \kappa \cO_{\Delta}[-3] \label{eq:unrigidified-DT-PT-bilinear-element-N} \\
    \scE\Big|_{\fQ \times \fQ}, \scE\Big|_{\fQ \times \fN}, \scE\Big|_{\fN \times \fQ} &\coloneqq Rp_*R\cHom(\scI_1, \scI_2) \label{eq:unrigidified-DT-PT-bilinear-element-Q}
  \end{align}
  where $\scI_i$ denotes the universal pair on $X$ times the $i$-th
  factor, $p$ is projection along $X$, and $\Delta\colon \fN \to \fN
  \times \fN$ is the diagonal. The $\cO_{\Delta}$ factors in
  \eqref{eq:unrigidified-DT-PT-bilinear-element-N} arise from
  symmetrized pullback of the symmetric obstruction theory of
  \eqref{eq:DT-PT-obstruction-theories} on $\fN^\pl$ along the trivial
  $\bC^\times$-gerbe $\Pi^\pl\colon \fN \to \fN^\pl$
  (Lemma~\ref{lem:DT-PT-rigidification}). Conversely, $\fQ$ is
  well-known to carry the symmetric obstruction theory corresponding
  to $Rp_*R\cHom(\scI, \scI)$, as in
  \eqref{eq:unrigidified-DT-PT-bilinear-element-Q}, which, by
  Lemma~\ref{lem:obstruction-theory-pl}, induces a symmetric
  obstruction theory on $\fQ^\pl$ by removing a copy of $\bC$ from
  $H^0$ and, by Serre duality, a copy of $\bC \otimes \kappa$ from
  $H^3$. (This step was done ``manually'' in \cite[\S
    3.3.8]{klt_dtpt}.)

  Note that, since the pairs $I$ parameterized by $\fQ_n$ avoid $D
  \subset X$, \eqref{eq:unrigidified-DT-PT-bilinear-element-Q} may be
  replaced by the equivalent $Rp_*R\cHom(\scI_1, \scI_2(-D))$.

\subsubsection{Assumptions~\ref{es:assump:semistable-invariants} and \ref{es:assump:wall-crossing}}
\label{sec:DT-PT-WCF-proof-end}

  (\ref{es:assump:semistable-invariants}\ref{es:assump:it:tau-artinian},
  \ref{es:assump:semistable-invariants}\ref{es:assump:it:semistable-loci},
  and \ref{es:assump:wall-crossing}\ref{es:assump:it:tau-circ}) By
  \cite[\S 6.2]{toda_dtpt}, $\cat{A}$ admits $\tau_\xi$-HN filtrations
  for permissible classes, for all $\xi$. By \cite[\S 6.3]{toda_dtpt},
  $\tau_\xi$-semistability is open for permissible classes, for all
  $\xi$.

  (\ref{es:assump:semistable-invariants}\ref{es:assump:it:framing-functor})
  For the framing functor(s), note that the full subcategory
  $\cat{A}^\circ \subset \cat{A}$ consisting of objects $I$ with $\dim
  \cH^1(I) = 0$ is extension-closed and therefore an exact
  subcategory. By Lemma~\ref{lem:DT-PT-rank-1-pairs}, objects of rank
  $r \le 1$ in $\cat{A}^\circ$ have the form $I = [\cO_X \otimes L \to
    \cF]$ where $L$ is an $r$-dimensional vector space (equivalently,
  a $\bC$-point in $[\pt/\GL(r,\bC)]$) and $\cF \in \cat{Coh}_{\le
    1}(X)$. Let $\cO_X(1)$ denote any very ample line bundle on $X$,
  e.g. $\cO_{\bP^1}(1)^{\boxtimes 3}$, take $k \gg 0$ and $p \in
  \bZ_{> 0}$, and define
  \[ \Fr_{k,p}\colon [\cO_X \otimes L \to \cF] \mapsto H^0(\cF \otimes \cO_X(k)) \oplus L^{\oplus p}. \]
  Comparing with Example~\ref{ex:framing-functor}, it is clear that
  $\Fr_{k,p}$ is a framing functor on the full exact subcategory
  $\cat{A}^{\Fr_{k,p}} \subset \cat{A}^\circ$ consisting of pairs
  where $\cF$ is $k$-regular. The factor $L^{\oplus p}$ ensures that
  $\Hom(I, I) \to \Hom(\Fr_{k,p}(I), \Fr_{k,p}(I))$ is injective,
  especially when $\lambda = \mu = \nu = \emptyset$. \footnote{It
    suffices to take $p = 1$ throughout, but the freedom to vary $p$
    was helpful in applying a combinatorial trick in \cite[\S
      6]{klt_dtpt}.}

  (\ref{es:assump:semistable-invariants}\ref{es:assump:it:rank-function})
  For the rank function $r$, we claim that there exists $r_0(\beta_C)
  \in \bZ$ such that $\fM_{1,0,-\beta_C,-n}^\sst(\tau_0) = \emptyset$
  for all $n \le n_0(\beta)$, and then it is straightforward to check
  that any $r$ satisfying
  \begin{align*}
    r(1, 0, -\beta_C, -n) &\coloneqq n - n_0(\beta_C) \\
    r(0, 0, 0, -n) &\coloneqq n
  \end{align*}
  is a valid rank function. Such a lower bound $n_0(\beta_C)$ exists
  because if $\fM_{1,0,-\beta_C,-n'}^{\sst}(\tau_0) \neq \emptyset$
  then $\dim \fM_{1,0,-\beta_C,-n}^{\sst}(\tau_0) \ge 3(n-n')$ by
  direct summing with $n-n'$ objects of the form $\cO_x[-1]$ for $x
  \in \bC^3$ (which are obviously $\tau_0$-semistable of the same
  slope as $(1, 0, -\beta_C, -n')$), but we already know
  $\fM_{1,0,-\beta_C,-n}^{\sst}(\tau_0)$ is of finite type.

  (\ref{es:assump:semistable-invariants}\ref{es:assump:it:inert-classes})
  Take $C(\cat{A})_{\inert} \coloneqq \emptyset$.

  (\ref{es:assump:semistable-invariants}\ref{es:assump:it:semi-weak-stability})
  We prove the stronger claim that $\tau_0(\beta') =
  \tau(\alpha-\beta')$ implies $\tau_0(\beta') =
  \tau_0(\beta-\beta')$, for {\it any} classes $0 < \beta' < \beta <
  \alpha$ where $\beta, \alpha \in C(\cat{A})_{\pe}$. If $\beta \in B$
  and $0 < \beta' < \beta$ then $\beta', \beta, \beta-\beta' \in B$ as
  well, and $\tau_0$ is constant on $B$. Otherwise if $\beta \in A$,
  then $\alpha \in A$ as well, and the hypothesis $\tau_0(\beta') =
  \tau_0(\alpha - \beta')$ implies $\beta' = (0, 0, 0, -m)$ or $\beta'
  = (1, 0, -\beta_C, -m)$ for some $m \in \bZ$. But then clearly
  $\tau_0(\beta') = \tau_0(\beta - \beta')$ as well.

  (\ref{es:assump:semistable-invariants}\ref{es:assump:it:properness}
  and \ref{es:assump:wall-crossing}\ref{es:assump:it:properness-wcf})
  Properness of various fixed loci may be proved via Langton's
  elementary modifications; see \cite[\S 4.3, Prop. 6.1.5]{klt_dtpt}
  for details.

  (\ref{es:assump:semistable-invariants}\ref{es:assump:it:Bpe-sst-summands}
  and \ref{es:assump:wall-crossing}\ref{es:assump:it:tau-circ}) If
  $\beta_1 + \beta_2 \in B$ then necessarily $\beta_1, \beta_2 \in B$.
  Else if $\beta_1 + \beta_2 \in A$ with $\tau_\xi(\beta_1) =
  \tau_\xi(\beta_2)$ then $\xi = 0$ and (without loss of generality)
  $\beta_1 \in A$ and $\beta_2 \in B$. In either case it is
  straightforward to verify the desired conditions, noting that the
  conditions
  \ref{sec:DT-PT-N-stack}\ref{it:cond:DT-PT-N-stack-support} and
  \ref{it:cond:DT-PT-N-stack-transverality} are preserved upon passing
  to sub-objects.

  (\ref{es:assump:wall-crossing}\ref{es:assump:it:lambda}) Let $\alpha
  \in C(\cat{A})_{\pe}$. If $\alpha \in B$, then $R_\alpha \subset B$
  and so $\mathring\tau(\beta) = \mathring\tau(\alpha - \beta)$ for
  any $\beta \in R_\alpha$, thus we may take $\lambda \coloneqq 0$. If
  $\alpha \in A$, then for the $\tau_0$ to $\tau^{\DT}$ (resp.
  $\tau^{\PT}$) dominant wall-crossing, take any group homomorphism
  $\lambda\colon K(\cat{A}) \to \bR$ such that $\lambda(\alpha) = 0$
  and $\lambda(0,0,0,-1) < 0$ (resp. $\lambda(0,0,0,-1) > 0$).
\end{proof}

\subsubsection{}
\label{sec:DT-PT-consequences}

Clearly only the trivial PT stable pair $\cO \to 0$ has curve class
$\beta_C = 0$, hence
\begin{equation} \label{eq:PT0-vertex-trivial}
  \sV^{\PT}_{\emptyset,\emptyset,\emptyset} = \partial
\end{equation}
where $\partial = \id \in K_\circ^{\tilde\sT}(\pt)$. This is the same
element which plays a significant role in the definition of semistable
invariants (Theorem~\ref{thm:sst-invariants}).

Specializing the equivariant vertex correspondence
(Theorem~\ref{thm:vertex-correspondences}) to the case
$(\lambda,\mu,\nu) = (\emptyset,\emptyset,\emptyset)$,
\eqref{eq:PT0-vertex-trivial} implies
\begin{equation} \label{eq:DT-PT-0}
  I_*\sV_{\emptyset,\emptyset,\emptyset}^{\DT} = \exp(\ad(\sz)) I_*\partial.
\end{equation}
Applying the operational equivariant vertex correspondence
(Theorem~\ref{thm:vertex-correspondences}) to $\cO$, by rigidity
(Proposition~\ref{prop:rigidity}) and that $(I_*\phi)(\cO) =
\phi(\cO)$,
\[ \sV_{\lambda,\mu,\nu}^{\DT}(\cO) = z \cdot \sV_{\lambda,\mu,\nu}^{\PT}(\cO) \]
for a factor $z$ which is {\it independent} of $\lambda, \mu, \nu$.
Since $\partial(\cO) = 1$, specializing to $(\lambda,\mu,\nu) =
(\emptyset,\emptyset,\emptyset)$ yields $z =
\sV_{\emptyset,\emptyset,\emptyset}^{\DT}(\cO)$. Hence we recover the
DT/PT primary vertex correspondence \eqref{eq:DT-PT-primary}.

\subsection{PT/BS vertex correspondence}
\label{sec:PT-BS}

\subsubsection{}

Let $\pi\colon X \to X_0$ be a projective morphism of relative
dimension one with $R\pi_*\cO_X = \cO_{X_0}$. Recall Bridgeland's
theory of perverse coherent sheaves \cite[\S 3]{Bridgeland2002}. Let
\[ \Per(\pi) \coloneqq \left\{E \in D^b\cat{Coh}(X) : \begin{array}{l} R\pi_*E \in \cat{Coh}(X_0),\\ \Hom^{<0}(E, \cat{C}) = \Hom^{<0}(\cat{C}, E) = 0\end{array}\right\} \]
where $\cat{C} \coloneqq \{\cF \in \cat{Coh}(X) : R\pi_*\cF = 0\}$.
(This is Bridgeland's ${}^p\!\Per(X/X_0)$ with the perversity $p = 0$.)
Equivalently, $\Per(\pi)$ is the tilt of $\cat{Coh}(X)$ with respect
to the torsion pair
\begin{align*}
  \cat{T}_\pi &\coloneqq \{\cT \in \cat{Coh}(X) : R^1\pi_*\cT = 0\} \\
  \cat{F}_\pi &\coloneqq \{\cF \in \cat{Coh}(X) : \pi_*\cF = 0, \, \Hom(\cat{C}, \cF) = 0\}
\end{align*}
on $\cat{Coh}(X)$ \cite[Lemma 3.1.2]{VandenBergh2004}, therefore
$\Per(\pi)$ is the heart of a bounded t-structure on $D^b\cat{Coh}(X)$
(the {\it perverse t-structure}) and in particular it is an abelian
category. Note that $\cO_X \in \Per(\pi)$.

Let $\PcH^i(-) \in \cat{Per}(\pi)$ denote cohomology with respect to
the perverse t-structure, to distinguish it from the cohomology
$\cH^i(-) \in \cat{Coh}(X)$ with respect to the ordinary t-structure.

\subsubsection{}

Let
\begin{align*} 
  \Per_0(\pi) &\coloneqq \{F \in \Per(\pi) : \dim \supp R\pi_*F = 0\} \\
  \Per_{\le 1}(\pi) &\coloneqq \{F \in \Per(\pi) : \dim \supp F \le 1\},
\end{align*}
which are the analogues of $\cat{Coh}_{\le 0}(X)$ and $\cat{Coh}_{\le
  1}(X)$ in \S\ref{sec:DT-PT}, and let
\[ \cat{A}_\pi \coloneqq \inner{\cO_X, \Per_{\le 1}(\pi)[-1]}_{\text{ex}} \subset \cat{D}_X \]
be the smallest extension-closed\footnote{Extensions are taken in the
abelian category $\cat{Per}^\dag(\pi)[-1]$, where
$\cat{Per}^{\dag}(\pi)$ is the tilt of $\cat{Per}(\pi)$ with respect
to the torsion pair $(\cat{Per}_{\le 1}(\pi), \cat{Per}_{\le
  1}(\pi)^\perp)$; see \cite[\S 3.1]{Toda2013}.} full subcategory
containing $\cO_X$ and $\cat{Per}_{\le 1}(\pi)[-1]$. Like $\Per(\pi)$,
both $\Per_{\le 1}(\pi)$ and $\cat{A}_\pi$ are tilts of
$\cat{Coh}_{\le 1}(X)$ and $\cat{A}_X$ respectively \cite[Lemmas 3.2,
  3.6]{Toda2013}, and are therefore both abelian categories. Moreover,
$\cat{A}_\pi$ is Noetherian \cite[Lemma 3.5(i)]{Toda2013} and clearly
$\bC$-linear. This is the ambient abelian category of interest in this
section.

Let $\fM_\pi$ denote the moduli stack parameterizing objects in
$\cat{A}_\pi$. When it is unambiguous or unimportant, we omit the
subscript $\pi$ from $\cat{A}_\pi$ and $\fM_\pi$. Clearly direct sum
and scaling automorphisms in $\cat{A}$ make $\fM$ into a graded
monoidal stack.

\subsubsection{}

\begin{remark}
  The abelian category $\cat{A}_\pi$ first appeared in Toda's work
  \cite{Toda2013} (where it was denoted ${}^0\cB_{X/X_0}$). Many
  objects and results there are implicitly about BS stable pairs,
  despite it predating the invention of BS stable pairs in
  \cite{Bryan2016} by several years. While the abelian category
  $\cat{A}_X$ appearing in \S\ref{sec:DT-PT} is important for the
  study of DT-type theories associated to $X$, Toda's insight was that
  $\cat{A}_\pi$ is the appropriate abelian category for the study of
  DT-type theories associated to the resolution $\pi$ and its
  birational transformations.
\end{remark}

\subsubsection{}

Specifically, for PT/BS, let the generator of the cyclic group
$\bZ_{m+1}$ act on $\bC^2$ by $(z_1, z_2) \mapsto (\zeta z_1,
\zeta^{-1} z_2)$, let $\cA_m \to \bC^2/\bZ_{m+1}$ be the minimal
smooth (crepant) resolution, and let
\[ \cA_m \times \bP^1 \eqqcolon X \xrightarrow{\pi} X_0 \coloneqq \bC^2/\bZ_{m+1} \times \bP^1 \]
be the trivial\footnote{It is certainly interesting and productive to
  take non-trivial families, see e.g. \cite{Kim2016}, but we do not do
  so here.} family of these resolutions over $\bP^1$. Clearly $\pi$ is
a projective morphism of relative dimension one, and it is well-known
that $R\pi_*\cO_X = \cO_{X_0}$ \cite{Viehweg1977}. The natural scaling
action of $\sT \coloneqq (\bC^\times)^3$ on $\bC^2 \times \bP^1$
commutes with the $\bZ_m$-action and therefore induces a $\sT$-action
on $X$. Let $\iota\colon D \hookrightarrow X$ be the $\sT$-invariant
boundary divisor $\cA_m \times \{\infty\}$. Let $\kappa$ denote the
$\sT$-weight of the trivial (but not equivariantly trivial) canonical
bundle $\cK_{X \setminus D}$.

Objects $I \in \cat{A}_\pi$ have Chern character
\[ \ch(I) = (r, 0, (-\beta_{\bP}, -\beta_{\cA}), -n) \]
where $r = \rank(I)$ and $2n = 2\ch_3(I)$ are integers, $\beta_{\bP}
\in H_2(\bP^1; \bZ) \cong \bZ$ and $\beta_{\cA} \in H_2(\cA_m; \bZ)$.
Let $\omega \in H^2(\cA_m)$ be the first Chern class of a
globally-generated\footnote{This is useful for the content of
  \S\ref{sec:PT-BS-WCF-proof-end}, but is not used otherwise.} ample
line bundle, and, on $\cat{A}_\pi$, define the family of weak
stability conditions
\begin{equation} \label{eq:PT-BS-stability-condition}
  \tau_\xi(r, 0, (-\beta_{\bP},-\beta_{\cA}), -n) \coloneqq \begin{cases} 3\pi/4 & r \neq 0, \\ \pi/2 & r = 0, \, \beta_{\bP} \neq 0, \\ 3\pi/4 + \xi & r = \beta_{\bP} = 0, \, \omega \cdot \beta_{\cA} > 0, \\ \pi & r = \beta_{\bP} = 0, \, \omega \cdot \beta_{\cA} \le 0 \end{cases}
\end{equation}
for $\xi \in (-\pi/4, \pi/4)$. This is a continuous family with
exactly one wall at $\xi = 0$ for objects of rank $r=1$, so let
$\tau^{\BS}$ and $\tau^{\PT}$ denote the cases $\xi > 0$ and $\xi < 0$
respectively. Convexity of the half spaces $\{\beta : \omega \cdot
\beta > 0\}$ and $\{\beta : \omega \cdot \beta \le 0\}$ ensures this
is a weak stability condition. Recall that $\beta_{\cA} > 0$, meaning
that $\beta_{\cA}$ is an effective curve class, implies $\omega \cdot
\beta_{\cA} > 0$, but the converse may not be true.

We proceed to identify $\tau_\xi$-semistable objects of rank $r \le
1$.

\subsubsection{}

\begin{lemma} \label{lem:PcH-vs-cH}
  Let $I \in \cat{A}_\pi$. Then $\cH^1(I) = \cH^0(\PcH^1(I)) \in
  \cat{T}_\pi$.
\end{lemma}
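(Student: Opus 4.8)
The statement relates the ordinary degree-$1$ cohomology sheaf $\cH^1(I)$ with the perverse cohomology $\PcH^1(I)$ of an object $I \in \cat{A}_\pi$. The plan is to exploit the two tilts at play: on one hand $\cat{A}_\pi$ sits inside $D^b\cat{Coh}(X)$ and has a bounded t-structure whose heart involves $\cat{Per}_{\le 1}(\pi)[-1]$; on the other, $\cat{Per}(\pi)$ is a tilt of $\cat{Coh}(X)$ with respect to the torsion pair $(\cat{T}_\pi, \cat{F}_\pi)$. First I would recall the basic relation between the perverse and ordinary t-structures on $D^b\cat{Coh}(X)$: since $\cat{Per}(\pi)$ is obtained from $\cat{Coh}(X)$ by tilting at $(\cat{T}_\pi, \cat{F}_\pi)$, for any complex $E$ one has short exact sequences (in the appropriate hearts) expressing $\PcH^i(E)$ in terms of $\cH^i(E)$ and $\cH^{i+1}(E)$; concretely $\PcH^i(E)$ fits into $0 \to \cH^i(E)[\text{shift}]\text{-part in }\cat{F}_\pi \to \PcH^i(E) \to \cH^{i+1}(E)\text{-part in }\cat{T}_\pi \to 0$, coming from the two-step filtration of the standard truncation by the tilted one. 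I would make this precise using \cite[\S 3]{Bridgeland2002} or the torsion-pair description from \cite{VandenBergh2004} cited in the excerpt.

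The key input is that $I \in \cat{A}_\pi = \langle \cO_X, \cat{Per}_{\le 1}(\pi)[-1]\rangle_{\text{ex}}$, so $I$ has perverse cohomology concentrated in degrees $0$ and $1$: $\PcH^0(I)$ is a subobject-type piece built from $\cO_X$-summands and $\PcH^1(I) \in \cat{Per}_{\le 1}(\pi)$. In particular $\cH^i(I) = 0$ for $i \neq 0, 1$ and $\cH^{i}(I) = 0$ outside a small range, which already forces the spectral-sequence/filtration relating $\cH^*$ and $\PcH^*$ to degenerate into a single short exact sequence. Then I would identify $\cH^1(I)$: applying $\cH^1$ to the perverse filtration triangle $\tau^{\cat{Per}}_{\le 0} I \to I \to \PcH^1(I)[-1] \xrightarrow{+1}$, and using that $\tau^{\cat{Per}}_{\le 0} I$ has $\cH^1 = 0$ (because $\cO_X \in \cat{T}_\pi \subset \cat{Coh}(X)$ sits in degree $0$ and $\cat{Per}_{\le 1}(\pi)$-objects have bounded ordinary cohomology in degrees $0$ and $1$ with the degree-$1$ part landing where it does), I would get $\cH^1(I) \cong \cH^0(\PcH^1(I))$. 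Finally, to see that $\cH^0(\PcH^1(I)) \in \cat{T}_\pi$: for any $F \in \cat{Per}(\pi)$, writing the defining two-step filtration of $F$ coming from $(\cat{T}_\pi, \cat{F}_\pi)$ — namely $\cH^{-1}(F)[1] \in \cat{F}_\pi[1]$ as a subobject and $\cH^0(F) \in \cat{T}_\pi$ as a quotient — shows precisely that $\cH^0(F) \in \cat{T}_\pi$. Apply this with $F = \PcH^1(I)$.

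The step I expect to be the main obstacle is the careful bookkeeping of which truncation functor is being applied and in which abelian category, since we are juggling three hearts ($\cat{Coh}(X)$, $\cat{Per}(\pi)$, and $\cat{A}_\pi$, plus their further tilts) and the cohomology functors $\cH^i$, $\PcH^i$ do not commute naively. In particular I must verify that an object of $\cat{A}_\pi$ really does have ordinary cohomology concentrated in degrees $0$ and $1$ and perverse cohomology in degrees $0$ and $1$ as well — this should follow from the extension-closure definition together with the fact that $\cat{Per}_{\le 1}(\pi)[-1] \subset \cat{A}_\pi$ and that $\cat{Per}(\pi)$-objects have $\cH^i = 0$ for $i \notin \{-1, 0\}$ (a standard property of perversity-$0$ perverse sheaves for a morphism of relative dimension one). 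Once the amplitude bounds are pinned down, the rest is a direct diagram chase; I would cite \cite[\S 3]{Toda2013} for the analogous statements about $\cat{A}_X$ versus $\cat{A}_\pi$ and adapt them, since Toda essentially works with the same categories.
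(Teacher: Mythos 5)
Your proposal is correct and is essentially the paper's argument: the paper also applies ordinary cohomology to the perverse truncation triangle $\PcH^0(I) \to I \to \PcH^1(I)[-1]$, uses that $\PcH^0(I) \in \cat{Per}(\pi)$ has $\cH^{>0} = 0$ to get $\cH^1(I) \cong \cH^0(\PcH^1(I))$, and the membership in $\cat{T}_\pi$ follows from the torsion-pair description of the tilt exactly as you say. (Note that the clean justification for the vanishing step is simply $\PcH^0(I) \in \cat{Per}(\pi)$, which also gives the $\cH^2$ vanishing needed for the isomorphism; the longer spectral-sequence bookkeeping you anticipate is unnecessary.)
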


\begin{proof}
  Consider the short exact sequence $0 \to I_1 \to I \to \PcH^1(I)[-1]
  \to 0$. Since $\PcH^1(I_1) = 0$, i.e. $I_1 \in \cat{Per}(\pi)$, we
  know $\cH^{>0}(I_1) = 0$. Thus $\cH^1(I) = \cH^1(\PcH^1(I)[-1])$, as
  claimed.
\end{proof}

\subsubsection{}

\begin{lemma} \label{lem:PT-BS-rank-1-pairs}
  Take $I \in \cat{A}_\pi$ with $\rank I = 1$, and suppose $\supp
  \PcH^1(I)$ avoids $D \subset X$. There is an isomorphism
  \[ I \cong [\cO_X \xrightarrow{s} F], \quad F \in \cat{Per}_{\le 1}(\pi), \, \Pcoker s \in \cat{Per}_0(\pi), \]
  if and only if $\PcH^1(I) \in \cat{Per}_0(\pi)$. Here $\cO_X$ is in
  degree zero and $F$ is in degree one.
\end{lemma}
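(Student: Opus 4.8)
The statement is the perverse analogue of Lemma~\ref{lem:DT-PT-rank-1-pairs}, so the plan is to transplant Toda's argument (and the proof of Lemma~\ref{lem:DT-PT-rank-1-pairs}) from the ordinary t-structure on $\cat{Coh}(X)$ to the perverse t-structure on $\Per(\pi)$. The ``only if'' direction is immediate: if $I \cong [\cO_X \xrightarrow{s} F]$ with $F \in \Per_{\le 1}(\pi)$ in degree one, then $\PcH^1(I) = \Pcoker s$, which is assumed to lie in $\Per_0(\pi)$. So the work is all in the ``if'' direction.

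For the ``if'' direction, first I would split $I$ into the two short exact sequences in $\cat{A}_\pi$ (compare \eqref{eq:DT-PT-rank-1-objects})
\begin{align*}
  0 \to \cE[-1] \to &I_1 \to \cO_X \to 0, \\
  0 \to I_1 \to &I \to Q[-1] \to 0,
\end{align*}
where $Q \coloneqq \PcH^1(I) \in \Per_0(\pi)$, so that $\PcH^1(I_1) = 0$, i.e. $I_1 \in \Per(\pi)$, and $I_1$ is the perverse analogue of an ideal sheaf of a $1$-dimensional subscheme. The key vanishing to establish is the perverse analogue of \eqref{eq:DT-PT-pair-factorization}: the composition $Q[-2] \to I_1 \to \cO_X$ must vanish, which reduces to showing $\Hom_{D^b\cat{Coh}(X)}(Q[-2], \cO_X) = 0$, i.e. $\Ext^2_X(Q, \cO_X) = 0$. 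Here I would use that $Q \in \Per_0(\pi)$ together with the support hypothesis that $\supp \PcH^1(I) = \supp Q$ avoids $D$: writing $U \coloneqq X \setminus D$, on which $\cK_U \cong \kappa \otimes \cO_U$, Serre duality (applied on the relevant proper-over-$X_0$ locus, or via the resolution $\pi$) gives $\Ext^2_X(Q, \cO_X) \cong \Ext^2_U(Q|_U, \cO_U) \cong H^1(U, Q|_U \otimes \cK_U)^\vee$, and since $Q \in \Per_0(\pi)$ has $R\pi_*Q$ supported in dimension zero, the relevant higher pushforward vanishes — this is exactly the perverse replacement for ``$\cQ \in \cat{Coh}_{\le 0}(X)$ forces $H^1(X, \cQ \otimes \cK_X) = 0$''. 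Once this vanishing is in hand, the octahedral/$3\times 3$ diagram \eqref{eq:pair-formation} goes through verbatim: I complete the solid square into a commutative diagram of exact triangles, obtaining $s\colon \cO_X \to F$ with $F$ an extension of $\cE$ and $Q$ inside $D^b\cat{Coh}(X)$. Since $\cE \in \Per_{\le 1}(\pi)$ (it is $\cH^0$ of something in $\cat{A}_\pi$ placed suitably) and $Q \in \Per_0(\pi) \subset \Per_{\le 1}(\pi)$, and $\Per_{\le 1}(\pi)$ is extension-closed, $F \in \Per_{\le 1}(\pi)$. Finally, because $\PcH^1(I_1) = 0$, the long exact perverse cohomology sequence identifies $\Pcoker s \cong Q = \PcH^1(I) \in \Per_0(\pi)$, and the rightmost column of the diagram furnishes the isomorphism $I \cong [\cO_X \xrightarrow{s} F]$.

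The main obstacle I anticipate is the perverse Serre duality / vanishing step $\Ext^2_X(Q, \cO_X) = 0$: one must be careful about properness (objects in $\cat{A}_\pi$ are supported properly over $X_0$ but $X$ itself is non-compact), about the interaction between the perverse t-structure and duality, and about genuinely using the support hypothesis ``$\supp \PcH^1(I)$ avoids $D$'' to reduce to the open Calabi--Yau locus $U$ where $\cK_U$ is trivial up to a $\sT$-weight. I expect this to be exactly the point where Toda's argument in \cite[\S 3.1]{Toda2013} and the purity-type lemmas referenced nearby (e.g. in the vein of Lemma~\ref{lem:PT-BS-purity}) do the work, so I would cite those rather than reprove them. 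The remaining steps — the two short exact sequences, the octahedral completion, and reading off $\Pcoker s$ — are formal once this vanishing is available, being identical in structure to the proof of Lemma~\ref{lem:DT-PT-rank-1-pairs}.
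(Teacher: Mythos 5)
Your proposal is correct and follows essentially the same route as the paper: decompose $I$ via the two short exact sequences, reduce the key point to the vanishing $\Hom(Q[-2],\cO_X)=0$ using the support hypothesis to trivialize $\cK_X$ near $\supp Q$, and then run the octahedral diagram verbatim. The only place the paper is more explicit is in proving that vanishing: it reduces to $H^1(X,Q)=0$ and uses the Leray sequence $0 \to H^1(X_0,\pi_*Q) \to H^1(X,Q) \to H^0(X_0,R^1\pi_*Q)$, where both outer terms die because $Q\in\cat{Per}_0(\pi)$ forces $R^1\pi_*Q=0$ and $\pi_*Q$ zero-dimensional — exactly the two ingredients you gesture at with ``the relevant higher pushforward vanishes.''
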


Here, the notation $\Pcoker$ reminds us that the cokernel is taken in
the category $\cat{Per}(\pi)$.

Moreover, $\im s$ is a perverse structure sheaf \cite[Def.
  3.4]{Bridgeland2002} numerically equivalent to $\cO_C$ for some
$1$-dimensional subscheme $C \subset X$, but we will not use this.

\begin{proof}
  This is the PT/BS analogue of Lemma~\ref{lem:DT-PT-rank-1-pairs}, so
  we indicate only the necessary modifications to that proof. First,
  $\cat{Coh}(X)$ must be replaced by $\cat{Per}(\pi)$ throughout, and
  $\cH^*$ and $\coker$ replaced by $\PcH^*$ and $\Pcoker$
  respectively. Second, given $I \in \cat{A}_\pi$ of rank one such
  that $Q \coloneqq \PcH^1(I)$ lies in $\cat{Per}_0(\pi)$ and is
  supported away from $D$, we need to show the analogue of the
  vanishing in \eqref{eq:DT-PT-pair-factorization}. It suffices to
  show
  \[ H^1(X, Q) \stackrel{?}{=} 0 \]
  since $\supp Q$ avoids $D$ by hypothesis. This vanishing follows
  from the exact sequence
  \[ 0 \to H^1(X_0, \pi_*Q) \to H^1(X, Q) \to H^0(X_0, R^1\pi_*Q) \to \cdots \]
  associated to the Leray spectral sequence for $\pi$ (which in fact
  degenerates because $\pi$ has relative dimension one, but we don't
  need this). Since $Q \in \Per_0(\pi)$, in particular $R\pi_*Q \in
  \cat{Coh}(X_0)$, the outer two terms in this exact sequence vanish
  and thus the middle term does too. The rest of the proof remains
  unchanged.
\end{proof}

\subsubsection{}
\label{sec:PT-BS-N-stack}

We define a sequence of substacks of $\fM_{1,0,(-\beta_{\bP},
  -\beta_{\cA}),-n}$, for any $\beta_{\bP}$, $\beta_{\cA}$ and $n$.
First, let
\[ \fM^\circ_{1,0,(-\beta_{\bP},-\beta_{\cA}),-n} \subset \fM_{1,0,(-\beta_{\bP},-\beta_{\cA}),-n} \]
be the moduli substack of objects $I \in \cat{A}$ such that
\begin{enumerate}[label = (\alph*)]
\item \label{it:cond:PT-BS-N-stack-support} (support) $\supp
  \PcH^1(I)$ avoids $D$
\end{enumerate}
and $\dim R\pi_*\PcH^1(I) = 0$. By Lemma~\ref{lem:PT-BS-rank-1-pairs},
all objects parameterized by this substack are isomorphic to pairs
$s\colon\cO_X \to F$ where $F \in \cat{Per}_{\le 1}(\pi)$. Note that
$\PcH^1(I) = \Pcoker(s)$. Second, let
\[ \fN_{\beta_{\bP},\beta_{\cA},n} \subset \fM^\circ_{1,0,(-\beta_{\bP}, -\beta_{\cA}),-n} \]
be the moduli substack consisting of the pairs such that:
\begin{enumerate}[resume,label = (\alph*)]
\item \label{it:cond:PT-BS-N-stack-transverality} (transversality)
  $L^k\iota^* F = 0$ for $k > 0$.
\end{enumerate}
This condition implies the existence of the {\it evaluation map}
\begin{align*}
  \ev\colon \fN_{\beta_{\bP}, \beta_{\cA},n} &\to \fHilb(D, \beta_{\bP}) \\
  [\cO_X \xrightarrow{s} F] &\mapsto [\cO_D \xrightarrow{s} \iota^*F],
\end{align*}
which lands in the Hilbert stack of $\beta_{\bP}$ points on $D$.
Finally, given a point $p \in \fHilb(D, \beta_{\bP})$, define the
substack
\[ \fN_{p,\beta_{\cA},n} \coloneqq \ev^{-1}(p) \subset \fN_{\beta_{\bP},\beta_{\cA},n}. \]

\subsubsection{}

\begin{lemma} \label{lem:PT-BS-stacks-summary}
  The analogues of Lemmas~\ref{lem:DT-PT-rigidification},
  \ref{lem:DT-PT-moduli-stacks}, and
  \ref{lem:DT-PT-obstruction-theories} hold for the moduli stacks
  \[ \fN_{p,\beta_{\cA},n} \subset \fN_{\beta_{\bP},\beta_{\cA},n} \subset \fM^\circ_{1,0,(-\beta_{\bP},-\beta_{\cA}),-n} \subset \fM_{1,0,(-\beta_{\bP},-\beta_{\cA}),-n}. \]
\end{lemma}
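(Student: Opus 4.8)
The plan is to run the exact same arguments as for the DT/PT statements (Lemmas~\ref{lem:DT-PT-rigidification}, \ref{lem:DT-PT-moduli-stacks}, \ref{lem:DT-PT-obstruction-theories}), replacing $\cat{Coh}(X)$ with $\cat{Per}(\pi)$ and $\cat{A}_X$ with $\cat{A}_\pi$ throughout, and using $\PcH^*$ and $\Pcoker$ in place of $\cH^*$ and $\coker$. Nothing new is needed for the first statement: by Lemma~\ref{lem:PT-BS-rank-1-pairs}, every object of $\cat{A}_\pi$ parameterized by $\fM^\circ_{1,0,(-\beta_{\bP},-\beta_{\cA}),-n}$ is a pair $[\cO_X \xrightarrow{s} F]$ with $\Aut(\cO_X) = \bC^\times$, so the $\bC^\times$-rigidification $\Pi^\pl$ admits a section sending such a pair to the choice of an isomorphism of its degree-zero term with $\cO_X$, hence trivializes $\Pi^\pl$ by \cite[Lemma 3.21]{Laumon2000}. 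The isomorphisms \eqref{eq:DT-PT-rigidification} follow as before via Lemma~\ref{lem:k-homology-pl}.

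For the analogue of Lemma~\ref{lem:DT-PT-moduli-stacks}, I would again embed $\fM_{1,0,(-\beta_{\bP},-\beta_{\cA}),-n}$ as an open substack of a Mumford-type moduli stack of objects $I \in D\cat{Coh}(X)$ with $\Ext^{<0}(I,I) = 0$, which is Artin and locally of finite type by \cite{Lieblich2006}; Toda's argument \cite[\S 4]{Toda2013} shows openness of $\fM_{1,0,\ldots}$ inside it. The closedness of $\fN_{p,\beta_{\cA},n}$ inside $\fN_{\beta_{\bP},\beta_{\cA},n}$ is clear (fibre of the evaluation map), and $\fN_{\beta_{\bP},\beta_{\cA},n}$ is open inside $\fM^\circ$ because the support condition~\ref{it:cond:PT-BS-N-stack-support} and the transversality condition~\ref{it:cond:PT-BS-N-stack-transverality} are open (the latter by upper semicontinuity of $h^k$ of $L^k\iota^*F$); and $\fM^\circ \subset \fM$ is open because, by Lemma~\ref{lem:PcH-vs-cH}, the condition $\dim R\pi_*\PcH^1(I)=0$ together with condition~\ref{it:cond:PT-BS-N-stack-support} cuts out an open locus. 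The analogue of Lemma~\ref{lem:DT-PT-obstruction-theories} is the formal part: the Mumford stack carries a $\sT$-equivariant obstruction theory $Rp_*R\cHom_X(\scI,\scI)$ by \cite[Theorem B]{Ricolfi2021} (this is insensitive to which heart we use, being a statement about complexes), the compatibility-of-Atiyah-classes argument \cite[Proposition 3.3.2]{klt_dtpt} transports it to the closed substack $\fN = \fN_{p,\beta_{\cA},n}$ with $\scI(-D)$ in place of $\scI$ (using the conditions along $\iota\colon D \hookrightarrow X$), Serre duality on the $3$-fold $X\setminus D$ makes $(\bF[1])^\vee$ for $\bF = Rp_*R\cHom_X(\scI,\scI(-D))$ into a $\kappa$-symmetric obstruction theory, and $\det \bF$ has a square root by the argument of \cite[\S 6]{Nekrasov2016}.

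The main obstacle, and essentially the only place the perverse setting genuinely differs from \S\ref{sec:DT-PT}, is ensuring the vanishing that replaces \eqref{eq:DT-PT-pair-factorization}; but this is already isolated and handled inside the proof of Lemma~\ref{lem:PT-BS-rank-1-pairs}, where the Leray spectral sequence for $\pi$ combined with $R\pi_*Q \in \cat{Coh}(X_0)$ for $Q \in \Per_0(\pi)$ and the support condition along $D$ forces $H^1(X,Q)=0$. So the only thing to check when quoting Lemma~\ref{lem:PT-BS-rank-1-pairs} inside these analogues is that the relevant $\PcH^1$'s indeed lie in $\Per_0(\pi)$ and avoid $D$, which is exactly how $\fM^\circ$ and $\fN_{\beta_{\bP},\beta_{\cA},n}$ were defined in \S\ref{sec:PT-BS-N-stack}. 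Everything else — openness, closedness, the Artin/finite-type claims, and the construction and symmetry of the obstruction theory — is a verbatim transcription of the DT/PT proofs, so I would simply state this and point to the corresponding lemmas rather than rewrite the arguments.
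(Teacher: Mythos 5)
There is a genuine gap: your argument ignores that, unlike the DT/PT case where $X=(\bP^1)^3$ is projective, here $X=\cA_m\times\bP^1$ is only quasi-projective, and this is precisely the point where the proof cannot be a verbatim transcription. You propose to embed $\fM_{1,0,(-\beta_{\bP},-\beta_{\cA}),-n}$ into a Mumford-type stack of objects on $X$ itself and to endow it with the obstruction theory $Rp_*R\cHom_X(\scI,\scI)$ via \cite[Theorem B]{Ricolfi2021}, but both \cite{Lieblich2006} and the Atiyah-class construction are results about proper (projective) varieties. For a rank-one object $I=[\cO_X\to F]\in\cat{A}_\pi$ on the non-proper $X$ the relevant Ext groups are not even finite-dimensional (already $\Hom(I,I)$ contains $H^0(\cO_X)$, which is infinite-dimensional since $\cA_m$ is not proper), so the stack $\{\Ext^{<0}(I,I)=0\}$ on $X$ with its natural obstruction theory is not available, and the appeal to Serre duality ``on $X\setminus D$'' to get $\kappa$-symmetry likewise has no content without compact support or a compactification.

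The fix used in the paper is to choose a smooth projective $\sT$-equivariant compactification $\bar X$ of $X$, replace every instance of $\fMum_X$ by $\fMum_{\bar X}$, and realize the moduli stacks in question as the open substacks of $\fMum_{\bar X}$ consisting of rank-one objects whose restriction to $\bar X\setminus X$ is $\cO_{\bar X\setminus X}$ (this openness is \cite[\S 6.4, Step 1]{Toda2013}, not \S 4). The obstruction theory is then inherited from $\fMum_{\bar X}$, and one must check it computes the intended complex: this is where the twist by $-D$ earns its keep, since $R\cHom_X(\cI,\cI(-D))$ agrees with $R\cHom_{\bar X}(\cI,\cI(-D))$ because $R\Hom_{\bP^1}(\cO_{\bP^1},\cO_{\bP^1}(-1))=0$. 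Your treatment of the rigidification, of the openness/closedness of the chain of inclusions, and your observation that the vanishing replacing \eqref{eq:DT-PT-pair-factorization} is already isolated in Lemma~\ref{lem:PT-BS-rank-1-pairs} are all fine and agree with the paper; what is missing is the compactification step, which is the actual content of this lemma's proof.
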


\begin{proof}
  We indicate only the necessary modifications to the proofs of
  Lemmas~\ref{lem:DT-PT-rigidification},
  \ref{lem:DT-PT-moduli-stacks}, and
  \ref{lem:DT-PT-obstruction-theories}. Note that $X$ is now
  quasi-projective instead of projective. Let $\bar X$ be any smooth
  (projective) $\sT$-equivariant compactification of $X$ --- all
  instances of $\fMum_X$ should be replaced by $\fMum_{\bar X}$. Then,
  for example, the moduli stack of rank-$1$ objects in $\cat{A}_X$ is
  the open substack of $\fMum_{\bar X}$ consisting of rank-$1$ objects
  whose restriction to $\bar X \setminus X$ is $\cO_{\bar X \setminus
    X}$. With this point in mind,
  $\fM_{1,0,(-\beta_{\bP},-\beta_{\cA}),-n}$ is an open substack of
  $\fMum_{\bar X}$ \cite[\S 6.4, Step 1]{Toda2013}. The last inclusion
  is open because both the support condition
  \ref{sec:PT-BS-N-stack}\ref{it:cond:PT-BS-N-stack-support} and $\dim
  R\pi_*\PcH^1(I) = 0$ are open conditions. Finally, the obstruction
  theory is inherited from $\fMum_{\bar X}$. Note that $R\cHom_X(\cI,
  \cI(-D))$ agrees with $R\cHom_{\bar X}(\cI, \cI(-D))$ because
  $R\Hom_{\bP^1}(\cO_{\bP^1}, \cO_{\bP^1}(-1)) = 0$.
\end{proof}

\subsubsection{}

For $M \in \{\PT, \BS\}$ and torus-fixed points $\lambda_1, \ldots,
\lambda_{m+1} \in \Hilb(\bC^2)$ specifying a $\sT$-fixed point
$\vec\lambda = (\lambda_1, \ldots, \lambda_{m+1}) \in \Hilb(\cA_m)$,
let
\begin{equation} \label{eq:PT-BS-vertices-1}
  \sV_{\vec\lambda}^M = \sum_{\substack{\beta_{\cA} \in H_2(\cA_m;\bZ)\\ n \in \bZ}} A^{\beta_{\cA}} Q^n \chi\left(\fN_{\vec\lambda,\beta_{\cA},n}^{\sst}(\tau^M), \hat\cO^\vir \otimes -\right),
\end{equation}
Lemma~\ref{lem:PT-BS-stability-chambers} below shows that the spaces
$\fN_{\vec\lambda,\beta_{\cA},n}^{\sst}(\tau^M)$ are exactly the PT
and BS moduli spaces $M^{\sst}_{\vec\lambda,\beta_{\cA},n}$ from
\S\ref{sec:intro:equivariant-vertices}. In particular, they are
algebraic spaces with proper $\sT$-fixed loci, so the symmetrized
virtual cycle may be defined using the obstruction theory of
Lemma~\ref{lem:PT-BS-stacks-summary}. Thus \eqref{eq:PT-BS-vertices-1}
is well-defined and agrees with the definition
\eqref{eq:PT-BS-vertices} of PT and BS vertices for $\pi$.

\subsubsection{}

\begin{lemma} \label{lem:PT-BS-wall-semistables}
  Take $I \in \cat{A}$ with $\tau_0(I) = 3\pi/4$. Then $I$ is
  $\tau_0$-semistable if and only if
  \begin{enumerate}[label = (\alph*)]
  \item \label{it:PT-BS-slope-pi-destabilizer} $I$ has no sub-object
    in $\cH^{-1}\cat{Per}_{\le 1}(\pi)$ or $\cat{Coh}_{\le 0}(X)[-1]$,
    and
  \item \label{it:PT-BS-slope-pi2-destabilizer} $\cH^1(I) \in
    \cat{T}_\pi \cap \cat{Per}_0(\pi) \subset \cat{T}_\pi$.
  \end{enumerate}
\end{lemma}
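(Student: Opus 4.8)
The plan is to mimic the proof of Lemma~\ref{lem:DT-PT-wall-semistables}, taking care of the extra perverse structure. As in that proof, the statement $\tau_0(I) = 3\pi/4$ forces either $\rank I > 0$ or $\ch(I) = (0,0,(0,0),*)$, and the latter case is automatically $\tau_0$-semistable with no destabilizing sub-objects of the relevant types, so we may assume $\rank I = 1$. The essential point is to understand which sub- and quotient objects of $I$ in $\cat{A}_\pi$ can have slope strictly greater than (resp. less than) $\tau_0(I) = 3\pi/4$. By the definition \eqref{eq:PT-BS-stability-condition}, an object $J \in \cat{A}_\pi$ has $\tau_0(J) > 3\pi/4$ if and only if $\rank J = 0$, $\beta_{\bP}(J) = 0$, and $\omega \cdot \beta_{\cA}(J) \le 0$ (slope $\pi$); and $\tau_0(J) < 3\pi/4$ if and only if $\rank J = 0$ and $\beta_{\bP}(J) \ne 0$ (slope $\pi/2$). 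So $I$ is $\tau_0$-unstable precisely when it has a sub-object of slope $> 3\pi/4$ or a quotient object of slope $< 3\pi/4$ (equivalently a sub-object of slope $< 3\pi/4$, by the see-saw property for the rank-$1$ object $I$).

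\textbf{Key steps.} First I would translate condition~\ref{it:PT-BS-slope-pi2-destabilizer}. Using Lemma~\ref{lem:PcH-vs-cH}, $\cH^1(I) = \cH^0(\PcH^1(I)) \in \cat{T}_\pi$ always holds, so the real content of \ref{it:PT-BS-slope-pi2-destabilizer} is the statement $\cH^1(I) \in \cat{Per}_0(\pi)$, or equivalently (given $\cH^1(I) \in \cat{T}_\pi \subset \cat{Per}(\pi)$) that $\dim \supp R\pi_*\cH^1(I) = 0$. I claim that $\dim \supp R\pi_*\cH^1(I) > 0$ is exactly the obstruction coming from $\beta_{\bP}$-charged quotients: if $R\pi_*\cH^1(I)$ has positive-dimensional support in the $\bP^1$-direction, one produces a quotient of $I$ (a further quotient of $\PcH^1(I)[-1]$, hence a quotient in $\cat{A}_\pi$ after the tilting bookkeeping of \cite[\S 3]{Toda2013}) with $\beta_{\bP} \ne 0$, hence slope $\pi/2$, destabilizing; conversely any slope-$\pi/2$ quotient of $I$ must factor through $\PcH^1(I)[-1]$ for rank reasons and forces positive $\beta_{\bP}$-support of $R\pi_*\cH^1(I)$. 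This parallels the first paragraph of the proof of Lemma~\ref{lem:DT-PT-wall-semistables}, where $\dim \cH^1(I) > 0$ gave a destabilizing quotient; the perverse setting just requires that one tracks the $\beta_{\bP}$ grading rather than dimension. Second I would handle condition~\ref{it:PT-BS-slope-pi-destabilizer}. Sub-objects of $I$ of slope $> 3\pi/4$ are exactly objects of $\cat{A}_\pi$ with $\rank = 0$, $\beta_{\bP} = 0$, $\omega\cdot\beta_{\cA} \le 0$; under the tilt description of $\cat{A}_\pi$ (objects being extensions of $\cO_X$-summands by shifts of $\cat{Per}_{\le 1}(\pi)$-objects), such a sub-object sits inside the ``$\cH^{-1}$'' part $\cH^{-1}(I) \in \cat{Coh}(X)$ and is therefore a sub-object of a shift of a perverse sheaf, which translates to ``a sub-object in $\cH^{-1}\cat{Per}_{\le 1}(\pi)$ or $\cat{Coh}_{\le 0}(X)[-1]$'' once one separates the $\beta_{\bP} = 0$ part (cokernels supported in the $\cA_m$-direction). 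Conversely, any sub-object of $I$ of one of the two listed forms is $\rank = 0$, $\beta_{\bP} = 0$, and has $\omega \cdot \beta_{\cA} \le 0$ (here one uses that $\cat{Coh}_{\le 0}(X)[-1]$ has $\beta_{\cA} = 0$, and objects in $\cH^{-1}\cat{Per}_{\le 1}(\pi)$ have $\omega \cdot \beta_{\cA} \le 0$ by the torsion-pair characterization of $\cat{F}_\pi$ — objects killed by $\pi_*$ have $\beta_{\cA}$ supported on $\pi$-exceptional curves, which pair non-positively with the pullback-ample $\omega$), hence have slope $\pi > 3\pi/4$, so are destabilizing. Combining these two steps gives the biconditional.

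\textbf{Main obstacle.} The routine parts are the rank/$\beta$ bookkeeping; the genuinely delicate point I expect to be the main obstacle is verifying that sub-objects \emph{in $\cat{A}_\pi$} of slope $> 3\pi/4$ are precisely the ones landing in $\cH^{-1}\cat{Per}_{\le 1}(\pi)$ or $\cat{Coh}_{\le 0}(X)[-1]$ — i.e.\ correctly matching the abstract ``$\rank 0$, $\beta_{\bP} = 0$, $\omega\cdot\beta_{\cA}\le 0$'' condition with these two explicit subcategories. This requires knowing exactly how sub-objects of an object of $\cat{A}_\pi = \inner{\cO_X, \cat{Per}_{\le 1}(\pi)[-1]}_{\mathrm{ex}}$ decompose with respect to the tilting torsion pair, for which I would lean on \cite[Lemmas 3.2, 3.5, 3.6]{Toda2013}. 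A secondary subtlety, needed in step one, is that a slope-$\pi/2$ quotient of $I$ is automatically a quotient of $\PcH^1(I)[-1]$ in $\cat{A}_\pi$ and not merely in some larger triangulated category; this should follow from the long exact sequence in perverse cohomology applied to $0 \to I_1 \to I \to \PcH^1(I)[-1]\to 0$ together with the fact that a rank-$0$, $\beta_{\bP}\ne 0$ quotient cannot receive a nonzero map from the rank-$1$ perverse sheaf $I_1$ onto a $\beta_{\bP}$-charged object once one checks the relevant $\Hom$ vanishes — essentially the argument already used in the proof of Lemma~\ref{lem:PT-BS-rank-1-pairs}.
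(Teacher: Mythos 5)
Your overall route is the paper's own: classify the possible destabilizers of a slope-$3\pi/4$ object as rank-zero sub-objects of slope $\pi$ and rank-zero quotients of slope $\pi/2$, and match these against \ref{it:PT-BS-slope-pi-destabilizer} and \ref{it:PT-BS-slope-pi2-destabilizer} using the canonical exact sequences. But there are genuine gaps. First, your opening reduction is false: at $\xi=0$ the rank-zero objects of slope $3\pi/4$ are not those with $\ch=(0,0,(0,0),*)$ but those with $\beta_{\bP}=0$ and $\omega\cdot\beta_{\cA}>0$, i.e.\ objects supported on exceptional fibers; and these are not automatically semistable. For instance $I=\cO_{E}(-2)\oplus\cO_{E}^{\oplus 2}[-1]$ (with $\cO_E(-2)\in\cat{F}_\pi$, $\cO_E\in\cat{T}_\pi$, $E$ an exceptional curve times a point) has $\beta_{\bP}=0$, $\omega\cdot\beta_{\cA}=\omega\cdot[E]>0$, hence slope $3\pi/4$, yet the sub-object $\cO_E(-2)$ has slope $\pi$ and destabilizes. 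So the rank-zero case cannot be discarded; the paper handles all ranks uniformly by a case analysis on $\rank A$ and $\rank B$ in an arbitrary short exact sequence. Relatedly, your reason why objects of $\cH^{-1}\cat{Per}_{\le 1}(\pi)$ have slope $\pi$ misreads $\omega$: it is ample on $\cA_m$ (not a pullback from $X_0$), so it pairs \emph{strictly positively} with exceptional curves; the inequality $\omega\cdot\beta_{\cA}\le 0$ holds only because such objects are \emph{unshifted} sheaves, so in the convention $\ch=(r,0,(-\beta_{\bP},-\beta_{\cA}),-n)$ their $\beta_{\cA}$ is minus an effective class. This sign is precisely what separates $\cH^{-1}\cat{Per}_{\le 1}(\pi)$ (slope $\pi$) from shifted sheaves on exceptional curves (slope $3\pi/4$), and the argument does not survive without it.

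Second, the step you flag as the main obstacle is not resolved by your sketch. The object-level claim that sub-objects of slope $>3\pi/4$ are exactly the objects of $\cH^{-1}\cat{Per}_{\le 1}(\pi)$ or $\cat{Coh}_{\le 0}(X)[-1]$ is false: $A=\cO_E(-2)\oplus\cO_x[-1]$ has rank $0$, $\beta_{\bP}=0$ and $\omega\cdot\beta_{\cA}=-\omega\cdot[E]<0$, hence slope $\pi$, yet lies in neither subcategory; and the proposed mechanism does not parse, since objects of $\cat{A}_\pi$ have cohomology only in degrees $0$ and $1$, so there is no ``$\cH^{-1}(I)$'' to sit inside. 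What is actually needed (and is the paper's argument) is a two-step reduction for an arbitrary rank-zero sub-object $A\subset I$: condition \ref{it:PT-BS-slope-pi-destabilizer} applied to the further sub-object $\cH^0(A)\in\cH^{-1}\cat{Per}_{\le 1}(\pi)$ forces $\cH^0(A)=0$, so $A=\cF[-1]$ with $\cF\in\cat{T}_\pi$; then \ref{it:PT-BS-slope-pi-destabilizer} again excludes $\cF\in\cat{Coh}_{\le 0}(X)$, and effectivity of the class of $\cF$ together with ampleness of $\omega$ gives $\tau_0(A)\le 3\pi/4$. Dually, for a rank-zero quotient $B$ one uses only \ref{it:PT-BS-slope-pi2-destabilizer}: the surjection $\cH^1(I)\twoheadrightarrow\cH^1(B)$, closedness of $\cat{T}_\pi\cap\cat{Per}_0(\pi)$ under quotients, and $\pi_*\cH^0(B)=0$ give $\tau_0(B)\ge 3\pi/4$. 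Finally, your identification of ``$\dim\supp R\pi_*\cH^1(I)>0$'' with ``positive-dimensional support in the $\bP^1$-direction'' silently uses that compact curves in $\cA_m$ are exceptional; this is exactly why positive-dimensional $\pi_*$-support forces $\beta_{\bP}\neq 0$, and it should be stated rather than assumed.
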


This is the PT/BS analogue of Lemma~\ref{lem:DT-PT-wall-semistables}.

\begin{proof}
  Suppose \ref{it:PT-BS-slope-pi-destabilizer} is violated by a
  sub-object $A \hookrightarrow I$. If $A \in \cat{Coh}_{\le
    0}(X)[-1]$, clearly $\tau_0(A) = \pi$. Otherwise, if $A \in
  \cH^{-1}\cat{Per}_{\le 1}(\pi)$, then $A \in \cat{F}_\pi$, in
  particular $\pi_*A = 0$, so $A$ is a $1$-dimensional sheaf supported
  only on exceptional fibers and thus indeed $\tau_0(A) = \pi$. In
  either case, $A \hookrightarrow I$ is $\tau_0$-destabilizing for
  $I$.

  Suppose \ref{it:PT-BS-slope-pi2-destabilizer} is violated. An object
  $\cF \in \cat{Coh}_{\le 1}(X)$ with $\dim \pi_*\cF > 0$ has
  non-trivial curve class along $\bP^1$ and thus $\tau_0(\cF) =
  \pi/2$. In particular $\tau_0(\cH^1(I)[-1]) = \pi/2$ and therefore
  the quotient $I \twoheadrightarrow \cH^1(I)[-1]$ is
  $\tau_0$-destabilizing for $I$.

  Suppose both \ref{it:PT-BS-slope-pi-destabilizer} and
  \ref{it:PT-BS-slope-pi2-destabilizer} are satisfied. Let $0 \to A
  \to I \to B \to 0$ be a short exact sequence in $\cat{A}$. Note that
  if $\rank I = 0$ then $\rank A = \rank B = 0$.
  \begin{itemize}
  \item If $\rank A, \rank B, \rank I > 0$, then $\tau_0(A) =
    \tau_0(I) = \tau_0(B)$.

  \item If $\rank A = 0$, then $A \in \cat{Per}_{\le 1}(\pi)[-1]$.
    Since $I$ has a sub-object $\cH^0(A) \hookrightarrow A
    \hookrightarrow I$, and $\cH^0(A) \in \cH^{-1}\cat{Per}_{\le
      1}(\pi)$, by \ref{it:PT-BS-slope-pi-destabilizer} $\cH^0(A) =
    0$. Thus $A \in \cat{Coh}_{\le 1}(\pi)[-1]$. Furthermore, by
    \ref{it:PT-BS-slope-pi-destabilizer}, $A \notin \cat{Coh}_{\le
      0}(\pi)[-1]$. So $A[1]$ is a $1$-dimensional sheaf and therefore
    $\tau_0(A) \le 3\pi/4$.

  \item If $\rank B = 0$, then $B \in \cat{Per}_{\le 1}(\pi)[-1]$.
    Since there is a quotient $\cH^1(I) \twoheadrightarrow \cH^1(B)$,
    and $\cat{T}_\pi \cap \cat{Per}_0(\pi)$ is closed under quotients,
    by \ref{it:PT-BS-slope-pi2-destabilizer} we get that $\dim
    \pi_*\cH^1(B) = 0$. Since $\cH^0(B) \in \cat{F}_\pi$, in
    particular $\pi_*\cH^0(B) = 0$, we conclude $B$ is supported only
    on exceptional fibers and therefore $\tau_0(B) \ge 3\pi/4$.
  \end{itemize}
  In all cases, the short exact sequence is not $\tau_0$-destabilizing
  for $I$, so $I$ is $\tau_0$-semistable.
\end{proof}

\subsubsection{}

\begin{lemma} \label{lem:PT-BS-stability-chambers}
  Let $I = [\cO_X \xrightarrow{s} F]$ where $F \in \cat{Per}_{\le
    1}(\pi)$.
  \begin{enumerate}[label = (\roman*)]
  \item \label{it:PT-stability-pi} $I$ is $\tau^{\PT}$-stable if and
    only if it is a PT stable pair, i.e. $F \in \cat{Coh}_{\le
      0}(X)^\perp$ and $\coker s \in \cat{Coh}_{\le 0}(X)$.
  \item \label{it:BS-stability-pi} $I$ is $\tau^{\BS}$-stable if and
    only if it is a BS stable pair, i.e. $F \in
    (\cat{T}^{\BS}_\pi)^\perp$ and $\coker s \in \cat{T}^{\BS}_\pi$
    where
    \[ \cat{T}^{\BS}_\pi \coloneqq \left\{\cF \in \cat{Coh}_{\le 1}(X) : R\pi_*\cF \in \cat{Coh}_{\le 0}(X_0)\right\}. \]
  \end{enumerate}
\end{lemma}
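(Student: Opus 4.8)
The statement is the PT/BS analogue of Lemma~\ref{lem:DT-PT-stability-chambers}, so the plan is to mimic that proof. The key observation, as in the DT/PT case, is that $\tau^{\PT}$-stability (resp. $\tau^{\BS}$-stability) should be equivalent to $\tau_0$-semistability together with a ``no small sub-objects'' condition: $I$ is $\tau^{\PT}$-stable iff $I$ is $\tau_0$-semistable and has no non-trivial sub-object in $\cat{Coh}_{\le 0}(X)[-1]$; and $I$ is $\tau^{\BS}$-stable iff $I$ is $\tau_0$-semistable and has no non-trivial sub-object whose class has slope $\pi$ in the $\xi>0$ chamber, i.e. no sub-object $\cT[-1]$ with $\cT \in \cat{T}^{\BS}_\pi$ in the appropriate sense. (Here one uses that for $I$ of rank one, the only way the see-saw property can be violated at $\xi \neq 0$ but not at $\xi = 0$ is via sub- or quotient objects of class with $r = \beta_{\bP} = 0$, i.e. supported on exceptional fibers over points of $\bP^1$.) Combining this with Lemma~\ref{lem:PT-BS-wall-semistables} (the analogue of Lemma~\ref{lem:DT-PT-wall-semistables}) should convert the abstract stability statements into the concrete sheaf-theoretic conditions.

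First I would record the slope-comparison lemma: for $I$ of rank one with $\tau_0(I) = 3\pi/4$, and a short exact sequence $0 \to A \to I \to B \to 0$ in $\cat{A}_\pi$, the sequence is $\tau^{\PT}$-destabilizing (for $\xi < 0$ small) iff $A \neq 0$ has $\tau_0(A) = 3\pi/4$ and $\tau_\xi(A) > \tau_\xi(B)$, which by the explicit formula \eqref{eq:PT-BS-stability-condition} happens iff $\operatorname{rank} A = 0$ and $A \in \cat{Coh}_{\le 1}(X)[-1]$ with $\dim \pi_*(A[1]) = 0$ and $A[1] \notin \cat{Coh}_{\le 0}(X)$ is ruled out while $A[1] \in \cat{Coh}_{\le 0}(X)$ is allowed --- wait, one must be careful about the direction of the inequality. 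Concretely: in the $\xi < 0$ chamber, classes $(0,0,(0,-\beta_\cA),-n)$ with $\omega\cdot\beta_\cA \le 0$ or $n > 0$ have slope $\pi > 3\pi/4 = \tau_0(I)$ only if... I would carefully tabulate, using that objects supported on exceptional fibers carry class with $\beta_{\bP} = 0$, which of these classes jump above vs. below $3\pi/4$ as $\xi$ crosses $0$. The upshot should be that $\tau^{\PT}$-stability forbids exactly sub-objects in $\cat{Coh}_{\le 0}(X)[-1]$, and $\tau^{\BS}$-stability forbids exactly sub-objects of class in $\cat{T}^{\BS}_\pi[-1]$ (those $\cT \in \cat{Coh}_{\le 1}(X)$ with $R\pi_*\cT$ zero-dimensional).

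Next, for \ref{it:PT-stability-pi}: given $I = [\cO_X \xrightarrow{s} F]$, $\tau^{\PT}$-stability forces $\tau_0$-semistability, which by Lemma~\ref{lem:PT-BS-wall-semistables}\ref{it:PT-BS-slope-pi2-destabilizer} gives $\cH^1(I) \in \cat{T}_\pi \cap \cat{Per}_0(\pi)$, i.e. $\coker s = \cH^1(I) \in \cat{Coh}_{\le 0}(X)$ (using Lemma~\ref{lem:PcH-vs-cH} and that $\cat{T}_\pi \cap \cat{Per}_0(\pi)$ consists of zero-dimensional sheaves once we know $\dim \pi_* = 0$; a membership $\cT \in \cat{T}_\pi$ with $R^1\pi_*\cT = 0$ and $\dim R\pi_*\cT = 0$ implies $\cT$ is zero-dimensional because $\pi$ has one-dimensional fibers). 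Conversely, given a PT stable pair, I would check it has no $\cat{Coh}_{\le 0}(X)[-1]$ sub-object (that is precisely the condition $F \in \cat{Coh}_{\le 0}(X)^\perp$ together with purity from the cokernel being zero-dimensional, which is the standard reformulation as in \cite{toda_dtpt}) and no sub-object in $\cH^{-1}\cat{Per}_{\le 1}(\pi)$ (automatic since $\cH^{-1}(I) = 0$ as $I$ is a two-term complex $[\cO_X \to F]$ with $F$ in degree one and $\cO_X$ injecting — so $\cH^{0}(I) = 0$ too, giving $I \in \cat{Coh}^\dag(X)[-1]$-type constraints). Then Lemma~\ref{lem:PT-BS-wall-semistables} gives $\tau_0$-semistability, and the absence of the forbidden sub-objects upgrades this to $\tau^{\PT}$-stability. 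The case \ref{it:BS-stability-pi} is entirely parallel, replacing $\cat{Coh}_{\le 0}(X)$ by $\cat{T}^{\BS}_\pi$ throughout and using that $\cat{T}_\pi \cap \cat{Per}_0(\pi)$ is exactly the perverse-categorical incarnation of $\cat{T}^{\BS}_\pi$-torsion, i.e. $\PcH^1(I) = \Pcoker(s) \in \cat{Per}_0(\pi)$ translates (via the tilt relating $\cat{Per}(\pi)$ and $\cat{Coh}(X)$) to $\coker s \in \cat{T}^{\BS}_\pi$.

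\textbf{Main obstacle.} The hard part will be the bookkeeping of which classes $(0,0,(-\beta_{\bP},-\beta_{\cA}),-n)$ land on which side of the wall, and correctly matching the abstract ``no destabilizing sub-object of slope $\pi$'' condition with the concrete sheaf-theoretic conditions $F \in \cat{Coh}_{\le 0}(X)^\perp$, $\coker s \in \cat{Coh}_{\le 0}(X)$ (resp. the $\cat{T}^{\BS}_\pi$-versions). In particular, one must verify the compatibility between the perverse torsion pair $(\cat{T}_\pi, \cat{F}_\pi)$ on $\cat{Coh}(X)$ and the description of BS stable pairs in terms of $R\pi_*$, i.e. that $\cat{T}^{\BS}_\pi$ as defined is precisely the ``torsion part'' governing which quotients of $\coker s$ are allowed; this is essentially \cite[Lemma 3.2, Lemma 3.6]{Toda2013} but needs to be unwound carefully. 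I expect the purity argument — showing $\coker s$ has the right dimension once $\PcH^1(I)$ lies in the expected perverse subcategory — to require Lemma~\ref{lem:PcH-vs-cH} plus a spectral sequence / fiber-dimension argument for $\pi$, analogous to (but slightly more delicate than) the corresponding step in the DT/PT case.
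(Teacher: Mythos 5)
Your overall strategy (reduce to $\tau_0$-semistability via Lemma~\ref{lem:PT-BS-wall-semistables}, exclude the objects whose slope moves across $3\pi/4$, and translate via Lemma~\ref{lem:PcH-vs-cH} and the Hom-vanishing statements of Lemma~\ref{lem:PT-BS-purity}) is the paper's strategy, and your treatment of the BS chamber essentially reproduces the paper's proof of \ref{it:BS-stability-pi}. But your characterization of the PT chamber is wrong, and this is a genuine gap. In the family \eqref{eq:PT-BS-stability-condition} the objects of $\cat{Coh}_{\le 0}(X)[-1]$ have $\beta_{\cA}=0$, hence $\omega\cdot\beta_{\cA}\le 0$ and slope $\pi$ for \emph{every} $\xi$; so ``no sub-objects in $\cat{Coh}_{\le 0}(X)[-1]$'' is already part of $\tau_0$-semistability (Lemma~\ref{lem:PT-BS-wall-semistables}\ref{it:PT-BS-slope-pi-destabilizer}) and does not distinguish the $\xi<0$ chamber. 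The objects that actually cross the wall at $\xi=0$ are those of class $r=\beta_{\bP}=0$, $\omega\cdot\beta_{\cA}>0$ (e.g.\ $\cO_C(-1)[-1]$ for an exceptional curve $C$), and for $\xi<0$ their slope drops \emph{below} $3\pi/4$: what $\tau^{\PT}$-stability adds to $\tau_0$-semistability is the absence of such \emph{quotient} objects, not of sub-objects. With your claimed equivalence, $\tau^{\PT}$-stability would coincide with $\tau_0$-semistability, which is false: $I'\oplus\cO_C(-1)[-1]$, with $I'$ a PT stable pair, satisfies both conditions of Lemma~\ref{lem:PT-BS-wall-semistables} (so is $\tau_0$-semistable) but has the destabilizing quotient $\cO_C(-1)[-1]$ in the PT chamber, and its cokernel is not zero-dimensional. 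In particular your route can only ever yield $\coker s\in\cat{T}^{\BS}_\pi$, never the stronger PT condition $\coker s\in\cat{Coh}_{\le 0}(X)$.

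The auxiliary claim you invoke to close this gap is also false: it is not true that $\cT\in\cat{T}_\pi$ with $R\pi_*\cT$ zero-dimensional forces $\cT$ to be zero-dimensional; $\cO_C(-1)$ on an exceptional fiber has $R\pi_*\cO_C(-1)=0$ and is one-dimensional. (If $\cat{T}_\pi\cap\cat{Per}_0(\pi)=\cat{T}^{\BS}_\pi$ were equal to $\cat{Coh}_{\le 0}(X)$ there would be no PT/BS wall at all.) The correct division of labor, which is how the paper argues \ref{it:BS-stability-pi} and intends \ref{it:PT-stability-pi} as ``analogous'', is: $\tau_0$-semistability plus Lemma~\ref{lem:PT-BS-purity} already gives $F\in\cat{Coh}(X)$, $F\in\cat{Coh}_{\le 0}(X)^\perp$ and $\coker s=\cH^0(\Pcoker s)\in\cat{T}^{\BS}_\pi$; in the BS chamber the additional forbidden sub-objects are those in $\cat{Per}_0(\pi)[-1]$, which via Lemma~\ref{lem:PT-BS-purity}\ref{it:BS-purity} is exactly $F\in(\cat{T}^{\BS}_\pi)^\perp$; in the PT chamber the additional forbidden objects are \emph{quotients} of class $r=\beta_{\bP}=0$, $\omega\cdot\beta_{\cA}>0$, and excluding these is what upgrades $\coker s\in\cat{T}^{\BS}_\pi$ to $\coker s\in\cat{Coh}_{\le 0}(X)$. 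A minor further slip: your parenthetical ``$\cH^{-1}(I)=0$, so $\cH^0(I)=0$ too'' is incorrect ($\cH^0(I)=\ker s\neq 0$ in general); the absence of sub-objects in $\cH^{-1}\cat{Per}_{\le 1}(\pi)$ for a pair with $F$ a sheaf is instead an instance of Lemma~\ref{lem:PT-BS-purity}\ref{it:pair-vs-F-subobject}.
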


This is the PT/BS analogue of
Lemma~\ref{lem:DT-PT-stability-chambers}. Note that $\cat{T}^{\BS}_\pi
= \cat{T}_\pi \cap \cat{Per}_0(\pi)$.

\begin{proof}
  We prove \ref{it:BS-stability-pi} and leave the (analogous) case
  \ref{it:PT-stability-pi} to the reader. Note that if $[\cO_X
    \xrightarrow{s} \cF] \in \cat{A}_\pi \cap \cat{A}_X$, then
  Lemma~\ref{lem:PcH-vs-cH} identifies $\coker s = \cH^0(\Pcoker s)$.

  Suppose $I$ is a BS stable pair, so in particular $F \in
  \cat{Coh}_{\le 1}(X)$. By
  Lemma~\ref{lem:PT-BS-purity}\ref{it:pair-vs-F-subobject}, $I$ has no
  sub-objects in $\cH^{-1}\cat{Per}_{\le 1}(\pi)$, and by
  Lemma~\ref{lem:PT-BS-purity}\ref{it:BS-purity}, $I$ has no
  sub-objects in $\cat{Per}_0(\pi)[-1] \supset \cat{Coh}_{\le
    0}(X)[-1]$. Moreover, $\cH^1(I) = \coker s \in \cat{T}^{\BS}_\pi$.
  By Lemma~\ref{lem:PT-BS-wall-semistables}, we conclude $I$ is
  $\tau^{\BS}$-stable.

  Conversely, suppose $I$ is $\tau^{\BS}$-stable. Then it is
  $\tau_0$-semistable. In particular, by
  Lemma~\ref{lem:PT-BS-wall-semistables}, $\cH^{-1}(F) \hookrightarrow
  F[-1] \hookrightarrow I$ must be the zero sub-object, and $\Pcoker s
  \in \cat{Per}_0(\pi)$. These imply, respectively, that $F \in
  \cat{Coh}(X)$ and $\coker s = \cH^0(\Pcoker s) \in
  \cat{T}^{\BS}_\pi$. Finally, the $\tau^{\BS}$-slope of objects in
  $\cat{Per}_0(\pi)[-1]$ is $> 3\pi/4$, so $F \in
  (\cat{T}^{\BS}_\pi)^\perp$ by
  Lemma~\ref{lem:PT-BS-purity}\ref{it:BS-purity}. Hence $I$ is a BS
  stable pair.
\end{proof}

\subsubsection{}

\begin{lemma} \label{lem:PT-BS-purity}
  Let $I = [\cO_X \to \cF]$ be a pair with $\cF \in \cat{Coh}_{\le
    1}(X)$.
  \begin{enumerate}[label = (\roman*)]
  \item \label{it:pair-vs-F-subobject} Let $\cat{T} \subset \cat{A}$
    be a full subcategory whose elements have support of dimension
    $\le 1$. Then
    \[ \Hom(\cat{T}[-1], I) = \Hom(\tau^{\ge 0}\cat{T}, \cF) \]
    where $\tau^{\ge 0}$ denotes truncation (with respect to the
    ordinary t-structure).
  \item \label{it:PT-purity} $\Hom(\cat{Coh}_{\le 0}(X)[-1], I) = 0$
    if and only if $\cF \in \cat{Coh}_{\le 0}(X)^\perp$.
  \item \label{it:BS-purity} $\Hom(\cat{Per}_0(\pi)[-1], I) = 0$ if
    and only if $\cF \in (\cat{T}^{\BS}_\pi)^\perp$.
  \end{enumerate}
\end{lemma}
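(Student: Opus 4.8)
The plan is to reduce all three parts to a single long exact sequence coming from the brutal truncation of the two-term complex $I$. First I would record the distinguished triangle
\[ \cF[-1] \to I \to \cO_X \xrightarrow{\;s\;} \cF \]
(equivalently $I = \cone(s)[-1]$), whose connecting map is $s$ itself, and which exhibits $\cF$ in cohomological degree $1$.

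For \ref{it:pair-vs-F-subobject}, fix $T \in \cat{T}$ and apply $\Hom(T[-1], -)$ to this triangle. Around cohomological degree $0$ the resulting long exact sequence reads
\[ \Hom(T, \cO_X) \xrightarrow{s\circ-} \Hom(T, \cF) \to \Hom(T[-1], I) \to \Ext^1(T, \cO_X) \xrightarrow{s\circ-} \Ext^1(T, \cF). \]
The key input is that $X$ is a smooth threefold and $T$ has support of dimension $\le 1$ with cohomology sheaves in non-positive degrees (which holds for every $\cat{T}$ to which the lemma is applied): by the codimension estimate $\cExt^i(\cG, \cO_X) = 0$ for $i$ below the codimension of $\supp \cG$, the complex $R\cHom(T, \cO_X)$ has cohomology sheaves only in degrees $\ge 2$, so the hypercohomology spectral sequence forces $\Hom(T, \cO_X) = \Ext^1(T, \cO_X) = 0$. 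Hence the long exact sequence collapses to an isomorphism $\Hom(T[-1], I) \cong \Hom(T, \cF)$. Finally, since $\cF$ is a sheaf placed in degree $0$, the truncation triangle $\tau^{\le -1}T \to T \to \tau^{\ge 0}T$ yields $\Hom(\tau^{\ge 0}T, \cF) \xrightarrow{\sim} \Hom(T, \cF)$ (the $\tau^{\le -1}$-part contributes nothing to $\Hom$ or $\Ext^{-1}$ into a sheaf in degree $0$), giving the claimed identity.

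Parts \ref{it:PT-purity} and \ref{it:BS-purity} then follow by specializing \ref{it:pair-vs-F-subobject}. For \ref{it:PT-purity}, take $\cat{T} = \cat{Coh}_{\le 0}(X)$: these are $0$-dimensional sheaves in degree $0$, so $\tau^{\ge 0}$ is the identity and $\Hom(\cat{Coh}_{\le 0}(X)[-1], I) = \Hom(\cat{Coh}_{\le 0}(X), \cF)$, which vanishes precisely when $\cF \in \cat{Coh}_{\le 0}(X)^\perp$. For \ref{it:BS-purity}, take $\cat{T} = \cat{Per}_0(\pi)$, whose objects have cohomology only in degrees $-1, 0$ with $1$-dimensional support, so \ref{it:pair-vs-F-subobject} applies and gives $\Hom(\cat{Per}_0(\pi)[-1], I) = \Hom(\tau^{\ge 0}\cat{Per}_0(\pi), \cF)$. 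It then remains to identify the essential image of $\tau^{\ge 0}$ on $\cat{Per}_0(\pi)$ with $\cat{T}^{\BS}_\pi = \cat{T}_\pi \cap \cat{Per}_0(\pi)$: one inclusion is clear since $\cat{T}^{\BS}_\pi$ consists of sheaves already lying in $\cat{Per}_0(\pi)$, and for the other, given $T \in \cat{Per}_0(\pi)$ the canonical sequence $0 \to \cH^{-1}(T)[1] \to T \to \cH^0(T) \to 0$ pushes forward (using $\pi_*\cH^{-1}(T) = 0$) to a short exact sequence of sheaves $0 \to R^1\pi_*\cH^{-1}(T) \to R\pi_*T \to \pi_*\cH^0(T) \to 0$, whence $\dim\supp\pi_*\cH^0(T) \le \dim\supp R\pi_*T = 0$ and $\cH^0(T) \in \cat{Per}_0(\pi)$ (it already lies in $\cat{T}_\pi$). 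Thus $\Hom(\cat{Per}_0(\pi)[-1], I) = \Hom(\cat{T}^{\BS}_\pi, \cF)$, which vanishes iff $\cF \in (\cat{T}^{\BS}_\pi)^\perp$.

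\emph{Main obstacle.} The only step requiring care is the $\cExt$-vanishing: ensuring $\Ext^1(T, \cO_X) = 0$ is exactly where the support-dimension-$\le 1$ hypothesis, smoothness, and $\dim X = 3$ are genuinely used, and one must keep track that $T$ is a complex whose cohomology sheaves sit in non-positive degrees (so that all relevant $\cExt$'s land in degrees $\ge 2$). The identification $\tau^{\ge 0}\cat{Per}_0(\pi) = \cat{T}^{\BS}_\pi$ in \ref{it:BS-purity} is mildly fiddly but reduces to the pushforward computation above.
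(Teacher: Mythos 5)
Your proof is correct and follows essentially the same route as the paper's: apply $\Hom(T[-1],-)$ to the triangle $\cF[-1]\to I\to\cO_X$, kill $\Ext^{k}(T,\cO_X)$ for $k\le 1$, and then truncate since $\cF$ sits in degree $0$. The only cosmetic differences are that the paper obtains the vanishing via Serre duality ($\Hom(T[-k],\cO_X)\cong H^{3-k}(X,T\otimes\cK_X)^\vee$, which dies for dimension-of-support reasons) whereas you use the local $\cExt$ codimension estimate, and that you spell out the identification $\tau^{\ge 0}\cat{Per}_0(\pi)=\cat{T}^{\BS}_\pi$, which the paper simply asserts.
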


\begin{proof}
  For \ref{it:pair-vs-F-subobject}, let $T \in \cat{T}$ and apply
  $\Hom(T[-1], -)$ to the short exact sequence
  \[ 0 \to \cF[-1] \to I \to \cO_X \to 0 \]
  in $\cat{A}$. Since $\Hom(T[-k], \cO_X) \cong H^{3-k}(X, T \otimes
  \cK_X)^\vee$ vanishes for $k < 2$ for dimension reasons,
  \[ \Hom(T, \cF) \cong \Hom(T[-1], I). \]
  Finally, since $\cF$ is a sheaf, $\Hom(T, \cF) = \Hom(\tau^{\ge 0}T,
  \cF)$.

  For \ref{it:PT-purity} and \ref{it:BS-purity}, apply
  \ref{it:pair-vs-F-subobject} to $\cat{T} = \cat{Coh}_{\le 0}(X)$ and
  $\cat{Per}_0(\pi)$, noting that $\cat{T}^{\BS}_\pi = \tau^{\ge
    0}\cat{Per}_0(\pi)$.
\end{proof}

\subsubsection{}
\label{sec:PT-BS-WCF-proof}

\begin{proof}[Proof of Theorem~\ref{thm:vertex-correspondences} for PT/BS]
  The proof will be a more sophisticated version of the DT/PT case in
  \S\ref{sec:DT-PT-WCF-proof} -- \S\ref{sec:DT-PT-WCF-proof-end}. We
  will only specify the necessary modifications. We must verify
  Assumptions~\ref{es:assump:exact-subcategory},
  \ref{es:assump:semistable-invariants} and
  \ref{es:assump:wall-crossing}. However, it turns out that satisfying
  Assumption~\ref{es:assump:semistable-invariants} --- in particular,
  parts \ref{es:assump:it:rank-function},
  \ref{es:assump:it:inert-classes}, and
  \ref{es:assump:it:semi-weak-stability} --- is very difficult for the
  weak stability condition $\tau_0$ ``on the wall'' as defined by
  \eqref{eq:PT-BS-stability-condition}. Essentially, the issue is
  that, unlike in the DT/PT case, the relevant classes in
  wall-crossing do not always have maximal slope. We fix this in an
  ad-hoc way by modifying the family of weak stability conditions.
  This forces us to factorize the desired wall-crossing into an {\it
    sequence} of wall-crossings of simple type.

  By \cite[Lemma 6.1]{Toda2013}, there exists an integer $C(\pi) \in
  \bZ$ (depending on $\pi$) such that\footnote{Concretely,
    $\cat{Per}_0(\pi)$ is generated by the dualizing sheaf
    $\omega_E[1]$ of the exceptional divisor $E$, and $\cO_{C_i}(-1)$
    where $C_i \cong \bP^1$ are the irreducible components of $E$
    \cite[Lemma 2.20]{Toda2013}, hence it suffices to take $C(\pi) >
    \omega \cdot [E]$.}
  \[ \Pn(\beta_{\cA}, n) \coloneqq \omega \cdot \beta_{\cA} + C(\pi) n \ge 0 \]
  for all classes $(0, 0, (0, -\beta_{\cA}), -n)$ of objects in
  $\cat{Per}_0(\pi)[-1]$, with equality if and only if $(\beta_{\cA},
  n) = (0, 0)$, i.e. is the class of the zero object. Let
  \[ \mu(\beta_{\cA},n) \coloneqq \frac{\omega \cdot \beta_{\cA}}{\Pn(\beta_{\cA},n)} \in \bQ \]
  and, for every $s \in \bQ$, define the family of weak stability
  conditions
  \[ \tau_\xi^{(s)}(r, 0, (-\beta_{\bP}, -\beta_{\cA}), -n) \coloneqq \begin{cases} 3\pi/4 & r \neq 0, \\ \pi/2 & r=0, \, \beta_{\bP} \neq 0, \\ 3\pi/4 + \epsilon |\xi| \left(\mu(\beta_{\cA},n) - s\right) + \epsilon^2 \xi & r = \beta_{\bP} = 0, \end{cases} \]
  for $\xi \in (-\pi/4, \pi/4)$, where $\epsilon$ is a formal symbol
  with ordering defined by $x_0 + \epsilon x_1 + \epsilon^2 x_2 < y_0
  + \epsilon y_1 + \epsilon^2 y_2$ if and only if $(x_0, x_1, x_2) <
  (y_0, y_1, y_2)$ in the lexicographic order. In English, this is the
  family where objects in $\cat{Per}_0(\pi)[-1]$ of class
  $(\beta_{\cA},n)$ with $\mu(\beta_{\cA},n) = s$ move from slope
  $<3\pi/4$ to slope $>3\pi/4$, and slopes of all other objects remain
  unchanged. This is still a continuous family of stability conditions with exactly one wall at $\xi
  = 0$ for objects of rank one, so let $\tau_+^{(s)}$ and
  $\tau_-^{(s)}$ denote the cases $\xi > 0$ and $\xi < 0$
  respectively. 

  Analogously to $\fN_{(\lambda,\mu,\nu),n}$, define the following
  open moduli substack of $\fM_{0,0,-\beta_{\cA},-n}$:
  \begin{align*}
    \fQ_{\beta_{\cA},n}
    &\coloneqq \{[0 \to F] \in \fM_{0,0,-\beta_{\cA},-n} : L\iota^*F = 0\} \\
    &= \{[0 \to F] : F \in \cat{Per}_0(\pi|_{X \setminus D}), \, (\ch_2(F), \ch_3(F)) = (\beta_{\cA}, n)\}.
  \end{align*}
  Note that all such $F$ are automatically $\tau_0^{(s)}$-semistable,
  i.e. $\fQ_{\beta_{\cA},n}^{\sst}(\tau_0^{(s)}) =
  \fQ_{\beta_{\cA},n}^\pl$, and all sub- and quotient objects of $F$
  have the same $\tau_0^{(s)}$-slope as $F$. None of this is true for
  $\tau_0$ and is one reason why $\tau_0^{(s)}$ is better.

  It is easy to check that $\{\tau_\xi^{(s)}\}_{\xi}$ defines a
  wall-crossing problem of simple type
  (Definition~\ref{def:wall-crossing-simple-type}) for the subsets $A$
  and $\tilde B$ as in \eqref{eq:PT-BS-A-B-classes} below and for the
  subset $B \subset \tilde B$ consisting of classes with
  $\mu(\beta_{\cA},n) = s$. Thus,
  Proposition~\ref{prop:wcf-simple-type} still applies for each $s$.
  Note that the moduli stacks $\fQ_{\beta_{\cA},n}$ (for classes in
  $\tilde B$) are independent of $\vec\lambda$. Clearly
  $\tau_+^{(0)}$-semistability (resp. $\lim_{s \to \infty}
  \tau_+^{(s)}$) agrees with $\tau^{\PT}$-semistability (resp.
  $\tau^{\BS}$) for objects of rank one. For a given $\alpha \in A$,
  finiteness of $R_\alpha$ (see \S\ref{wc:sec:S-and-R-sets}) ensures
  there are only finitely many walls $s \in \bQ_{> 0}$ to consider.
  Hence, the desired wall-crossing from $\tau^{\PT}$ to $\tau^{\BS}$
  may be factorized through finitely many pairs of dominant
  wall-crossings (``chamber-to-wall'' followed by ``wall-to-chamber'')
  given by $(\tau, \mathring\tau) = (\tau_0^{(s)}, \tau_\pm^{(s)})$.
  This produces the desired wall-crossing formula \eqref{eq:PT-BS}.

\subsubsection{Assumption~\ref{es:assump:exact-subcategory}}

  (\ref{es:assump:exact-subcategory}\ref{es:assump:it:permissible-classes})
  Set $\beta_{\bP} \coloneqq \sum_{i=1}^{m+1} |\lambda_i|$ and let
  $C(\cat{A})_{\pe} \coloneqq A \sqcup B$ where
  \begin{equation} \label{eq:PT-BS-A-B-classes}
    \begin{aligned}
      A &\coloneqq \{(1,0,(-\beta_{\bP},-\beta_{\cA}),-n) : \fN_{\vec\lambda,\beta_{\cA},n}^{\sst}(\tau_0') \neq \emptyset\}, \\
      \tilde B &\coloneqq \{(0,0,(0,-\beta_{\cA}),-n) : \fQ_{\beta_{\cA},n}^{\sst}(\tau_0') \neq \emptyset\}.
    \end{aligned}
  \end{equation}
  By Lemma~\ref{lem:PT-BS-stacks-summary} (and an easy modification of
  the argument there), the moduli stacks $\{\fM_\alpha\}_{\alpha \in
    C(\cat{A})_{\pe}}$ are Artin and locally of finite type, and form
  a graded partially-monoidal $\sT$-stack.

  (\ref{es:assump:exact-subcategory}\ref{es:assumption-restricted-substack})
  The locally closed substacks
  \[ \fN_{\vec\lambda,\beta_{\cA},n} \subset \fM_{1,0,(-\beta_{\bP},-\beta_{\cA}),-n} \text{ and } \fQ_{\beta_{\cA},n} \subset \fM_{0,0,(0,-\beta_{\cA}),-n}, \]
  ranging over all classes in $C(\cat{A})_{\pe}$, form a restricted
  graded partially-monoidal $\sT$-stack with $\kappa$-symmetric
  bilinear elements given by
  \eqref{eq:unrigidified-DT-PT-bilinear-element-N} and
  \eqref{eq:unrigidified-DT-PT-bilinear-element-Q}.

\subsubsection{Assumptions~\ref{es:assump:semistable-invariants} and \ref{es:assump:wall-crossing}}
\label{sec:PT-BS-WCF-proof-end}

  (\ref{es:assump:semistable-invariants}\ref{es:assump:it:tau-artinian},
  \ref{es:assump:semistable-invariants}\ref{es:assump:it:semistable-loci},
  and \ref{es:assump:wall-crossing}\ref{es:assump:it:tau-circ}) By the
  same argument as in \cite[\S 6.2]{Toda2013}, $\cat{A}$ admits
  $\tau_\xi$-HN filtrations for permissible classes, for all $\xi$. By
  the same argument as in \cite[\S 6.4, Step 2]{Toda2013},
  $\tau_\xi$-semistability is open for permissible classes, for all
  $\xi$.

  (\ref{es:assump:semistable-invariants}\ref{es:assump:it:framing-functor})
  For the framing functor(s), as in the DT/PT case
  (\S\ref{sec:DT-PT-WCF-proof-end}), we consider pairs $I = [\cO_X
    \otimes L \to F]$ where now $F \in \cat{Per}_{\le 1}(\pi)$. But
  there is an equivalence of abelian categories \cite[Theorem
    1.4]{Calabrese2016}
  \[ \Phi\colon \Per(\pi) \xrightarrow{\sim} \cat{Coh}(\fX), \]
  where $\fX \coloneqq [\bC^2/\bZ_{m+1}] \times \bP^1$ is the orbifold
  corresponding to $X_0$, and we can view a sheaf on the quotient
  stack $[\bC^2/\bZ_{m+1}]$ as a $\bZ_{m+1}$-equivariant sheaf on
  $\bC^2$. So, let $\cO_{\fX}(1)$ denote any very ample line bundle on
  $\fX$, take $k \gg 0$ and $p \in \bZ_{> 0}$, and define
  \[ \Fr_{k,p}\colon [\cO_X \otimes L \to F] \mapsto H^0_{\bZ_{m+1}}(\Phi(F) \otimes \cO_{\fX}(k)) \oplus L^{\oplus p}. \]
  Here $H^0_{\bZ_{m+1}}$ denotes the vector space (not
  $\bZ_{m+1}$-representation!) of $\bZ_{m+1}$-equivariant global
  sections, i.e. pushforward along $\fX \to [\pt/\bZ_{m+1}]$. Since a
  morphism of $\bZ_{m+1}$-representations is in particular a morphism
  of vector spaces, $\Hom(I, I) \to \Hom(\Fr_{k,p}(I), \Fr_{k,p}(I))$
  remains injective. Comparing with Example~\ref{ex:framing-functor},
  the rest of the properties making $\Fr_{k,p}$ into a framing functor
  is clear.

  (\ref{es:assump:semistable-invariants}\ref{es:assump:it:rank-function})
  For the rank function, by the same argument as in the DT/PT case
  (\S\ref{sec:DT-PT-WCF-proof-end}), there exists $\Pn_0(\beta_{\bP})
  \in \bZ$ such that $\fM_{1,0,(-\beta_{\bP}, -\beta_{\cA}),
    -n}^{\sst}(\tau_0') = \emptyset$ for all $(\beta_{\cA},n)$ such
  that $\Pn(\beta_{\cA},n) \le \Pn_0(\beta_{\bP})$. Thus, any $r$
  satisfying
  \begin{align*}
    r(1, 0, (-\beta_{\bP}, -\beta_{\cA}), -n) &\coloneqq \Pn(\beta_{\cA}, n) - \Pn_0(\beta_{\bP}) \\
    r(0, 0, (0, -\beta_{\cA}), -n) &\coloneqq \Pn(\beta_{\cA}, n)
  \end{align*}
  is a valid rank function.

  (\ref{es:assump:wall-crossing}\ref{es:assump:it:lambda}) Let $\alpha
  \in C(\cat{A})_{\pe}$. If $\alpha \in \tilde B$, then $R_\alpha
  \subset \tilde B$ and we can take $\lambda$ to be any homomorphism
  such that
  \[ \lambda(\beta) \coloneqq \omega \cdot \beta_{\cA}(\beta) \Pn(\alpha) - \omega \cdot \beta_{\cA}(\alpha) \Pn(\beta) \]
  for $\beta \in \tilde B$, where $\beta_{\cA}(\beta)$ denotes the
  $\beta_{\cA}$ component of the class $\beta$. If $\alpha \in A$,
  take $\lambda$ to be any homomorphism such that $\lambda(\alpha)
  \coloneqq 0$ and
  \[ \lambda(\beta) \coloneqq \left(\omega \cdot \beta_{\cA}(\beta) - s \Pn(\beta)\right) \pm \delta \Pn(\beta), \quad \beta \in \tilde B, \]
  for the $\tau_0^{(s)}$ to $\tau_\pm^{(s)}$ dominant wall-crossing,
  where $\delta \in \bR$ is positive but sufficiently small such that
  the bracketed term always dominates the $\delta \Pn(\beta)$ term
  when $\beta \in R_\alpha$. Such a $\delta$ exists because $R_\alpha$
  is a finite set (see \S\ref{wc:sec:S-and-R-sets}).
\end{proof}

\subsubsection{}
\label{sec:PT-BS-consequences}

Only the trivial BS stable pair $\cO \to 0$ has curve class
$\beta_{\bP} = 0$ \cite[Prop. 18]{Bryan2016}, hence
\[ \sV^{\BS(\pi)}_{\vec\emptyset} = \partial \]
where $\partial = \id \in K_\circ^{\tilde\sT}(\pt)$. By the same
reasoning as in \S\ref{sec:DT-PT-consequences}, we obtain
\begin{equation} \label{eq:PT-BS-0}
  I_*\sV_{\vec\emptyset}^{\PT(\pi)} = \bigg(\prod^{\rightarrow}_{s \in \bQ_{>0}} \exp\left(\ad(\sz^{\pi}_{(s)})\right)\bigg) I_*\partial.
\end{equation}
and the PT/BS primary vertex correspondence \eqref{eq:PT-BS-primary}.

\subsubsection{}

\begin{remark} \label{rem:PT-BS-wcf-shape}
  If we were able to ignore the technical issues in
  \S\ref{sec:PT-BS-WCF-proof} and use the original family
  $\{\tau_\xi\}_\xi$ of weak stability conditions --- which is already
  a wall-crossing problem of simple type for $A$ --- instead of the
  factorization into the families $\{\tau_\xi^{(s)}\}_\xi$, then we
  would obtain a wall-crossing formula of the form
  \begin{equation} \label{eq:PT-BS-expected}
    I_*\sV_{\vec\lambda}^{\PT(\pi)} = \exp\left(\ad(\sz^{\pi})\right) I_*\sV_{\vec\lambda}^{\BS(\pi)}
  \end{equation}
  for the natural semistable invariants $\sz^{\pi} \coloneqq
  \sum_{\omega \cdot \beta_{\cA} > 0} \sum_{n \in \bZ}
  \sz_{\beta_{\cA},n}^{\pi} A^{\beta_{\cA}} Q^n$ associated to the
  semistable loci $\fQ_{\beta_{\cA},n}^{\sst}(\tau^{\PT})$. Note that
  this may be a {\it different} set of semistable invariants than the
  ones appearing in \eqref{eq:PT-BS}. To compute descendent
  transformations, it may be easier to have a formula of the shape
  \eqref{eq:PT-BS-expected} rather than \eqref{eq:PT-BS}, but both
  work equally well for the purpose of obtaining the equivariant
  primary PT/BS vertex correspondence \eqref{eq:PT-BS-primary}.
\end{remark}

\subsection{Explicit descendent transformations}
\label{sec:DT-descendent-transformations}

\subsubsection{}

In this subsection, we consider the cohomological DT/PT vertex
correspondence
(Theorem~\ref{thm:vertex-correspondences}\ref{it:dt-pt-vertex}),
following the notation in \S\ref{sec:cohomological-version}, and give
an explicit computation of the cohomological Lie bracket
(Theorem~\ref{thm:cohVOA-monoidal-stack} and
\S\ref{sec:coh-lie-algebra})
\[ (\Pi^\pl)_*[I_*(-), -]\colon A_*^{\sT}(\fN_{(\lambda,\mu,\nu),n}^\pl)_{\loc} \otimes A_*^{\sT}(\fQ_m)_{\loc}^\pl \to A_*^{\sT}(\fN_{(\lambda,\mu,\nu),n+m}^\pl)_{\loc} \]
for the DT/PT moduli stacks (\S\ref{sec:DT-PT-N-stack} and
\S\ref{sec:DT-PT-WCF-proof}), and thereby prove
Corollary~\ref{cor:descendent-correspondences} and
Theorem~\ref{thm:DT-PT-descendents} from the introduction. Here we are
using the isomorphisms \eqref{eq:DT-PT-rigidification}; see also
Definition~\ref{def:DT-rigidified-descendents}. More precisely, we
will actually compute the dual operation
\begin{align*}
  \Delta\colon A_{\sT}^*(\fN_{\lambda,\mu,\nu}^\pl)_{\loc} &\to A_{\sT}^*(\fN_{\lambda,\mu,\nu}^\pl)_{\loc} \otimes A_{\sT}^*(\fQ)_{\loc} \\
  \tau &\mapsto \frac{1}{\hbar} \Res_{u=0} (I \times \id)^* \left(\Theta(u) \cup (\deg_u \times \id)\Phi^*(\Pi^\pl)^*\tau\right)
\end{align*}
so that
\[ \left((\Pi^\pl)_*[I_*\phi, \psi]\right)(\tau) = (\phi \boxtimes \psi)\left(\Delta(\tau)\right). \]
Iterating this, and letting $\Delta^{\boxtimes n} \coloneqq (\Delta \boxtimes
\id^{\boxtimes n-2})(\Delta \boxtimes \id^{\boxtimes n-3}) \cdots (\Delta
\boxtimes \id)\Delta$,
\begin{equation} \label{eq:coproduct-iterated}
  \left(\exp(\ad_{\sz}) I_*\phi\right)((\Pi^\pl)^*\tau) = \bigg(\sum_{n \ge 0} \frac{(-1)^n}{n!} (\phi \boxtimes \sz^{\boxtimes n})\bigg)\left(\Delta^{\boxtimes n} \tau\right).
\end{equation}
For short, we write the right hand side as $(\sum_{n \ge 0} \phi
\boxtimes \sz^{\boxtimes n})(\exp(-\Delta)\tau)$ with the convention
for each term that if the number of external tensor factors do not
match then the result is zero. Then, from the equivariant vertex
correspondence \eqref{eq:DT-PT},
\begin{equation} \label{eq:DT-PT-with-coproduct}
  \sV_{\lambda,\mu,\nu}^{\DT} = \bigg(\sV_{\lambda,\mu,\nu}^{\PT} \boxtimes \sum_{n \ge 0} \sz^{\boxtimes n}\bigg)\left(e^{-\Delta} \tau\right).
\end{equation}
The goal is to compute matrix elements of $e^{-\Delta}$ in the
tautological sub-ring of $A_{\sT}^*(\fN_{\lambda,\mu,\nu}^\pl)_{\loc}$
generated by the following descendent classes.

\subsubsection{}

\begin{definition}[Descendents] \label{def:DT-rigidified-descendents}
  Let $\pi\colon \fN \to \fN^\pl$ be a trivial $\bC^\times$-gerbe with
  section denoted $I$. Write $\fN = \fN^\pl \times [\pt/\bC^\times]$,
  let $\cL$ denote weight-$1$ line bundle on $[\pt/\bC^\times]$, and
  let
  \[ v \coloneqq c_1(\cL). \]
  If there is a universal family $\scF$ on $\fN \times X$ of
  $\bC^\times$-weight $1$, then $\cL^\vee \otimes \scF$ has weight $0$
  and therefore $\scF$ descends to the ``rigidified'' universal family
  \[ \scF^\pl \coloneqq (I \times \id)^*(\cL^\vee \otimes \scF) \]
  on $\fN^\pl$. For any homogeneous $\xi \in \CH_*^\sT(X)$, define the
  {\it descendents} and {\it unrigidified descendents}
  \begin{align*}
    \tau_n(\xi) &\coloneqq \pi_{\fN^\pl *}(\ch_n(\scF^\pl) \cdot \pi_X^*(\xi) \cap \pi_{\fN^\pl}^*(-)) \in A_{\sT}^{n-3+\deg \xi}(\fN^\pl; \bQ), \\
    \tilde\tau_n(\xi) &\coloneqq \pi_{\fN*}(\ch_n(\scF) \cdot \pi_X^*(\xi) \cap \pi_{\fN}^*(-)) \in A_{\sT}^{n-3+\deg \xi}(\fN; \bQ),
  \end{align*}
  where $\pi_{\fN}$, $\pi_{\fN^\pl}$, and $\pi_X$ are projections to
  the factor specified by the subscript. They are related by
  \[ (\Pi^\pl)^* \tau_n(\xi) = e^{-v} \tilde\tau_n(\xi) \]
  using that $(\Pi^\pl)^* \scF^\pl = \cL^\vee \otimes \scF$ and base
  change.

  By Lemma~\ref{lem:DT-PT-rigidification}, these considerations apply
  to the moduli stacks $\fN_{(\lambda,\mu,\nu),n}$ with their
  universal families $\scF$ of objects $F$ in pairs $[\cL \to F]$. On
  the other hand, although the moduli stacks $\fQ_m$ are not trivial
  $\bC^\times$-gerbes over their rigidifications, we continue to use
  $\scF$ to denote their universal families and $\tilde\tau_n(\xi)$ to
  denote their unrigidified descendents.
\end{definition}

\subsubsection{}

We fix some notation. Let $\bh_{\sT}^* = \bZ[s_1, s_2, s_3]$, and let
$\hbar \coloneqq s_1 + s_2 + s_3$ be the cohomological Calabi--Yau
weight, i.e. the cohomological version of $\kappa$. We freely use the
notation of Theorem~\ref{thm:DT-PT-descendents} throughout this
subsection.

For the remainder of this subsection only, it is convenient to let
$\psi^k$ denote the $k$-th ``cohomological Adams operation'', which
acts as multiplication by $k^n$ on $H_\sT^{2n}$. A superscript $(k)$
on an object will mean $\psi^k$ applied to that object. This is
consistent with
\[ \tau^{(k)}(\xi) = \psi^k \tau(\xi), \text{ for } \tau(\xi) \coloneqq \sum_{n \ge 0} \tau_n(\xi). \]
Clearly $\psi^k$ commutes with all pullbacks, which preserve
cohomological degree. 

\subsubsection{}

\begin{proposition} \label{prop:DT-coproduct}
  Let $\Theta(u) \eqqcolon \sum_{n \ge 0} u^{-n} \theta_n$ be its
  expansion in $u$ and define
  \[ \Xi \coloneqq \frac{1}{\hbar} \sum_{n \ge 0} \frac{\theta_{n+1}}{n!} \]
  and $\Xi^{(k)} \coloneqq \psi^k \Xi$ (see Lemma~\ref{lem:DT-Theta}
  below). Then
  \begin{equation} \label{eq:DT-coproduct}
    \Delta\bigg(\prod_{i=1}^N \tau^{(k_i)}(\alpha_i)\bigg) = \sum_{I \sqcup J = \underline{N}} (I \times \id)^*(e^{-K_J v} \Xi^{(-K_J)}) \otimes \bigg(\prod_{i \in I} \tau^{(k_i)}(\alpha_i) \boxtimes \prod_{j \in J} \tilde\tau^{(k_j)}(\alpha_j)\bigg)
  \end{equation}
  where, in the sum, $K_J \coloneqq \sum_{j \in J} k_j$.
\end{proposition}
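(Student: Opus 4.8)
The plan is to compute $\Delta$ directly from its definition
\[ \Delta(\tau) = \frac{1}{\hbar}\Res_{u=0}(I\times\id)^*\left(\Theta(u)\cup(\deg_u\times\id)\Phi^*(\Pi^\pl)^*\tau\right), \]
specialized to the case $\tau = \prod_{i=1}^N \tau^{(k_i)}(\alpha_i)$, the product being taken on $\fN^\pl_{\lambda,\mu,\nu}$ and the target of $\Phi$ being $\fN^\pl_{\lambda,\mu,\nu}\times\fQ$. The first step is to pull the descendent product back along $\Phi$, i.e.\ through the direct sum map. Since the universal family on the $\fN\times\fQ$ side of $\Phi$ is $\scF_{\fN}\boxplus\scF_{\fQ}$ by bilinearity, the Chern character splits additively: $\ch_n(\Phi^*\scF^\pl) = \ch_n(\scF^\pl_{\fN}\oplus \scF^\pl_{\fQ})$ up to the usual care with the $\bC^\times$-gerbe structure. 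Concretely, $(\Pi^\pl)^*\tau_n(\xi) = e^{-v}\tilde\tau_n(\xi)$ (Definition~\ref{def:DT-rigidified-descendents}), and $\Phi^*$ of this becomes a sum over how the cycle $\xi$ and the Chern character land on the $\fN$-factor versus the $\fQ$-factor. Each descendent $\tau^{(k_i)}(\alpha_i)$ therefore contributes a factor that either ``stays on $\fN$'' or ``moves to $\fQ$'', which is the combinatorial origin of the sum over $I\sqcup J=\underline N$.

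Second, I would track the degree operators. The operator $\deg_u$ is defined by pullback along $\Psi$, the scaling action; applied to $\Phi^*(\Pi^\pl)^*\tau$ it multiplies the $\fN$-factor weight-decomposition by powers of $u$. Because $(\Pi^\pl)^*$ of a descendent is $e^{-v}$ times an unrigidified class, and $v=c_1(\cL)$ has $\bC^\times$-weight $1$, the effect of $\deg_u$ on $e^{-K_Jv}$-type factors is to replace $v$ by $v+$ (a multiple of $u$), which after the residue becomes the substitution $v\mapsto$ (something) — this is where the shift $e^{-K_Jv}$ and the Adams operation $\psi^{-K_J}$ will appear, since scaling by the total weight $K_J$ of the ``moved'' descendents is exactly an Adams-type rescaling of the cohomological variables by $-K_J$. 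I would make this precise by expanding $\Theta(u)=\sum_{n\ge0}u^{-n}\theta_n$ (using Lemma~\ref{lem:DT-Theta}, which I am assuming provides the relevant finiteness/expansion of $\Theta$ so that $\Xi=\frac1\hbar\sum_{n\ge0}\theta_{n+1}/n!$ is well-defined) and computing $\Res_{u=0}$ term by term: the residue picks out the coefficient of $u^{-1}$, and the exponential generating function $\sum_n 1/n!$ in the definition of $\Xi$ arises from the $\deg_u$-expansion $\sum_k u^k\binom{\cdot}{k}/\dots$ of the degree operator acting on the cohomology classes, combined with the $u^{-n}$ from $\Theta$.

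Third, I would reassemble: after the residue, the $\fN$-factor contribution is precisely $(I\times\id)^*$ applied to $e^{-K_Jv}$ times $\Xi^{(-K_J)}$, while the $\fQ$-factors are the unrigidified descendents $\tilde\tau^{(k_j)}(\alpha_j)$ (they do not see the gerbe trivialization section $I$ on the $\fN$-side, and the Adams twist on them is absorbed), and the remaining $\fN$-descendents $\tau^{(k_i)}(\alpha_i)$ for $i\in I$ survive unchanged. Summing over all ways $I\sqcup J=\underline N$ gives exactly \eqref{eq:DT-coproduct}. The main obstacle I anticipate is the careful bookkeeping of the $\bC^\times$-gerbe: keeping straight which factors are pulled back via $\Pi^\pl$ versus via $I$, how $\deg_u$ interacts with the $e^{-v}$ factors coming from rigidification, and verifying that the Adams operation $\psi^{-K_J}$ is the correct net effect of scaling the $\fN$-side universal family by the total $\bC^\times$-weight $K_J$ of the descendents moved to $\fQ$. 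Getting the sign and the argument $-K_J$ (rather than $+K_J$) right is the delicate point; I would pin it down by checking the $N=1$ case (one descendent) by hand against the definition of $\deg_u$ in Theorem~\ref{thm:cohVOA-monoidal-stack}, where the formula reduces to a single residue computation, and then argue the general case follows by the multiplicativity of Chern characters of direct sums and of the degree operator.
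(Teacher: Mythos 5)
Your plan is essentially the paper's own proof: reduce to a single descendent using that $\Phi^*$, $(\Pi^\pl)^*$ and $\deg_u$ are algebra homomorphisms, expand the product to produce the sum over $I \sqcup J = \underline{N}$, and take $\Res_{u=0}$ of $e^{-K_J u}\Theta(u)$, where the homogeneity $\deg \theta_n = 2n$ converts the exponential into the Adams twist $\Xi^{(-K_J)}$. The one point to spell out (which you correctly flag as the delicate spot) is the cancellation for $i \in I$: $\deg_u$ produces $e^{+k_i u}$ on $\tilde\tau^{(k_i)}(\alpha_i)$ and $e^{-k_i u}$ on the companion factor $e^{-k_i v}$, so only the indices in $J$ contribute to the net exponent $e^{-K_J u}$ --- exactly as in the paper's displayed computation.
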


\begin{proof}
  Note that all pullbacks, and therefore $\deg_u$ too, are algebra
  homomorphisms, so it is enough to consider a single
  $\tau^{(k)}(\alpha)$ when computing
  \begin{align}
    &(\deg_u \times \id)\Phi^*(\Pi^\pl)^*\tau(\alpha) \nonumber \\
    &= (\deg_u \times \id)\Phi^*\left(e^{-kv} \tilde\tau^{(k)}(\alpha)\right) \nonumber \\
    &= (\deg_u \times \id)\left(e^{-kv} (\tilde\tau^{(k)}(\alpha) \boxtimes 1 + 1 \boxtimes \tilde\tau^{(k)}(\alpha))\right) \nonumber \\
    &= e^{-kv} \tilde\tau^{(k)}(\alpha) \boxtimes 1 + e^{-ku} e^{-kv} \boxtimes \tilde\tau^{(k)}(\alpha), \label{eq:DT-coproduct-1}
  \end{align}
  using that $\Phi^*\ch^{(k)}(\scF) = \ch^{(k)}(\scF \boxplus \scF) =
  \ch^{(k)}(\scF) \boxtimes 1 + 1 \boxtimes \ch^{(k)}(\scF)$ and that
  \[ \deg_u \ch^{(k)}(\scF) = e^{ku} \ch(\scF), \quad \deg_u e^{-kv} = e^{-ku} e^{-kv}. \]
  Now we cup $\Theta(u)/\hbar$ with a product of
  \eqref{eq:DT-coproduct-1}:
  \begin{align*}
    &\frac{1}{\hbar} \Res_{u=0} \bigg(\Theta(u) \cup \prod_{i=1}^N (e^{-k_i v} \tilde\tau^{(k_i)}(\alpha_i) \boxtimes 1 + e^{-k_i u} e^{-k_i v} \boxtimes \tilde\tau^{(k_i)}(\alpha_i))\bigg) \\
    &= \sum_{I \sqcup J = \underline N} \frac{1}{\hbar} \Res_{u=0} \bigg(\Theta(u) \cup e^{-K_J u} e^{-K_Jv} \prod_{i \in I} e^{-k_i v}\tilde\tau^{(k_i)}(\alpha_i) \boxtimes \prod_{j \in J} \tilde\tau^{(k_j)}(\alpha)\bigg) \\
    &= \sum_{I \sqcup J = \underline N} e^{-K_Jv}\Xi^{(-K_J)} \otimes \bigg(\prod_{i \in I} e^{-k_i v} \tilde\tau^{(k_i)}(\alpha_i) \boxtimes \prod_{j \in J} \tilde\tau^{(k_j)}(\alpha)\bigg).
  \end{align*}
  where in the second equality we used that $\theta_k$ has
  cohomological degree $2k$ (Proposition~\ref{lem:DT-Theta}) and
  therefore
  \[ \frac{1}{\hbar} \Res_{u=0} e^{N u} \sum_{n \ge 0} u^{-n} \theta_n = \frac{1}{\hbar} \sum_{k \ge 0} \frac{N^k \theta_{k+1}}{k!} = \psi^N \Xi. \]
  Finally, by Proposition~\ref{lem:DT-Theta} below, $e^v \theta_k$
  and therefore $e^{-K_J v} \Xi^{(-K_J)}$ has $\bC^\times$-weight zero
  in the first factor.
\end{proof}

\subsubsection{}

\begin{lemma} \label{lem:DT-Theta}
  Let $\scE$ be the bilinear element defining $\Theta(u)$. Then
  \begin{equation} \label{eq:DT-Theta-plethystic}
    \Theta(u) = \frac{e_{-u}(\scE)}{e_u(\kappa^{-1} \scE^\vee)} = \left(-\frac{u}{u - \hbar}\right)^{\rank \scE} \exp \bigg(\sum_{k>0} ((u - \hbar)^{-k} - u^{-k}) (k-1)! \ch_k(\scE)\bigg).
  \end{equation}
\end{lemma}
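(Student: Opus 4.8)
The plan is to compute $\Theta(u)$ directly from the definitions of the cohomological Euler class and the Segre class. Recall that $\Theta(u) = e_{-u}(\scE) \cup e_u((12)^*\scE')$ where $\scE' = \scE_{\beta,\alpha}$, and for the DT/PT bilinear elements we have the $\kappa$-symmetry $(12)^*\scE_{\beta,\alpha} = -\kappa^{-1} \otimes \scE_{\alpha,\beta}^\vee$ in K-theory (Definition~\ref{def:graded-monoidal-stack}, using that on $\fN$ the obstruction theory contributes the extra symmetric summand and the ranks are controlled). So the first step is to reduce to the claimed expression $\Theta(u) = e_{-u}(\scE)/e_u(\kappa^{-1}\scE^\vee)$; here one must be slightly careful that $e_u(-\cF) = 1/e_u(\cF)$ by the multiplicativity built into Definition~\ref{def:k-theoretic-euler-class} and its cohomological analogue, and that $\rank \scE_{\beta,\alpha} = -\rank\scE_{\alpha,\beta}$ so no fractional powers of $u$ appear.

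Next I would use the splitting principle. Writing the (cohomological, equivariant) Chern roots of $\scE$ as $\{\omega_\ell\}$, we have $e_{-u}(\scE) = \prod_\ell(-u - \omega_\ell)$ and $\kappa^{-1}\scE^\vee$ has Chern roots $\{-\hbar - \omega_\ell\}$ where $\hbar = c_1(\kappa)$, so $e_u(\kappa^{-1}\scE^\vee) = \prod_\ell(u - \hbar - \omega_\ell)$. Hence
\[
\Theta(u) = \prod_\ell \frac{-u - \omega_\ell}{u - \hbar - \omega_\ell} = \prod_\ell \left(-\frac{u+\omega_\ell}{u - \hbar - \omega_\ell}\right).
\]
Pulling out the leading behavior, $\prod_\ell(-1) = (-1)^{\rank\scE}$, and factoring $-u/(u-\hbar)$ out of each term gives the prefactor $(-u/(u-\hbar))^{\rank\scE}$ times $\prod_\ell \frac{(1 + \omega_\ell/u)}{(1 - (\hbar+\omega_\ell)/(u-\hbar))} \cdot (\text{correction})$; the cleanest route is to take $\log$ of each factor and expand.

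The key computation is then the logarithmic expansion: $\log\frac{u+\omega_\ell}{u-\hbar-\omega_\ell} - \log\frac{u}{u-\hbar}$. Using $\log(1 + \omega_\ell/u) = -\sum_{k>0}\frac{(-1)^k}{k}(\omega_\ell/u)^k$ and similarly $\log(1 - (\hbar+\omega_\ell)/(u-\hbar) + \hbar/(u-\hbar))$—or more symmetrically, writing $u + \omega_\ell = u(1 + \omega_\ell/u)$ and $u - \hbar - \omega_\ell = (u-\hbar)(1 - \omega_\ell/(u-\hbar))$—one gets
\[
\log \Theta(u) = \rank\scE \cdot \log\!\left(-\frac{u}{u-\hbar}\right) + \sum_\ell\sum_{k>0} \frac{-(-\omega_\ell)^k}{k}\left(\frac{1}{(u-\hbar)^k} - \frac{1}{u^k}\right).
\]
Now $\sum_\ell \omega_\ell^k = $ (up to the standard Newton's identity factor) is expressible via the Chern character: precisely $\ch_k(\scE) = \frac{1}{k!}\sum_\ell \omega_\ell^k$, so $\sum_\ell \omega_\ell^k = k!\,\ch_k(\scE)$, and the sign $(-1)^{k+1}$ combines with this. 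Collecting terms yields exactly
\[
\sum_{k>0}\left((u-\hbar)^{-k} - u^{-k}\right)(k-1)!\,\ch_k(\scE),
\]
matching \eqref{eq:DT-Theta-plethystic}. Exponentiating gives the result, and the expansion $\Theta(u) = \sum_{n\ge 0} u^{-n}\theta_n$ with $\theta_n$ of cohomological degree $2n$ follows because each $\ch_k(\scE)$ has degree $2k$, each $(u-\hbar)^{-k} - u^{-k}$ is $O(u^{-k-1})$ with coefficients polynomial in $\hbar$ (degree $2$), and the prefactor $(-u/(u-\hbar))^{\rank\scE} = (-1)^{\rank\scE}(1 - \hbar/u)^{-\rank\scE}$ expands with degree-$2j$ coefficient of $u^{-j}$.

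The main obstacle I anticipate is bookkeeping the signs and the precise normalization of the Chern character versus power sums of Chern roots, together with checking that the formal manipulation (taking $\log$, reindexing, exponentiating) is valid in the completed ring where $\Theta(u)$ lives—i.e. that the sum over $k$ is genuinely convergent $u^{-1}$-adically and that $\rank\scE$ is locally constant so the prefactor makes sense. On the fixed locus $\fF$ with the resolution property, as in \S\ref{sec:monoidal-stack-vertex-product-well-defined}, all Euler classes are genuinely defined and the Chern roots may be treated as honest classes, so the splitting principle applies rigorously; the only remaining subtlety is that $\scE$ is a K-theory \emph{difference} $\scE_1 - \scE_2$, so one should either split each of $\scE_1, \scE_2$ separately (the logarithmic identity is additive in $\scE$, so this is harmless) or invoke the extension of $c_u(-)$ and $\ch_k(-)$ to $K_0$ that is already set up in the cohomological analogue of Definition~\ref{def:k-theoretic-wedge}.
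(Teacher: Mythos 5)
Your route is the same as the paper's: reduce by multiplicativity/the splitting principle to equivariant Chern roots and match logarithmic expansions, using $\zeta^k = k!\,\ch_k$; the paper phrases this as verifying the identity for a single line bundle, which is the identical computation (and the first equality is simply the definition of $\Theta(u)$ together with the $\kappa$-symmetry, as you note). One concrete correction, though: under the convention the paper actually uses (see the rigidity computation in Lemma~\ref{lem:cohomological-projective-bundle-formulas}, where $e_u(\cV)=\prod_\omega(u+\omega)$ over equivariant Chern roots $\omega$), one has $e_{-u}(\scE)=\prod_\ell(-u+\omega_\ell)$, not $\prod_\ell(-u-\omega_\ell)$, while your denominator $\prod_\ell(u-\hbar-\omega_\ell)$ already uses the correct convention. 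With your mixed signs the single-root factor is $-\frac{u}{u-\hbar}\cdot\frac{1+\omega/u}{1-\omega/(u-\hbar)}$, whose logarithm is $\sum_{k>0}\frac{\omega^k}{k}\bigl((u-\hbar)^{-k}-(-1)^k u^{-k}\bigr)$; the stray $(-1)^k$ does not ``combine away'' as asserted, and the claimed exponent would fail in odd degrees. With the consistent sign the factor is $-\frac{u}{u-\hbar}\cdot\frac{1-\omega/u}{1-\omega/(u-\hbar)}$, whose logarithm is $\sum_{k>0}\frac{\omega^k}{k}\bigl((u-\hbar)^{-k}-u^{-k}\bigr)$, and summing over roots gives exactly the exponent in \eqref{eq:DT-Theta-plethystic}. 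This is a fixable bookkeeping slip rather than a gap in the method; your remaining points (additivity of the exponent in $\scE$, hence validity for K-theory differences, and $u^{-1}$-adic convergence on the fixed locus) are fine and agree with the paper.
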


From \eqref{eq:DT-Theta-plethystic}, clearly $\Theta(u)$ has the form
$\sum_{n \ge 0} u^{-n} \theta_n$ and each $\theta_n$ is divisible by
$\hbar$. Explicitly, using the binomial theorem or otherwise,
\begin{equation} \label{eq:DT-theta}
  \theta_n = (-1)^{\rank \scE} \!\!\!\!\sum_{\substack{k \ge 0\\n_1,\ldots,n_k \ge 2\\m\ge 0\\n=n_1+\cdots+n_k+m}} \!\!\!\!\!\!\frac{(-1)^k}{k!} (-\hbar)^m \binom{\rank \scE}{m} \prod_{i=1}^k \bigg[\sum_{a=1}^{n_i-1} \frac{(n_i-1)!}{(n_i-a)!} \hbar^{n_i-a} \ch_a(\scE)\bigg].
\end{equation}

\begin{proof}
  The first equality is the definition of $\Theta(u)$. To prove the
  second equality, note that both sides are multiplicative in $\scE$,
  so it suffices to prove it when $\scE = \cL$ is a line bundle. Let
  $\zeta \coloneqq c_1(\cL)$. Then the left hand side becomes
  \[ \frac{-u + \zeta}{u - \hbar - \zeta} = -\frac{u}{u - \hbar} \frac{1 - u^{-1} \zeta}{1 - (u - \hbar)^{-1}\zeta}. \]
  This equals the right hand side using the series expansion (in
  $u^{-1}$)
  \[ 1 - u^{-1}\zeta = \exp\left(\log(1 - u^{-1}\zeta)\right) = \exp\bigg(-\sum_{k>0} \frac{(u^{-1} \zeta)^k}{k}\bigg) \]
  and that $\zeta^k = k! \cdot \ch_k(\cL)$.
\end{proof}

\subsubsection{}

\begin{lemma} \label{lem:DT-bilinear-element-chern-character}
  Let $\td(\bC^3) \coloneqq \td(s_1)\td(s_2)\td(s_3)$ where $\td(s)
  \coloneqq s/(1 - e^{-s})$. Then
  \[ \ch(\scE) = -\td(\bC^3) (e^{-v} - \tilde\tau^{(-1)}(\spt)) \boxtimes \tilde\tau(1). \]
\end{lemma}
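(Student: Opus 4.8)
The plan is to unwind the definition of the bilinear element $\scE$ restricted to $\fN \times \fQ$ (equivalently $\fQ \times \fN$ or $\fQ \times \fQ$, which all agree) and compute its Chern character by Grothendieck--Riemann--Roch. Recall from \eqref{eq:unrigidified-DT-PT-bilinear-element-Q} and the remark following it that, on $\fQ \times \fN$, we may take
\[ \scE = Rp_*R\cHom(\scI_1, \scI_2(-D)), \]
where $p\colon \fN \times \fQ \times X \to \fN \times \fQ$ is the projection, $\scI_1$ is the universal pair on the $\fQ$ factor, $\scI_2$ the universal pair on the $\fN$ factor, and $D \subset X$ is the boundary divisor. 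First I would observe that, by definition of $\scE_{\alpha,\beta}$ as the bilinear element for the monoidal stack (Definition~\ref{def:graded-monoidal-stack}, bilinearity condition \eqref{eq:mVOA-bilinear-complex-conditions}), and since the objects in $\fQ$ are perverse sheaves $F \in \cat{Per}_0(\pi)$ supported away from $D$, with universal family $\scI_1 = F$ placed in cohomological degree $1$, the class $\scI_1$ in K-theory is $-\scF_\fQ$ where $\scF_\fQ$ is the universal sheaf of $\fQ$ (the shift by $[-1]$ producing the sign). Similarly the rank-one pair $\scI_2 = [\cL \to \scF_\fN]$ on the $\fN$ factor has K-theory class $\cL - \scF_\fN$, where $\cL$ is the weight-$1$ line bundle (from the de-rigidification, cf. Lemma~\ref{lem:DT-PT-rigidification}). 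Since $\scF_\fQ$ is supported away from $D$, twisting by $\cO(-D)$ does not affect it, so $\scI_2(-D)$ contributes $\cL(-D) - \scF_\fN(-D)$, but again on the support of $\scF_\fQ$ the divisor $D$ is invisible; so effectively $R\cHom(\scI_1, \scI_2(-D))$ has K-theory class $(-\scF_\fQ)^\vee \otimes (\cL - \scF_\fN)$ after restricting attention to the relevant support, i.e.\ $-\scF_\fQ^\vee \otimes \cL + \scF_\fQ^\vee \otimes \scF_\fN$... wait, I must be careful with the duality sign: $R\cHom(-\scF_\fQ, -) = \scF_\fQ^\vee \otimes (-)$ has a sign flip, so the class is $\scF_\fQ^\vee \otimes (\cL - \scF_\fN) \cdot (-1)$? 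No: $R\cHom(A,B) = A^\vee \otimes B$ in K-theory with $A^\vee = -(\scF_\fQ)^\vee$ when $A = -\scF_\fQ$. Hence $\scE = -Rp_*(\scF_\fQ^\vee \otimes (\cL - \scF_\fN))$ in K-theory (on the relevant locus).

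Next I would apply GRR to $p$: $\ch(Rp_*(-)) = p_*(\ch(-) \cdot \td(T_p))$, where $T_p$ is the relative tangent bundle, which over $\bC^3 = X \setminus D$ is the trivial bundle with $\sT$-weights given by the three coordinate directions, so $\td(T_p) = \td(\bC^3) \coloneqq \td(s_1)\td(s_2)\td(s_3)$ as defined in the statement. Since the integrand is supported on $\fN \times \fQ \times \bC^3$, only this Todd class enters. Thus
\[ \ch(\scE) = -p_*\big(\ch(\scF_\fQ)^\vee \cdot (\ch(\cL) - \ch(\scF_\fN)) \cdot \td(\bC^3)\big). \]
Now I would rewrite this using the descendent notation: $\ch(\cL) = e^{v}$ where $v = c_1(\cL)$, so $\ch(\cL)^\vee = e^{-v}$... but actually $\cL$ appears covariantly here as $\scI_2$'s piece, so in the dual $\scF_\fQ^\vee$ the weight-$1$ structure of $\scF_\fQ$ contributes; I need to track which factor carries $\cL$. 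The rigidified universal families are related to the unrigidified ones by $\scF_\fQ = \cL_\fQ \otimes \scF_\fQ^\pl$ etc., but $\fQ$ is not a trivial gerbe — however in the external product $\fN \times \fQ$ the $\bC^\times$ acting on $\scF_\fQ$ is the stabilizer of $\fQ$ and its weight-$1$ line bundle pulls back; the key point established in \S\ref{sec:DT-PT-WCF-proof} is that $\scF_\fN = \cL \otimes \scF_\fN^\pl$ where $\cL$ is the weight-$1$ line bundle from de-rigidification on the $\fN$ side. Unwinding: the descendents $\tilde\tau_n(\xi)$ on $\fQ$ encode $p_*(\ch_n(\scF_\fQ) \cdot \pi_X^*\xi)$, and the Chern character $\ch(\scF_\fQ)$ pushed forward against $\td(\bC^3)$ and the class $1 \in \CH_*^\sT(\bC^3)$ (after expanding $\ch(\scF_\fQ)^\vee$ by the signs $(-1)^k$ on $\ch_k$, which is exactly the Adams operation $\psi^{-1}$, giving $\tilde\tau^{(-1)}$) yields $\tilde\tau^{(-1)}(\spt)$ up to the point class appearing because $\supp \scF_\fQ$ is zero-dimensional in $\bC^3$. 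The factor $(e^{-v} - \tilde\tau^{(-1)}(\spt))$ comes from the $\cL - \scF_\fN$ piece: $e^{-v}$ from the dual of $\cL$'s contribution (the $\fN$-factor carries $e^{v}$, dualized against $\scF_\fQ^\vee$'s support gives $e^{-v}$ — I'd need to check the sign of $v$ carefully), and $\tilde\tau^{(-1)}(\spt)$ from the $\scF_\fN$ piece with GRR producing the point class on the $\fN \times \bC^3$ side as well since $D$ is invisible. Assembling, $\ch(\scE) = -\td(\bC^3)\, (e^{-v} - \tilde\tau^{(-1)}(\spt)) \boxtimes \tilde\tau(1)$, which is the claim (with $\boxtimes$ separating the $\fN$ and $\fQ$ contributions and $\tilde\tau(1) = \sum_n \tilde\tau_n(1)$ being the total unrigidified descendent of the fundamental class, since the $\fQ$-side universal sheaf contributes its full Chern character against $\td(\bC^3)$).

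The main obstacle I anticipate is \emph{the careful bookkeeping of the two $\bC^\times$-gerbe structures and the associated weight-$1$ line bundles}: the $\fN$ side is a trivial gerbe with de-rigidification line bundle $\cL$, contributing the factor $e^{\pm v}$ with a sign that must be pinned down by tracing through Definition~\ref{def:DT-rigidified-descendents} (where $\scF^\pl = (I \times \id)^*(\cL^\vee \otimes \scF)$, so $\scF$ has weight $+1$ and $(\Pi^\pl)^*\tau_n(\xi) = e^{-v}\tilde\tau_n(\xi)$), whereas the $\fQ$ side is \emph{not} a trivial gerbe, so its contribution must be expressed via the unrigidified descendents $\tilde\tau_n$ living on $\fQ$ itself, not on $\fQ^\pl$. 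A secondary subtlety is justifying that the twist by $\cO(-D)$ and the Todd class of $X$ (as opposed to $\bC^3$) genuinely drop out: this uses that the universal sheaf $\scF_\fQ$ on $\fQ$ is scheme-theoretically supported on $\fQ \times \bC^3$ (condition \S\ref{sec:DT-PT-N-stack}\ref{it:cond:DT-PT-N-stack-support}), so $R\cHom(\scF_\fQ, -)$ and its GRR pushforward only see the open $\bC^3 \subset X$, on which $T_p$ is trivial of the stated weights and $\cO(-D)$ is trivial. Once these two points are handled, the remainder is a direct GRR computation with no further surprises; I would present it compactly rather than expanding \eqref{eq:DT-theta}-style formulas.
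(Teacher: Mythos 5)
Your overall strategy (take the K-theory class of $R\cHom(\scI_1,\scI_2)$, apply GRR along $p$, use the support of the $\fQ$-side sheaf inside $\bC^3$ to discard the $-D$ twist and to reduce $\td(X)$ to $\td(\bC^3)$, then repackage the pushforward in terms of descendents) is exactly the paper's strategy. However, there is a genuine gap in the execution: you have put the duality on the wrong tensor factor, and the sign issues you flag mid-proof are never actually resolved — you simply assert the target formula at the end.

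Concretely, the lemma computes $\ch(\scE)$ for $\scE$ restricted to $\fN\times\fQ$, with $\scI_1=[\cL\to\scF_1]$ the rank-one pair on the $\fN$-factor and $\scI_2=[0\to\scF_2]$ the rank-zero object on the $\fQ$-factor; then $\ch\bigl(R\cHom(\scI_1,\scI_2)\bigr)=\ch^{(-1)}(\scI_1)\cdot\ch(\scI_2)=\bigl(e^{-v}-\ch^{(-1)}(\scF_1)\bigr)\cdot\bigl(-\ch(\scF_2)\bigr)$, so the Adams involution $\psi^{-1}$ and hence both $e^{-v}$ and $\tilde\tau^{(-1)}(\spt)$ land on the $\fN$-side, while the $\fQ$-side contributes the untwisted $\tilde\tau(1)$ — which is what the stated formula says. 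You instead take $\scI_1$ on $\fQ$ and $\scI_2$ on $\fN$, justified by the claim that the restrictions in \eqref{eq:unrigidified-DT-PT-bilinear-element-Q} ``all agree''; they do not — they are given by the same formula but on different products, and the two orderings are related by $\kappa$-symmetry and a swap, not equal. With your ordering the dual falls on $\scF_\fQ$ and the line bundle $\cL$ enters undualized, so a correct computation would produce something of the shape $-\td(\bC^3)\,\tilde\tau^{(-1)}(1)\boxtimes\bigl(e^{+v}-\tilde\tau(\spt)\bigr)$ (on $\fQ\times\fN$), i.e.\ the Adams twist and the sign of $v$ on the wrong factors relative to the lemma. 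Your attempted explanation that ``$e^{-v}$ comes from the dual of $\cL$'s contribution\ldots dualized against $\scF_\fQ^\vee$'s support'' does not make sense in your setup, since nothing dualizes $\cL$ there. To repair the argument, either work on $\fN\times\fQ$ from the start as the paper does, or carry the $\kappa$-symmetry relation $\scE_{\beta,\alpha}=-\kappa^{-1}(12)^*\scE_{\alpha,\beta}^{\vee}$ explicitly through the computation. A second, more minor point: the final identification of which dual class ($\spt$ versus $1$) pairs with which factor needs the Künneth decomposition of the diagonal together with the support argument (only $\tilde\tau(1)$ survives on $\fQ$, and only $\zeta^\star=1$ survives against the $\fN$-side), which you gesture at but do not pin down; and the $\fQ$-side objects here are $0$-dimensional sheaves on $\bC^3$, not perverse sheaves in $\cat{Per}_0(\pi)$ (that is the PT/BS geometry).
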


\begin{proof}
  Write $\scI_1 = [\cL \to \scF_1]$ and $\scI_2 = [0 \to \scF_2]$ and
  let $\pi = \pi_{\fN \times \fQ}$ be projection along $X$. Then
  \begin{align*}
    \ch(\scE)
    &= \ch(R\pi_*R\cHom(\scI_1, \scI_2)) \\
    &= \pi_*\left(\ch\left(R\cHom(\scI_1, \scI_2)\right) \td(X)\right) \\
    &= \pi_*\left(\ch^{(-1)}(\scI_1) \cdot \ch(\scI_2) \cdot \td(X)\right) \\
    &= \pi_*\left((e^{-v} - \ch^{(-1)}(\scF_1))\cdot (-\ch(\scF_2)) \cdot \td(X)\right).
  \end{align*}
  Here, we used that $\supp \scI_2$ is a $0$-dimensional subscheme of
  $\bC^3 \subset X$, so the twist by $-D$ in $\scE$ may be neglected.

  To express this in terms of descendents, write the class $\delta$ of
  the diagonal $X \subset X \times X$ as $\delta = \sum_i \zeta_i
  \boxtimes \zeta_i^\star$ where $\{\zeta_i\}$ and $\{\zeta_i^\star\}$
  are dual bases for $H^*_\sT(X)$ under the pairing $x \otimes y
  \mapsto \int_X xy$. Then $\ch(\scF_i) = \sum_a \tilde\tau(\zeta_i)
  \boxtimes \zeta_i^\star$ and therefore we get
  \begin{equation} \label{eq:bilinear-element-chern-character}
    \ch(\scE) = -\sum_j (e^{-v} \boxtimes \tilde\tau(\zeta_j)) \int_X \zeta_j^\star \td(X) + \sum_{i,j} (\tilde\tau^{(-1)}(\zeta_i) \boxtimes \tilde\tau(\zeta_j)) \int_X \zeta_i^\star \zeta_j^\star \td(X).
  \end{equation}
  Pick the basis $\{1, h\} \subset H^*_{\bC^\times}(\bP^1)$ where $h$
  is the hyperplane at $\infty$. Then, on $\fQ$, only $\tilde\tau(1)$
  is non-vanishing. The dual of $\zeta_j = 1$ is $\zeta_j^\star =
  \spt$, so the first sum becomes $\td(\bC^3) e^{-v} \boxtimes
  \tilde\tau(1)$. Then, in the second sum, for support reasons
  $\zeta_i^\star = 1$ is the only non-zero term. Therefore the second
  sum becomes $\td(\bC^3) \tilde\tau^{(-1)}(\spt) \boxtimes
  \tilde\tau(1)$.
\end{proof}

\subsubsection{}

\begin{remark} \label{rem:DT-PT-coproduct-generality}
  Up until \eqref{eq:bilinear-element-chern-character}, we have used
  nothing specific to the DT/PT geometry, i.e.
  Proposition~\ref{prop:DT-coproduct} and Lemma~\ref{lem:DT-Theta} and
  \eqref{eq:bilinear-element-chern-character} continue to hold for the
  PT/BS geometry of \S\ref{sec:PT-BS}. A more complicated calculation
  starting from \eqref{eq:bilinear-element-chern-character} will yield
  a PT/BS version of
  Lemma~\ref{lem:DT-bilinear-element-chern-character}.
\end{remark}

\subsubsection{}

\begin{proof}[Proof of Corollary~\ref{cor:descendent-correspondences}.]
  We will state the proof for the DT/PT case, i.e. for
  \[ (\sV, \sV_0, \sV') = (\sV_{\lambda,\mu,\nu}^{\DT}, \sV_{\emptyset,\emptyset,\emptyset}^{\DT}, \sV_{\lambda,\mu,\nu}^{\PT}), \]
  but by Remark~\ref{rem:DT-PT-coproduct-generality} the same proof
  strategy will yield the PT/BS case.

  Combining Proposition~\ref{prop:DT-coproduct} with
  Lemmas~\ref{lem:DT-Theta} and
  \ref{lem:DT-bilinear-element-chern-character}, clearly $\Delta$
  preserves the {\it tautological subalgebras} of
  $A_{\sT}^*(\fN_{\lambda,\mu,\nu}^\pl)$ and $A_{\sT}^*(\fQ)$, i.e.
  the $\bh_{\sT}$-subalgebras generated by descendent classes. Hence,
  using \eqref{eq:DT-PT-with-coproduct},
  \[ \sV(\ff) = \sum_{\ff'} \tilde c_{\ff}^{\ff'}(\sz) \cdot \sV'(\ff') \]
  where $\tilde c_{\ff}^{\ff'}(\sz)$ is a polynomial in $\sz(\ff'')$.
  To conclude, it suffices to show that $\sz_n(\ff)$, for a descendent
  $\ff$, is itself a polynomial in $(\sV_0)_m(\ff')$ for $m \le n$ and
  descendents $\ff'$. This follows because the $Q^n$ coefficient of
  \eqref{eq:DT-PT-0}, for $n > 0$, says
  \[ I_*(\sV_0)_n = [\sz_n, \partial] + \cdots, \]
  where $\cdots$ involves only $\sz_m$ for $m < n$, so using that
  $\Delta$ preserves the tautological subalgebras, by induction
  $\sz_n(\ff)$ may be written using only iterated Lie brackets of
  various $(\sV_0)_m(\ff')$ for $m \le n$. (The same argument was also
  used in Definition~\ref{def:sstable-explicit} to explicitly define
  semistable invariants.)
\end{proof}

\subsubsection{}

For the remainder of this subsection, we take the Calabi--Yau
specialization $\hbar \to 0$ with the goal of proving the explicit
Calabi--Yau descendent correspondence
Theorem~\ref{thm:DT-PT-descendents}. From either
\eqref{eq:DT-Theta-plethystic} or \eqref{eq:DT-theta},
\[ \frac{\theta_{n+1}}{\hbar} = -(-1)^{\rank \scE} n! \ch_n(\scE) \]
and therefore, in \eqref{eq:DT-coproduct},
\[ (I \times \id)^*\left(e^{-K_J} \Xi^{(-K_J)}\right) = (-1)^{\rank \scE-1} \ch^{-(K_J)}(\scE). \]
Using Lemma~\ref{lem:DT-bilinear-element-chern-character}, the
coproduct \eqref{eq:DT-coproduct} simplifies into
\begin{equation} \label{eq:DT-coproduct-CY}
  \Delta\bigg(\prod_{i=1}^N \tau^{(k_i)}(\spt)\bigg) = \sum_{I \sqcup J = \underline{N}} \bigg((1 - \tau^{(K_J)}(\spt)) \prod_{i \in I} \tau^{(k_i)}(\spt)\bigg) \boxtimes a\{k_j\}_{j \in J}.
\end{equation}
where, using that $\rank \scE = -m$ depends only on the factor $\fQ_m$
and not the factor $\fN_{(\lambda,\mu,\nu),n}$,
\[ a\{k_1, \ldots, k_M\} \coloneqq (-1)^{\rank \scE} \td^{(K)}(\bC^3) \tilde\tau^{(-K)}(1) \prod_{j \in J} \tilde\tau^{(k_j)}(\spt), \qquad K \coloneqq \sum_{i=1}^M k_i. \]
Formulas are more economical in the basis of descendents given by
$\sigma\{k_1,\ldots,k_N\}$, in which
\begin{equation} \label{eq:DT-coproduct-CY-sigma}
  \Delta\left(\sigma\{k_1,\ldots,k_N\}\right) = \sum_{I \sqcup J = \underline{N}} (-1)^{|J|} \sigma\{K_J, \{k_i\}_{i \in I}\} \boxtimes a\{k_j\}_{j \in J}
\end{equation}
by collecting all terms in \eqref{eq:DT-coproduct-CY} of the form $-
\boxtimes a\{k_j\}_{j \in J}$.

\subsubsection{}

\begin{proof}[Proof of Theorem~\ref{thm:DT-PT-descendents}.]
  We define a ``formal'' version of $\Delta$ for notational
  convenience. Fix $k_1, \ldots, k_N$ and let $V = V_{\underline{N}}$
  be the free $\bh_\sT$-module spanned by formal symbols in the set
  \begin{equation} \label{eq:descendent-sigma-basis}
    \Sigma = \Sigma_{\underline{N}} \coloneqq \left\{\sigma\{K_{S_1},\ldots,K_{S_n}\} : S_1 \sqcup \cdots \sqcup S_n = \underline{N} \text { is a set partition}\right\},
  \end{equation}
  which we take to be an orthogonal basis in order to write matrix
  elements later. For a subset $S \subset \underline{N}$, the symbol
  $K_S$ corresponds to $\sum_{j \in S} k_j$, and if $S = \{i\}$ then
  we write $k_i$ instead of $K_S$. View $\Delta$ as a linear operator
  on $V$ valued in the (external) tensor $\bh_\sT$-algebra in formal
  variables $a\{k_i\}_{i \in S}$, for all subsets $S \subset
  \underline{N}$. Then \eqref{eq:DT-coproduct-CY-sigma} becomes
  \begin{equation} \label{eq:DT-coproduct-CY-matrix-elements}
    \Braket{\sigma' | \Delta | \sigma\{k_1,\ldots,k_N\}} = \begin{cases} a\{\} - \sum_{i=1}^N a\{k_i\} & \sigma' = \sigma\{k_1,\ldots,k_N\} \\ (-1)^{|J|} a\{k_j\}_{j \in J} & \sigma' = \sigma\{K_J, \{k_i\}_{i \in I}\} \\ & \quad\text{for } I \sqcup J = \underline{N}, \, |I| > 1 \\ 0 & \text{otherwise}. \end{cases}
  \end{equation}
  In this notation, the vertex correspondence
  \eqref{eq:DT-PT-with-coproduct} becomes
  \begin{equation} \label{eq:DT-PT-CY-with-coproduct}
    \sV_{\lambda,\mu,\nu}^{\DT}(\sigma\{k_1,\ldots,k_N\}) = \sum_{\sigma' \in \Sigma} \sV_{\lambda,\mu,\nu}^{\PT}(\sigma') \sz^{\boxtimes}\left(\Braket{\sigma' | e^{-\Delta} | \sigma\{k_1,\ldots,k_N\}}\right)
  \end{equation}
  where $\sz^{\boxtimes} \coloneqq \sum_{n \ge 0} \sz^{\boxtimes n}$
  for short. Specializing $(\lambda,\mu,\nu) =
  (\emptyset,\emptyset,\emptyset)$, this becomes
  \begin{equation} \label{eq:DT-PT-0-CY-with-coproduct}
    \sV_{\emptyset,\emptyset,\emptyset}^{\DT}(\sigma\{k_1,\ldots,k_N\}) = \sum_{\sigma' \in \Sigma} \sz^{\boxtimes}\left(\Braket{\sigma' | e^{-\Delta} | \sigma\{k_1,\ldots,k_N\}}\right).
  \end{equation}

\subsubsection{}

  We will compute the matrix exponential $e^{-\Delta}$ as follows.
  First, define a partial ordering $\preceq$ on $\Sigma$ by
  \[ \sigma\{K_{S_1'},\ldots,K_{S_m'}\} \preceq \sigma\{K_{S_1},\ldots,K_{S_n}\} \iff S_1 \sqcup \cdots \sqcup S_n \text{ refines } S_1' \sqcup \cdots \sqcup S_m' \]
  in the sense that each $S_j'$ for $1 \le j \le m$ is the union of
  one or more $S_i$ for $1 \le i \le n$. Then, from
  \eqref{eq:DT-coproduct-CY-matrix-elements},
  \[ \Braket{\sigma' | \Delta | \sigma} \neq 0 \text{ only if } \sigma' \preceq \sigma, \]
  i.e. $\Delta$ is upper-triangular with respect to $\preceq$.

  Second, we may assume without loss of generality that $\Delta$ is
  valued in the {\it polynomial ring} in the variables $a\{k_i\}_{i
    \in S}$ instead of the tensor algebra, i.e. that these variables
  commute. This is because, in \eqref{eq:DT-PT-CY-with-coproduct},
  they ultimately form the input to $\sz^{\boxtimes}$, an element
  invariant under permutation of its tensor factors.

  Finally, to determine the evaluation of $\sz^{\boxtimes}$ on matrix
  elements of $e^{-\Delta}$, it suffices to find formulas expressing
  the evaluations of $\sz^{\boxtimes}$ on all but one entry of
  $e^{-\Delta} \ket{\sigma}$ in terms of the evaluations of
  $\sz^{\boxtimes}$ of entries of $e^{-\Delta}\ket{\sigma'}$ for
  $\sigma' \prec \sigma$. The evaluation of $\sz^{\boxtimes}$ on the
  unknown entry is then given by the relation
  \eqref{eq:DT-PT-0-CY-with-coproduct}, and by induction to compute
  the smaller matrix elements. Specifically, our unknowns will be
  \begin{equation} \label{eq:DT-PT-CY-Y-coefficients}
    \sY\{k_1,\ldots,k_N\} \coloneqq \sz^{\boxtimes}\Braket{\sigma\{K_{\underline{N}}\} | e^{-\Delta} | \sigma\{k_1,\ldots,k_N\}}.
  \end{equation}
  
\subsubsection{}

  \begin{lemma} \label{lem:DT-PT-CY-non-extremal-matrix-element}
    Let $S_1 \sqcup \cdots \sqcup S_n = \underline{N}$. Then
    \[ \sz^{\boxtimes}\Braket{\sigma\{K_{S_1},\ldots,K_{S_n}\} | e^{-\Delta} | \sigma\{k_1,\ldots,k_N\}} = \sY\{\} \prod_{i=1}^n \frac{\sY\{k_j\}_{j \in S_i}}{\sY\{\}}. \]
  \end{lemma}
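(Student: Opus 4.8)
The key structural observation is that $\Delta$ is upper-triangular with respect to the refinement partial order $\preceq$ on the basis $\Sigma$, so its matrix exponential $e^{-\Delta}$ is also upper-triangular, and each matrix element $\Braket{\sigma' \mid e^{-\Delta} \mid \sigma}$ with $\sigma' \preceq \sigma$ can be computed by summing over chains from $\sigma$ down to $\sigma'$ in the poset. The plan is to exploit the ``multiplicative'' or ``coalgebra-like'' structure of $\Delta$ encoded in the matrix elements \eqref{eq:DT-coproduct-CY-matrix-elements}: the action of $\Delta$ on $\sigma\{k_1,\ldots,k_N\}$ either fixes $\sigma$ (up to the scalar $a\{\} - \sum_i a\{k_i\}$) or groups some subset $J$ of the legs into a single new leg $K_J$ while emitting the factor $(-1)^{|J|} a\{k_j\}_{j\in J}$. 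Iterating, a chain from $\sigma\{k_1,\ldots,k_N\}$ down to $\sigma\{K_{S_1},\ldots,K_{S_n}\}$ corresponds to a way of successively coalescing the blocks $S_i$, and the contributions factorize across the blocks $S_i$ (and further, recursively, within each block).

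\textbf{Key steps.} First, I would set up the combinatorial bookkeeping: fix the target partition $S_1 \sqcup \cdots \sqcup S_n$ and observe that, because $\Delta$ only ever merges legs (never splits them), every chain in the poset realizing the transition $\sigma\{k_i\} \rightsquigarrow \sigma\{K_{S_1},\ldots,K_{S_n}\}$ decomposes as an independent choice of chain realizing $\sigma\{k_j\}_{j\in S_i} \rightsquigarrow \sigma\{K_{S_i}\}$ within each block $S_i$, interleaved arbitrarily, plus a choice of how many times the diagonal (fixed-point) term $a\{\} - \sum a\{k_i\}$ is applied at each stage. Second, I would compute the exponential by summing the geometric-type series in the diagonal term: writing $\Delta = D + N$ where $D$ is the diagonal part and $N$ the strictly-upper part, $e^{-\Delta}$ expands as an alternating sum over compositions of applications of $N$ separated by powers of the scalar $D$, and the scalar sums telescope. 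The upshot is that $\sz^\boxtimes\Braket{\sigma\{K_{S_1},\ldots,K_{S_n}\}\mid e^{-\Delta}\mid\sigma\{k_1,\ldots,k_N\}}$ factors as a product over the blocks $S_i$ of the ``fully-merged'' matrix element for that block, times one overall copy of the empty-leg contribution $\sY\{\}$ coming from the single surviving leg after all merges — and the normalization by $\sY\{\}^{n-1}$ in the denominator is exactly the correction for the overcounting of this empty-leg factor across the $n$ blocks. Third, I would identify the per-block fully-merged element $\sz^\boxtimes\Braket{\sigma\{K_{S_i}\}\mid e^{-\Delta}\mid\sigma\{k_j\}_{j\in S_i}}$ with the defined quantity $\sY\{k_j\}_{j\in S_i}$ from \eqref{eq:DT-PT-CY-Y-coefficients}, and divide by $\sY\{\}$ per block to avoid double-counting, giving the claimed formula.

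\textbf{Main obstacle.} The delicate point is proving the block factorization rigorously: one must check that when $\sz^\boxtimes$ is applied, the contributions of chains that interleave merges from different blocks $S_i$ genuinely multiply across blocks. This relies on two facts established earlier — that the $a\{-\}$ variables may be treated as commuting (since they feed into $\sz^\boxtimes$, which is symmetric in its tensor factors, per \S\ref{sec:DT-PT-CY-with-coproduct} and the commutativity of the K-homology/Chow-homology theory), and that merges in disjoint blocks commute as operators on $V$ because \eqref{eq:DT-coproduct-CY-matrix-elements} shows the emitted factor $a\{k_j\}_{j\in J}$ depends only on $J$ and not on the order or the presence of other legs. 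Once commutativity is in hand, the factorization is a standard manipulation: the exponential of a sum of commuting block-operators (each itself upper-triangular within its block, plus a shared diagonal scalar) factors as a product, and the shared scalar $\sY\{\}$ must be extracted with the right multiplicity. The induction on $N$ (or on the poset) then closes the argument, with the extremal unknowns $\sY\{k_1,\ldots,k_N\}$ determined afterward by \eqref{eq:DT-PT-0-CY-with-coproduct}.
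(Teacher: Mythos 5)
Your proposal is correct and follows essentially the same route as the paper: restrict attention to the poset interval over the target partition (legitimate because $\Delta$ only merges legs, so contributing chains never leave it), decompose $\Delta$ there as a shared $a\{\}$-scalar plus commuting block operators $\Delta_{S_i}$, factor the exponential accordingly, and identify each block factor with $\sY\{k_j\}_{j \in S_i}$ while the shared scalar produces the $\sY\{\}^{\,n-1}$ normalization. The only cosmetic difference is your intermediate "diagonal part $D$ is a scalar and telescopes" framing (the diagonal of $\Delta$ is not constant on the interval), but your final commuting-block formulation — which is exactly the paper's argument — renders that step unnecessary.
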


  \begin{proof}
    For the purpose of computing this matrix element, we may restrict
    $\Delta = \Delta_{\underline{N}}$ to the $\bh_\sT$-submodule of
    $V_{\underline{N}}$ generated by
    $\sigma\{K_{S_1'},\ldots,K_{S_m'}\}$ for all $S_1' \sqcup \cdots
    \sqcup S_m' \succeq S_1 \sqcup \cdots \sqcup S_n$. Then
    \[ \Delta_{\underline{N}} = -(n-1)a\{\} \cdot \id + \Delta_{S_1} + \cdots + \Delta_{S_n} \]
    where $\Delta_{S_i}$ denotes the natural lift of $\Delta_{S_i}
    \in \End(V_{S_i})$ to $V_{\underline{N}}$. Clearly $[\Delta_{S_i},
      \Delta_{S_j}] = 0$ for all $i$ and $j$. Then
    \begin{align*}
      &e^{(n-1)a\{\}} \Braket{\sigma\{K_{S_1},\ldots,K_{S_n}\} | e^{\Delta_{\underline{N}}} | \sigma\{k_1,\ldots,k_N\}} \\
      &= \Braket{\sigma\{K_{S_1},\ldots,K_{S_n}\} | e^{\Delta_{S_1}}\cdots e^{\Delta_{S_n}} | \sigma\{k_1,\ldots,k_N\}} \\
      &= \Braket{\sigma\{K_{S_1},\ldots,K_{S_n}\} | e^{\Delta_{S_1}} | \sigma\{\{k_j\}_{j \in S_1}, K_{S_2},\ldots, K_{S_n}\}} \\
      &\quad \cdot \Braket{\sigma\{\{k_j\}_{j \in S_1}, K_{S_2},\ldots, K_{S_n}\} | e^{\Delta_{S_2}} | \sigma\{\{k_j\}_{j \in S_1}, \{k_j\}_{j \in S_2}, K_{S_3},\ldots,K_{S_n}\}} \\
      &\quad \cdot \cdots \\
      &\quad \cdot \Braket{\sigma\{\{k_j\}_{j \in S_1},\ldots,\{k_j\}_{j \in S_{n-1}}, K_{S_n}\} | e^{\Delta_{S_n}} | \sigma\{k_1,\ldots,k_N\}},
    \end{align*}
    where the last equality follows because all other possible
    intermediate insertions $\ket{\sigma'}\bra{\sigma'}$ produce zero.
    We conclude by the definition \eqref{eq:DT-PT-CY-Y-coefficients}
    of $\sY$, noting that $\sY\{\} = e^{a\{\}}$.
  \end{proof}




\subsubsection{}

  Combining the definition \eqref{eq:DT-PT-CY-Y-coefficients} of $\sY$
  with Lemma~\ref{lem:DT-PT-CY-non-extremal-matrix-element}, the
  relation \eqref{eq:DT-PT-0-CY-with-coproduct} yields the recursion
  \begin{equation} \label{eq:DT-PT-Y-recursion}
    \sY\{k_1, \ldots, k_N\} \coloneqq \sV^{\DT}_{\emptyset,\emptyset,\emptyset}(\sigma\{k_1, \ldots, k_N\}) - \sum_{\substack{n>1\\S_1 \sqcup \cdots \sqcup S_n = \underline{N}}} \frac{\prod_{i=1}^n \sY\{k_j\}_{j \in S_i}}{\sY\{\}^{n-1}},
  \end{equation}
  so that $\sY\{\} = \sV^{\DT}_{\emptyset,\emptyset,\emptyset}(1)$ in
  particular, and the vertex correspondence
  \eqref{eq:DT-PT-CY-with-coproduct} becomes
  \begin{equation} \label{eq:DT-PT-descendent-transformation-1}
    \sV_{\lambda,\mu,\nu}^{\DT}\left(\sigma\{k_1, \ldots, k_N\}\right) = \sum_{\substack{n>0\\ S_1 \sqcup \cdots \sqcup S_n = \underline{N}}} \sV_{\lambda,\mu,\nu}^{\PT}\bigg(\sigma\big\{\sum_{j \in S_i} k_j\big\}_{i=1}^n\bigg) \frac{\prod_{i=1}^n \sY\{k_j\}_{j \in S_i}}{\sY\{\}^{n-1}}.
  \end{equation}
  The recursion \eqref{eq:DT-PT-Y-recursion} can be explicitly solved
  as follows.

\subsubsection{}

  \begin{lemma}
    \eqref{eq:DT-PT-Y-recursion} is equivalent to
    \begin{equation} \label{eq:DT-PT-Y-explicit}
      \sY\{k_1, \ldots, k_N\} \coloneqq \sum_{\substack{n>0\\S_1 \sqcup \cdots \sqcup S_n = \underline{N}}} (-1)^{n-1} (n-1)! \frac{\prod_{i=1}^n \sV^{\DT}_{\emptyset,\emptyset,\emptyset}(\sigma(\{k_j\}_{j \in S_i}))}{\sY\{\}^{n-1}}. 
    \end{equation}
  \end{lemma}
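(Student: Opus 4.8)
The claim is that the nonlinear recursion \eqref{eq:DT-PT-Y-recursion} for $\sY$ is solved in closed form by \eqref{eq:DT-PT-Y-explicit}. The plan is to verify this directly by a generating-function argument, reducing everything to a single algebraic identity between the ``exponential'' and ``logarithm'' maps on power series. First I would package the data of the $\sY\{\cdot\}$ and $\sV^{\DT}_{\emptyset,\emptyset,\emptyset}(\sigma\{\cdot\})$ into exponential generating functions in formal variables $x_1, \ldots, x_N$ (one per element of $\underline N$, so that each coefficient $k_j$ is simply carried along as a label), and likewise encode set partitions of $\underline N$ via the standard ``exponential formula'' bookkeeping. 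Writing $f$ for the generating function of $(\sV^{\DT}_{\emptyset,\emptyset,\emptyset}(\sigma\{k_j\}_{j\in S}))_S$ over nonempty subsets $S$ and $g$ for that of $(\sY\{k_j\}_{j\in S})_S$, with the empty-set values $f_\emptyset = g_\emptyset = \sV^{\DT}_{\emptyset,\emptyset,\emptyset}(1) = \sY\{\}$, both normalized by dividing through by $\sY\{\}$, the recursion \eqref{eq:DT-PT-Y-recursion} becomes exactly $f = \sum_{n\ge 1} \tfrac{1}{n!}(g - g_\emptyset)^n \cdot g_\emptyset^{\,1-n}$ read coefficientwise, i.e. (after the normalization $\bar f = f/g_\emptyset$, $\bar g = g/g_\emptyset$) the clean statement $\bar f = e^{\bar g - 1}$ in the completed polynomial ring; dually \eqref{eq:DT-PT-Y-explicit} reads $\bar g = 1 + \log \bar f = 1 + \log(1 + (\bar f - 1))$, using $\sum_{n\ge 1} \tfrac{(-1)^{n-1}(n-1)!}{n!} = \sum_{n \ge 1} \tfrac{(-1)^{n-1}}{n}$ as the coefficients of $\log(1+t)$.

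The combinatorial translation is the one genuinely fiddly step, so I would spell it out carefully: a partition $S_1 \sqcup \cdots \sqcup S_n = \underline N$ into $n$ nonempty blocks contributes, on the generating-function side, the $n$-th term of a composition of power series precisely because the blocks are unordered and unlabeled among themselves — this is the standard exponential formula, and the factor $1/n!$ in \eqref{eq:DT-PT-Y-recursion} and the factor $(-1)^{n-1}(n-1)!/n!$ hidden in \eqref{eq:DT-PT-Y-explicit} are exactly the coefficients of $\exp$ and $\log$ respectively. Once the two identities are recognized as $\bar f = e^{\bar g - 1}$ and $\bar g - 1 = \log \bar f$, they are manifestly inverse to each other as formal power series (with $\bar f, \bar g$ having constant term $1$, so the logarithm and exponential are well-defined in the appropriate completion), hence \eqref{eq:DT-PT-Y-recursion} and \eqref{eq:DT-PT-Y-explicit} define the same sequence $\sY\{\cdot\}$. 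I would also note that $\sY\{\} = \sV^{\DT}_{\emptyset,\emptyset,\emptyset}(1)$ is invertible in the relevant ring of series (its leading term in $Q$ is the invertible DT partition function of the empty configuration), so dividing by $\sY\{\}^{n-1}$ is legitimate and the normalization $\bar f, \bar g$ makes sense.

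The main obstacle I anticipate is purely notational rather than mathematical: the coefficients $k_1, \ldots, k_N$ are attached to elements of $\underline N$ and are \emph{summed} within each block (so $\sigma\{\sum_{j\in S_i} k_j\}$, not $\sigma\{k_j\}_{j\in S_i}$, appears on the left of \eqref{eq:DT-PT-descendent-transformation-1}), and one must be scrupulous that the exponential-formula bijection between ``ordered sequences of composition factors'' and ``set partitions with per-block data'' respects these labels. The cleanest way to sidestep this is to treat the $x_j$ as genuine distinct formal variables and define $f, g$ as multilinear functionals (linear in each $x_j$, degree $\le 1$), so that the coefficient of $x_1 \cdots x_N$ in a product $f \cdot f$ automatically ranges over ordered set partitions into two blocks; symmetrizing recovers the unordered partitions with the correct $1/n!$ and the identity drops out. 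Everything else — checking the empty-set base case, checking that the two recursions have the same solution by induction on $N$ if one prefers to avoid generating functions entirely — is routine, and I would relegate it to a one-line remark that \eqref{eq:DT-PT-Y-explicit} can alternatively be verified by substituting it into \eqref{eq:DT-PT-Y-recursion} and collecting terms over refinements of partitions, using $\sum_{\text{partitions of a set into } \ge 1 \text{ blocks}} (-1)^{\#\text{blocks}-1}(\#\text{blocks}-1)! = [\,|\text{set}| = 1\,]$, the Möbius function of the partition lattice.
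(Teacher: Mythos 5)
Your proof is correct and is essentially the paper's argument in different packaging: both reduce the equivalence of \eqref{eq:DT-PT-Y-recursion} and \eqref{eq:DT-PT-Y-explicit} to the fact that $\exp$ and $\log$ are inverse formal power series, with set partitions bookkept by the exponential formula — the paper does this by matching coefficients $c(S_1,\ldots,S_n)$ inductively and invoking $\exp\log(1+t)=1+t$ through the univariate series $1+\sum_n \mu_n t^n/n! = \exp\sum_n \kappa_n t^n/n!$, while you encode the same identity directly via multilinear generating functions $\bar F = e^{\bar G - 1}$, $\bar G - 1 = \log\bar F$. (Only your optional closing remark is slightly off: direct substitution produces the identity $\sum_{\pi}\prod_{B\in\pi}(-1)^{|B|-1}(|B|-1)! = [n=1]$, i.e. $\sum_\pi\mu(\hat 0,\pi)$, rather than the single-factor sum $\sum_\pi(-1)^{\#\mathrm{blocks}-1}(\#\mathrm{blocks}-1)!$ you quote — both are true partition-lattice identities, but the former is the one needed.)
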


  \begin{proof}
    Clearly \eqref{eq:DT-PT-Y-recursion} has the same form as
    \eqref{eq:DT-PT-Y-explicit}, say with unknown coefficient
    $c(S_1,\ldots,S_n)$ for the term corresponding to $S_1 \sqcup
    \cdots \sqcup S_n = \underline{N}$, and we must show
    \[ c(S_1,\ldots,S_n) \stackrel{?}{=} (-1)^{n-1} (n-1)!. \]
    We do this by induction on $n$. The base case $n=1$ is clear from
    the first term of the right hand side of
    \eqref{eq:DT-PT-Y-recursion}. For general $n>1$, plug
    \eqref{eq:DT-PT-Y-explicit} into the right hand side of
    \eqref{eq:DT-PT-Y-recursion}. Then
    \[ c(S_1,\ldots,S_n) = \sum_{\substack{m>1\\T_1 \sqcup \cdots \sqcup T_m = \underline{n}}} \prod_{j=1}^m c(\{S_i\}_{i \in T_j}). \]
    To conclude the induction, it therefore suffices to prove that the
    polynomial
    \[ \mu_n(\kappa_1, \kappa_2, \ldots) \coloneqq \sum_{\substack{m>0\\T_1 \sqcup \cdots \sqcup T_m = \underline{n}}} \prod_{i=1}^m \kappa_{|T_i|} \]
    vanishes when $\kappa_j = (-1)^{j-1} (j-1)!$. This follows from the
    equality of generating series
    \[ 1 + \sum_{n > 0} \frac{\mu_n t^n}{n!} = \exp \sum_{n>0} \frac{\kappa_n t^n}{n!} \]
    and the relation $\exp \log (1 + t) = 1 + t$.
  \end{proof}
  
  Plugging \eqref{eq:DT-PT-Y-explicit} into
  \eqref{eq:DT-PT-descendent-transformation-1} concludes the proof of
  Theorem~\ref{thm:DT-PT-descendents}.
\end{proof}

\subsubsection{}

\begin{remark} \label{rem:DT-PT-descendents-basis}
  We make some observations on equivalent forms of
  Theorem~\ref{thm:DT-PT-descendents}.

  First, the change of basis from $\sigma\{k_1, \ldots, k_N\}$ to
  monomials in $\{\tau^{(k)}(\spt)\}_{k \in \bZ}$ is upper-triangular
  (with respect to degree in the symbols $\tau^{(k)}(\spt)$) and
  invertible over $\bZ$. So \eqref{eq:DT-PT-descendent-transformation-1}
  is equivalently a formula for the transformation of the DT
  descendent $\prod_{i=1}^n \tau^{(k_i)}(\spt)$.

  Second, since \eqref{eq:DT-PT-descendent-transformation-1} holds for
  all $k_1, \ldots, k_N \in \bZ$, we may treat it as an equality of
  formal power series in the variables $k_1, \ldots k_N$ and extract
  coefficients. Since the coefficient of $k_1^{n_1} \cdots k_N^{n_N}$
  in $\prod_{i=1}^N \tau^{(k_i)}(\spt)$ is $\prod_{i=1}^N
  \tau_{n_i}(\spt)$, the result is a descendent vertex correspondence
  for the insertion $\prod_{i=1}^N \tau_{n_i}(\spt)$ on the DT side.

  Finally, for vertices, note that $\xi = \spt$ is the only
  interesting class. Namely, any $\xi \in \CH_*^\sT(X)_{\loc}$ may be
  written in the basis of (classes of the) $\sT$-fixed points in $X$,
  and the universal sheaf $\scF$ restricted to any fixed point other
  than $0 \in \bC^3$ is constant over the moduli space.
\end{remark}

\subsubsection{}

\begin{remark}[Partition functions] \label{rem:DT-PT-partition-functions}
  Let $X$ be a smooth toric $3$-fold and $\sT$ be its dense open
  torus. Let $\Delta(X)$ be its toric $1$-skeleton, whose vertices $v$
  are toric charts $\bC^3$ with toric coordinates $\vec t_v$, and
  whose (half-)edges $e$ are the non-empty double intersections
  $\bC^\times \times \bC^2$ with toric coordinates $\vec t_e$. The
  (equivariant, cohomological) {\it operational DT partition function}
  of $X$ is
  \[ \sZ^{\DT}_X \coloneqq \sum_{\beta \in H_2(X; \bZ)} \sum_{n \in \bZ} Q^n A^\beta \int_{[\DT^{\sst}_{\beta,n}(X)]^\vir} (-). \]
  Here $\DT^{\sst}_{\beta,n}(X)$ is the moduli space of DT-stable
  pairs (i.e. ideal sheaves) $[\cO_X \to \cF]$ of Chern character
  $(1,0,-\beta,-n)$, and the variable $A$ records the multi-index
  $\beta$. Analogously, define the operational PT partition function
  $\sZ^{\PT}_X$ of $X$.

  Our explicit DT/PT descendent correspondence
  (Theorem~\ref{thm:DT-PT-descendents}) for vertices yields an
  explicit DT/PT descendent correspondence for partition functions, as
  follows. First, by writing classes $\xi \in \CH_*^\sT(X)_{\loc}$ in
  the basis of $\sT$-fixed points on $X$, assume without loss of
  generality that all descendents are of the form $\ff = \prod_v
  \ff_v$ where each $\ff_v$ is a polynomial of only descendents of the
  $\sT$-fixed point corresponding to $v$. Then $\sZ^{\DT}_X(\ff)$
  admits a standard factorization (see e.g. \cite[\S 4]{Maulik2006})
  into contributions $\sV$ and $E$ (which record the contribution to
  $A^\beta$) from vertices and edges of $\Delta(X)$ respectively:
  \begin{equation} \label{eq:DT-vertex-edge-factorization}
    \sZ^{\DT}_X(\ff) = \sum_{\vec\lambda} \prod_e E_{\vec\lambda(e)} \prod_v \sV_{\vec\lambda(e_1),\vec\lambda(e_2),\vec\lambda(e_3)}^{\DT}(\ff_v)
  \end{equation}
  where $e_1, e_2, e_3$ denotes the three incident (half-)edges at
  each vertex $v$, and the sum is over all assignments $\vec\lambda$
  of an integer partition to each (half-)edge in $\Delta(X)$. Note
  that the vertex at $v$ (resp. edge at $e$) uses the coordinates
  $\vec t_v$ (resp. $\vec t_e$); we omit this from the notation to
  avoid clutter. The same factorization holds for $\sZ^{\PT}_X(\ff)$.
  Now apply Theorem~\ref{thm:DT-PT-descendents} to each DT vertex on
  the right hand side of \eqref{eq:DT-vertex-edge-factorization}.
  Write the descendent correspondence
  \eqref{eq:DT-PT-descendent-transformation} at the vertex $v$ in the
  form
  \[ \sV^{\DT}(\ff_v) \equiv \sum_{i\in I(v)} \sV^{\PT}(\ff_{i}) C_i \bmod{\hbar}, \]
  for some coefficients $C_i$ and some index sets $I(v)$ implicitly
  dependent on $\ff_v$. Plugging this into
  \eqref{eq:DT-vertex-edge-factorization} and pulling out the sum over
  $I(v)$ for each vertex $v$, we get
  \[ \sZ^{\DT}_X(\ff) \equiv \sum_{(i_v\in I(v))_{v \in \Delta(X)}} \sum_{\vec\lambda} \bigg(\prod_e E_{\vec\lambda(e)}\bigg) \bigg(\prod_v \sV^{\PT}_{\vec\lambda(e_1),\vec\lambda(e_2),\vec\lambda(e_3)}(\ff_{i_v}) C_{i_v}\bigg) \bmod{\hbar}. \]
  Since each $C_{i_v}$ is independent of $\vec\lambda$, we can collect
  factors to find PT invariants of $X$ in this expression
  \begin{equation*}
    \sZ^{\DT}_X(\ff) \equiv \sum_{(i_v\in I(v))_{v \in \Delta(X)}} \sV^{\PT}_X\left({\textstyle\prod_{v}}\ff_{i_v}\right) \bigg(\prod_v C_{i_v}\bigg) \bmod{\hbar}.
  \end{equation*}
  Moreover, since \eqref{eq:DT-vertex-edge-factorization} takes the
  form $\sZ^{\DT_0}_X(\ff) = \prod_v
  \sV_{\emptyset,\emptyset,\emptyset}^{\DT}(\ff_v)$ for the degree-$0$
  (i.e. $\beta = 0$) partition function, $\prod_v C_{i_v}$ can be
  written as a polynomial in $\sZ^{\DT_0}_X(1)^{-1}$ and
  $\sZ^{\DT_0}_X(\tilde{\ff})$, for various descendents $\tilde{\ff}$,
  $\sZ^{\DT_0}_X(1)^{-1}$, and also possibly extra factors of
  $\sV_{\emptyset,\emptyset,\emptyset}^{\DT}(1)^{-1}$. These extra
  factors are necessary because the $C_{i_v}$, as $v$ varies, may not
  have the same degree in $\sV^{\DT}_{\emptyset,\emptyset,\emptyset}$,
  but each $\sZ^{\DT_0}_X$ is a product of exactly one $\DT_0$ vertex
  at each vertex $v$. Hence, we have obtained an explicit DT/PT
  descendent correspondence for partition functions of $X$.
\end{remark}

\section{Example: refined Vafa--Witten theory}
\label{sec:VW}

\subsection{The geometric setup}
\label{sec:VW-setup}

\subsubsection{}

Let $S$ be a smooth projective surface, and let $X \coloneqq
\tot(\cK_S)$ denote the total space of its canonical bundle, with
projection $\pi\colon X \to S$. Then $X$ is a smooth quasi-projective
Calabi--Yau threefold. Let $\sT \coloneqq \bC^\times$, with coordinate
denoted $\kappa$, act on $X$ by scaling the fibers of $\pi$ with
weight one. For example, $\cK_X \cong \kappa^{-1} \otimes \cO_X$.

Let $\cat{Coh}_c(X)$ denote the Noetherian and $\bC$-linear abelian
category of compactly-supported coherent sheaves on $X$. Let $\fM =
\fM_X$ denote the moduli stack parameterizing objects in
$\cat{Coh}_c(X)$ with the natural $\sT$-action inherited from $X$.

Since $\sT$ acts trivially on the surface $S$, our convention is that
all sheaves written on $S$ have $\sT$-weight zero.

\subsubsection{}

By the spectral construction \cite[\S 2]{Tanaka2020}, there is an
equivalence of categories
\[ \cat{Coh}_c(X) \cong \cat{Higgs}(S) \]
with the category of {\it Higgs sheaves} on $S$: a pair $(\bar\cE,
\phi)$ where $\bar\cE \in \cat{Coh}(S)$ is a coherent sheaf and
$\phi\colon \bar\cE \to \bar\cE \otimes \cK_S$ is a
morphism.\footnote{Passing to appropriate derived enhancements
  \cite{Pantev2013}, this identifies $\fM_X = T^*[-1]\fM_S$ as the
  $(-1)$-shifted cotangent bundle of the derived moduli stack of
  coherent sheaves on $S$ (with the natural quasi-smooth structure on
  $\fM_S$). This perspective is useful psychologically but we will
  avoid any actual use of derived algebraic geometry.} Explicitly,
$\cE \in \cat{Coh}_c(X)$ is identified with $(\pi_*\cE, \phi)$ where
$\phi$ is the operator of multiplication by the fiber coordinate of
$\pi$. We take the Chern character
\[ \alpha = (r, c_1, \ch_2) \coloneqq \ch(\bar\cE) \in H^{\text{even}}(S; \bQ) \]
to be the topological class of an object $\cE \in \cat{Coh}_c(X)$ and
consider only the sub-monoid $C_+(S) \subset H^{\text{even}}(S; \bQ)$
of classes with $r > 0$ unless stated otherwise.

\subsubsection{}
\label{vw:sec:M-stack}

As a moduli stack of sheaves on an equivariant Calabi--Yau threefold,
$\fM$ carries the $\kappa$-symmetric $\sT$-equivariant obstruction
theory given at the point $[\cE] \in \fM$ by the shifted dual of
$R\Hom_X(\cE, \cE)$ \cite[Theorem B]{Ricolfi2021}. The exact triangle
\begin{equation} \label{eq:ex-tri-VW}
  R\Hom_X(\cE, \cE) \to R\Hom_S(\bar \cE, \bar \cE) \xrightarrow{\circ \phi - \phi\circ} \kappa^{-1} R\Hom_S(\bar \cE, \bar \cE \otimes \cK_S) \xrightarrow{+1} 
\end{equation}
relates the obstruction theories of $\cE$ and $(\bar \cE, \phi)$
\cite[Prop. 2.14]{Tanaka2020}. Note that the Higgs field $\phi$
carries the weight $\kappa^{-1}$. Applying
Lemma~\ref{lem:obstruction-theory-pl} removes $H^0(\cO_S)$ and its
dual $\kappa^{-1} H^2(\cK_S)$ from the second and third terms of
\eqref{eq:ex-tri-VW}, and produces a symmetrically-compatible
symmetric obstruction theory for $\fM^\pl$. The resulting
K-homological invariants
\[ \sVW^{U}_\alpha(H) \coloneqq \chi\left(\fM^{\sst}_{\alpha}(\tau^H), \hat\cO^{\vir}_{\fM_{\alpha}^{\sst}(\tau^H)} \otimes -\right) \]
for stable=semistable loci of $\fM_{r,c_1,\ch_2}^\pl$ are called {\it
  $U(r)$ refined VW invariants}, and may be viewed as (local)
K-theoretic DT invariants of $X$. However, as in \cite[Remark
  4.5]{Tanaka2020} and \cite{liu_ss_vw}:
\begin{itemize}
\item if $h^2(\cO_S) \neq 0$, then $\sVW^{U}_\alpha(H) = 0$ because
  the trivial summand $H^2(\cO_S)$ in $R\Hom_S(\bar \cE, \bar \cE)$
  yields a non-trivial cosection (as in
  \S\ref{rc:sec:reduced-virtual-classes}) that forces any virtual
  cycle to be zero;

\item if $h^1(\cO_S) \neq 0$, then there is a non-trivial
  $\Pic_0(S)$-action on $\fM_\alpha$ given by
  \[ \cL \cdot (\bar\cE, \phi) \mapsto (\cL \otimes \bar\cE, \phi), \]
  and then, by Lemma~\ref{lem:VW-H1-vanishing} below,
  $\sVW^{U}_\alpha(H)(\cE) = 0$ for any $\cE \in
  K_{\sT}^\circ(\fM_\alpha^\pl)$ admitting a $\Pic_0(S)$-equivariant
  structure.\footnote{This is a non-trivial condition, i.e. the
    canonical map $K_{\Pic_0(S)}(-) \to K(-)$ which forgets the
    equivariant structure is typically not surjective. In other words,
    it could be that $\sVW^{U}_\alpha(H) \neq 0$ even if $h^1(\cO_S) \neq
    0$.}
\end{itemize}

\subsubsection{}

\begin{lemma} \label{lem:VW-H1-vanishing}
  Let $\sP \coloneqq \Pic_0(S)$. Suppose $M$ is a proper algebraic
  space with $(\sT \times \sP)$-action and a $(\sT \times
  \sP)$-equivariant map
  \[ \det\colon M \to r\!\Pic_{c_1}(S), \quad r \in \bZ_{> 0}, \]
  where $\sP$ acts on $r\!\Pic_{c_1}(S) \coloneqq \{\cL^{\otimes r} :
  \cL \in \Pic(S), \, c_1(\cL) = c_1\}$ by $\cL_0 \cdot \cL^{\otimes
    r} \coloneqq (\cL_0 \otimes \cL)^{\otimes r}$, and $\sT$ acts
  trivially. If $\cE \in \cat{Perf}_{\sT}(M)$ admits a $(\sT \times
  \sP)$-equivariant structure, then
  \[ \chi(M, \cE) = 0. \]
  Here $\chi$ denotes the $\sT$-equivariant Euler characteristic.
\end{lemma}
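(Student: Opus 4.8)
The plan is to use the $\sP$-action to produce a free circle (or rather, a free $\sP'$ for a suitable positive-dimensional subtorus/abelian variety $\sP' \subseteq \sP$) acting on $M$, and then invoke the vanishing of equivariant Euler characteristics along free actions. The key observation is that $r\!\Pic_{c_1}(S)$ is a torsor under $r\!\Pic_0(S) = \{\cL^{\otimes r} : \cL \in \Pic_0(S)\}$, and the $\sP = \Pic_0(S)$-action on it is, up to the isogeny $\cL_0 \mapsto \cL_0^{\otimes r}$, the translation action of $r\!\Pic_0(S)$ on itself, which is free and transitive. Thus the composite $M \xrightarrow{\det} r\!\Pic_{c_1}(S)$ is a $\sP$-equivariant map to a space on which $\sP$ acts through a surjective-up-to-finite-kernel homomorphism $\sP \twoheadrightarrow r\!\Pic_0(S)$ onto a positive-dimensional abelian variety (here we use $h^1(\cO_S) \neq 0$, so $\dim \sP > 0$) with a free transitive target action.

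First I would reduce to the case where $\sP$ acts on $M$ itself with finite stabilizers, or at least where there is a positive-dimensional subgroup acting freely: let $K \subseteq \sP$ be the kernel of $\sP \to r\!\Pic_0(S)$, which is finite since multiplication by $r$ is an isogeny of abelian varieties; then $\sP/K \cong r\!\Pic_0(S)$ acts on $M$, and since the target $r\!\Pic_{c_1}(S)$ of the equivariant map $\det$ has a free $\sP/K$-action, the $\sP/K$-action on $M$ is free as well. Pick any $1$-dimensional subtorus $\bC^\times \hookrightarrow \sP/K$ (possible since $\dim \sP/K = h^1(\cO_S) \geq 1$); it acts freely on $M$ and commutes with the $\sT$-action. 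Then $M \to M/\bC^\times$ is a $\bC^\times$-bundle, and $\cE$ descends (after pulling back to a $\bC^\times$-equivariant perfect complex, which it already is by hypothesis) so that, by the projection formula and the fact that $R\Gamma$ of the structure sheaf of a nontrivial $\bC^\times$-torsor in $\bC^\times$-equivariant K-theory vanishes — equivalently, $\chi^{\bC^\times}(\bC^\times, \cO) = 0$ — we get $\chi^{\sT \times \bC^\times}(M, \cE) = 0$. Specializing the $\bC^\times$-equivariant parameter to the trivial one (using that $M^{\sT}$ is proper, so the Euler characteristic is an honest element of $\bk_{\sT}$ with no poles in the $\bC^\times$-variable) yields $\chi^{\sT}(M, \cE) = 0$.

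More carefully, the cleanest route avoids choosing a subtorus: since $M \to r\!\Pic_{c_1}(S)$ is $\sP/K$-equivariant with free transitive target action, $M \cong (M \times_{r\!\Pic_{c_1}(S)} \{\text{pt}\}) \times r\!\Pic_0(S)$ is a trivial $r\!\Pic_0(S)$-bundle over the fiber $F$, and $\cE$ is pulled back from... no, $\cE$ need not be pulled back from $F$, but it is $\sP/K$-equivariant, so on $M = F \times r\!\Pic_0(S)$ one can use the Fourier–Mukai / weight decomposition: $R\Gamma(M, \cE) = \bigoplus_{\chi} R\Gamma(F, \cE_\chi) \otimes R\Gamma(r\!\Pic_0(S), L_\chi)$ over characters $\chi$ of the abelian variety, and each $R\Gamma(r\!\Pic_0(S), L_\chi)$ for the associated line bundle (a nontrivial character gives a nontrivial degree-$0$ line bundle with vanishing cohomology, and the trivial character still contributes $R\Gamma(\mathcal{O})$ of an abelian variety of positive dimension, whose Euler characteristic is $0$). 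In all cases the $\sT$-equivariant Euler characteristic vanishes. I expect the main obstacle to be purely bookkeeping: making precise the descent of $\cE$ and the vanishing $\chi(\text{abelian variety}, L) = 0$ in the $\sT$-equivariant setting with all the (trivial) $\sT$-actions tracked correctly, and ensuring properness of $M^{\sT}$ is used to rule out denominators — but conceptually it is just ``free action of a positive-dimensional group kills Euler characteristic.''
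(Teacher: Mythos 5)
Your underlying mechanism -- exploit the $(\sT\times\sP)$-equivariance over the torsor $r\!\Pic_{c_1}(S)$ and reduce to the vanishing of Euler characteristics of numerically trivial line bundles on an abelian variety -- is the same one the paper uses, but both of your implementations have concrete gaps. First, $\sP=\Pic_0(S)$ and its isogenous quotient $r\!\Pic_0(S)$ are abelian varieties, hence proper; they contain no copy of $\bC^\times$ whatsoever (any homomorphism from a linear algebraic group to an abelian variety is trivial), so the step ``pick any $1$-dimensional subtorus $\bC^\times\hookrightarrow\sP/K$'' cannot be carried out and the first route collapses. Second, you assert that the $\sP$-action on $M$ descends to $\sP/K$: it need not. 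Only the action on the base $r\!\Pic_{c_1}(S)$ factors through the isogeny $\cL_0\mapsto\cL_0^{\otimes r}$; the finite kernel $K$ (the $r$-torsion) can act nontrivially on $M$ inside the fibers of $\det$. Consequently the trivialization $M\cong F\times r\!\Pic_0(S)$ is not available; with only the given action one gets at best a twisted product $M\cong \sP\times_K F$. Third, the ``weight decomposition over characters of the abelian variety'' is not a valid statement: an abelian variety admits no nontrivial characters, and translation-equivariant sheaves do not split as in the torus case; the correct assertion is simply that the (essentially) free transitive action identifies equivariant K-theory of the torsor with that of a point, so every equivariant class is a multiple of $\cO$ times a class from the point.

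The paper's proof sidesteps every one of these issues by never working on $M$: properness of $M$ gives $\chi(M,\cE)=\chi\bigl(r\!\Pic_{c_1}(S),\det\nolimits_*\cE\bigr)$, the pushforward $\det_*\cE$ is $(\sT\times\sP)$-equivariant, homogeneity of the base under $\sP$ forces $\det_*\cE = w\cdot\cO_{r\!\Pic_{c_1}(S)}$ in $K_\sT$ for some $w\in K_\sT(\pt)$, and then $\chi(\cO_A)=0$ on any abelian variety $A$ concludes. Your argument becomes correct (and shorter) if you replace the attempted free action on $M$, the subtorus, and the product decomposition by this single pushforward step.
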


In particular, if $M$ carries a $\sT$-equivariant perfect obstruction
theory which is $\sP$-invariant, like \eqref{eq:ex-tri-VW}, then its
virtual cycle $\cO_M^\vir$ has a natural $(\sT \times
\sP)$-equivariant structure.

\begin{proof}
  Since $M$ is proper, so is $\det$, and we have
  \[ \chi(M, \cE) = \chi(r\!\Pic_{c_1}(S), \det\nolimits_*\cE). \]
  We claim $\det_*\cE = w \cdot \cO_{r\!\Pic_{c_1}(S)} \in
  K_{\sT}(r\!\Pic_{c_1}(S))$ for some $w \in K_\sT(\pt)$. Since $\cE$
  admits a $(\sT \times \sP)$-equivariant structure and $\det$ is
  $(\sT \times \sP)$-equivariant, $\det_*\cE \in K_{\sT \times
    \sP}(r\!\Pic_{c_1}(S))$. Since $\sP$ acts freely on
  $r\!\Pic_{c_1}(S)$,
  \[ K_{\sT \times \sP}(r\!\Pic_{c_1}(S)) = K_{\sT}(r\!\Pic_{c_1}(S) / \sP) = K_{\sT}(\pt) \]
  is generated by $\cO_{r\!\Pic_{c_1}(S)}$. Hence $\det_*\cE = w \cdot
  \cO_{r\!\Pic_{c_1}(S)} \in K_{\sT \times \sP}(r\!\Pic_{c_1}(S))$,
  and forgetting the $\sP$-equivariant structure yields the claim. We
  conclude because, non-equivariantly, $\chi(\cO_A) = 0$ on any
  abelian variety $A$.
\end{proof}

\subsubsection{}
\label{sec:VW-stack}

Let $\fPic(S)$ denote the Picard stack of $S$.\footnote{The trivial
$\bC^\times$-gerbe over the Picard {\it scheme} $\Pic(S)$.} There is a
natural map
\[ \det \times \tr\colon \fM \to [(\Pic(S) \times H^0(\cK_S)) / \bC^\times] \cong \fPic(S) \times H^0(\cK_S) \]
sending a pair $(\bar \cE, \phi)$ to $(\det \bar \cE, \tr \phi)$.
Here, $\bC^\times$ acts by diagonal scaling and $\sT$ acts with weight
$(0, 1)$. Let $\fN_{\alpha,\cL}$ denote the closed substack defined by
the fiber product
\[ \begin{tikzcd}
  \fN_{\alpha,\cL} \ar{d} \ar[hookrightarrow]{r} & \fM_{\alpha} \ar{d}{\det \times \tr} \\
  \{[\cL]\}\times \{0\} \ar[hookrightarrow]{r} & \fPic(S) \times H^0(\cK_S),
\end{tikzcd} \]
parameterizing Higgs sheaves $(\bar \cE, \phi)$ such that $\tr\phi =
0$ and $\det \bar \cE \cong \cL$.\footnote{To be pedantic, this is
{\it without} a specific choice of isomorphism $\det \bar \cE
\xrightarrow{\sim} \cL$. After rigidification, this choice of
isomorphism is fixed.} We also get a map of rigidified stacks
$\fN_{\alpha,\cL}^\pl \to \fM_\alpha^\pl$, over $\{[\cL]\} \times
\{0\} \subset \Pic(S) \times H^0(\cK_S)$. Later, as in
\S\ref{sec:intro-VW-setup}, it will be important to choose
$\{\cL(\alpha)\}_{\alpha \in C_+(S)} \subset \Pic(S)$ such that
$\cL(\alpha+\beta) = \cL(\alpha) \otimes \cL(\beta)$.


\subsubsection{}
\label{sec:VW-TT-obstruction-theory}

The main technical innovation of Tanaka--Thomas \cite[\S
  5]{Tanaka2020} is that the obstruction theory \eqref{eq:ex-tri-VW}
for $\fM_\alpha$ exists relative to $\fPic(S) \times H^0(\cK_S)$, and
its ``symmetrized restriction'' to $\fN_{\alpha,\cL}$ continues to be
$\sT$-equivariant and symmetric \cite[\S 5]{Tanaka2020}. In other
words, $\fN_{\alpha,\cL}$ (resp. $\fN_{\alpha,\cL}^\pl$) carries a
$\sT$-equivariant symmetric obstruction theory given by $R\Hom_X(\cE,
\cE)_{\lrcorner}$ (resp. $R\Hom_X(\cE, \cE)_{\perp}$) obtained from
the exact triangle \eqref{eq:ex-tri-VW} by removing the diagonal copy
of $\tau^{>0}R\Gamma(S,\cO_S)$ (resp. $R\Gamma(S,\cO_S)$) from the
middle term and the dual copy of $\tau^{<2}R\Gamma(S,\cK_S)$ (resp.
$R\Gamma(S,\cK_S)$) from the rightmost term. In particular there is an
exact triangle
\[ R\Hom_X(\cE, \cE)_\perp \to R\Hom_S(\bar\cE, \bar\cE)_0 \xrightarrow{\circ\phi - \phi\circ} R\Hom_S(\bar\cE, \bar\cE \otimes \cK_S)_0 \xrightarrow{+1} \]
where the subscript $0$ denotes traceless part. The resulting
(K-)homological invariants
\[ \sVW_\alpha(H) \coloneqq \chi\left(\fN^{\sst}_{\alpha,\cL}(\tau^H), \hat\cO^{\vir}_{\fN_{\alpha,\cL}^{\sst}(\tau^H)} \otimes -\right) \]
for stable=semistable loci of $\fN_{r,c_1,\ch_2,\cL}^\pl$ are called
{\it $\SU(r)$ VW invariants}. They are independent of the choice of
$\cL$ by deformation invariance along $\Pic(S)$ \cite[Remark
  6.4]{Tanaka2020}. These will be the stable=semistable VW invariants
of interest in this section.

\subsubsection{}

\begin{remark} \label{rem:VW-refinement}
  Since $\sT = \bC^\times$ has rank one, general principles imply that
  $\kappa$-symmetrized enumerative invariants like $\sVW_\alpha(H)$
  have {\it no poles at $\kappa = 1$}, i.e. if $\cE$ is a
  non-localized class then $\sVW_\alpha(H)(\cE)$ is a rational
  function in $\kappa$ which is well-defined upon specializing $\kappa
  \to 1$, and moreover this specialization recovers the unrefined VW
  invariant
  \[ \sVW_\alpha(H)(\cE)\big|_{\kappa=1} = \int_{[\fN^{\sst}_{\alpha,\cL}(\tau^H)]^\vir} \rank(\cE) \in \bQ, \]
  see \cite[Prop. 2.22]{Thomas2020}. In this sense, refined VW
  invariants are a $\kappa$-refinement of unrefined VW invariants.

  Motivic invariants --- meaning, Euler characteristics weighted by
  Behrend function --- may also be refined by replacing Euler
  characteristic with the Hirzebruch $\chi_{-\kappa}$-genus. In some
  settings, for instance when $\deg \cK_S < 0$ (in particular,
  $h^1(\cO_S) = h^2(\cO_S) = 0$) \cite[Theorem 5.15]{Thomas2020}, such
  refined motivic invariants of $\fN_{\alpha,\cL}^{\sst}(\tau^H)$
  agree with our refined VW invariants $\sVW_\alpha(H)(\cO)$, and,
  more generally, refined DT invariants in the sense of Joyce--Song
  (see also \S\ref{sec:comparison-with-joyce-song}) agree with our
  semistable refined VW invariants $\svw_\alpha(H)(\cO)$. But in
  general they disagree.
\end{remark}

\subsubsection{}

\begin{definition} \label{def:VW-stability}
  Let $\NS(S)$ be the N\'eron--Severi group of $S$, and $\NS_{\bR}(S)
  \coloneqq \NS(S) \otimes_{\bZ} \bR$. Recall that an element $H \in
  \NS_{\bR}(S)$ is {\it ample} if $H = \sum_i a_i \cL_i$ for $\cL_i
  \in \NS(X)$ ample and $a_i \in \bR_{>0}$. Given an ample $H \in
  \NS_{\bR}(S)$, viewed as a $(1,1)$-class, let
  \[ P_\alpha^H(n) \coloneqq \int_X \alpha \cup e^{nH} \cup \td(X) \]
  be the Hilbert polynomial of $\alpha$ (a Chern character of some
  coherent sheaf) with respect to $H$. This is a real polynomial in
  $n$ of degree $\dim \alpha$. If $H$ is a polarization of $S$, then,
  abusing notation slightly, $P_\alpha^H(n) = \chi(\alpha \otimes H^n)
  \in \bZ$. In particular $P_\alpha^H(n) = \dim H^0(\alpha \otimes
  H^n) > 0$ for $n \gg 0$, so the leading coefficient $r_\alpha^H$ is
  positive. By continuity this is therefore also true for general $H$.
  The normalized Hilbert polynomial
  \[ \tau^H(\alpha) \coloneqq \frac{P_\alpha^H}{r_\alpha^H} = n^d + a_{d-1} n^{d-1} + \cdots + a_0 \]
  is a monic polynomial of degree $d = \dim \alpha$. Following
  \cite[Def. 7.7]{Joyce2021}, put a total order $\le$ on monic
  polynomials: $f \le g$ if either $\deg f > \deg g$, or $\deg f =
  \deg g$ and $f(n) \le g(n)$ for all $n \gg 0$. Then $\tau^H$ is a
  stability condition on $\cat{Coh}(S)$ \cite[Lem. 2.5]{Rudakov1997}
  called {\it $H$-Gieseker stability}. Similarly define
  \[ \mu^H(\alpha) \coloneqq \frac{\bar P_\alpha^H}{r_\alpha^H} \coloneqq n^d + a_{d-1} n^{d-1} \]
  where $\bar P_\alpha^H$ is the truncation of $P_\alpha^H$ to the
  first two leading terms. This is a weak stability condition on
  $\cat{Coh}(S)$ \cite[Def. 7.8]{Joyce2021} called {\it $H$-slope
    stability}.

  We continue to use $\tau^H$ and $\mu^H$ to denote the same (weak)
  stability conditions on $\cat{Higgs}(S)$ instead of $\cat{Coh}(S)$.
  Note that a sub-object of a Higgs sheaf $(\bar\cE, \phi)$ is a {\it
    $\phi$-invariant} sub-sheaf $\cF \subset \bar\cE$.
\end{definition}

\subsubsection{}

\begin{remark}
  This definition of Gieseker and slope stability is more general than
  the usual definition, e.g. in \cite[\S 1.2]{huybrechts_lehn_geom},
  which only considers pure sheaves of positive rank. The extension to
  arbitrary sheaves is what makes slope stability into a {\it weak}
  stability condition. Gieseker stability remains a stability
  condition. It is important that slope stability is also a stability
  condition when considering only short exact sequences of sheaves of
  positive rank. 

  In more generality, the ample class $H$ may taken inside the
  K\"ahler cone inside $H^{1,1}(S; \bR)$, rather than inside the ample
  cone inside $H^{1,1}(S) \otimes_{\bZ} \bR \subset H^{1,1}(S; \bR)$.
  We will not use this.
\end{remark}

\subsubsection{}
\label{sec:VW-auxiliary-stack}

Applying the discussion of \S\ref{es:sec:auxiliary-stacks}, for any
quiver $Q$ and framing functors $\vec\Fr$, we obtain the (rigidified)
auxiliary stacks
\[ \pi_{\fN_{\alpha,\cL}^{\vec\Fr,\pl}}\colon \fN_{(\alpha,\cL),\vec d}^{Q(\vec\Fr),\pl} \to \fN_{\alpha,\cL}^{\vec\Fr,\pl}. \]
If $H$ is an ample line bundle on $S$ then, for $k \in \bZ_{> 0}$,
\[ \Fr_{H,k}(\cE) \coloneqq H^0(\cE \otimes H^k) \]
is a framing functor on the open locus of sheaves $\cE$ which are
$k$-regular \cite[\S 1.7]{huybrechts_lehn_geom}, i.e. $H^i(\cE \otimes
H^{k-i}) = 0$ for all $i > 0$. In particular, for the pairs quiver $Q$
of Definition~\ref{def:pair-invariant} and $k \gg 0$ (depending on
$\alpha$), we let
\begin{equation} \label{eq:VW-pair-invariant}
  \tilde{\sVW}_\alpha(H; k) \coloneqq \chi\left(\tilde\fN_{(\alpha,\cL),1}^{Q(\Fr_{H,k}),\sst}((\tau^H)^Q), \hat\cO^\vir \otimes -\right) \in K_\circ^{\tilde\sT}(\tilde\fN_{(\alpha,\cL),1}^{Q(\Fr_{H,k}),\pl})_{\loc}
\end{equation}
denote the universal enumerative invariant associated to the auxiliary
stability condition $(\tau^H)^Q$, where $\tau^H$ is $H$-Gieseker
stability, and the framing functor $\Fr_{H,k}$. Here, regarding
$\tilde\fN_{(\alpha,\cL),1}^{Q(\Fr_{H,k}),\sst}((\tau^H)^Q)$:
\begin{itemize}
\item its the virtual cycle arises from the $\kappa$-symmetric APOT
  obtained by symmetrized pullback (Theorem~\ref{thm:APOTs}) along the
  smooth forgetful map $\pi_{\fN_{\alpha,\cL}^{\vec\Fr,\pl}}$;
\item properness of its $\sT$-fixed loci follows from the verification
  of
  Assumptions~\ref{es:assump:semistable-invariants}\ref{es:assump:it:properness}
  and \ref{es:assump:wall-crossing}\ref{es:assump:it:properness-wcf}
  in \S\ref{sec:VW-WCF-proof-end} below.
\end{itemize}

\subsubsection{}

\begin{remark}
  For the universal enumerative invariant of the auxiliary pairs
  stack, it is not actually necessary to use the same $H$ for both the
  framing functor $\Fr_{H,k}$ and the stability condition $\tau^H$. We
  did so for $\tilde{\sVW}_\alpha(H; k)$ in
  \eqref{eq:VW-pair-invariant} in order for
  Corollary~\ref{cor:numerical-VW}\ref{it:numerical-VW-sst-invariants}
  to match, on the nose, with previous work \cite{Tanaka2017,
    liu_ss_vw}. In general, we may take any ample line bundle
  $\cO_X(1)$ and consider $\Fr_{\cO(1),k}$, and take any $H \in
  \NS_{\bR}(S)$ and consider $\tau^H$. The resulting auxiliary
  universal enumerative invariant can equally well be used in place of
  $\tilde{\sVW}_\alpha(H; k)$ in Theorem~\ref{thm:VW-sst-invariants},
  with no change to the right hand sides of
  \eqref{eq:VW-sst-invariants-pg-zero} or
  \eqref{eq:VW-sst-invariants-pg-positive}.
\end{remark}

\subsection{Positive-rank invariants and wall-crossing}
\label{sec:VW-WCF}

\subsubsection{}

In this subsection, we prove Theorems~\ref{thm:VW-sst-invariants} and
\ref{thm:VW-WCF}, on the existence and wall-crossing of refined
semistable VW invariants of classes $\alpha \in C_+(S)$, by
specializing the main theorems to the VW setting of
\S\ref{sec:VW-setup} and verifying all the relevant assumptions. There
will be the following cases.
\begin{itemize}
\item If $h^1(\cO_S) = h^2(\cO_S) = 0$, then $\Pic_0(S) = 0 =
  H^0(\cK_S)$ and therefore $\fN_{\alpha,\cL} = \fM_\alpha$. Then the
  ``plain'' Theorems~\ref{es:thm:sst-invariants} and \ref{es:thm:wcf}
  can be applied directly to the set $C_+(S)$ of permissible classes;
  see \S\ref{sec:VW-proof-H1-H2-zero}.
\item If $h^1(\cO_S) \neq 0$ or $h^2(\cO_S) \neq 0$, then
  $\fN_{\alpha,\cL} \subsetneq \fM_\alpha$, and in fact these do not
  satisfy
  Assumption~\ref{es:assump:semistable-invariants}\ref{es:assump:it:Bpe-sst-summands}!
  Nonetheless, we can pretend that our wall-crossing framework is
  applicable and obtain non-trivial results by a more careful analysis
  of the master space arguments used in
  \S\ref{sec:semistable-invariants} and \S\ref{sec:wall-crossing}; see
  \S\ref{sec:VW-proof-H1-H2-nonzero}--\S\ref{sec:VW-proof-end}.
\end{itemize}

\subsubsection{}

To prove Theorem~\ref{thm:VW-WCF} for a given class $\alpha \in
C_+(S)$, i.e. to cross between Gieseker stability for two
polarizations $H_1$ and $H_2$, following the discussion of
\S\ref{sec:general-wall-crossing} we first reduce to the dominant case
where we cross between a weak stability condition $\tau$ which weakly
dominates another weak stability condition $\mathring \tau$ at
$\alpha$. There are two main technical issues here. First, Gieseker
stability $\tau^H$ does not depend continuously on the class $H \in
\NS_{\bR}(S)$, e.g. it is possible to have a sub-object $0 \neq \cE'
\subsetneq \cE$ and a family $\{\tau^{H_t}\}_t$ such that $\cE'$ is
destabilizing for $t = t_0$ but not for any $t < t_0$. Second, we must
ensure that we only cross finitely many walls between $\tau^{H_1}$ and
$\tau^{H_2}$. Both problems are resolved by using the slope stability
$\mu^H$, which {\it does} depend continuously on $H$.

\subsubsection{}

\begin{lemma} \label{lem:VW-WCF-dominant-to-general}
  Suppose that Theorem~\ref{thm:VW-WCF} holds for all slope stability
  conditions $\mu$ and $\mathring\mu$ such that $\mu$ weakly dominates
  $\mathring\mu$ at $\alpha \in C_+(S)$. Then it holds for all
  Gieseker (and slope) stability conditions $\tau^{H_1}$ and
  $\tau^{H_2}$.
\end{lemma}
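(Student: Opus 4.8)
The plan is to reduce the general case to the dominant case via a finite chain of wall-crossings inside the space $\NS_{\bR}(S)$ of polarizations, using slope stability $\mu^H$ as the continuous family along the chain. First I would recall the basic comparison: Gieseker stability $\tau^H$ and slope stability $\mu^H$ induce the same (semi)stable loci \emph{except} for a locus of classes $\alpha$ where the two disagree; more precisely, $\mu^H$-stable implies $\tau^H$-stable implies $\tau^H$-semistable implies $\mu^H$-semistable, and the $\tau^H$-semistable locus $\fN^{\sst}_\alpha(\tau^H)$ equals the $\mu^H$-semistable locus $\fN^{\sst}_\alpha(\mu^H)$ whenever there are no strictly $\mu^H$-semistable objects of class $\alpha$ that fail to be $\tau^H$-semistable. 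In particular, $\tau^H$ weakly dominates $\mu^H$ at $\alpha$ and vice versa (both are ``on the wall'' relative to the other in the sense of Definition~\ref{def:dominates-at}), so by the dominant wall-crossing formula (Theorem~\ref{es:thm:wcf}, or rather its reduced version Theorem~\ref{thm:red-wcf}) applied twice, the semistable invariant $\svw_\alpha(\tau^H)$ is related to $\svw_\alpha(\mu^H)$ by a wall-crossing formula with coefficients $\tilde U(-; \tau^H, \mu^H)$ and $\tilde U(-; \mu^H, \tau^H)$. Thus it suffices to prove the wall-crossing formula between $\mu^{H_1}$ and $\mu^{H_2}$.

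Next I would connect $H_1$ to $H_2$ by a path $\{H_t\}_{t \in [0,1]}$ inside the ample cone of $\NS_{\bR}(S)$, and argue that, for the fixed class $\alpha$, only finitely many ``walls'' $0 = t_0 < t_1 < \cdots < t_m = 1$ are crossed. This is the standard finiteness of walls for slope stability: the classes $\beta$ that can appear as classes of subobjects of $\mu^{H_t}$-semistable objects of class $\alpha$ form a finite set (this is exactly the finiteness of $R_\alpha$, cited as \cite[Lemma 9.1]{Joyce2021} in \S\ref{wc:sec:S-and-R-sets}, once
Assumption~\ref{es:assump:semistable-invariants} is known to hold — which it is, by the verification carried out in \S\ref{sec:VW-WCF}), and the condition $\mu^{H_t}(\beta) = \mu^{H_t}(\alpha - \beta)$ is, for each such $\beta$, either always true or cuts out a codimension-$\ge 1$ subset of the segment, hence meets $\{H_t\}$ in finitely many points for a generic path. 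After perturbing the path to be generic, each $t_i$ is a simple wall in the sense that at $t_i$ the slope stability $\mu^{H_{t_i}}$ weakly dominates both $\mu^{H_{t_i - \epsilon}}$ and $\mu^{H_{t_i + \epsilon}}$ at $\alpha$ for small $\epsilon > 0$. Then the dominant wall-crossing formula (Theorem~\ref{thm:red-wcf}) applies to each pair $(\mu^{H_{t_i}}, \mu^{H_{t_i \pm \epsilon}})$, and on each wall-free subinterval the semistable invariants are constant by Theorem~\ref{thm:VW-sst-invariants}(iii) (the analogue of Theorem~\ref{thm:sst-invariants}\ref{item:vss-isomorphic-moduli}), since the semistable loci are unchanged.

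Finally I would assemble these local wall-crossing formulas into a single one from $\mu^{H_1}$ to $\mu^{H_2}$ using the transitivity (composition) property of the universal coefficients $\tilde U$, namely Lemma~\ref{wc:lem:U-properties}\ref{it:U-composition} together with \cite[Assumption 5.3, \S 11]{Joyce2021} as invoked in \S\ref{sec:general-wall-crossing} — this is precisely the passage from ``dominant wall-crossing formula'' to ``general wall-crossing formula''. Composing with the two Gieseker-to-slope wall-crossings at the endpoints yields the formula between $\tau^{H_1}$ and $\tau^{H_2}$ with coefficients $\tilde U(-; \tau^{H_1}, \tau^{H_2})$, again by transitivity of $\tilde U$. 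The same argument works verbatim in the reduced setting (the cosection here is the single $H^2(\cO_S)$-cosection when $h^2(\cO_S) \neq 0$, or $o_\alpha = 0$ when $h^2(\cO_S) = 0$), giving the extra constraint $o_\alpha = o_{\alpha_1} + \cdots + o_{\alpha_n}$ in the sum, which accounts for the dichotomy in the statement of Theorem~\ref{thm:VW-WCF}.

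The main obstacle I anticipate is the genericity/finiteness argument for the path $\{H_t\}$: one must verify that a generic straight-line (or piecewise-linear) path between $H_1$ and $H_2$ in the ample cone crosses each of the finitely many ``numerical walls'' $\{H : \mu^H(\beta) = \mu^H(\alpha-\beta)\}$ transversally and at distinct parameter values, and that no pathological higher-codimension coincidences occur. Since each such wall is the zero locus of an explicit linear-fractional function of $H$ (coming from the formula for $\mu^H$), this is a routine transversality statement, but it needs to be stated carefully to ensure each crossing is genuinely simple — i.e. that $\mu^{H_{t_i}}$ weakly dominates the adjacent slope conditions at $\alpha$, so that Theorem~\ref{thm:red-wcf} is applicable. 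Everything else is bookkeeping with the combinatorial identities for $\tilde U$, which are already established in \S\ref{wc:sec:gen-sst-inv}.
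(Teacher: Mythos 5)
Your overall route---dominate Gieseker stability by slope stability, join $H_1$ to $H_2$ inside the ample cone, cross finitely many walls by dominant wall-crossings, and compose via the transitivity of $\tilde U$---is the same as the paper's. But the two steps you dismiss as routine are exactly where the substance lies, and as written they have genuine gaps. The finiteness argument does not close: finiteness of $R_\alpha$ holds for each \emph{fixed} stability condition, whereas along the path $\{H_t\}$ the set of classes $\beta$ that could destabilize varies with $t$, and ``finitely many crossing points for each $\beta$'' does not yield finitely many walls unless the set of relevant $\beta$ is bounded \emph{uniformly} along the path. The paper obtains this uniform bound from the Bogomolov--Gieseker inequality for Higgs sheaves \cite{Simpson1988, Langer2015}: only decompositions $\alpha = \beta_1 + \beta_2$ with $\rank\beta_i > 0$ and $\Delta(\beta_i) \ge 0$ define (Yoshioka) walls, and by \cite[Lemma 2.2]{Yoshioka1996} only finitely many such hyperplanes meet a compact subset of the ample cone. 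No genericity or perturbation of the path is then needed: each wall either meets the straight segment in a single point or contains it, and the latter case imposes no crossing at all; this is how Lemma~\ref{vw:lem:finite-dominant-wcs-at-alpha} is organized.

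Second, weak dominance at a wall is not a numerical transversality statement. By Definition~\ref{def:dominates-at} you must also verify, for every $\beta \in \mathring R_\alpha$, the inclusion of semistable loci $\fN_{\beta,\cL}^{\sst}(\mu^{H_t}) \subseteq \fN_{\beta,\cL}^{\sst}(\mu^{H_{c_i}})$, i.e.\ that a Higgs sheaf semistable off the wall remains semistable on the wall; your proposal only checks slope equalities. The paper proves this inclusion by a Yoshioka-type argument (cf.\ \cite[Lemma 2.3]{Yoshioka1996}), again via the Bogomolov--Gieseker inequality: a destabilizing subobject off the wall would produce a wall meeting the segment properly, contradicting the choice of subinterval. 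Relatedly, your claim that $\tau^H$ and $\mu^H$ weakly dominate each other at $\alpha$ is false in one direction: $\mu^H$-semistable does not imply $\tau^H$-semistable, so $\tau^H$ does not weakly dominate $\mu^H$; only the correct (and sufficient) direction, that $\mu^H$ dominates $\tau^H$, which follows directly from the definitions, is used in the paper. With these two points repaired, the rest of your argument (composition of finitely many dominant steps via transitivity, and the constancy of invariants on wall-free subintervals) matches the paper's proof.
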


\begin{proof}
  This follows from the following three ingredients.
  \begin{enumerate}[label = (\roman*)]
  \item \label{vw:lem:dom-suff-i} The wall-crossing formula is
    transitive, as discussed in \S\ref{sec:general-wall-crossing},
    following \cite[\S 11]{Joyce2021}. So any finite sequence of
    dominant wall-crossings may be composed to get a general
    wall-crossing formula of exactly the same shape.
    
  \item \label{vw:lem:dom-suff-ii} For any ample $H \in \NS_{\bR}(S)$,
    the slope stability condition $\mu^H$ dominates the Gieseker
    stability condition $\tau^H$. This follows directly from
    definitions.

  \item \label{vw:lem:dom-suff-iii} Given ample $H_1, H_2 \in
    \NS_{\bR}(S)$, let $\mu_t$ be the family of slope stability
    conditions corresponding to the family $H_t = tH_1 + (1-t)H_0$ for
    $t\in [0,1]$. Then there exists finitely many values $0 = c_0 <
    c_1 < \cdots < c_N = 1$ so that the following is true (setting
    $c_{-1} \coloneqq 0$ and $c_{N+1}\coloneqq 1$):
    \begin{itemize}
    \item if $c_i < s,t < c_{i+1}$, then $\mu_s$ and $\mu_t$ weakly
      dominate each other at $\alpha$;
    \item if $c_{i-1} < t < c_{i+1}$, then $\mu_{c_i}$ weakly
      dominates $\mu_t$ at $\alpha$.
    \end{itemize}
    This is the content of
    Lemma~\ref{vw:lem:finite-dominant-wcs-at-alpha} below.
  \end{enumerate}
  To see how these imply the desired result, note that by
  \ref{vw:lem:dom-suff-iii}, for any given $\alpha \in C_+(S)$, the
  wall-crossing between different slope stability conditions breaks
  into a sequence of dominant wall-crossings: for each $c_i$, we may
  cross ``onto the wall'' and then ``off the wall''. By
  \ref{vw:lem:dom-suff-i}, this implies the wall-crossing formula
  between any two slope stability conditions. Combining
  \ref{vw:lem:dom-suff-i} and \ref{vw:lem:dom-suff-ii}, we conclude
  the result for Gieseker stability from the one for slope stability.
\end{proof}

\subsubsection{}

\begin{definition}[{\cite[Def. 2.1]{Yoshioka1996}}]
  Let $\alpha = (r, c_1, \ch_2) \in C_+(S)$ and let $c_2 \coloneqq
  c_1^2/2 - \ch_2$. Recall that the {\it discriminant} of $\alpha$ is
  \[ \Delta(\alpha) \coloneqq \frac{1}{r}\left(c_2 - \frac{r-1}{2r} c_1^2\right) \in H^4(S; \bQ). \]
  A hyperplane $W \subset \NS_{\bR}(S)$ is a {\it (Yoshioka) wall for
    $\alpha$} if there exists an effective decomposition $\alpha =
  \beta_1 + \beta_2$ such that:
  \begin{itemize}
  \item $\rank \beta_i > 0$ and $\Delta(\beta_i) \ge 0$ for $i = 1,2$;
  \item $W$ is the orthogonal complement of the class
    $c_1(\beta_1)/\rank(\beta_1) - c_1(\beta_2)/\rank(\beta_2)$.
  \end{itemize}
  This coincides with Yoshioka's definition.\footnote{More precisely,
  the object $W^F$ defined by Yoshioka is the union of Yoshioka walls,
  as defined here, over all classes $\beta_1$ of a fixed filtration
  $F$.}
\end{definition}

\subsubsection{}

\begin{lemma}\label{vw:lem:finite-dominant-wcs-at-alpha}
  \begin{enumerate}[label = (\roman*)]
  \item \label{vw:item:fdwcsi} Let $K \subset \NS_{\bR}(S)$ be a
    compact subset contained in the interior of the ample cone. Then
    $K$ intersects only a finite set of walls for $\alpha$.
  \item \label{vw:item:fdwcsii} Let $H, H' \in \NS_{\bR}(S)$ be ample
    with associated slope stabilities $\mu, \mu'$, and let $\ell$
    denote the line connecting $H$ and $H'$. If neither $\mu$ weakly
    dominates $\mu'$ at $\alpha$ nor $\mu'$ weakly dominates $\mu$ at
    $\alpha$, then there exists a wall $W$ for class $\alpha$
    which intersects $\ell$ in a single point.
  \item \label{vw:item:fdwcsiii} If there is no wall
    intersecting $\ell$ outside of the point $\{H\}$, then $\mu$
    weakly dominates $\mu'$ at $\alpha$.
  \end{enumerate}
\end{lemma}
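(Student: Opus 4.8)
\textbf{Plan of proof for Lemma~\ref{vw:lem:finite-dominant-wcs-at-alpha}.}
The plan is to establish the three parts in order, deriving each from
standard facts about Yoshioka walls together with the combinatorial
characterization of weak dominance from
Definition~\ref{def:dominates-at}. For part~\ref{vw:item:fdwcsi}, I
would fix the class $\alpha$ and argue that there are only finitely
many effective decompositions $\alpha = \beta_1 + \beta_2$ with
$\rank\beta_i > 0$ and $\Delta(\beta_i) \ge 0$ that can possibly
contribute a wall meeting $K$. Concretely, boundedness of the ranks is
automatic since $\rank\beta_1 + \rank\beta_2 = \rank\alpha$, and then
boundedness of the $c_1(\beta_i)$ follows from the Hodge index theorem
applied on the compact set $K$: the condition $\Delta(\beta_i) \ge 0$
bounds $c_1(\beta_i)^2$ from below in terms of $c_2(\beta_i)$, while
the wall condition forces $c_1(\beta_i)$ to be orthogonal to a class
in $K$, and together these pin $c_1(\beta_i)$ to a finite set in
$\NS(S)$. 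This is essentially \cite[Lemma 2.3]{Yoshioka1996}, and I
would either cite it directly or reproduce the short argument. Since
there are finitely many such decompositions, there are finitely many
walls, and each hyperplane either contains $K$ (impossible, since $K$
has nonempty interior in $\NS_\bR(S)$ and the walls are proper
hyperplanes meeting the ample cone) or meets it in a lower-dimensional
slice.

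For part~\ref{vw:item:fdwcsii}, I would argue the contrapositive of
the relevant implication built into weak dominance. Suppose $\ell$
meets no wall for $\alpha$ in a single point, i.e. either $\ell$ meets
no wall at all or $\ell$ is entirely contained in some wall. By
part~\ref{vw:item:fdwcsi} applied to a compact sub-interval of
$\ell$, only finitely many walls are relevant; in the first sub-case,
$\mu$ and $\mu'$ lie in the same chamber, so the slope orderings
$\mu(\beta) \lessgtr \mu(\alpha-\beta)$ agree with those for $\mu'$
for every relevant $\beta$, hence $R_\alpha$ and the semistable loci
coincide and each weakly dominates the other. In the second sub-case
(which cannot actually occur if $H \neq H'$ are generic but must be
handled), $\ell$ lying in a wall means the slopes are equal along
$\ell$ for the corresponding decomposition, and one checks this still
forces the $R_\alpha$-sets to agree; the point is that moving inside a
single wall does not change which $\beta \in C_+(S)$ satisfy
$\mu(\beta) = \mu(\alpha-\beta)$, because that equality is governed
only by which walls one lies on. So in either sub-case weak dominance
holds in both directions, contradicting the hypothesis, and therefore
$\ell$ must cross some wall transversally in one point.

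Part~\ref{vw:item:fdwcsiii} is the cleanest: if $\ell$ meets no wall
for $\alpha$ away from $\{H\}$, then as one moves from $H'$ toward $H$
along $\ell$, the slope ordering $\mu'(\beta) \lessgtr
\mu'(\alpha-\beta)$ never changes for any $\beta$ arising in the
decompositions relevant to $R_\alpha$ or $\mathring R_\alpha$ (each
such $\beta$ would require crossing its wall, which meets $\ell$ only
at $H$ if at all), so strict inequalities for $\mu'$ persist as
non-strict inequalities for $\mu$. This is exactly the hypothesis of
\cite[Prop. 10.2]{Joyce2021} / the remark following
Definition~\ref{def:dominates-at} that ``$\mu$ dominates
$\mathring\tau$ at $\alpha$'' (here $\mathring\tau = \mu'$), from which
weak dominance at $\alpha$ follows. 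One must also check the
semistable-locus containment $\fM_\beta^\sst(\mu') \subseteq
\fM_\beta^\sst(\mu)$ for $\beta \in \mathring R_\alpha$, which follows
from the same ``no wall crossed'' observation: a $\mu'$-semistable
object of class $\beta$ can only become $\mu$-unstable by acquiring a
destabilizing sub-object, whose class would define a wall separating
$H$ and $H'$. The main obstacle I anticipate is the bookkeeping in
part~\ref{vw:item:fdwcsii} around the degenerate sub-case where $\ell$
lies inside a wall, and more generally making precise the claim that
the $R_\alpha$-sets and semistable loci are locally constant on the
complement of the (locally finite) wall-and-chamber structure --- this
is morally clear and is exactly the picture Joyce's framework is built
for, but spelling out that the relevant $\beta$'s are confined to
finitely many classes (so that the finiteness in part~\ref{vw:item:fdwcsi}
genuinely applies) requires invoking that $R_\alpha$ and $\mathring
R_\alpha$ are finite, which in turn uses
Assumption~\ref{es:assump:semistable-invariants} and \cite[Lemma
9.1]{Joyce2021}.
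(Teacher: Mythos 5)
Your overall route matches the paper's: part~\ref{vw:item:fdwcsi} is Yoshioka's finiteness statement (the paper cites \cite[Lemma 2.2]{Yoshioka1996}; your Hodge-index sketch is essentially his proof), part~\ref{vw:item:fdwcsiii} is proved by showing that the slope-equality hyperplane of $\beta$ versus $\alpha-\beta$ passes through $H'$ and hence must contain $\ell$, together with a ``destabilizing would force a crossed wall'' argument for the containment $\fM_\beta^{\sst}(\mu')\subseteq\fM_\beta^{\sst}(\mu)$, and part~\ref{vw:item:fdwcsii} is then a chamber/segment argument (the paper simply deduces \ref{vw:item:fdwcsii} from \ref{vw:item:fdwcsiii} by splitting $\ell$ at the finitely many proper intersection points, which is cleaner than your two-sub-case analysis but not essentially different).

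However, there is a genuine gap at the two places where your argument says ``the class would define a wall.'' The walls in this lemma are Yoshioka walls for $\alpha$, which require \emph{both} pieces of the decomposition to have nonnegative discriminant. A class $\beta\in\mathring R_\alpha$ with $\mu'(\beta)=\mu'(\alpha-\beta)$, or the class of a destabilizing sub-object appearing when a $\mu'$-semistable object becomes $\mu$-unstable, determines a slope-equality hyperplane, but a priori not a wall; if it is not a wall, the hypothesis ``no wall meets $\ell$ away from $H$'' and the finiteness in \ref{vw:item:fdwcsi} say nothing about it, and both steps of \ref{vw:item:fdwcsiii} (hence your chamber claims in \ref{vw:item:fdwcsii} as well) collapse. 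The missing input is the Bogomolov--Gieseker inequality for semistable \emph{Higgs} sheaves (\cite[Prop. 3.4]{Simpson1988}, \cite[Theorem 7]{Langer2015}) --- note that the objects here are Higgs pairs, so the ordinary surface BG inequality does not directly apply --- applied to the semistable objects guaranteed by $\beta\in\mathring R_\alpha$, respectively to the (semistable) destabilizing sub-object at the point of $\ell$ where instability first occurs, as in the proof of \cite[Lemma 2.3]{Yoshioka1996}. Relatedly, you should say why the destabilizer of a class-$\beta$ object yields a wall \emph{for $\alpha$} rather than for $\beta$: this uses that, once the first claim is established, $\mu_t(\beta)=\mu_t(\alpha-\beta)$ holds along all of $\ell$, so the destabilizing class can be completed to a decomposition of $\alpha$ with equal slopes at the crossing point. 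With the BG input and this bookkeeping added, your argument becomes the paper's proof.
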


\begin{proof}
  \ref{vw:item:fdwcsi} follows from the finiteness result \cite[Lemma
    2.2]{Yoshioka1996}.
  
  \ref{vw:item:fdwcsii} follows from \ref{vw:item:fdwcsiii} by
  splitting $\ell$ into segments at the (finitely many) proper
  intersection points with walls for class $\alpha$.

  Hence, we focus on \ref{vw:item:fdwcsiii}. By assumption, any wall
  for $\alpha$ either contains $\ell$ or intersects it only at $H \in
  \ell$. We may assume $H' \neq H$; otherwise there is nothing to
  show.

  Let $\beta \in \mathring R_{\alpha} \setminus \{\alpha\}$. Note that
  $\mu'(\beta) = \mu'(\alpha-\beta)$ implies $\rank \beta, \rank
  \alpha-\beta > 0$. According to Definition~\ref{def:dominates-at}
  for weak dominance at $\alpha$, we must show:
  \begin{enumerate}
  \item \label{vw:it:fdwcsiii-1} $\mu(\beta) = \mu(\alpha-\beta)$;
  \item \label{vw:it:fdwcsiii-2} $\fM_{\beta}^{\sst}(\mu') \subseteq
    \fM_{\beta}^{\sst}(\mu)$.
  \end{enumerate}
  Namely, if we show these for all $\beta \in \mathring R_\alpha
  \setminus \{\alpha\}$, then in particular \ref{vw:it:fdwcsiii-2}
  holds for $\alpha-\beta$ as well, thus $\mathring R_\alpha \subseteq
  R_\alpha$, so $\mu$ weakly dominates $\mu'$ at $\alpha$, as desired.

  To see \ref{vw:it:fdwcsiii-1}, use that there are $\mu'$-semistable
  Higgs sheaves $\cE_1, \cE_2$ whose underlying sheaves $\bar\cE_1,
  \bar\cE_2$ have classes $\beta$ and $\alpha-\beta$ respectively, by
  hypothesis. By the Bogomolov--Gieseker inequality for Higgs sheaves
  \cite[Prop. 3.4]{Simpson1988} \cite[Theorem 7]{Langer2015}, we have
  $\Delta(\beta), \Delta(\alpha-\beta) \ge 0$, so that the
  decomposition $\alpha = \beta + (\alpha-\beta)$ defines a wall for
  class $\alpha$. By definition, this wall contains the polarization
  $H'$, and thus by assumption must also contain $H$. This in turn
  means $\mu(\beta) = \mu(\alpha-\beta)$, as desired.

  To see \ref{vw:it:fdwcsiii-2}, suppose for the sake of contradiction
  that $\cE$ is a Higgs sheaf of class $\beta$ that is
  $\mu'$-semistable but not $\mu$-semistable. By the same argument as
  in the proof of \cite[Lemma 2.3]{Yoshioka1996} (which makes crucial
  use of the Bogomolov--Gieseker inequality), we can find some $t>0$
  such that $H_t$ lies on a wall not containing $H_0$, contradicting
  our assumption.
\end{proof}

\subsubsection{}
\label{sec:VW-proof-H1-H2-zero}

\begin{proof}[Proof of Theorems~\ref{thm:VW-sst-invariants} and \ref{thm:VW-WCF} when $h^1(\cO_S) = h^2(\cO_S) = 0$.]
  Let $H \in \NS_{\bR}(S)$, let $\tau \coloneqq \mu^H$, and suppose
  $\tau$ weakly dominates another weak stability condition
  $\mathring\tau$ at $\alpha \in C_+(S)$. Note that
  Lemma~\ref{lem:VW-WCF-dominant-to-general} reduces
  Theorem~\ref{thm:VW-WCF} to this setting: $\mathring\tau$ could be
  either $\mu^{\mathring H}$ for some $\mathring H \in \NS_{\bR}(S)$
  not on a wall, or it could be $\tau^H$. It remains to provide the
  ingredients and check the necessary
  Assumptions~\ref{es:assump:exact-subcategory},
  \ref{es:assump:semistable-invariants}, and
  \ref{es:assump:wall-crossing}, in order to apply the main
  Theorems~\ref{thm:sst-invariants} and \ref{thm:wcf}.

  Note that Yoshioka walls are orthogonal complements of rational
  cohomology classes, and therefore, without loss of generality, $H
  \in \NS_{\bQ}(S) \coloneqq \NS(S) \otimes_{\bZ} \bQ$ is a rational
  polarization.

\subsubsection{Assumption~\ref{es:assump:exact-subcategory}}

  (\ref{es:assump:exact-subcategory}\ref{es:assump:it:B-closed})
  Automatic since we are taking $\cat{B} \coloneqq \cat{A} \coloneqq
  \cat{Coh}_c(X)$.

  (\ref{es:assump:exact-subcategory}\ref{es:assump:it:permissible-classes})
  Let $C(\cat{A})_{\pe} \coloneqq C_+(S)$. This is a monoid, and the
  entire moduli stack $\fM_X$ of $\cat{Coh}_c(X)$ is Artin, locally of
  finite type, and forms a graded monoidal $\sT$-stack.

  (\ref{es:assump:exact-subcategory}\ref{es:assumption-restricted-substack})
  The closed substacks $\fN_{\alpha,\cL} \subseteq \fM_\alpha$,
  ranging over all classes $\alpha \in C_+(S)$, form a restricted
  graded monoidal $\sT$-stack with $\kappa$-symmetric bilinear
  elements given by (see \S\ref{sec:VW-TT-obstruction-theory})
  \[ \scE_{\alpha_1,\alpha_2} \coloneqq Rp_*R\cHom(\scE_1, \scE_2)_{\lrcorner} \in \cat{Perf}_\sT(\fN_{\alpha_1} \times \fM_{\alpha_2}) \]
  where $\scE_i$ denotes the universal family on the $i$-th factor
  times $X$ and $p$ is projection along $X$. This defines an
  obstruction theory on $\fN_{\alpha,\cL}$ and $\fN^\pl_{\alpha,\cL}$.

  When $h^1(\cO_S) = h^2(\cO_S) = 0$, note that $\cL$ is uniquely
  determined and $\fN_{\alpha,\cL} = \fM_\alpha$ for all $\alpha \in
  C_+(S)$.

\subsubsection{Assumptions~\ref{es:assump:semistable-invariants} and \ref{es:assump:wall-crossing}}
\label{sec:VW-WCF-proof-end}

  (\ref{es:assump:semistable-invariants}\ref{es:assump:it:tau-artinian},
  \ref{es:assump:semistable-invariants}\ref{es:assump:it:semistable-loci},
  and \ref{es:assump:wall-crossing}\ref{es:assump:it:tau-circ}) Since
  $\cat{Coh}_c(X)$ is $\tau^H$-Artinian and $\mu^H$-Artinian for any
  $H \in \NS_{\bR}(S)$, see e.g. \cite[Lemma 4.19, Theorem
    4.20]{Joyce2007}, any non-zero object admits $\tau^H$- and
  $\mu^H$-HN filtrations. Furthermore, $\tau^H$- and
  $\mu^H$-semistability are open conditions \cite[Prop.
    2.3.1]{huybrechts_lehn_geom}\footnote{The proof there works for
    arbitrary $H \in \NS_{\bR}(S)$ and also for slope stability.} and
  form bounded families in a given class $\alpha \in C_+(S)$
  \cite[Theorem 6.8]{Greb2016}. \footnote{The Bogomolov--Gieseker
    inequality for Higgs sheaves must be used here.}

  (\ref{es:assump:semistable-invariants}\ref{es:assump:it:framing-functor})
  Let $\cO_X(1)$ denote any very ample line bundle on $X$, take $k \gg
  0$, and define
  \[ \Fr_k\colon \cE \mapsto H^0(\cE \otimes \cO_X(k)). \]
  This is an exact functor on the open locus of pairs where $\cF$ is
  $k$-regular \cite[\S 1.7]{huybrechts_lehn_geom}, i.e. such that
  $H^i(\cF(k-i)) = 0$ for all $i > 0$. Basic properties of regularity
  imply that $\Fr_k$ is a framing functor.

  (\ref{es:assump:semistable-invariants}\ref{es:assump:it:rank-function})
  For the rank function, take
  \[ r(\alpha) \coloneqq \rank \alpha. \]
  By definition, $r(\alpha) > 0$ for any $\alpha \in
  C(\cat{A})_{\pe}$, and clearly $r(\alpha_1+\alpha_2) = r(\alpha_1) +
  r(\alpha_2)$ for any $\alpha_1,\alpha_2 \in C(\cat{A})$. If $A$ is
  $\mu^H$-semistable, if $0 \neq A' \subsetneq A$ is a rank zero
  sub-object then $\mu(A') > \mu(A)$, and similarly if $\rank A/A' =
  0$ then $\mu(A) < \mu(A/A')$. Hence the hypothesis $\mu^H(A') =
  \mu^H(A/A')$ implies $r(A'), r(A/A') > 0$, as desired.

  (\ref{es:assump:semistable-invariants}\ref{es:assump:it:inert-classes})
  Define the set of inert classes
  \[ C(\cat{A})_{\inert} \coloneqq \{(0, 0, \ch_2)\} \subset C(\cat{A}). \]
  Clearly $\mu^H$ of such (zero-dimensional) classes is strictly
  greater than $\mu^H$ of any $\alpha \in C(\cat{A})_{\pe}$. Moreover,
  given $\gamma = (0,0,n)$ and $\delta = (r,c_1,m) \in
  C(\cat{A})_{\pe}$, clearly $\gamma+\delta$ have the same rank $r>0$
  and therefore the same leading coefficient $r^H$ in their Hilbert
  polynomials, thus
  \[ \mu^H(\gamma + \delta) = n^2 + \frac{H \cdot c_1}{r^H} n = \mu^H(\delta). \]

  (\ref{es:assump:semistable-invariants}\ref{es:assump:it:semi-weak-stability})
  Let $B' \subset B$ where $B$ is $\mu^H$-semistable. We prove the
  stronger claim that $\mu^H(B') < \mu^H(B) = \mu^H(B/B')$ never
  occurs, and if $\mu^H(B') = \mu^H(B) < \mu^H(B/B')$ then $B/B' \in
  C(\cat{A})_{\inert}$. First, it cannot be that $\rank(B') = 0$,
  because $B$ is $\mu^H$-semistable. Second, if $B', B, B/B'$ all have
  rank $>0$ then $\mu^H$ behaves like a (non-weak!) stability
  condition on them, so we can never have $\mu^H(B') < \mu^H(B) =
  \mu^H(B/B')$ or $\mu^H(B') = \mu^H(B) < \mu^H(B/B')$. Combining
  these two observations, we rule out the possibility $\mu^H(B') <
  \mu^H(B) = \mu^H(B/B')$. Hence $\rank B' = \rank B$, so that $\rank
  B/B' = 0$, is the only remaining possibility. Then $\mu^H(B') =
  \mu^H(B)$ implies $H \cdot c_1(B/B') = 0$. Since $H$ is ample,
  $c_1(B/B') = 0$ and thus $B/B' \in C(\cat{A})_{\inert}$.

  (\ref{es:assump:semistable-invariants}\ref{es:assump:it:properness}
  and \ref{es:assump:wall-crossing}\ref{es:assump:it:properness-wcf})
  Embed $X$ into the $\sT$-equivariant projective completion $\hat X
  \coloneqq \bP(\cK_S \oplus \cO_S)$. Since $\sT$-fixed semistable
  loci in auxiliary stacks for $X$ are components of $\sT$-fixed
  semistable loci in auxiliary stacks for $\hat X$, without loss of
  generality we can suppose $X$ is projective. Then the desired
  properness of various fixed loci may be proved via standard
  semistable reduction arguments; see \cite[\S
    4.3]{huybrechts_lehn_geom} \cite[Theorem 9.6, Remark
    9.7]{Greb2016} and also \cite[\S 4.3, Prop. 5.1.9, Prop.
    6.1.5]{klt_dtpt} for the specific case of our auxiliary framed
  stacks. 

  (\ref{es:assump:semistable-invariants}\ref{es:assump:it:Bpe-sst-summands}
  and \ref{es:assump:wall-crossing}\ref{es:assump:it:tau-circ}) Let
  $\beta_1,\beta_2 \in C(\cat{A})$ such that $\rank(\beta_1+\beta_2) >
  0$. If $\rank(\beta_i) = 0$ then $\mu^H(\beta_i) >
  \mu^H(\beta_1+\beta_2)$ for $i=1,2$ and any $H \in \NS_{\bR}(S)$. So
  the hypothesis $\mu^H(\beta_1) = \mu^H(\beta_2)$, which therefore
  also equals $\mu^H(\beta_1+\beta_2)$, implies $\rank(\beta_i) > 0$,
  i.e. $\beta_i \in C(\cat{A})_{\pe}$. The same holds for $\tau^H$ in
  place of $\mu^H$. Finally, since $\fN_{\alpha,\cL} = \fM_\alpha$ for
  all $\alpha \in C_+(S)$, the desired Cartesian square is trivially
  Cartesian.

  (\ref{es:assump:wall-crossing}\ref{es:assump:it:lambda}) Since
  $\beta \in R_\alpha$, we know $\rank \beta, \rank \alpha-\beta > 0$.
  If $(\tau, \mathring\tau) = (\mu^H, \tau^H)$ then take
  $\lambda(\beta) \coloneqq r_\alpha^H P_\beta^H(N) - r_\beta^H
  P_\alpha^H(N)$ for $N \gg 0$. Otherwise, if $(\tau, \mathring\tau) =
  (\mu^H, \mu^{\mathring H})$ then take $\lambda(\beta) \coloneqq
  r_\alpha^{\mathring H} \bar P_\beta^{\mathring H}(N) -
  r_\beta^{\mathring H} \bar P_\alpha^{\mathring H}(N)$ for $N \gg 0$.
  In both cases, the desired property for $\lambda$ follows because
  $\mathring\tau(\beta) > \mathring\tau(\alpha-\beta)$ if and only if
  $\mathring\tau(\beta) > \mathring\tau(\alpha)$ if and only if
  $\mathring\tau(\beta)(N) > \mathring\tau(\alpha)(N)$.
\end{proof}

\subsubsection{}
\label{sec:VW-proof-H1-H2-nonzero}

\begin{proof}[Proof of Theorems~\ref{thm:VW-sst-invariants} and \ref{thm:VW-WCF} when $h^1(\cO_S) \neq 0$ or $h^2(\cO_S) \neq 0$.]
  In this case,
  Assumption~\ref{es:assump:semistable-invariants}\ref{es:assump:it:Bpe-sst-summands}
  fails because the Cartesian square
  \[ \begin{tikzcd}
    \fZ_{\beta_1,\beta_2} \ar{d}{\Phi_{\beta_1,\beta_2}} \ar[hookrightarrow]{r} & \fM_{\beta_1} \times \fM_{\beta_2} \ar{d}{\Phi_{\beta_1,\beta_2}} \\
    \fN_{\beta} \ar[hookrightarrow]{r} & \fM_{\beta}
  \end{tikzcd} \]
  defines a locus
  \[ \fZ_{\beta_1,\beta_2} = \left\{\begin{array}{c} {}[(\bar\cE_1, \phi_1)] \in \fM_{\beta_1} \\ {}[(\bar\cE_2, \phi_2)] \in \fM_{\beta_2} \end{array} : \det(\bar\cE_1)\det(\bar\cE_2) = \cL(\beta), \, \tr(\phi_1) + \tr(\phi_2) = 0\right\} \]
  which is not isomorphic to $\fN_{\beta_1,\cL} \times
  \fN_{\beta_2,\cL}$. Rather, by taking $(\det(\bar\cE_1),
  \tr(\phi_1))$, it admits a map
  \[ \det\nolimits_1 \times \tr_1\colon \fZ_{\beta_1,\beta_2} \to \fPic(S) \times H^0(\cK_S) \]
  and it is the fiber over $(\cL(\beta_1), 0)$ which is isomorphic to
  $\fN_{\beta_1,\cL} \times \fN_{\beta_2,\cL}$.

  Despite the failure of
  Assumption~\ref{es:assump:semistable-invariants}\ref{es:assump:it:Bpe-sst-summands},
  we can analyze the {\it proof} of
  Theorems~\ref{es:thm:sst-invariants} and \ref{es:thm:wcf}, in
  particular the structure of each ``simple'' wall-crossing step, in
  \S\ref{sec:pairs-master-space-localization} and
  \S\ref{sec:horizontal-wc} respectively. In both steps, if we run the
  machinery over $\fM$ instead of over $\fN$, we have loci
  \[ Z^{\fM}_{\beta_1,\beta_2} \xrightarrow{\sim} \tilde M_{\beta_1} \times \tilde M_{\beta_2} \]
  in some master space over $\fM_\beta$, where $\tilde M_{\beta_i} \to
  \fM_{\beta_i}^\pl$ are some stable=semistable loci in auxiliary
  framed stacks for $\fM_{\beta_i}$; see
  \eqref{eq:pairs-master-space-complicated-locus} and
  \eqref{wc:eq:master-space-complicated-locus}. These are the loci
  which contribute the Lie bracket terms. In reality, we want to run
  the machinery over $\fN$, and use
  Assumption~\ref{es:assump:semistable-invariants}\ref{es:assump:it:Bpe-sst-summands}
  and base change along $\fN_\beta^\pl \hookrightarrow \fM_\beta^\pl$
  to conclude a similar isomorphism for the loci
  \[ Z^{\fN}_{\beta_1,\beta_2} \xrightarrow[?]{\sim} \tilde N_{\beta_1} \times \tilde N_{\beta_2} \]
  in the actually-desired master space over $\fN_\beta$. But it is
  easy to see that there is a Cartesian square
  \[ \begin{tikzcd}
    Z^{\fN}_{\beta_1,\beta_2} \ar{d}{\pi} \ar[hookrightarrow]{r} & Z^{\fM}_{\beta_1,\beta_2} \ar{d} \\
    \fZ_{\beta_1,\beta_2} \ar[hookrightarrow]{r} & \fM_{\beta_1} \times \fM_{\beta_2}
  \end{tikzcd} \]
  and only the fiber over $(\det_1 \times \tr_1) \circ \pi$ over
  $(\cL(\beta_1), 0)$ is the desired $\tilde N_{\beta_1} \times \tilde
  N_{\beta_2}$. So the arguments in
  \S\ref{sec:pairs-master-space-localization} and
  \S\ref{sec:horizontal-wc} cannot proceed as written.

  We will instead show that the contribution from
  $Z^{\fN}_{\alpha_1,\alpha_2}$ must vanish (under certain
  circumstances). In other words, each ``simple'' wall-crossing step
  becomes trivial (under certain circumstances), with the result that
  the overall wall-crossing also becomes trivial. This will be the
  operational version of the arguments in \cite[\S 5.12]{liu_ss_vw}.

\subsubsection{}
\label{sec:VW-proof-end}

  Observe that the natural obstruction theory on
  $\fZ_{\beta_1,\beta_2}$ contains
  \[ R\Gamma(\cO_S)^\vee[-1] \oplus \kappa \otimes R\Gamma(\cK_S)^\vee \]
  as a summand. This is because it is constructed by the ``symmetric
  reduction'' procedure of \S\ref{sec:VW-TT-obstruction-theory} from
  the external direct sum of the $\kappa$-symmetric obstruction
  theories on $\fM_{\beta_1}$ and $\fM_{\beta_2}$, each of which
  contains $R\Gamma(\cO_S)^\vee[-1] \oplus \kappa \otimes
  R\Gamma(\cK_S)^\vee$ as a summand, and the ``symmetric reduction''
  removes only one (diagonal) copy. In particular, we have
  $h^2(\cO_S)$ genuine cosections, and $h^1(\cK_S) = h^1(\cO_S)$
  cosections which are ``$\kappa$-twisted'', i.e. map to $\kappa
  \otimes \cO$ instead of $\cO$.

  Suppose $h^2(\cO_S) \neq 0$. By
  Lemma~\ref{lem:lifting-Obs-cosections}, the $h^2(\cO_S)$ genuine
  cosections on $\fZ_{\beta_1,\beta_2}$ can be lifted to $h^2(\cO_S)$
  surjective cosections of the obstruction sheaf of
  $Z^{\fN}_{\beta_1,\beta_2}$. By cosection localization, the virtual
  cycle on $Z^{\fN}_{\beta_1,\beta_2}$ vanishes. Thus its contribution
  to each ``simple'' wall-crossing step, e.g.
  \eqref{eq:pairs-master-term-3}, vanishes as well. We conclude that
  \eqref{thm:sst-invariants} and \eqref{thm:wcf}, applied to our VW
  setting, contain only their $n=1$ terms.

  Suppose $h^1(\cO_S) \neq 0$. Lemma~\ref{lem:lifting-Obs-cosections}
  continues to apply to the $h^1(\cO_S)$ ``$\kappa$-twisted''
  cosections on $\fZ_{\beta_1,\beta_2}$. The conclusion is that the
  virtual cycle on $Z^{\fN}_{\beta_1,\beta_2}$ always contains a
  factor of
  \[ (1 - \kappa)^{h^1(\cO_S)} \]
  and therefore vanishes modulo $(1 - \kappa)^{h^1(\cO_S)}$. This
  statement makes sense because there are never any poles at $\kappa =
  1$ --- see Remark~\ref{rem:VW-refinement} --- and therefore we may
  put a filtration on such numerical invariants in
  $K_{\tilde\sT}(\pt)$ by their order of vanishing at $\kappa = 1$. We
  conclude that \eqref{thm:sst-invariants} and \eqref{thm:wcf},
  applied to our VW setting modulo $(1 - \kappa)^{h^1(\cO_S)}$,
  contain only their $n=1$ terms.
\end{proof}

\subsubsection{}

\begin{remark}
  For the $h^2(\cO_S) \neq 0$ case, we would like to use the setup of
  \S\ref{sec:reduced-obstruction-theories}, but the content there is
  not directly applicable because the cosections are ``on $\fM$''
  instead of ``on $\fN$''. Regardless, in the language there, the idea
  is that there are $o_\alpha = h^2(\cO_S)$ cosections for every
  $\alpha$, so in particular $o_{\alpha_1} + o_{\alpha_2} >
  o_{\alpha_1+\alpha_2}$, and therefore \eqref{thm:red-sst-invariants}
  and \eqref{thm:red-wcf} contain only their $n=1$ terms.

  For the $h^1(\cO_S) \neq 0$ case, we would like to use the same
  argument (\S\ref{vw:sec:M-stack}) that showed the vanishing of
  $U(r)$ refined VW invariants. This would at least prove the genuine
  vanishing of the Lie bracket whenever it is applied to a
  $\Pic_0(S)$-equivariant class. However, we do not see a $\Pic_0(S)$
  action on $Z^{\fN}_{\beta_1,\beta_2}$ which is compatible with the
  map $\det_1\colon Z^{\fN}_{\beta_1,\beta_2} \to r\!\Pic_{c_1}(S)$.
\end{remark}

\phantomsection
\addcontentsline{toc}{section}{References}

\begin{small}
\bibliographystyle{alpha}
\bibliography{paper}
\end{small}

\end{document}